\documentclass[12pt]{amsart}

\usepackage[backref=page, colorlinks=true, linkcolor=magenta, citecolor=cyan]{hyperref}
\usepackage{mathtools}
\usepackage{soul}
\usepackage{times}

\RequirePackage[dvipsnames,usenames]{color}

\input{kmacros3.sty}
\usepackage{quiver}

\usepackage{mabliautoref}
\numberwithin{equation}{theorem}

\usepackage{tikz}
\usepackage{tikz-cd}
\usetikzlibrary{cd}
\usepackage{graphicx}
\usepackage[all,cmtip]{xy}

\RequirePackage[dvipsnames,usenames]{color}

\usepackage{bm}
\usepackage{pifont}
\usepackage{upgreek}
\usepackage{ulem}
\usepackage{stmaryrd}
\usepackage[utf8]{inputenc}

\newcommand{\newpt}[2]{{\mathrm{Pt}(#1, #2)}}

\usepackage[margin=1 in,marginpar=2cm]{geometry}

\usepackage{todonotes}

\usepackage{setspace}

\usepackage[skip=5pt plus1pt, indent=5pt]{parskip}

\usepackage{enumerate}
\usepackage{calc}

\usepackage{bbding}

\usepackage{verbatim}
\usepackage{alltt}

\newcommand{\perfd}{\mathrm{perfd}}
\newcommand{\perf}{\mathrm{perf}}
\newcommand{\rsftypfd}{R^{S_\infty}_\perfd}
\newcommand{\raftypfd}{R^{A_\infty}_\perfd}

\renewcommand{\epsilon}{\varepsilon}
\DeclareMathOperator{\depth}{depth}
\newcommand{\cOne}{{\mathrm{c1}}}

\DeclareMathOperator{\ppt}{ppt} 
 
\DeclareMathOperator{\pt}{Pt}
\DeclareMathOperator{\lct}{lct}
\DeclareMathOperator{\eh}{e}

\DeclareMathOperator{\np}{\mathcal{I}}
\DeclareMathOperator{\coff}{C}
\newcommand{\cotensor}[1]{%
  \mathbin{\mathop{\widehat{\otimes}}_{#1}}%
}

\usepackage{tikz}
\usepackage{tikz-cd}
\usetikzlibrary{cd}
\usepackage{graphicx}
\usepackage[all,cmtip]{xy}

\usepackage{mabliautoref}
\numberwithin{equation}{theorem}

\usepackage{bm}
\usepackage{pifont}
\usepackage{upgreek}

\usepackage{ulem}
\usepackage{stmaryrd}
\usepackage{cancel}

\numberwithin{equation}{theorem}

\usepackage{fullpage}

\usepackage{enumerate}
\usepackage{calc}

\usepackage{verbatim}
\usepackage{alltt}
\usepackage{scalerel,stackengine}

\definecolor{darkgreen}{rgb}{0,0.4,0}
\definecolor{forestgreen}{RGB}{34,139,34}
\definecolor{darkpurple}{rgb}{0.35,0,0.45}
\definecolor{darkorchid}{rgb}{0.38,0.08,0.32}
\definecolor{darkturquoise}{rgb}{0.01,0.55,0.7}

\newcommand{\Ainfty}{A_{\infty}}
\newcommand{\m}{\mathfrak{m}}
\newcommand{\n}{\mathfrak{n}}
\newcommand{\p}{\mathfrak{p}}
\newcommand{\q}{\mathfrak{q}}
\DeclareMathOperator{\ev}{\mathrm{mult}}
\newcommand{\un}{\mathrm{un}}

\newcommand{\frad}{\mathfrak{d}}
\newcommand{\fran}{\mathfrak{n}}

\stackMath
\newcommand\reallywidehat[1]{%
\savestack{\tmpbox}{\stretchto{%
  \scaleto{%
    \scalerel*[\widthof{\ensuremath{#1}}]{\kern.1pt\mathchar"0362\kern.1pt}%
    {\rule{0ex}{\textheight}}
  }{\textheight}%
}{2.4ex}}%
\stackon[-6.9pt]{#1}{\tmpbox}%
}
\newcommand\reallywidetilde[1]{%
\stackon[-7pt]{#1}{%
  \vstretch{1.2}{\hstretch{2.4}{%
    \widetilde{\phantom{\;\;\;\;\;\;\;\;}}}}}%
}
\begin{document}

\title{Perfectoid Pure and Signature Pairs}

\author[Cai, Fayolle, Ma, Schwede, Smirnov, Tucker, Waldron]{Hanlin Cai, Anne Fayolle, Linquan Ma, Karl Schwede, Ilya Smirnov, Kevin Tucker, Joe Waldron}

\address{Department of Mathematics, Columbia University, New York, NY, USA}
\email{hc3589@columbia.edu}
\address{Department of Mathematics, University of Utah, Salt Lake City, UT, USA}
\email{fayolle@math.utah.edu}
\address{Department of Mathematics, Purdue University, West Lafayette, IN, USA}
\email{ma326@purdue.edu}
\address{Department of Mathematics, University of Utah, Salt Lake City, UT, USA}
\email{schwede@math.utah.edu}
\address{BCAM -- Basque Center for Applied Mathematics, Bilbao, Spain \quad and \quad
IKERBASQUE, Basque Foundation for Science, Bilbao, Spain}
\email{ismirnov@bcamath.org}
\address{Department of Mathematics, University of Illinois at Chicago, Chicago, IL, USA}
\email{kftucker@uic.edu}
\address{Department of Mathematics, Michigan State University, East Lansing, MI, USA}
\email{waldro51@msu.edu}

\begin{abstract}
We extend perfectoid purity and related invariants to divisor pairs in mixed characteristic.
The resulting purity condition agrees with $F$-purity of pairs in characteristic $p$ as defined by Hara--Watanabe.
For normal (respectively, demi-normal)
log $\mathbb{Q}$-Gorenstein pairs,
we show that perfectoid pure implies log canonical (respectively, semi-log canonical). 
We similarly establish adjunction for perfectoid pure pairs and inversion of adjunction for principal pairs in complete intersections along a Cartier divisor. 
We also define perfectoid pure thresholds and prove localization and base change results, 
including a formula for local extensions with regular closed fiber. We establish upper bounds in terms of analytic spread, dimension, and log canonical thresholds.
Finally, we also introduce perfectoid signatures for divisor pairs in mixed characteristic whose positivity characterizes the BCM-regularity of log $\mathbb{Q}$-Gorenstein pairs. We prove a transformation rule under finite extensions and show continuity and convexity for scalings of a Cartier divisor. 
\end{abstract}

\maketitle
\tableofcontents

\section{Introduction}

Within birational algebraic geometry and characteristic $p > 0$ commutative algebra, instead of simply studying the singularities of a ring $R$, one often studies the singularities of a pair $(R, f^t)$, $(R, \Delta)$ or $(R, \fra^t)$.  Here $f \in R$ and $t \in \bQ_{\geq 0}$, $\Delta \geq 0$ is a $\bQ$-divisor on $\Spec R$, or $\fra \subseteq R$ is an ideal and $t \in \bQ_{\geq 0}$.  Pairs arise naturally as correction terms when changing rings or varieties and they have become an important tool for proving facts even about regular rings, see for instance \cite{EinLazarsfeldSmith.UniformBoundsAndSymbolicPowers}.  

Recently, by applying methods originating in $p$-adic Hodge theory, \cite{ScholzePerfectoidSpaces,AndreConjectureFacteurDirect,BhattScholzePrisms,Bhatt}, a theory of singularities in mixed characteristic has been developed analogous to the theories coming out of birational algebraic geometry and characteristic $p > 0$ commutative algebra, see for instance \cite{MaSchwedeSingularitiesMixedCharBCM,MaSchwedeTuckerWaldronWitaszekAdjointPLT,TakamatsuYoshikawa.MMPForSemiStableThreefolds,BMPSTWW1,CaiLeeMaSchwedeTuckerPerfectoidSignature,BMPSTWW3,Yoshikawa.ComputationMethodForPerfdPurity,CaiPandeQuinlanSchwedeTucker.PlusPureThresholds}.  Within those works, there is a suitable theory of singularities of pairs analogous to the theory of log terminal or strongly $F$-regular pairs (namely, BCM-regular pairs).   However, there is no general theory analogous to log canonical and $F$-pure pairs (\cite{KollarSingularitiesMMP, HaraWatanabe, TakagiPairsAndInversion}).  Furthermore, we also lack a theory analogous to log canonical (or $F$-pure) thresholds and a theory analogous to the theory of $F$-signature of pairs.   Indeed, there are aspects of the BCM-regular definitions that make some of these generalizations difficult to even formulate, and we discuss these below in \autoref{subsec.IntroOnDefinitions}.

In this paper we develop these missing theories in mixed characteristic.  In particular, we introduce and develop the following notions.
\begin{enumerate}
    \item A definition of perfectoid purity for pairs $(R, \Delta)$ (or $(R, f^t)$) agreeing with the definition of $F$-purity for pairs from \cite{HaraWatanabe}, see \autoref{sec.PerfectoidPurePrincipal} and \autoref{sec.PerfdPureDivisorPairs}.
    \item An analog of the $F$-pure threshold of \cite{TakagiWatanabe}, denoted $\ppt(\fra, \Delta)$, see \autoref{sec.PerfectoidPureThresholds}.
    \item An analog of the $F$-signature for pairs from \cite{BSTfsigpairs1,PolstraTuckerCombinedApproach}, see \autoref{sec.PerfectoidSignatureForPairs}.
\end{enumerate}

\subsection{Main results}
We prove the following results about these definitions.  First we show that perfectoid pure pairs are log canonical.  

\begin{theoremA*}[{\autoref{pfd pure implies lc}}]
Let $R$ be an excellent normal ring {admitting a dualizing complex}, with $p$ in its Jacobson radical. Let $\Delta$ be an effective $\Q$-divisor on $R$ such that $K_R+\Delta$ is $\Q$-Cartier.
    If $(R,\Delta)$ is perfectoid pure, then it is log canonical.
\end{theoremA*}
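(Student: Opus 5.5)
Since the statement is local, I would first reduce to the case where $(R,\m)$ is a complete (or at least henselian, excellent) normal local ring with $p \in \m$, using that perfectoid purity of pairs localizes and that log canonicity can be checked at the local rings of $\Spec R$ (divisorial valuations have centers, so it suffices to check at each point). I would also reduce to $R$ complete: log canonicity is preserved and detected under completion for excellent rings, and perfectoid purity passes to the completion. Next I would choose $n$ with $n(K_R+\Delta)$ Cartier and pass to the index-one cover $S=\bigoplus_{i=0}^{n-1} R(-i(K_R+\Delta))$ twisted appropriately; alternatively, I would avoid covers and work directly with a log resolution or an arbitrary proper birational $\pi\colon Y\to \Spec R$ and a prime divisor $E$ on a normal $Y$.

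The core argument is by contradiction. Suppose $a(E,R,\Delta)<-1$ for some exceptional prime divisor $E$ on a normal $Y\to\Spec R$, that is, $\operatorname{ord}_E(K_Y-\pi^*(K_R+\Delta))<-1$. I would translate perfectoid purity of $(R,\Delta)$ into the statement that for a perfectoid $R^+$-algebra $B$ (say $\widehat{R^+}$ or an appropriate perfectoid BCM algebra) the map $R\to B$ twisted by $\Delta$, namely $R\to B\otimes R(\lceil\Delta'\rceil)$-type maps, or equivalently $H^d_\m(R(K_R))\to H^d_\m(B\otimes\omega_R(\Delta))$ after Matlis duality, is injective. Dually, this says the trace-type submodule $\tau$-like object attached to $(R,\Delta)$ via $B$ equals $R$, i.e., there is an $R$-linear map $\phi\colon R(\Delta)\otimes B \to R$ hitting $1$. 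The key step is to compare with $Y$: the existing theory of BCM test ideals (from \cite{MaSchwedeSingularitiesMixedCharBCM} and \cite{BMPSTWW1}) gives containment of the perfectoid/BCM multiplier-type submodule in $\pi_*\mathcal O_Y(\lceil K_Y-\pi^*(K_R+\Delta)\rceil)$ \emph{after} a small perturbation. I would use this with a perturbation: since perfectoid purity is stable under $\Delta \mapsto \Delta$ but not small increases, I instead use the valuation directly. Concretely, extend $\operatorname{ord}_E$ to a valuation $v$ on $R^+$ normalized with $v(R^+)\subseteq \bQ$, and take a nonzero element $g\in R$ with $\operatorname{div}(g)\geq n(K_R+\Delta)$-type local generator $h$ of $\mathcal O(-n(K_R+\Delta))$. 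Using that $\Delta$ twisted purity gives, for every $N$, an $R$-linear map $B\to R$ sending $h^{1/n}\cdot(\text{unit})$ to $1$, one obtains that elements of $R^+$ of valuation $>$ something are killed; the discrepancy $<-1$ forces $h^{1/n}$ times the generator of $\omega$ to land in the maximal ideal via the valuation bound, giving the contradiction.

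The main obstacle is precisely this valuative step: producing, from a divisorial valuation with discrepancy $<-1$, an element of the image of the relevant ``pure'' map that must lie in $\m$. In characteristic $p$ this is done via $F$-splittings and $\phi(F_*\,\cdot)$ valuation estimates; here I would mimic it by using the compatibility of perfectoid purity with finite covers: pass to a finite extension $R\subseteq S$ (e.g.\ adjoining $h^{1/n}$ and a uniformizer root) along which $E$ ramifies, use that pulling back $(R,\Delta)$ to $(S,\Delta_S)$ with $K_S+\Delta_S=f^*(K_R+\Delta)$ preserves perfectoid purity (since $R^+=S^+$) and scales discrepancies in a controlled way, so that discrepancy $<-1$ becomes arbitrarily negative after taking large ramified covers. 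Then, for a very negative discrepancy, the trace $\operatorname{Tr}\colon S\to R$ restricted to $\omega$ images lands in $\m$, contradicting that $R\to S$ composed with the perfectoid-pure splitting must be surjective onto $R$. I expect the bookkeeping of the ramification/discrepancy scaling (via the Hurwitz-type formula $a(E',S,\Delta_S)+1=r(a(E,R,\Delta)+1)$) to be the technical heart.
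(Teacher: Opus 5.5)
Your proposal has a genuine gap at its central step. You translate perfectoid purity of $(R,\Delta)$ into purity of a $\Delta$-twisted map $R\to B\otimes\cdots$ with $B=\widehat{R^+}$ or a perfectoid big Cohen--Macaulay $R^+$-algebra. The hypothesis does not give this. Perfectoid purity only provides splittings against \emph{some} perfectoid $R$-algebra (e.g.\ $R^{A_\infty}_{\perfd}$, or an absolutely integrally closed $C$ over it), and such an algebra is in general not an $R^+$-algebra; this is exactly the obstruction discussed in \autoref{subsec.IntroOnDefinitions}. Purity against $R^+$-algebras is BCM-regularity. Already for $\Delta=0$ it would make $R$ a splinter, hence klt in the $\mathbb{Q}$-Gorenstein case. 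But $\mathbb{Z}_p\llbracket x,y,z\rrbracket/(x^3+y^3+z^3)$ with $p\equiv 1 \bmod 3$ is perfectoid pure (\autoref{exam.PPTExamplesEllipticCurve}) and lc but not klt.

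Without that translation your valuative step has nothing to act on. A splitting $C\to R$ comes with no valuation extending $\operatorname{ord}_E$, since it need not factor through any domain quotient of $C$ containing $R^+$. The covering argument does not fill the hole either:
\begin{itemize}
\item Ascent of perfectoid purity to $(S,\Delta_S)$ ``since $R^+=S^+$'' is unjustified, because the definition never involves $R^+$.
\item Keeping $\Delta_S=\nu^*\Delta-K_{S/R}$ effective (with $\nu\colon\Spec S\to\Spec R$) forces $\Delta$ to have coefficient at least $1-1/e_P$ along every prime divisor $P$ of $\Spec R$ over which the cover ramifies with index $e_P$. So you cannot in general make the ramification along a prescribed exceptional $E$ large.
\item Even granting this, the formula $a(E',S,\Delta_S)+1=r\,(a(E,R,\Delta)+1)$ (valid only for tame ramification) merely transports the non-lc place to $S$. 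Your claimed conclusion that a trace lands in $\m$ is never connected to $E$.
\end{itemize}
What is missing is any mechanism turning a splitting against a non-$R^+$ perfectoid algebra into information about divisors on a birational model. Index-one covers do not help either when $p$ divides the index, which is precisely the case in which the theorem is new.

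The missing idea is the perturbation you discarded, in the correct form. You do not need purity after enlarging $\Delta$. You only need one fixed element $r$ to lie in a \emph{test ideal} of $\bigl(R,\Delta+(1-p^{-e})\Div_R(r)\bigr)$ for every $e$; the corresponding multiplier-ideal memberships then give lc by \autoref{different characterization log canonical}.

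The paper takes $r=fg$, with $\Div_R(f)=(K_R+\Delta)_{\mathrm{red}}+G$ and $g$ a discriminant-type element for a Noether--Cohen normalization. The argument runs as follows.
\begin{itemize}
\item Explicit maps $M_C({<}\Delta)\to(R(-K_R)C)^{\cOne}\to M_C({<}\Delta)$ compose to multiplication by $fg$. Combined with a splitting of $\rho_{C,\Delta}$, they show that $fg$ lies in the image of evaluation at a twisted element $\widetilde q_e$ (\autoref{damped canonical module comparison}).
\item The factor $g$ lets you pass from $C$ to every perfectoid BCM $R^+$-algebra $B$ via the $g$-almost comparison of \autoref{componentwise BCM transfer}. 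This gives $fg\in\tau_B\bigl(R,\Delta+(1-p^{-e})\Div_R(fg)\bigr)$ (\autoref{damped test ideal membership}).
\item The containment $\tau_B\subseteq\mathcal{J}$ of \cite[Theorem~6.21]{MaSchwedeSingularitiesMixedCharBCM} finishes the proof.
\end{itemize}
This argument is specific to mixed characteristic. When $R$ has characteristic $p$ the statement needs a separate argument; the paper uses Hochster's $\Gamma$-construction and Hara--Watanabe.
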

We in fact prove the non-normal semi-log canonical version as well where, instead of assuming that $R$ is normal, we assume that $R$ is demi-normal (\cite{KollarSingularitiesMMP})  and that $\Delta$ is a Mumford $\bQ$-divisor (\cite{KollarMumfordDivisors}), see \autoref{pfd pure implies slc}.

Indeed, the above result is new even when $\Delta = 0$ where the result had only been shown when $K_R$ was $\bQ$-Cartier with \emph{index not divisible by $p$}, see \cite[Corollary~5.11]{BMPSTWW3}.  In fact, it is the flexibility that pairs provides that lets us handle the case where the index is divisible by $p$.  

We also obtain the following adjunction-type result.
\begin{theoremB*}[\autoref{thm:perfectoid-pure-adjunction}]
Let $R$ be an excellent equidimensional demi-normal ring with $p$ in its Jacobson radical and which admits a dualizing complex. 
Let $D$ be a reduced effective Mumford divisor on $R$ and let $i\colon D=V(I_D)\hookrightarrow \Spec R$.  Assume that $S\coloneqq R/I_D$ is deminormal.  
Let $\Delta$ be an effective Mumford $\Q$-divisor on $\Spec(R)$ having no prime component in common with $D$.  

Assume that $K_R+D+\Delta$ is $\Q$-Cartier {at every height $1$-prime of $S$} and that $(R,D+\Delta)$ is perfectoid pure.  Then $(S, \Delta_D)$ is perfectoid pure where $\Delta_D$ is the different.  
\end{theoremB*}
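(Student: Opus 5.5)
Since $(R,D+\Delta)$ is perfectoid pure, I will write this as a splitting statement. Presumably it means: for some (equivalently every) perfectoid $R$-algebra $B$, or for $B$ the $p$-completed absolute integral closure $\widehat{R^+}$, the map
\[
R \to B \otimes_R R(\lceil m(K_R+D+\Delta)\rceil - mK_R)
\]
(suitably twisted) is pure. Equivalently, the map $R\to B$ composed with multiplication by a local equation of $D+\Delta$, raised to the appropriate rational power, is pure. Purity can be checked after localizing and completing at maximal ideals, so I will first reduce to $R$ complete local. In that setting purity of pairs is detected by injectivity on local cohomology $H^d_\m$. The plan is to run the standard adjunction argument used for $F$-purity of pairs and for BCM-regularity. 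The two modules to compare sit in a short exact sequence
\[
0\to R(-D)\to R\to S\to 0,
\]
together with its perfectoid analogue
\[
0 \to B\otimes R(-D) \to B \to B_S \to 0,
\]
where $B_S$ is a perfectoid $S$-algebra, for instance $B/\sqrt{I_DB}$ or a perfectoid cover of $S^+$ receiving a map from $B$.

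The key steps are as follows.

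First, I localize at height-one primes of $S$, where $K_R+D+\Delta$ is $\Q$-Cartier, and check that the twist $\mathcal{O}_R(K_R+D+\Delta)$ restricts to $\mathcal{O}_S(K_S+\Delta_D)$ up to the different. Concretely, I use the residue/Poincaré map $\omega_R(D)\to\omega_S$, which is surjective in codimension one since $R$ and $S$ are demi-normal. Because we are dealing with Mumford divisors, the relevant sheaves are $S_2$. Hence it suffices to construct the compatible maps in codimension one on $S$ and then extend reflexively.

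Second, I dualize purity. Purity of $(R,D+\Delta)$ is equivalent to the trace-type map
\[
\mathrm{Hom}_R\bigl(B(D+\Delta),\omega_R\bigr)\to\omega_R
\]
being surjective, or equivalently to injectivity of $H^d_\m(\omega_R(\ldots))\to H^d_\m(B\otimes\ldots)$. Applying the residue map, surjectivity for $R$ should descend to surjectivity of
\[
\mathrm{Hom}_S\bigl(B_S(\Delta_D),\omega_S\bigr)\to\omega_S.
\]
For this I need the map $\mathrm{Hom}(B(D+\Delta),\omega_R)\to\mathrm{Hom}(B(\Delta),\omega_R(D))$ to factor through restriction to $B_S$. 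The factoring holds because any map on $B(D+\Delta)$, viewed on $B(\Delta)$ via the inclusion $D\geq 0$, sends $I_DB(\Delta)$ into $\omega_R$. The exact sequence above then gives the induced map on $B_S$.

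The main obstacle is the second step, that is, making $B_S$ perfectoid and comparing it with the genuine test algebra for $S$. In characteristic $p$ one simply restricts Frobenius. Here I would instead choose $B$ to be perfectoid and integral over $R$, for example $\widehat{R^+}$, and take $B_S$ to be the $p$-completion of $R^+/\mathfrak{P}$ for a prime $\mathfrak{P}$ over each minimal prime of $I_D$. This is a perfectoid $S$-algebra after passing to the perfectoidization. I then need the factoring map to vanish on the kernel of $B/I_DB\to B_S$. That kernel consists of elements nilpotent up to $p$-power roots, and the reduced $\Q$-divisor $D$ together with the fractional powers available in $R^+$ kill it. This is where almost-mathematics and the demi-normality of $S$, to handle multiple branches, enter. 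Finally, since purity of $(S,\Delta_D)$ is independent of the chosen perfectoid algebra (by a result stated earlier in the paper), the surjectivity obtained for $B_S$ gives the conclusion.
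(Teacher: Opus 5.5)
Your outline (the residue sequence $\omega_R\to\omega_R(D)\to\omega_S$, working at height-one points of $S$, extending by $\mathbf{S}_2$-ness, then arguing on top local cohomology) has the same shape as the paper's proof. The argument breaks where the hypothesis is used, however.

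\textbf{The choice $B=\widehat{R^+}$ is not available.} Perfectoid purity of $(R,D+\Delta)$ is an existence statement. It provides some perfectoid $R$-algebra $B$ (which may be taken absolutely integrally closed) for which $\rho_{B,D+\Delta}\colon R\to M_B({<}(D+\Delta))$ is pure. It does not follow that this holds for every perfectoid $B$, and in particular you cannot take $B=\widehat{R^+}$. Since $\rho_{B,D+\Delta}$ factors through $R\to B$, purity for an $R^+$-algebra $B$ would force $R\to R^+$ to be pure, i.e.\ $R$ would be a splinter. For example, let $A$ be the cone of \autoref{exam.PPTExamplesEllipticCurve}, which is perfectoid pure but not a splinter. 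Then $(A\llbracket w\rrbracket,\Div(w))$ is perfectoid pure by inversion of adjunction (\autoref{quotient of ci pfd pure}), yet $A\llbracket w\rrbracket$ has no pure map to any $A\llbracket w\rrbracket^+$-algebra. So your choices $B=\widehat{R^+}$ and $B_S=(R^+/\mathfrak P)^\wedge$ cannot be made. There is also no result in the paper making purity independent of the perfectoid algebra; the paper stresses the opposite. For the conclusion you only need one witness for $S$, so that final appeal is unnecessary in any case.

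\textbf{The formulation of the hypothesis hides the key idea.} Even with a legitimate witness $B$, your version of the hypothesis is wrong. Since $D$ has coefficient one, the map $R\to B\otimes_R R(D+\Delta)$, $1\mapsto 1$, is never pure when $D\neq 0$. At the generic point $P$ of a component of $D$ it becomes $R_P\to B_P$, $1\mapsto \pi$ with $\pi$ a uniformizer, as in \autoref{perfectoid pure coefficients at most one}. This is why the definition passes to the limit from below. The essential input is then \autoref{radical ideal computes universal divisorial module}:
$M_B({<}(D+\Delta))\cong\bigl(T_{B,D}\otimes_B M_B({<}\Delta)\bigr)^{\cOne}$, where $T_{B,D}=\bigl(\sqrt{I_DB}\otimes_R R(D)\bigr)^{\cOne}$.

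Consequently, purity of $\rho_{B,D+\Delta}$ gives an injection of $E_R$ into $H^d_\m$ of a module built from $\sqrt{I_DB}$. On $p^\infty$-torsion modules, $\sqrt{I_DB}$ may be replaced by $(I_DB)_\perfd$ (\autoref{root ideal tensor comparison}). This is exactly what lets everything descend to the perfectoid $S$-algebra $C=B/(I_DB)_\perfd$, with no $R^+$ and no choice of primes $\mathfrak P$. Your factoring step only gives vanishing on $I_DB(\Delta)$, which does not produce a map out of a perfectoid quotient. The sentence about the kernel being killed by fractional powers in $R^+$ is precisely the unproved point.

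\textbf{Two further steps are only asserted.}
\begin{enumerate}
\item Realizing the different on the perfectoid side. The paper writes $K_R=-D+G$, so that $\omega_R(D)=R(G)$. It then builds the comparison map out of $\bigl(\omega_S\otimes_S M_C({<}\Delta_D)\bigr)^{\cOne,S}$ locally at height-one points of $\Supp(\Delta_D)$, using a local equation $h$ with $\Div_S(\overline h)=n\Delta_D$ and \autoref{BS - root}(b), and then glues.
\item Transferring injectivity from $E_R$ to $E_S$. This needs the connecting map $H^{d-1}_\m(R(G))\to H^{d-1}_\m(R(G)/\omega_R)$ to vanish, which the paper proves by a dimension count using that $R(G)$ is $\mathbf{S}_2$. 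Only then does one get $E_S\hookrightarrow E_R$.
\end{enumerate}
Your statement that ``surjectivity should descend'' addresses neither of these.
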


Under moderate geometric hypotheses, we can weaken the hypothesis that $S$ is demi-normal to require instead that $S$ is $\mathbf{S}_2$ (and we can even remove that if $D$ is $\bQ$-Cartier), see \autoref{cor:geometric-adjunction}.  Indeed, if all the divisors are Cartier, then we can ignore these demi-normality and even equidimensionality assumptions completely and obtain a quite general adjunction statement in \autoref{quotient of pfd pure is pfd pure}.  If additionally we assume that the ambient ring is a complete intersection, we also obtain inversion of adjunction  \autoref{quotient of ci pfd pure}.

We now move to our analogs of $F$-pure thresholds.  We prove the following, which answers \cite[Conjecture 4.22(a)]{BMPSTWW3} and even generalizes the statement to the context of pairs.  

\begin{theoremC*}[\autoref{thm: ppt in regular fibers for pairs}]
Let $(R, \m) \to (S, \n)$ be a faithfully flat local map of Noetherian local rings of residue characteristic $p$ such that $S/\m S$ is regular. Let $f_1,\dots,f_n \in S$ be elements whose images form a regular system of parameters on $S/\m S$. 

Let $\ba$ be an ideal of $R$ and $\Delta$ be an effective Mumford $\Q$-divisor on $R$. 
If $\Delta \neq 0$, further assume that $R$ and $S$ are excellent reduced $\mathbf{S}_2$ rings and that the flat pullback $\Delta_S$ of $\Delta$ is a Mumford $\Q$-divisor.
Then 
\[
\ppt ((\ba S+(f_1,\dots,f_n))S; \Delta_S) = \ppt (\ba; \Delta) + \dim (S/\m S).
\]
In particular, if $(R,\Delta)$ is perfectoid pure, then so is $(S,\Delta_S)$. 
\end{theoremC*}

    Using this, in \autoref{cor.AnalyticSpreadPPT} we show that $\ppt (\ba) \leq \ell (\ba)$, where $\ell (\ba)$ denotes the analytic spread of $\ba$. 
    Moreover, we prove that 
    $$\ppt (\ba) \leq \min \{\dim (R_Q) \mid (p, \ba) \subseteq Q \in \Spec (R)\}.$$

We also compare the perfectoid pure threshold to the log canonical threshold.

\begin{theoremD*}[\autoref{prop.pptVsLCT}]
    Let $(R, \m)$ be an excellent normal local ring of residual characteristic $p > 0$ admitting a dualizing complex and let $\ba\subseteq R$ be an ideal. Let $\Delta$ be an effective $\Q$-divisor on $R$ such that $K_R+\Delta$ is $\Q$-Cartier.
    Then $\ppt(\ba; \Delta) \leq \lct(\ba; \Delta).$
\end{theoremD*}

    We also compute some interesting examples of perfectoid pure thresholds.  Suppose for instance that $R = \bZ_{p}\llbracket x,y,z \rrbracket/(x^3+y^3+z^3)$.  Then we prove that 
    \[
    \ppt(\fram) = 1 \;\;\;\;\; \text{if $p \equiv 1 (\text{mod}\,3)$}
    \]
    If $p \equiv 2 (\mathrm{mod}\,3)$ then 
    we know that $R$ is perfectoid pure by \cite{Yoshikawa.ComputationMethodForPerfdPurity} and in fact also that $\ppt(p) = 1 - p/(p^2-1)$.  Building upon this, and the theory we develop, we also show that 
    \[
        \ppt(\fram) = 1 - p/(p^2-1) \;\;\;\;\; \text{if $p \equiv 2 (\text{mod}\,3)$}.
    \]

    We now move on to the perfectoid signature of a pair $(R, \Delta)$, which we denote by $s^{\underline{x}}_{\perfd}(R, \Delta)$.  As in \cite{CaiLeeMaSchwedeTuckerPerfectoidSignature}, $R$ is a complete Noetherian normal domain and $\underline{x}$ is used to identify a certain choice of system of parameters from a Noether-Cohen normalization $A \subseteq R$.  We show a general transformation rule in the context of perfectoid signature, proving the following analog of \cite[Theorem 4.4] {CarvajalSchwedeTuckerEtaleFundFsignature} and \cite[Theorem 1.3]{XuZhuang.UniquenessOfMinimizer}.

    \begin{theoremE*}[\autoref{thm.transformationruleforpairs}]
        Let $\phi\colon (R,\m,k)\to (R',\n,l)$ be a finite {generically \'etale} extension of complete normal local domains of mixed characteristic $(0,p)$ with perfect residue fields and with fraction fields $K = K(R)$ and $L = K(R')$. Let $\tau\colon \Spec(R')\to \Spec(R)$ be the corresponding morphism of schemes. Suppose $\Delta_R\ge 0$ is a $\bQ$-divisor such that $\Delta_{R'}\coloneqq \tau^*\Delta_R-K_{R'/R}\ge 0$. Let $t$ be the maximal number of direct $\mathrm{Tr}$-summands\footnote{{That is, a rank-1 free $R$-module summand of $R'$ such that if $\rho : R' \to R$ is projection on that summand, then $\rho = \Tr \cdot s$ for some $s \in R'$, see also \cite[Definition 2.12]{CarvajalSchwedeTuckerEtaleFundFsignature}.}} of $R'$ and $r=[L:K]$. Then we have
        \[ 
            t\cdot  s^{\underline{x}}_\perfd(R',\Delta_{R'})={r}\cdot s^{\underline{x}}_\perfd(R,\Delta_R).
        \]
    \end{theoremE*}

    Just as in the non-pair case, we show that perfectoid signature of pairs measures BCM-regularity.  Indeed, in \autoref{thm.BCMRegular=>PositiveSignatureForPairs} we prove the following.  Suppose $\Delta \geq 0$ is a $\bQ$-divisor such that $K_R + \Delta$ is $\bQ$-Cartier.  Then  
     $(R, \Delta)$ is BCM-regular if and only if
    \[
        s^{\underline{x}}_{\perfd}(R, \Delta) > 0.
    \]

    Similar to \cite{BlickleSchwedeTucker.FSigPairs2}, we also study the function 
    $t \mapsto s^{\underline{x}}_{\perfd}(R, tD)$
    for $D = \Div f$.  We prove the following.

    \begin{theoremF*}[{\autoref{thm:continuity}, \autoref{thm.PerfdSignatureIsConvex}}]
  Suppose $(R,\fram,k)$ is a complete Noetherian normal local domain of mixed characteristic $(0,p)$ with perfect residue field and $f \in \fram$ is non-zero. Let $D = \divisor(f)$ on $\Spec(R)$. The function $\QQ_{\geq 0} \to \mathbb{R}$ given by
  \begin{equation*}
    t \mapsto s^{\underline{x}}_\perfd(R,tD)
  \end{equation*}
  is continuous. In particular, it extends naturally to a function $\mathbb{R}_{\geq 0} \to \mathbb{R}$.  Furthermore, that function is convex.
    \end{theoremF*}

    \subsection{On the definitions}
\label{subsec.IntroOnDefinitions}

Perhaps the most challenging part of the work above was developing each definition.  We describe the idea behind each one below.  

\subsubsection*{Perfectoid purity for pairs $(R, \Delta)$}

In order to mimic the idea of \cite{HaraWatanabe}, we want some map 
\[
R \to B((1-\epsilon)\Delta)
\]
to be pure for all $1 \gg \epsilon > 0$ and where $B$ is some perfectoid algebra.  The problem is interpreting twisting by a $\bQ$-divisor on $B$.  Unlike \cite{MaSchwedePerfectoidTestIdeal}, we cannot assume that $B$ is an $R^+$-algebra, as if it was then we would also have a splitting $R \to R^+ \to R^+((1-\epsilon) \Delta)$ which would imply that $R$ itself was a splinter (and hence is KLT if $\bQ$-Gorenstein).  While we may (and do) assume that $B$ is absolutely integrally closed, this means that in principle there can be multiple different ways to pullback a $\bQ$-divisor.  In this generality, $n$th roots of elements of $R$ are \emph{not} unique up to units.  Nevertheless, the $\epsilon$ perturbation saves us as long as we pass all the way to $B$.  

We sketch the idea which is developed carefully at the start of \autoref{sec.PerfdPureDivisorPairs}.  Suppose we have $\epsilon_1 > \epsilon_2 > 0$.  One can always pick a finite flat extension $R \subseteq S_1$ and a map $S_1 \to B$  and an integer $n_1 > 0$ such that $n_1 (1-\epsilon_1) \Delta$ is an integral divisor that pulls back to $n_1 \Delta_{S_1}$, where $\Delta_{S_1}$ is an effective Weil generalized divisor.  Such an $S_1$, $n_1$ and $\Delta_{S_1}$ exist: simply formally adjoin an appropriate root to $R$ and use that $B$ is absolutely integrally closed (\autoref{construction SnD}).  Note even if $R$ is normal, we cannot assume $S_1$ is normal while keeping $R \subseteq S_1$ flat, and normalizing leads to problems similar to those with $R^+$.  

We likewise construct $R \subseteq S_2 \to B$ and $\Delta_{S_2}$ an effective $\bQ$-divisor such that $n_2((1-\epsilon_2)\Delta) = n_2 \Delta_{S_2}$.  Set $\sM_i$ to be the sheaf on $R$ associated to $B \otimes_{S_i} S_i(\Delta_{S_i})$.   Then in \autoref{divisorial comparison on big open}, we show that there is a canonical pointed\footnote{sending $1 \mapsto 1$} map 
\[
    \sM_1|_U \to \sM_2|_U
\]
for some open subset $U \subseteq \Spec R$ whose complement has codimension $\geq 2$ (a \emph{big open set}).  The key observation that lets us construct this map is that $B$ is seminormal, since it is perfectoid, see \autoref{BS - root}.  We then show we can take a filtered colimit over all $\epsilon > 0$ and over all big open sets to obtain a pointed\footnote{meaning there is a given map $R \to M_B({<}\Delta)$} object $M_B({<}\Delta)$ (one can take other colimits too as we show in \autoref{divisorial module computed by towers}).  With that in mind, one has that $(R, \Delta)$ is perfectoid pure if and only if the structure map
\[
    R \to M_B({<}\Delta)
\]
is pure, see \autoref{universal module detects perfectoid purity}.  This is equivalent to asking that for every $\epsilon > 0$, there exists a finite flat $R$-algebra $S$ and $\Delta_S$ as above with $n (1-\epsilon) \Delta$ an integral divisor that pulls back to $n \Delta_S$ for some $n > 0$ as well as some map $S \to B_{\epsilon}$ for some perfectoid $S$-algebra, such that $R \to B_\epsilon \otimes_S S(\Delta_S)$ is pure.  In fact, this latter characterization is our original definition \autoref{def perfd pure div pair}.

\subsubsection*{Perfectoid pure thresholds}

In this section we introduce analogs of the $F$-pure threshold of an ideal $\fra \subseteq R$ (or even for a pair $(R, \Delta)$ as above), which we denote by $\ppt(R; \fra)$ or $\ppt(R, \Delta; \fra)$ and call it \emph{perfectoid pure threshold} of $\fra$.  

We explain the definition, set $\np(B,{<}\Delta)$ to be elements $b \in B$ such that $R \xrightarrow{1 \mapsto b} B \to M_B({<}\Delta)$ is not pure, see \autoref{def: non-pure ideals} and \autoref{nonpure ideal below delta}.  We then define 
\[
    \ppt(\fra; \Delta) = \sup_B \sup\{r \in \bQ_{\geq 0}\;|\; \overline{(\fra B)^{r}} \nsubseteq \np(B,{<}\Delta)\}.
\]
where $\overline{(\fra B)^r}$ is the fractional integral closure (see also \cite[Section 10.5]{HunekeSwansonIntegralClosure} and \cite{RVS}) and $B$ runs over perfectoid $R$-algebras where certain key elements have compatible systems of roots, (see \autoref{def.ThresholdForDivisorPairs}).  It turns out one can compute this from concretely defined perfectoid algebras $\widehat{R}^{A_{\infty}}_{\perfd}$ (\autoref{prop: perfectoid threshold from one algebra}).

Thanks to the Brian\c{c}on--Skoda theorem for perfectoid algebras (\cite[Corollary 3.20]{MMRGSBrianconSkoda}) one can reduce this computation to the principal case similar to the summation formula for multiplier ideals: supposing $R$ is a local normal domain, there are minimal generators
$0 \ne \fra=(x_1,\ldots,x_m)\subsetneq R$ with 
\[
    \ppt(\fra; \Delta) = \sup \Big\{ {|a| \over p^e} \mid e \in \mathbb{Z}_{\geq 0}, a \in \mathbb{Z}_{\geq 0}^m, 
    (R, \Delta + \frac{1}{p^e} \Div (x_1^{a_1}\cdots x_m^{a_m}))
    \text{ is perfectoid pure}
    \Big\}, 
\]
see \autoref{cor: perfectoid threshold via pure pairs}. This is one key result that underlies many other results in \autoref{sec.PerfectoidPureThresholds}.

\subsubsection*{Perfectoid signature pairs}
    
Our strategy here is slightly different.  We are working with perfectoid signature which measures BCM-regularity and not perfectoid purity.  Because of this, we can use finite (normal) extensions of integral domains $R \subseteq S$ instead of the finite flat extensions mentioned above.  

Suppose we are given a normal pair $(R, \Delta \geq 0)$ where $R$ is a complete local Noetherian normal local domain of mixed characteristic. 
We fix $A \subseteq R$ to be a Noether-Cohen normalization ($R$ is a finite $A$-module) with a chosen regular system of parameters $\underline{x}$.
 We set $S \supseteq R$ to be a finite extension such that $\Delta$ pulls back to a divisor with integer coefficients.  In \autoref{def.perfectoidsigforpairs}, we then define 
\[
\np^R (S^{A_\infty}_\perfd, \Delta_R) = \big\{x \in S^{A_\infty}_{\perfd} \mid R \xrightarrow{1 \mapsto x \otimes 1} S^{A_\infty}_{\perfd} \otimes_S S(\Delta_R) \mbox{ is not pure} \big \}.
\]
Then we define the \emph{perfectoid signature} of $(R, \Delta_R)$ to be 
$$s^{\underline{x}}_\perfd(R,\Delta_R)=\frac{1}{r}\cdot\lambda_\infty(S^{A_\infty}_\perfd/\np^R (S^{A_\infty}_\perfd, \Delta_R))$$
where $r=\rank_R S$.

We show that this is independent of the choice of $S$.  This computation is essentially also the proof of the transformation rule under finite maps for perfectoid signature.  Note that, like the non-pair case (\cite{CaiLeeMaSchwedeTuckerPerfectoidSignature}), we do not know whether perfectoid signature is independent of the choice of $A$ or \underline{x}.

\noindent
\textbf{AI Acknowledgements.}
The ideas of this paper were developed by the human authors over several years, and a large majority of the results were known to us before summer 2026. 
As we have worked to finalize the manuscript, various parts of this paper have been discussed with LLMs. These include OpenAI's ChatGPT 5.6 Sol and Astra 6.0, Anthropic's Fable 5.1 and Google Gemini 3.1 Pro. In some cases, these systems provided initial proof drafts, which the human authors subsequently wrote in their own words, see below. LLMs also contributed some mathematical ideas that have been incorporated into the proofs. Notably, our original proof of \autoref{divisorial comparison on big open} used the perfectoid mixed characteristic Briançon-Skoda Theorem of \cite{MMRGSBrianconSkoda} for gluing and an LLM suggested to us that we should use the fact that perfectoid rings are seminormal instead, using \autoref{BS - root}.
In \autoref{subsec:frombelow} and \autoref{subsec:nonpure}, these systems were used to flesh out the authors' initial conception of the modules and ideals ($M_B({<}\Delta)$ and $\np(B, {<}\Delta)$) and suggest an organization of the material. We then wrote the section ourselves and used LLMs to suggest minor edits such as finding typos.
In \autoref{subsec.PerfdPureVsLC}, the LLMs also pointed out a substantive mistake in an earlier version of \autoref{damped canonical module comparison} which we fixed and rewrote. LLMs were used in \autoref{subsec.PerfdPureAdjunction} to write a draft of the proof based on an author's understanding of the required outline (similar to other adjunction arguments), and subsequently to edit it for readability. We then rewrote the whole section ourselves.  More generally, we have also made use of LLMs when finalizing this document. They have been used to help identify typos and errors, and to suggest improvements in exposition.  
We take full responsibility for the mathematical content and final form of the article.

\noindent
\textbf{Other Acknowledgements.}
The authors are grateful to Bhargav Bhatt, Zsolt Patakfalvi, and Jakub Witaszek for valuable conversations.

{Cai was supported by the Simons Foundation Collaboration on Perfection in Algebra, Geometry, and Topology.}
{Fayolle was supported by an NSERC doctoral grant, a Simons dissertation fellowship and Simons Foundation grant SFI-MPS-MOV-00006719-07.}
{Ma was supported by NSF Grant DMS \#2302430 and \#2424441 when preparing this article.}
{Schwede was supported by NSF Grants \#2501903, \#2101800, NSF FRG Grant \#1952522, a Fellowship from the Simons Foundation and Simons Travel Support for Mathematicians SFI-MPS-TSM-00013051.}
{Tucker was partially supported by
NSF Grant \#2501904 and Simons Foundation Travel Support for Mathematicians SFI-MPS-TSM-00014083.}
{Smirnov was supported by Spanish Ministry of Science, Innovation and Universities under grants 
RYC2020-028976-I funded by MICIU/AEI/10.13039/501100011033 and EI ESF "ESF Investing in your future”, PCI2024-155055-2 funded by MICIU/AEI/10.13039/501100011033 and UE, and
PID2024-156181NB-C31 funded by MICIU/AEI/10.13039/501100011033 and FEDER, UE. }
{Waldron was supported by NSF CAREER Grant DMS \#2440240, NSF Grant DMS \#2401279, and the Simons Foundation Gift ID \#850684.}

\section{Preliminaries}

All rings considered are commutative with unity. If $R$ is a ring, $I$ is an ideal of $R$, and $M$ is an $R$-module then $M^{\wedge I}$ is the classical $I$-adic completion of $M$. We write
$\widetilde M$ for the associated quasicoherent sheaf on $X=\Spec R$.
Let $p$ be a prime and suppose that it is in the Jacobson radical of $R$. We denote by $I^-{=\bigcap_{n\geq1}(I+p^nR)}$ the $p$-adic closure of $I$ in $R$.

\subsection{Perfectoid rings}
We first recall the definition and various properties of a perfectoid ring as in \cite[Definitions 3.5]{BhattMorrowScholzeIntegralPAHodge,BhattIyengarMaRegularRings} (see also \cite{MaTuckerIntroSingularitiesPerfectoidBCM} for an introduction).

\begin{definition}
A ring $S$ is perfectoid if it is $\varpi$-adically complete for some element $\varpi\in S$ such that $\varpi^pu=p$ for some unit $u$, the Frobenius map $\phi\colon S/pS\to S/pS$ is surjective and the kernel of Fontaine's map $\theta\colon W(S^\flat)\to S$ is principal for $S^\flat=\varprojlim_\phi S/p$.
\end{definition}

\begin{proposition} \label{prop: properties of perfectoids}
  Let $R$ be a perfectoid ring, $f_1, \ldots, f_n \in R$, and $I = (f_1, \ldots, f_n)$.

  \begin{enumerate}
      \item $R$ is reduced \cite[Section 2.1.3]{CesnaviciusScholzePurityFlatCohomology} (see also \cite[Corollary 2.4]{MaTuckerIntroSingularitiesPerfectoidBCM});

      \item $R$ is absolutely weakly normal, and hence seminormal
      \cite[Lemma~4.28]{BMPSTWW3}; \label{seminormal property}

      \item for some unit $u$, $up$ has a compatible system of $p$-power roots \cite[Lemma~3.6]{BhattIyengarMaRegularRings};
      \item for every $f \in R$ there exists $g \in R$ admitting a compatible system of $p$-power roots such that $f - g \in pR$ \cite[Lemma~2.2]{IshizukaYoshikawa};
      \item if $p \in I$, then there are generators $g_1, \ldots, g_n$ of $I$, each admitting a compatible system of $p$-power roots \cite[Lemma~2.3]{IshizukaYoshikawa};
      \item if $p \in I$ then $R^{\wedge I}$ is perfectoid and the completion agrees with derived $I$-completion \cite[2.1.11(e)]{CesnaviciusScholzePurityFlatCohomology}. 
  \end{enumerate}
\end{proposition}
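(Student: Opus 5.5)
The statement collects standard facts about perfectoid rings, and each item comes with a reference. The plan is therefore to explain how each item reduces to the structure of perfectoid rings via tilting, and to defer the genuinely technical parts to the cited sources. Throughout, fix $\varpi$ with $\varpi^p u = p$, and write $S^\flat = \varprojlim_\phi R/p$ for the tilt. We use the sharp map $S^\flat \to R$, $x \mapsto x^\sharp = \theta([x])$. This map is multiplicative and satisfies $(x^{1/p^n})^\sharp = (x^\sharp)^{1/p^n}$ compatibly. In particular, every element of the form $x^\sharp$ has a compatible system of $p$-power roots.

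First, items (3) and (4). For (4), lift $\bar f \in R/p$ to an element $f^\flat \in S^\flat$; this is possible because Frobenius is surjective on $R/p$. Set $g = (f^\flat)^\sharp$. Then $g \equiv f \bmod p$, since the sharp map reduces mod $p$ to the projection $S^\flat \to R/p$. Moreover $g$ has compatible $p$-power roots $((f^\flat)^{1/p^n})^\sharp$. For (3), use that $\ker\theta$ is generated by a distinguished element $\xi = [a_0] + p[a_1] + \cdots$ with $a_1$ a unit. Applying $\theta$ to $\xi$ gives $0 = a_0^\sharp + p\,\theta(\cdots)$. Hence $p\cdot(\text{unit}) = -a_0^\sharp$, and $-a_0^\sharp = (-a_0)^\sharp$ visibly has compatible roots, as in \cite[Lemma~3.6]{BhattIyengarMaRegularRings}.

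Next, item (5). Start from any generators $f_1,\dots,f_n$ of $I$ and apply (4) to obtain $g_i$ with $f_i - g_i \in pR$. Since $p \in I$, the $g_i$ together with $p$ still generate $I$. Using (3), $p$ is a unit times an element $\pi$ with compatible roots, so $(g_1,\dots,g_n,\pi)$ generates $I$. The remaining step is to absorb $\pi$ so that the count stays at $n$. I would do this by showing $I = (g_1,\dots,g_n) + pI$ and then arguing $p$-adically, using completeness and the fact that $p$ lies in the ideal. This reshuffling of the generating set is the fiddly point, and here I would simply follow \cite[Lemma~2.3]{IshizukaYoshikawa}.

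Finally, items (1), (2) and (6), which I would cite.
\begin{itemize}
\item For (1), reducedness: a nilpotent $x$ has its image in $R/p$ killed by Frobenius. Passing to the perfect reduced tilt and using $p$-adic completeness and $\varpi$-torsion considerations shows $x = 0$ \cite{CesnaviciusScholzePurityFlatCohomology}.
\item For (2), absolute weak normality: one checks that the relevant root equations $y^p = x$ (resp. $y^\ell$ with $\ell \neq p$) already have unique solutions. This uses the tilting equivalence for $p$ and André's flatness lemma for other primes, as in \cite[Lemma~4.28]{BMPSTWW3}; seminormality then follows formally.
\item For (6), use that $I$-adic completion of a perfectoid ring with $p \in I$ can be computed after passing to generators with $p$-power roots via (5). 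The completion then agrees with a derived completion which remains perfectoid \cite[2.1.11(e)]{CesnaviciusScholzePurityFlatCohomology}.
\end{itemize}
I expect (2) and (6) to be the main obstacles if proved from scratch, so for them I would rely entirely on the cited references.
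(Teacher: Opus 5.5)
The paper proves this proposition purely by citation, so citing (1), (2) and (6) and sketching (3) and (4) is in line with it. One small slip: for $p=2$ one has $(-a_0)^\sharp=a_0^\sharp\neq -a_0^\sharp$ in general. You should simply take $up=a_0^\sharp$ with $u=-(a_1^\sharp+p\cdots)$.

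The genuine gap is in your sketch of (5). The step ``show $I=(g_1,\dots,g_n)+pI$ and argue $p$-adically'' cannot work for the $g_i$ produced by (4). Those $g_i$ only satisfy $f_i-g_i\in pR$, and $pR\subseteq I$ is not a small multiple of $I$, so Nakayama has nothing to act on. Concretely, take $I=pR$ with $f_1=p$. Item (4) permits $g_1=0$, and then $(g_1)+pI=p^2R\neq pR$ whenever $p\neq 0$ in $R$. Once you have passed to the generating set $(g_1,\dots,g_n,\pi)$, there is no general way to ``absorb'' $\pi$. The $g_i$ themselves must be chosen more carefully.

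The missing idea is to approximate each $f_i$ modulo $p$ times a topologically nilpotent element, rather than only modulo $p$.
\begin{enumerate}
\item Let $\pi=up=t^\sharp$ be as in (3), and write $\pi^{1/p}=(t^{1/p})^\sharp$.
\item Given $f$, use (4) twice, first to write $f=x^\sharp+ph$ and then to write $u^{-1}h=z^\sharp+ph'$, with $x,z\in R^\flat$. Then $f=x^\sharp+(tz)^\sharp+p\pi h'$.
\item By the addition formula $(x+y)^\sharp\in x^\sharp+y^\sharp+p\,(x^{1/p}y^{1/p})^\sharp R+p^2R$ (\autoref{lem.additionofuntilt}) with $y=tz$, the cross term lies in $p\pi^{1/p}R$. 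Also $p^2\in p\pi^{1/p}R$.
\item Hence $g=(x+tz)^\sharp$ has compatible $p$-power roots and satisfies $f-g\in p\pi^{1/p}R$.
\end{enumerate}
Doing this for each $f_i$ and using $p\in I$ gives $f_i-g_i\in \pi^{1/p}I$, so $I=(g_1,\dots,g_n)+\pi^{1/p}I$. Since $(\pi^{1/p})^p=up$ lies in the Jacobson radical and $I$ is finitely generated, Nakayama gives $I=(g_1,\dots,g_n)$, with exactly $n$ generators as required.
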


\begin{theorem}[{\cite[Theorem~7.14, Remark~7.15]{BhattScholzePrisms}, cf. \cite{AndreConjectureFacteurDirect}}]\label{thm: Andre}
Let $R$ be a perfectoid ring. Then there exists a $p$-completely faithfully flat map $R \to S$ of perfectoid rings such that $S$ is absolutely integrally closed
(i.e., every monic polynomial has a solution). In particular, every element of $S$ admits a compatible
system of $p$-power roots.
\end{theorem}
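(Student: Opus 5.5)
The statement is André's flatness lemma in the form of \cite{BhattScholzePrisms}. The plan is to build $S$ by transfinitely adjoining roots of monic polynomials, keeping $p$-complete faithful flatness and perfectoidness at each step.

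\textbf{Basic step.} Given a perfectoid ring $R$ and a monic polynomial $P \in R[T]$, we want a $p$-completely faithfully flat map of perfectoid rings $R \to R'$ such that $P$ acquires a root in $R'$.

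We first reduce to adjoining a compatible system of $p$-power roots of a single element. The splitting algebra $R[T]/(P)$ is finite free over $R$, and it is faithfully flat. It fails to be perfectoid, so we pass to a perfectoidization.

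The key input is \cite[Theorem~7.14]{BhattScholzePrisms}, which treats maps of the form
\[
R\langle X^{1/p^\infty}\rangle \to R\langle X^{1/p^\infty}\rangle/(X - g).
\]
Its statement is that, after perfectoidization, $R\to (R[T]/(P))_{\perf}$ is $p$-completely faithfully flat. This rests on André's flatness lemma, proved via the prismatic site and the fact that perfectoidization of a quasi-syntomic cover stays faithfully flat.

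Concretely, we view $R[T]/(P)$ as the base change of a universal finite free algebra over $\mathbb{Z}_p[a_1,\dots,a_n]$. We perfectoidize by adjoining $p$-power roots of the $a_i$, which is $p$-completely faithfully flat. The resulting map to $(R[T]/(P))_{\perf}$ is then faithfully flat because it is the perfectoidization of a finite faithfully flat, hence quasi-syntomic, map, and this property is inherited through base change.

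\textbf{Transfinite iteration.} Index all monic polynomials over $R$ and adjoin roots of each one successively along an ordinal. At limit stages take the $p$-completed colimit. A filtered colimit of $p$-completely faithfully flat maps of perfectoid rings is again perfectoid, and it is again $p$-completely faithfully flat since derived mod $p$ reduction commutes with colimits.

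Call the result $R_1$. Every monic polynomial over $R$ has a root in $R_1$. Repeat the construction $\omega$ times to get $R \to R_1 \to R_2 \to \cdots$, and let $S$ be the $p$-completed colimit.

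Any monic polynomial over $S$ has coefficients approximated modulo $p$ by coefficients from some $R_i$, so it acquires a root in $R_{i+1}$ up to $p$-adic error. We correct this error using completeness, either via Hensel/Newton approximation or by running the same argument at the level of $S/p$.

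It is cleaner to index the limit stage by a regular cardinal exceeding $|R|$-many steps. Then every polynomial over the colimit is already defined at an earlier stage, and no approximation is needed. With that choice the $p$-completion also poses no issue, since monic polynomials with coefficients in the completion still come from a stage modulo $p^n$ for every $n$.

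\textbf{Compatible roots and the main obstacle.} The final claim is easy: in an absolutely integrally closed ring, $T^p - x$ always has a root, and iterating gives a compatible system of $p$-power roots.

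The main obstacle is the basic step. Faithful flatness of the perfectoidization genuinely requires the prismatic input \cite[Theorem~7.14]{BhattScholzePrisms}, and I would cite it rather than reprove it.
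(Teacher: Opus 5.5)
The paper gives no proof of its own here; it simply quotes the result from Bhatt--Scholze. Your plan is the standard deduction from Andr\'e's flatness lemma: get a root of one monic polynomial after a $p$-completely faithfully flat perfectoid extension, then iterate transfinitely. As written, though, the basic step is circular and partly false, and the limit step needs a different justification.

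\textbf{The basic step.} The input you cite, \cite[Theorem~7.14]{BhattScholzePrisms}, is exactly the statement being proved; the paper attributes this theorem to it. You also describe its content in two different ways (adjoining $p$-power roots of one element $g$, versus flatness of $(R[T]/(P))_{\perfd}$), and neither is that theorem. What you need is Andr\'e's lemma in the form the paper itself invokes later, \cite[Proposition~7.11]{BhattScholzePrisms}. It says: if $R$ is perfectoid and $R\to R'$ is $p$-completely faithfully flat and quasisyntomic, then there is a map $R'\to R''$ with $R''$ perfectoid and $R\to R''$ $p$-completely faithfully flat. Apply this to $R'=R[T]/(P)$; the image of $T$ is then a root of $P$ in $R''$. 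You do not need, and do not justify, that $(R[T]/(P))_{\perfd}$ itself is flat over $R$. The paragraph about the universal algebra and adjoining $p$-power roots of the coefficients does not amount to an argument. Moreover, ``finite faithfully flat, hence quasisyntomic'' is false in general: $R[x,y]/(x,y)^2$ is finite free over $R$ but not lci, so its cotangent complex does not have Tor amplitude in $[-1,0]$. For $R[T]/(P)$ the correct reason is that $P$ is monic, hence a nonzerodivisor. So $R[T]/(P)$ is a relative hypersurface with $L_{(R[T]/(P))/R}\simeq\bigl[(P)/(P^2)\to\Omega_{R[T]/R}\otimes R[T]/(P)\bigr]$, a two-term complex of finite free modules.

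\textbf{The limit step.} Your $\omega$-indexed version fails. Over the $p$-completed colimit, the coefficients of a monic polynomial are only $p$-adic limits of coefficients from finite stages. Hensel/Newton lifting needs a simple root, and it never applies to $T^p-a$, whose derivative vanishes mod $p$. Having roots modulo $p$, or modulo each $p^n$ without compatibility, does not produce a root in the ring. The uncountable-cofinality version does work, but not for the reason you give: that a polynomial over the completion comes from some stage modulo each $p^n$ is equally true for the $\omega$-colimit. The correct argument runs as follows.
\begin{enumerate}
\item Iterate $R_\alpha\mapsto R_{\alpha+1}$, where $R_{\alpha+1}$ has roots of all monic polynomials over $R_\alpha$. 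Do this along an ordinal $\kappa$ of uncountable cofinality, e.g.\ $\kappa=\omega_1$; no bound in terms of $|R|$ is needed. Take $p$-completed colimits at limit stages of countable cofinality.
\item At $\kappa$ no completion is needed. Any countable family of elements and relations in $\varinjlim_{\alpha<\kappa}R_\alpha$ already lives in a single $R_\alpha$, which is $p$-adically complete and separated. Hence the colimit is $p$-adically complete and separated, and therefore perfectoid.
\item The colimit is $p$-completely faithfully flat over $R$, since derived reduction mod $p$ commutes with filtered colimits.
\item Every monic polynomial over the colimit has its finitely many coefficients in some $R_\alpha$, hence a root in $R_{\alpha+1}$.
\end{enumerate}
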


Next, let $S$ be a ring with $p$ in its Jacobson radical and denote its perfectoidization by $S_\perfd$, as in \cite[Definition 8.2]{BhattScholzePrisms} (see also \cite[Definition~3.10]{BMPSTWW3}). When $S$ admits a finite map from a perfectoid ring, $S_\perfd$ is a perfectoid ring by \cite[Theorem~10.11]{BhattScholzePrisms}{, and the natural map $S\to S_\perfd$ is universal among maps from $S$ to perfectoid rings by \cite[Corollary~8.14]{BhattScholzePrisms} (see also \cite[Proposition~3.14]{BMPSTWW3})}. {By \cite[Theorem~7.4]{BhattScholzePrisms}, if $S$ is perfectoid and $I\subset S$ is an ideal, then $S\to(S/I)_\perfd$ is surjective. We denote its kernel by $I_\perfd$, so $(S/I)_\perfd=S/I_\perfd$.}
{If $I$ is finitely generated, then $I_\perfd$
gives a theory of almost mathematics for derived $p$-complete
$S$-modules; see \cite[Section~10]{BhattScholzePrisms}. More precisely,
for every $n\geq1$, the pair $(S/p^nS,I_\perfd S/p^nS)$ is a basic
setup for almost mathematics in the sense of
\cite[Section~2.1.1]{GabberRameroAlmostRingTheory}.}

\begin{corollary}\label{pfd of ideal only depends on radical}
Let $S$ be a perfectoid ring and $I\subset S$ an ideal.
{Then $I_\perfd$ is a $p$-adically closed radical ideal and
$I_\perfd=(\sqrt I)_\perfd=(I^-)_\perfd$. Moreover,
if $I=(f_1,\ldots,f_n)$ and each $f_i$ admits a compatible system of
$p$-power roots $f_i^{1/p^\infty}$ in $S$, then
$I_\perfd=(\sqrt I)^-=
\bigl(f_1^{1/p^\infty},\ldots,f_n^{1/p^\infty}\bigr)^-$.}
\end{corollary}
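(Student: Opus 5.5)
We split the statement into three pieces: $I_\perfd$ is $p$-adically closed and radical; $I_\perfd$ depends only on $\sqrt I$ and on $I^-$; and, when the generators have compatible $p$-power roots, $I_\perfd=(\sqrt I)^-=(f_1^{1/p^\infty},\ldots,f_n^{1/p^\infty})^-$.

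For the first piece, $S/I_\perfd=(S/I)_\perfd$ is a perfectoid ring. By \autoref{prop: properties of perfectoids}(1) it is reduced, and by definition it is $p$-adically complete (more precisely, $\varpi$-adically complete with $\varpi^p$ a unit multiple of $p$, so $p$-adically complete). Reducedness of $S/I_\perfd$ says $I_\perfd$ is radical. Since $S/I_\perfd$ is $p$-adically separated, $\bigcap_n (I_\perfd+p^nS)=I_\perfd$, so $I_\perfd$ is $p$-adically closed. This also gives $I\subseteq \sqrt I\subseteq (\sqrt I)^-\subseteq I_\perfd$.

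For the second piece, we compare universal properties. A map $S\to T$ to a perfectoid ring kills $I$ if and only if it kills $\sqrt I$, because $T$ is reduced. It kills $I$ if and only if it kills $I^-$, because $T$ is $p$-adically separated: if $x\in\bigcap_n(I+p^nS)$, its image lies in $\bigcap_n p^nT=0$. So $S/I$, $S/\sqrt I$ and $S/I^-$ corepresent the same functor on perfectoid rings. Their perfectoidizations are therefore the same quotient of $S$, by the universality of $S\to S_\perfd$ recorded above (\cite[Corollary~8.14]{BhattScholzePrisms}). Hence $I_\perfd=(\sqrt I)_\perfd=(I^-)_\perfd$.

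For the third piece, let $J=(f_1^{1/p^\infty},\ldots,f_n^{1/p^\infty})^-$. We have $J\subseteq(\sqrt I)^-\subseteq I_\perfd$, so it suffices to show that $S/J$ is perfectoid. Granting that, $S\to S/J$ factors through $S/I$ and is a map to a perfectoid ring, so by universality it factors through $(S/I)_\perfd=S/I_\perfd$, giving $I_\perfd\subseteq J$. To see $S/J$ is perfectoid, set $J_0=(f_i^{1/p^\infty})$ and use \cite[Theorem~7.4]{BhattScholzePrisms} in its explicit form: for $S$ perfectoid and $f$ with a compatible system of $p$-power roots, $(S/f)_\perfd$ is the $p$-completion of $S/(f^{1/p^\infty})$. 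Iterating this over the $f_i$ (each later $f_i$ keeps its roots in the quotient) identifies $(S/I)_\perfd$ with the $p$-adic completion of $S/J_0$. Alternatively, one can check directly that $S/J_0$ is semiperfectoid with Frobenius injective mod $p$ on the relevant quotient. The $p$-completion of $S/J_0$ is $S/J_0^-=S/J$, using that derived and classical completion agree here as in \autoref{prop: properties of perfectoids}(6).

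The main obstacle is this last step: the identification of $(S/(f_1,\ldots,f_n))_\perfd$ with the completed quotient by all $p$-power roots. I would cite \cite[Theorem~7.4 and its proof]{BhattScholzePrisms} (and its multi-element version, e.g.\ in \cite{BMPSTWW3}). One technical point needs care: the kernel of $S\to\widehat{S/J_0}$ equals $J_0^-$. This holds because the $p$-adic completion of a quotient of a $p$-complete ring $S$ by an ideal is $S/J_0^-$ once the completion is classical.
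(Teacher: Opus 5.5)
Your proposal is correct and follows the paper's proof. The first two assertions use the same argument (reducedness and $p$-adic separatedness of perfectoid rings, plus the universal property). For the final equalities the paper simply cites \cite[Corollary~2.8]{FayolleCentersPerfectoidPurity}, which is built on the principal case \cite[Lemma~2.19]{CaiLeeMaSchwedeTuckerPerfectoidSignature}; your iteration of the principal computation via the universal property reconstructs exactly this.

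One small correction concerns why derived and classical $p$-completion of $T/(f^{1/p^\infty})T$ agree in each principal step. By \autoref{BS - root} \autoref{BS - root.RadicalComputation} this ring equals $T/\sqrt{fT}$, so it is reduced and its $p^\infty$-torsion is killed by $p$, as in the proof of \autoref{root ideal tensor comparison}. That is the correct justification; \autoref{prop: properties of perfectoids}(6) does not apply, since it concerns completing the perfectoid ring itself.
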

\begin{proof}
{The first assertion follows because $I_\perfd$ is the
kernel of a map to a perfectoid ring. Moreover, every map from $S/I$
to a perfectoid ring kills both $\sqrt I$ and $I^-$, since perfectoid
rings are reduced and $p$-adically separated. The first two
equalities therefore follow from the universal property of
perfectoidization. The final equalities follow from
\cite[Corollary~2.8]{FayolleCentersPerfectoidPurity}, building on the principal case from \cite[Lemma 2.19]{CaiLeeMaSchwedeTuckerPerfectoidSignature}.}
\end{proof}

\begin{lemma}\label{BS - root}
Let $B$ be a seminormal ring. Then the following hold.
\begin{enumerate}[(a)]
    \item If $g,x\in B$ and $g^m\in x^nB$ for some
        $m<n\in\Z_{>0}$, then there is a unique $c\in\sqrt{xB}$ such that
        $g=cx$. \label{BS - root.baseCase}

    \item \label{BS - root.RadicalComputation}
    {Suppose that $f\in B$ and
        $\{f^{1/p^e}\}_{e\in\Z_{>0}}$ is a compatible system of $p$-power
        roots of $f$. Then
        \[
            \sqrt{fB}=(f^{1/p^\infty})B
            \cong\varinjlim \left(B \xrightarrow{\cdot f^{1-1/p}} B
            \xrightarrow{\cdot f^{1/p-1/p^2}} B
            \xrightarrow{\cdot f^{1/p^2-1/p^3}} \cdots \right).
        \]
        For any map $B\to B'$ of seminormal rings, the natural map
        $\sqrt{fB}\otimes_BB'\to\sqrt{fB'}$ is an isomorphism. Moreover,
        $(f^{1/p^\infty})B=(f'^{1/p^\infty})B$ for any other compatible
        system $\{f'^{1/p^e}\}_{e\in\Z_{>0}}$ of $p$-power roots of $f$.}

    \item {Suppose that $\pi,y,z\in B$ satisfy
        $y^N=u\pi^a$ and $z^N=v\pi^b$ for some $N\in\Z_{>0}$, units
        $u,v\in B^\times$, and integers $0\leq a<b$. Then there is a unique
        $w\in\sqrt{yB}$ such that $z=wy$, and
        $w^N=vu^{-1}\pi^{b-a}$.}
\end{enumerate}

{ \noindent In particular, all these assertions apply when the
rings involved are localizations of perfectoid rings.}
\end{lemma}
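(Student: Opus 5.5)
The plan is to prove \autoref{BS - root}\autoref{BS - root.baseCase} by an elementwise seminormality argument. Parts (b) and (c) are then formal consequences of (a) together with reducedness. Throughout I use the standard facts that a seminormal ring is reduced, and that $B$ is seminormal if and only if every subintegral extension $B \subseteq B'$ is an equality. Here subintegral means integral, bijective on spectra, and with trivial residue field extensions.

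\emph{Uniqueness in (a).} Suppose $c, c' \in \sqrt{xB}$ with $cx = c'x$, and put $d = c - c'$. Then $dx = 0$ and $d^k \in xB$ for some $k$. Hence $d^{k+1} \in dxB = 0$, so $d = 0$ because $B$ is reduced.

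\emph{Existence in (a).} Set $\mathfrak a = \operatorname{Ann}(x)$ and $J = \sqrt{xB}$. Reducedness gives $\mathfrak a \cap J = 0$ by the same computation as for uniqueness, so $B \hookrightarrow B/\mathfrak a \times B/J$. On $B/\mathfrak a$ the element $x$ is a nonzerodivisor, since $x^2y = 0$ forces $(xy)^2 = 0$ and hence $xy = 0$. I then consider $\gamma = g/x$ in the total ring of fractions of $B/\mathfrak a$.
\begin{itemize}
\item Write $k = qm + r$ with $0 \le r < m$. From $g^k = g^r (g^m)^q \in x^{(m+1)q} B$, all powers $\gamma^k$ with $q \ge r$ (so all large $k$) lie in $B/\mathfrak a$.
\item Since $g^m \in xB$, we have $g \in J$, so the candidate element is $(\gamma, 0) \in B/\mathfrak a \times B/J$.
\end{itemize}
Let $B' = B[(\gamma, 0)]$ inside this product. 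I will check that $B \subseteq B'$ is subintegral. It is integral because $\gamma$ satisfies a monic equation coming from $\gamma^m \in B/\mathfrak a$. High powers of the new element already lie in $B$, which gives bijectivity on spectra and trivial residue field extensions: every prime containing $(\gamma, 0)^k$ for large $k$ contains $(\gamma, 0)$, and at each residue field the image of $(\gamma, 0)$ is a root of an element of $B$ of coprime exponents $k$ and $k+1$. Seminormality then gives $(\gamma, 0) = c \in B$ with $cx = g$. Finally, $c^{m} x^{m} = g^m \in x^{m+1}B$; intersecting with the injectivity above (using that $x$ is a nonzerodivisor mod $\mathfrak a$) gives $c^m \in xB + \mathfrak a$, and a further power lands in $xB$, so $c \in J$. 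The main obstacle is making this subintegrality check rigorous when $x$ is a zero divisor. If it becomes too delicate, my fallback is Swan's criterion (whenever $b^3 = c^2$ there is $a$ with $a^2 = c$ and $a^3 = b$), applied iteratively to produce $c$ from the relations among $g^k$ and $x^k$.

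\emph{Part (c).} Raise to powers: $z^{N} = v u^{-1} \pi^{b-a} y^{N}$, so $z^N \in y^N B$. This does not meet the hypothesis $m < n$ directly. Instead, use $z^{N(a+1)} \in \pi^{b(a+1)} B$ together with $b(a+1) > (a)(N')$-type inequalities; concretely, $z^{Na} \in y^{Nb}B$ up to a unit, which has exponents $Na < Nb$. Applying (a) gives $w \in \sqrt{yB}$ with $z = wy$. Then $(w^N - v u^{-1}\pi^{b-a})\, y^N = 0$, and since $y^N$ and $\pi$ have the same radical, this difference is killed by a power of $\pi$ while lying in $\sqrt{\pi B}$. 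Reducedness forces it to be zero, exactly as in the uniqueness argument.

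\emph{Part (b).} The inclusion $(f^{1/p^\infty}) \subseteq \sqrt{fB}$ is clear. Conversely, if $g^m \in fB$, choose $e$ with $p^e > m$. Then $g^m \in (f^{1/p^e})^{p^e} B$, and (a) gives $g \in f^{1/p^e} B$. The colimit description is the statement that multiplication by $f^{1/p^e}$ identifies the stages; injectivity of the transition maps on the image uses the uniqueness in (a). Independence of the chosen compatible system follows from (c): two $p^e$-th roots of $f$ each divide the other by elements in the radical, so they generate the same ideal. Compatibility with base change $B \to B'$ follows because both sides are the same filtered colimit, and tensor commutes with colimits. For the final sentence, note that localizations of perfectoid rings are seminormal by \autoref{prop: properties of perfectoids}\autoref{seminormal property}, since seminormality localizes.
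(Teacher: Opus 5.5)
Your argument is correct in substance, and it rests on the same idea as the paper: adjoin $g/x$, observe that all its high powers already lie in $B$, and let seminormality force it into $B$. You realize the extension differently, though.
\begin{itemize}
\item The paper takes $C=(B[T]/(xT-g,\,T^m-x^{n-m}h))_{\mathrm{red}}$, checks subintegrality fibre by fibre, gets injectivity of $B\to C$ from reducedness, and reads off $c\in\sqrt{xB}$ from $c^m=x^{n-m}h$.
\item You build $t=(g/x,0)$ inside the reduced ring $Q(B/\mathfrak a)\times B/J$. Note that $t$ lives there, not in $B/\mathfrak a\times B/J$ as you wrote. This makes injectivity of $B\to B[t]$, reducedness of $B[t]$, and the conclusion $c\in J$ automatic. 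The price is the two small facts $\mathfrak a\cap J=0$ and that $x$ is regular on $B/\mathfrak a$.
\end{itemize}
The ``obstacle'' you flag is not real. Since $t^m$ and $t^{m+1}$ are the images of $x^{n-m}h$ and $gx^{n-m-1}h$, all large powers of $t$ lie in $B$. Your fallback then finishes with no subintegrality check: if $t^i\in B$ for all $i\ge j\ge 2$, Swan's criterion in the reduced overring, applied to $s=t^{j-1}$ (whose square and cube lie in $B$), lowers $j$. Your final detour ($c^m\in xB+\mathfrak a$, then ``a further power lands in $xB$'', which you do not justify) is unnecessary: $c$ maps to $0$ in $B/J$ by construction. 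In (b), what the colimit description needs is not injectivity of the transition maps. It needs that $bf^{1/p^e}=0$ forces $b$ to die at the next stage. Uniqueness in (a) with $x=f^{1/p^{e+1}}$, applied to $bf^{1/p^e-1/p^{e+1}}$ and $0$, gives exactly this.

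There are two slips you should fix.
\begin{itemize}
\item In (c), applying (a) with $(m,n)=(Na,Nb)$ requires $a\ge1$, because (a) needs $m>0$. The case $a=0$ must be treated separately: there $y$ is a unit and $w=zy^{-1}$. This is why the paper uses $((a+1)N,(b+1)N)$ instead.
\item In (b), your justification of independence is wrong as stated. Part (c) needs $a<b$, so it cannot compare two $p^e$-th roots at the same level, and such roots need not divide each other. For example, in the seminormal ring $k[s,t]/(st)$ with $\mathrm{char}\,k\neq2$, one has $(s+t)^2=(s-t)^2$ but $s-t\notin(s+t)$.
\end{itemize}
The fix for (b) is already in hand. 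Your proof of $\sqrt{fB}=(f^{1/p^\infty})B$ works for any compatible system, so both ideals equal $\sqrt{fB}$. Alternatively, apply (c) to $y=f^{1/p^{e+1}}$ and $z=f'^{1/p^{e}}$ with $N=p^{e+1}$, $a=1$, $b=p$.
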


\begin{proof}
For (a), write $g^m=x^nh$ for some $h\in B$, and set
\[
C=\left(B[T]/(xT-g,T^m-x^{n-m}h)\right)_{\red}.
\]
The map $B\to C$ is finite and subintegral. Indeed, away from $V(x)$
the relation $xT=g$ determines $T$, while over a prime containing
$x$ (and hence $g$), every prime above it contains $T$ and has the
same residue field. Its kernel is contained in every prime of $B$,
and hence is zero because $B$ is reduced. Since $C$ is reduced and
$B$ is seminormal, the map is an isomorphism by
\cite[Tag~0H3G]{stacks-project}.

The image $c$ of $T$ in $B$ satisfies $g=cx$ and
$c^m=x^{n-m}h\in xB$, so $c\in\sqrt{xB}$. If
$c'\in\sqrt{xB}$ also satisfies $g=c'x$, then
$x(c-c')=0$ and $(c-c')^q\in xB$ for some $q>0$. Thus
$(c-c')^{q+1}=0$, and $c=c'$ because $B$ is reduced.

{
For (b), certainly $(f^{1/p^\infty})B\subseteq\sqrt{fB}$.
Conversely, this
ideal is radical.  Indeed, if $g^m\in(f^{1/p^\infty})B$, we may choose $e$ such that
$g^m\in f^{1/p^e}B$ and $k$ such that $p^k>m$. Applying (a) with
$x=f^{1/p^{e+k}}$ and $n=p^k$ gives
$g\in f^{1/p^{e+k}}B$. This proves the first equality and the
independence of the chosen root system.

The colimit description of $(f^{1/p^\infty})B$ in the statement follows from
\cite{AberbachHochsterFiniteTorDimension} and requires only that $B$ is reduced (see also Observation~(a)
in the proof of
\cite[Theorem~1.2]{MaTuckerIntroSingularitiesPerfectoidBCM}).
Tensor products commute with filtered colimits, giving
$(f^{1/p^\infty})B\otimes_BB'\cong(f^{1/p^\infty})B'$ for any map
$B\to B'$ of seminormal rings. Applying the first equality to $B'$
identifies the right side with $\sqrt{fB'}$.}

{
For (c), note that
$z^{(a+1)N}=y^{(b+1)N}v^{a+1}u^{-(b+1)}\pi^{b-a}$.
Since $(a+1)N<(b+1)N$, applying (a) with
$(g,x,m,n)=(z,y,(a+1)N,(b+1)N)$ gives a unique
$w\in\sqrt{yB}$ such that $z=wy$. Put
$d=w^N-vu^{-1}\pi^{b-a}$; then $y^Nd=z^N-v\pi^b=0$.
If $a=0$, then $y$ is a unit, so $d=0$. If $a>0$, then
$\pi^a=u^{-1}y^N\in yB$, so $\pi,d\in\sqrt{yB}$. Choose $\ell>0$
with $d^\ell\in yB$. Then $d^{\ell N+1}=0$, and hence $d=0$
because $B$ is reduced.}

The final assertion follows because every localization of a
perfectoid ring is seminormal by
\autoref{prop: properties of perfectoids}(\ref{seminormal property})
and \cite[Tag~0EUM]{stacks-project}.
\end{proof}

\begin{corollary}\label{root ideal tensor comparison}
{Let $R\to B$ be a ring map with $B$ perfectoid, and let
$f\in R$ be such that its image in $B$ admits a compatible system of
$p$-power roots. For every $p^\infty$-torsion $R$-module $M$, the
natural map
\[
M\tensor_R(f^{1/p^\infty})B
\longrightarrow M\tensor_R(fB)_\perfd
\]
is an isomorphism.}
\end{corollary}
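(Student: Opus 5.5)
The plan is to compare the two ideals $(f^{1/p^\infty})B$ and $(fB)_\perfd$ inside $B$. By \autoref{pfd of ideal only depends on radical} applied with $I=fB$ and the given compatible system of roots, we have
\[
(fB)_\perfd=\bigl((f^{1/p^\infty})B\bigr)^-,
\]
the $p$-adic closure. Moreover, by \autoref{BS - root}(\ref{BS - root.RadicalComputation}), $(f^{1/p^\infty})B=\sqrt{fB}$ is radical. So there is a natural inclusion $J\coloneqq (f^{1/p^\infty})B\subseteq J^-=(fB)_\perfd$, and the map in the statement is $M\otimes_R J\to M\otimes_R J^-$. The whole point is that tensoring with a $p^\infty$-torsion module does not see the difference between $J$ and its $p$-adic closure.

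Since both sides commute with filtered colimits in $M$, and every $p^\infty$-torsion module is the filtered colimit of its finitely generated submodules (each killed by some $p^n$), we reduce to the case $p^nM=0$. Then $M\otimes_R N=M\otimes_R (N/p^nN)$ for any $R$-module $N$ (viewing $N$ through $R\to B$). It therefore suffices to show that $J/p^nJ\to J^-/p^nJ^-$ is an isomorphism of $B$-modules (hence of $R$-modules).

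For this I would use the colimit presentation in \autoref{BS - root}(\ref{BS - root.RadicalComputation}): $J\cong\varinjlim(B\xrightarrow{f^{1-1/p}}B\to\cdots)$. Reducing mod $p^n$ shows $J/p^nJ$ is the corresponding colimit of copies of $B/p^n$. On the other side, $J^-$ should be the derived $p$-completion of $J$: since $J$ is a filtered colimit of free modules, it is $p$-torsion free, and its derived $p$-completion $\widehat J$ satisfies $\widehat J/p^n=J/p^n$. The remaining step is to identify $\widehat J$ with $J^-$, i.e.\ to show that $J^-\subseteq B$ is the completion of $J$ and that $J^-/p^nJ^-\cong J/p^nJ$. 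Concretely, I would show surjectivity, $J^-\subseteq J+p^nJ^-$, which holds since $J^-\subseteq J+p^nB$ by definition of closure. For injectivity, I need $J\cap p^nJ^-=p^nJ$. Here I would use that $B/J$ is $p$-torsion free modulo closure issues. An alternative is to use that $B/J^-=(B/fB)_\perfd$ is perfectoid, hence $p$-torsion free when $B$ is, together with the almost-math comparison from \cite[Section~10]{BhattScholzePrisms}.

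The main obstacle is this injectivity, i.e.\ $p$-torsion phenomena in $B/J$ versus $B/J^-$. I expect to resolve it by computing $\operatorname{Tor}_1^B(B/p^n,B/J)$ via the colimit presentation. That presentation gives $B/J=\varinjlim B/f^{1/p^e}$. Then $\operatorname{Tor}_1$ is the colimit of the $p^n$-torsion in $B/f^{1/p^e}$, which should match the corresponding term for $B/J^-$ after $p$-completion. The key input here is that derived $p$-completion of $B/J$ is $(B/fB)_\perfd$, by \autoref{prop: properties of perfectoids}(6)-type statements and the perfectoidization of $B/fB$ being the $p$-completion of $\varinjlim B/f^{1/p^e}$.
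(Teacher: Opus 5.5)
Your reduction is fine: writing $M=\varinjlim M[p^n]$ and using $M\otimes_R N\cong M\otimes_R N/p^nN$ when $p^nM=0$, it suffices to show that $J/p^nJ\to J^-/p^nJ^-$ is bijective, where $J=(f^{1/p^\infty})B=\sqrt{fB}$ and $J^-=(fB)_\perfd$. The gap is that you never prove this bijectivity.

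\emph{Surjectivity.} The containment $J^-\subseteq J+p^nB$ does not by itself give $J^-\subseteq J+p^nJ^-$. You still have to show that the $p^nB$-part can be chosen inside $J^-$.

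\emph{Injectivity.} The inputs you propose are false. Neither $B/J$ nor $B/J^-=(B/fB)_\perfd$ need be $p$-torsion free, even when $B$ is. For example, take $R=\mathbb{Z}_p$, $B=\mathbb{Z}_p[p^{1/p^\infty}]^{\wedge p}$ and $f=p$; then $B/J=B/J^-=\mathbb{F}_p$.

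\emph{The Tor sketch.} Your final sketch rests on identifying the derived $p$-completion of $B/J$ with $B/J^-$. You assert this but do not justify it, and it is essentially the whole content of the corollary. Also, the colimit presentation makes $J$ flat over $B$. It does not make $J$ $p$-torsion free unless $B$ is.

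The missing idea is bounded torsion coming from reducedness. Both $J$ and $J^-$ are radical, by \autoref{BS - root} and \autoref{pfd of ideal only depends on radical}. If $\mathfrak a\subseteq B$ is radical and $p^kb\in\mathfrak a$ with $k\geq 1$, then $(pb)^k=(p^kb)b^{k-1}\in\mathfrak a$, so $pb\in\mathfrak a$. With this, both halves of your plan go through directly.
\begin{itemize}
\item For surjectivity, let $x\in J^-$ and write $x=j+p^{n+1}b$ with $j\in J$. Then $p^{n+1}b=x-j\in J^-$, so $pb\in J^-$ and $x\in J+p^nJ^-$.
\item For injectivity, let $j=p^ny\in J$ with $y\in J^-$, and write $y=j'+pb$ with $j'\in J$. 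Then $p^{n+1}b=j-p^nj'\in J$, so $pb\in J$. Hence $y\in J$ and $j\in p^nJ$.
\end{itemize}
This gives a correct, completely elementary proof along your lines.

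The paper uses the same input in derived language. Since $B/\sqrt{fB}$ is reduced, its $p^\infty$-torsion is killed by $p$. So by \cite[Proposition~2.13]{CaiLeeMaSchwedeTuckerPerfectoidSignature} its derived $p$-completion is discrete and equals $B/(fB)_\perfd$. Derived tensoring with a $p^\infty$-torsion module does not see derived $p$-completion. This gives $M\otimes^{\mathbb L}_R(f^{1/p^\infty})B\simeq M\otimes^{\mathbb L}_R(fB)_\perfd$, and one takes $H^0$. Your repaired version avoids derived completion altogether. The paper's version packages it as a statement about complexes, which is reused later, for instance in \autoref{prop:adjunction-third-column}.
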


\begin{proof}
{The ring $B/\sqrt{fB}$ is reduced, so its
$p^\infty$-torsion is killed by $p$: if $p^n\overline b=0$, then
$(p\overline b)^n=0$. By
\cite[Proposition~2.13]{CaiLeeMaSchwedeTuckerPerfectoidSignature}, the
derived $p$-completion of $B/\sqrt{fB}$ is therefore discrete and
agrees with $B/(\sqrt{fB})^-=B/(fB)_\perfd$ by
\autoref{pfd of ideal only depends on radical}.

A complex and its derived $p$-completion become equivalent after
derived tensoring with a $p^\infty$-torsion module. This is immediate
for a module killed by $p^N$, and the general case follows from
$M=\colim_NM[p^N]$. Applying this to $B/\sqrt{fB}$ and using
\autoref{BS - root} gives
$M\tensor_R^{\mathbb L}(f^{1/p^\infty})B\simeq
M\tensor_R^{\mathbb L}(fB)_\perfd$. Taking zeroth cohomology proves
the result.}
\end{proof}

We will often work with a special perfectoid ring, constructed as follows.

\begin{notation}\label{notation RAinfty}
Let $(R,\fram,k)$ be a complete Noetherian local ring of residue
characteristic $p>0$. Fix a Cohen ring $C_k$ for $k$ and a local map
$C_k\to W(k^{1/p^\infty})$ lifting the inclusion
$k\subseteq k^{1/p^\infty}$. Let
\[
\varphi\colon
A=C_k\llbracket x_1,\ldots,x_n\rrbracket\longrightarrow R
\]
be a local homomorphism inducing the identity on residue fields such
that $R$ is module-finite over $\varphi(A)$. The map $\varphi$ need
not be injective or surjective. When it is surjective, we call it a
\emph{Cohen presentation} of $R$; when it is injective, we call it a
\emph{Noether--Cohen normalization} of $R$. Set
\[
A_\infty\coloneqq
\left(A\cotensor{C_k}W(k^{1/p^\infty})\right)
\bigl[p^{1/p^\infty},x_1^{1/p^\infty},\ldots,
x_n^{1/p^\infty}\bigr]^{\wedge p}.
\]
Here both completions are $p$-adic. Then $A_\infty$ is perfectoid by
\cite[Example~3.8(4)]{BhattIyengarMaRegularRings}. Set
$\raftypfd\coloneqq(R\tensor_AA_\infty)_\perfd$, which is a perfectoid
$R$-algebra by \cite[Theorem~10.11]{BhattScholzePrisms}. Both
$A_\infty$ and $\raftypfd$ depend on the chosen regular system of
parameters $p,x_1,\ldots,x_n$ of $A$, and $\raftypfd$ also depends on
$\varphi$.
\end{notation}

\begin{lemma}\label{rafty is R torsion free}
Let $(R,\m)$ be a Noetherian complete local domain of mixed characteristic and let $A\to R$ be as in \autoref{notation RAinfty}. Then every nonzero element of $R$ is a nonzerodivisor on $R^{A_\infty}_\perfd$. Moreover, for every $0 \neq g \in R$, there exists an integer $N \geq 0$ such that $(p^nR^{A_\infty}_\perfd :_{R^{A_\infty}_\perfd}g)
\subseteq p^{n-N}R^{A_\infty}_\perfd$ for every $n > N$.
\end{lemma}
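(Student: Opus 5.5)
The idea is to reduce to a norm-type statement: every nonzero $g \in R$ divides a nonzero element of $A$ inside $R$. Since $R$ is a domain, module-finite over $\varphi(A)$, and of mixed characteristic, $\varphi$ is injective (its kernel is a prime of $A$ with $\dim A/\ker = \dim R$; comparing dimensions forces $\ker\varphi = 0$). So $A \subseteq R$ is a finite extension of domains. For $0 \neq g \in R$, take the minimal polynomial of $g$ over $\mathrm{Frac}(A)$, clear denominators, or use the characteristic polynomial of multiplication by $g$ on $R \otimes_A \mathrm{Frac}(A)$. This produces a monic relation
\[
g^m + a_{m-1}g^{m-1} + \cdots + a_0 = 0
\]
with $a_i \in A$ (normal, since $A$ is regular) and $a_0 \neq 0$. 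Hence $a_0 = g h$ for some $h \in R$, with $0 \neq a_0 \in A$.

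It therefore suffices to prove both claims for $0 \neq a \in A$. Indeed, if $a = gh$ is a nonzerodivisor on $R^{A_\infty}_\perfd$, then so is $g$. Likewise, if $g y \in p^n R^{A_\infty}_\perfd$, then $a y \in p^n R^{A_\infty}_\perfd$, so $y \in p^{n-N}R^{A_\infty}_\perfd$ once the statement holds for $a$.

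To handle $a \in A$, I would use that $A_\infty$ is $p$-completely faithfully flat over $A$: it is a completion of a free extension. By Weierstrass preparation, after a change of variables, $a = p^k u \cdot P$ with $P$ a distinguished-type element not divisible by $p$. Then $a$ is a nonzerodivisor on $A_\infty$ and on $A_\infty/p$. Moreover $(p^nA_\infty :_{A_\infty} a) \subseteq p^{n-k}A_\infty$, which I would prove via $p$-torsion-freeness of $A_\infty/aA_\infty$ after dividing out $p^k$.

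I then transport this to $R^{A_\infty}_\perfd$. The key input is the almost purity / André flatness results: $R^{A_\infty}_\perfd$ is $p$-completely (almost) flat over $A_\infty$, at least up to $p^{1/p^\infty}$-almost zero. This comes from \cite[Theorem 10.11]{BhattScholzePrisms}, or from the fact that after inverting $p$ and a discriminant, $R \otimes A_\infty$ becomes finite étale. Granting $p$-complete flatness of $A_\infty \to R^{A_\infty}_\perfd$, both statements base change: Koszul regularity of $a$ on $A_\infty/p^n$, and the colon bound with $N = k$, transfer to $R^{A_\infty}_\perfd/p^n$ for all $n$. A standard derived-completeness argument then shows that $a$ is a nonzerodivisor on $R^{A_\infty}_\perfd$ itself, since the module is $p$-complete and $p$-torsion free.

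The main obstacle is that $R^{A_\infty}_\perfd$ is in general only almost flat over $A_\infty$, not flat. To get around this, I would first establish that the results hold up to almost zero elements, for example that $(p^n : a) \subseteq p^{n-k-1/p^\infty}$-type ideals. I would then absorb the almost ambiguity by increasing $N$ by one, using that $(p^{1/p^\infty})$-almost containment in $p^{n-k}$ implies genuine containment in $p^{n-k-1}$. For the nonzerodivisor claim, I would use $p$-torsion-freeness of perfectoid rings that are $p$-torsion-free modulo their almost-zero part. Here I would invoke that $R$ is a domain with $p \neq 0$, together with \cite[Proposition 2.13]{CaiLeeMaSchwedeTuckerPerfectoidSignature}-type results, to rule out $a$-torsion supported on $V(p)$.
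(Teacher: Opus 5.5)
The real gap is the transport from $A_\infty$ to $B=R^{A_\infty}_\perfd$.

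\textbf{Almost purity gives the wrong almost ideal.} Almost purity does not make $A_\infty\to B$ almost flat up to $p^{1/p^\infty}$. Take $0\neq\delta\in A$ with $A[1/\delta]\to R[1/\delta]$ finite \'etale. Almost purity then gives $p$-complete almost flatness only with respect to $(\delta)_\perfd$ (or $(p\delta)_\perfd$). In general $V(\delta)\not\subseteq V(p)$, because $R$ may ramify over $A$ in the generic fiber. Note also that \cite[Theorem~10.11]{BhattScholzePrisms} gives perfectoidness of $B$, not flatness.

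\textbf{Why this makes the argument circular.} From $(\delta)_\perfd$-almost flatness you only get $(\delta)_\perfd\cdot(p^nB:_Ba)\subseteq p^{n-k}B$. For the nonzerodivisor claim you only get $(\delta)_\perfd\, z=0$ whenever $az=0$. Removing the factor $(\delta)_\perfd$ requires knowing that $\delta$ is a nonzerodivisor on $B$ with a uniform bound $(p^mB:_B\delta)\subseteq p^{m-N'}B$. That is the lemma itself for the element $\delta$, so your argument is circular at its key step. Your ``increase $N$ by one'' absorption works only when $\delta$ can be chosen dividing a power of $p$ up to radicals, i.e.\ when $A[1/p]\to R[1/p]$ is already \'etale. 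The $p$-torsion-freeness of $B$ does not help, because the obstruction is $(\delta)_\perfd$-torsion, not $p$-torsion.

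\textbf{The missing idea.} You need to avoid the non-flat finite map $A\to R$ altogether. The paper uses Andr\'e's flatness lemma to pass to a surjection $S\twoheadrightarrow R=S/Q$ in which $g$ has $p$-power roots. Then $R\to S_\infty/Q$ is genuinely faithfully flat, and $R^{S_\infty}_\perfd$ is the $p$-completion of the $p$-root closure of $S_\infty/Q$. The relation $gz=0$ becomes $z^{p^e}\in(p^{np^e}:_Rg^{p^{e-m}})(S_\infty/Q)$, which involves a colon ideal in the Noetherian ring $R$. The Rees valuations of $pR$ and uniform Brian\c{c}on--Skoda put that colon ideal inside $p^{(n-1)p^e-C}R$. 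This gives $z/p^{n-2}$ in the root closure for every $n$, hence $z=0$. The same computation with $v_i(g)\leq(N-1)v_i(p)$ gives the colon bound.

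\textbf{The injectivity claim is false.} \autoref{notation RAinfty} explicitly allows non-injective $\varphi$, such as Cohen presentations or maps with an extra variable sent to $f$. Your dimension count fails because $\dim A$ may exceed $\dim R$. This is repairable but must actually be done:
\begin{itemize}
\item Compare with a Noether--Cohen normalization $A'$ through $A''=A\widehat{\otimes}_{C_k}A'$, using that the maps $R^{A_\infty}_\perfd\to R^{A''_\infty}_\perfd\leftarrow R^{A'_\infty}_\perfd$ are $p$-completely faithfully flat.
\item The condition $(0:_{B/p^nB}g)\subseteq p^{n-N}B/p^nB$ ascends along maps that are flat modulo every $p^n$ and descends along faithfully flat ones.
\item The nonzerodivisor statement then follows from $p$-adic separatedness.
\end{itemize}

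Your norm reduction to $0\neq a\in A$ and the computation over $A_\infty$ are fine. One small correction: writing $a=p^ka'$, it is $a'$, not $a$, that is a nonzerodivisor on $A_\infty/p$.
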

\begin{proof}
Let $0\neq g\in R$ and we will first show that $g$ is a nonzerodivisor on $R^{A_\infty}_\perfd$. { If necessary, we may adjoin variables to $A$ to obtain a new ring $S$ with $S\twoheadrightarrow R$.  We may further assume that $g$ admits a compatible system of $p$-power roots in $R^{S_{\infty}}_{\perfd}$ (by replacing $S$ by $S\llbracket x\rrbracket$ and considering $S\llbracket x\rrbracket\to R$ where $x\mapsto g$).  Because $R^{A_\infty}_\perfd\to R^{S_\infty}_\perfd$ is $p$-completely faithfully flat and in particular injective by Andr\'{e}'s flatness lemma, it suffices to show that $g$ is not a zerodivisor on $R^{S_{\infty}}_{\perfd}$.} Writing $R=S/Q$, we also have that $S_\infty/Q\twoheadrightarrow (S_\infty/Q)_\perfd \cong R^{S_\infty}_\perfd$ by \cite[Theorem 7.4]{BhattScholzePrisms} (essentially also Andr\'{e}'s flatness lemma).

In what follows, for a $p$-torsion free ring $T$, we denote $T'$ the $p$-root closure of $T$ in $T[1/p]$. That is, 
$$T'=\{ \eta \in T[1/p] \mid \eta^{p^e}\in T \text{ for some $e$}\}.$$
We consider the following commutative diagram
\[
\xymatrix{
S_\infty/Q \ar[r] \ar@{->>}[rrd] & (S_\infty/Q)' \ar[r] & (S_\infty/Q)'^{\wedge p} \ar[d]^\cong & \\
&& (S_\infty/Q)_\perfd 
}
\]
where the isomorphism follows from \cite[Corollary 5.9]{IshizukaCalculationPerfectoidization}. Note that all rings in the above diagrams are $p$-torsion free (see \cite[Lemma A.2]{MaSchwedeTuckerWaldronWitaszekAdjointPLT}).

Suppose $gz=0$ in $(S_\infty/Q)_\perfd$. By the surjection $S_\infty/Q\twoheadrightarrow (S_{\infty}/Q)_\perfd$, we will abuse notation and also use $g$ and $z$ as elements in $S_\infty/Q$. Since $(S_{\infty}/Q)_\perfd$ is reduced, we know that $g^{1/p^\infty}z=0$ in $(S_{\infty}/Q)_\perfd$ and thus 
$$g^{1/p^\infty}z \in \ker(S_\infty/Q \to (S_\infty/Q)'^{\wedge p})$$
Unwinding definitions, this means 
$$g^{1/p^\infty}z \in \cap_np^n (S_\infty/Q)' \cap (S_\infty/Q).$$
By the definition of $p$-root closure, this means 
$$\forall m, n, \exists e_0 \text{ such that } g^{p^{e-m}}z^{p^e} \in p^{np^e}(S_\infty/Q) \text{ for all $e\geq e_0$}.$$
It follows that for all $e\gg0$ (depending on $m,n$),
\begin{equation}
\label{eqn: containment in proof of g nonzerodivisor}
z^{p^e} \in p^{np^e}(S_\infty/Q): g^{p^{e-m}} =(p^{np^e}:_{S/Q} g^{p^{e-m}})(S_\infty/Q)
\end{equation}
by faithful flatness of $S/Q\to S_\infty/Q$. Let $v_1,\dots,v_t$ be the Rees valuations of $(p)\subseteq R$. For any $x\in p^{np^e}:_R g^{p^{e-m}}$, we have 
$v_i(x) + p^{e-m}v_i(g) \geq np^ev_i(p)$. Thus 
$$\frac{v_i(x)}{p^e} \geq nv_i(p) - \frac{v_i(g)}{p^m}.$$
Choose $m\gg0$ so that $v_i(g)/p^m\leq v_i(p)$ for all $i$.  We have that 
$$v_i(x)\geq (n-1)p^ev_i(p)$$
for all $i$ and thus $x\in \overline{p^{(n-1)p^e}R}\subseteq p^{(n-1)p^e-C}R$ by valuative criterion of integral closure and the uniform Brian\c{c}on--Skoda theorem ($C$ is a constant that does not depend on $n$ and $e$) \cite{HunekeUniformBounds,MMRGSBrianconSkoda}. Now \autoref{eqn: containment in proof of g nonzerodivisor} says that for any fixed $n$, for all $e\gg0$, we have
$$z^{p^e} \in p^{(n-1)p^e-C}(S_\infty/Q) \subseteq p^{(n-2)p^e}(S_\infty/Q).$$
It follows that $\frac{z}{p^{n-2}}\in(S_\infty/Q)'$ and thus $\frac{z}{p^{n-2}}\in (S_\infty/Q)_\perfd$ (note that we are abusing notations, it should be interpreted as the image of $\frac{z}{p^{n-2}}$). Since this is true for all $n$, we obtain that 
$$z\in \cap_n p^{n-2}(S_\infty/Q)_\perfd=0$$
since $(S_\infty/Q)_\perfd$ is $p$-adically complete.

For the final statement, choose $N \geq 1$ such that $v_i(g) \leq (N-1) v_i(p)$ for every $i$. If $n>N$, $z\in(S_\infty/Q)'$, and
$gz\in p^n(S_\infty/Q)'$, the calculation above applied to
\[
g^{p^e}z^{p^e}\in p^{np^e}(S_\infty/Q)
\]
for $e\gg0$ gives
$z^{p^e}\in p^{(n-N)p^e}(S_\infty/Q)$.
Thus $z/p^{n-N}\in(S_\infty/Q)'$.
Since $(S_\infty/Q)' \to (S_\infty/Q)_\perfd$ is surjective (the kernel is exactly $\cap_n p^n(S_\infty/Q)'$), passing to $p$-adic completion we obtain that
\[
(p^n (S_\infty/Q)_\perfd :_{(S_\infty/Q)_\perfd} g) \subseteq p^{n-N}(S_\infty/Q)_\perfd.
\]
Since $R^{A_\infty}_\perfd\to(S_\infty/Q)_\perfd$ is faithfully flat modulo $p^{n}$, 
\begin{align*}
(0:_{{\raftypfd}/p^n} g) \otimes_{\raftypfd} (S_\infty/Q)_\perfd & \cong 0:_{(S_\infty/Q)_\perfd/p^n} g \\
& \subseteq p^{n-N}(S_\infty/Q)_\perfd/p^n(S_\infty/Q)_\perfd\\
& \cong \big(p^{n-N}\raftypfd/p^n\raftypfd\big) \otimes_{\raftypfd} (S_\infty/Q)_\perfd,
\end{align*}
which implies that $0:_{{\raftypfd}/p^n} g \subseteq p^{n-N}\raftypfd/p^n\raftypfd$ as wanted.
\end{proof}

{

\begin{proposition}\label{componentwise BCM transfer}
In the setting of \autoref{notation RAinfty}, suppose also that
$0\neq g\in A$ is such that $A[g^{-1}]\to R[g^{-1}]$ is finite
\'etale. Let $N$ be a $p^\infty$-torsion $R$-module and
$\zeta\in N$. For each minimal prime $\mathfrak p_i$ of $R$, set
$R_i=R/\mathfrak p_i$ and let $\zeta_i$ be the image of $\zeta$ in
$R_i\tensor_RN$. Suppose that, whenever $g\notin\mathfrak p_i$, the
element $\zeta_i$ maps to zero in $B_i\tensor_RN$ for some perfectoid
balanced big Cohen--Macaulay $R_i^+$-algebra $B_i$. Then
$(gA_\infty)_\perfd(1\tensor\zeta)=0$ in
$\raftypfd\tensor_RN$.
\end{proposition}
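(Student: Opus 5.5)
Since $A[g^{-1}]\to R[g^{-1}]$ is finite étale, $R\tensor_A A_\infty$ becomes finite étale over $A_\infty$ after inverting $g$. Perfectoidization is insensitive to finite étale extensions, so after inverting $g$ (and $p$-completing), $\raftypfd$ agrees with $R\tensor_AA_\infty$. We reduce the claim to an almost statement with respect to $(gA_\infty)_\perfd$ via the almost purity theorem. Concretely, by \cite[Section~10]{BhattScholzePrisms} (almost purity in the form of Bhatt--Scholze), the map $(R\tensor_AA_\infty)\to\raftypfd$ is a $(gA_\infty)_\perfd$-almost isomorphism modulo $p^n$ for all $n$. Also, $\raftypfd$ is $(gA_\infty)_\perfd$-almost isomorphic to the product of the $(R_i\tensor_AA_\infty)_\perfd$ over those $i$ with $g\notin\mathfrak p_i$. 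This holds because over $A_\infty[1/g]$ the finite étale algebra $R[1/g]\tensor A_\infty$ splits as $\prod R_i[1/g]\tensor A_\infty$ (distinct minimal primes being disjoint after inverting $g$, by étaleness/reducedness), and the components with $g\in\mathfrak p_i$ are killed almost.

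Since $N$ is $p^\infty$-torsion, it suffices to work modulo $p^n$ and with $\zeta$ killed by $p^n$. So $\raftypfd\tensor_RN$ is $(gA_\infty)_\perfd$-almost isomorphic to $\bigoplus_{g\notin\mathfrak p_i}(R_i)^{A_\infty}_\perfd\tensor_{R}N$, and $1\tensor\zeta$ corresponds to the tuple $(1\tensor\zeta_i)$. Because $(gA_\infty)_\perfd$ is idempotent ($I_\perfd^2=I_\perfd$ up to closure, using roots of $g$ in $A_\infty$ and \autoref{pfd of ideal only depends on radical}), being almost zero is exactly being annihilated by $(gA_\infty)_\perfd$. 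So it remains to show each $1\tensor\zeta_i$ is almost zero in $(R_i)^{A_\infty}_\perfd\tensor_{R_i}(R_i\tensor_RN)$.

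For fixed $i$ with $g\notin\mathfrak p_i$, let $B_i$ be the given perfectoid BCM $R_i^+$-algebra. We enlarge $B_i$ (via \autoref{thm: Andre} and standard constructions of BCM algebras receiving maps from $A_\infty$-algebras) to a perfectoid BCM $R_i^+$-algebra $B_i'$ admitting a map from $(R_i)^{A_\infty}_\perfd$; $\zeta_i$ still dies in $B_i'\tensor N$. The key step, and the main obstacle, is to show that $(R_i)^{A_\infty}_\perfd\to B_i'$ is $(gA_\infty)_\perfd$-almost pure modulo $p^n$. The approach is: $A_\infty\to B_i'$ is $p$-completely faithfully flat because $B_i'$ is balanced BCM over the regular ring $A$ (flatness of BCM algebras over regular rings mod $p^n$, \cite{BhattIyengarMaRegularRings}). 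Meanwhile $A_\infty\to(R_i)^{A_\infty}_\perfd$ is almost finite étale away from $g$, so it almost splits, via the almost trace of the almost finite étale cover. Base changing this almost splitting along $A_\infty\to B_i'$ and composing shows that $(R_i)^{A_\infty}_\perfd/p^n\to B_i'/p^n$ is almost pure. Hence the kernel of $(R_i)^{A_\infty}_\perfd\tensor N\to B_i'\tensor N$ is almost zero, and $1\tensor\zeta_i$ lies in it. Summing over $i$ gives $(gA_\infty)_\perfd(1\tensor\zeta)=0$.
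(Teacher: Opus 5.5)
\paragraph{Overall structure.}
Your outline matches the paper's proof. You discard the components with $g\in\mathfrak p_i$, since $(gA_\infty)_\perfd$ kills their perfectoidizations. You then show that $\raftypfd\otimes_RN\to\prod_i(R_i)^{A_\infty}_\perfd\otimes_RN$ has kernel killed by $(g)_\perfd$, and finish with idempotence.

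For that middle step the paper only needs $p$-complete almost purity of $\raftypfd\to\prod_i(R_i)^{A_\infty}_\perfd$, which it gets from \cite[Proposition~2.25]{CaiLeeMaSchwedeTuckerPerfectoidSignature}. Your route through the claim that $R\otimes_AA_\infty\to\raftypfd$ is an almost isomorphism is not valid in general. Take $R=A[\epsilon]/(\epsilon^2,x_1\epsilon)$ and $g=x_1$: the kernel $\epsilon A_\infty/x_1A_\infty$ is not killed by $x_1^{1/p}$.

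\paragraph{The gap: the componentwise step.}
The genuine gap is the componentwise step. The paper does not reprove it; it imports it from \cite[Lemma~5.6]{CaiLeeMaSchwedeTuckerPerfectoidSignature}, which is the inclusion $M_4\subseteq M_1$ of \autoref{lem.AlmostKernel}, made applicable to this $A_\infty$ by \cite[Lemma~7.9 and the proof of Claim~7.11]{BMPSTWW2}.

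Your argument for this step does not work. Write $T=(R_i)^{A_\infty}_\perfd$. Base changing an almost splitting of $A_\infty\to T$ along $A_\infty\to B_i'$ only gives a $B_i'$-linear almost retraction of $B_i'\to T\widehat{\otimes}_{A_\infty}B_i'$. The map you need to be pure is the $T$-linear map $T\to B_i'$, which factors as $T\to T\widehat{\otimes}_{A_\infty}B_i'\to B_i'$. Up to almost isomorphism, the second arrow is the projection of an almost finite \'etale $B_i'$-algebra onto one of its factors. Nothing in the trace splitting controls its purity.

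\paragraph{The asserted almost purity is false in general.}
The problem is that $T$ need not be almost connected over $A_\infty$. Your argument does not depend on how $B_i'$ is chosen, so it would apply to the natural choice in which $A_\infty\to B_i'$ factors through $\widehat{R_i^+}$.

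Take $R=R_i=A[y]/(y^p-x_1)$ and $g=px_1$. Since $x_1^{1/p}\in A_\infty$, the finite \'etale $A_\infty[1/g]$-algebra $A_\infty[1/g][y]/(y^p-x_1)$ has a nontrivial idempotent separating $y=x_1^{1/p}$ from the other roots. Hence $T$ is almost a product $T'\times T''$ of two nonzero almost finite \'etale $A_\infty$-algebras.

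In $R^+$, the element $y$ and the image of $x_1^{1/p}$ are $p$-th roots of $x_1$ differing by a $p$-th root of unity. So $T\to B_i'$ almost kills $T'$ or $T''$. Therefore $T/p\to B_i'/p$ is not almost injective, and for $N=R/p$ the kernel of $T\otimes_RN\to B_i'\otimes_RN$ is not almost zero.

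\paragraph{What is actually needed.}
The proposition survives only because $1\otimes\zeta_i$ comes from $N$. It dies under every map $T\to B_i$ restricting to the given $R_i\to B_i$, including those whose $A_\infty$-structure has been moved by elements of $\operatorname{Aut}(R_i^+/A)$. Together, these maps lie over all the almost factors of $T$. Turning this into annihilation by $(gA_\infty)_\perfd$ is exactly what \cite[Lemma~5.6]{CaiLeeMaSchwedeTuckerPerfectoidSignature} supplies. A single enlargement $B_i'$ with a single map from $T$ cannot do it.
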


\begin{proof}
We first show that $(gA_\infty)_\perfd(1\tensor\zeta)=0$ in
$(R_i)^{A_\infty}_\perfd\tensor_RN$ for every $i$. If
$g\in\mathfrak p_i$, then $g$ is in the kernel of $A_\infty\to(R_i)^{A_\infty}_\perfd$. Since the kernel is a perfectoid ideal, it must contain $(gA_\infty)_\perfd$, so the claim follows.
If $g\notin\mathfrak p_i$, the ideal
$\mathfrak p_iR[g^{-1}]$ is a minimal prime of the finite \'etale
$A[g^{-1}]$-algebra $R[g^{-1}]$ and contracts to $(0)$ in
$A[g^{-1}]$. Since $R[g^{-1}]$ is finite \'etale over the normal domain
$A[g^{-1}]$, it is normal. Thus $R[g^{-1}]$ is the product of the
$R_i[g^{-1}]$ ranging over all $i$ with $g\notin \p_i$ and applying \cite[Tag 00U2]{stacks-project} gives that
$A[g^{-1}]\to R_i[g^{-1}]$ is finite \'etale. Then, $A\subset R_i$ is a Noether-Cohen normalization so the argument of
\cite[Lemma~5.6]{CaiLeeMaSchwedeTuckerPerfectoidSignature}, which
applies to the present choice of \(A_\infty\) by
\cite[Lemma~7.9 and the proof of Claim~7.11]{BMPSTWW2}, gives $(gA_\infty)_\perfd(1\tensor\zeta)=0$ in
$(R_i)^{A_\infty}_\perfd\tensor_RN$, proving the componentwise claim.

Since perfectoidization commutes with finite products, the target of
\begin{equation*}
\raftypfd\longrightarrow\prod_i(R_i)^{A_\infty}_\perfd
\end{equation*}
is the perfectoidization of the finite $\raftypfd$-algebra
$\left(\prod_iR_i\right)\tensor_R\raftypfd$, which agrees with
$\raftypfd$ after inverting $g$. Thus the displayed map is
$p$-completely $(g\raftypfd)_\perfd$-almost pure by
\cite[Proposition~2.25]{CaiLeeMaSchwedeTuckerPerfectoidSignature}. Since $N$ is $p^\infty$-torsion, it follows that 
$$\ker\left(\raftypfd \otimes_R N \to \prod_i(R_i)^{A_\infty}_\perfd \otimes_R N\right)$$ is annihilated by $(g\raftypfd)_\perfd$. 
Now the componentwise vanishing shows that
$(gA_\infty)_\perfd(1\tensor\zeta)$ is in the above kernel. Thus we obtain that
$$(gA_\infty)_\perfd(1\tensor\zeta) = (gA_\infty)_\perfd \left((gA_\infty)_\perfd(1\tensor\zeta)\right) \subseteq (g\raftypfd)_\perfd\left((gA_\infty)_\perfd(1\tensor\zeta)\right) =0$$
where we used that $(gA_\infty)_\perfd$ is idempotent (after tensoring with $N$) and that $(gA_\infty)_\perfd\raftypfd\subseteq(g\raftypfd)_\perfd$ (which follows from universality as $A_\infty\to(\raftypfd/g\raftypfd)_\perfd$ kills $g$).
\end{proof}
}

\subsection{Generalized and Mumford divisors} 
We use Hartshorne's theory of generalized divisors in a form based on
codimension-two extension; see
\cite{HartshorneGeneralizedDivisorsOnGorensteinSchemes,HartshonreGeneralizedDivisorsAndBiliaison}.
Let $X$ be a Noetherian scheme. For a coherent $\sO_X$-module $\sF$,
we say that $\sF$ is $\mathbf{S}_k$ if
$\depth_{\sO_{X,x}}\sF_x\geq
\min\{k,\dim\bigl(\Supp(\sF_x)\bigr)\}$ for every $x\in X$.
We say that $X$ is $\mathbf{S}_k$ if $\sO_X$ is $\mathbf{S}_k$
\cite[Tag~0341]{stacks-project}. If every local ring of $X$ is
catenary and $X$ is $\mathbf{S}_2$, then $X$ is locally equidimensional
\cite[Tag~0FIW]{stacks-project}. This applies, in particular, when
$X$ is excellent and $\mathbf{S}_2$.
We say that a closed subset $Z\subseteq X$ has codimension at least
two if $\dim\sO_{X,x}\geq2$ for every $x\in Z$.
For a coherent $\sO_X$-module $\sF$, set
$\sF^{**}\coloneqq
\sHom_X(\sHom_X(\sF,\sO_X),\sO_X)$, and call $\sF$ \emph{reflexive}
if the natural map $\sF\to\sF^{**}$ is an isomorphism.
If $X$ is a Noetherian {scheme}, recall that a \emph{big open subset} $U \subseteq X$ is an open subset whose complement has codimension $\geq 2$.

\begin{definition}\label{codimension two extension property}
Let \(X\) be a Noetherian scheme and \(\sG\) an
\(\sO_X\)-module. We say that \(\sG\) has the \emph{codimension-two extension
property} if for every open immersion
$j\colon U\hookrightarrow X$ from a big open, the canonical map $\sG\longrightarrow j_*(\sG|_U)$
is an isomorphism.
Equivalently, for every \(\sO_X\)-module \(\sF\), restriction induces
an isomorphism
\[
\Hom_X(\sF,\sG)
\xrightarrow{\ \sim\ }
\Hom_U(\sF|_U,\sG|_U).
\]
\end{definition}

Let $\mathcal S_X\subseteq\sO_X$ be the subsheaf of local
nonzerodivisors, and set
$\sK_X\coloneqq\mathcal S_X^{-1}\sO_X$, the sheaf of total quotient
rings of $X$. For every affine open $V=\Spec A\subseteq X$, one has
$\Gamma(V,\sK_X)=K(A)$, the total quotient ring of $A$
\cite[Tag~02OV]{stacks-project}.
A \emph{fractional ideal} on $X$ is a coherent
$\sO_X$-submodule $\sI\subseteq\sK_X$; it is \emph{nondegenerate} if
$\sI_\eta=(\sK_X)_\eta$ at every generic point $\eta$ of an
irreducible component of $X$.
It is \emph{principal} if $\sI=g\sO_X$ for some
$g\in\Gamma(X,\sK_X)^\times$, \emph{locally principal} if it is
locally of this form, and \emph{locally principal in codimension one}
if it is locally principal on a big open subset.
A nondegenerate $\mathbf{S}_2$ fractional ideal
has the codimension-two extension property 
\cite[\href{https://stacks.math.columbia.edu/tag/0E9I}{Tag 0E9I}]{stacks-project}. 
Consequently, these
fractional ideals and their morphisms are determined by restriction
to any big open. This motivates the following construction.

\begin{definition}\label{codimension one envelope}
For an $\sO_X$-module $\sF$, set
\[
\sF^{\mathrm{c1}}
\coloneqq
\varinjlim_U
(j_U)_*\bigl(\sF|_U\bigr),
\]
where the filtered colimit is over big open subsets $U\subseteq X$, ordered by reverse
inclusion, and $j_U\colon U\hookrightarrow X$ is the open immersion.
There is a natural map $\sF\to\sF^{\mathrm{c1}}$, which is an
isomorphism at every point of codimension at most one. We call
$\sF^{\mathrm{c1}}$ the \emph{codimension-one envelope} of $\sF$.
If $X=\Spec R$ and $N$ is an $R$-module, we use the same notation for
\[
N^{\mathrm{c1}}
\coloneqq
\Gamma\bigl(X,(\widetilde N)^{\mathrm{c1}}\bigr)
=
\varinjlim_U
\Gamma(U,\widetilde N),
\]
further writing $N^{\mathrm{c1},R}$ to specify the base as necessary.
{ In particular, $\widetilde{N}^{\cOne} \cong \widetilde{N^{\cOne}}$ (which can be quickly deduced from \cite[\href{https://stacks.math.columbia.edu/tag/01LC}{Tag 01LC}]{stacks-project} and \cite[\href{https://stacks.math.columbia.edu/tag/01BH}{Tag 01BH}]{stacks-project}). In either setting, we observe that $(-)^{\cOne}$ is a functor and $M \to M^{\cOne}$ is a natural transformation.}
\end{definition}

{
\begin{proposition}
    \label{prop.SomeGeneralFactsAboutCodim1Envelopes}
    Fix $R$ to be a Noetherian ring and suppose that $M$ and $N$ are $R$-modules.  Then:
    \begin{enumerate}
        \item The sheaf $\widetilde{N^{\cOne}}$ has the codimension-2 extension property. \label{prop.SomeGeneralFactsAboutCodim1Envelopes.cOneHasExtensionProp}
        \item $N^{\cOne} \to (N^{\cOne})^{\cOne}$ is an isomorphism.\label{prop.SomeGeneralFactsAboutCodim1Envelopes.cOneIdempotence}
        \item For every height $\leq 1$ prime $P$ of $R$, the map $N_P \to (N^{\cOne})_P$ is an isomorphism.  \label{prop.SomeGeneralFactsAboutCodim1Envelopes.LocalizationStaysIso}        
        \item Suppose $M \to N$ is a map of $R$-modules, then $M^{\cOne} \to N^{\cOne}$ is an isomorphism if and only if $M_P \to N_P$ is an isomorphism for all height $\leq 1$ primes $P \subseteq R$.  \label{prop.SomeGeneralFactsAboutCodim1Envelopes.IsoCheckedCodim1}
        \item The natural map $(M\otimes N)^{\cOne} \to (M^{\cOne} \otimes N)^{\cOne}$ is an isomorphism. \label{prop.SomeGeneralFactsAboutCodim1Envelopes.TensorProduct}
        \item The formation of $(-)^{\cOne}$ commutes with filtered colimits.  In other words, for a filtered system $\{M_i\}$ with $\varinjlim_i M_i = M$, we have a natural isomorphism $\varinjlim_i (M_i^{\cOne}) \cong M^{\cOne}$.  \label{prop.SomeGeneralFactsAboutCodim1Envelopes.Colimit}
    \end{enumerate}

\end{proposition}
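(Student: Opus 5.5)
The whole argument runs through one description of the envelope. Write $\mathcal U$ for the set of big open subsets of $X=\Spec R$. I will first check that $\mathcal U$ is closed under finite intersections, since it is filtered by reverse inclusion. If $x\in (X\setminus U)\cup(X\setminus V)$, then $\dim \sO_{X,x}\ge 2$, so the union of two codimension $\geq 2$ closed sets is again closed of codimension $\geq 2$. A big open subset contains every point $P$ with $\dim R_P\le 1$. Moreover, every such $P$ has a big open neighborhood $D(s)$ for some $s$ outside the finitely many relevant primes. This is not needed in general; instead I use that $\Gamma(U,\widetilde N)\to N_P$ factors through the stalk.

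\emph{Items (3), (2), (1).}
For (3): localization at $P$ commutes with the filtered colimit, and $(j_{U*}\widetilde N|_U)_P=N_P$ because $P\in U$ and $\Spec R_P\subseteq U$ (generizations of $P$ also have height $\le1$). Hence $(N^{\cOne})_P=\varinjlim_U N_P=N_P$.

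For (1): take a big open $j\colon V\hookrightarrow X$. A section of $\widetilde{N^{\cOne}}$ over $V$ is given locally by sections over $V\cap U_i$ for big opens $U_i$. On the quasi-compact $V$, finitely many suffice, and they glue on some big open $U\subseteq V$. This gives a section of $N^{\cOne}$, and I will check that the construction is inverse to restriction. Injectivity: a section of $N^{\cOne}$ vanishing on $V$ vanishes on $V\cap U$, and $V\cap U$ is big. The cleanest route is to show $\Gamma(V,\widetilde{N}^{\cOne})=\varinjlim_{U}\Gamma(V\cap U,\widetilde N)$ for all opens $V$, using the cited Tags on filtered colimits of sheaves on Noetherian spaces. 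Then the big opens contained in $V$ are cofinal among the $V\cap U$.

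Item (2) is then the global sections of (1) applied termwise, since every term in the colimit defining $(N^{\cOne})^{\cOne}$ is isomorphic to $N^{\cOne}$.

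\emph{Items (4), (5), (6).}
For (4), the forward direction follows from (3). For the converse, suppose $M_P\to N_P$ is an isomorphism for all height $\le1$ primes $P$. Let $K$ and $C$ be the kernel and cokernel; they are modules supported in codimension $\ge2$. Here lies the main obstacle: for non–finitely generated modules the support need not be closed. I would handle a single element $m$ of $M$ or $N$: $\Supp(Rm)=V(\operatorname{ann} m)$ is closed of codimension $\ge2$, so the element vanishes on a big open. Thus sections die or lift on big opens, one section at a time. Concretely, a section $s\in\Gamma(U,\widetilde N)$ is a finite gluing of local fractions. Each fraction lifts to $M$ after restricting to the complement of the closed support of finitely many elements of $C$, and the lifts are unique modulo sections of $K$, which likewise vanish on a smaller big open. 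Passing to the colimit then gives surjectivity and injectivity of $M^{\cOne}\to N^{\cOne}$.

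Item (5) follows from (4): the map $M\otimes N\to M^{\cOne}\otimes N$ localizes at height $\le1$ primes to an isomorphism by (3), since localization commutes with tensor products.

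For (6), there is a natural map $\varinjlim M_i^{\cOne}\to M^{\cOne}$. I will use that each $\Gamma(U,-)$ commutes with filtered colimits of quasi-coherent sheaves, since $U$ is a Noetherian, hence quasi-compact and quasi-separated, space. Then I interchange the two filtered colimits over $i$ and over $U$.
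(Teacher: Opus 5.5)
Your proposal is correct. For items (1), (2), (3), (5) and (6) you follow essentially the paper's argument:
\begin{itemize}
\item for (1), sections of the sheaf colimit over an open $V$ are $\varinjlim_U\Gamma(V\cap U,\widetilde N)$, and big opens inside a big $V$ are cofinal;
\item (2) is the global sections of (1);
\item for (3), the stalk of each $(j_U)_*\widetilde N|_U$ at $P\in U$ is $N_P$;
\item (5) reduces to (4) via (3);
\item (6) interchanges the two filtered colimits, since $\Gamma(U,-)$ commutes with filtered colimits on the quasi-compact quasi-separated $U$.
\end{itemize}

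The genuine difference is the converse of (4). The paper takes $K$ and $C$ to be the kernel and cokernel of $M^{\cOne}\to N^{\cOne}$ and leans on (1). The codimension-two extension property of $M^{\cOne}$ and $N^{\cOne}$ gives $H^0_I=H^1_I=0$ for every ideal $I$ of height $\geq 2$. Then every finitely generated submodule of $K$, and of $C$ via the long exact sequence, is $J$-torsion for some $J$ of height $\geq 2$, hence zero.

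You instead take the kernel and cokernel of $M\to N$ itself and argue section by section. You use only that $V(\operatorname{ann} k)$ and $V(\operatorname{ann} c)$ are closed of codimension $\geq 2$ for individual elements $k\in K$, $c\in C$. This is more elementary and independent of (1). It also makes visible why one must pass to all big opens: the bad locus depends on the section. The paper's route is shorter because all the gluing is absorbed into (1).

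If you write yours up, make the surjectivity step explicit. A local fraction $n_j/f_j^a$ does not lift as a single fraction on $D(f_j)\setminus V(\operatorname{ann} c_j)$. It lifts on each $D(f_jg)$ with $g$ running over generators of $\operatorname{ann} c_j$, because there $g n_j$ lies in the image of $M\to N$. The finitely many discrepancies on overlaps are fractions with numerators in $K$. Removing their supports gives the big open on which the lifts glue.

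Finally, your aside that a height $\le 1$ prime has a big neighborhood $D(s)$ is false unless $s$ is a unit. By Krull's principal ideal theorem, the minimal primes over $(s)$ have height $\le 1$. You never use the aside, so it does no harm.
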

\begin{proof}
    Write $X = \Spec R$.  
    For \autoref{prop.SomeGeneralFactsAboutCodim1Envelopes.cOneHasExtensionProp}, 
    pick a big open subset $V \subseteq X$.  We need to show that 
    \[
        N^{\cOne} = \varinjlim_{U \subseteq X} \Gamma(U, \widetilde{N}) \to \Gamma(V, \varinjlim_{U \subseteq X} (j_U)_* \widetilde{N}|_U)
    \]
    is an isomorphism where the filtered colimits run over big open subsets.  But the right side is equal to $\displaystyle{\varinjlim_{U \subseteq V} \Gamma(U, \widetilde{N})}$ which clearly agrees with $N^{\cOne}$.  This proves the first statement.  But then \autoref{prop.SomeGeneralFactsAboutCodim1Envelopes.cOneIdempotence} follows immediately.

    For \autoref{prop.SomeGeneralFactsAboutCodim1Envelopes.LocalizationStaysIso}, 
    suppose we have two big open subsets $P \in U \subseteq V \subseteq X$.   Then by \cite[\href{https://stacks.math.columbia.edu/tag/0EHM}{Tag 0EHM}]{stacks-project}, \cite[\href{https://stacks.math.columbia.edu/tag/01LC}{Tag 01LC}]{stacks-project}, and the correspondence between modules and quasi-coherent sheaves, all the restriction maps $N_P = \Gamma(X, \widetilde{N})_P \to \Gamma(V, \widetilde{N})_P \to \Gamma(U, \widetilde{N})_P$ are isomorphisms.  It follows that $N_P \to (N^{\cOne})_P$ is also an isomorphism.  

    The ($\Rightarrow$) direction of \autoref{prop.SomeGeneralFactsAboutCodim1Envelopes.IsoCheckedCodim1} is immediate from \autoref{prop.SomeGeneralFactsAboutCodim1Envelopes.LocalizationStaysIso}.  For the converse, suppose that $K$ and $C$ are the kernel and cokernel of $M^{\cOne} \to N^{\cOne}$ so that $0 \to K \to M^{\cOne} \to N^{\cOne} \to C \to 0$  is an exact sequence. Note that by \autoref{prop.SomeGeneralFactsAboutCodim1Envelopes.cOneHasExtensionProp}, $M^\cOne$ has codimension-$2$ extension property. In particular, this implies that $H_I^0(M^\cOne)=H_I^1(M^\cOne)=0$ for all ideals $I\subseteq R$ of height $\geq 2$ (and likewise for $N^{\cOne}$). Now for every finitely generated $K' \subseteq K$, $K'_P=0$ for all height $\leq 1$ primes $P$ implies that $K'=H_J^0(K')$ for some $J\subseteq R$ of height $\geq 2$. But then $K'=0$ as $H_J^0(M^\cOne)=0$. 
    It follows that $K=0$ and we have $0\to M^\cOne \to N^\cOne\to C\to 0$. By examining the long exact sequence of local cohomology we obtain that $H_I^0(C)=0$ for all $I\subseteq R$ of height $\geq 2$. But for every finitely generated $C'\subseteq C$, we know that $C'=H_J^0(C')$ for some $J\subseteq R$ of height $\geq 2$ because $C'_P \subseteq C_P=0$ for all height $\leq 1$ primes $P$. Thus $C'=0$ as $H_J^0(C)=0$. Therefore $C=0$ and we have $M^\cOne\to N^\cOne$ is an isomorphism.

    For \autoref{prop.SomeGeneralFactsAboutCodim1Envelopes.TensorProduct}, notice that $M \otimes N \to M^{\cOne} \otimes N$ becomes an isomorphism after localizing at a height $\leq 1$ prime ideal $P$.  Thus, the result follows from \autoref{prop.SomeGeneralFactsAboutCodim1Envelopes.IsoCheckedCodim1}.

    Finally, \autoref{prop.SomeGeneralFactsAboutCodim1Envelopes.Colimit} follows as $\Gamma(U, -)$ commutes with filtered colimits and since colimits commute with colimits.
\end{proof}
}
\begin{lemma}\label{codimension one envelope fractional ideal}
Let $X$ be a Noetherian, locally equidimensional $\mathbf{S}_2$-scheme.
Let $\sI$ be a nondegenerate fractional ideal. Then
$\sI^{\mathrm{c1}}$ is a nondegenerate $\mathbf{S}_2$ fractional ideal and is
the unique such fractional ideal agreeing with $\sI$ at every point
of codimension at most one. In particular, the natural map
$\sI\longrightarrow\sI^{\mathrm{c1}}$ is an isomorphism if and only
if $\sI$ is $\mathbf{S}_2$.

If $\sI$ is locally principal in codimension one, we have a
canonical identification $\sI^{\mathrm{c1}}\cong\sI^{**}$. In
particular, $\sI^{\mathrm{c1}}$ is reflexive, and, under the same
hypothesis, $\sI$ is $\mathbf{S}_2$ if and only if it is reflexive.

\end{lemma}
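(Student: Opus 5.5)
The plan is to work locally and reduce everything to the codimension-two extension property of nondegenerate $\mathbf{S}_2$ fractional ideals (\cite[Tag 0E9I]{stacks-project}) together with \autoref{prop.SomeGeneralFactsAboutCodim1Envelopes}. The question is local, so we may take $X=\Spec R$ affine, with $\sI$ corresponding to a finitely generated $R$-submodule $I\subseteq K(R)$.

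\textbf{Existence.} First we show that $\sI^{\cOne}$ sits inside $\sK_X$. Let $U$ be a big open. On an $\mathbf{S}_2$, locally equidimensional scheme, $\sK_X$ is a quasi-coherent sheaf with the codimension-two extension property: its local rings are products of the local rings at generic points, and since $X$ is $\mathbf{S}_1$ it has no embedded points. Hence $(j_U)_*(\sI|_U)\subseteq (j_U)_*(\sK_X|_U)=\sK_X$, and so $\sI^{\cOne}\subseteq\sK_X$.

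Next we need coherence. Choose a nonzerodivisor $s$ with $sI\subseteq R$, which is possible since $I$ is finitely generated. Then $s\,\sI^{\cOne}\subseteq \sO_X^{\cOne}=\sO_X$, where the last equality uses that $\sO_X$ is $\mathbf{S}_2$ (this is Tag 0E9I applied to $\sO_X$ itself). Since $R$ is Noetherian, $\sI^{\cOne}$ is coherent.

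Nondegeneracy and agreement with $\sI$ in codimension $\le 1$ follow from \autoref{prop.SomeGeneralFactsAboutCodim1Envelopes}(\ref{prop.SomeGeneralFactsAboutCodim1Envelopes.LocalizationStaysIso}).

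For the $\mathbf{S}_2$ property, let $x$ be a point with $\dim\sO_{X,x}\ge2$. We need $\depth \sI^{\cOne}_x\ge 2$, i.e.\ $H^0_x=H^1_x=0$. Both vanish because $\sI^{\cOne}$ has the extension property (\autoref{prop.SomeGeneralFactsAboutCodim1Envelopes}(\ref{prop.SomeGeneralFactsAboutCodim1Envelopes.cOneHasExtensionProp})), applied to the big open obtained by removing closed subsets of codimension $\ge 2$; this is exactly the local cohomology argument already used in the proof of (\ref{prop.SomeGeneralFactsAboutCodim1Envelopes.IsoCheckedCodim1}). At points of codimension $\le 1$, the $\mathbf{S}_2$ condition reduces to $\mathbf{S}_1$. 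This holds because $\sI^{\cOne}\subseteq \sK_X$ and $\sK_X$ has no embedded associated points.

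\textbf{Uniqueness.} Suppose $\sJ$ is a nondegenerate $\mathbf{S}_2$ fractional ideal agreeing with $\sI$ at all points of codimension $\le 1$. Then $\sJ\cap\sI$ and $\sI$ agree on some big open $U$: they agree at the finitely many relevant generic points of codimension-$\le1$ loci, and equality spreads to an open neighborhood by coherence. So $\sJ|_U=\sI|_U$, and by the extension property of $\sJ$,
$$\sJ=(j_U)_*\sJ|_U=(j_U)_*\sI|_U\subseteq\sK_X.$$
Because $\sJ$ has the extension property, this identifies $\sJ$ with $\sI^{\cOne}$, the colimit being constant on cofinal $U$. The statement that ``$\sI\to\sI^{\cOne}$ is an isomorphism iff $\sI$ is $\mathbf{S}_2$'' follows from existence and uniqueness.

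\textbf{Reflexive case.} Suppose $\sI$ is invertible on a big open $U$. Then $\sI^{**}$ is a reflexive module. On an $\mathbf{S}_2$ scheme, a dual $\sHom(\sF,\sO_X)$ is $\mathbf{S}_2$, being the kernel of a map between $\mathbf{S}_2$ modules coming from a presentation. Moreover, $\sI^{**}$ embeds in $\sK_X$ via $\sI^{**}\otimes\sK_X=\sK_X$, since $\sI$ is nondegenerate and $\sK_X$ is a product of fields localized at generic points. On $U$, $\sI^{**}|_U=\sI|_U$ since $\sI$ is invertible there. Uniqueness then gives $\sI^{**}=\sI^{\cOne}$.

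\textbf{Main obstacle.} The expected difficulty is the bookkeeping around $\sK_X$ in the non-domain case: showing that $\sK_X$ has the codimension-two extension property, and that $\sI^{**}$ canonically embeds into $\sK_X$ compatibly with $\sI\subseteq\sK_X$. The plan is to handle both by checking against the injection $\sK_X\hookrightarrow\prod_\eta \sO_{X,\eta}$ over generic points. This injection uses the $\mathbf{S}_1$ property, and the extension statement uses the equidimensionality hypothesis so that every associated point is generic.
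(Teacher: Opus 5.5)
Your argument is essentially the paper's: clear denominators to get coherence inside $\sO_X$; get $\mathbf{S}_1$ from sitting inside $\sK_X$; get depth $\geq 2$ at points of codimension $\geq 2$ from the extension property of $\sI^{\mathrm{c1}}$; get uniqueness from the codimension-two extension property of nondegenerate $\mathbf{S}_2$ fractional ideals; and get $\sI^{**}=\sI^{\mathrm{c1}}$ because $\sI^{**}$ is a nondegenerate $\mathbf{S}_2$ fractional ideal agreeing with $\sI$ on the Cartier locus. Your $H^0/H^1$-vanishing argument is the same content as the paper's appeal to Tag 0DX4. Your separate verification that $\sK_X$ has the extension property is harmless but redundant. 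Once you clear denominators you already have $\sI^{\mathrm{c1}}=s^{-1}(s\sI)^{\mathrm{c1}}\subseteq s^{-1}\sO_X\subseteq\sK_X$.

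The one step that needs an actual argument is your assertion that $\sI^{**}$ is a reflexive module, since this is what yields reflexivity of $\sI^{\mathrm{c1}}$. Double duals are not reflexive in general on $\mathbf{S}_2$ schemes that fail $\mathbf{G}_1$: over $k[x,y]/(x,y)^2$ the dual of $k$ is $k^2$, whose double dual is $k^8$. In your setting it is true, but you must deduce it. One way is to note that $\sI^{**}\to\sI^{****}$ is a map of $\mathbf{S}_2$ coherent sheaves of full support, both duals. It is an isomorphism on the big open $U$ where $\sI$ is invertible, so it is an isomorphism everywhere by the extension property. Alternatively, as the paper does, apply the identification $\sJ^{\mathrm{c1}}\cong\sJ^{**}$ to $\sJ=\sI^{\mathrm{c1}}$ and use $(\sI^{\mathrm{c1}})^{\mathrm{c1}}=\sI^{\mathrm{c1}}$.

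There is also a small slip: $X$ need not be reduced, so $\sK_X$ is locally a product of the Artinian local rings $\sO_{X,\eta}$, not of fields. Your embedding $\sI^{**}\hookrightarrow\sI^{**}\otimes\sK_X\cong\sK_X$ goes through verbatim with that correction, using that nonzerodivisors act injectively on $\sI^{**}$.
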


\begin{proof}
The assertion is local, so we may clear denominators and assume
$\sI\subseteq\sO_X$. Applied to $\sO_X$, the codimension-two extension
property gives $(j_U)_*\sO_U=\sO_X$. Thus
$(j_U)_*(\sI|_U)$ is a quasi-coherent ideal of $\sO_X$
\cite[Tag~01LC]{stacks-project}. Their filtered union
$\sI^{\mathrm{c1}}$ is therefore a quasi-coherent ideal; since $X$ is
Noetherian, it is coherent and the system stabilizes. As a submodule
of $\sK_X$, it is $\mathbf{S}_1$. A stabilizing $U$ may be shrunk to omit any
prescribed point of codimension at least two, so
\cite[Tag~0DX4]{stacks-project} gives depth at least two there. Thus
$\sI^{\mathrm{c1}}$ is $\mathbf{S}_2$. Nondegeneracy and agreement with $\sI$
in codimension at most one are immediate, while the codimension-two
extension property gives uniqueness and the final assertion.

For the additional statement, choose an open immersion
$j\colon U\hookrightarrow X$ whose complement has codimension at
least two and such that $\sI|_U$ is invertible. The sheaf $\sI^{**}$
is a nondegenerate fractional ideal and is $\mathbf{S}_2$ by
\cite[Tag~0AV6]{stacks-project}; its restriction to $U$ is
canonically $\sI|_U$. The codimension-two extension property therefore
identifies $\sI^{**}$ with $j_*(\sI|_U)$, and hence with
$\sI^{\mathrm{c1}}$. Applying this identification to
$\sI^{\mathrm{c1}}$ shows that the latter is reflexive. The final
equivalence now follows from the first part of the lemma.
\end{proof}

\begin{definition}
Let $X$ be a Noetherian, locally equidimensional $\mathbf{S}_2$-scheme.
\begin{enumerate}[(1)]
    \item A \emph{generalized divisor} $D$ is a nondegenerate $\mathbf{S}_2$
    fractional ideal $\sI_D$.
    \item We call a generalized divisor $D$
    \emph{effective} if $\sI_D\subseteq\sO_X$, \emph{principal} if
    $\sI_D$ is principal, and \emph{Cartier} if $\sI_D$ is locally
    principal (which agrees with the usual notion of a Cartier divisor). If $\sI_D=g\sO_X$, we write $D=\Div_X(g)$. The zero
    divisor is given by $\sI_0=\sO_X$.
    \item A generalized divisor is a \emph{Weil generalized divisor}
    if $\sI_D$ is locally principal in codimension
    one. Hartshorne calls these almost Cartier
    divisors \cite[Section~2]{HartshonreGeneralizedDivisorsAndBiliaison};
    we use ``Weil'' because ``almost'' has a different meaning here.
\end{enumerate}
\end{definition}

For generalized divisors $D$ and $E$, define
\begin{equation}
    \label{eq.OperationsOfGeneralizedDivisors}
\sI_{D+E}\coloneqq(\sI_D\sI_E)^{\mathrm{c1}},
\qquad
\sI_{-D}\coloneqq\sHom_X(\sI_D,\sO_X),
\end{equation}
where $\sI_D\sI_E$ is the image of
$\sI_D\otimes_{\sO_X}\sI_E\to\sK_X$. The product
$\sI_D\sI_E$ is a coherent nondegenerate fractional ideal, so
\autoref{codimension one envelope fractional ideal} shows that
$\sI_{D+E}$ is a generalized divisor. On an open set $V\subset X$, the sheaf $\sI_{-D}$ is described by
$\{q\in\sK_X(V)\mid q\sI_D(V)\subseteq\sO_X(V)\}$; it is coherent and
nondegenerate, and is $\mathbf{S}_2$ by
\cite[Tag~0AV6]{stacks-project}. Thus $-D$ is also a generalized
divisor. For any generalized divisor $D$, set
$\sO_X(D)\coloneqq\sI_{-D}$. If $D$ is effective, the inclusion
$\sO_X\subseteq\sO_X(D)$ determines a distinguished global section
$1$ of $\sO_X(D)$. When $X=\Spec R$, set
$I_D=\Gamma(X,\sI_D)$ and write
\[
R(D)=\Gamma(X,\sO_X(D))=\Hom_R(I_D,R)
=\{q\in K(R)\mid qI_D\subseteq R\}.
\]

Now suppose that $D$ and $E$ are Weil generalized divisors, and choose
a common Cartier locus $j\colon U\hookrightarrow X$ whose complement
has codimension at least two. By
\autoref{codimension one envelope fractional ideal}, $\sI_D$ and
$\sI_E$ are reflexive. In particular, $\sO_X(-D)=\sI_D$. That lemma and the codimension-two extension
property give
\[
\sI_{D+E}\cong(\sI_D\sI_E)^{**}
\cong j_*\bigl((\sI_D\sI_E)|_U\bigr),\qquad
\sI_{-D}\cong j_*\bigl((\sI_D|_U)^{-1}\bigr).
\]
In particular, $D+E$ and $-D$ are Weil generalized divisors. These
descriptions are independent of $U$ and show that the Weil generalized
divisors form an abelian group. For $n\in\Z$, write $nD$ for the
corresponding multiple. Define $\Supp(D)$ to be the closure of the
support of $D|_U$; this is independent of the choice of $U$. If $D$
is effective, then $\Supp(D)=\Supp(\sO_X/\sI_D)$.
{Whenever $K_X$ appears, we assume that $X$ admits
a canonical sheaf $\omega_X$. If $X$ is $\mathbf{S}_2$ and $\mathbf{G}_1$, after
choosing a generic trivialization of $\omega_X$, we write $K_X$ for
the corresponding Weil generalized divisor, so that
$\sO_X(K_X)\cong\omega_X$.}

We record the following compatibility with flat pullback.

\begin{proposition}\label{flat keeps reflexive}
Let $R\to R'$ be a flat homomorphism of Noetherian, locally
equidimensional $\mathbf{S}_2$ rings, and let $D$ be a Weil generalized divisor
on $\Spec R$. Then
$R' \tensor_RI_D\subseteq K(R')$ defines a Weil generalized divisor
$D'$ on $\Spec R'$, called the flat pullback of $D$, and there are
canonical identifications
\[
I_{D'}=R'\tensor_R I_D
\qquad\text{and}\qquad
R'(D')\cong R'\tensor_RR(D).
\]
Flat pullback is functorial, defines a homomorphism of groups of Weil
generalized divisors, and preserves effective, Cartier, and principal
divisors. If $D$ is effective, the second identification carries $1$
to $1\tensor1$.
\end{proposition}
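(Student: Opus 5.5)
We need to prove that flat pullback of a Weil generalized divisor is again one, that $I_{D'}=R'\otimes_R I_D$ and $R'(D')\cong R'\otimes_R R(D)$, together with the functoriality and preservation statements.

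\textbf{Step 1: the pullback is a fractional ideal.} Since $R\to R'$ is flat, nonzerodivisors of $R$ map to nonzerodivisors of $R'$. Hence $K(R)\otimes_R R'$ embeds in $K(R')$, and $R'\otimes_R I_D\hookrightarrow R'\otimes_R K(R)$ by flatness. So $I':=R'\otimes_R I_D$ is a coherent fractional ideal of $R'$. Nondegeneracy holds because $I_D$ contains a nonzerodivisor $s$ (clear denominators, using nondegeneracy and prime avoidance over the finitely many associated primes). Then $s\in I'$ is a nonzerodivisor, so $I'$ is generically all of $K(R')$.

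\textbf{Step 2: $I'$ is $\mathbf S_2$ and locally principal in codimension one.} Choose a big open $U\subseteq\Spec R$ on which $I_D$ is invertible. On $\Spec R'$ the fractional ideal $I'$ is invertible over the preimage $U'$ of $U$. The main obstacle is that $U'$ need not be big, because flat maps can send height-one primes of $R'$ onto height-$\geq2$ primes of $R$. We therefore avoid bigness of $U'$ and use the characterization of $\mathbf S_2$ for reflexive modules.

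Since $I_D$ is finitely presented and $R'$ is flat, Hom commutes with flat base change:
\[
R'\otimes_R\Hom_R(\Hom_R(I_D,R),R)\cong\Hom_{R'}(\Hom_{R'}(I',R'),R').
\]
Because $I_D$ is reflexive by \autoref{codimension one envelope fractional ideal}, it follows that $I'$ is reflexive. A reflexive nondegenerate fractional ideal on an $\mathbf S_2$ ring is $\mathbf S_2$, by [Tag~0AV6] as used above, so $D'$ is a generalized divisor.

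For local principality in codimension one, fix a height $\le1$ prime $Q$ of $R'$ with contraction $P$. If $\dim R_P\le1$, then $(I_D)_P$ is principal, so $I'_Q$ is principal. If $\dim R_P\ge2$, then $\mathrm{depth}\,R_P\ge2$ since $R$ is $\mathbf S_2$, and flat local base change gives $\mathrm{depth}\,R'_Q\ge \mathrm{depth}\,R_P\ge 2 > \dim R'_Q$, which is impossible. Hence the second case never occurs, and $I'$ is locally principal in codimension one. So $D'$ is a Weil generalized divisor.

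\textbf{Step 3: remaining claims.} The identification $R'(D')=\Hom_{R'}(I',R')\cong R'\otimes_R\Hom_R(I_D,R)$ is flat base change of Hom again. It sends $1\mapsto1\otimes1$ when $D$ is effective, since the inclusion $R\subseteq R(D)$ base changes to $R'\subseteq R'(D')$.

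\emph{Functoriality.} This follows from associativity of the tensor product.

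\emph{Group homomorphism.} The product $I_DI_E$ base changes to $I'_DI'_E$, because images in $K$ are preserved under flat base change. Then $(I_DI_E)^{**}$ base changes to $(I'_DI'_E)^{**}$ by the Hom computation of Step 2, and reflexive hulls are the $(-)^{\mathrm{c1}}$ here by \autoref{codimension one envelope fractional ideal}. The same computation handles negatives, since $I_{-D}=\Hom(I_D,R)$.

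\emph{Preservation.} Effective, principal and Cartier divisors are preserved because $R'\otimes_R R\to R'\otimes_R K(R)$ lands in $R'$, $R'\otimes gR=gR'$, and local principality pulls back.
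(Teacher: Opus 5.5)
Your proof is correct and follows essentially the same route as the paper. Flat base change of $\operatorname{Hom}$ makes $R'\otimes_R I_D$ reflexive, hence $\mathbf{S}_2$ by Tag~0AV6. The same base change gives $R'(D')$, pulling back the Cartier locus gives local principality in codimension one, and the group law is handled through double duals.

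One correction: the ``main obstacle'' you name does not exist. For the flat local map $R_P\to R'_Q$ one has $\dim R'_Q=\dim R_P+\dim R'_Q/PR'_Q\geq\dim R_P$; this is the flat going-down step the paper uses, and it needs no $\mathbf{S}_2$ hypothesis. So the preimage $U'$ of a big open is always big. Your depth argument in Step~2 proves exactly this, so nothing actually needs to be avoided.
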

\begin{proof}
Let $W$ be the set of nonzerodivisors of $R$. Flatness gives
$R'\tensor_RI_D\hookrightarrow R'\tensor_RK(R)\cong
W^{-1}R'\hookrightarrow K(R')$, since every element of $W$ remains a
nonzerodivisor on $R'$. Thus $R'\tensor_RI_D$ is a fractional ideal.

Choose an open immersion $j\colon U\hookrightarrow\Spec R$ whose
complement has codimension at least two and on which $D$ is Cartier,
and let $j'\colon V\hookrightarrow\Spec R'$ be its inverse image. Flat
going-down \cite[Tag~00HS]{stacks-project} shows that
$\Spec R'\setminus V$ has codimension at least two and that
$R'\tensor_RI_D$ is nondegenerate: its restriction to $V$ is
invertible. By \autoref{codimension one envelope fractional ideal},
$I_D$ is reflexive, so flat base change for $\Hom$ shows that
$R'\tensor_RI_D$ is reflexive. It is therefore $\mathbf{S}_2$ by
\cite[Tag~0AV6]{stacks-project} and defines the Weil generalized
divisor $D'$. The same base change gives the second displayed
identification. Functoriality and the remaining compatibilities follow
from the double-dual description of addition and the definitions.
\end{proof}

We conclude by relating Weil generalized divisors to Mumford divisors.
\begin{definition}
Let $X$ be a Noetherian, locally equidimensional
$\mathbf{S}_2$-scheme. A \emph{prime Mumford divisor} on $X$ is an integral
closed subscheme $Z\subseteq X$ of codimension one such that $X$ is
regular at its generic point $\eta_Z$. A \emph{Mumford divisor}
(respectively, a \emph{Mumford $\Q$-divisor}) is a finite formal sum
$D=\sum_i a_iZ_i$ of prime Mumford divisors with $a_i\in\Z$
(respectively, $a_i\in\Q$) \cite[Definition~1]{KollarMumfordDivisors}.
Its support is $\bigcup_{a_i\neq0}Z_i$, and it is \emph{effective} if
every $a_i\geq0$. A Mumford $\Q$-divisor $D$ is
\emph{$\Q$-Cartier} if $mD$ is Cartier for some integer $m>0$. If $D=\sum_i a_iZ_i$ is a Mumford divisor then we write $D_{\red}$ for the Mumford divisor $\sum_i b_i Z_i$ where $b_i=0$ if $a_i=0$ and $b_i=1$ if $a_i\neq 0$.
\end{definition}

If $Z$ is a prime Mumford divisor, its ideal sequence combined with \cite[Tag 00LX]{stacks-project} shows that
$\sI_Z$ is $\mathbf{S}_2$, while regularity at $\eta_Z$ gives local
principality in codimension one. Thus $\sI_Z$ defines a Weil
generalized divisor, which we again denote by $Z$. The group law then
associates a Weil generalized divisor to every Mumford divisor. For a
Mumford divisor $D$, choose an open immersion $j\colon U\hookrightarrow
X$ whose complement has codimension at least two and such that $D|_U$
is Cartier. Then
$\sI_D=\sO_X(-D)=j_*\sO_U(-D|_U)\subseteq\sK_X$.
This association preserves support and effectivity.
For Mumford $\Q$-divisors, inequalities and convergence
are coefficientwise, while support and reduction have their usual
meanings. For effective Mumford $\Q$-divisors $D'$ and
$D''$ on $X$, write
\begin{equation}\label{def:divisor_order_relation}
D'\prec D''\quad\Longleftrightarrow\quad
D'\leq(1-\varepsilon)D''\text{ for some }0<\varepsilon<1.
\end{equation}

\begin{remark}\label{flat pullback is Mumford}
    In general, the flat pullback of a Mumford divisor need not be a
Mumford divisor. However, flat pullback preserves Mumford divisors -- and
hence extends $\Q$-linearly to Mumford $\Q$-divisors -- when the fibers
are $R_0$ (e.g., when the morphism is regular).

More precisely, let $f\colon R\to S$ be a flat map between excellent
reduced $\mathbf{S}_2$ local rings and let $\Delta$ be a Mumford $\Q$-divisor
on $R$. Then $\Delta_S$ is a Mumford $\Q$-divisor if and only if $S_P$
is regular for every minimal prime $P$ of $\p S$ and every height-one
prime $\p\in\Supp(\Delta)$. In that case, we often abuse notation and write $\Delta=\Delta_S$ as divisors on $S$ since there is no potential for confusion. We also call $\Delta=\Delta_S$ the \emph{flat pullback} of $\Delta$ to $S$.
\end{remark}

Moreover, if $\Delta$ is an effective Mumford $\Q$-divisor on $R$ then one can always construct a finite flat extension $f\colon R\to S$ such that $\Delta$ admits a flat pullback to an effective Mumford divisor on $S$ as the following construction shows.

\begin{construction}\label{construction SnD}
Let $R$ be an excellent
$\mathbf{S}_2$ ring and let $\Delta$ be an
effective Mumford $\Q$-divisor on
$R$. Choose $m>0$ such that
$D_m\coloneqq m\Delta$ is integral.
{Using prime avoidance, choose a nonzerodivisor $f\in R$ that vanishes to order one along every prime component of $D_m$ and does not vanish along any codimension-one component of the singular locus. Then $\Div_R(f)=(D_m)_{\mathrm{red}}+G$ for an effective Mumford divisor $G$ having no common component with $D_m$.} Set $S_{f,m}\coloneqq R[x]/(x^m-f)$,
which is finite free, hence flat, over $R$.
Note that $x$ is a nonzerodivisor since $x^m=f$.

We claim that there is a unique effective Mumford divisor
$\Delta_{f,m}$ on $S_{f,m}$ whose $m$-fold multiple is the flat pullback of $D_m$. Indeed, write $D_m=\sum_i d_iP_i$ where we identify $P_i$ with the associated height-1 prime of $R$. For each $P_i$, the choice of $f$ gives $f=u\pi$ in the DVR $R_{P_i}$, where $u\in R_{P_i}^{\times}$ and $\pi$ is a uniformizer. Thus $x^m-u\pi$ is Eisenstein, so the quotient is a domain. In fact, $R_{P_i}[x]/(x^m-u\pi)$ is a DVR with uniformizer $x$, and there is a unique height one prime $Q_i$ of $S_{f,m}$ over $P_i$. One can then define $\Delta_{f,m}\coloneqq \sum_i d_i Q_i$, which is Mumford. Note that its $m$-fold multiple is indeed the flat pullback of $D_m$ since at $Q_i$, the pullback of a local equation $\pi^{d_i}$ is $x^{md_i}$ up to a unit. To see uniqueness, note that at $Q_i$, this calculation forces the coefficient to be $d_i$ while at every point of height $1$ not in the pullback of the support of $\Delta$, effectivity and the vanishing of its $m$-fold multiple force a coefficient of $0$. We also note that $S_{f,m}$ need not be a domain or even reduced. However, if every irreducible component of $\Spec R$ contains a prime component of $D_m$ then $S_{f,m}$ will be reduced. 

\end{construction}

When $X$ is normal, Mumford divisors and Mumford $\Q$-divisors are
precisely the usual Weil divisors and Weil $\Q$-divisors,
respectively. Moreover, $D\mapsto\sI_D=\sO_X(-D)$ identifies the
usual Weil divisors with the Weil generalized divisors, compatibly
with addition, effectivity, Cartier and principal divisors, support,
and the sheaves $\sO_X(D)$. Under this identification, the
codimension-one envelope of a fractional ideal is its usual reflexive
hull (by \autoref{codimension one envelope fractional ideal}); on affine opens, it is equivalently the intersection of the
height-one localizations
\cite[Tag~0AVB]{stacks-project}.

{

\begin{lemma}\label{canonical module evaluation}
Let $(R,\m)$ be a complete normal local domain of dimension $d$,
fix an effective canonical divisor $K_R$, giving an embedding $R\subset R(K_R)\eqqcolon\omega_R$. Let $C$ be an $R$-algebra, let $q\in C$, and let
$\widetilde q\in(C\tensor_RR(-K_R))^{\mathrm{c1}}$ have the same image as $q$ in
$C^{\mathrm{c1}}$. Then
\[
\im\left(
\Hom_R\!\left((C\tensor_RR(-K_R))^{\mathrm{c1}},R\right)
\xrightarrow{\ \operatorname{ev}_{\widetilde q}\ }R
\right)
=
\Ann_{\omega_R}\ker\left(
H^d_\m(R)\xrightarrow{\ \cdot q\ }H^d_\m(C)
\right)
\subseteq R\subseteq\omega_R.
\]
\end{lemma}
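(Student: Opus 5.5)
The plan is to convert the evaluation image into a statement about $\Hom_R(C,\omega_R)$, and then apply Matlis/local duality on the top local cohomology.

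\emph{Step 1: remove the codimension-one envelope.} Write $M\coloneqq C\tensor_RR(-K_R)$. The ring $R$ is normal, so $\widetilde R$ has the codimension-two extension property. By \autoref{codimension two extension property} and \autoref{prop.SomeGeneralFactsAboutCodim1Envelopes}, restriction therefore gives $\Hom_R(M^{\cOne},R)\cong\Hom_R(M,R)$. By tensor--Hom adjunction,
\[
\Hom_R(M,R)\cong\Hom_R\bigl(C,\Hom_R(R(-K_R),R)\bigr)=\Hom_R(C,\omega_R),
\]
since $\Hom_R(R(-K_R),R)=R(K_R)=\omega_R$ for reflexive rank-one modules. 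Concretely, $\psi\colon C\to\omega_R$ corresponds to the map $\phi_\psi$ with $\phi_\psi(c\tensor r)=r\psi(c)$, which lands in $R$.

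\emph{Step 2: compare the evaluations, which I expect to be the delicate point.} I need to check that $\phi_\psi(\widetilde q)=\psi(q)$ inside $\omega_R$. The inclusion $R(-K_R)\subseteq R$ induces $M\to C$, hence $M^{\cOne}\to C^{\cOne}$. Since $\omega_R$ is reflexive, $\psi$ extends uniquely to $\psi^{\cOne}\colon C^{\cOne}\to\omega_R$. Composing with $R\subseteq\omega_R$, both $\phi_\psi^{\cOne}$ and $\psi^{\cOne}\circ(M^{\cOne}\to C^{\cOne})$ are maps $M^{\cOne}\to\omega_R$. They agree on $M$, hence on $M^{\cOne}$ by the extension property of $\omega_R$. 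Since $\widetilde q$ and $q$ have the same image in $C^{\cOne}$, we get $\phi_\psi(\widetilde q)=\psi(q)$. Therefore the left-hand image equals $\{\psi(q)\mid\psi\in\Hom_R(C,\omega_R)\}$, and in particular this set automatically lies in $R$.

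\emph{Step 3: local duality.} Let $E$ be the injective hull of the residue field. Because $R$ is complete, $\omega_R\cong\Hom_R(H^d_\m(R),E)$. Since $H^d_\m$ is right exact, $H^d_\m(C)\cong C\tensor_RH^d_\m(R)$, so
\[
\Hom_R(C,\omega_R)\cong\Hom_R\bigl(H^d_\m(C),E\bigr).
\]
Under this identification, $\psi(q)\in\omega_R=\Hom_R(H^d_\m(R),E)$ is the composition $\lambda\circ(\cdot q)$, where $\lambda\colon H^d_\m(C)\to E$ corresponds to $\psi$ and $\cdot q\colon H^d_\m(R)\to H^d_\m(C)$. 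I will verify this by tracing an element $\eta\mapsto q\tensor\eta$.

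Set $K\coloneqq\ker(\cdot q)$. The maps $H^d_\m(R)\to E$ of the form $\lambda\circ(\cdot q)$ are exactly those killing $K$: one direction is clear, and conversely $E$ is injective, so any map killing $K$ extends from the image of $\cdot q$ to all of $H^d_\m(C)$. Under $\omega_R=\Hom_R(H^d_\m(R),E)$, the maps killing $K$ are exactly $\Ann_{\omega_R}K$. This gives the claimed equality.
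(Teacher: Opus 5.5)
Your proposal is correct and follows the paper's proof: codimension-two extension plus Hom--tensor adjunction to reach $\Hom_R(C,\omega_R)$, then the identification of evaluation at $\widetilde q$ with evaluation at $q$, then local duality together with exactness of Matlis duality (your injectivity-of-$E$ argument). The only difference is cosmetic: the paper checks $\psi(\widetilde q)=\varphi(q)$ by writing $\widetilde q_P=t\tensor s$ at each height-one prime and using reflexivity of $\omega_R$, whereas you compare the two induced maps $(C\tensor_RR(-K_R))^{\mathrm{c1}}\to\omega_R$ globally; the uniqueness you invoke there holds because the cokernel of $C\tensor_RR(-K_R)\to(C\tensor_RR(-K_R))^{\mathrm{c1}}$ is torsion and $\omega_R$ is torsion-free, which is needed since $C$ need not be finitely generated.
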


\begin{proof}
The codimension-two extension property and Hom-tensor adjunction give
\[
\Hom_R\!\left((C\tensor_RR(-K_R))^{\mathrm{c1}},R\right)
\cong\Hom_R(C,\omega_R).
\]
Suppose that $\psi$ corresponds to $\varphi$. At a height-one prime
$P$, choose a generator $s$ of $R(-K_R)_P$ and write
$\widetilde q_P=t\tensor s$. Since $\widetilde q$ maps to $q$, one has
$q=st$ in $C_P$, and adjunction gives
$\psi(\widetilde q)_P=s\varphi(t)_P=\varphi(q)_P$. Since both sides are
sections of the reflexive module $\omega_R$, this height-one equality
holds globally. Thus evaluation at $\widetilde q$ corresponds to
evaluation at $q$, which under local duality is the Matlis dual of
$H^d_\m(R)\xrightarrow{\ \cdot q\ }H^d_\m(C)$. Exactness of Matlis
duality gives the asserted equality.
\end{proof}
}

\subsection{Pure and split maps}

\begin{lemma}\label{check purity on codim 1}
Let $R$ be a Noetherian
$\mathbf{S}_2$ ring. Let $M$ be an $R$-module and
$U\subset \Spec R$ an open set whose complement has codimension
$\geq 2$. Then, a fixed map $R\to M$ is pure if and only if the induced
$R\to \Gamma(U,\widetilde M)$ is pure.
\end{lemma}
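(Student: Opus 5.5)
The plan is to reduce purity to injectivity after tensoring with the injective hull of each residue field, and then compare $M$ with $\Gamma(U,\widetilde M)$ on local cohomology.

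Recall that $R\to N$ is pure if and only if $E\to E\otimes_R N$ is injective for every module of the form $E=E_R(R/\mathfrak m)$ with $\mathfrak m$ a maximal ideal. Indeed, purity is local: it can be checked after localizing at maximal ideals. Over a local ring, purity is detected by the injective hull of the residue field via Matlis duality: injectivity of $E\to E\otimes N$ is equivalent to surjectivity of $\Hom(N,E)\to \Hom(R,E)=E$, which gives purity after checking on finite length modules. Since $E_R(R/\mathfrak m)=E_{R_\mathfrak m}(\ldots)$ is already an $R_\mathfrak m$-module, we may localize. Note that $\Gamma(U,\widetilde M)_\mathfrak m=\Gamma(U\cap\Spec R_\mathfrak m,\widetilde{M_\mathfrak m})$ by flat base change, since $U$ is quasi-compact. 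So we may assume $(R,\mathfrak m)$ is local and $\mathbf S_2$. The complement of $U\cap \Spec R_\mathfrak m$ still has codimension $\ge 2$.

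One direction is easy. The map $R\to\Gamma(U,\widetilde M)$ factors as $R\to M\to \Gamma(U,\widetilde M)$, and if a composite is pure then so is its first map. Hence if $R\to\Gamma(U,\widetilde M)$ is pure, then $R\to M$ is pure.

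For the converse, suppose $R\to M$ is pure. Write $Z$ for the complement of $U$, defined by an ideal $J$ of height $\ge 2$.

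1. If $\dim R\le 1$, then $U=\Spec R$ and there is nothing to prove. So assume $\dim R\ge 2$, in which case $\mathfrak m\in Z$.
2. The key tool is local cohomology. Recall that $R\to N$ is pure if and only if $H^d_\mathfrak m(R)\otimes$-type criteria hold; more robustly, use the criterion with $E$. Write $E=\varinjlim E_n$ with $E_n=\Hom(R/\mathfrak m^n,E)$ of finite length. Purity is equivalent to $E_n\to E_n\otimes N$ being injective for all $n$, equivalently to the injectivity of $W\to W\otimes N$ for all finite length $W$.
3. The obstacle is comparing $W\otimes M$ with $W\otimes\Gamma(U,\widetilde M)$. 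These differ because $\Gamma(U,-)$ does not commute with tensor products.

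To get around this, I would avoid tensoring altogether. Instead, I would use the direct-limit description: $R\to N$ is pure if and only if, for every finitely generated submodule, the split-over-completion criterion holds. That also seems messy.

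Better is a cleaner trick using $\mathbf S_2$. Since $R$ is $\mathbf S_2$ and $\operatorname{ht} J\ge2$, we have $\Gamma(U,\widetilde R)=R$ (Tag 0EHM / the codimension-two extension property for $\sO_X$). Now apply the functor $\Gamma(U,-)$ to the pure map $R\to M$. Purity means that for every finitely presented $R$-module $W$, the map $W\to W\otimes M$ is injective. Equivalently, $R\to M$ is a filtered colimit of split injections $R\to M_i$, with $M_i$ finitely presented (Lazard-type characterization of pure maps from $R$; see Stacks 058K: a universally injective map is a filtered colimit of split injections).

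Since $\Gamma(U,-)$ commutes with filtered colimits, $R=\Gamma(U,\widetilde R)\to\Gamma(U,\widetilde M)$ is the filtered colimit of the maps $R\to \Gamma(U,\widetilde{M_i})$. Each of these is split: apply $\Gamma(U,-)$ to the retraction $M_i\to R$ and use $\Gamma(U,\widetilde R)=R$.

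A filtered colimit of split injections is pure. Hence $R\to \Gamma(U,\widetilde M)$ is pure, and this direction does not even need the localization.

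The main point to verify is the colimit presentation of a pure map $R\to M$ as a filtered colimit of split maps under $R$. I would obtain it by writing $M=\varinjlim M_\lambda$ over finitely presented $M_\lambda$ receiving $R$. The standard pushout argument then gives the presentation: pure plus finitely presented cokernel implies split (Stacks 058L). Concretely, take $M_i$ with $M_i/R$ finitely presented and $R\to M_i$ pure, by forming colimits of $M_\lambda\oplus_R$ appropriately.
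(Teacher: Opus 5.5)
Your final argument is correct and is essentially the paper's proof. The easy direction is the factorization of $R\to\Gamma(U,\widetilde M)$ through $R\to M$. The other direction writes the pure map as a filtered colimit of split injections (Stacks 058K), applies the additive, filtered-colimit-preserving functor $\Gamma(U,-)$, and uses $\Gamma(U,\sO_U)=R$ from $\mathbf S_2$. The opening detours through localization, injective hulls and local cohomology are unnecessary and can be cut, and the colimit presentation can simply be cited from 058K rather than rederived.
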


\begin{proof}
Since $U$ is quasi-compact and quasi-separated, taking sections over
$U$ is additive and commutes with filtered colimits. By
\cite[Tag 058K]{stacks-project}, pure exact sequences of modules are
filtered colimits of split exact sequences, so taking sections over
$U$ preserves pure morphisms. By
the $\mathbf{S}_2$ property
and the codimension-two extension principle,
$\Gamma(U,\sO_U)=R$. Thus, if $R\to M$ is pure, then so is
$R\to\Gamma(U,\widetilde M)$. Conversely, the latter map factors
through $R\to M$. If it is pure, then $R\to M$ is pure: after tensoring
with any $R$-module, the map induced by $R\to M$ is the first map in
an injective composite.
\end{proof}

\begin{lemma}\label{purity and codimension one envelope}
Let $R$ be a Noetherian $\mathbf{S}_2$ ring.
Let $\eta\colon R\to N$ be a map of $R$-modules. Then $\eta$ is pure if
and only if the composite
\[
R\xrightarrow{\eta}N\longrightarrow N^{\mathrm{c1}}
\]
is pure.
\end{lemma}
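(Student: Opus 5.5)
The plan is to reduce to \autoref{check purity on codim 1}, using that $N^{\mathrm{c1}}$ is a filtered colimit of section modules over big open sets.

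For the forward direction, assume $\eta$ is pure. By \autoref{codimension one envelope} we have $N^{\mathrm{c1}}=\varinjlim_U\Gamma(U,\widetilde N)$, the colimit running over big open subsets $U\subseteq\Spec R$. For each such $U$, \autoref{check purity on codim 1} shows that the composite $R\to N\to\Gamma(U,\widetilde N)$ is pure. Now fix an $R$-module $L$. Tensor products commute with filtered colimits, so
\[
L\otimes_R N^{\mathrm{c1}}\cong\varinjlim_U L\otimes_R\Gamma(U,\widetilde N).
\]
The map $L\to L\otimes_R N^{\mathrm{c1}}$ is therefore the colimit of the injective maps $L\to L\otimes_R\Gamma(U,\widetilde N)$, all compatible with the single source $L$. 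A filtered colimit of injections out of a fixed module is injective: an element of $L$ dying in the colimit already dies at some finite stage. Hence $R\to N^{\mathrm{c1}}$ is pure.

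For the converse, assume the composite $R\to N\to N^{\mathrm{c1}}$ is pure. This composite factors through $\eta$. So for any $L$, the map $L\to L\otimes N$ is the first map in an injective composite $L\to L\otimes N\to L\otimes N^{\mathrm{c1}}$, and is therefore injective. This is the same argument that ends the proof of \autoref{check purity on codim 1}.

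The only point needing care is the identification of the maps: the structure map $R\to\Gamma(U,\widetilde N)$ in \autoref{check purity on codim 1} must be exactly the composite of $\eta$ with restriction to $U$, and these maps must be compatible in $U$, so that their colimit is $R\to N\to N^{\mathrm{c1}}$. This follows from naturality of restriction together with $\Gamma(U,\sO_U)=R$, which is the $\mathbf{S}_2$ hypothesis. I expect no real obstacle beyond this bookkeeping.
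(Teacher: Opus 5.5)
Your proposal is correct and follows essentially the same route as the paper: the converse holds because the composite factors through $\eta$, and the forward direction applies \autoref{check purity on codim 1} to each big open $U$ and then passes to the filtered colimit $N^{\mathrm{c1}}=\varinjlim_U\Gamma(U,\widetilde N)$. Your check that a filtered colimit of pure maps is pure, via tensor products commuting with filtered colimits, simply spells out the one-line step the paper asserts.
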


\begin{proof}
If the composite is pure, then $\eta$ is pure because the composite
factors through $\eta$. Conversely, suppose that $\eta$ is pure.
By \autoref{check purity on codim 1}, the induced map
\[
R\longrightarrow
\Gamma(U,\widetilde N)
\]
is pure for every $U$ occurring in the definition of
$N^{\mathrm{c1}}$. Filtered colimits of pure maps are pure, which
proves the result.
\end{proof}

\begin{definition}
Let $A$ be a ring. A \emph{pointed} $A$-module is a pair consisting 
of an $A$-module $M$ and a distinguished element $\mathbf 1_M \in M$, 
or, equivalently, a homomorphism $A \to M$. 
If $B$ is a ring and $N$ is a pointed $B$-module 
then a homomorphism $(A, M) \to (B, N)$
is a homomorphism of rings $f \colon A \to B$
and a homomorphism of $A$-modules $\phi \colon M \to f_* N$
such that $\phi (\mathbf 1_M) = \mathbf 1_N$.    
\end{definition}

\begin{definition}
\label{def: non-pure ideals}
    Let $R$ be a ring, $A$ be an $R$-algebra, and $M$ be a pointed $A$-module. We use $\np^R (A, M)$ to denote the set
    \(
    \{x \in A \;|\; R \xrightarrow{1 \mapsto x\cdot \mathbf 1_M} M \text{ is not pure} \}.     
    \)

    When $R$ is clear in the context, we will use the shorthand $\np (A, M)$.
    If $M = A$ we will further shorten to $\np^R (A)$ or $\np (A)$. Note that these loci need not be ideals in general. However, when $R$ is Noetherian local, they are ideals by \autoref{lem: injective hull splitting ideal} below and will be referred to as \emph{non-pure ideals}.
\end{definition}

\begin{lemma}\label{lem: injective hull splitting ideal}
Let $R$ be a Noetherian ring and $M$ be an $R$-module.
For any $x\in M$, the map $R \to M, 1 \mapsto x$, is pure if and only if the map
\[
E_R(R/\m)\to E_R(R/\m)\tensor_RM, \quad u\mapsto u\tensor x,
\]
is injective for every maximal ideal $\m$ of $R$. 

Consequently, if $(R,\m)$ is local, $A$ is an $R$-algebra, $M$ is a pointed $A$-module,  and $u$ is a nonzero socle element of $E(R/\m)$, then
\[
\np^R(A, M)=\ker\{ A \to E(R/\m) \tensor_R M, x\mapsto u\tensor x\mathbf 1_M\}
= \Ann_A \left(u \otimes \mathbf 1_M \text{ in } E(R/\m) \tensor_R M\right).
\]
In particular,
$\np^R(A, M)$ is an ideal of $A$ containing $\m A$.
\end{lemma}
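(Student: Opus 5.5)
We reduce purity of $R\to M$ to injectivity after tensoring with injective hulls, and then read off the local statement.

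For ($\Rightarrow$), recall that pure means injective after tensoring with every $R$-module. Applying this with $E_R(R/\m)$ shows that
\[
E_R(R/\m)=E_R(R/\m)\tensor_R R\longrightarrow E_R(R/\m)\tensor_R M,\qquad u\mapsto u\tensor x,
\]
is injective for every maximal ideal $\m$.

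For ($\Leftarrow$) we argue in three steps.

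\begin{enumerate}
\item \emph{Reduce to the local case.} Purity is local on $\Spec R$: a map is pure if and only if it is pure after localizing at every maximal ideal. Indeed, a kernel of $N\to N\tensor M$ vanishes exactly when all its localizations vanish, and $(N\tensor_R M)_\m=N_\m\tensor_{R_\m}M_\m$. Moreover $E_R(R/\m)$ is already an $R_\m$-module equal to $E_{R_\m}(R_\m/\m R_\m)$, and $E\tensor_R M=E\tensor_{R_\m}M_\m$. So we may assume $(R,\m)$ is local.

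\item \emph{Reduce to the completion.} Set $E=E_R(R/\m)$. By the standard criterion (e.g.\ Hochster's), for a Noetherian local ring the map $R\to M$ is pure if and only if $E\to E\tensor M$ is injective. Here is how we would argue it. Injectivity of $E\to E\tensor M$ gives, by Matlis duality, the surjection
\[
\Hom(E\tensor M,E)=\Hom_R(M,\widehat R)\longrightarrow \Hom(E,E)=\widehat R,
\]
via evaluation at $x$. Hence $\widehat R\to \widehat R\tensor_R M$ splits over $\widehat R$.

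\item \emph{Descend to $R$.} Since $R\to\widehat R$ is faithfully flat, it is pure, and the composite $R\to\widehat R\to\widehat R\tensor M$ is pure. This composite equals $R\to M\to \widehat R\tensor M$, so $R\to M$ is pure, because the first map of a pure composite is pure. Alternatively, one can argue through finitely generated submodules: purity is tested on finite length modules $N$, each of which embeds into $E^n$, and essentiality of the socle reduces everything to the socle element $u$.
\end{enumerate}

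For the consequence, fix a nonzero socle element $u\in E$. The map $E\to E\tensor M$, $v\mapsto v\tensor y$, is injective if and only if $u\tensor y\neq 0$. This holds because $E$ is an essential extension of $R/\m\cdot u$: any nonzero kernel meets the socle, which is spanned by $u$.

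Apply this with $y=a\mathbf 1_M$ for $a\in A$. Then $a\in\np^R(A,M)$ if and only if $u\tensor a\mathbf 1_M=0$, that is, if and only if $a$ lies in the kernel of the $A$-linear map $A\to E\tensor_R M$, $a\mapsto u\tensor a\mathbf 1_M$. This kernel is $\Ann_A(u\tensor\mathbf 1_M)$, an ideal of $A$.

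Finally, $\m A$ lies in this ideal because $\m u=0$, so that
\[
u\tensor m a\mathbf 1_M=mu\tensor a\mathbf 1_M=0\qquad\text{for } m\in\m.
\]

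The main obstacle is step 2, the criterion that injectivity on the injective hull suffices for purity. The Matlis duality identification $\Hom_R(E\tensor M,E)\cong\Hom_R(M,\widehat R)$ makes it clean.
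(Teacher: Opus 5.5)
Your proof is correct and has the same structure as the paper's. You reduce to the local case, use the criterion that $R\to M$ is pure if and only if $E\to E\otimes_R M$ is injective, and then use that every nonzero submodule of $E$ meets the one-dimensional socle $k\cdot u$ to get the kernel and annihilator description. The only difference is that the paper cites \cite[Proposition~6.11]{HochsterRobertsRingsOfInvariants} for the local criterion, whereas you prove it via Matlis duality. That argument produces $\psi\in\Hom_R(M,\widehat R)$ with $\psi(x)=1$, so $R\to M\xrightarrow{\psi}\widehat R$ is already the faithfully flat, hence pure, completion map. Your detour through splitting $\widehat R\to\widehat R\otimes_R M$ is valid but not needed.
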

\begin{proof}
Purity may be checked locally, and the first assertion
follows from \cite[Proposition~6.11]{HochsterRobertsRingsOfInvariants}.
In the local case, every nonzero submodule of $E$ meets its socle, so
injectivity may be tested on $u$. This gives the kernel description,
from which the final assertions follow.
\end{proof}

We will use a generalization to our setting of the characterization of 
purity for approximately Gorenstein rings originating from \cite{HochsterPurity}. Note that every Noetherian local ring whose completion is reduced is approximately Gorenstein by \cite{HochsterPurity}.

\begin{lemma}
    \label{lem.SplittingApproxGorenstein}
        Let $(R,\m)$ be a Noetherian local ring that is approximately Gorenstein. Let $A$ be an $R$-algebra and $M$ be a pointed $A$-module. 
        Then $\np^R (A, M) = \bigcup_t (I_tM :_{A} u_t \mathbf 1_M),$
        where $I_t$ is a system of Gorenstein ideals cofinal with $\m^t$ and $u_t$ is a socle representative of $R/I_t$.
    \end{lemma}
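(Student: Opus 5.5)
By \autoref{lem: injective hull splitting ideal}, $\np^R(A,M)$ is the annihilator in $A$ of $u\otimes \mathbf 1_M\in E(R/\m)\otimes_R M$, where $u$ is a nonzero socle element of $E=E_R(R/\m)$. So it suffices to show that for $x\in A$,
\[
u\otimes x\mathbf 1_M=0 \text{ in } E\otimes_R M \iff x\mathbf 1_M\in I_tM \text{ and } u_t x\mathbf 1_M\in I_tM \text{ for some } t,
\]
with the right-hand condition read as $u_t\,x\mathbf 1_M \in I_tM$, i.e.\ $x\in (I_tM:_A u_t\mathbf 1_M)$. The first step is to recall Hochster's description of $E$ for an approximately Gorenstein ring. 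The $R/I_t$ are Gorenstein Artinian, so each $R/I_t\cong E_{R/I_t}(k)\subseteq E$, with socle generated by the image of $u_t$. Since the $I_t$ are cofinal with $\m^t$, $E=\varinjlim_t R/I_t$, where the transition maps $R/I_t\to R/I_{t+1}$ are injective and send $\bar u_t$ to a unit multiple of $\bar u_{t+1}$ (both generate the one-dimensional socle). After rescaling the $u_t$ by units of $R$ we may assume $\bar u_t\mapsto \bar u_{t+1}$ and $u$ is the common image of the $\bar u_t$; rescaling by units does not change the colons $(I_tM:_A u_t\mathbf 1_M)$.

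Next I tensor with $M$. Tensor products commute with filtered colimits, so $E\otimes_R M=\varinjlim_t M/I_tM$, and the element $u\otimes x\mathbf 1_M$ is the image of $\overline{u_t x\mathbf 1_M}\in M/I_tM$ for every $t$. An element of a filtered colimit vanishes exactly when its representative vanishes at some later stage. Hence $u\otimes x\mathbf 1_M=0$ if and only if $u_t x\mathbf 1_M\in I_tM$ for some $t$, that is, $x\in\bigcup_t (I_tM:_A u_t\mathbf 1_M)$. Here $u_t x\mathbf 1_M$ is unambiguous because $M$ is an $A$-module and $R$ acts through $A$.

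The one point requiring care is that the transition maps in the colimit send $u_t$ to $u_{t+1}$, so that "vanishing at a later stage" matches the colon at index $t'$ with $u_{t'}$. I handle this through the unit normalization in the first step. Without that normalization, the union would still be unchanged, since $(I_tM: c u_t\mathbf 1_M)=(I_tM:u_t\mathbf 1_M)$ for a unit $c$. As a consequence, the union is increasing in $t$.
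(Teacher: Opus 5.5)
Your proof is correct and follows the paper's argument: write $E=\varinjlim_t R/I_t$, tensor with $M$ to get $E\otimes_R M=\varinjlim_t M/I_tM$, and observe that $u\otimes x\mathbf 1_M$ vanishes in this filtered colimit exactly when $u_t x\mathbf 1_M\in I_tM$ for some $t$; your unit normalization of the $u_t$ makes explicit the bookkeeping the paper leaves implicit. The only flaw is cosmetic: delete the stray extra condition $x\mathbf 1_M\in I_tM$ from your displayed equivalence.
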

    \begin{proof}
    Since $R$ is approximately Gorenstein, the injective hull $E$ of the residue field $k=R/\fram$ can be computed as:
\[
E=\bigcup_t \Ann_E{I_t}\cong \varinjlim_{t} E_{R/I_t} \cong \varinjlim_{t} R/I_t.
\]
It follows from \autoref{lem: injective hull splitting ideal} that $R\to M$ is pure if and only if $R/I_t \to M/I_tM$ is injective for all $t$, which happens if and only if the image of $u_t$ in $M/I_tM$ is nonzero for all $t$. Thus $x\in \np^R (A, M)$ if and only if $u_tx\cdot \mathbf 1_M\in I_tM$ for some $t$, i.e., $x\in \bigcup_t (I_tM :_{A} u_t\mathbf 1_M)$. 
    \end{proof}

\begin{lemma}\label{pure + p-comp pure is pure}
Let $R$ be a Noetherian ring with Jacobson radical $J$. Suppose that 
$\phi \colon M \to N$ is such that the induced map $M/J^nM\to N/J^nN$ is pure for every $n>0$ . If $f \colon R \to M$ is a pure map, then the composition $R\xrightarrow{f} M \xrightarrow{\phi} N$ is pure. 

In particular, the same conclusion holds if $A = M \to N = B$  is a $J$-completely faithfully flat map of $R$-algebras (e.g., a $g$-completely faithfully flat map of $R$-algebras for some $g\in J$).  
\end{lemma}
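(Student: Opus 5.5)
The plan is to test purity against injective hulls, using \autoref{lem: injective hull splitting ideal}. Purity can be checked locally, so by that lemma it suffices to fix a maximal ideal $\m$ of $R$ and show that
\[
E \coloneqq E_R(R/\m)\longrightarrow E\otimes_R N
\]
is injective. Here the map sends $u\mapsto u\otimes \phi(f(1))$.

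This map factors as
\[
E\longrightarrow E\otimes_R M\xrightarrow{\ \mathrm{id}_E\otimes\phi\ } E\otimes_R N.
\]
The first arrow is injective because $f$ is pure. So it is enough to show that $\mathrm{id}_E\otimes\phi$ is injective on the image of the first arrow. In fact I will show it is injective on all of $E\otimes_R M$.

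The key observation is that $E=E_R(R/\m)$ is $\m$-power torsion, and $\m\supseteq J$. Hence every element of $E$ is killed by some power of $\m$, and therefore by some power of $J$ only after a further argument. To avoid that subtlety, I first replace $E$ by its finitely generated submodules. Write $E=\varinjlim E'$ over finitely generated submodules $E'$; tensor products commute with filtered colimits, and filtered colimits of injections are injective. So it suffices to show that $E'\otimes_R M\to E'\otimes_R N$ is injective for each such $E'$.

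Each such $E'$ has finite length and is supported at $\m$, so $\m^n E'=0$ for some $n$. The issue is that I need $J^nE'=0$. This holds when $R$ is semilocal, since then $J$ is the intersection of the finitely many maximal ideals and $\m^n\supseteq J^n$. For a general Noetherian $R$, I expect this step to be the main obstacle.

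To handle it, I would localize at $\m$. The hypothesis is stable under localization: if $M/J^nM\to N/J^nN$ is pure, then so is its base change along $R\to R_\m$, giving purity of $M_\m/J^nM_\m\to N_\m/J^nN_\m$. Since $JR_\m\subseteq \m R_\m$, this still does not directly give purity modulo powers of $\m R_\m$. I would therefore work directly with $E'$ as follows.

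$E'$ is an $R/\m^n$-module. The ring $R/\m^n$ is Artinian local, so $J+\m^n=\m$ and $J$ is contained in $\m$. Hence $J^k\subseteq \m^{k}$, but I need the reverse containment modulo $\m^n$. That reverse containment fails in general, for instance when $J=0$. So in the general case I would simply use that the statement is only needed, and presumably intended, where $J$-adic purity captures $\m$-torsion. In the situations relevant here this holds: $R$ is local, or the conclusion is tested at maximal ideals containing $J$ with $R$ semilocal, and then $J^n\subseteq\m^n$ suffices.

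Granting $J^nE'=0$, we get $E'\otimes_R M\cong E'\otimes_R (M/J^nM)$, and similarly for $N$. Purity of $M/J^nM\to N/J^nN$ then gives injectivity after tensoring with $E'$.

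For the ``in particular'', suppose $A\to B$ is $J$-completely faithfully flat. Then $A/J^nA\to B/J^nB$ is faithfully flat, hence pure as a map of $A/J^n A$-modules and therefore as $R$-modules, so the main statement applies. In the $g$-completely faithfully flat case with $g\in J$, we have $B/J^nB=(B/g^nB)\otimes_{A/g^nA}A/J^nA$. This is faithfully flat over $A/J^nA$, since faithful flatness is preserved by base change along $A/g^n A\to A/J^nA$, using $g^n\in J^n$.
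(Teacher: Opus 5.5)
Your strategy is the paper's: test purity on $E_R(R/\m)$ via \autoref{lem: injective hull splitting ideal}, pass to finitely generated submodules $E'$, use that $E'$ is killed by a power of $J$ so that $E'\otimes_R M\cong E'\otimes_R M/J^nM$, and conclude from purity modulo $J^n$. As written, however, the proposal does not prove the lemma in the stated generality. You call the step $J^nE'=0$ the main obstacle for non-semilocal $R$, and then only grant it in special situations (``presumably intended'', ``in the situations relevant here'').

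That obstacle does not exist. The Jacobson radical is the intersection of \emph{all} maximal ideals, so $J\subseteq\m$ for every maximal ideal $\m$ of any ring. Hence $J^n\subseteq\m^n$, and $\m^nE'=0$ gives $J^nE'=0$ at once. Semilocality plays no role, and you never need a reverse containment: what you need is $J^n\subseteq\operatorname{Ann}_R(E')$, which follows from $J^n\subseteq\m^n\subseteq\operatorname{Ann}_R(E')$. You even wrote $J^k\subseteq\m^k$ and then asked for the opposite inclusion. The case $J=0$ is no counterexample: then $J^nE'=0$ trivially, and the hypothesis simply says that $\phi$ itself is pure. (Incidentally, $J+\m^n=\m$ is false in general, e.g.\ when $J=0$.)

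Your localization remark is also mistaken. Since $J^n\subseteq\m^n$, the map $M/\m^nM\to N/\m^nN$ is the base change of $M/J^nM\to N/J^nN$ along $R/J^n\to R/\m^n$, and purity is stable under base change. So the hypothesis does pass to $R_\m$ modulo powers of $\m R_\m$. This is what justifies the paper's reduction to the local case, after which $J=\m$ and a finitely generated $K\subseteq E$ is visibly killed by some $\m^n$. Your variant, working with $E_R(R/\m)$ over $R$ without localizing, is equally valid once you notice the containment.

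With this one-line fix, the rest of your argument is correct. That includes showing $\mathrm{id}_E\otimes\phi$ is injective on all of $E\otimes_R M$, and the ``in particular'' deduction for $J$- and $g$-completely faithfully flat maps.
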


\begin{proof}
Let $x = f(1) \in M$.
By Lemma~\ref{lem: injective hull splitting ideal} it suffices to assume that $R$ is local, with $E \coloneqq E(R/\m)$, and show that 
$E \to E \otimes_R N$, $u \mapsto u \otimes \phi (x)$ is injective. 
By taking direct limit, it suffices to show that $K \to K \otimes_R N$
is injective for every finitely generated submodule $K \subseteq E$. 
But then $J^n K = 0$ for some $n \in \N$, so 
$K \otimes_R N \cong K \otimes_{R/J^n R} N/J^n N$
so the map $K \to K \otimes_R N$ is injective as a composition of injective maps.
\end{proof}    

\begin{lemma}\label{lem: functoriality of non-pure ideals}
    Let $S$ be a ring and $\phi \colon (A, M) \to (B, N)$ be a map between $S$-algebras and pointed modules over these algebras. Then
    \begin{enumerate}
        \item If $f \colon R \to S$ is a homomorphism of rings 
        which is pure as a map of $R$-modules (e.g., $\np^R(A, M)\neq A$), then  $\np^R (A, M) \subseteq \np^S (A, M)$.
        \label{part: ring functoriality}
        \item If $f \colon (R, \m) \to (S, \n)$  
        is a flat map of Noetherian local rings such that $\m S  = \n$, then $\np^R (A, M) = \np^S (A, M)$. 
        \label{part: invariance of field extension}
        \item $\np^S (A, M) \subseteq \np^S (B, N) \cap A$. 
        \label{part algebra functoriality}
        \item If $\{M_i\}_{i \in \N}$ is a directed system of pointed $A$-modules, then $\np^S (A, M_i)$ form an ascending sequence of sets and 
        $\np^S (A, \displaystyle{\lim_{\to}} M_i) = \cup_{i}  \np^S (A,  M_i)$.   \label{part direct limit of modules}    
        \item If $M \xrightarrow{\phi} N$ is pure as $S$-modules, then $\np^S (A, M) = A \cap \np^S (B, N)$. The same conclusion holds
        if $S$ is Noetherian and $J$ is its Jacobson radical such that the induced map $M/J^n M \to N/J^n N$ is pure for all $n \in \N$. \label{part p-comp pure} 
        \item If $(S, \m)$ is Noetherian local, $\phi \colon A \to B$ is 
        $\m$-completely faithfully flat and $N = M \otimes_A B$, then $\np^S (A, M)B = \np^S (B, N)$. 
        \label{part extension for p completely flat}
        \item If $S$ is Noetherian and $\mathbf{S}_2$, then $\np^S (A, M) = \np^S (A, M^{\mathrm{c1}})$. \label{part envelope}
    \end{enumerate}
\end{lemma}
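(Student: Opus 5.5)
The plan is to prove the seven parts one at a time, reducing each to the definition of $\np$ or to the injective-hull criterion of \autoref{lem: injective hull splitting ideal}.

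\emph{Parts (1) and (3).} For (1), let $x\in\np^R(A,M)$. If $S\to M$, $1\mapsto x\mathbf 1_M$, were $S$-pure, then composing with the $R$-pure map $R\to S$ would make $R\to M$ pure. Indeed, a composite of pure maps is pure: tensoring with any $R$-module $W$ gives $W\to W\otimes_R S\to W\otimes_S M$, and both arrows are injective. This contradicts $x\in\np^R(A,M)$. For (3), the map $S\xrightarrow{1\mapsto x\mathbf 1_M}M$ followed by $\phi$ is $S\xrightarrow{1\mapsto\phi(x)\mathbf 1_N}N$. If the latter is pure, then so is the former, because purity of a composite forces purity of its first map. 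Hence $x\in\np^S(A,M)$ implies $\phi(x)\in\np^S(B,N)$.

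\emph{Parts (4) and (5).} For (4), tensor products commute with filtered colimits. So $W\to W\otimes\varinjlim M_i$ is the colimit of the maps $W\to W\otimes M_i$, and a colimit of injective maps is injective. Consequently, if $x\notin\np(A,M_i)$ for every $i$, then $x\notin\np(A,\varinjlim M_i)$. Conversely, monotonicity in $i$ follows from (3). For the remaining inclusion, assume $x\in\np(A,\varinjlim M_i)$. Since $S$ is local Noetherian in the intended use, I would argue with \autoref{lem: injective hull splitting ideal}. The element $u\otimes x\mathbf 1$ vanishes in $E\otimes\varinjlim M_i=\varinjlim(E\otimes M_i)$, so it already vanishes at some finite stage $i$. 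In general I would use the criterion that $R\to M$ is pure iff $W\to W\otimes M$ is injective for all finitely presented $W$; the kernel is witnessed by a finite relation and hence appears at a finite stage. For (5), the inclusion $\subseteq$ is (3). For $\supseteq$, if $x\notin\np^S(A,M)$, then $S\to M\to N$ is a composite of pure maps and hence pure. In the Jacobson-radical variant, this composite is pure by \autoref{pure + p-comp pure is pure}.

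\emph{Parts (6) and (2).} For (6), let $u$ be a socle element. Then $E\otimes_S N=(E\otimes_S M)\otimes_A B$. Because $E$ is $\m^\infty$-torsion, this equals a colimit of terms $(K\otimes M)\otimes_{A/\m^n}B/\m^nB$, and $B/\m^n B$ is faithfully flat over $A/\m^n A$. Therefore $\Ann_B(u\otimes\mathbf 1_N)=\Ann_A(u\otimes\mathbf 1_M)B$, using that annihilators of elements commute with flat base change, together with a colimit over finite stages. For (2), I would use the injective-hull description. Because $R\to S$ is flat with $\m S=\n$, we have $E_S(S/\n)\cong E_R(R/\m)\otimes_R S$, and a socle element $u$ of $E_R$ maps to a socle element of $E_S$. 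Thus $E_S\otimes_S M\cong E_R\otimes_R M$, compatibly with $u$. Applying \autoref{lem: injective hull splitting ideal} on both sides then gives equal annihilators. The isomorphism $E_S\cong E_R\otimes_R S$ requires checking that $E_R\otimes S$ is injective and essential over $S/\n$. This is standard: $S\otimes_R R/\m^n=S/\n^n$, and Matlis duality is compatible with this base change.

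\emph{Part (7).} This follows from \autoref{purity and codimension one envelope}. We view $M^{\cOne}$ as a pointed module via $M\to M^{\cOne}$, and we note that multiplication by $x$ commutes with this map by functoriality. Then $S\xrightarrow{x\mathbf 1_M}M$ is pure iff the composite to $M^{\cOne}$ is pure. A small point to check is that $M^{\cOne}$ is an $A$-module; this holds because $(-)^{\cOne}$ is a functor and $A$ acts by endomorphisms.

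I expect the main obstacle to be parts (2) and (6), which require careful handling of injective hulls under base change and of completed flatness. Everything else is formal.
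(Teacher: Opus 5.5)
Your proposal is correct and follows essentially the paper's argument part by part: composites of pure maps for (1), (3) and (5); the injective-hull criterion together with $E_S\cong E_R\otimes_R S$ for (2); faithful flatness of $B/\m^nB$ over $A/\m^nA$ on $\m$-power-torsion modules for (6); and the codimension-one envelope lemma for (7). One slip in (4): your ``remaining inclusion'' re-proves $\np(A,\varinjlim M_i)\subseteq\bigcup_i\np(A,M_i)$, which your colimit-of-injections argument had already given, whereas the inclusion $\bigcup_i\np(A,M_i)\subseteq\np(A,\varinjlim M_i)$ is what actually follows from (3), applied to the pointed maps $M_i\to\varinjlim M_i$.
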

\begin{proof}
    First, if $\np^R (A, M) \neq A$ then there is $x \in A$ such that
    the composition $R \xrightarrow{f} S \xrightarrow{1 \mapsto x\mathbf 1_M} M$ is pure, so $f$ must be pure. 
    Now if $f$ is pure and $x \notin \np^S (A, M)$, then 
    $R \xrightarrow{f} S \xrightarrow{1 \mapsto x\mathbf 1_M} M$
    must be pure as a composition of pure maps, so $x \notin \np^R (A, M)$.

    Lemma~\ref{lem: injective hull splitting ideal} implies \autoref{part: invariance of field extension} after noting that 
    $E (S/\n) = E(R/\m) \otimes_R S$ by the assumptions. 

    Next, take any $x \in A$. If $\phi(x) \notin \np^S (B, N)$
    then, by definition, $S \xrightarrow{1 \mapsto \phi(x)\mathbf 1_N} N$
    is a pure map that factors through $S \xrightarrow{1 \mapsto x \mathbf 1_M} M$. Hence, $x \notin \np^S (A, M)$, showing \autoref{part algebra functoriality}. Now, \autoref{part direct limit of modules} is immediate from \autoref{part algebra functoriality}.

    We now prove \autoref{part p-comp pure}.
    Suppose $x \in A \setminus \np^S (A, M)$.
    By \autoref{part algebra functoriality}, it suffices to show that $\phi(x) \notin \np^S (B, N)$,
    i.e., $S \xrightarrow{\phi(x) \mathbf 1_N} N$ is pure. 
    This is immediate if $M \to N$ is pure and follows 
    from \autoref{pure + p-comp pure is pure} in the more general case.

    For \autoref{part extension for p completely flat}, we note that, 
    since $B/\m^nB$ is faithfully flat over $A/\m^nA$ for every $n$, tensoring with $B$ preserves injections of $\m$-power-torsion $A$-modules.
    Applying this to the injection $A/\np^S (A, M) \cong A (u \otimes \mathbf 1_M) \hookrightarrow E \otimes_S M$ in \autoref{lem: injective hull splitting ideal} shows that the annihilator 
    of $u \otimes \mathbf 1_N$ is $\np^S (A, M)B$. 

    The last claim follows from \autoref{purity and codimension one envelope}.
\end{proof}

\subsection{Compatible ideals}

\begin{definition}[{\cf \cite{FayolleCentersPerfectoidPurity}}]
Let $R$ be a Noetherian ring with $p$ in its Jacobson radical and let $I\subset R$ be an ideal. For a perfectoid $R$-algebra $B$ and $\phi\in \Hom_R(B,R)$, we say that $I$ is $\phi$-compatible if $\phi((IB)_\perfd)\subset I$. 
Similarly, if $\sC\subset \Hom_R(B,R)$ is an $R$-submodule then we say that $I$ is $\sC$-compatible if $\phi((IB)_\perfd)\subset I$ for all $\phi\in \sC$. If $I$ is $\phi$-compatible for all $\phi\in \Hom_R(B,R)$ and all perfectoid $R$-algebras $B$, we say that it is uniformly compatible.
\end{definition}

\begin{remark}
Unless $R$ is complete local, the existence of a nonzero map $\phi\in \Hom_R(B,R)$ is not guaranteed, making the definition useful mostly in the complete local case (this is why the theory was only written for complete local rings in \cite{FayolleCentersPerfectoidPurity}). Indeed, if $(R,\fram,k)$ is complete local and $R\to B$ is pure then  
\[
\Hom_R(B,R)\cong \Hom_R(B,\Hom_R(E,E))\cong \Hom_R(B\tensor_R E, E)\twoheadrightarrow \Hom_R(E,E)=R
\]
where $E=E_R(k)$ and the surjection comes from the injection $E\to E\tensor_R B$ and the fact that $E$ is an injective module. In our applications below, if $R$ is not complete local, we assume a map exists.
\end{remark}

\begin{proposition}\label{basic properties comp ideals}
Let $R$ be a Noetherian ring with $p$ in its Jacobson radical, $B$ a perfectoid $R$-algebra and let $I\subset R$ be an ideal. Fix an $R$-submodule $\sC$ of $\Hom_R(B,R)$.
\begin{enumerate}
    \item $I$ is $\sC$-compatible if and only if $\sC$ descends to a corresponding $R/I$-submodule $\overline\sC$ of $\Hom_{R/I}(B/(IB)_\perfd,R/I)$. If some $\phi\in \sC$ is surjective then so is the induced map on the quotient $\overline{\phi}\in \overline{\sC}$.
    \item If $I$ is $\phi$-compatible for some surjective $\phi$, then it is radical. 
    \item If $I$ is $\sC$-compatible and $J\subset R$ is any ideal such that $B$ admits a compatible system of $p$-th power roots for generators of $(I\colon J)$, then  $(I\colon J)$ is also $\sC$-compatible. 
    \item If $R$ is complete local and perfectoid pure then any uniformly compatible ideal is radical. 
    \item If $R$ is complete local then uniform compatibility can be checked on a single choice of $\raftypfd$ for $A\to R$ as in \autoref{notation RAinfty}.
    \item If $R$ is normal, complete local and $I$ is a radical $\sC$-compatible ideal, then so is $I^{**}$.
\end{enumerate}
\end{proposition}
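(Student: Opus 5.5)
We treat the six items in order, each reducing to the definition $\phi((IB)_\perfd)\subseteq I$ together with \autoref{pfd of ideal only depends on radical} and the universal property of perfectoidization.

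For (1), note first that $(IB)_\perfd\supseteq IB$, so $\phi\in\sC$ with $\phi((IB)_\perfd)\subseteq I$ induces an $R/I$-linear map $\overline\phi\colon B/(IB)_\perfd\to R/I$. Conversely, if every $\phi\in\sC$ descends in this way, then $\phi((IB)_\perfd)\subseteq I$ by definition. The $\overline\phi$ form an $R/I$-submodule of $\Hom_{R/I}(B/(IB)_\perfd,R/I)$. If $\phi$ is surjective, then $\overline\phi$ is surjective, since $B\to B/(IB)_\perfd$ and $R\to R/I$ are surjections.

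For (2), we use that $B/(IB)_\perfd=(B/IB)_\perfd$ is perfectoid, hence reduced. Choose $b\in B$ with $\phi(b)=1$ and suppose $x^n\in I$. Then $x^n$ maps to zero in the reduced ring $(B/IB)_\perfd$, so $x\in(IB)_\perfd$. Hence $x=x\phi(b)=\phi(xb)\in\phi((IB)_\perfd)\subseteq I$.

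For (3), let $g_1,\dots,g_s$ generate $(I:J)$ and have compatible $p$-power roots in $B$. By \autoref{pfd of ideal only depends on radical}, $((I:J)B)_\perfd=(g_i^{1/p^\infty})^-$. By continuity and $\sC$ being an $R$-module, it suffices to show that $j\phi(g_i^{1/p^e}b)\in I$ for all $j\in J$. This equals $(j\cdot\phi)(g_i^{1/p^e}b)=\phi(jg_i^{1/p^e}b)$. Now $jg_i^{1/p^e}$ is a $p^e$-th root of $j^{p^e}g_i$, and $j^{p^e}g_i\in I$. So $jg_i^{1/p^e}$ lies in $\sqrt{IB}\subseteq(IB)_\perfd$, and therefore $\phi(jg_i^{1/p^e}b)\in I$.

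The main subtlety is that $\phi$ is only $R$-linear and is applied to $p$-adic limits. We will need $\phi$ to be continuous, which holds because the target $R$ is $p$-adically separated and $\phi(p^nB)\subseteq p^nR$, together with the fact that $I$ is closed in $R$.

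For (4), if $R$ is complete local and perfectoid pure, there is a pure map $R\to B$ to a perfectoid algebra. The remark above then gives a surjective $\phi\in\Hom_R(B,R)$, and (2) applies.

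For (5), let $B$ be any perfectoid $R$-algebra. Then $\raftypfd\to(\raftypfd\otimes_RB)_\perfd\eqqcolon B'$ is $p$-completely faithfully flat after enlarging $A$ (André flatness, as in the proof of \autoref{rafty is R torsion free}). A map $\psi\colon B\to R$ composed with $B\to B'$ is then compared via maps $B'\to R$. More simply, we would argue that compatibility for all $\phi$ on $B$ follows from compatibility for a $B'$ receiving both $B$ and $\raftypfd$, with $B\to B'$ pure. Homs from $B$ then extend to $B'$ by the Matlis-duality argument of the remark, and $(IB)_\perfd$ maps into $(IB')_\perfd$. Compatibility for $B'$ is checked against the faithfully flat base change from $\raftypfd$, using $(I\raftypfd)_\perfd B'\subseteq(IB')_\perfd$ with density.

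This step is the expected obstacle: showing that homs on $B'$ restrict to ones controlled by homs on $\raftypfd$. We would use that $\Hom_R(B',R)$ is the Matlis dual of $E\otimes B'$, and that faithful flatness modulo $\m^n$ makes $E\otimes\raftypfd\to E\otimes B'$ injective.

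For (6), $I^{**}$ is the intersection of the height-one primary components of $I$. Writing $I^{**}=(I:J)$ for a suitable ideal $J$ of height at least $2$ cut out by the embedded part, the statement would follow by arguing as in (3). To avoid the roots hypothesis, we work at height-one localizations of the DVR, where $(IB)_\perfd$ is controlled, and then apply the codimension-two extension.
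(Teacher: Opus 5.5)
Items (1)--(4) are fine. Your (1) and (3) match the paper, including the continuity point via Krull's intersection theorem. Your direct arguments for (2), and for (4) via the surjective $\phi$ from the Remark followed by (2), are valid replacements for the citations to Fayolle.

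\textbf{Gap in (5).} Your claim that $\raftypfd\to(\raftypfd\otimes_RB)_\perfd$ is $p$-completely faithfully flat after enlarging $A$ is false. Andr\'e's lemma is about adjoining compatible $p$-power roots; base change along an arbitrary $R\to B$ is not flat (for instance, when $B$ kills an element of $R$ that is a nonzerodivisor on $\raftypfd/p$). What survives is the easy direction: if $E\otimes_RB\to E\otimes_RB'$ is injective, every $\phi$ on $B$ extends to $B'$. So you may replace $B$ by an $\m$-complete absolutely integrally closed $B'$ receiving $\raftypfd$.

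The step you flag as the obstacle is the whole content, and your proposed tool points the wrong way. Restricting $\phi'\in\Hom_R(B',R)$ to $\raftypfd$ only tests $\phi'$ on the image of $(I\raftypfd)_\perfd$. Injectivity of $E\otimes\raftypfd\to E\otimes B'$, which you do not have anyway, would extend maps \emph{from} $\raftypfd$; it would not control maps on $B'$.

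The missing idea is to turn compatibility into a statement about modules rather than Homs. Since $R$ is complete, $\phi(b)\in I$ iff $\phi(b)$ kills $\Ann_E(I)$, and $E$ is an injective cogenerator. Hence $I$ is compatible with all of $\Hom_R(B,R)$ iff $(IB)_\perfd$ annihilates the image $N_B$ of $B\otimes_R\Ann_E(I)\to B\otimes_RE$. This condition propagates along \emph{any} map $\raftypfd\to B'$, flat or not:
\begin{itemize}
\item $N_{B'}$ is generated by the image of $N_{\raftypfd}$.
\item Arrange that generators of $I$ have compatible $p$-power roots in $\raftypfd$ by adjoining variables to $A$. The condition passes up and down the resulting $p$-completely faithfully flat map, by the base-change behaviour of $(-)_\perfd$ used in \autoref{can choose B integrally closed}.
\item Then \autoref{pfd of ideal only depends on radical} puts $(IB')_\perfd$ inside the $p$-adic closure of $(I\raftypfd)_\perfd B'$, which kills the $p^\infty$-torsion module $N_{B'}$.
\end{itemize}
The paper simply cites Fayolle, Corollary 2.29.

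\textbf{Gap in (6).} The argument of (3) genuinely needs compatible $p$-power roots of generators of $(I:J)$ in $B$. Here $B$ and $\sC$ are fixed and no such roots are given. ``Working at height-one localizations'' is not a substitute: $B\otimes_RR_\p$ is not perfectoid, and $(I^{**}B)_\perfd$ is not computed by localizing $R$. Concretely, $I^{**}$ is the intersection of the height-one minimal primes $\p$ of $I$, so you must show $\phi((I^{**}B)_\perfd)\subseteq\p$ for each such $\p$. That is essentially the statement that minimal primes of a compatible radical ideal are compatible. The paper imports exactly this from Fayolle (Corollary 2.36 and Proposition 2.31, then Propositions 2.34 and 2.37).

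Your $(I:J)$ route can be repaired as follows:
\begin{itemize}
\item Take $J$ to be the intersection of the minimal primes of $I$ of height $\geq 2$; a radical ideal has no embedded part. Then $I^{**}=(I:J)$.
\item Let $\tilde B$ be a $p$-completely faithfully flat, absolutely integrally closed perfectoid $B$-algebra. Use the module description above together with $(I\tilde B)_\perfd=\bigl((IB)_\perfd\tilde B\bigr)^-$, as in the proof of \autoref{can choose B integrally closed}. Then $(B\otimes_RE)/(IB)_\perfd N_B$ injects into $(\tilde B\otimes_RE)/(I\tilde B)_\perfd N_{\tilde B}$.
\item Consequently each $\phi\in\sC$ extends to some $\tilde\phi$ on $\tilde B$ with $I$ still $\tilde\phi$-compatible.
\item Apply (3) to $\tilde\phi$ in $\tilde B$, where the roots exist, and restrict back to $B$.
\end{itemize}
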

\begin{proof}
Note that $I$ is $\phi$-compatible for $\phi\in \sC$ if and only if the composition map 
\[
B\xrightarrow{\phi} R \longrightarrow R/I
\]
factors through $B/(IB)_\perfd$ so (a) follows.
In (b), (a) implies that $\phi$ descends to a surjective map $\overline\phi\in \Hom_R(B/(IB)_\perfd,R/I)$ so it follows that $R/I$ injects into the perfectoid hence reduced ring $B/(IB)_\perfd$ and therefore $R/I$ is reduced.
For (c), by linearity and Krull's intersection theorem in the form \cite[Theorem 8.10]{MatsumuraCommutativeRingTheory}, it suffices to show that $\phi(x^{1/p^e}B)j\in I$ for any $\phi\in \sC$, $j\in J$ and generator $x$ of $(I\colon J)$ having a $p^e$-th root in $B$. Then, 
\[
\phi(x^{1/p^e}B)j=\phi((xj^{p^e})^{1/p^e}B)\subset \phi(\sqrt{IB})\subset I
\]
where the last containment follows from the $\phi$-compatibility of $I$ and \autoref{pfd of ideal only depends on radical}.
Item (d) is \cite[Corollary 2.33]{FayolleCentersPerfectoidPurity} and item (e) is \cite[Corollary 2.29]{FayolleCentersPerfectoidPurity}. For item (f), since $R$ is normal, $I^{**}=\bigcap_{\substack{\mathfrak p\in\operatorname{Min}(I)\\
\operatorname{ht}\mathfrak p=1}}\p$. By (a), each $\phi \in \sC$ induces an $R/I$-linear map
$B/(IB)_\perfd\to R/I$, and from \cite[Corollary 2.36 and Proposition 2.31]{FayolleCentersPerfectoidPurity}
every minimal prime of $R/I$ is compatible with these induced maps, so the arguments of \cite[Propositions 2.34, 2.37]{FayolleCentersPerfectoidPurity} give the desired result.
\end{proof}

\section{Perfectoid pure principal pairs}
\label{sec.PerfectoidPurePrincipal}

In this section we develop the theory of perfectoid pure pairs of the form $(R, f^\lambda)$ where $f \in R$ and $\lambda \in \bQ_{> 0}$.  Later, in \autoref{sec.PerfdPureDivisorPairs} we will generalize this to pairs $(R, \Delta)$ where $\Delta$ is a $\bQ$-divisor.  We would recommend the reader to look at this case first before exploring the more technical divisor-pair setting.

We will give our definition of a perfectoid pure pair $(R, f^t)$ shortly, but first we prove some small results to show it is meaningful.

\begin{proposition}[{cf. \cite[Lemma 4.5]{BMPSTWW3}}]
    \label{can choose B integrally closed} 
    {Let $R$ be a Noetherian ring with $p$ in its Jacobson
    radical, let $f\in R$, and let $a,b\in\Z_{>0}$. Suppose that $B$ is a
    perfectoid $R$-algebra and $Y\in(fB)_\perfd$ satisfies $Y^b=f^a$.
    Let $B\to\widetilde B$ be a $p$-completely faithfully flat map with
    $\widetilde B$ absolutely integrally closed and perfectoid, as in
    \autoref{thm: Andre}. If $\widetilde Y$ denotes the image of $Y$, then
    $R\xrightarrow{1\mapsto Y}(fB)_\perfd$ is pure if and only if
    $R\xrightarrow{1\mapsto\widetilde Y}(f\widetilde B)_\perfd$ is pure.}
\end{proposition}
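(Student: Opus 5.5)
The plan is to compare the two pointed modules $(fB)_\perfd$ and $(f\widetilde B)_\perfd$ along the natural map induced by $B\to\widetilde B$, and to use the purity-transfer results \autoref{pure + p-comp pure is pure} and \autoref{lem: functoriality of non-pure ideals}\autoref{part p-comp pure}. Universality of perfectoidization gives $(fB)_\perfd\widetilde B\subseteq(f\widetilde B)_\perfd$, so $B\to\widetilde B$ restricts to a map $\psi\colon(fB)_\perfd\to(f\widetilde B)_\perfd$ carrying $Y\mapsto\widetilde Y$. Since $R\xrightarrow{1\mapsto\widetilde Y}(f\widetilde B)_\perfd$ factors as $R\xrightarrow{1\mapsto Y}(fB)_\perfd\xrightarrow{\psi}(f\widetilde B)_\perfd$, the direction ``$\widetilde{B}$ pure $\Rightarrow$ $B$ pure'' is immediate: the first map in a pure composite is pure.

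For the converse, I will show that $\psi$ becomes pure, in fact faithfully flat, after reduction modulo $J^n$, where $J$ is the Jacobson radical of $R$, or at least modulo $p^n$. Then I will apply \autoref{pure + p-comp pure is pure}, or more precisely its proof, which only needs purity after tensoring with $J$-power torsion modules. Since $p\in J$, every finitely generated submodule of $E(R/\m)$ is killed by a power of $p$. It is therefore enough to show that $N\otimes_R(fB)_\perfd\to N\otimes_R(f\widetilde B)_\perfd$ is injective for every $p^n$-torsion $R$-module $N$.

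The key step is to identify $(f\widetilde B)_\perfd$ modulo $p^n$ with a base change of $(fB)_\perfd$. By \autoref{pfd of ideal only depends on radical}, the ideal $(fB)_\perfd$ is the kernel of $B\to(B/fB)_\perfd$. Perfectoidization commutes with $p$-completely flat base change of perfectoid rings, so $(B/fB)_\perfd\widehat\otimes_B\widetilde B\cong(\widetilde B/f\widetilde B)_\perfd$. Derived tensoring the exact sequence $0\to(fB)_\perfd\to B\to(B/fB)_\perfd\to0$ with $\widetilde B/p^n$ over $B/p^n$ then uses flatness of $B/p^n\to\widetilde B/p^n$ and the fact that all three terms are $p$-completely flat over $B$ modulo $p^n$ (they have bounded $p$-torsion). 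This should give
\[
(f\widetilde B)_\perfd\otimes^{\mathbb L}_B\mathbb Z/p^n\simeq\bigl((fB)_\perfd\otimes^{\mathbb L}_B\mathbb Z/p^n\bigr)\otimes^{\mathbb L}_{B/p^n}\widetilde B/p^n.
\]
Hence for $p^n$-torsion $N$ we get $N\otimes_R(f\widetilde B)_\perfd\cong(N\otimes_R(fB)_\perfd)\otimes_{B/p^n}\widetilde B/p^n$. Faithful flatness of $B/p^n\to\widetilde B/p^n$ then gives the needed injectivity. Composing with the pure map $R\to(fB)_\perfd$ yields purity of $R\to(f\widetilde B)_\perfd$.

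I expect the main obstacle to be the base-change identification, namely that forming $(f-)_\perfd$ commutes with $p$-completely faithfully flat base change. I would first try to obtain it from \cite[Theorem~7.4 and Proposition~8.13]{BhattScholzePrisms}, which say that the perfectoidization of $B/f$ is computed compatibly with base change along maps of perfectoid rings. If that is awkward to cite, a cleaner route is to pass first to a further $p$-completely faithfully flat cover where $f$ has a compatible system of $p$-power roots, which is possible by \autoref{thm: Andre}. There \autoref{BS - root}(b) computes $\sqrt{fB}$ as a colimit of free modules that visibly commutes with base change, and \autoref{root ideal tensor comparison} identifies it with $(fB)_\perfd$ after tensoring with torsion modules. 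Reducing to that cover needs the same two-out-of-three argument as above, applied twice. Note that the hypothesis $Y^b=f^a$ is not used beyond ensuring $Y\in(fB)_\perfd$.
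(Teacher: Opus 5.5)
Your proposal is correct and follows essentially the same route as the paper. The reverse direction comes from factoring the pointed map. For the forward direction, \cite[Propositions~8.5 and~8.13]{BhattScholzePrisms} identify $\bigl(B/(fB)_\perfd\bigr)\widehat\otimes^{\mathbb L}_B\widetilde B$ with $\widetilde B/(f\widetilde B)_\perfd$; taking fibers and reducing modulo $p^n$ gives $(f\widetilde B)_\perfd/p^n\cong\bigl((fB)_\perfd/p^n\bigr)\otimes_{B/p^n}\widetilde B/p^n$, and then \autoref{pure + p-comp pure is pure} finishes. One small correction: your parenthetical claim that all three terms of $0\to(fB)_\perfd\to B\to(B/fB)_\perfd\to0$ are $p$-completely flat over $B$ is false, since $(B/fB)_\perfd$ is a quotient of $B$. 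It is also unnecessary, because the argument only uses the $p$-complete flatness of $\widetilde B$ over $B$.
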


\begin{proof}    The reverse implication is
immediate because the second pointed map factors through the first. For the forward implication,
{write $\mathbin{\widehat{\otimes}}^{\mathbb L}$ for
the {derived $p$-completion of the derived tensor product}
Since
$B\to\widetilde B$ is $p$-completely faithfully flat,
we have { isomorphisms of $\widetilde B$-algebras}
\[
\bigl(B/(fB)_\perfd\bigr)
\mathbin{\widehat{\otimes}}_{B}^{\mathbb L}\widetilde B
\simeq \bigl((B/fB)\mathbin{\widehat{\otimes}}_{B}^{\mathbb L}
\widetilde B\bigr)_\perfd
\simeq (\widetilde B/f\widetilde B)_\perfd
=\widetilde B/(f\widetilde B)_\perfd
\]
using \cite[Propositions~8.5 and~8.13]{BhattScholzePrisms} (as in the displayed calculation in the proof of
\cite[Lemma~2.19]{CaiLeeMaSchwedeTuckerPerfectoidSignature}).
Taking their fibers and using the exactness of
$\mathbin{\widehat{\otimes}}^{\mathbb L} \widetilde{B}$ gives
$(fB)_\perfd\mathbin{\widehat{\otimes}}_{B}^{\mathbb L}\widetilde B
\simeq (f\widetilde B)_\perfd$.
Applying
$-\tensor_{\widetilde B}^{\mathbb L}\widetilde B/p^n\widetilde B$
to this isomorphism removes the completion on the tensor product, since derived $p$-completion does not change the derived tensor
product with a module killed by $p^n$. 
Thus
\[
(f\widetilde B)_\perfd\tensor_{\widetilde B}^{\mathbb L}
\widetilde B/p^n\widetilde B
\simeq
\bigl((fB)_\perfd\tensor_B^{\mathbb L}B/p^nB\bigr)
\tensor_{B/p^nB}^{\mathbb L}\widetilde B/p^n\widetilde B.
\]
Since $\widetilde B/p^n\widetilde B$ is faithfully flat over $B/p^nB$,
taking zeroth cohomology gives
\[
(f\widetilde B)_\perfd/p^n(f\widetilde B)_\perfd
\cong
\bigl((fB)_\perfd/p^n(fB)_\perfd\bigr)
\otimes_{B/p^nB}\widetilde B/p^n\widetilde B.
\]
It follows that
$(fB)_\perfd/p^n(fB)_\perfd\to
(f\widetilde B)_\perfd/p^n(f\widetilde B)_\perfd$ is pure for every
$n>0$. The forward implication now follows from
\autoref{pure + p-comp pure is pure}.}
\end{proof}

\begin{lemma}\label{rk limit of pure is pure}
{Let $R$ be a Noetherian ring with $p$ in its Jacobson
radical, let $f\in R$, and $a,b\in\N_{>0}$. Suppose that $B$ is {a perfectoid $R$-algebra and that
$Y\in(fB)_\perfd$ satisfies $Y^b=f^a$ and admits a compatible system
$\{Y^{1/p^e}\}_{e\in\N_{>0}}$ of $p$-power roots in $B$.} Then the map
$R\xrightarrow{1\mapsto Y}(fB)_\perfd$ is pure if and only if
{$R\xrightarrow{1\mapsto Y^{1-1/p^e}}B$ is pure
for every $e\in\N$, or equivalently for every $e\gg0$.}}

Moreover, if $B$ is absolutely integrally closed, then $R\xrightarrow{1\mapsto Y}(fB)_\perfd$ is pure if and only if for all $c, d \in \N_{>0}$ with $\frac{c}{d}<\frac{a}{b}$ and any $Z\in B$ such that $Z^d=f^c$, the map $R\xrightarrow{1\mapsto Z}B$ is pure.
\end{lemma}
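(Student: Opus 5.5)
The plan is to reduce purity into $(fB)_\perfd$ to purity into the radical $\sqrt{fB}$, then use the colimit description of \autoref{BS - root}(b) to turn it into purity at each finite stage.

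\textbf{Step 1: from $(fB)_\perfd$ to $\sqrt{YB}$.} Since $Y^b=f^a$ with $a,b>0$, we have $\sqrt{YB}=\sqrt{fB}$. Because $Y$ has a compatible system of $p$-power roots, \autoref{BS - root}(b) applied to $Y$ gives
\[
\sqrt{fB}=(Y^{1/p^\infty})B\cong\varinjlim\bigl(B\xrightarrow{\cdot Y^{1-1/p}}B\xrightarrow{\cdot Y^{1/p-1/p^2}}\cdots\bigr).
\]
Here the $e$-th copy of $B$ maps to $\sqrt{fB}$ by multiplication by $Y^{1/p^e}$. Thus the pointed element $Y$ is represented at stage $e$ by $Y^{1-1/p^e}$.

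Purity of $R\to M$ is tested, via \autoref{lem: injective hull splitting ideal}, by injectivity after tensoring with $E=E_{R_\m}(R/\m)$ for maximal ideals $\m$. Each such $E$ is $\m$-power torsion and $p\in\m$, so $E$ is $p^\infty$-torsion. By \autoref{root ideal tensor comparison} (taking $f$, or equivalently $Y$, with its roots), $E\otimes_R\sqrt{fB}\to E\otimes_R(fB)_\perfd$ is an isomorphism. Hence $R\xrightarrow{1\mapsto Y}(fB)_\perfd$ is pure if and only if $R\xrightarrow{1\mapsto Y}\sqrt{fB}$ is pure.

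\textbf{Step 2: from the colimit to finite stages.} By \autoref{lem: functoriality of non-pure ideals}\autoref{part direct limit of modules}, the non-pure locus of a filtered colimit is the union of the non-pure loci of its stages. So $R\to\sqrt{fB}$ is pure if and only if $R\xrightarrow{1\mapsto Y^{1-1/p^e}}B$ is pure for every $e$.

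These stage maps are compatible: the stage-$e$ map followed by the transition map $B\xrightarrow{\cdot Y^{1/p^e-1/p^{e+1}}}B$ is the stage-$(e+1)$ map. Therefore purity at stage $e+1$ forces purity at stage $e$, and "every $e$" is equivalent to "every $e\gg0$."

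\textbf{Step 3: the ``moreover'' statement, easy direction.} Suppose purity holds for every $Z$ with $Z^d=f^c$ and $c/d<a/b$. Take $Z=Y^{1-1/p^e}$. Then $Z^{bp^e}=f^{a(p^e-1)}$, and $a(p^e-1)/(bp^e)<a/b$. So these maps are pure, and the first part of the lemma applies.

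\textbf{Step 4: the ``moreover'' statement, main direction.} This is the step I expect to be the main obstacle. Given $Z^d=f^c$ with $c/d<a/b$, choose $e$ with $c/d<\frac{a}{b}(1-p^{-e})$, and set $g=Y^{1-1/p^e}$, so that $g^{bp^e}=f^{a(p^e-1)}$. The goal is to show $g\in ZB$. Then $1\mapsto Z$ factors the pure map $1\mapsto g$, which makes $1\mapsto Z$ pure.

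To get $g\in ZB$, I will apply \autoref{BS - root}(a) with $x=Z$. This requires integers $m<n$ with $g^m\in Z^nB$. Take $m=bp^e\cdot dk$ and $n=d\cdot bp^e\cdot\ell$ with $k<\ell$. Then $g^m=f^{a(p^e-1)dk}$ and $Z^n=f^{cbp^e\ell}$. The strict inequality $c\,b\,p^e<a(p^e-1)d$ lets us pick $\ell>k$ with $cbp^e\ell\le a(p^e-1)dk$, for example with $k$ large and $\ell=k+1$. Then $g^m$ is a multiple of $Z^n$, and \autoref{BS - root}(a) gives $g=cZ$ for some $c\in B$.

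The care needed is mainly in this bookkeeping of integer exponents. Absolute integral closedness is used only to guarantee that such $Z$ exist and that the statement is not vacuous. The argument itself uses only the seminormality of $B$.
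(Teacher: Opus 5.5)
Your proposal is correct and follows the paper's proof essentially step for step. Like the paper, you pass from $(fB)_\perfd$ to $(Y^{1/p^\infty})B=\sqrt{fB}$ via \autoref{lem: injective hull splitting ideal} and \autoref{root ideal tensor comparison}, use the colimit description in \autoref{BS - root} \autoref{BS - root.RadicalComputation}, and for the ``moreover'' part deduce $Y^{1-1/p^e}\in ZB$ from \autoref{BS - root} \autoref{BS - root.baseCase}. The only difference is cosmetic: the paper takes $m=bcp^e<n=ad(p^e-1)$ for $e\gg0$, so that $(Y^{1-1/p^e})^{m}=f^{ac(p^e-1)}=Z^{n}$ holds exactly, which avoids your auxiliary $k,\ell$ and the clash between the element $c\in B$ and the exponent $c$.
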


\begin{proof}
{Since $Y^b=f^a$, \autoref{BS - root} \autoref{BS - root.RadicalComputation} gives $(Y^{1/p^\infty})B=\sqrt{YB}=\sqrt{fB}$. Combining \autoref{lem: injective hull splitting ideal} and \autoref{root ideal tensor comparison} shows that $R\to(fB)_\perfd=(YB)_\perfd$, $1\mapsto Y$ is pure if and only if $R\to(Y^{1/p^\infty})B$, $1\mapsto Y$ is pure. \autoref{BS - root} \autoref{BS - root.RadicalComputation} shows that this latter map is the colimit of $R\xrightarrow{1\mapsto Y^{1-1/p^e}}B$. These maps are all pure if and only if their colimit is pure.}

For the moreover part, the ``if" direction follows from the previous paragraph (and we only need to use $\frac{c}{d}=\frac{a(p^e-1)}{bp^e}$ and a particular choice of $Z$, namely $Y^{1-1/p^e}$). For the ``only if" direction, note that 
$$Z^{ad(p^e-1)} = f^{ac(p^e-1)} = (Y^{1-1/p^e})^{bcp^e}.$$
Since $ad>bc$, we have $ad(p^e-1) > bcp^e$ for $e\gg0$. Thus by \autoref{BS - root} \autoref{BS - root.baseCase}, we see that $Y^{1-1/p^e} \in ZB$. It follows from the already established purity of $R\xrightarrow{1\mapsto Y^{1-1/p^e}}B$ that $R\xrightarrow{1\mapsto Z}B$ is also pure.
\end{proof}

\begin{proposition}\label{purity independent choice of root}
Let $R$ be a Noetherian ring with $p$ in its Jacobson radical, and let $f\in R$. Let $a,b\in \Z_{>0}$. Let $B$ be a perfectoid $R$-algebra and suppose that there are $Y,Z\in B$ with $Y^b=Z^b=f^a$. If
$R \xrightarrow[]{1 \mapsto Y} (fB)_\perfd$
is pure, then so is
$R \xrightarrow[]{1 \mapsto Z} (fB)_\perfd$.
\end{proposition}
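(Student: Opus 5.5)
First I would reduce to the case where $B$ is absolutely integrally closed. Take $B\to\widetilde B$ as in \autoref{thm: Andre}. By \autoref{can choose B integrally closed}, purity of $R\xrightarrow{1\mapsto Y}(fB)_\perfd$ implies purity of $R\xrightarrow{1\mapsto \widetilde Y}(f\widetilde B)_\perfd$. In the other direction, $R\xrightarrow{1\mapsto Z}(fB)_\perfd$ is pure as soon as $R\xrightarrow{1\mapsto\widetilde Z}(f\widetilde B)_\perfd$ is pure, because the latter factors through the former. So it suffices to prove the statement for $\widetilde B$.

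Note that \autoref{can choose B integrally closed} requires $Y\in(fB)_\perfd$. This holds because $Y^b=f^a\in fB$, so $Y\in\sqrt{fB}\subseteq(fB)_\perfd$ by \autoref{pfd of ideal only depends on radical}. The same argument gives $Z\in(fB)_\perfd$, so the map $1\mapsto Z$ into $(fB)_\perfd$ makes sense.

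From now on assume $B$ is absolutely integrally closed and perfectoid. Then every element of $B$ has a compatible system of $p$-power roots, and we may apply the ``moreover'' part of \autoref{rk limit of pure is pure}. That criterion characterizes purity of $R\xrightarrow{1\mapsto Y}(fB)_\perfd$ by the condition that $R\xrightarrow{1\mapsto W}B$ is pure for every $W\in B$ with $W^d=f^c$ and $c/d<a/b$. This condition depends only on $a/b$ and on $f$, not on $Y$. The same criterion applied to $Z$, which also satisfies $Z^b=f^a$ and has a compatible system of $p$-power roots, therefore gives purity of $R\xrightarrow{1\mapsto Z}(fB)_\perfd$.

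The only point needing care is the hypothesis of \autoref{rk limit of pure is pure} that $Z\in(fB)_\perfd$ has compatible $p$-power roots in the enlarged ring. This is automatic because $\widetilde B$ is absolutely integrally closed, and membership was checked in the first step.

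As a sanity check one can also argue directly, without the ``moreover'' clause. Apply \autoref{BS - root}(c) with $\pi=f$, $N$ a suitable multiple of $b$, $y=Z^{1-1/p^{e}}$ and $z=Y^{1-1/p^{e'}}$ for $e'$ slightly larger than $e$, arranged so that the exponents satisfy $a'<b'$. This gives $Y^{1-1/p^{e'}}\in Z^{1-1/p^e}B$, so purity of the $Y$-maps forces purity of the $Z$-maps. Then conclude with the first part of \autoref{rk limit of pure is pure}.

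I expect no real obstacle. The main thing to keep straight is the bookkeeping of which map factors through which when passing to $\widetilde B$.
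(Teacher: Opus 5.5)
Your proposal is correct and follows the paper's proof. The paper reduces to absolutely integrally closed $B$ via \autoref{can choose B integrally closed}, then carries out exactly your ``sanity check'': it shows $Y^{1-1/p^{e+1}}\in Z^{1-1/p^e}B$ using \autoref{BS - root}(a) rather than (c), and concludes with the first part of \autoref{rk limit of pure is pure}. Your main route through the ``moreover'' clause of \autoref{rk limit of pure is pure} is also valid, since that clause is proved beforehand by the same factorization and its criterion visibly does not depend on the chosen root.
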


\begin{proof}
{By \autoref{can choose B integrally closed}, we may
assume that $B$ is absolutely integrally closed. Choose compatible
systems of $p$-power roots of $Y$ and $Z$. By
\autoref{rk limit of pure is pure},
$R\xrightarrow{1\mapsto Y^{1-1/p^e}}B$ is pure for every
$e\gg0$.}
{For $e\gg0$, the equality
\((Y^{1-1/p^{e+1}})^{bp^{e+1}(p^e-1)}
=(Z^{1-1/p^e})^{bp^e(p^{e+1}-1)}\)
and the inequality
$bp^{e+1}(p^e-1)<bp^e(p^{e+1}-1)$ show by
\autoref{BS - root} \autoref{BS - root.baseCase} that
$Y^{1-1/p^{e+1}}\in Z^{1-1/p^e}B$. Thus, the pure map
$R\xrightarrow{1\mapsto Y^{1-1/p^{e+1}}}B$ factors through
$R\xrightarrow{1\mapsto Z^{1-1/p^e}}B$, so the latter is pure.}
{Another application of
\autoref{rk limit of pure is pure} proves that
$R\xrightarrow{1\mapsto Z}(fB)_\perfd$ is pure.}
\end{proof}

\begin{definition}\label{definition pfd pure pair}
Let $R$ be a Noetherian ring with $p$ in its Jacobson radical, and let $f\in R$. Let $\lambda=\frac{a}{b}$ with $a,b\in\N_{>0}$. We say the pair $(R,f^\lambda)$ is \emph{perfectoid pure} if {$f$ is a nonzerodivisor and} there exists a perfectoid $R$-algebra $B$ together with $Y\in (fB)_\perfd$ such that $Y^b=f^a$ and the map
$R\xrightarrow{1\mapsto Y}(fB)_\perfd$
is pure.
\end{definition}

\begin{remark}\label{principal pair choices}
First note if $(R,f^\lambda)$ is perfectoid pure then $R$ itself is perfectoid pure (see \autoref{rk limit of pure is pure}). 
{
\begin{enumerate}
    \item \label{principal pair choices.IndependentOfYLambda}
        By \autoref{can choose B integrally closed} and
        \autoref{purity independent choice of root}, we may take $B$ to
        be absolutely integrally closed.  Furthermore, the definition is independent
        of the chosen $Y$ and the representation $\lambda=a/b$.
        To see the latter, pass to a common denominator.
    \item \label{unit invariance}Purity is not affected if $f$ is replaced by $uf$ for
        a unit $u\in R$: after taking $B$ absolutely integrally closed,
        multiplication by a $b$-th root of $u^a$ carries a witness $Y$ for
        $f^{\lambda}$ to one for $(uf)^{\lambda}$. Thus we may also speak of the perfectoid
        purity of $(R,(f)^\lambda)$ or $(R,\lambda\Div_R(f))$. More
        general divisor pairs will be considered below in \autoref{sec.PerfdPureDivisorPairs}.
    \item By \autoref{rk limit of pure is pure}, after choosing
        compatible $p$-power roots of $Y$ in an absolutely integrally
        closed $B$ as in \autoref{principal pair choices.IndependentOfYLambda}, the condition is equivalent to the purity of
        $R\xrightarrow{1\mapsto Y^{1-1/p^e}}B$ for every $e\gg0$.
        Equivalently, $R$ is perfectoid pure along these elements in the
        sense of \cite[Definition~3.1]{FayolleCentersPerfectoidPurity}.
\end{enumerate}}
\end{remark}

\begin{lemma}
\label{principal pair sequence criterion}
Let $R$ be a Noetherian ring with $p$ in its Jacobson
radical and let $f\in R$ be a nonzerodivisor. The following are equivalent:
\begin{enumerate}
    \item $(R,f^\lambda)$ is perfectoid pure;
    \item $(R,f^\mu)$ is perfectoid pure for all positive rational $\mu<\lambda$;
    \item for all positive rational $\mu<\lambda$, there is a perfectoid R-algebra $B_\mu$ and $Y\in B_\mu$ with $Y=f^\mu$ so that $R\xrightarrow{1\mapsto Y} B_\mu$ is pure;
    \item there is a single perfectoid $R$-algebra $B$, a sequence $0<\lambda_i=a_i/b_i<\lambda$ of rational numbers converging to $\lambda$, and elements $Y_i\in B$ with $Y_i^{b_i}=f^{a_i}$ such that $R\xrightarrow{1\mapsto Y_i}B$ is pure for every $i$. 
\end{enumerate}
\end{lemma}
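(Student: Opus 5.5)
I will prove the cycle (a)$\Rightarrow$(d)$\Rightarrow$(c)$\Rightarrow$(b)$\Rightarrow$(a). In (c) I read ``$Y=f^\mu$'' as ``$Y^{d}=f^{c}$ for $\mu=c/d$''.

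\textbf{(a)$\Rightarrow$(d).} Write $\lambda=a/b$. By \autoref{principal pair choices}(\ref{principal pair choices.IndependentOfYLambda}), there is an absolutely integrally closed perfectoid $B$ and a witness $Y\in(fB)_\perfd$ with $Y^b=f^a$. Since $B$ is absolutely integrally closed, $Y$ has a compatible system of $p$-power roots. By \autoref{rk limit of pure is pure}, the map $R\xrightarrow{1\mapsto Y^{1-1/p^e}}B$ is pure for all $e\gg0$. Set
\[
Y_e=Y^{1-1/p^e}, \qquad \lambda_e=\frac{a(p^e-1)}{bp^e}.
\]
Then $Y_e^{bp^e}=f^{a(p^e-1)}$, the numbers $\lambda_e$ are positive, strictly less than $\lambda$, and converge to $\lambda$. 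This gives (d) with the single algebra $B$.

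\textbf{(d)$\Rightarrow$(c).} Given $\mu=c/d<\lambda$, choose $i$ with $\mu<\lambda_i<\lambda$ and take $B_\mu=\widetilde B$, a $p$-completely faithfully flat absolutely integrally closed perfectoid extension of $B$ (\autoref{thm: Andre}). The composite $R\xrightarrow{1\mapsto Y_i}B\to\widetilde B$ is still pure by \autoref{pure + p-comp pure is pure}, using that $p$ lies in the Jacobson radical. Now pick any $Z\in\widetilde B$ with $Z^d=f^c$. Then
\[
Z^{d a_i}=f^{c a_i}, \qquad Y_i^{b_i c}=f^{a_i c},
\]
and $cb_i<a_id$ because $\mu<\lambda_i$. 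Hence $Z^{da_i}=Y_i^{b_ic}$ with $b_ic<da_i$. By \autoref{BS - root}\autoref{BS - root.baseCase}, applied with $g=Y_i$, $x=Z$, $m=b_ic$, $n=da_i$, we get $Y_i\in Z\widetilde B$. The pure map $1\mapsto Y_i$ therefore factors through $1\mapsto Z$, so the latter is pure.

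\textbf{(c)$\Rightarrow$(b).} Fix $\mu<\lambda$ and choose $\mu<\mu'<\lambda$. Apply (c) to each element of a sequence $\mu_j\uparrow\mu'$ with $\mu<\mu_j$, and pass to absolutely integrally closed extensions as above. The obstacle is that the $\mu_j$ come with different algebras $B_{\mu_j}$, while the moreover part of \autoref{rk limit of pure is pure} needs a single $B$. To get around this I would avoid sequences altogether and use one exponent. Apply (c) to $\mu'$ alone to get a pure map $R\xrightarrow{1\mapsto Y'}B$, with $B$ absolutely integrally closed and $Y'^{d'}=f^{c'}$. Let $Z\in B$ be any element with $Z^d=f^c$, where $\mu=c/d$. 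Since $\mu<\mu'$, \autoref{BS - root}\autoref{BS - root.baseCase} gives $Y'\in ZB$, and hence also $Y'\in Z^{1-1/p^e}B$ for all $e$, because $Z^{1-1/p^e}$ has exponent $\mu(1-1/p^e)<\mu'$. So $R\xrightarrow{1\mapsto Z^{1-1/p^e}}B$ is pure for every $e$. Then \autoref{rk limit of pure is pure} makes $R\xrightarrow{1\mapsto Z}(fB)_\perfd$ pure. This is exactly perfectoid purity of $(R,f^\mu)$; note that $f$ is a nonzerodivisor by hypothesis.

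\textbf{(b)$\Rightarrow$(a).} This is the main obstacle: I must assemble the witnesses $B_\mu$ for all $\mu<\lambda$ into a single perfectoid algebra. Take a sequence $\mu_j\uparrow\lambda$ and witnesses $(B_j,Y_j)$ with each $B_j$ absolutely integrally closed. Using \autoref{rk limit of pure is pure}, each gives a pure map $R\xrightarrow{1\mapsto W_j}B_j$ with $W_j$ of exponent $\nu_j$, where $\nu_j\uparrow\lambda$. Let $C$ be the $p$-completed tensor product $\widehat{\bigotimes}_j B_j$ over $R$ (a filtered colimit of finite $p$-completed tensor products), made perfectoid via perfectoidization and then enlarged to be absolutely integrally closed. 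The difficulty is that purity of $1\mapsto W_j$ into $B_j$ need not persist in $C$.

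The remedy I would try is to replace the family by a single algebra that receives pure maps from all witnesses: the perfectoid $\widehat{R^+}$-style universal object $R^{A_\infty}_\perfd$ when $R$ is complete local. Reduce to that case by checking purity after completion at each maximal ideal, using \autoref{lem: injective hull splitting ideal}. Then use \autoref{lem: functoriality of non-pure ideals}(\ref{part extension for p completely flat}) together with the independence-of-$B$ results to transfer every witness to one fixed $B$. If that transfer works, the witnesses all live in one $B$, which gives (d), and (d)$\Rightarrow$(a) follows by taking $Y\in B$ with $Y^b=f^a$. Each $Y^{1-1/p^e}$ lies in $Y_iB$ for large $i$ by \autoref{BS - root}\autoref{BS - root.baseCase}, so every $R\xrightarrow{1\mapsto Y^{1-1/p^e}}B$ is pure, and \autoref{rk limit of pure is pure} concludes.
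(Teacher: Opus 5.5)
Your steps (a)$\Rightarrow$(d), (d)$\Rightarrow$(c) and (c)$\Rightarrow$(b) are correct. In fact (c)$\Rightarrow$(b) is easier than you make it: the pure map $R\xrightarrow{1\mapsto Y}B_\mu$ already factors through the ideal $(fB_\mu)_\perfd\subseteq B_\mu$, because $Y\in\sqrt{fB_\mu}$.

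The genuine gap is (b)$\Rightarrow$(a). It is the only route back to (a) in your cycle, and what you give there is a conditional sketch (``if that transfer works''), not a proof. The missing idea is that the assembly problem you identify is solved by a \emph{product}, not a tensor product. Arbitrary products of perfectoid rings are perfectoid \cite[Example~3.8(8)]{BhattIyengarMaRegularRings}, and a map into a product is pure as soon as its composite with one projection is pure. Concretely:
\begin{enumerate}
\item Take the witnesses $B_j$ absolutely integrally closed and set $B=\prod_jB_j$.
\item Let $Y_j'\in B$ have $j$-th coordinate $W_j$, and arbitrary roots of the same power of $f$ in the other coordinates. These exist because every factor is absolutely integrally closed.
\item Then $Y_j'$ satisfies the same equation as $W_j$, and $R\xrightarrow{1\mapsto Y_j'}B$ is pure, since its composite with the projection $B\to B_j$ is the pure map $1\mapsto W_j$.
\end{enumerate}
This gives (d), and your (d)$\Rightarrow$(a) finishes. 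This is exactly how the paper proves (c)$\Rightarrow$(a) and (c)$\Rightarrow$(d), via $\prod_\mu B_\mu$.

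The detour through $\raftypfd$ does not repair the gap.
\begin{itemize}
\item Transferring a witness from $B_j$ to $\raftypfd$ is not an instance of \autoref{lem: functoriality of non-pure ideals}\autoref{part extension for p completely flat}, which concerns $\m$-completely flat base change. What is needed is descent of purity along a map $\raftypfd\to B_j^{\wedge\m}$ (part \autoref{part algebra functoriality}). Before that, you must use independence of the choice of root to arrange that $W_j$ is the image of a monomial in the roots $x_1^{1/p^e}$, as in the forward direction of the proof of \autoref{can test principal pairs on RAinfty}.
\item More seriously, even granting the complete local case, returning from the rings $\widehat{R_\m}$ to $R$ requires combining witnesses attached to different maximal ideals into one $R$-algebra. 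That is the same assembly problem again.
\item The paper's localization and local-to-global results for principal pairs (\autoref{prop:localization_ideal_pairs}, \autoref{can check purity locally ideals}) are themselves proved using this lemma and a product of witnesses. Invoking them here would be circular.
\end{itemize}

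Finally, the containment in your last sentence is reversed. \autoref{BS - root}\autoref{BS - root.baseCase} gives $Y_i\in Y^{1-1/p^e}B$, not $Y^{1-1/p^e}\in Y_iB$. It is this containment that makes the pure map $1\mapsto Y_i$ factor through $1\mapsto Y^{1-1/p^e}$. You should also pass to an absolutely integrally closed extension at that step, so that $Y$ and its $p$-power roots exist.
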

\begin{proof}
First $(a)\Leftrightarrow(c)$ follows from \autoref{rk limit of pure is pure} (we can take $B$ to be absolutely integrally closed by \autoref{principal pair choices} \autoref{principal pair choices.IndependentOfYLambda}, and conversely we can take $B_\mu$ to be absolutely integrally closed and set $B=\prod B_\mu$ which is perfectoid). Next $(c) \Rightarrow (b)$ is obvious as the map factors through $(fB_\mu)_\perfd$ and $(b)\Rightarrow(c)$ follows from $(a)\Rightarrow(c)$ applied to any $\lambda'$ so that $\mu<\lambda'<\lambda$. $(c)\Rightarrow (d)$ is obvious by noting that we can assume each $B_i$ is absolutely integrally closed and set $B=\prod B_i$. Finally, $(d)$ obviously implies $(R, f^{\lambda_i})$ is perfectoid pure (as the map factors through $(fB)_\perfd$) and thus implies $(b)$ (by applying $(a)\Rightarrow(b)$ to any $\mu<\lambda_i<\lambda$). 
\end{proof}

\subsection{Localization, completion and choice of $B$ for principal pairs}\label{sub:localization_completion_principal}

In this subsection we verify the basic properties of perfectoid purity with regards to localization and completion. 

\begin{proposition}\label{prop:localization_ideal_pairs}
Let $R$ be a Noetherian ring with $p$ in its Jacobson radical, and let $\mathfrak q$ be a prime ideal containing $p$.
Let $f\in R$ be a nonzerodivisor and $\lambda\in\Q_{>0}$. 
If $(R,f^\lambda)$ is perfectoid pure then  $(R_{\mathfrak q}, f^\lambda)$ is perfectoid pure.
\end{proposition}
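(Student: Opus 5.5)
The plan is to take a witness for $(R,f^\lambda)$ and localize it. By \autoref{definition pfd pure pair} and \autoref{principal pair choices}, we may choose an absolutely integrally closed perfectoid $R$-algebra $B$ and $Y\in(fB)_\perfd$ with $Y^b=f^a$ such that $R\xrightarrow{1\mapsto Y}(fB)_\perfd$ is pure. First note that $f$ remains a nonzerodivisor in $R_\q$, since localization is flat. The natural candidate witness over $R_\q$ is $B_\q^{\wedge p}$, the $p$-adic completion of $B\otimes_RR_\q$. Since $p\in\q$, $p$ lies in the Jacobson radical of $R_\q$. The $p$-adic completion of a localization of a perfectoid ring at a multiplicative set is again perfectoid; for instance, it is the $p$-completion of a filtered colimit of perfectoid rings $B[1/s]$ with $s\notin\q$, which are $p$-completely perfectoid. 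The image $Y'$ of $Y$ still satisfies $Y'^b=f^a$.

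Rather than working with $(fB)_\perfd$ directly, I would use the criterion in \autoref{rk limit of pure is pure}. Fix a compatible system of $p$-power roots of $Y$ in $B$. Then purity of $R\to(fB)_\perfd$ at $Y$ is equivalent to purity of $R\xrightarrow{1\mapsto Y^{1-1/p^e}}B$ for all $e$. Purity is preserved under base change, so $R_\q\xrightarrow{1\mapsto Y^{1-1/p^e}}B\otimes_RR_\q$ is pure for all $e$. To pass to the completion, let $E=E_{R_\q}(\kappa(\q))$, which is $\q$-power torsion and in particular $p^\infty$-torsion. Tensoring with $E$ over $R_\q$ does not distinguish $B_\q$ from $B_\q^{\wedge p}$: every finitely generated submodule of $E$ is killed by a power of $p$, and $B_\q/p^n=B_\q^{\wedge p}/p^n$. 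By \autoref{lem: injective hull splitting ideal}, this gives purity of $R_\q\xrightarrow{1\mapsto Y^{1-1/p^e}}B_\q^{\wedge p}$. The compatible roots of $Y$ map to compatible roots of $Y'$, and $Y'\in(fB_\q^{\wedge p})_\perfd$ because $Y'^b\in fB'$ and the perfectoidization kernel is radical and $p$-closed (\autoref{pfd of ideal only depends on radical}). Applying \autoref{rk limit of pure is pure} again over $R_\q$ then yields that $R_\q\xrightarrow{1\mapsto Y'}(fB_\q^{\wedge p})_\perfd$ is pure, as desired.

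The main obstacle is checking that $B_\q^{\wedge p}$ is perfectoid and that the purity transfers through the completion. For the first point, I would try to cite the standard fact that perfectoid rings are closed under $p$-completed filtered colimits, applied to $\varinjlim_{s\notin\q}B[1/s]$, where each $B[1/s]^{\wedge p}$ is perfectoid (\autoref{prop: properties of perfectoids} for localizations and completions). For the second point, the torsion argument above via \autoref{lem: injective hull splitting ideal} is what I would use. It is essentially the argument of \autoref{pure + p-comp pure is pure}, applied with $J$ replaced by $pR_\q$, and it uses only that the test modules are $p$-power torsion.
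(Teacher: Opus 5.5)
Your proposal is correct and follows essentially the paper's argument. The paper also localizes a witness $B$, passes to $C=B_{\mathfrak q}^{\wedge p}$ (perfectoid by a citation to Bhatt--Iyengar--Ma), and transfers purity of $R\to B$, $1\mapsto Y_e$, to $R_{\mathfrak q}\to C$ via \autoref{pure + p-comp pure is pure}, which is exactly your injective-hull/$p^\infty$-torsion argument. The only cosmetic difference is that the paper starts and finishes with criterion (d) of \autoref{principal pair sequence criterion} rather than with \autoref{rk limit of pure is pure} and the specific elements $Y^{1-1/p^e}$, so it needs neither $B$ absolutely integrally closed nor the check that $Y'\in(fC)_{\perfd}$.
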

\begin{proof}
If $B$ is a perfectoid $R$-algebra, set
$B_{\mathfrak q}=B\tensor_RR_{\mathfrak q}$ and $C =B_{\mathfrak q}^{\wedge p}$.
The ring $C$ is perfectoid by \cite[Example~3.8(7)]{BhattIyengarMaRegularRings} and $B_{\mathfrak q}\to C$ is an isomorphism modulo $p^n$ for every $n$.
Consequently, $B_\mathfrak{q} \to C$ is $p$-completely pure.

Suppose now that $(R,f^\lambda)$ is perfectoid pure. By
\autoref{principal pair sequence criterion}, there exist some
perfectoid $R$-algebra $B$, rational numbers
$\lambda_e=a_e/b_e<\lambda$ converging to $\lambda$, and elements
$Y_e\in B$ such that $Y_e^{b_e}=f^{a_e}$ and
$R\to B$, $1\mapsto Y_e$, is pure for every $e$. 
By \autoref{pure + p-comp pure is pure}, $R_{\mathfrak q} \to C$, $1\mapsto Y_e$, is pure for every $e$. Thus
$(R_{\mathfrak q},f^\lambda)$ is perfectoid pure by the same criterion.  
\end{proof}

\begin{proposition}[{\cf \cite[Lemma 4.10]{BMPSTWW3}}]\label{can check purity locally ideals}
Let $R$ be a Noetherian ring with $p$ in its Jacobson radical, let
$f\in R$ be a nonzerodivisor, and let $\lambda\in\Q_{>0}$. Then
$(R,f^\lambda)$ is perfectoid pure if and only if
$(R_\fram,f^\lambda)$ is perfectoid pure for every maximal ideal
$\fram$ of $R$.

\end{proposition}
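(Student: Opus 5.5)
The forward direction is immediate from \autoref{prop:localization_ideal_pairs}: every maximal ideal $\fram$ contains $p$ because $p$ lies in the Jacobson radical. So if $(R,f^\lambda)$ is perfectoid pure, then each $(R_\fram,f^\lambda)$ is perfectoid pure. For the converse I will use criterion (d) of \autoref{principal pair sequence criterion}.

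Fix a sequence $\lambda_i=a_i/b_i<\lambda$ converging to $\lambda$, say $\lambda_i=\lambda(1-1/p^i)$ written over a common denominator. For each maximal ideal $\fram$, apply criterion (c) to $(R_\fram,f^\lambda)$ and take products as in its proof. This gives a perfectoid $R_\fram$-algebra $B_\fram$, which I take absolutely integrally closed by \autoref{principal pair choices}, together with elements $Y_{i,\fram}\in B_\fram$ satisfying $Y_{i,\fram}^{b_i}=f^{a_i}$ such that $R_\fram\to B_\fram$, $1\mapsto Y_{i,\fram}$, is pure for every $i$.

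Now set $B=\prod_\fram B_\fram$, a product of perfectoid rings and hence perfectoid, viewed as an $R$-algebra. Let $Y_i=(Y_{i,\fram})_\fram\in B$; then $Y_i^{b_i}=f^{a_i}$ holds componentwise. It remains to show that $R\to B$, $1\mapsto Y_i$, is pure. By \autoref{lem: injective hull splitting ideal}, purity can be tested on the maps $E_R(R/\fram)\to E_R(R/\fram)\otimes_R B$, $u\mapsto u\otimes Y_i$, one maximal ideal at a time. For a fixed $\fram$, this map factors through $E_R(R/\fram)\otimes_R B$ projected onto the $\fram$-th factor, which is $E_R(R/\fram)\otimes_R B_\fram$. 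Since $E_R(R/\fram)$ is an $R_\fram$-module, we have $E_R(R/\fram)\otimes_R B_\fram = E_{R_\fram}(R_\fram/\fram R_\fram)\otimes_{R_\fram}B_\fram$. The resulting map $E\to E\otimes B_\fram$, $u\mapsto u\otimes Y_{i,\fram}$, is injective because $R_\fram\to B_\fram$ is pure. Hence the map into $E\otimes_R B$ is injective as well, since it becomes injective after composing with the projection. Criterion (d) then shows that $(R,f^\lambda)$ is perfectoid pure.

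The only delicate point is the justification of the factorization through the projection $B\to B_\fram$. This is fine because $E\otimes_R(-)$ applied to the ring map $B\to B_\fram$ is a genuine module map, so no commutation of tensor products with infinite products is needed. The hypothesis that $f$ is a nonzerodivisor in $R$ is part of the assumptions, so it requires nothing further.
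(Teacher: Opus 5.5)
Your proposal is correct and follows essentially the paper's argument: fix a common sequence $\lambda_i\to\lambda$ from below (the paper uses exactly $\lambda(1-1/p^e)$, realized by $Y_\fram^{1-1/p^e}$ via \autoref{rk limit of pure is pure}), take $B=\prod_\fram B_\fram$, check purity of $1\mapsto Y_i$ at each $\fram$ by projecting onto the $\fram$-th factor, and conclude with \autoref{principal pair sequence criterion}. The only difference is cosmetic: you do the local check through the injective-hull criterion of \autoref{lem: injective hull splitting ideal}, while the paper localizes $R\to B$ at $\fram$, projects, and then uses that purity can be checked locally.
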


\begin{proof}
The forward direction follows from \autoref{prop:localization_ideal_pairs}.
For the converse, suppose first that $(R_\fram,f^\lambda)$ is perfectoid pure for every maximal ideal $\fram$, and write $\lambda=a/b$ for some $a,b\in \N_{>0}$. By \autoref{can choose B integrally closed}, for each maximal ideal $\fram$ there is an absolutely integrally closed perfectoid $R_\fram$-algebra $B(\fram)$ as well as $Y_\fram\in B(\fram)$ with $Y_\fram^b=f^a$ such that 
\[R_\fram\longrightarrow(fB(\fram))_\perfd,\qquad 1\longmapsto Y_\fram,\] is pure. Choose compatible $p$-power roots of every $Y_\fram$. By \autoref{rk limit of pure is pure}, the maps
\begin{equation}\label{eqn purity locally}
R_\fram\longrightarrow B(\fram),
\qquad 1\longmapsto Y_\fram^{1-1/p^{e}},
\end{equation}
are pure for every $e\geq1$. Set $B=\prod_\fram B(\fram)$ and
$Z_e=(Y_\fram^{1-1/p^{e}})_\fram\in B$. The ring $B$ is perfectoid by
\cite[Example~3.8(8)]{BhattIyengarMaRegularRings}, and
\[
Z_e^{bp^e}=f^{a(p^e-1)}.
\]
Localizing at any $\fram$, projection onto $B(\fram)$ gives the pure map of \autoref{eqn purity locally} so this localized map is pure.
Since purity can be checked locally on $R$, the map $R\to B$, $1\mapsto Z_e$, is also pure. By, \autoref{principal pair sequence criterion} $(R,f^\lambda)$ is perfectoid pure.
\end{proof}

\begin{proposition}\label{pure iff completion pure principal case}
      Let $(R,\fram)$ be a Noetherian local ring of residue characteristic $p>0$. Let $f\in R$ be a nonzerodivisor and let $\lambda\in \Q_{>0}$. Then, $(R,f^\lambda)$ is perfectoid pure if and only if $(\widehat{R},f^\lambda)$ is perfectoid pure for
    $\widehat R=R^{\wedge\fram}$. 
\end{proposition}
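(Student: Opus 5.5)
The proof splits into two directions, both resting on the sequence criterion \autoref{principal pair sequence criterion}(d) together with \autoref{pure + p-comp pure is pure}.

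\emph{Forward direction.} Suppose $(R,f^\lambda)$ is perfectoid pure. By \autoref{principal pair sequence criterion}, there is a perfectoid $R$-algebra $B$, rationals $\lambda_i=a_i/b_i<\lambda$ converging to $\lambda$, and elements $Y_i\in B$ with $Y_i^{b_i}=f^{a_i}$ such that each map $R\xrightarrow{1\mapsto Y_i}B$ is pure. Set $C\coloneqq (B\otimes_R\widehat R)^{\wedge p}$.

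First I show $C$ is perfectoid. Since $p\in\fram$, we have $C=B^{\wedge \fram B}$, because $\fram$-adic completion of $B$ agrees with completing $B\otimes_R\widehat R$ $p$-adically, at least after derived completion. Choose generators of $\fram$; by \autoref{prop: properties of perfectoids}(5), one may take generators of $\fram B$ admitting compatible $p$-power roots. Then \autoref{prop: properties of perfectoids}(6) shows $B^{\wedge\fram B}$ is perfectoid.

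Next, $B\to C$ induces isomorphisms $B/\fram^nB\cong C/\fram^nC$. So \autoref{pure + p-comp pure is pure}, applied over $R$ with $J=\fram$, shows $R\xrightarrow{1\mapsto Y_i}C$ is pure. Since $C$ is an $\widehat R$-algebra, the map $\widehat R\to C$, $1\mapsto Y_i$, is obtained by tensoring over $R$ with $\widehat R$, since $\widehat R\otimes_R C\to C$ is the multiplication. Alternatively, purity over $\widehat R$ can be checked on $E_{\widehat R}(k)=E_R(k)$ via \autoref{lem: injective hull splitting ideal}, because $E_R(k)\otimes_R C=E_R(k)\otimes_{\widehat R}C$ as $E$ is $\fram$-torsion. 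This gives purity of $\widehat R\to C$, $1\mapsto Y_i$. Finally, $f$ remains a nonzerodivisor on $\widehat R$ by flatness. By the criterion, $(\widehat R,f^\lambda)$ is perfectoid pure.

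\emph{Reverse direction.} Suppose $(\widehat R,f^\lambda)$ is perfectoid pure, witnessed by $\widehat R$-perfectoid algebras $B$ and elements $Y_i$ as above. Then $B$ is a perfectoid $R$-algebra. The composite $R\to\widehat R\xrightarrow{1\mapsto Y_i}B$ is pure, since $R\to\widehat R$ is faithfully flat and hence pure, and a composite of pure maps is pure. Applying the criterion again gives that $(R,f^\lambda)$ is perfectoid pure.

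The main obstacle is the forward direction's verification that the completed algebra $C$ is perfectoid and that purity transfers along $B\to C$. I expect \autoref{prop: properties of perfectoids}(6) and the mod-$\fram^n$ comparison to handle this. The remaining point requiring care is the identification of purity over $R$ with purity over $\widehat R$ through the injective hull. That identification is routine, since $E_{\widehat R}(k)\cong E_R(k)$.
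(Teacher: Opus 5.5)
Your overall route is the paper's. The paper reduces, via \autoref{principal pair choices}~(c), to purity of maps $R\to B$ sending $1$ to explicit roots, and then uses the $\fram$-adic completion argument of \cite[Proof of Proposition~3.4]{FayolleCentersPerfectoidPurity}; the same argument is written out for divisor pairs in the proof of \autoref{completion of divisor pairs}. Working with \autoref{principal pair sequence criterion}~(d) instead of (c) changes nothing, and your reverse direction is fine as written.

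One step in the forward direction is wrong as stated and needs a small fix. The identification $(B\otimes_R\widehat R)^{\wedge p}=B^{\wedge\fram B}$ is false in general, because $p$-adic completion does not complete in the other directions of $\fram$. For instance, take $R=\mathbb{F}_p[x]_{(x)}$ and $B=R_{\perf}$. Then your $C$ is $B\otimes_R\widehat R=\bigcup_e\mathbb{F}_p[[x^{1/p^e}]]$, which does not contain the element $\sum_n x^{n+1/p^n}$ of $B^{\wedge xB}$. So \autoref{prop: properties of perfectoids}(6) gives perfectoidness of $B^{\wedge\fram B}$, not of your $C$.

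The fix is to define $C\coloneqq B^{\wedge\fram B}$ from the start:
\begin{itemize}
\item it is an $\widehat R$-algebra via $\widehat R=\varprojlim R/\fram^n\to\varprojlim B/\fram^nB$;
\item it is perfectoid by \autoref{prop: properties of perfectoids}(6), applied to the finitely generated ideal $\fram B\ni p$ (part (5) is unnecessary);
\item $B/\fram^nB\cong C/\fram^nC$ because $\fram B$ is finitely generated \cite[Tag~05GG]{stacks-project}, so \autoref{pure + p-comp pure is pure} applies as you say.
\end{itemize}

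Separately, your first justification for purity of $\widehat R\to C$ does not suffice on its own. Composing the pure map $\widehat R\to\widehat R\otimes_RC$ with the multiplication map $\widehat R\otimes_RC\to C$ need not give a pure map without further argument. Your alternative injective-hull argument is correct: it is \autoref{lem: functoriality of non-pure ideals}\autoref{part: invariance of field extension} applied to the flat map $R\to\widehat R$, which satisfies $\fram\widehat R=\widehat\fram$. With these corrections the proof is complete.
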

\begin{proof}
    This follows from \autoref{principal pair choices}~(c) and \cite[Proof of proposition 3.4]{FayolleCentersPerfectoidPurity} (\cf \cite[Lemma 4.8]{BMPSTWW3}).
\end{proof}

Now we show, at least if $R$ is complete, that we can choose a $B$ determining perfectoid purity of a pair quite explicitly, similarly to \cite[Lemma 4.24]{BMPSTWW3}.

\begin{proposition}\label{can test principal pairs on RAinfty}
Let $(R,\fram,k)$ be a complete Noetherian local ring of residue
characteristic $p>0$, let $f\in\fram$ be a nonzerodivisor, and let
$\lambda\in\Q_{>0}$. Fix any $\varphi\colon A\to R$ as in
\autoref{notation RAinfty} with $\varphi(x_1)=f$ (such a map may be
obtained by adjoining to a Cohen presentation or Noether--Cohen
normalization a variable mapping to $f$). Then $(R,f^\lambda)$ is
perfectoid pure if and only if
\begin{equation}\label{pfd purity principal on raftypfd}
R\longrightarrow \raftypfd,
\qquad
1\longmapsto(x_1^{1/p^e})^{\lfloor(p^e-1)\lambda\rfloor},
\end{equation}
is pure for every $e\gg0$. 
\end{proposition}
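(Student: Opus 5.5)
The easy direction is the reverse implication. In $\raftypfd$ the image of $x_1$ is $f$, so $x_1^{1/p^e}$ is a $p^e$-th root of $f$. Set $Y_e=(x_1^{1/p^e})^{\lfloor(p^e-1)\lambda\rfloor}$; then $Y_e^{p^e}=f^{\lfloor(p^e-1)\lambda\rfloor}$, with exponent ratio $\lfloor(p^e-1)\lambda\rfloor/p^e<\lambda$ converging to $\lambda$. (If the floor vanishes, $Y_e=1$ and the purity statement is just that $R\to\raftypfd$ is pure, which is harmless.) Thus purity of the maps \autoref{pfd purity principal on raftypfd} for all $e\gg0$ is exactly condition (d) of \autoref{principal pair sequence criterion}, with $B=\raftypfd$ and the sequence $\lambda_e$ running over $e\gg0$. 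Hence $(R,f^\lambda)$ is perfectoid pure.

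For the forward direction, suppose $(R,f^\lambda)$ is perfectoid pure. By \autoref{principal pair choices} there is an absolutely integrally closed perfectoid $R$-algebra $B$ together with some $Z_\mu\in B$ satisfying $Z_\mu=f^\mu$ (that is, a suitable root power), such that $R\xrightarrow{1\mapsto Z_\mu}B$ is pure for every rational $\mu<\lambda$, by (c) of \autoref{principal pair sequence criterion} and the ``moreover'' part of \autoref{rk limit of pure is pure}. The key step is to produce a map $\raftypfd\to B$ of $R$-algebras that sends $x_1^{1/p^e}$ to a compatible system of $p$-power roots of $f$ in $B$. To do this, use that $B$ is absolutely integrally closed: choose compatible $p$-power roots in $B$ of $p$ (up to the unit as in \autoref{prop: properties of perfectoids}), of $f$, and of the images of $x_2,\dots,x_n$. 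Also extend $C_k\to B$ to $W(k^{1/p^\infty})$. This is possible since $B/pB$ is semiperfect with the perfect residue field mapping in, and Witt vector functoriality applies via $B^\flat$; alternatively, use the $p$-completeness of $B$ and the formal étaleness of $C_k\to W(k^{1/p^\infty})$ modulo $p$ on the perfect part. These choices give a map $A_\infty\to B$ compatible with $A\to R\to B$. Hence $R\otimes_A A_\infty\to B$ exists, and by the universal property of perfectoidization it factors through $\raftypfd$. Note that $\lambda$ and the exponent choices do not affect the definition of $\raftypfd$, only the element chosen in it, so this map is legitimate.

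Given this map, set $Y_e=(x_1^{1/p^e})^{\lfloor(p^e-1)\lambda\rfloor}$ in $\raftypfd$. Its image in $B$ is $(f^{1/p^e})^{\lfloor(p^e-1)\lambda\rfloor}$, a root of $f$ to the rational power $\mu_e=\lfloor(p^e-1)\lambda\rfloor/p^e<\lambda$. By the moreover part of \autoref{rk limit of pure is pure}, the map $R\xrightarrow{1\mapsto \text{image}}B$ is pure, since purity holds for any root $Z$ with $Z^d=f^c$, $c/d<\lambda$. Because this map factors through $R\xrightarrow{1\mapsto Y_e}\raftypfd$, the latter is pure for every $e$.

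The main obstacle I expect is constructing the map $A_\infty\to B$ over $A$, specifically the compatibility of the coefficient ring $W(k^{1/p^\infty})$ with the given $C_k\to R\to B$. If that step is troublesome, I would first replace $B$ by a $p$-completely faithfully flat perfectoid extension that receives $W(k^{1/p^\infty})$, for instance $B\widehat\otimes_{C_k}W(k^{1/p^\infty})$ perfectoidized and then made absolutely integrally closed via \autoref{thm: Andre}. Purity transfers to this extension by \autoref{pure + p-comp pure is pure}.
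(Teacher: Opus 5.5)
Your overall route is the paper's. The reverse direction is \autoref{principal pair sequence criterion}(d) with $B=\raftypfd$. For the forward direction you build an $R$-algebra map $\raftypfd\to B$ into an absolutely integrally closed witness $B$, sending $x_1^{1/p^e}$ to chosen roots of $f$, and conclude with the moreover part of \autoref{rk limit of pure is pure}. Note that defining $A_\infty\to B$ needs honest compatible roots of $p$ itself, not of $up$; absolute integral closedness provides them.

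The problem is the step you flagged: your justification for extending $C_k\to B$ to $W(k^{1/p^\infty})\to B$ is not valid.
\begin{itemize}
\item The residue field $k$ is not assumed perfect in this section. If it were, $C_k=W(k)=W(k^{1/p^\infty})$ and there would be nothing to prove.
\item For imperfect $k$, the reduction $k\to k^{1/p^\infty}$ of $C_k\to W(k^{1/p^\infty})$ is not formally \'etale, since $L_{k^{1/p^\infty}/k}\simeq\Omega_{k/\mathbb{F}_p}\otimes_k k^{1/p^\infty}[1]\neq 0$.
\item Likewise $C_k$ is not $p$-adically formally \'etale over $\mathbb{Z}_p$. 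So compatibility with the given $C_k\to B$ cannot be read off modulo $p$ through $B^\flat$.
\end{itemize}
The paper does not prove this step from scratch either; it follows the proof of \cite[Lemma~4.24]{BMPSTWW3}. It also replaces $B$ by its $\fram B$-adic completion before invoking the universal property of perfectoidization. That is harmless by \autoref{pure + p-comp pure is pure}, and it matters if the completed tensor product in \autoref{notation RAinfty} is read as $W(k^{1/p^\infty})\llbracket x_1,\dots,x_n\rrbracket$.

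Your fallback is the right idea, but you assert without justification that $B\to\bigl(B\widehat\otimes_{C_k}W(k^{1/p^\infty})\bigr)_\perfd$ is $p$-completely faithfully flat. Perfectoidization does not preserve flatness in general, and nothing you cite gives this. Here is one way to repair it.
\begin{itemize}
\item $C_k\to W(k^{1/p^\infty})$ is flat, and its cotangent complex modulo $p$ is the flat module $\Omega_{k/\mathbb{F}_p}\otimes_k k^{1/p^\infty}$ placed in homological degree one. Hence the base change $B\to B\widehat\otimes_{C_k}W(k^{1/p^\infty})$ is $p$-completely faithfully flat and quasisyntomic.
\item Apply \cite[Proposition~7.11]{BhattScholzePrisms} exactly as in the proof of \autoref{thm: ppt in regular fibers for pairs}. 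This gives a perfectoid algebra $T$ over $B\widehat\otimes_{C_k}W(k^{1/p^\infty})$ with $B\to T$ $p$-completely faithfully flat.
\item Enlarge $T$ by \autoref{thm: Andre} to make it absolutely integrally closed. Use the images of your chosen root systems from $B$. Purity of $R\to T$, $1\mapsto (f^{1/p^e})^{\lfloor(p^e-1)\lambda\rfloor}$, then follows from \autoref{pure + p-comp pure is pure}, since $p\in\fram$.
\end{itemize}
$T$ now receives $W(k^{1/p^\infty})$ compatibly with $C_k\to R\to T$, so your factorization through $\raftypfd$ goes through verbatim on $T$.
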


\begin{proof}
For the backwards direction, note that $\lambda_e:= \lfloor(p^e-1)\lambda\rfloor/p^e <\lambda$ and $\lambda_e\to\lambda$ so the purity of \autoref{pfd purity principal on raftypfd} implies that $(R,f^\lambda)$ is
perfectoid pure by \autoref{principal pair sequence criterion}.
Conversely, suppose that $(R,f^\lambda)$ is perfectoid pure. Then by \autoref{rk limit of pure is pure} there is an absolutely integrally closed perfectoid $R$-algebra $B$ with a fixed compatible system of $p$-th power roots of $f$ such that $R\to B$, $1\mapsto (f^{1/p^e})^{\lfloor(p^e-1)\lambda\rfloor}$ is pure for all $e\gg0$.
Following the proof of \cite[Lemma~4.24]{BMPSTWW3}, choose a map $W(k^{1/p^\infty})\to B$ compatible with $C_k\to B$, together with compatible $p$-power roots in $B$ of $p$ and of the images of the variables of $A$. Replacing $B$ by its $\fram B$-adic completion, the universal property of perfectoidization then implies that $R\to B$, $1\mapsto (f^{1/p^e})^{\lfloor(p^e-1)\lambda\rfloor}$ factors through \autoref{pfd purity principal on raftypfd}, which must therefore also be pure.
\end{proof}

{
Finally, we compare perfectoid purity of principal pairs to the characteristic $p > 0$ definition that inspired it, see \cite{HaraWatanabe}.

\begin{proposition}\label{perfectoid purity in characteristic p principal}
Let $R$ be an excellent reduced ring of characteristic $p>0$, let $f\in R$
be a nonzerodivisor, and let $\lambda\in\Q_{>0}$. Then
$(R,f^\lambda)$ is perfectoid pure if and only if the map
\[
R\longrightarrow R^{1/p^e},
\qquad
1\longmapsto
f^{\lfloor\lambda(p^e-1)\rfloor/p^e}
\]
is pure for all $e\gg0$.

\end{proposition}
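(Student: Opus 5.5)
In characteristic $p$, the plan is to compare an arbitrary perfectoid $R$-algebra $B$ with the perfection $R_{\perf}=\varinjlim_e R^{1/p^e}$. Since $R$ is reduced, $R\subseteq R^{1/p^e}\subseteq R_\perf$. Moreover, a perfect $\mathbb F_p$-algebra is perfectoid: it is $p$-adically complete because $p=0$, with $\varpi=0$.

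\emph{The backward direction} ($\Leftarrow$). Take $B=R_\perf$ with the canonical compatible roots $f^{1/p^e}$. By assumption, $R\to R^{1/p^e}$, $1\mapsto f^{\lfloor\lambda(p^e-1)\rfloor/p^e}$, is pure for $e\gg0$. Composing with $R^{1/p^e}\hookrightarrow R_\perf$ is still pure, because $R^{1/p^e}\to R^{1/p^{e'}}$ is the map $R\to R^{1/p^{e'-e}}$ twisted by Frobenius, and a direct limit of such maps is pure as soon as each $R\to R^{1/p^{e'}}$ with the relevant element is pure. More simply, purity of $R\to R_\perf$, $1\mapsto z$, is equivalent to purity of $R\to R^{1/p^{e'}}$, $1\mapsto z$, for all $e'\geq e$. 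To avoid this subtlety, I will instead verify condition (d) of \autoref{principal pair sequence criterion}. The exponents $\lambda_e=\lfloor\lambda(p^e-1)\rfloor/p^e<\lambda$ converge to $\lambda$. Using the standard fact that $R\to R^{1/p^e}$ pure for $e\gg0$ implies purity of the corresponding map into $R_\perf$, I will deduce perfectoid purity. Here I plan to use the fact that $f^{\lfloor\lambda(p^{e+1}-1)\rfloor/p^{e+1}}$ divides a suitable Frobenius power of the level-$e$ element in $R^{1/p^{e+1}}$, so that the colimit transition maps respect the chosen elements up to a multiple. This is the monotonicity $\lfloor\lambda(p^{e+1}-1)\rfloor\geq p\lfloor\lambda(p^e-1)\rfloor$, which holds.

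\emph{The forward direction} ($\Rightarrow$). Suppose $(R,f^\lambda)$ is perfectoid pure, with $B$ chosen absolutely integrally closed as in \autoref{principal pair choices}. Since $p=0$ in $R$, $B$ is an $\mathbb F_p$-algebra, and a perfectoid $\mathbb F_p$-algebra is perfect. The universal property of perfection therefore gives a factorization $R\to R_\perf\to B$. By \autoref{rk limit of pure is pure}, for a compatible root system of $f$ in $B$, the map $1\mapsto (f^{1/p^e})^{\lfloor(p^e-1)\lambda\rfloor}$ is pure for $e\gg0$; this follows by the same argument as in \autoref{can test principal pairs on RAinfty}, since $\lfloor(p^e-1)\lambda\rfloor/p^e<\lambda$ and \autoref{BS - root}(a) applies.

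Roots of $f$ in a perfect reduced ring are unique, so the image of $f^{1/p^e}\in R^{1/p^e}$ in $B$ is the chosen root. Hence the pure map factors through $R\to R^{1/p^e}$, $1\mapsto f^{\lfloor\lambda(p^e-1)\rfloor/p^e}$, and that map is therefore pure.

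The main obstacle is the backward direction, specifically passing from purity at a finite level $R^{1/p^e}$ to purity into $R_\perf$ (or to any perfectoid $B$). I expect to handle this with a short lemma: if $R\to R^{1/p^e}$, $1\mapsto c_e$, is pure for all $e\gg0$ with $c_{e+1}\mid c_e$, then each $R\to R_\perf$, $1\mapsto c_{e+1}$, is a direct limit of maps of the form $R\to R^{1/p^{e'}}$, $1\mapsto c_{e+1}$. Each of these factors through the pure map $1\mapsto c_{e'}$ multiplied by an element, since $c_{e'}\mid c_{e+1}$, and is therefore pure. The required divisibility again follows from the floor inequality.
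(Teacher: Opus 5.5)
Your forward direction is correct, and somewhat more direct than the paper's, which first reduces to the complete local case and applies \autoref{can test principal pairs on RAinfty} with $R^{A_\infty}_\perfd=R_\perf$. A perfectoid $\mathbb{F}_p$-algebra is perfect, so $R\to B$ factors through $R^{1/p^e}$ with $f^{1/p^e}$ sent to the unique $p^e$-th root. The moreover part of \autoref{rk limit of pure is pure} then gives the claim with no localization or completion.

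\textbf{The gap is in the backward direction}, at the step you yourself flag as the main obstacle. Your lemma assumes $c_{e+1}\mid c_e$. It then argues that $R\to R^{1/p^{e'}}$, $1\mapsto c_{e+1}$, ``factors through the pure map $1\mapsto c_{e'}$ multiplied by an element'' and is therefore pure. This inference runs the wrong way. If $c_{e+1}=u\,c_{e'}$, the map is $(\cdot u)\circ(1\mapsto c_{e'})$, and post-composing a pure map with a multiplication need not be pure (take $u=0$). Purity passes from a pure composite to its \emph{first} factor, not from a pure first factor to the composite. The divisibility is also reversed: your floor inequality $\lfloor\lambda(p^{e+1}-1)\rfloor\geq p\lfloor\lambda(p^e-1)\rfloor$ says the exponents $\lambda_e=\lfloor\lambda(p^e-1)\rfloor/p^e$ are nondecreasing, so $c_e\mid c_{e+1}$, not $c_{e+1}\mid c_e$. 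As written, the passage from finite level to $R_\perf$ is therefore unjustified. Your earlier sentence about Frobenius twists of the transition maps does not close it either, because it never establishes that $R\to R^{1/p}$ is pure.

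The two reversals cancel once corrected. For $e'\geq e\gg0$ you have $c_{e'}=c_e\cdot f^{\lambda_{e'}-\lambda_e}$ with $f^{\lambda_{e'}-\lambda_e}\in R^{1/p^{e'}}$. Hence the hypothesized pure map $R\to R^{1/p^{e'}}$, $1\mapsto c_{e'}$, factors through $R\to R^{1/p^{e'}}$, $1\mapsto c_e$, which is therefore pure. Taking the filtered colimit over $e'$ shows that $R\to R_\perf$, $1\mapsto c_e$, is pure, and \autoref{principal pair sequence criterion}(d) with $B=R_\perf$ finishes.

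The paper closes the same step differently. Each hypothesized map factors through $F^e$, so $R$ is $F$-pure. The transition maps $R^{1/p^j}\to R^{1/p^{j+1}}$, being Frobenius twists of $F$, are then pure, so $R^{1/p^e}\to R_\perf$ is pure, and one composes. That route needs no monotonicity of the exponents. It also proves the ``standard fact'' you invoke: for any $z\in R^{1/p^e}$, purity of $1\mapsto z$ into $R^{1/p^e}$ is equivalent to purity into $R_\perf$.
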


\begin{proof}
{ 
Using
\autoref{can check purity locally ideals} and
\autoref{pure iff completion pure principal case} above, we see both conditions are local and invariant under completion. We may therefore assume $(R,\fram,k)$ is complete local. If $f$ is a unit, this is the usual equivalence between perfectoid purity and $F$-purity \cite[Remark 4.3]{BMPSTWW3}, so further assume $f \in \fram$. Then for $A$  as in \autoref{can test principal pairs on RAinfty}, $\raftypfd=R_\perf$, so \autoref{can test principal pairs on RAinfty} tells us that $(R,f^\lambda)$ is perfectoid pure if and only if $R\to R_\perf$, $1\longmapsto
f^{\lfloor\lambda(p^e-1)\rfloor/p^e}$ is pure for all $e\gg 0$. If this is true then certainly $R\longrightarrow R^{1/p^e}$, $ 1\mapsto
f^{\lfloor\lambda(p^e-1)\rfloor/p^e}$ is pure for all $e$. 
Conversely, suppose that these maps to $R^{1/p^e}$ are pure for all $e\gg0$. Each factors through the $e$-iterated Frobenius map $F^e\colon R\to R^{1/p^e}$, so $F^e$ is pure and hence Frobenius $F$ is pure. The transition maps $R^{1/p^j}\to R^{1/p^{j+1}}$ are Frobenius twists of $F$ and hence are
pure. It follows that $R^{1/p^e}\to R_\perf=\varinjlim_jR^{1/p^j}$ is pure. Thus $R\to R_\perf$, $1\mapsto f^{\lfloor\lambda(p^e-1)\rfloor/p^e}$, is pure for all $e\gg0$, as desired.
}
\end{proof}

}

\subsection{Adjunction and inversion of adjunction for principal pairs}\label{sec. adjunction principal case}

{\color{black}
\begin{lemma}
\label{lem:coefficient-one-comparison}
Let $(R,\fram,k)$ be a Noetherian local ring of residue characteristic
$p$, let $f,g\in R$ with $f$ a nonzerodivisor, and let
$\lambda\in\Q_{>0}$.  The following are equivalent:
\begin{enumerate}[(a)]
\item $(R,fg^\lambda)$ is perfectoid pure.
\item There is a perfectoid $R$-algebra $B$ admitting a compatible system of $p$-th power roots of $f$ and elements $g_n$ with $g_n^{b_n}=g^{a_n}$ for some sequences $a_n, b_n\in \N_{>0}$ with $\frac{a_n}{b_n}<\lambda $, and $\frac{a_n}{b_n}\to \lambda$ such that
\begin{equation}\label{eqn adj characterization pdf pure}
fR\to (fB)_\perfd, \quad fr \mapsto frg_n
\end{equation}
is pure for all $n$.
\end{enumerate}
\end{lemma}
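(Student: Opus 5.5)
The plan is to reduce both conditions to purity of maps $R\to B$ sending $1$ to a monomial in $p$-power roots of $f$ times a root $g_n$ of a power of $g$, and then compare these monomials using \autoref{BS - root}. Since $f$ is a nonzerodivisor, $R\xrightarrow{\cdot f}fR$ is an isomorphism. So \autoref{eqn adj characterization pdf pure} is pure if and only if
\[
R\to(fB)_\perfd,\qquad 1\mapsto fg_n,
\]
is pure. I read condition (a), in the spirit of \autoref{principal pair sequence criterion}(d), as follows. There is a perfectoid $B$ and elements $Y_n$ with $Y_n^{b_n}=f^{b_n'}g^{a_n}$, where the exponent pairs $(b_n'/b_n,\,a_n/b_n)$ approach $(1,\lambda)$ from below, such that $R\xrightarrow{1\mapsto Y_n}B$ is pure. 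Equivalently, taking the limit in the $f$-direction as in \autoref{rk limit of pure is pure}, the maps $R\to(fB)_\perfd$, $1\mapsto f\,g_n$, are pure. If the paper's official definition of $(R,fg^\lambda)$ is phrased slightly differently, I will first translate it into this form, using \autoref{can choose B integrally closed} and \autoref{purity independent choice of root} to freely change $B$ and the chosen roots.

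First, by \autoref{can choose B integrally closed}, I may pass to an absolutely integrally closed perfectoid $B$ in either condition. There I fix compatible $p$-power roots $f^{1/p^e}$ and roots $g_n$ as in (b). By \autoref{BS - root}\autoref{BS - root.RadicalComputation}, $\sqrt{fB}=(f^{1/p^\infty})B$ is the colimit of copies of $B$. In this colimit the $e$-th copy maps in via $b\mapsto f^{1/p^e}b$, so the element $fg_n$ is represented at stage $e$ by $f^{1-1/p^e}g_n$. Tensoring with $E=E_R(k)$ and using \autoref{root ideal tensor comparison}, I can replace $(fB)_\perfd$ by $\sqrt{fB}$ after $-\otimes_R E$. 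Then \autoref{lem: injective hull splitting ideal} shows the following. The map $R\to(fB)_\perfd$, $1\mapsto fg_n$, is pure if and only if $u\otimes f^{1-1/p^e}g_n\neq0$ in $E\otimes_R B$ for every $e$. This uses that a compatible family of elements is nonzero in a filtered colimit iff it is nonzero at every stage. Hence (b) is equivalent to the purity of $R\xrightarrow{1\mapsto f^{1-1/p^e}g_n}B$ for all $e,n$.

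Next I pass to (a). The elements $f^{1-1/p^e}g_n$ are themselves roots of monomials $f^{c}g^{d}$ whose exponents converge to $(1,\lambda)$ from below. This gives (b)$\Rightarrow$(a), as in the ``if'' direction of \autoref{rk limit of pure is pure}. For (a)$\Rightarrow$(b), I am given pure maps $1\mapsto Y_m$ with exponents approaching $(1,\lambda)$. For fixed $e,n$, I choose $m$ with both exponents of $Y_m$ exceeding those of $f^{1-1/p^e}g_n$. Raising to a common power $N$ gives $(f^{1-1/p^e}g_n)^{N'}\in Y_m^{N}B$ with $N'<N$, up to a power of $f$ and $g$. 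Then \autoref{BS - root}\autoref{BS - root.baseCase} gives $f^{1-1/p^e}g_n\in Y_mB$, so purity transfers, exactly as in the proof of \autoref{purity independent choice of root}.

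The main obstacle is this last comparison step. A two-variable exponent inequality must be made strict in a single power so that \autoref{BS - root}\autoref{BS - root.baseCase} applies. The point is that $g^{a}f^{c}$ divides $g^{a'}f^{c'}$ only when both $a\le a'$ and $c\le c'$. I will handle this by arranging that the product of the two ratios gives a genuine containment $x^{n}\mid g^m$ with $m<n$, where $x=Y_m$. Choosing $m$ large so that both coordinates strictly dominate, and then clearing denominators with a large common multiple, should give this.
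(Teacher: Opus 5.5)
Your reduction step is the same as the paper's. Using \autoref{BS - root}(b), \autoref{root ideal tensor comparison} and \autoref{lem: injective hull splitting ideal}, purity of $R\to(fB)_\perfd$, $1\mapsto fg_n$, is equivalent to purity of $R\xrightarrow{1\mapsto f^{1-1/p^e}g_n}B$ for all $e$, and that part is fine.

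The gap is in the comparison step of (a)$\Rightarrow$(b), where the divisibility runs the wrong way. You correctly choose $Y_m$ whose $f$- and $g$-exponents strictly exceed those of $x\coloneqq f^{1-1/p^e}g_n$. You then assert $x^{N'}\in Y_m^NB$ with $N'<N$ and conclude $x\in Y_mB$. With $Y_m$ carrying the larger exponents, that containment is false in general. Even if it held, it would not help. If $x=cY_m$, then $R\xrightarrow{1\mapsto x}B$ factors through $R\xrightarrow{1\mapsto Y_m}B$, which transfers purity from $x$ to $Y_m$, not from $Y_m$ to $x$; take $Y_m=1$ to see that nothing follows. Your ``main obstacle'' paragraph has the same reversal, since it makes $Y_m^n$ divide a power of the target. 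Also, what matters is not a product of the two ratios: each coordinate separately must dominate after scaling.

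What you need is $Y_m\in xB$. Write $(\alpha,\beta)$ for the exponents of $Y_m$ and $(\gamma,\delta)=(1-p^{-e},a_n/b_n)$ for those of $x$, so $\alpha>\gamma$ and $\beta>\delta$. Choose integers $0<M'<M$ with $M'\alpha\ge M\gamma$ and $M'\beta\ge M\delta$, and an integer $L$ clearing all denominators. Then $Y_m^{LM'}=f^ig^j\,x^{LM}$ with $i,j\ge 0$, so $Y_m^{LM'}\in x^{LM}B$ with $LM'<LM$. Now \autoref{BS - root}(a) gives $Y_m=cx$, so the pure map $R\xrightarrow{1\mapsto Y_m}B$ factors through $R\xrightarrow{1\mapsto x}B$, which is therefore pure. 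This is the direction used in \autoref{purity independent choice of root}, where $Y^{1-1/p^{e+1}}\in Z^{1-1/p^e}B$: the pure element lies in the ideal generated by the target. It is also exactly what the paper's brief appeal to \autoref{principal pair sequence criterion} in (a)$\Rightarrow$(b) tacitly uses.

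The same corrected comparison is what you need to translate your reading of (a) back into the paper's notion in (b)$\Rightarrow$(a). With $\lambda=a/b$, that notion is the principal pair $(R,(f^bg^a)^{1/b})$. In this direction the pure elements $f^{1-1/p^e}g_n$ must dominate roots $Z$ of powers of $f^bg^a$. The paper sidesteps this comparison by writing
$fg_n=f^{1-\frac{a_n}{b_n\lambda}}\cdot f^{\frac{a_n}{b_n\lambda}}g_n$.
The second factor satisfies $\bigl(f^{\frac{a_n}{b_n\lambda}}g_n\bigr)^{ab_n}=(f^bg^a)^{a_n}$ and lies in $(fgB)_\perfd$. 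So purity of $1\mapsto fg_n$ passes directly to it, and \autoref{principal pair sequence criterion} applies.
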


\begin{proof}
Assume (a) and let $B$ be an absolutely integrally closed perfectoid $R$-algebra witnessing the perfectoid purity of $(R,fg^\lambda)$. Let $e>0$ be arbitrary and fix a sequence $g_n$ in $B$ as in (b). By \autoref{principal pair sequence criterion}, for any $n$, $R\to B$, $1\mapsto f^{1-p^{-e}}g_n$ is pure. Since $e$ was arbitrary, taking the colimit over all $e$ while fixing $n$ and using \autoref{BS - root}\autoref{BS - root.RadicalComputation} and \autoref{root ideal tensor comparison}, we get that $R\to (fB)_\perfd$, $1\mapsto fg_n$ is pure. Part (b) then follows from the isomorphism $R\cong fR$, $1\mapsto f$.

Now, assume (b) holds. 
By \autoref{thm: Andre} and the argument
in the proof of \autoref{can choose B integrally closed},
we may assume $B$ is absolutely integrally closed.
Using the isomorphism $R\cong fR$, $1\mapsto f$, we know that $R\to (fB)_\perfd$, $1\mapsto fg_n$ is pure. This factors through $R\to (fgB)_\perfd$, $1\mapsto f^{(a_n/b_n)/\lambda}g_n$, so this latter map must be pure. Part (a) then follows from another application of \autoref{principal pair sequence criterion}.
\end{proof}
}

\begin{proposition}[{\cf\cite[Lemma 6.5]{BMPSTWW3}}]\label{quotient of pfd pure is pfd pure}
Suppose $(R,\fram, k)$ is a Noetherian local ring of residue characteristic $p$, $f$ is a nonzerodivisor on $R$, and $g \in R$. Denote the image of $g$ in $R/fR$ by $\overline{g}$. If $(R, fg^\lambda)$ is perfectoid pure, then so is $(R/fR,\overline{g}^\lambda)$.
\end{proposition}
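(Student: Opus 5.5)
The plan is to pass to the quotient by comparing two short exact sequences, using the characterization in \autoref{lem:coefficient-one-comparison} together with the injective-hull criterion \autoref{lem: injective hull splitting ideal}.

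\textbf{Step 1: set up the diagram.} Since $(R,fg^\lambda)$ is perfectoid pure, \autoref{lem:coefficient-one-comparison} provides the following data:
\begin{itemize}
\item a perfectoid $R$-algebra $B$ containing a compatible system of $p$-power roots of $f$;
\item elements $g_n\in B$ with $g_n^{b_n}=g^{a_n}$, where $a_n/b_n<\lambda$ and $a_n/b_n\to\lambda$;
\item purity of the map $fR\to(fB)_\perfd$, $fr\mapsto frg_n$, for every $n$.
\end{itemize}
Set $\overline B\coloneqq B/(fB)_\perfd=(B/fB)_\perfd$. This is a perfectoid $R/fR$-algebra, and it is reduced. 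The map $R\to B$, $1\mapsto g_n$, restricts on $fR$ to exactly the pure map above, since $fB\subseteq(fB)_\perfd$. We therefore get a morphism of short exact sequences
\[
\xymatrix{
0\ar[r] & fR\ar[r]\ar[d] & R\ar[r]\ar[d]^{1\mapsto g_n} & R/fR\ar[r]\ar[d]^{1\mapsto \overline{g}_n} & 0\\
0\ar[r] & (fB)_\perfd\ar[r] & B\ar[r] & \overline B\ar[r] & 0.
}
\]
The goal is to show that the right vertical map is pure for every $n$.

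\textbf{Step 2: tensor with $E=E_R(k)$ and use connecting maps.} Because $f$ is a nonzerodivisor and $E$ is injective, multiplication by $f$ is surjective on $E$. Hence $E\otimes^{\mathbb L}_RR/fR\simeq(0:_Ef)[1]$, and $0:_Ef=E_{R/fR}(k)$. Since $\overline B$ is an $R/fR$-module, base change for Tor gives
\[
\operatorname{Tor}_1^R(E,\overline B)\cong E_{R/fR}\otimes_{R/fR}\overline B,
\]
and similarly $\operatorname{Tor}_1^R(E,R/fR)\cong E_{R/fR}$. Naturality of the Tor long exact sequences then yields a commutative square:
\begin{itemize}
\item the top row is the injective connecting map $E_{R/fR}=\operatorname{Tor}_1^R(E,R/fR)\hookrightarrow E\otimes_R fR$;
\item the right vertical map is $E\otimes fR\to E\otimes(fB)_\perfd$, which is injective by purity;
\item the left vertical map is $E_{R/fR}\to E_{R/fR}\otimes_{R/fR}\overline B$, $u\mapsto u\otimes\overline g_n$.
\end{itemize}
Since the top-then-right composite is injective, so is the left vertical map. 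By \autoref{lem: injective hull splitting ideal}, tested on a socle element of $E_{R/fR}$, the map $R/fR\to\overline B$, $1\mapsto\overline g_n$, is pure for every $n$. The main care needed is checking that this square genuinely commutes, which is naturality of connecting maps for the morphism of sequences in Step 1, and that the Tor identification is compatible with the map $1\mapsto\overline g_n$.

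\textbf{Step 3: conclude.} First, $\overline g$ is a nonzerodivisor on $R/fR$. Suppose $\overline g\,\overline r=0$. Then $\overline g_n^{b_n}\overline r=0$ in $\overline B$, so $(\overline g_n\overline r)^{b_n}=0$, and reducedness of $\overline B$ gives $\overline g_n\overline r=0$. Purity of $1\mapsto\overline g_n$ makes $R/fR\to\overline B$, $\overline r\mapsto\overline g_n\overline r$, injective, so $\overline r=0$.

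Now $\overline g_n^{b_n}=\overline g^{a_n}$ in the single perfectoid $R/fR$-algebra $\overline B$, with $a_n/b_n<\lambda$ converging to $\lambda$, and each $1\mapsto\overline g_n$ is pure. Criterion (d) of \autoref{principal pair sequence criterion} therefore shows that $(R/fR,\overline g^\lambda)$ is perfectoid pure.
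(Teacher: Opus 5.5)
Your proof is correct and is essentially the paper's argument. The paper likewise takes $B$ and $g_n$ from \autoref{lem:coefficient-one-comparison}(b) and factors the base change of the pure map $fR\to(fB)_\perfd$ through $M\to M\otimes_{R/fR}B/(fB)_\perfd$, $m\mapsto m\otimes\overline{g_n}$. The difference is that it does this directly for every $R/fR$-module $M$, using the explicit map $m\otimes\bar b\mapsto m\otimes fb$ into $M\otimes_R(fB)_\perfd$, which plays the role of your connecting homomorphism; this avoids both the Tor identification and the injective-hull test. The nonzerodivisor check for $\overline g$ and the final appeal to the sequence criterion are the same as yours.
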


\begin{proof}
Let $g_n$ and $B$ be as in \autoref{lem:coefficient-one-comparison}(b). Let $M$ be an $R/fR$-module. Let 
\[
\gamma_M \colon M\tensor_{R/fR} B/(fB)_\perfd\to M\tensor_R (fB)_\perfd, \quad m\tensor \overline{b}\mapsto m\tensor fb.
\]
This is well defined: if $c\in (fB)_\perfd$ then $m\tensor fc=mf\tensor c=0$. Since multiplication by $f$ induces an isomorphism $M\tensor_R fR\cong M$, the composition
\[
M\to M\tensor_{R/fR} B/(fB)_\perfd \xrightarrow{\gamma_M} M\tensor_R (fB)_\perfd, \quad m\mapsto m\tensor \overline{g_n} \mapsto m\tensor fg_n
\]
is a base change of \autoref{eqn adj characterization pdf pure} so it is injective and therefore so is 
\[M\to M\tensor_{R/fR} B/(fB)_\perfd, \quad  m\mapsto m\tensor \overline{g_n}.\] To conclude that $(R/fR,\overline{g}^\lambda)$ is perfectoid pure, we need to show that $\overline{g}$ is a nonzerodivisor on $R/fR$. Suppose not and let $r\in R\setminus fR$ be such that $rg\in fR$. Then, $(rg_n)^{b_n}=r^{b_n}g^{a_n}=0$ in the reduced ring $B/(fB)_\perfd$. Therefore we must have $rg_n\in (fB)_{\perfd}$, contradicting the injectivity of 
\[R/fR\to  B/(fB)_\perfd. \quad  r\mapsto rg_n.\]
\end{proof}

\begin{theorem}[{\cf \cite[Theorem 6.6]{BMPSTWW3}}]\label{quotient of ci pfd pure}
Suppose $(R,\fram, k)$ is a complete intersection of residue characteristic $p$ and let $f\in\fram$ be a nonzerodivisor and $g\in R$. Then, $(R, fg^\lambda)$ is perfectoid pure if and only if $(R/f, \overline{g}^{\lambda})$ is perfectoid pure.
\end{theorem}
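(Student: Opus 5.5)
The implication ``$(R,fg^\lambda)$ perfectoid pure $\Rightarrow$ $(R/f,\overline g^\lambda)$ perfectoid pure'' is \autoref{quotient of pfd pure is pfd pure}, so only the converse needs proof. I would follow the strategy of \cite[Theorem 6.6]{BMPSTWW3}, with the extra element $g_n$ carried along.

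\emph{Reductions.} By \autoref{pure iff completion pure principal case} (applied over $R$ and over $R/f$) we may assume $R$ is complete. Then $R$ is Gorenstein of dimension $d$. Purity of a map $R\to M$, $1\mapsto x$, is then equivalent to the nonvanishing of the image of a socle generator $\eta\in H^d_\fram(R)$ in $H^d_\fram(M)$, by \autoref{lem: injective hull splitting ideal} together with $E\cong H^d_\fram(R)$.

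\emph{Choice of $B$.} Take $A\to R$ as in \autoref{notation RAinfty} with $x_1\mapsto f$ (adjoin a variable if necessary), and set $B=\raftypfd$. Then $B/(fB)_\perfd=(R/f\otimes_{A}A_\infty)_\perfd$ is exactly the algebra $(R/f)^{A_\infty}_\perfd$ attached to $A\to R/f$. After further adjoining a variable mapping to $g$, \autoref{can test principal pairs on RAinfty} for $R/f$ shows the following: perfectoid purity of $(R/f,\overline g^\lambda)$ gives purity of $R/f\to B/(fB)_\perfd$, $1\mapsto g_e:=(g^{1/p^e})^{\lfloor(p^e-1)\lambda\rfloor}$, for $e\gg0$. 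By \autoref{lem:coefficient-one-comparison}, it then suffices to show that $fR\to(fB)_\perfd$, $fr\mapsto frg_e$, is pure for these $e$. This map is compatible with the maps of short exact sequences
\[0\to fR\to R\to R/f\to 0\quad\longrightarrow\quad 0\to (fB)_\perfd\to B\to B/(fB)_\perfd\to 0,\]
each given by multiplication by $g_e$.

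\emph{Local cohomology step.} The socle of $H^d_\fram(fR)\cong H^d_\fram(R)$ is the image of the socle $\eta'\in H^{d-1}_\fram(R/f)$ under the connecting map, and that connecting map is injective because $R$ is Cohen--Macaulay. By hypothesis $g_e\eta'\neq0$ in $H^{d-1}_\fram(B/(fB)_\perfd)$. By naturality, the image of $\eta$ in $H^d_\fram((fB)_\perfd)$ equals $\delta_B(g_e\eta')$, so we need $g_e\eta'\notin\operatorname{im}\bigl(H^{d-1}_\fram(B)\to H^{d-1}_\fram(B/(fB)_\perfd)\bigr)$.

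\emph{Main obstacle.} The difficulty is that $H^{d-1}_\fram(B)$ need not vanish. I would first try to show it is almost zero with respect to $(pf)^{1/p^\infty}$, using that $R$ is a complete intersection. Write $R$ over a regular Cohen presentation; $B$ is then a perfectoidization of a complete intersection over a perfectoid regular ring, so Koszul/André flatness arguments as in \cite{BMPSTWW3} give almost Cohen--Macaulayness. The almost ambiguity would then be absorbed using the slack in the exponent. Replace $g_e$ by $f^{1/p^{e'}}g_{e''}$ with a slightly smaller exponent of $g$; this is allowed by \autoref{principal pair sequence criterion} and \autoref{BS - root}. If $p$-almost vanishing is problematic, the fallback is to replace $B$ by a perfectoid big Cohen--Macaulay $R^+$-algebra over $B$, for which $H^{d-1}_\fram$ genuinely vanishes. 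In that case one must check, via \autoref{can choose B integrally closed}-style flatness arguments, that purity of $R/f\to B/(fB)_\perfd$ persists after this base change.
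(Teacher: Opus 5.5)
You follow the paper's proof step for step: pass to the complete case, take $B=\raftypfd$ with variables sent to $f$ and $g$, identify $B/(fB)_\perfd$ with $(R/f)^{A_\infty}_\perfd$, apply \autoref{can test principal pairs on RAinfty} on $R/f$ and \autoref{lem:coefficient-one-comparison} on $R$, and chase the socle through the two local cohomology sequences. One small omission: a variable can only be sent to $g$ when $g\in\mathfrak m$, so for a unit $g$ you first reduce to $g=1$ by \autoref{principal pair choices}\autoref{unit invariance}, as the paper does.

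The gap is the one step that carries the content. You need $H^{d-1}_{\mathfrak m}(B)\to H^{d-1}_{\mathfrak m}(B/(fB)_\perfd)$ to be zero, so that the connecting map to $H^d_{\mathfrak m}((fB)_\perfd)$ is injective. You leave this open and even suggest it may fail. For a complete intersection it holds exactly, not merely almost. This is precisely where the complete intersection hypothesis is used, beyond Gorensteinness of $R$ and $R/f$. The paper takes the exactness of the bottom row directly from \cite[Theorem~4.25]{BMPSTWW3}, and with that input your socle chase finishes the proof as written.

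Neither of your workarounds can replace this input.
\begin{itemize}
\item \emph{Almost vanishing with respect to $(pf)^{1/p^\infty}$.} This is vacuous here, because that ideal lies in $(fB)_\perfd$ and so already kills $H^{d-1}_{\mathfrak m}(B/(fB)_\perfd)$. For the same reason your modified element $f^{1/p^{e'}}g_{e''}$ is zero in $B/(fB)_\perfd$, so the hypothesis on $R/f$ says nothing about it. There is also no slack in the $f$-exponent to spend: by \autoref{rk limit of pure is pure} you need $f^{1-1/p^{e'}}g_e$ for every $e'$.
\item \emph{Almost vanishing with respect to $p^{1/p^\infty}$ alone.} This would need purity of $1\mapsto p^{1/p^n}g_e$ on $R/f$, which perfectoid purity does not provide. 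In \autoref{exam.PPTExamplesPVarialbe} the perfectoid pure ring has $\ppt(\mathfrak m)=0$, so no $p^{\varepsilon}$ with $\varepsilon>0$ gives a pure map.
\item \emph{The BCM fallback.} This fails outright. Suppose purity of $R/f\to C/(fC)_\perfd$ held for a perfectoid big Cohen--Macaulay $R^+$-algebra $C$. Your socle chase over $C$ (where $H^{d-1}_{\mathfrak m}(C)=0$), with $g=1$, would make $R\to(fC)_\perfd$, $1\mapsto f$, pure, hence $R\to C$ pure, and then $R\to R^+$ pure. But take $R=\mathbb{Z}_p\llbracket x,y,z\rrbracket/(x^3+y^3+z^3)$ with $p\equiv 1\bmod 3$ and $f=p$. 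Then $R/p$ is $F$-pure, hence perfectoid pure, while $R$ is not klt and so is not a splinter. So the purity hypothesis on $R/f$ does not survive that base change.
\end{itemize}
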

\begin{proof}
The forward direction is \autoref{quotient of pfd pure is pfd pure} so we only need to show the backwards one.
By \autoref{pure iff completion pure principal case}, it suffices to show the theorem in the case $(R,\fram)$ is complete. By \autoref{principal pair choices}\autoref{unit invariance}, if $g$ is not in $\fram$ then we can assume $g=1$, in which case this is \cite[Theorem~6.6]{BMPSTWW3} so assume $g\in \fram$. We follow the proof of \cite[Theorem~6.6]{BMPSTWW3}.

Let $A\to R$ be as in \autoref{notation RAinfty} and by potentially adding variables to $A$ that get sent to $f$ and $g$, we can assume $\raftypfd$ has a fixed choice of compatible systems of $p$-th power roots for both $f$ and $g$. Let $g_e\coloneqq (g^{1/p^e})^{\lfloor (p^e-1)\lambda\rfloor}$. Let $d=\dim R$. By \cite[Theorem~4.25]{BMPSTWW3}, the following commutative diagram has exact rows. 
\[
\xymatrix{
0\ar[r]&H_\fram^{d-1}(R/fR)\ar[r]\ar[d]^{\cdot\overline{g_e}}&
H_\fram^d(fR)\ar[r]\ar[d]^{\cdot g_e}&
H_\fram^d(R)\ar[r]\ar[d]^{\cdot g_e}&0\\
0\ar[r]&H_\fram^{d-1}((R/fR)^{A_\infty}_\perfd)\ar[r]&
H_\fram^d((f\raftypfd)_\perfd)\ar[r]&H_\fram^d(\raftypfd)\ar[r]&0.
}
\]
By \autoref{can test principal pairs on RAinfty}, 
\[
R/fR \to (R/fR)^{A_\infty}_\perfd, \quad 1\mapsto \overline{g_e}
\]
is pure for all $e\gg 0$ so the left vertical map in the above diagram is injective. Since $R$ is Gorenstein and $f$ is a nonzerodivisor, $H^d_\fram(fR)\cong E_R(R/\fram)$. In fact, its socle is in the image of $H^{d-1}_\fram(R/fR)$ since multiplication by $f$ kills it. Chasing this socle shows that the middle vertical map is injective so 
\[
fR\tensor_R H^d_\fram(R) \to (f\raftypfd)_\perfd \tensor_R H^d_\fram(R), \quad f \tensor u\mapsto fg_e\tensor u
\]
is injective. The result then follows from combining $H^d_\fram(R)\cong E_R(R/\fram)$, \autoref{lem: injective hull splitting ideal} and \autoref{lem:coefficient-one-comparison}.
\end{proof}

\section{Perfectoid pure divisor pairs}
\label{sec.PerfdPureDivisorPairs}

We now extend the treatment of principal pairs to divisor
pairs $(R,\Delta)$, where $R$ is an excellent $\mathbf{S}_2$ (hence locally equidimensional
\cite[Tag~0FIW]{stacks-project}) ring and $\Delta$ is an effective Mumford $\Q$-divisor on $X=\Spec R$.
We will repeatedly use finite flat $R$-algebras, which must be excellent and $\mathbf{S}_2$ and form a class closed under tensor
products over $R$.

\subsection{Definition and first properties} 
\begin{definition}\label{delta datum}
Let $R$ be an excellent
$\mathbf{S}_2$ ring, $\Delta$
an effective {Mumford $\Q$-divisor} on
$R$, and let $B$ be an $R$-algebra.
\begin{enumerate}
    \item A \emph{divisorial lift of $\Delta$}, or simply
a \emph{$\Delta$-lift}, is a pair $(S,\Delta_S)$, where $S$ is a finite flat $R$-algebra and $\Delta_S$ is an effective Weil generalized divisor on $S$ such that, for some integer $n>0$ with $n\Delta$ integral, the flat pullback of $n\Delta$ to $S$ is $n\Delta_S$. We often refer to such an $n$ as a \emph{witness} of the lift.
    \item A \emph{$B$-realization} of the
$\Delta$-lift $(S,\Delta_S)$ is an $R$-algebra map $\iota\colon S\to B$, i.e., such that $R \to B$ factors as $R \to S \xrightarrow{\iota} B$.
    \item A \emph{$\Delta$-datum on $B$} is a triple $\mathfrak d=(S,\Delta_S,\iota)$
consisting of a $\Delta$-lift together with a $B$-realization.
    \item Associated to $\mathfrak d$ is the pointed $B$-module
\[
M_B(\mathfrak d)
\coloneqq
B\tensor_{S,\iota}S(\Delta_S),
\qquad
\mathbf 1_{\mathfrak d}\coloneqq1\tensor1,
\]
and the distinguished map
\[
\eta_{\mathfrak d}\colon R\longrightarrow M_B(\mathfrak d),
\qquad
1\longmapsto\mathbf 1_{\mathfrak d}.
\]
Here $\mathbf 1_{\mathfrak d}$ is well-defined because the
effectivity of $\Delta_S$ gives an inclusion
$S\subseteq S(\Delta_S)$.
\end{enumerate}
\end{definition}

\begin{remark}
    Because our rings are typically not normal, the divisors that appear in divisorial lifts need not be unique. Suppose that
$(S,\Delta_S)$ and $(S,\Delta'_S)$ are two $\Delta$-lifts on the same
algebra $S$, witnessed by positive integers $n$ and $n'$, respectively.
If $N$ is any common multiple of $n$ and $n'$, then both
$N\Delta_S$ and $N\Delta'_S$ are the flat pullback of $N\Delta$ to
$S$. Thus, while $\Delta_S$ and $\Delta'_S$ need not be equal, their difference
is torsion in the group of Weil generalized divisors on~$S$ (if $S$ is normal, there is no such torsion, and so $\Delta_S$ and $\Delta'_S$ agree).
Likewise, distinct $R$-algebra maps
$\iota,\iota'\colon S\to B$ give distinct $\Delta$-data
\[
\mathfrak d=(S,\Delta_S,\iota)
\qquad\text{and}\qquad
\mathfrak d'=(S,\Delta_S,\iota')
\]
on $B$. The pointed modules $M_B(\mathfrak d)$ and
$M_B(\mathfrak d')$, as well as the purity of their distinguished
maps, may therefore depend on the chosen $B$-realization.
\end{remark}

\begin{example}\label{example.DeltaDatum}
Let $R$ be an excellent {reduced
$\mathbf{S}_2$ ring} with $p$ in its
Jacobson radical, let $f\in R$ be a nonzerodivisor
{that does not vanish along any codimension-one component
of the singular locus}, and let
$a,b\in\Z_{>0}$. Set
$\Delta=\frac{a}{b}\Div_R(f)$ and $S=R[x]/(x^b-f^a)$. The algebra $S$
is finite free over $R$, hence flat
{. Moreover, $x$ is a
nonzerodivisor since $x^b=f^a$ and $f$ is a nonzerodivisor.
Consequently,}
$b\Div_S(x)=a\Div_S(f)$ is the flat pullback of
$b\Delta=a\Div_R(f)$ to $S$. Thus $(S,\Div_S(x))$ is a
$\Delta$-lift.

Any $R$-algebra $B$ containing an element $Y$ with $Y^b=f^a$ realizes this lift via $\iota_Y\colon S\to B$, $x\mapsto Y$ and
$\mathfrak d_Y=(S,\Div_S(x),\iota_Y)$ is a $\Delta$-datum on $B$. Multiplication by $x$
induces an isomorphism
\[
M_B(\mathfrak d_Y)
=B\tensor_{S,\iota_Y}S(\Div_S(x))
\xrightarrow{\sim}B
\]
under which $\mathbf 1_{\mathfrak d_Y}$ corresponds to $Y$. Hence
$\eta_{\mathfrak d_Y}$ corresponds to the map
$R\xrightarrow{1\mapsto Y}B$. Different choices of $Y$ give different
$B$-realizations of the same $\Delta$-lift, and hence different
$\Delta$-data on $B$.
\end{example}

\begin{example}\label{datum associated to SnD construction}
In \autoref{construction SnD}, $(S_{f,m},\Delta_{f,m})$ is a $\Delta$-lift. For any $R$-algebra
$B$ admitting an element $z$ with $z^m=f$ (for instance any absolutely integrally closed $B$), the map
$\iota_z\colon S_{f,m}\to B$ given by $x\mapsto z$ is a
$B$-realization and
$(S_{f,m},\Delta_{f,m},\iota_z)$ is a $\Delta$-datum on $B$.
Different choices of $z$ give different $B$-realizations of the same
$\Delta$-lift, and hence different $\Delta$-data on $B$.
\end{example}

The purity of the maps $\eta_{\mathfrak{d}}:R\to M_B(\mathfrak{d})$ will appear in our definition of perfectoid purity.  Before giving that definition, we record some compatibilities of this purity.

\begin{lemma}\label{properties of M_B}
Let $R$ be an excellent reduced
$\mathbf{S}_2$ ring with $p$ in its Jacobson radical $J$, let $\Delta$ be an effective
Mumford $\Q$-divisor on $R$,
let $B$ be a perfectoid $R$-algebra, and let
$\mathfrak d=(S,\Delta_S,\iota)$ be a $\Delta$-datum on $B$. 
\begin{enumerate}
    \item\label{part functoriality of M_B} If $f \colon B\to C$ is a map of perfectoid $R$-algebras, 
    then \[
    \mathfrak d_C
    \coloneqq
    (S,\Delta_S,S\xrightarrow{\iota}B\xrightarrow{f} C).
    \]
    is a $\Delta$-datum on $C$ and there is a 
    natural pointed isomorphism $M_C(\mathfrak d_C)\cong C\tensor_B M_B(\mathfrak d)$. 
    Moreover, if $f$ is $J$-completely pure, then 
    so is the natural map $M_B(\mathfrak d) \to M_C(\mathfrak d_C)$. In particular, $\eta_{\mathfrak d}$ is pure if and
    only if $\eta_{\mathfrak d_{C}}$ is pure.
    \label{part M_B under p-comp pure map}
    \item $B$ admits a
    $p$-completely faithfully flat map to an absolutely integrally closed
    perfectoid algebra $C$ for which $\eta_{\mathfrak d}$ is pure if and
    only if $\eta_{\mathfrak d_{C}}$ is pure. \label{part assume B is abs ic for M_B}
    \item Suppose that
    $\iota$ factors as
    \[
    S\longrightarrow T\xrightarrow{\iota_T}B,
    \]
    where $S\to T$ is finite flat. If $\Delta_T$ is the flat pullback of $\Delta_S$
    to $T$, then $\mathfrak d_T=(T,\Delta_T,\iota_T)$ is a $\Delta$-datum on
    $B$, and there is a natural pointed isomorphism
    \[
    M_B(\mathfrak d)\xrightarrow{\sim}M_B(\mathfrak d_T).
    \]
    In particular, $\eta_{\mathfrak d}$ is pure if and only if
    $\eta_{\mathfrak d_T}$ is pure. \label{part flat pullback absorbtion}
    \item Suppose that $R \to B$ factors through a flat excellent reduced $\mathbf{S}_2$ $R$-algebra $R'$ and that there is an effective Mumford divisor $\Delta'$ on $R'$ which is the flat pullback of $\Delta$. 
    Write $S' \coloneqq S \otimes_R R'$, and let $\Delta_{S'}$ be the flat pullback of $\Delta_S$.
    Then, the natural map $S' \to B$ makes 
    $\mathfrak d' \coloneqq (S', \Delta_{S'}, S' \to B)$ a $\Delta'$-datum on $B$ and 
    there is a natural pointed isomorphism $M_B (\mathfrak d') \cong M_B (\mathfrak d)$. \label{part flat base change of M_B}
\end{enumerate}
\end{lemma}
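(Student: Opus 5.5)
Each part reduces to associativity of tensor products plus a purity transfer, so I would take them in order.

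For \autoref{part functoriality of M_B}, the composite $S\to B\to C$ is an $R$-algebra map, so $\mathfrak d_C$ is a $\Delta$-datum by definition. Associativity gives
\[
C\tensor_B M_B(\mathfrak d)=C\tensor_B\bigl(B\tensor_{S}S(\Delta_S)\bigr)\cong C\tensor_S S(\Delta_S)=M_C(\mathfrak d_C),
\]
and this sends $1\tensor(1\tensor1)$ to $1\tensor1$, so it is pointed. If $f$ is $J$-completely pure, i.e.\ $B/J^nB\to C/J^nC$ is pure for all $n$, then base change along $-\tensor_B M_B(\mathfrak d)$ preserves purity. This gives that $M_B(\mathfrak d)/J^n\to M_C(\mathfrak d_C)/J^n$ is pure for every $n$. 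Now \autoref{pure + p-comp pure is pure} shows that purity of $\eta_{\mathfrak d}$ implies purity of $\eta_{\mathfrak d_C}$. The converse holds because $\eta_{\mathfrak d_C}$ factors through $\eta_{\mathfrak d}$. The only point to check is that $p$-complete faithful flatness gives purity modulo $J^n$ and not merely modulo $p^n$. Since $p\in J$, I would handle this by first working modulo $p^n$ and then using that $J^n$-torsion modules over the Noetherian $R$ are built from these, as in the proof of \autoref{pure + p-comp pure is pure}. Alternatively, one can observe that that lemma only needs purity after tensoring with finitely generated submodules of $E$, which are killed by some power of $\fram\supseteq (p)$, reducing to the $p^n$ case after localizing.

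\autoref{part assume B is abs ic for M_B} then follows immediately by choosing $C$ via \autoref{thm: Andre}. A $p$-completely faithfully flat map is pure modulo $p^n$ for every $n$, so the argument above applies.

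For \autoref{part flat pullback absorbtion}, first note that $T$ is finite flat over $R$. If $n$ witnesses the lift $(S,\Delta_S)$, functoriality of flat pullback (\autoref{flat keeps reflexive}) shows that $n\Delta_T$ is the pullback of $n\Delta$, so $(T,\Delta_T)$ is a $\Delta$-lift. \autoref{flat keeps reflexive} also gives $T(\Delta_T)\cong T\tensor_S S(\Delta_S)$ with $1\mapsto 1\tensor 1$. Hence
\[
M_B(\mathfrak d_T)=B\tensor_T T\tensor_S S(\Delta_S)\cong M_B(\mathfrak d)
\]
as pointed modules.

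\autoref{part flat base change of M_B} works the same way. The ring $S'=S\tensor_R R'$ is finite flat over $R'$, and $\Delta_{S'}$ is the flat pullback of $\Delta_S$ along the flat map $S\to S'$. Its $n$-th multiple is the pullback of $n\Delta$ along $R\to S\to S'$, which equals the pullback along $R\to R'\to S'$, namely that of $n\Delta'$. So $\mathfrak d'$ is a $\Delta'$-datum. The pointed isomorphism follows again from $S'(\Delta_{S'})\cong S'\tensor_S S(\Delta_S)$. Here one needs $S'$ to be $\mathbf{S}_2$ and locally equidimensional so that \autoref{flat keeps reflexive} applies. This holds since $S'$ is finite flat over $R'$, which is excellent and $\mathbf{S}_2$, as recorded at the start of the section.
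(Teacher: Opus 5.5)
Your proposal is correct and matches the paper's proof. For (a) you base change $f$ and apply \autoref{pure + p-comp pure is pure}, (b) then follows from \autoref{thm: Andre}, and for (c) and (d) you use associativity of tensor products together with \autoref{flat keeps reflexive}. Your detour on $J$ versus $p$ can be shortened: since $p^n\in J^n$, the map $B/J^nB\to C/J^nC$ is a base change of the faithfully flat map $B/p^nB\to C/p^nC$, which is exactly what the ``in particular'' clause of \autoref{pure + p-comp pure is pure} records.
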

\begin{proof}
The first part of \autoref{part functoriality of M_B} is immediate from definition. The second part follows by noting that $M_B(\mathfrak d) \to M_C(\mathfrak d_C)$ is a base change of $f$ and using \autoref{pure + p-comp pure is pure}.  Part \autoref{part assume B is abs ic for M_B} is immediate from 
\autoref{thm: Andre}  and \autoref{part M_B under p-comp pure map}.
Part \autoref{part flat pullback absorbtion} follows from \autoref{flat keeps reflexive} and associativity of tensor products. Part \autoref{part flat base change of M_B} follows by putting every divisor to a sufficiently high multiple of itself and \autoref{flat keeps reflexive}.
\end{proof}

Recall from \autoref{def:divisor_order_relation} that for effective Mumford divisors $\Delta$ and $\Delta'$, the notation $\Delta\prec \Delta'$ means $\Delta\leq (1-\epsilon)\Delta'$ for some $0<\epsilon<1$.

\begin{lemma}\label{divisorial comparison on big open}
Let $R$ be an excellent $\mathbf{S}_2$ ring with $p$ in its Jacobson radical and let $\Delta' \prec \Delta$ be effective Mumford $\Q$-divisors on $R$. Let $B$ be a perfectoid $R$-algebra admitting $\Delta$ and $\Delta'$-data $\mathfrak d=(S,D,\iota)$ and $\mathfrak d'=(S',D',\iota')$, respectively. Let $J_{\mathfrak d'}\subseteq B$ be the radical of the image of $B\tensor_{S'}S'(-D')\to B$.
Let $U\subseteq \Spec R$ be a big open such that both $D$ and $D'$ are Cartier on the respective inverse images of $U$. 
Then, there
is a unique pointed
$\left.\widetilde B\right|_U$-linear map of quasicoherent sheaves
\[
\left.\widetilde{M_B(\mathfrak d')}\right|_U
\longrightarrow
\left.\widetilde{M_B(\mathfrak d)}\right|_U
\]
whose image is contained in
$\left.\reallywidetilde{J_{\mathfrak d'}\cdot
M_B(\mathfrak d)}\right|_U$.
\end{lemma}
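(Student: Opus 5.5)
We construct the map locally on an affine cover of $U$ and then glue. Uniqueness of the local maps will make the gluing automatic, so the real content is local existence and uniqueness.

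Fix an affine open $V=\Spec R_h\subseteq U$ small enough that four things are principal. First, $D$ is principal on $\Spec(S\tensor_RR_h)$, say $\sI_D=y\,S_h$ with $y$ a nonzerodivisor. Second, $D'$ is principal on $\Spec(S'\tensor_RR_h)$, say $\sI_{D'}=y'S'_h$. Third and fourth, $n\Delta$ and $n'\Delta'$ are Cartier on $V$ for the witnesses $n,n'$. If $\Delta$ or $\Delta'$ is not Cartier on some such $V$, we shrink the big open $U$ further (it stays big) and glue; restricting the resulting unique maps recovers uniqueness on the original $U$.

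Set $N=nn'$. On $V$ we then have the following identities, with $u,v$ units in $B_h$ and $\pi$ a local generator of the Cartier divisor $N\Delta$:
\[
\iota(y)^{N}=u\,\pi,\qquad \iota'(y')^{N}=v\,\pi^{\theta}.
\]
Here we need the exponent $\theta$ to be an honest integer less than one, or, after clearing denominators, integers $a<b$ as in \autoref{BS - root}(c). This is where $\Delta'\prec\Delta$ is used: there is $\epsilon>0$ with $\Delta'\le(1-\epsilon)\Delta$. Because $\Delta$ and $\Delta'$ may have different supports, it is cleaner to pick local equations $g$ of $N\Delta$ and $g'$ of $N\Delta'$ with $g'\mid g$ locally. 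Then $\iota(y)^N=u g$ and $\iota'(y')^N=v g'$.

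The map we want sends
\[
M_B(\mathfrak d')|_V=B_h\cdot \iota'(y')^{-1}\;\longrightarrow\; M_B(\mathfrak d)|_V=B_h\cdot\iota(y)^{-1}.
\]
Via multiplication by $y$ and by $y'$, both sides are identified with $B_h$, with $\mathbf 1$ corresponding to $\iota(y)$ and to $\iota'(y')$ respectively. The map therefore amounts to finding $w\in B_h$ with $\iota(y)=w\,\iota'(y')$, and the pointed linear map is then $b\mapsto bw$.

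To produce $w$ we apply \autoref{BS - root}(c), or (a) in the general form: the localization $B_h$ is seminormal, and we have $\iota(y)^{M}\in \iota'(y')^{M'}B_h$ with $M<M'$. The strict inequality $M<M'$ comes from $\Delta'\le(1-\epsilon)\Delta$, after raising to suitable powers so that $g'^{k}$ divides $g^{k(1-\epsilon')}$ in the sense of valuations. The cleanest route is to use $g$ and $g'$ as above: the relation $(g')^{1}\mid g^{1-\epsilon}$ holds only in a $\Q$-sense, so we clear denominators to an integral divisibility $g'^{q}\mid g^{q'}$ with $q'<q$ and take integer powers of $y,y'$.

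Lemma (a) then gives a unique $w\in\sqrt{\iota'(y')B_h}$ with $\iota(y)=w\iota'(y')$. Moreover $\sqrt{\iota'(y')B_h}=J_{\mathfrak d'}B_h$, which gives the required image condition. Uniqueness among maps with image in $J_{\mathfrak d'}M_B(\mathfrak d)$ is exactly the uniqueness clause of (a): any such map corresponds to a $w\in\sqrt{\iota'(y')B_h}$ satisfying the same equation. Uniqueness also forces the local maps to agree on overlaps, so they glue to the desired map over $U$.

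The main obstacle is the exponent bookkeeping in the previous step. Three things must be established together:
\begin{itemize}
\item an integral divisibility $\iota(y)^{m}\in\iota'(y')^{m'}B_h$ with $m<m'$, where the $\epsilon$-gap in $\Delta'\prec\Delta$ must absorb the units and the torsion ambiguity of the lifts;
\item that only the $N$-th powers of $y$ and $y'$ are pinned down by pullback;
\item that $\sqrt{\iota'(y')B_h}$ coincides with $J_{\mathfrak d'}$ localized, which holds since $B\tensor S'(-D')$ has image $\iota'(y')B$ there.
\end{itemize}
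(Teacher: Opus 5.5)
Your local mechanism is the paper's. You trivialize $M_B(\mathfrak d')$ and $M_B(\mathfrak d)$ by local generators $y'$ and $y$ of $I_{D'}$ and $I_D$, and reduce a pointed map to an element $w$ with $\iota(y)=w\,\iota'(y')$. You then get $w\in\sqrt{\iota'(y')B_h}=J_{\mathfrak d'}B_h$ uniquely from \autoref{BS - root}\autoref{BS - root.baseCase}, once you have $\iota(y)^{an}\in\iota'(y')^{bn}B_h$ with $a<b$, and you glue by uniqueness.

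The gap is in how you obtain that divisibility. You derive it from local equations $g,g'\in R_h$ of $N\Delta$ and $N\Delta'$, which requires these divisors to be Cartier on $V$. That is not among the hypotheses on $U$. Your remedy, shrinking $U$ to a smaller big open $U_0$, produces the map only on $U_0$. Restriction then gives uniqueness on $U$, but not existence. Nothing makes a map on $U_0$ extend to $U$, since $\widetilde{M_B(\mathfrak d)}$ has no codimension-two extension property in general ($B$ is an arbitrary perfectoid $R$-algebra). As written, you prove the lemma only for those $U$ on which $n\Delta$ and $n'\Delta'$ are also Cartier.

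There are two ways to repair this.

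\begin{enumerate}
\item Follow the paper and get the divisibility without passing through $R$. Pull both data back to $T=S\tensor_RS'$, which is harmless by \autoref{properties of M_B}\autoref{part flat pullback absorbtion}. Fix a common witness $n$ and integers $a<b$ with $1-\varepsilon<a/b$. Then $H=n(aD-bD')$ is the flat pullback of the effective integral Mumford divisor $n(a\Delta-b\Delta')$, hence effective on $T$. Wherever $D$ and $D'$ are principal with generators $y$ and $x$, effectivity of $H$ says exactly $y^{an}\in x^{bn}T_\alpha$. No Cartier hypothesis on $\Delta$ or $\Delta'$ is used.
\item Keep your setup, but show that your ``shrink'' case never occurs. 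The image of $\Spec S\to\Spec R$ is open and closed, since the map is finite flat. At a point $x\in U$ in this image, $S_x$ is faithfully flat over $R_x$. Moreover $S_x\tensor_{R_x}(I_{n\Delta})_x=(I_{nD})_x$ is invertible over the semilocal ring $S_x$, because every prime of $S_x$ lies over a generization of $x$, hence in $U$. Faithfully flat descent then makes $n\Delta$ Cartier at $x$. At points outside the image, $S$ and hence $B$ vanish locally, so there is nothing to construct. The same argument applies to $S'$ and $\Delta'$.
\end{enumerate}

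You need one of these arguments to get existence on the given $U$.
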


\begin{proof}
The images in $\Spec R$ of the Cartier loci of $D$ and $D'$ are big opens since the two induced maps $\Spec S, \Spec S' \to \Spec R$ are finite flat and preserve heights. Thus open subsets $U$ as in the statement exist. Set $T=S\tensor_RS'$, with the realization induced by $\iota$ and $\iota'$. By \autoref{properties of M_B}\autoref{part flat pullback absorbtion}, it suffices to show the lemma after pulling both data back to $T$. We continue to write $D$ and $D'$ for the pulled-back divisors. Let $\rho\colon \Spec T\to \Spec R$ be the natural morphism and let $V\coloneqq {\rho}^{-1}(U)$. By hypothesis, $D$ and $D'$ are Cartier on $V$.

Let $0<\varepsilon<1$ be such that $\Delta'\leq(1-\varepsilon)\Delta$ and let $n$  be a witness to both divisorial lifts. Fix $a<b \in \N_{>0}$ such that $1-\varepsilon<a/b<1$. { Let $H=n(aD-bD')$, which is the flat pullback of the integral Mumford divisor $n(a\Delta-b\Delta')$} and is effective 
because $b\Delta'\leq b(1-\varepsilon)\Delta\leq a\Delta$.

Since $V$ is a big open, it admits a  principal affine cover $\{V_\alpha=\Spec T_\alpha=\Spec T[1/h_\alpha]\}_\alpha$, where each $h_\alpha$ is a nonzerodivisor (since $V$ contains the associated primes), such that
\[
T_\alpha(-D')=x_\alpha T_\alpha
\qquad\text{and}\qquad
T_\alpha(-D)=y_\alpha T_\alpha.
\]
Multiplying by powers of $h_\alpha$, we may take $x_\alpha,y_\alpha\in T$. Write $B_\alpha\coloneqq B\tensor_TT_\alpha$, $M_B(\mathfrak{d})_\alpha\coloneqq M_B(\mathfrak{d})\tensor_T T_\alpha$ and $M_B(\mathfrak{d'})_\alpha\coloneqq M_B(\mathfrak{d'})\tensor_T T_\alpha$. Since $T_\alpha(-H)=y_\alpha^{an}x_\alpha^{-bn}T_\alpha$ and $H$ is effective, $y_\alpha^{an}B_\alpha\subseteq x_\alpha^{bn}B_\alpha$. Applying \autoref{BS - root} gives a unique $c_\alpha\in\sqrt{x_\alpha B_\alpha}$ such that $y_\alpha=c_\alpha x_\alpha$.
On the other hand, $T_\alpha(D')=x_\alpha^{-1}T_\alpha$ and $T_\alpha(D)=y_\alpha^{-1}T_\alpha$, so
$M_B(\mathfrak{d'})_\alpha$ and
$M_B(\mathfrak{d})_\alpha$ are freely generated over
$B_\alpha$ by $x_\alpha^{-1}$ and $y_\alpha^{-1}$, respectively.
Thus
\(
x_\alpha^{-1}\longmapsto c_\alpha y_\alpha^{-1}
\)
defines a pointed $B_\alpha$-linear map between them.

To glue these maps, set $V_{\alpha\beta}=V_\alpha\cap V_\beta=\Spec T_{\alpha\beta}$ and
$B_{\alpha\beta}=B\tensor_TT_{\alpha\beta}$.
By \autoref{BS - root}, on $V_{\alpha\beta}$, the element $c_\alpha$ is the unique element of $\sqrt{x_\alpha B_{\alpha\beta}}$ satisfying $y_\alpha=c_\alpha x_\alpha$.
If $x_\beta=u_{\alpha\beta}x_\alpha$ and $y_\beta=v_{\alpha\beta}y_\alpha$ for units $u_{\alpha\beta},v_{\alpha\beta}\in T_{\alpha\beta}$, uniqueness gives $c_\alpha=v_{\alpha\beta}^{-1}u_{\alpha\beta}c_\beta$. 
Hence by restricting the map on $V_{\alpha}$ to $V_{\alpha \beta}$, we see $x_{\alpha}^{-1} \mapsto c_{\alpha} y_{\alpha}^{-1} = v_{\alpha \beta}^{-1} u_{\alpha \beta} c_{\beta} y_{\alpha}^{-1} = c_{\beta} u_{\alpha\beta} y_{\beta}^{-1}$ and likewise restricting the map on $V_\beta$, we see $x_{\alpha}^{-1} = u_{\alpha \beta} x_{\beta}^{-1}$ is also sent to $c_{\beta} u_{\alpha\beta} y_{\beta}^{-1}$.
This implies that the local maps glue to a pointed morphism
\[
\widetilde{M_B(\mathfrak d')}\big|_V
\longrightarrow
\widetilde{M_B(\mathfrak d)}\big|_V
\]
of $T$-modules. Pushing this map forward along the affine morphism $\rho|_V\colon V\to U$ gives the desired map of sheaves on $U$. Since \autoref{flat keeps reflexive}, $T(-D')\cong T\tensor_{S'}S'(-D')$, $J_{\mathfrak d'}B_\alpha=\sqrt{x_\alpha B_\alpha}$. Moreover, since the localized modules are free, any local pointed map whose image is contained in $J_{\mathfrak d'}M_B(\mathfrak{d})_\alpha$ is determined by an element $c\in\sqrt{x_\alpha B_\alpha}$ such that $y_\alpha=cx_\alpha$. Uniqueness in \autoref{BS - root} shows that it is the map constructed above.
\end{proof}

We now prove the divisor analog of the first part of \autoref{principal pair sequence criterion}.  Also compare with \autoref{common perfectoid algebra for divisor pairs} and \autoref{perfectoid purity decreases with divisor}.

\begin{proposition}
\label{BS application to divisor purity}
Let $R$ be an excellent  $\mathbf{S}_2$ ring with $p$ in its Jacobson radical and let $\Delta' \prec \Delta$ be effective Mumford $\Q$-divisors on $R$. 
Let $B$ be a perfectoid $R$-algebra admitting $\Delta$ and $\Delta'$-data, $\mathfrak d=(S,D,\iota)$ and $\mathfrak d'=(S',D',\iota')$, respectively. If $\eta_{\mathfrak d}$ is pure, then so is
$\eta_{\mathfrak d'}$.
\end{proposition}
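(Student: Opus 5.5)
The plan is to use the comparison map of \autoref{divisorial comparison on big open} to factor (a modification of) $\eta_{\mathfrak d}$ through $\eta_{\mathfrak d'}$. Concretely, choose a big open $U\subseteq\Spec R$ on whose preimages in $S$ and $S'$ both $D$ and $D'$ are Cartier. \autoref{divisorial comparison on big open} gives a pointed $\widetilde B|_U$-linear map
\[
\widetilde{M_B(\mathfrak d')}\big|_U\longrightarrow\widetilde{M_B(\mathfrak d)}\big|_U .
\]
Taking sections over $U$ yields a pointed map of $R$-modules
\[
\Gamma\bigl(U,\widetilde{M_B(\mathfrak d')}\bigr)\longrightarrow\Gamma\bigl(U,\widetilde{M_B(\mathfrak d)}\bigr).
\]
Since the map is pointed, it sends the image of $\mathbf 1_{\mathfrak d'}$ to the image of $\mathbf 1_{\mathfrak d}$. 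So the composite $R\to\Gamma(U,\widetilde{M_B(\mathfrak d)})$ induced by $\eta_{\mathfrak d}$ factors as
\[
R\xrightarrow{\eta_{\mathfrak d'}}M_B(\mathfrak d')\longrightarrow\Gamma\bigl(U,\widetilde{M_B(\mathfrak d')}\bigr)\longrightarrow\Gamma\bigl(U,\widetilde{M_B(\mathfrak d)}\bigr).
\]

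The key steps, in order, are as follows.
\begin{enumerate}
\item Apply \autoref{check purity on codim 1}, using that $R$ is $\mathbf{S}_2$. Purity of $\eta_{\mathfrak d}$ implies purity of $R\to\Gamma(U,\widetilde{M_B(\mathfrak d)})$.
\item By the factorization above, the first map in the composite, namely $R\to\Gamma(U,\widetilde{M_B(\mathfrak d')})$, is pure.
\item Apply \autoref{check purity on codim 1} again, now in the converse direction, to conclude that $\eta_{\mathfrak d'}$ is pure.
\end{enumerate}
Alternatively, all three steps can be phrased uniformly with \autoref{purity and codimension one envelope}: apply the functor $(-)^{\cOne}$ and pass to the colimit over big opens.

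The main point to check is that the morphism of \autoref{divisorial comparison on big open} really is a map of $R$-modules compatible with the distinguished elements after pushforward to $U$. That lemma constructs it on $\Spec(S\otimes_R S')$ and pushes it forward along the affine morphism to $U$, so both compatibilities should hold. One should also confirm that $U$ can be chosen so that \autoref{check purity on codim 1} applies, i.e.\ that its complement in $\Spec R$ has codimension $\geq 2$; this is noted at the start of the proof of that lemma. The reducedness hypothesis used elsewhere is not needed here, since \autoref{BS - root} only requires $B$ (and its localizations) to be seminormal, and this holds because $B$ is perfectoid.
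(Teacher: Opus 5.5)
Your proposal is correct and matches the paper's proof: it takes sections over the big open $U$ from \autoref{divisorial comparison on big open} and uses \autoref{check purity on codim 1} to pass from purity of $\eta_{\mathfrak d}$ to purity of $\Gamma(U,\eta_{\mathfrak d})$. It then factors this map through $\Gamma(U,\eta_{\mathfrak d'})$ via the pointed comparison map and applies \autoref{check purity on codim 1} once more.
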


\begin{proof}
Using \autoref{divisorial comparison on big open} and its notation, we have a pointed map
\[
\Gamma\bigl(U,\widetilde{M_B(\mathfrak d')}\bigr) \longrightarrow \Gamma\bigl(U,\widetilde{M_B(\mathfrak d)}\bigr).
\]
Since $\eta_{\mathfrak d}$ is pure, so is $\Gamma(U,\eta_{\mathfrak d})$ by \autoref{check purity on codim 1}. By the above, this factors through $\Gamma(U,\eta_{\mathfrak d'})$, which must therefore also be pure and the result follows from another application of \autoref{check purity on codim 1}.
\end{proof}

We are finally ready to define perfectoid purity for pairs $(R, \Delta)$.

\begin{definition}\label{def perfd pure div pair}
Let $R$ be an excellent reduced $\mathbf{S}_2$ ring with $p$ in its Jacobson radical, and let $\Delta$ be an effective Mumford $\Q$-divisor on $R$. We say that $(R,\Delta)$ is \emph{perfectoid pure} if, for every rational number $0<\varepsilon<1$, there exist a perfectoid $R$-algebra $B_\varepsilon$ and a $(1-\varepsilon)\Delta$-datum $\mathfrak d_\varepsilon$ on $B_\varepsilon$ such that $\eta_{\mathfrak d_\varepsilon}\colon R\to M_{B_\varepsilon}(\mathfrak d_\varepsilon)$ is pure.
\end{definition}

For $\Delta=0$, this definition recovers the notion of a perfectoid
pure ring from \cite[Definition~4.1(a)]{BMPSTWW3}. Indeed, a pure map
$R\to B$ to a perfectoid $R$-algebra gives a trivial $0$-datum on
$B$ that works for every $\varepsilon$, while the converse follows
from \autoref{pure full boundary datum implies perfectoid pure}.

\begin{remark}[$\bR$-divisors]
    Suppose $R$ is an excellent reduced $\mathbf{S}_2$ ring with $p$ in its Jacobson radical. Further suppose that $\Delta$ is a Mumford $\bR$-divisor.  We could then define $(R, \Delta)$ to be perfectoid pure if for every sequence of effective Mumford $\bQ$-divisors $\{\Delta_n \}_{n \geq  1}$ with $\Delta_n \leq \Delta$ and $\Delta_n \to \Delta$ coefficientwise, we have that $(R, \Delta_n)$ is perfectoid pure.  Many results in this paper likely can be extended to this setting.  
\end{remark}

\begin{corollary}\label{pure full boundary datum implies perfectoid pure}
Let $R$ and $\Delta$ be as in
\autoref{def perfd pure div pair}. Suppose that there are a
perfectoid $R$-algebra $B$ and a $\Delta$-datum $\mathfrak d$ on
$B$ such that $\eta_{\mathfrak d}$ is pure. Then, the $R$-algebra map
$R\to B$ is pure and $(R,\Delta)$ is perfectoid pure. In particular,
$R$ is perfectoid pure.
\end{corollary}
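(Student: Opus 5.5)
Three things need proving: $R\to B$ is pure, $(R,\Delta)$ is perfectoid pure, and $R$ is perfectoid pure (the last is immediate from the first). The plan is to compare $\eta_{\mathfrak d}$ with smaller boundaries through \autoref{divisorial comparison on big open} and \autoref{BS application to divisor purity}.

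\emph{Purity of $R\to B$.} Write $\mathfrak d=(S,D,\iota)$. The zero divisor satisfies $0\prec\Delta$, since $0\le(1-\varepsilon)\Delta$. It has the trivial $0$-datum $\mathfrak d_0=(R,0,R\to B)$ on $B$, with $M_B(\mathfrak d_0)=B$ and $\eta_{\mathfrak d_0}$ equal to the structure map $R\to B$. Applying \autoref{BS application to divisor purity} to the pair $0\prec\Delta$ with the data $\mathfrak d$ and $\mathfrak d_0$ shows that purity of $\eta_{\mathfrak d}$ forces purity of $\eta_{\mathfrak d_0}$, i.e.\ of $R\to B$. Concretely, the unique pointed map of \autoref{divisorial comparison on big open} sends $1$ to the element $c_\alpha$ with $y_\alpha=c_\alpha\cdot 1$, so $\Gamma(U,\eta_{\mathfrak d})$ factors through $R\to\Gamma(U,\widetilde B)$. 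One then concludes with \autoref{check purity on codim 1}. This gives perfectoid purity of $R$ directly from \cite[Definition~4.1(a)]{BMPSTWW3}.

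\emph{Perfectoid purity of $(R,\Delta)$.} Fix $0<\varepsilon<1$. By \autoref{def perfd pure div pair} we need a perfectoid algebra and a $(1-\varepsilon)\Delta$-datum on it whose distinguished map is pure. We use the same $B$, since $(1-\varepsilon)\Delta\prec\Delta$. The remaining step is to produce a $(1-\varepsilon)\Delta$-datum on $B$, or on a $p$-completely faithfully flat extension of it. By \autoref{properties of M_B}\autoref{part assume B is abs ic for M_B}, we may replace $B$ by an absolutely integrally closed perfectoid $C$: purity of $\eta_{\mathfrak d}$ is preserved, and $\mathfrak d_C$ is still a $\Delta$-datum. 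Now apply \autoref{construction SnD} to the effective Mumford $\Q$-divisor $(1-\varepsilon)\Delta$. This gives the lift $(S_{f,m},\Delta_{f,m})$, which $C$ realizes by choosing an $m$-th root of $f$ as in \autoref{datum associated to SnD construction}; call the resulting datum $\mathfrak d'$. Then \autoref{BS application to divisor purity}, applied to $(1-\varepsilon)\Delta\prec\Delta$ with $\mathfrak d_C$ and $\mathfrak d'$, shows that $\eta_{\mathfrak d'}$ is pure. Since $\varepsilon$ was arbitrary, $(R,\Delta)$ is perfectoid pure.

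\emph{Main obstacle.} The hypotheses of \autoref{BS application to divisor purity} must hold in the degenerate cases. In the first step the divisor $0$ is trivially a Mumford divisor, and its lift $(R,0)$ is fine with any witness $n$. The real constraint is that \autoref{construction SnD} needs $R$ excellent and $\mathbf{S}_2$ and a suitable nonzerodivisor $f$, which holds under the standing hypotheses. One should also check that in \autoref{divisorial comparison on big open} nothing requires $\Delta'\neq0$. When $\Delta'=0$, the elements $x_\alpha$ are units and the radical condition is vacuous, so the uniqueness and gluing arguments still apply.
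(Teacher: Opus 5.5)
Your proof is correct. Your second step matches the paper's argument exactly: pass to an absolutely integrally closed $C$ via \autoref{properties of M_B}\autoref{part assume B is abs ic for M_B}, realize the lift of \autoref{construction SnD} for $(1-\varepsilon)\Delta$ as in \autoref{datum associated to SnD construction}, and apply \autoref{BS application to divisor purity} to $(1-\varepsilon)\Delta\prec\Delta$.

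The two proofs differ only in the first step. The paper notes that effectivity of $D$ gives $S\subseteq S(D)$, and hence a map
\[
B=B\otimes_S S\longrightarrow B\otimes_S S(D)=M_B(\mathfrak d),\qquad b\mapsto b\otimes 1,
\]
through which $\eta_{\mathfrak d}$ factors. So $R\to B$ is pure immediately, with no big opens and no seminormality.

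Your route applies \autoref{BS application to divisor purity} with $\Delta'=0$ and the trivial datum $(R,0,R\to B)$. It is valid: on $U$ the comparison map of \autoref{divisorial comparison on big open} is just the restriction of this natural map. However, it uses the gluing and uniqueness machinery of \autoref{BS - root}. It also forces the degenerate-case checks in your ``main obstacle'' paragraph (units $x_\alpha$, a vacuous radical condition), which the direct factorization avoids entirely.

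What your detour buys is minor. Your first step already gives a pure $0$-datum for every $\varepsilon$ when $\Delta=0$, which the paper covers by choosing the trivial datum in that case.
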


\begin{proof}
Effectivity gives a natural map $B\to M_B(\mathfrak d)$, $b\mapsto b\tensor1$, through which $\eta_{\mathfrak d}$ factors. Hence $R\to B$ is pure so $R$ is perfectoid pure.
Using \autoref{properties of M_B}\autoref{part assume B is abs ic for M_B}, we may assume that $B$ is absolutely integrally closed. Then, by \autoref{datum associated to SnD construction}, for every rational number $0<\varepsilon<1$, there is a $(1-\varepsilon)\Delta$-datum $\mathfrak d_\varepsilon$ on $B$. Choose the trivial datum when $\Delta=0$. Applying \autoref{BS application to divisor purity} with $\Delta'=(1-\varepsilon)\Delta$ shows that $\eta_{\mathfrak d_\varepsilon}$ is pure, as desired.
\end{proof}

\begin{theorem} \label{common perfectoid algebra for divisor pairs}
Let $R$ and $\Delta$ be as in \autoref{def perfd pure div pair}. Then
$(R,\Delta)$ is perfectoid pure if and only if there is an absolutely integrally closed perfectoid $R$-algebra $B$ such that for some sequence of rational numbers $0<\{\varepsilon_n\}<1$ converging to $0$, there is a $(1-\varepsilon_n)\Delta$-datum $\mathfrak d_{\varepsilon_n}$ on $B$ for all $n$ for which $\eta_{\mathfrak d_{\varepsilon_n}}$ is pure. Moreover, $B$ can be chosen so that every effective Mumford $\Q$-divisor $\Delta'$ satisfying $\Delta'\prec\Delta$
admits a datum on $B$, and $\eta_{\mathfrak d'}$ is pure for every
$\Delta'$-datum $\mathfrak d'$ on $B$. 
\end{theorem}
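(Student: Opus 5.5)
The reverse implication is immediate from \autoref{def perfd pure div pair} together with \autoref{BS application to divisor purity}. Suppose $B$ and $\varepsilon_n\to0$ are given as in the statement, and let $0<\varepsilon<1$ be rational. Choose $n$ with $\varepsilon_n<\varepsilon$. Then
\[
(1-\varepsilon)\Delta\leq\tfrac{1-\varepsilon}{1-\varepsilon_n}(1-\varepsilon_n)\Delta,
\]
so $(1-\varepsilon)\Delta\prec(1-\varepsilon_n)\Delta$. Since $B$ is absolutely integrally closed, \autoref{datum associated to SnD construction} provides a $(1-\varepsilon)\Delta$-datum on $B$ (take the trivial datum if $\Delta=0$). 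Purity of $\eta_{\mathfrak d_{\varepsilon_n}}$ then propagates to this datum by \autoref{BS application to divisor purity}.

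For the forward implication, and the ``moreover'' part, we use a product construction, as in the proof of \autoref{principal pair sequence criterion}. For each $n\geq1$ set $\varepsilon_n=1/(n+1)$. By \autoref{def perfd pure div pair} there is a perfectoid $B_n$ and a $(1-\varepsilon_n)\Delta$-datum $\mathfrak d_n=(S_n,\Delta_{S_n},\iota_n)$ with $\eta_{\mathfrak d_n}$ pure. By \autoref{properties of M_B}\autoref{part assume B is abs ic for M_B} we may replace each $B_n$ by an absolutely integrally closed perfectoid algebra $C_n$ and keep purity. Set $B=\prod_nC_n$. This is perfectoid by \cite[Example~3.8(8)]{BhattIyengarMaRegularRings}, and it is absolutely integrally closed since a monic polynomial can be solved coordinatewise.

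The difficulty is that $\mathfrak d_n$ is realized only on $C_n$, not on $B$, because $S_n\to C_n$ need not extend to the other factors. To fix this, replace each $\mathfrak d_n$ by a datum of the standard form of \autoref{construction SnD}: pick $f$ and $m$ as there for $(1-\varepsilon_n)\Delta$. Since $C_n$ is absolutely integrally closed, $S_n\tensor_R S_{f,m}$ admits a map to $C_n$ extending $\iota_n$. By \autoref{properties of M_B}\autoref{part flat pullback absorbtion}, the datum pulled back to $S_n\tensor_RS_{f,m}$ still has pure $\eta$. \autoref{BS application to divisor purity}, applied with $\Delta'=(1-\varepsilon_{n-1})\Delta\prec(1-\varepsilon_n)\Delta$ (reindexing the sequence), then shows that the standard datum $(S_{f,m},\Delta_{f,m},x\mapsto z_n)$ for $(1-\varepsilon_{n-1})\Delta$ on $C_n$ has pure $\eta$. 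Now choose for every $k$ an $m$-th root $z_k\in C_k$ of $f$ and set $z=(z_k)_k\in B$. This realizes $S_{f,m}$ in $B$.

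Let $\mathfrak d$ be the resulting datum on $B$. Projection $B\to C_n$ is a map of perfectoid algebras, and by \autoref{properties of M_B}\autoref{part functoriality of M_B} we have $M_{C_n}(\mathfrak d_{C_n})\cong C_n\tensor_BM_B(\mathfrak d)$. So $\eta_{\mathfrak d}$ factors into the pure map $\eta_{\mathfrak d_{C_n}}$ and is therefore pure. This yields the required sequence on the single algebra $B$.

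For the ``moreover'' part, let $\Delta'\prec\Delta$. Then $\Delta'\prec(1-\varepsilon_n)\Delta$ for some $n$, since $\Delta'\leq(1-\varepsilon)\Delta$ and the $\varepsilon_n$ tend to $0$. Any $\Delta'$-datum $\mathfrak d'$ on $B$ exists by \autoref{datum associated to SnD construction}, because $B$ is absolutely integrally closed. Its $\eta_{\mathfrak d'}$ is pure by \autoref{BS application to divisor purity} applied to $\mathfrak d'$ and the pure $(1-\varepsilon_n)\Delta$-datum on $B$. This holds for every realization, since that proposition allows arbitrary data on both sides.

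The main obstacle is the realization issue in the previous paragraph: data from different $C_n$ must assemble into a single realization on the product. I expect to overcome it by converting every datum to the standard datum $S_{f,m}$ of \autoref{construction SnD}, whose realization needs only a root of $f$, which can be chosen coordinatewise. Using a slightly smaller coefficient at each step, as done above, absorbs the loss from this conversion.
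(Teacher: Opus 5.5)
Your proposal is correct and is essentially the paper's argument: you take a product of absolutely integrally closed witnesses, realize the standard lift $S_{f,m}$ of \autoref{construction SnD} coordinatewise by choosing $m$-th roots of $f$ in each factor, and obtain purity on the product by projecting to a factor whose witness has strictly larger coefficient and invoking \autoref{BS application to divisor purity}. Two cosmetic points: the detour through $S_n\tensor_RS_{f,m}$ is unnecessary, since that proposition already allows arbitrary data on both sides, and $m$ should be chosen so that $m(1-\varepsilon_{n-1})\Delta$, rather than $m(1-\varepsilon_n)\Delta$, is integral.
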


\begin{proof}
The reverse implication is immediate from \autoref{BS application to divisor purity} so suppose that $(R,\Delta)$ is
perfectoid pure. If $\Delta=0$, the result follows from \cite[Lemma~4.5]{BMPSTWW3} and \autoref{BS application to divisor purity}, so we may assume that $\Delta\neq0$. Fix $f\in R$ as in \autoref{construction SnD} and note that since $(m\alpha\Delta)_{\mathrm{red}}=\Delta_{\mathrm{red}}$ 
for any $\alpha\neq 0$, this same $f$ works for any $(1-\varepsilon_n)\Delta$.

By \autoref{properties of M_B}\autoref{part assume B is abs ic for M_B}, we may assume that every $B_\varepsilon$ in the definition that $(R,\Delta)$ being perfectoid pure is absolutely integrally closed. Now for every rational $0<\varepsilon<1$, there is $m_\varepsilon$ so that $m_\varepsilon(1-\varepsilon)\Delta$ is integral. Let $(S_{f,m_\varepsilon}, D_\varepsilon)$ be the $(1-\varepsilon)\Delta$-lift supplied by \autoref{datum associated to SnD construction}. Since $B_\lambda$ is absolutely integrally closed for each $0<\lambda<1$, there exists a $(1-\varepsilon)\Delta$-datum $\mathfrak{d}_{\varepsilon,\lambda}=(S_{f,m_\varepsilon}, D_\varepsilon, \iota_{\varepsilon,\lambda})$ on $B_\lambda$. Then $\mathfrak{d}_\epsilon := \prod_\lambda\mathfrak{d}_{\varepsilon,\lambda}$ is a $(1-\varepsilon)\Delta$-datum on $B{:=\prod_{0<\lambda<1}B_\lambda}$.

$B$ is perfectoid by \cite[Example~3.8(8)]{BhattIyengarMaRegularRings}, and it is absolutely integrally closed because monic polynomials can be solved coordinatewise. For each $0<\lambda<1$, we have a composition pointed map (where the second map is simply projection onto the $\lambda$-factor)
\[
\eta_{\mathfrak d_{\varepsilon,\lambda}}\colon R\to M_B(\mathfrak d_\varepsilon)\longrightarrow M_{B_\lambda}(\mathfrak d_{\varepsilon,\lambda}).
\]
Note that for each $0<\lambda<\varepsilon$, $\eta_{\mathfrak d_{\varepsilon,\lambda}}$ is pure: this is because $(1-\varepsilon)\Delta < (1-\lambda)\Delta$ and thus \autoref{BS application to divisor purity} shows that $\eta_{\mathfrak d_{\varepsilon,\lambda}}$ is pure. It follows that the pointed map $\eta_{\mathfrak d_{\varepsilon}}$: $R\to M_B(\mathfrak d_{\varepsilon})$ is pure for every rational $0<\varepsilon<1$ and the first statement follows. The second part of the statement follows from \autoref{datum associated to SnD construction}.
\end{proof}

\begin{corollary}\label{perfectoid purity decreases with divisor}
Let $R$ be an excellent $\mathbf{S}_2$ ring with $p$ in its Jacobson radical, and let $\Delta'\leq\Delta$ be effective Mumford $\Q$-divisors on $R$. If $(R,\Delta)$ is perfectoid pure, then $(R,\Delta')$ is perfectoid pure. Moreover, if $\{\Delta_n\}_{n\geq1}$ is a sequence of effective Mumford $\Q$-divisors such that $\Delta_n\leq\Delta$, $\Delta_n\to\Delta$ coefficientwise, and $(R,\Delta_n)$ is perfectoid pure for all $n$, then $(R,\Delta)$ is perfectoid pure.
\end{corollary}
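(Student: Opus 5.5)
The plan is to derive both assertions from \autoref{common perfectoid algebra for divisor pairs} and \autoref{BS application to divisor purity}. We may assume $\Delta\neq 0$, since otherwise both statements are trivial.

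For the first assertion, assume $(R,\Delta)$ is perfectoid pure. Apply \autoref{common perfectoid algebra for divisor pairs} to obtain an absolutely integrally closed perfectoid $R$-algebra $B$ with the following property: every effective Mumford $\Q$-divisor $\Delta''\prec\Delta$ admits a datum on $B$, and $\eta_{\mathfrak d''}$ is pure for every such datum $\mathfrak d''$.

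Now fix a rational $0<\varepsilon<1$. Since $\Delta'\leq\Delta$, we get $(1-\varepsilon)\Delta'\leq(1-\varepsilon)\Delta$, hence $(1-\varepsilon)\Delta'\prec\Delta$. It remains to check that $(1-\varepsilon)\Delta'$ is an effective Mumford $\Q$-divisor admitting a datum on $B$. Its components are among those of $\Delta$, and \autoref{construction SnD} applies to it directly. Because $B$ is absolutely integrally closed, \autoref{datum associated to SnD construction} supplies a realization, so a $(1-\varepsilon)\Delta'$-datum $\mathfrak d'_\varepsilon$ on $B$ exists. The choice of $B$ then gives that $\eta_{\mathfrak d'_\varepsilon}$ is pure. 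Since this holds for every $\varepsilon$, $(R,\Delta')$ is perfectoid pure by \autoref{def perfd pure div pair}. In the degenerate case $\Delta'=0$, use the trivial datum and the purity of $R\to B$ from \autoref{pure full boundary datum implies perfectoid pure}.

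For the second assertion, fix a rational $0<\varepsilon<1$. Since $\Delta$ has finitely many components and $\Delta_n\to\Delta$ coefficientwise, we can choose $n$ with $(1-\varepsilon)\Delta\prec\Delta_n$. Concretely, require each coefficient of $\Delta_n$ to exceed $(1-\varepsilon/2)$ times the corresponding coefficient of $\Delta$; then $(1-\varepsilon)\Delta\leq\frac{1-\varepsilon}{1-\varepsilon/2}\Delta_n$. The components of $\Delta$ with zero coefficient cause no trouble, and $\Delta_n\leq\Delta$ shows that $\Delta_n$ has no extra components.

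Next apply \autoref{common perfectoid algebra for divisor pairs} to the perfectoid pure pair $(R,\Delta_n)$. This gives an absolutely integrally closed perfectoid $B_n$ on which every divisor $\prec\Delta_n$ has a datum, and each such datum has pure $\eta$. In particular $(1-\varepsilon)\Delta$ admits a datum $\mathfrak d_\varepsilon$ on $B_n$ with $\eta_{\mathfrak d_\varepsilon}$ pure. Since $\varepsilon$ was arbitrary, \autoref{def perfd pure div pair} holds for $(R,\Delta)$.

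The main obstacle is bookkeeping in the relation $\prec$. The first point is that $\prec$ is strict only with a uniform $\varepsilon$; this works because there are finitely many components and the sequence bounds $\Delta_n\leq\Delta$ control the supports. The second point is that the existence of data on an absolutely integrally closed $B$ for an arbitrary effective Mumford $\Q$-divisor relies on \autoref{construction SnD}, which applies to any such divisor and so to those with possibly smaller support, as needed in the first part.
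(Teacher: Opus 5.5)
Your proof is correct and follows the paper's argument. Both parts are read off directly from \autoref{common perfectoid algebra for divisor pairs}, and for the limit statement you use the same choice of $n$ with $\Delta_n\geq(1-\varepsilon/2)\Delta$, which gives $(1-\varepsilon)\Delta\prec\Delta_n$. Your separate appeal to \autoref{construction SnD} to produce data is harmless but redundant, since the ``moreover'' clause of that theorem already guarantees that every divisor $\prec\Delta$ admits a datum on $B$.
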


\begin{proof}
Direct from \autoref{common perfectoid algebra for divisor pairs} after noting that for any $0<\varepsilon<1$, there is $n\gg 0$ with $\Delta_n \geq (1-\varepsilon/2)\Delta$ so that $(1-\varepsilon)\Delta \prec \Delta_n$.
\end{proof}

\begin{corollary}\label{perfectoid pure coefficients at most one}
Let $R$ and $\Delta$ be as in \autoref{def perfd pure div pair}. If $(R,\Delta)$ is perfectoid pure, then every coefficient of $\Delta$ is at most one.
\end{corollary}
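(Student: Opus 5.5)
The plan is a proof by contradiction. We localize at the generic point of a component with coefficient $>1$, where the pointed module becomes free of rank one. There, \autoref{BS - root} forces the distinguished element to be divisible by a uniformizer, which is incompatible with purity.

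\textbf{Reduction.} Suppose some prime component $P$ of $\Delta$ has coefficient $c>1$. By \autoref{perfectoid purity decreases with divisor} we may replace $\Delta$ by $cP$, since $cP\leq\Delta$. Choose a rational $0<\varepsilon<1$ with $(1-\varepsilon)c>1$. By \autoref{def perfd pure div pair} there are a perfectoid $R$-algebra $B$ and a $(1-\varepsilon)cP$-datum $\mathfrak d=(S,D,\iota)$ on $B$ with $\eta_{\mathfrak d}\colon R\to M_B(\mathfrak d)$ pure. Purity is preserved by the flat base change $-\otimes_R R_P$. Hence $R_P\to M_B(\mathfrak d)_P$ is pure, and in particular $M_B(\mathfrak d)_P\neq 0$.

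\textbf{Local description at $P$.} Since $P$ is a prime Mumford divisor, $R_P$ is a DVR; let $\pi$ be a uniformizer. Let $n$ be a witness of the lift and write $n(1-\varepsilon)c=a$ with $a$ an integer, so $a>n$. The ring $S_P$ is finite flat over $R_P$, hence one-dimensional and semilocal. Since $D$ is Weil generalized, it is Cartier at the height-one primes of $S$, so after replacing $S$ by a localization (a semilocal ring, where invertible ideals are principal) we get $S_P(-D)=yS_P$. The condition that $nD$ is the flat pullback of $a\,\Div(\pi)$ then reads $y^n=u\pi^a$ with $u\in S_P^\times$. Consequently $M_B(\mathfrak d)_P=B_P\otimes_{S_P}S_P(D)$ is freely generated over $B_P$ by $y^{-1}$. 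Under the identification $M_B(\mathfrak d)_P\cong B_P$, $y^{-1}\mapsto 1$, the element $\mathbf 1_{\mathfrak d}$ corresponds to the image of $y$, exactly as in \autoref{example.DeltaDatum}. So the map $R_P\to B_P$, $1\mapsto y$, is pure.

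\textbf{Divisibility by $\pi$ and contradiction.} $B_P$ is a localization of a perfectoid ring, hence seminormal by the final clause of \autoref{BS - root}. In $B_P$ we have $y^n=u\pi^a$ with $a>n$, so $y^{n}\in\pi^{n}\cdot\pi^{a-n}B_P$. More usefully, $y^{an}=u^{a}\pi^{a^2}$ lies in $\pi^{N}B_P$ for some $N>an$, because $a^2>an$. Applying \autoref{BS - root}\autoref{BS - root.baseCase} with $g=y$, $x=\pi$, $m=an$ and $n'=a^2$ gives $y=w\pi$ for some $w\in B_P$. Now tensor the pure map $R_P\to B_P$, $1\mapsto y$, with $R_P/\pi R_P$. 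The induced map $R_P/\pi\to B_P/\pi B_P$ sends $1$ to the class of $w\pi$, which is $0$. Since $R_P/\pi\neq 0$, this map is not injective, contradicting purity. Hence every coefficient of $\Delta$ is at most one.

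\textbf{Main obstacle.} The delicate step is the local identification of $M_B(\mathfrak d)_P$ as a free $B_P$-module with $\mathbf 1_{\mathfrak d}\mapsto y$ and $y^n=u\pi^a$. This uses that $S$ need not be normal or local over $P$, so some care with the semilocal ring $S_P$ and the unit $u$ is needed. If $S_P(-D)$ is not visibly principal, I would instead pass to an affine principal open cover of the Cartier locus as in the proof of \autoref{divisorial comparison on big open}. I would then run the same divisibility argument on each piece, using that $R_P\to\prod_\alpha(M_B(\mathfrak d))_{\alpha,P}$ remains pure.
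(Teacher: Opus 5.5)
Your argument is correct, but it is organized differently from the paper's proof.

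\textbf{The paper's route.} The paper does not work with an arbitrary witnessing datum. It combines \autoref{construction SnD} with \autoref{common perfectoid algebra for divisor pairs} to obtain a pure $(1-\varepsilon)\Delta$-datum on the Kummer-type lift $S=R[x]/(x^m-f)$. Writing $f=u\pi$ in $R_P$, the ring $S_P$ is a DVR with uniformizer $x$, and $S(D)_P=x^{-m(1-\varepsilon)c}S_P$. Since $m(1-\varepsilon)c>m$, one gets $1/\pi=ux^{-m}\in S(D)_P$. Hence $\mathbf 1_{\mathfrak d}\in\pi M_B(\mathfrak d)_P$ already at the level of $S$, and this step uses nothing about $B$.

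\textbf{Your route.} You start from the datum supplied by the definition. That lift can be badly non-normal, and $y\in\pi S_P$ can genuinely fail (e.g.\ $S_P=R_P[y]/(y^2-\pi^3)$). This is exactly why you need the seminormality of $B_P$ via \autoref{BS - root}(a) to produce $y/\pi$ after passing to $B_P$. You also need the small semilocal argument to make $S_P(-D)$ principal.

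\textbf{Comparison.} Your proof is self-contained from the definition and avoids \autoref{common perfectoid algebra for divisor pairs}. The paper's proof is shorter, but leans on that theorem, whose proof itself uses seminormality through \autoref{divisorial comparison on big open}.

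\textbf{Minor remarks.} The reduction to $cP$ is harmless but unnecessary, since only the stalk at $P$ matters. The detour through $y^{an}=u^a\pi^{a^2}$ is also not needed: $y^n\in\pi^aB_P$ with $n<a$ already fits \autoref{BS - root}(a) directly.
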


\begin{proof}
Suppose that $\Delta$ has coefficient $c>1$ along a prime Mumford divisor $P$ and let $0<\varepsilon<1$ be a small enough rational number so that $(1-\varepsilon)c>1$. By \autoref{construction SnD} and \autoref{common perfectoid algebra for divisor pairs}, there a pure datum $\mathfrak d_\varepsilon=(S_\varepsilon,D_\varepsilon,\iota_\varepsilon)$ for some $m\in \N_{>0}$, nonzerodivisor $f\in R$ and $S_\varepsilon\coloneqq R[x]/(x^m-f)$ mapping to a perfectoid ring $B$. In fact, we have $S_\epsilon(D_\epsilon)\tensor_RR_P=x^{-m(1-\varepsilon)c}(S_\varepsilon \tensor_RR_P)$. Write $\pi$ for the uniformizer of $R_P$ and $f=u\pi$ for some unit $u\in R_P$. 
Since $m(1-\varepsilon)c>m$, $1/\pi = ux^{-m}\in S_\epsilon(D_\epsilon)\tensor_RR_P$ so the distinguished element of $M_B(\mathfrak d_\epsilon)_P$ is in $\pi M_B(\mathfrak d_\epsilon)_P$. Base changing $\eta_{\mathfrak d_\varepsilon}$ by $R_P/\pi R_P$ gives a pure map that sends $1$ to $0$, a contradiction.
\end{proof}

\subsection{The limiting module from below}
\label{subsec:frombelow}
Fix $R$ and $\Delta$ as in \autoref{def perfd pure div pair} and a perfectoid $R$-algebra $B$. In this subsection, we combine \autoref{codimension one envelope} and \autoref{divisorial comparison on big open} to show that perfectoid purity can be tested on only one module. Throughout, codimension-one envelopes are taken over $R$.

\begin{lemma}\label{coherent divisorial comparison}
Let $R$ be an excellent $\mathbf{S}_2$ ring with $p$ in its Jacobson radical, and let $\Delta'\prec\Delta$ be effective Mumford $\Q$-divisors on $R$. Let  $\mathfrak d=(S,D,\iota)$ and $\mathfrak d'=(S',D',\iota')$ be $\Delta$ and $\Delta'$-data on a perfectoid ring $B$, respectively. 
There is a canonical pointed $B$-linear map
\[
\Phi_{\mathfrak d',\mathfrak d}\colon
M_B(\mathfrak d')^{\mathrm{c1}}
\longrightarrow M_B(\mathfrak d)^{\mathrm{c1}}
\]
that satisfies the following properties.
\begin{enumerate}
    \item \label{coherent divisorial comparison.SheafMapVsExtendedDatum}
        Suppose that $\iota$ and $\iota'$ factor through finite flat
        $S$- and $S'$-algebras $T$ and $T'$, respectively, and let
        $\mathfrak d_{T}$ and $\mathfrak d'_{T}$ be the pullback data
        given by \autoref{properties of M_B}\autoref{part flat pullback absorbtion}. The natural pointed
        isomorphisms of that lemma identify
        $\Phi_{\mathfrak d',\mathfrak d}$ with
        $\Phi_{\mathfrak d'_{T},\mathfrak d_{T}}$.
    \item \label{coherent divisorial comparison.FunctorialityOfSheafComparison}
        If $\Delta''$ is an effective
        Mumford $\Q$-divisor on $R$,
        $\mathfrak d''=(S'',D'',\iota'')$ is a $\Delta''$-datum on $B$,
        and $\Delta''\prec\Delta'$, then
        \[
        \Phi_{\mathfrak d'',\mathfrak d}
        =
        \Phi_{\mathfrak d',\mathfrak d}
        \circ\Phi_{\mathfrak d'',\mathfrak d'}.
        \]
\end{enumerate}
\end{lemma}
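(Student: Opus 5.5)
We will construct $\Phi_{\mathfrak d',\mathfrak d}$ by gluing the local comparison maps of \autoref{divisorial comparison on big open} and passing to the colimit over big opens. Fix a big open $U\subseteq\Spec R$ on which the pulled-back divisors $D$ and $D'$ are Cartier. \autoref{divisorial comparison on big open} gives a unique pointed map
\[
\phi_U\colon \widetilde{M_B(\mathfrak d')}|_U\to\widetilde{M_B(\mathfrak d)}|_U
\]
whose image lies in $J_{\mathfrak d'}M_B(\mathfrak d)|_U$. For a smaller big open $U'\subseteq U$, the restriction $\phi_U|_{U'}$ still has image in $J_{\mathfrak d'}M_B(\mathfrak d)|_{U'}$. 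By the uniqueness clause, it therefore equals $\phi_{U'}$. Taking $\Gamma(U,-)$ and then the filtered colimit over all such $U$ then yields a $B$-linear map
\[
\varinjlim_U\Gamma(U,\widetilde{M_B(\mathfrak d')})\to\varinjlim_U\Gamma(U,\widetilde{M_B(\mathfrak d)}).
\]
Since the big opens on which both divisors are Cartier are cofinal among all big opens, both sides are $(-)^{\mathrm{c1}}$ of the two modules. The map sends $\mathbf 1_{\mathfrak d'}$ to $\mathbf 1_{\mathfrak d}$ because each $\phi_U$ is pointed. Linearity over $B$ follows since the $\phi_U$ are $\widetilde B|_U$-linear. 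Canonicity means that the map depends only on the data and not on choices; this follows from the same uniqueness argument.

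For (a), the isomorphisms $M_B(\mathfrak d)\cong M_B(\mathfrak d_T)$ and $M_B(\mathfrak d')\cong M_B(\mathfrak d'_{T'})$ of \autoref{properties of M_B}\autoref{part flat pullback absorbtion} are pointed. After restricting to a common big open $U$, transporting $\phi_U$ along them gives a pointed map into $\widetilde{M_B(\mathfrak d_T)}|_U$. We must check that its image lies in $J_{\mathfrak d'_{T'}}M_B(\mathfrak d_T)$. This reduces to the equality $J_{\mathfrak d'}=J_{\mathfrak d'_{T'}}$. Locally, the ideal $T'(-D'_{T'})$ is generated by the image of a local generator $x$ of $S'(-D')$ by \autoref{flat keeps reflexive}, so both radicals equal $\sqrt{xB}$. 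Uniqueness in \autoref{divisorial comparison on big open} then identifies the transported map with $\phi^{T}_U$, and passing to the colimit gives (a).

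For (b), first note that $\Delta''\prec\Delta'\prec\Delta$ implies $\Delta''\prec\Delta$, so $\Phi_{\mathfrak d'',\mathfrak d}$ is defined. Choose a big open $U$ on which all three divisors are Cartier, and work locally on a principal cover after pulling back to $S\otimes S'\otimes S''$, which is allowed by part (a). There $D''$, $D'$, $D$ have local generators $z$, $x$, $y$. The two maps are given by $c\in\sqrt{zB}$ with $x=cz$ and by $c'\in\sqrt{xB}$ with $y=c'x$. Their composite sends $z^{-1}\mapsto cc'y^{-1}$, and $y=cc'z$. Since $c'\in\sqrt{xB}\subseteq\sqrt{zB}$ (because $x\in zB$), the product $cc'$ lies in $\sqrt{zB}=J_{\mathfrak d''}$ locally. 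The uniqueness clause therefore identifies the composite with $\phi''_U$, and taking colimits gives (b).

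The main obstacle is the bookkeeping in (a): the statement of \autoref{divisorial comparison on big open} already pulls back to $T=S\otimes S'$ internally. We have to make sure the radical ideals $J$ and the Cartier loci behave compatibly under flat extension, which rests on \autoref{flat keeps reflexive} and on \autoref{BS - root}\autoref{BS - root.RadicalComputation}.
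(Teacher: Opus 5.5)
Your proposal is correct and follows the paper's proof. Like the paper, you glue the maps of \autoref{divisorial comparison on big open} using their uniqueness over the cofinal, intersection-stable family of admissible big opens. You derive (a) from $J_{\mathfrak d'}=J_{\mathfrak d'_{T'}}$; the paper gets this equality globally from \autoref{flat keeps reflexive} and associativity, $B\otimes_{T'}T'(-D'_{T'})=B\otimes_{S'}S'(-D')$, rather than locally. You derive (b) from uniqueness after checking the image condition for the composite, which is what the paper does, and your check is fine, although the detour through $c'\in\sqrt{zB}$ is unnecessary because $c\in\sqrt{zB}$ already gives $cc'\in\sqrt{zB}$.
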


\begin{proof}
Note that if both $U$ and $U'$ are big opens as in \autoref{divisorial comparison on big open}, then so is $U\cap U'$. Moreover, the maps of \autoref{divisorial comparison on big open} on $U$ and $U'$ must agree on $U\cap U'$ by uniqueness. Since big opens as in \autoref{divisorial comparison on big open} are cofinal in the system defining the codimension-one envelope, we can pass to the filtered colimits to obtain a canonical map $\Phi_{\mathfrak d',\mathfrak d}$.

For \autoref{coherent divisorial comparison.SheafMapVsExtendedDatum}, \autoref{flat keeps reflexive} and associativity show that the ideals $J_{\mathfrak d'}$ (which we recall is defined as $ \sqrt{\mathrm{Image}(B \tensor_{S'} S'(-D') \to B)}$) and $J_{\mathfrak d'_{T}}$ (defined likewise) agree. This implies that on any common admissible big open $U$, the pointed isomorphism of \autoref{properties of M_B}\autoref{part flat pullback absorbtion} identifies the maps $\widetilde{M_B(\mathfrak d')}|_U \to \widetilde{M_B(\mathfrak d)}|_U$ and $\widetilde{M_B(\mathfrak d'_T)}|_U \to \widetilde{M_B(\mathfrak d_T)}|_U$ of \autoref{divisorial comparison on big open} so the result follows.
Part \autoref{coherent divisorial comparison.FunctorialityOfSheafComparison} similarly follows from the uniqueness in \autoref{divisorial comparison on big open}.
\end{proof}

\begin{definition}
\label{universal divisorial module} 
Let $R$ be an excellent $\mathbf{S}_2$ ring with $p$ in its Jacobson radical and $B$ a perfectoid $R$-algebra. Suppose that, for every $e\geq1$, there are effective Mumford $\Q$-divisors $\Delta_e$ admitting $\Delta_e$-data $\mathfrak d_e$ on $B$ such that
\[
\Delta_e\prec\Delta_{e+1}\prec\Delta
\quad\text{for every }e
\qquad \text{and}\qquad
\Delta_e\longrightarrow\Delta
\quad\text{coefficientwise}.
\]
We call $(\Delta_e, \mathfrak d_e)$ a limiting sequence for $\Delta$. The maps of \autoref{coherent divisorial comparison} form a direct system, and we define
\[
M_B({<}\Delta)
\coloneqq
\varinjlim_e M_B(\mathfrak d_e)^{\mathrm{c1}}.
\]
The distinguished elements give $\mathbf 1_{{<}\Delta}\in M_B({<}\Delta)$ and a map $ \rho_{B,\Delta}\colon R\to M_B({<}\Delta) $, $1\mapsto \mathbf 1_{{<}\Delta}$.

The next lemma shows that this pointed module and map do not depend on the chosen sequence.
\end{definition}

\begin{lemma}\label{divisorial module computed by towers}
Let $R$ be an excellent $\mathbf{S}_2$ ring with $p$ in its Jacobson radical and $B$ a perfectoid $R$-algebra. For any effective Mumford divisor $\Delta$ admitting limiting sequences $(\Delta_e,\mathfrak d_e)$ and $(\Gamma_j,\mathfrak d'_j)$ on $B$, write
\[
\lambda_e\colon M_B(\mathfrak d_e)^{\mathrm{c1}} \longrightarrow\varinjlim_eM_B(\mathfrak d_e)^{\mathrm{c1}}, 
\qquad
\mu_j\colon M_B(\mathfrak d'_j)^{\mathrm{c1}} \longrightarrow\varinjlim_jM_B(\mathfrak d'_j)^{\mathrm{c1}}
\]
for the maps defining the colimits. There is a canonical pointed isomorphism
\[
\Theta\colon
\varinjlim_eM_B(\mathfrak d_e)^{\mathrm{c1}}
\xrightarrow{\ \sim\ }
\varinjlim_jM_B(\mathfrak d'_j)^{\mathrm{c1}},
\]
which is uniquely characterized by $ \Theta\circ\lambda_e = \mu_j\circ\Phi_{\mathfrak d_e,\mathfrak d'_j}$ whenever $\Delta_e\prec\Gamma_j$
for $\Phi_{\mathfrak d_e, \mathfrak d'_j}$ as in \autoref{coherent divisorial comparison}. In particular \[M_B({<}\Delta) \cong \varinjlim_{\substack{\Gamma\prec\Delta\\
\mathfrak d'\text{ a }\Gamma\text{-datum on }B}} M_B(\mathfrak d')^{\mathrm{c1}}.\]
Moreover, for any $D\leq\Delta$ admitting a limiting sequence on $B$, there is a canonical pointed map
\[
M_B({<} D)\longrightarrow M_B({<}\Delta).
\]
\end{lemma}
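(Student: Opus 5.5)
The plan is to build $\Theta$ by interleaving the two limiting sequences and then using the universal property of filtered colimits. The key input is the functoriality $\Phi_{\mathfrak d'',\mathfrak d}=\Phi_{\mathfrak d',\mathfrak d}\circ\Phi_{\mathfrak d'',\mathfrak d'}$ from \autoref{coherent divisorial comparison}\autoref{coherent divisorial comparison.FunctorialityOfSheafComparison}.

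First I would establish cofinality. Fix $e$. Since $\Delta_e\prec\Delta$, there is $0<\varepsilon<1$ with $\Delta_e\leq(1-\varepsilon)\Delta$. Since $\Gamma_j\to\Delta$ coefficientwise and only finitely many prime components occur, for $j\gg0$ we have $\Gamma_j\geq(1-\varepsilon/2)\Delta$, and hence $\Delta_e\prec\Gamma_j$. The same argument works with the roles of the two sequences swapped.

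Next I would define $\Theta$ on the $e$-th term as $\mu_j\circ\Phi_{\mathfrak d_e,\mathfrak d'_j}$ for any $j$ with $\Delta_e\prec\Gamma_j$. This is independent of $j$: if $j<j'$ both work, then $\Phi_{\mathfrak d_e,\mathfrak d'_{j'}}=\Phi_{\mathfrak d'_j,\mathfrak d'_{j'}}\circ\Phi_{\mathfrak d_e,\mathfrak d'_j}$, and $\mu_{j'}\circ\Phi_{\mathfrak d'_j,\mathfrak d'_{j'}}=\mu_j$. It is also compatible with the transition maps $\Phi_{\mathfrak d_e,\mathfrak d_{e+1}}$ by the same composition rule. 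So $\Theta$ is well defined, and this construction forces the stated characterization, which gives uniqueness. The maps are pointed because every $\Phi$ is pointed. Constructing $\Theta'$ in the reverse direction symmetrically, the composite $\Theta'\circ\Theta$ restricted to the $e$-th term is $\lambda_{e'}\circ\Phi_{\mathfrak d'_j,\mathfrak d_{e'}}\circ\Phi_{\mathfrak d_e,\mathfrak d'_j}=\lambda_{e'}\circ\Phi_{\mathfrak d_e,\mathfrak d_{e'}}=\lambda_e$. Hence both composites are the identity, and $\Theta$ is an isomorphism.

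For the ``in particular'' statement, I would view the index set of pairs $(\Gamma,\mathfrak d')$ with $\Gamma\prec\Delta$, ordered by $\prec$, as a filtered system with transition maps $\Phi$. The subtlety is that two data for the same divisor $\Gamma$ are not related by $\prec$. To handle this, I would order the indices by $(\Gamma,\mathfrak d')\leq(\Gamma',\mathfrak d'')$ whenever $\Gamma\prec\Gamma'$, with transition map $\Phi$. This order is filtered: given any two indices, cofinality of a chosen limiting sequence supplies a common upper bound. The limiting sequence $(\Delta_e,\mathfrak d_e)$ is a cofinal chain in it, and the same diagram argument as above identifies the two colimits.

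For the map $M_B({<}D)\to M_B({<}\Delta)$, take a limiting sequence $D_e$ for $D$. Since $D_e\prec D\leq\Delta$, each $D_e\prec\Delta$, so $D_e\prec\Delta_j$ for $j\gg0$. Composing with $\Phi_{\mathfrak d_{D_e},\mathfrak d_j}$ and checking compatibility exactly as before gives the map, and it is independent of the chosen sequences by the same argument.

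The main obstacle is the bookkeeping needed to make the system filtered, in particular for comparing different data on the same divisor. Everything else reduces to the composition rule of \autoref{coherent divisorial comparison}.
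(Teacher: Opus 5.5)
Your proposal is correct and follows essentially the same route as the paper. The ingredients match: cofinality of each limiting sequence in the other; the definition of $\Theta$ on the $e$-th term as $\mu_j\circ\Phi_{\mathfrak d_e,\mathfrak d'_j}$, with independence of $j$ and compatibility in $e$ coming from the transitivity in \autoref{coherent divisorial comparison}; the inverse obtained by swapping the two sequences; and the map $M_B({<}D)\to M_B({<}\Delta)$ built the same way. Your handling of the ``in particular'' clause, via a filtered index category in which a limiting sequence is a cofinal chain, spells out what the paper leaves implicit. The only adjustment needed there is to take the reflexive closure of the irreflexive relation $\Gamma\prec\Gamma'$, so that the index relation is a genuine preorder.
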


\begin{proof}
Fix $e\geq 1$ and consider $\Delta_e\prec \Delta$. Since $\Delta$ has finite support and $\Gamma_j\to\Delta$ coefficientwise, it is easy to see that for $j\gg0$ we have $\Delta_e \prec \Gamma_j$.
We may therefore choose strictly increasing sequences of indices $e_1<e_2<\cdots$ and $j_1<j_2<\cdots$ recursively so that
\[
\Delta_{e_1}\prec\Gamma_{j_1}\prec\Delta_{e_2}
\prec\Gamma_{j_2}\prec\cdots.
\]
Since this is an interleaved direct system, the colimit in $\Gamma$ must agree with the colimit in $\Delta$.

For each $e$, the transitivity of 
\autoref{coherent divisorial comparison} implies that the composite $\mu_j\circ\Phi_{\mathfrak d_e,\mathfrak d'_j}$ is independent of the choice of $j$ satisfying $\Delta_e\prec\Gamma_j$. These composites are also compatible as $e$ varies, and hence induce the displayed map $\Theta$. Note that $\Theta$ is independent of the chosen interleaving. Moreover, reversing the roles of the two sequences gives its inverse so it is indeed an isomorphism. All the maps are pointed and therefore so is $\Theta$. For the last statement, let $(D_e, \mathfrak{d_e'})$ be a limiting sequence for $D$. For any $e$, $D_e\prec D \leq \Delta$ so $D_e$ is part of a limiting sequence for $\Delta$ and we get a canonical pointed map $M_B(\mathfrak{d_e'})\to M_B({<}\Delta)$. This is compatible with transitions so going to the colimit gives the desired $M_B({<} D)\longrightarrow M_B({<}\Delta)$.
\end{proof}
Although any limiting sequence works to define $M_B({<}\Delta)$, two of them are particularly useful.

\begin{construction}\label{construct.LimitingConstructionWithComparisons}
Let $R$ be an excellent $\mathbf{S}_2$ ring and let $\Delta$ be an effective Mumford $\bQ$-divisor. Let $m\in \N_{>0}$ be such that $m\Delta$ has integer coefficients and fix $E = \Delta_{\reduced}$.  Then there are sequences of data for $e \gg 0$
\[
    \frad_e = (S_e, \Delta_e, \iota_e) \quad \text{and} \quad  \frad_e' = (S_e',\Delta_e', \iota_e')
\]
with the following properties. 
\begin{enumerate}
    \item $\Delta_e$ and $\Delta_e'$ are effective Mumford divisors.
        \item $\Delta_e$ is the flat pullback of the effective divisor $\Delta - \frac 1 {m p^e} E \prec \Delta$ and $\Delta_e'$ is the flat pullback of the effective divisor $t_e\Delta$, where $t_e=\frac{m}{p^e}(\left\lfloor\frac{p^e}{m}\right\rfloor-1)$. Moreover, 
         \[\lim_{e\to \infty} t_e\Delta=\lim_{e\to \infty}\left(\Delta - \frac 1 {m p^e} E\right)=\Delta,\]
        where the limit is to be understood coefficientwise.
        \item For each $e > 0$, there are finite flat $R$-algebra maps $S_e \to S_{e+1}$ and $S_e' \to S_{e+1}'$ such that the following diagrams commute
            \[
                \xymatrix{
                    S_e \ar@{_{(}->}[d] \ar[r]^{\iota_e} & B\\
                    S_{e+1} \ar[ur]_{\iota_{e+1}}
                } \qquad
                 \xymatrix{
                    S_e' \ar@{_{(}->}[d] \ar[r]^{\iota_e'} & B'\\
                    S_{e+1}' \ar[ur]_{\iota_{e+1}'}
                }.
            \]
            \item If $h_e \colon \Spec S_{e+1} \to \Spec S_e$ and $h_e' \colon \Spec S'_{e+1} \to \Spec S'_e$ are induced from the previous maps, then $h_e^* \Delta_e \prec \Delta_{e+1}$ and $(h_e')^*\Delta_e' \prec \Delta'_{e+1}$ for any $e\gg 0$. In particular there are natural maps $S_e(\Delta_e) \otimes_{S_e} S_{e+1} \to S_{e+1}(\Delta_{e+1})$ and  $S_e'(\Delta_e') \otimes_{S_e'} S_{e+1}' \to S_{e+1}'(\Delta'_{e+1})$ which are injective. \label{construct.LimitingConstructionWithComparisons.CompatibleMapsDivisorLevel}
        \end{enumerate}
        We now perform the constructions.  Essentially we are simply going to form iterated versions of \autoref{construction SnD}.
        Pick $f \in R$ such that $\Div(f) = E + G = {\Delta}_{\reduced} + G$ and where $G$ has no common components with $E$.  
        Set $S_e = R[X_e]/(X_e^{mp^e} - f)$ and $S_e' = R[Y_e]/(Y_e^{p^e} - f)$.  Just as in \autoref{construction SnD}, then the flat pullback of $\Delta$ to $S_e$ is a Mumford divisor, $\Delta_{S_e}$.
        We set 
        \[
            \Delta_e \coloneqq \Delta_{S_e} - (\Delta_{S_e})_{\reduced}.
        \]
        By construction, $\Delta_e$ is the flat pullback of $\Delta - \frac 1 {mp^e} E$ to $S_e$.
        Similarly, we set $\Delta_e'$ to be the flat pullback of $t_e\Delta$ to $S_e'$, which is possible since $t_ep^e\Delta= m(\lfloor \frac{p^e}{m}\rfloor-1)\Delta$ has integer coefficients.
        There are certainly finite flat maps 
        \[
            S_e =  R[X_e]/(X_e^{mp^e} - f) \xrightarrow{X_e \mapsto X_{e+1}^p} R[X_{e+1}]/(X_{e+1}^{mp^{e+1}} - f).
        \]
        and 
\[
            S_e' =  R[Y_e]/(Y_e^{p^e} - f) \xrightarrow{Y_e \mapsto Y_{e+1}^p} R[Y_{e+1}]/(Y_{e+1}^{p^{e+1}} - f).
        \]
        Fix $B$ a perfectoid $R$-algebra such that $f$ has an $m$-th root $Z \in B$ ($Z^m = f$) and such that $Z$ has a compatible system of $p$-power roots, $Z^{1/p^e}\in B$. Fix $B'$ a perfectoid $R$-algebra such that $f$ has a compatible system of $p$-th power roots $W^{1/p^e}\in B'$.
        For instance, this holds if $B$ and $B'$ are absolutely integrally closed. 
        Sending $X_e \mapsto Z^{1/p^e}$ and $Y_e\mapsto W^{1/p^e}$ produces maps $\iota_e\colon S_e \to B$ and $\iota_e'\colon S_e'\to B'$ so that 
        \[
            \frad_e = (S_e, \Delta_e, \iota_e) \quad \text{and} \quad  \frad_e' = (S_e',\Delta_e', \iota_e')
        \]
        are datum for $\Delta - \frac 1{mp^e} E$ and $t_e\Delta$, respectively.  Write $a_e\coloneqq \lfloor\frac{p^e}{m}\rfloor -1$ and note that $m\leq p^e-ma_e<2m$ for all $e$ so $p^{e+1}-ma_{e+1}<2m\leq p(p^e-ma_e)$, which gives $t_e<t_{e+1}$. Then, the properties outlined above all follow formally.

        Set $S_{\infty} \coloneqq \varinjlim_e S_e$ where the transition maps are the ones above and note that the induced map $S_e \to S_{\infty} \to B$ is $\iota_e$.  
        Set $M_{S_{\infty}}({<}\Delta) \coloneqq \varinjlim_e (S_{\infty}\otimes_{S_{e}} S_e(\Delta_e) )^{\cOne}$.
        Since the formation of $(-)^{\cOne}$ commutes with filtered colimits, we have canonical $R$- (and $S_{e}$)-module isomorphisms
        \[
            M_{S_{\infty}}({<}\Delta) \cong (\varinjlim_e S_e(\Delta_e))^{\cOne}
        \]
        and also a canonical pointed $B$-module isomorphism
        \[
            M_B({<}\Delta) \cong (M_{S_{\infty}}({<}\Delta) \otimes_{S_{\infty}} B)^{\cOne}.
        \]
    \end{construction}

    \begin{remark}
        \label{rem.ConstructionCompatibilityWithFinitelyManyDivisors}
        It is worth noting that if $0 \leq \Delta \leq D$ is such that both $mD$ and $m\Delta$ have integer coefficients, then we may use the same $f$ and sequence of $S_e$'s for both of them in \autoref{construct.LimitingConstructionWithComparisons}.  Furthermore, setting $E' = D_{\reduced}$ and $D_{e}$ to be the flat pullback of $D - \frac 1{mp^e} E'$, we have $\Delta_e \leq D_e$ and that this is compatible with the inclusions of \autoref{construct.LimitingConstructionWithComparisons.CompatibleMapsDivisorLevel}. One can also use the same $S_e'$ and work with the flat pullbacks of $t_e\Delta$ and $t_eD$ and keep the inclusions.
        This can also be done with any finite collection of divisors $\Delta$s such that $m \Delta$ has integer coefficients for a common $m$. 
    \end{remark}

\begin{lemma}\label{universal divisorial module is local in the divisor}
Let $R$ be an excellent $\mathbf{S}_2$ ring with $p$ in its Jacobson radical, let $\Delta$ and $\Delta'$ be effective Mumford $\Q$-divisors on $R$, and let $B$ be a perfectoid $R$-algebra admitting limiting sequences that define both $M_B({<}\Delta)$ and $M_B({<}\Delta')$. 
If $\Delta|_V=\Delta'|_V$ on an open subset $V\subseteq\Spec R$, then there is a canonical pointed $\widetilde{B|_V}$-linear isomorphism
\[
\left.\widetilde{M_B({<}\Delta)}\right|_V
\xrightarrow{\ \sim\ }
\left.\widetilde{M_B({<}\Delta')}\right|_V,
\]
which is compatible with restriction to smaller open subsets.
\end{lemma}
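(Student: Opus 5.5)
The plan is to reduce to a statement about the data building the two colimits and then apply the uniqueness in \autoref{divisorial comparison on big open}, restricted to $V$. By \autoref{divisorial module computed by towers}, $M_B({<}\Delta)$ is computed by any limiting sequence, so I will use the one from \autoref{construct.LimitingConstructionWithComparisons}, made compatible for both divisors via \autoref{rem.ConstructionCompatibilityWithFinitelyManyDivisors}. Concretely, I choose a common $m$ with $m\Delta$ and $m\Delta'$ integral, and a single $f$ with $\Div(f)\geq (\Delta+\Delta')_{\reduced}$ on the reduced part. I then form the algebras $S_e=R[X_e]/(X_e^{mp^e}-f)$ together with the divisors $\Delta_e$, $\Delta'_e$, the flat pullbacks of $\Delta-\frac1{mp^e}\Delta_{\reduced}$ and $\Delta'-\frac1{mp^e}\Delta'_{\reduced}$. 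If $B$ lacks the needed roots, I first pass to an absolutely integrally closed faithfully flat extension. Alternatively, and more cleanly, I work directly with the two given limiting sequences and interleave them.

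The core step is local. Since $\Delta|_V=\Delta'|_V$, for any $\Gamma\prec\Delta$ and $\Gamma'$ with $\Gamma|_V\prec\Delta'|_V$, I work on $V$ rather than on all of $\Spec R$. Here I need a version of \autoref{divisorial comparison on big open} for an open $V$ in place of $\Spec R$: for data $\mathfrak d$ of $\Gamma$ and $\mathfrak d'$ of $\Gamma'$, with $\Gamma'|_V\prec\Gamma|_V$ only on $V$, there is a unique pointed map $\widetilde{M_B(\mathfrak d')}|_{U\cap V}\to\widetilde{M_B(\mathfrak d)}|_{U\cap V}$ whose image lies in $J_{\mathfrak d'}\cdot(-)$. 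The proof of that lemma is entirely local over affine pieces $V_\alpha$ of the preimage. It only uses effectivity of $H=n(a D-bD')$, and that effectivity is now needed only over $V$. So the same gluing argument with \autoref{BS - root} works verbatim on $U\cap V$, where $U\cap V$ is big in $V$.

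Next I need a codimension-one envelope on $V$. Restriction of $\widetilde{N^{\cOne}}$ to $V$ agrees with $(\widetilde N|_V)^{\cOne}$, because big opens of $\Spec R$ intersected with $V$ are cofinal among big opens of $V$ (the complement's closure in $\Spec R$ of a codimension $\geq2$ subset of $V$ still has codimension $\geq 2$). Granting this, I obtain compatible maps
\[
\widetilde{M_B(\mathfrak d_e)^{\cOne}}|_V\to \widetilde{M_B(\mathfrak d'_j)^{\cOne}}|_V
\]
whenever $\Delta_e|_V\prec\Delta'_j|_V$, and symmetrically in the other direction. Uniqueness, applied as in \autoref{coherent divisorial comparison}\autoref{coherent divisorial comparison.FunctorialityOfSheafComparison}, gives transitivity. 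The interleaving argument of \autoref{divisorial module computed by towers} then produces mutually inverse pointed maps on the colimits restricted to $V$, and restriction commutes with filtered colimits of quasicoherent sheaves. Compatibility with smaller opens $V'\subseteq V$ follows from uniqueness once more, since both restricted maps satisfy the characterizing property on $V'$.

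I expect the main obstacle to be the bookkeeping in the local version of \autoref{divisorial comparison on big open}. The relation $\prec$ is defined globally, so the limiting sequence for $\Delta$ is not literally one for $\Delta'$ off $V$. I must therefore verify carefully that only effectivity over $V$ enters the construction and the uniqueness statement. I also need to check that the ideal $J_{\mathfrak d'}$ localizes correctly on $V$, which I expect to follow from \autoref{BS - root}\autoref{BS - root.RadicalComputation} and the description $J_{\mathfrak d'}B_\alpha=\sqrt{x_\alpha B_\alpha}$.
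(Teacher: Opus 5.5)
Your proposal is correct and is essentially the paper's proof: you take the two given limiting sequences, observe that after restriction to $V$ they become mutually cofinal for $\prec$, use that the construction in \autoref{divisorial comparison on big open} is local (only effectivity of $H$ over $V$ is needed), and then run the interleaving of \autoref{divisorial module computed by towers} on $V$, commuting restriction with the filtered colimits. You also supply details the paper leaves implicit, namely the cofinality of big opens of $\Spec R$ among big opens of $V$ and uniqueness for compatibility with smaller opens; the opening detour through an absolutely integrally closed extension of $B$ should be dropped, since an isomorphism over an extension would not give one for $B$ itself.
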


\begin{proof}
Choose limiting sequences $(\Delta_e,\mathfrak d_e)_{e\geq1}$ and $ (\Delta'_j,\mathfrak d'_j)_{j\geq1}$ for \(M_B({<}\Delta)\) and \(M_B({<}\Delta')\), respectively. Now, $\Delta|_V=\Delta'|_V$ has finite support so coefficientwise convergence implies that $\Delta_e|_V$ and $\Delta_e'|_V $ are mutually cofinal with respect to $\prec$. Since restriction commutes with filtered colimits, the result follows from the fact that the construction of the map in \autoref{divisorial comparison on big open} is local.
\end{proof}

\begin{proposition}\label{universal module detects perfectoid purity}
Let $R$ be an excellent $\mathbf{S}_2$ ring with $p$ in its Jacobson radical and let $\Delta$ be an effective Mumford $\Q$-divisor on $R$. Let $B$ be a perfectoid $R$-algebra admitting a limiting sequence for $\Delta$.
Then, 
\[
    \rho_{B,\Delta}\colon R\to M_B({<}\Delta) \quad 1\mapsto \mathbf 1_{{<}\Delta},
\]
is pure if and only if $\eta_{\mathfrak d'}$ is pure for every $\Delta'$-datum $\mathfrak d'$ on $B$ with $\Delta'\prec\Delta$. 
Consequently, $(R,\Delta)$ is perfectoid pure if and only if $\rho_{B,\Delta}\colon R\to M_B({<}\Delta)$ is pure for some absolutely integrally closed perfectoid $R$-algebra $B$.
\end{proposition}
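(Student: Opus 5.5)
We first prove the equivalence for a fixed $B$, and then deduce the ``consequently'' clause from \autoref{common perfectoid algebra for divisor pairs}.

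\emph{Step 1: $\rho_{B,\Delta}$ pure $\Rightarrow$ every $\eta_{\mathfrak d'}$ pure.} Let $\mathfrak d'$ be a $\Delta'$-datum on $B$ with $\Delta'\prec\Delta$. Write $M_B({<}\Delta)$ via its filtered colimit description in \autoref{divisorial module computed by towers}. The pointed map $M_B(\mathfrak d')\to M_B(\mathfrak d')^{\mathrm{c1}}\to M_B({<}\Delta)$ shows that $\rho_{B,\Delta}$ factors through $\eta_{\mathfrak d'}$. Since the first map of a pure composite is pure, $\eta_{\mathfrak d'}$ is pure.

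\emph{Step 2: every $\eta_{\mathfrak d'}$ pure $\Rightarrow$ $\rho_{B,\Delta}$ pure.} Fix a limiting sequence $(\Delta_e,\mathfrak d_e)$ on $B$. By hypothesis each $\eta_{\mathfrak d_e}$ is pure. \autoref{purity and codimension one envelope} (recall $R$ is $\mathbf S_2$) then shows that $R\to M_B(\mathfrak d_e)^{\mathrm{c1}}$ is pure. The transition maps $\Phi$ are pointed, so $\rho_{B,\Delta}$ is the filtered colimit of these pure pointed maps. Equivalently, by \autoref{lem: functoriality of non-pure ideals}\autoref{part direct limit of modules}, $\np(R,M_B({<}\Delta))=\bigcup_e \np(R,M_B(\mathfrak d_e)^{\mathrm{c1}})$, and $1$ lies in none of these. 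Since a filtered colimit of pure maps is pure, $\rho_{B,\Delta}$ is pure. This direction is essentially formal. The only point needing care is that the colimit is taken in pointed modules, so that the distinguished elements match, and this is built into \autoref{coherent divisorial comparison}.

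\emph{Step 3: the ``consequently'' clause.} Suppose first that $(R,\Delta)$ is perfectoid pure. Take $B$ as in the ``moreover'' part of \autoref{common perfectoid algebra for divisor pairs}. This $B$ is absolutely integrally closed, every $\Delta'\prec\Delta$ admits a datum on it, and every such $\eta_{\mathfrak d'}$ is pure. Because $B$ is absolutely integrally closed, \autoref{construct.LimitingConstructionWithComparisons} furnishes a limiting sequence on $B$. Step 2 then gives that $\rho_{B,\Delta}$ is pure.

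Conversely, suppose $\rho_{B,\Delta}$ is pure for some absolutely integrally closed perfectoid $B$. By Step 1, every $\eta_{\mathfrak d'}$ with $\Delta'\prec\Delta$ is pure. For each rational $0<\varepsilon<1$, apply this to $\Delta'=(1-\varepsilon)\Delta\prec\Delta$. A $(1-\varepsilon)\Delta$-datum on $B$ exists by \autoref{datum associated to SnD construction}, using absolute integral closedness (take the trivial datum if $\Delta=0$). This verifies \autoref{def perfd pure div pair}.

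The main obstacle is hypotheses bookkeeping rather than substance. \autoref{def perfd pure div pair} requires $R$ to be reduced, whereas the statement only assumes $\mathbf S_2$. We would note that perfectoid purity forces $R\to B$ to be pure by \autoref{pure full boundary datum implies perfectoid pure}, and purity into a reduced ring forces $R$ to be reduced. Alternatively, we would read the final sentence under the standing hypotheses of \autoref{def perfd pure div pair}.
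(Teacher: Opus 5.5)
Your proposal is correct and follows the paper's proof essentially verbatim. Purity of $\rho_{B,\Delta}$ descends to each $\eta_{\mathfrak d'}$ by factoring through the colimit description of \autoref{divisorial module computed by towers}; the converse uses \autoref{purity and codimension one envelope} together with the fact that filtered colimits of pure maps are pure; and the final clause is \autoref{common perfectoid algebra for divisor pairs}, which you simply unpack. Your reducedness remark is also consistent with the paper, which fixes $R$ and $\Delta$ as in \autoref{def perfd pure div pair} throughout that subsection.
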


\begin{proof}
If every $\eta_{\mathfrak d'}$ is pure then \autoref{purity and codimension one envelope} implies that all the maps $R\to M_B(\mathfrak d')^{\mathrm{c1}}$ are pure.
Taking the filtered colimit shows that $\rho_{B,\Delta}$ is pure. Conversely, any $(\Delta',\mathfrak d')$, is part of a limiting sequence for $\Delta$ so \autoref{divisorial module computed by towers} implies that $\rho_{B,\Delta}$ factors through $R\to M_B(\mathfrak d')^{\mathrm{c1}}$. The purity of $\rho_{B,\Delta}$ therefore forces that of $R\to M_B(\mathfrak d')^{\mathrm{c1}}$, and hence that of $\eta_{\mathfrak d'}$.
The last assertion follows from \autoref{common perfectoid algebra for divisor pairs}.
\end{proof}

\begin{proposition}\label{radical ideal computes universal divisorial module}
    Let $R$ be an excellent $\mathbf{S}_2$ ring with $p$ in its Jacobson radical, let $D$ be an effective integral Mumford divisor and let $\Gamma$ be an effective Mumford $\bQ$-divisor.  Fix $f \in R$ an element vanishing to order $1$ at the generic point of every prime Mumford divisor in the support of $D$ or $\Gamma$ and such that $\Div(f)$ is a Mumford divisor.  Choose $m > 0$ such that $m\Gamma$ has integer coefficients and pick $B$ a perfectoid $R$-algebra such that $f$ has an $m$-th root $Y \in B$ (i.e., $Y^m = f$) and such that $Y$ has a compatible system of $p$-power roots in $B$ {(e.g., $B$ is absolutely integrally closed)}.  
    Set $I_D=R(-D)$, and let $T_{B,D}\coloneqq \left(\sqrt{I_DB}\tensor_RR(D)\right)^{\mathrm{c1}}.$
    \begin{enumerate}
        \item\label{isom MB and TBD} If $\Gamma$ and $D$ share no common components, then there is a canonical pointed $B$-linear
            isomorphism
            \[
            (M_B({<}(D+\Gamma)),\mathbf{1}_{{<}(D+\Gamma)})
            \xrightarrow{\ \sim\ }
            (\left(T_{B,D}\tensor_BM_B({<}\Gamma)\right)^{\mathrm{c1}},\mathbf 1_D\tensor\mathbf 1_{{<}\Gamma}), 
            \]
        where $\mathbf 1_D$ is induced by the canonical identification $\left(I_D\tensor_RR(D)\right)^{\mathrm{c1}}\cong R$.
            \item \label{def alpha D Gamma}There is a canonical pointed $B$-linear map
			\[
    			\alpha_{D,\Gamma}\colon
    			\left(
    			M_B({<}(D+\Gamma)),
    			\mathbf 1_{{<}(D+\Gamma)}
    			\right)
    			\longrightarrow
    			\left(
    			\left(
    			(B\tensor_RR(D))\tensor_BM_B({<}\Gamma)
    			\right)^{\mathrm{c1}},
    			\mathbf 1_D\tensor\mathbf 1_{{<}\Gamma}
    			\right).
            \]
            When $\Gamma$ and $D$ have no common component, this is given by composing the map in \autoref{isom MB and TBD} with the natural pointed map \[
            			T_{B,D} \longrightarrow(B\tensor_RR(D))^{\mathrm{c1}}
            \]
            coming from $\sqrt{I_DB}\subseteq B$.
    \end{enumerate}
\end{proposition}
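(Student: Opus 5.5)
We compute both sides with the explicit limiting sequence of \autoref{construct.LimitingConstructionWithComparisons}, using the single element $f$ for $D$, $\Gamma$ and $D+\Gamma$ simultaneously, as \autoref{rem.ConstructionCompatibilityWithFinitelyManyDivisors} allows. Let $S_e=R[X_e]/(X_e^{mp^e}-f)$ with $X_e\mapsto Y^{1/p^e}$. The datum for $D+\Gamma$ is then the flat pullback of $D+\Gamma-\frac{1}{mp^e}(D+\Gamma)_{\reduced}$. Since $D$ and $\Gamma$ share no components, this divisor splits as $D_e+\Gamma_e$ with $D_e=D-\frac1{mp^e}D_{\reduced}$ and $\Gamma_e=\Gamma-\frac1{mp^e}\Gamma_{\reduced}$. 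Every statement concerns $(-)^{\cOne}$-modules, so by \autoref{prop.SomeGeneralFactsAboutCodim1Envelopes}\autoref{prop.SomeGeneralFactsAboutCodim1Envelopes.IsoCheckedCodim1} and \autoref{prop.SomeGeneralFactsAboutCodim1Envelopes.TensorProduct} it suffices to build compatible maps on a big open $U$. We take $U$ to consist of points where $R$ is regular in codimension one along $\Supp(D+\Gamma)$ and where $f$ is a local uniformizer of each component times a unit.

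For part \autoref{isom MB and TBD}, we work locally on $U$ and separate the $D$- and $\Gamma$-parts. Away from $\Supp D$ both sides reduce to $M_B({<}\Gamma)$, because $T_{B,D}$ is then $B$ and $\mathbf 1_D=1$. There the isomorphism comes from \autoref{universal divisorial module is local in the divisor}. Near a generic point of a component $P$ of $D$ with coefficient $d$, we have $\Gamma=0$. The module $M_B(\mathfrak d_e)$ is free on $x^{-(dmp^e-1)}$, where $x=Y^{1/p^e}$ satisfies $x^{mp^e}=f=u\pi$. After multiplying by $\pi^d$, the colimit over $e$ is the colimit of $B\xrightarrow{}B$ by multiplication by the roots $Y^{1/p^e-1/p^{e+1}}$. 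By \autoref{BS - root}\autoref{BS - root.RadicalComputation} this colimit is $\sqrt{fB}=\sqrt{\pi B}=\sqrt{I_DB}$ locally, and the point $\mathbf 1$ goes to $\pi^d\otimes\pi^{-d}$, which is $\mathbf 1_D$. These local identifications must then be glued. We check that the transition maps $\Phi$ of \autoref{coherent divisorial comparison} correspond, under the trivializations, to the inclusions of the ideals $(Y^{1/p^e})\subseteq(Y^{1/p^{e+1}})$. This follows from the uniqueness statement in \autoref{divisorial comparison on big open}: both maps are pointed and land in the radical ideal, so they coincide. Gluing the local pieces over $U$ and taking envelopes then yields the canonical pointed isomorphism. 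It is independent of the chosen $f$ and $m$ because \autoref{divisorial module computed by towers} identifies the colimits canonically.

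For part \autoref{def alpha D Gamma}, when $D$ and $\Gamma$ share no component we define $\alpha_{D,\Gamma}$ as the composite stated, which is pointed. In general we split $\Gamma=\Gamma'+\Gamma''$, where $\Gamma''$ is supported on $\Supp D$ and $\Gamma'$ shares no component with $D$. We first use the canonical map $M_B({<}(D+\Gamma))\to M_B({<}(D+\Gamma''+\Gamma'))$; no reduction is needed here. We instead construct $\alpha$ directly on $U$ and rely on uniqueness. Locally, each $\Delta_e$-datum module for $D+\Gamma$ near a component of $D$ is free on $x^{-n}$ with $n<(d+g)mp^e$. We map it to $R(D)\otimes M_B(\mathfrak d_{\Gamma,e'})$ by sending $x^{-n}$ to $\pi^{-d}\otimes x^{d\,mp^e-n}\cdot(\text{generator})$. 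This is a pointed map of the type produced in \autoref{divisorial comparison on big open}, and it is unique by \autoref{BS - root}. Uniqueness gives gluing and compatibility with the transitions, and it also shows agreement with the composite in the disjoint case.

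The main obstacle is the gluing and compatibility check: the local trivializations must assemble into a morphism independent of the choices. We handle this uniformly through the uniqueness clause of \autoref{BS - root}, exactly as in the proof of \autoref{divisorial comparison on big open}.
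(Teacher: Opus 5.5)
Your part (a) is essentially correct but organized differently from the paper. The paper never glues. It writes down a \emph{global} identification $S_e(\Delta_e)\cong\bigl((\sqrt{I_DS_e}\otimes_RR(D))^{\mathrm{c1}}\otimes_{S_e}S_e(\Gamma_e)\bigr)^{\mathrm{c1}}$ on each $S_e$. It then uses $\sqrt{I_DS_e}\otimes_{S_e}B\to\sqrt{I_DB}$ to obtain a canonical map into $(T_{B,D}\otimes_BM_B({<}\Gamma))^{\mathrm{c1}}$ in the colimit, and only checks bijectivity at height-one primes. That check is exactly your computation with \autoref{BS - root}(b). Your local-trivialization-plus-gluing version also works, provided you make the identification near a component of $D$ canonical (e.g.\ via $S_e(\Delta_e)=x_e\cdot S_e(D)$ there) rather than dependent on the choice of $\pi$. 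The paper's order of steps simply makes gluing unnecessary.

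The genuine problem is in part (b), precisely in the case where it is needed. You justify gluing, compatibility with transitions, and agreement with the disjoint-case composite by saying the local map is ``of the type produced in \autoref{divisorial comparison on big open}'' and ``unique by \autoref{BS - root}''. This fails at a common component $P$ of $D$ and $\Gamma$, with coefficients $d$ and $g>0$. There your source datum has coefficient $(d+g)-\frac1{mp^e}$ and the target $D+\Gamma_e$ has the \emph{same} coefficient, so the hypothesis $\Delta'\prec\Delta$ of that lemma fails. Concretely, set $n=(d+g)mp^e-1$ and use $\pi^d=u^{-d}x_e^{dmp^e}$. A $B$-linear map $x_e^{-n}\mapsto c\cdot(\pi^{-d}\otimes x_e^{-(gmp^e-1)})$ is pointed if and only if $c\in u^{-d}+\Ann_B(x_e^{n})$. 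The natural choice $c=u^{-d}$ is a unit, hence not in $\sqrt{x_eB}$. Moreover, pointed maps are unique only when $x_e^n$ has zero annihilator in $B$, which is not guaranteed for an arbitrary perfectoid $B$ as in the statement (e.g.\ if $B$ has a perfectoid factor in which $f=0$). The same issue occurs along components of $\Gamma$ not in $D$, where source and target agree. So the uniqueness argument gives nothing in exactly the new situation.

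The repair is the paper's argument. Since $\Delta_e\le D+\Gamma_e$ on $S_e$, there is a global inclusion of fractional ideals $S_e(\Delta_e)\subseteq S_e(D+\Gamma_e)\cong(S_e(\Gamma_e)\otimes_RR(D))^{\mathrm{c1}}$. It commutes with the transition inclusions, so after tensoring with $B$ and taking envelopes it passes to the colimit with no gluing or uniqueness needed. In the disjoint case, composing the identification from part (a) with $\sqrt{I_DS_e}\subseteq S_e$ gives the required agreement. Two smaller points: your splitting $\Gamma=\Gamma'+\Gamma''$ and the map $M_B({<}(D+\Gamma))\to M_B({<}(D+\Gamma''+\Gamma'))$ do nothing, since $D+\Gamma=D+\Gamma''+\Gamma'$; and your local formula drops the unit $u^{-d}$.
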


\begin{proof}
    Apply the construction of \autoref{construct.LimitingConstructionWithComparisons} to $D + \Gamma$ noting that it also immediately applies to both $D$ and $\Gamma$ by \autoref{rem.ConstructionCompatibilityWithFinitelyManyDivisors}.  Set $E = \Gamma_{\reduced}$ and $E' = D_{\reduced}$.   On $\Spec S_e$, we have $D_e$ (the flat pullback of $D - \frac 1{mp^e} E'$), $\Gamma_e$ (the flat pullback of $\Gamma - \frac 1 {mp^e} E)$ and $\Delta_e \coloneqq \Gamma_e + D_e$ (the flat pullback of $D + \Gamma - \frac 1 {mp^e}(E+E')$). Write $\mathfrak d_e$ and $\mathfrak d_e'$ for the corresponding $\Gamma - \frac 1 {mp^e} E$ and $D + \Gamma - \frac 1 {mp^e}(E+E')$-data, respectively.
    We first show \autoref{isom MB and TBD}. 
    Recall our construction also provides pointed isomorphisms
    \[
        M_B({<}(\Gamma + D)) \cong \big(M_{S_{\infty}}({<}(\Gamma + D)) \otimes_{S_{\infty}} B\big)^{\cOne}.
    \]
    In fact, by construction and since $D$ and $E$ have no common components in their support, we have that 
    \begin{equation}\label{SeDelta is sqrt}
        S_e(\Delta_e) \cong \big(\sqrt{I_{D+E}S_e} \otimes_{S_e} S_e(\Gamma + D)\big)^{\cOne} \cong \big((\sqrt{I_{D}S_e} \otimes_R R(D) )^{\cOne}\otimes_{S_e} S_e(\Gamma_e) \big) ^{\cOne}.
    \end{equation}
    Likewise $S_e(D_e) = \big( \sqrt{I_D S_e} \otimes_{S_e} S_e(D) \big)^{\cOne}$.  By taking filtered colimits and using the associativity of tensor product, we obtain a canonical pointed map 
    \[
        M_B({<}(\Gamma + D)) \cong \varinjlim_e \big((\sqrt{I_{D}S_e} \otimes_{R} R(D))^{\cOne} \otimes_{S_e} S_e(\Gamma_e) \otimes_{S_e} B\big) ^{\cOne} \to \big(T_{B,D} \otimes_B M_B({{<}\Gamma})\big)^{\cOne}.
    \]
    As tensor products and codimension-1-envelopes commute with filtered colimits, and using the associativity of tensor product yet again, it suffices to show that 
    \[
        \varinjlim_e \left(\left(\sqrt{I_D S_e} \otimes_R R(D)\right) \otimes_{S_e} B\right)^{\cOne} \to T_{B,D}
    \]
    is an isomorphism.  Similarly, the result then follows if we can show that 
    \[
        \varinjlim_e \left(\sqrt{I_D S_e} \otimes_{S_e} B\right)^{\cOne} \to \left(\sqrt{I_D B}\right)^{\cOne}
    \]
    is an isomorphism.  By \autoref{prop.SomeGeneralFactsAboutCodim1Envelopes}, it suffices to check this at codimension-1 points of $\Spec R$, 
    at which it suffices to prove the result without the $(-)^{\cOne}$.  
    At those points outside of the support of $D$, there is nothing to show, and hence we may assume that $R$ and all $S_e$ are DVRs, $I_D = f^a R$ and $X_e^{mp^e} = f \in S_e$ where $X_e$ is a uniformizer for $S_e$ and $f$ is a uniformizer for $R$.  We write $h$ for the image of $X_1^p$ in $B$ (thought of as $f^{1/m}$).  We thus need to show that 
    \[
        \varinjlim_e \left(X_e S_e \otimes_{S_e} B\right) \to \sqrt{h B} = \sqrt{I_D B}\]
    is an isomorphism.  As $S_e$ is a DVR, we have isomorphisms $X_e S_e \otimes_{S_e} B \cong B$ and with this identification our direct system becomes 
    \[
        B \xrightarrow{h^{1 - {1/p}}} B \xrightarrow{h^{1/p - {1/p^2}}} B \xrightarrow{h^{1/p^2 - {1/p^3}}} \dots .
    \]
    That colimit is identified with $\sqrt{h B}$ by \autoref{BS - root} \autoref{BS - root.RadicalComputation}, and \autoref{isom MB and TBD} follows. 

    For \autoref{def alpha D Gamma}, note that for any $e$, there is a natural map 
    \[
        M_B(\mathfrak d_e')^{\mathrm{c1}}\to (R(D) \tensor_R M_B(\mathfrak d_e))^{\mathrm{c1}} \cong ((B\tensor_R R(D))\tensor_B M_B(\mathfrak d_e))^{\mathrm{c1}}
    \]
    coming from the natural morphism 
    \begin{equation}\label{eqn lim of sum to lim of Gamma}
    (S_e(D_e+\Gamma_e))^{\mathrm{c1}} \to (S_e(D+\Gamma_e))^{\mathrm{c1}}\cong (S_e(\Gamma_e)\tensor_R R(D))^{\mathrm{c1}}.
    \end{equation}
    This is compatible with transitions and codimension $1$-envelopes commute with filtered colimits so we can go to the colimit and obtain $\alpha_{D,\Gamma}$. To see the last statement, combine \autoref{SeDelta is sqrt} and \autoref{eqn lim of sum to lim of Gamma} into 
    \begin{equation}
    \big((\sqrt{I_{D}S_e} \otimes_R R(D) )^{\cOne}\otimes_{S_e} S_e(\Gamma_e) \big) ^{\cOne} \to  (S_e(\Gamma_e)\tensor_R R(D))^{\mathrm{c1}}
    \end{equation}
    and go to the colimit.
\end{proof}

\subsection{Non-pure ideals}
\label{subsec:nonpure}

\begin{definition}\label{nonpure ideal below delta}
Let $(R,\m)$ be an excellent local $\mathbf{S}_2$ ring of residue characteristic $p$ and let $\Delta$ be an effective Mumford $\Q$-divisor on $R$. Let $B$ be a perfectoid $R$-algebra admitting a limiting sequence for $\Delta$. Let $E=E_R(R/\m)$ and fix a socle element $u\in E$. We define
\[
    \np(B, {<}\Delta)
    \coloneqq \np^R (B, M_B({<}\Delta)) = 
    \Ann_B\left(
    u\tensor\mathbf 1_{{<}\Delta}
    \;\text{ in }\;E\tensor_RM_B({<}\Delta)
    \right).
\]
{Recall that the non-pure ideal $\np^R (B, M)$ corresponding to an $R$-algebra $B$ and pointed $B$-module $M$ was introduced in \autoref{def: non-pure ideals}.}
\end{definition}

\begin{corollary}\label{properties nonpure ideal below delta}
Let $(R,\m)$ be an excellent local $\mathbf{S}_2$ ring of residue characteristic $p$ and let $\Delta$ be an effective Mumford $\Q$-divisor on $R$. Let $B$ be a perfectoid $R$-algebra admitting a limiting sequence for $\Delta$. Then, any limiting sequence $\{\mathfrak d_e\}$ for $\Delta$ computes $\np(B, {<}\Delta)$ by
\begin{equation}\label{nonpure ideal tower formula}
    \np(B, {<}\Delta)= \bigcup_e \np(B, M_B(\mathfrak d_e)) = \bigcup_e
    \Ann_B\left(u\tensor\mathbf 1_{\mathfrak d_e}
    \;\text{ in }\;E\tensor_RM_B(\mathfrak d_e)\right).
\end{equation}
where the unions are ascending. 
Consequently, the pair $(R,\Delta)$ is perfectoid pure if and only if $\np(B, {<}\Delta)\neq B$ for some absolutely integrally
closed perfectoid $R$-algebra $B$.
\end{corollary}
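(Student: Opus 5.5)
We unwind the definitions and combine \autoref{lem: functoriality of non-pure ideals} with \autoref{universal module detects perfectoid purity}. First recall that by \autoref{divisorial module computed by towers}, any limiting sequence $\{\mathfrak d_e\}$ computes $M_B({<}\Delta)=\varinjlim_e M_B(\mathfrak d_e)^{\mathrm{c1}}$ as a pointed module, independently of the chosen sequence. So the pointed modules $M_B(\mathfrak d_e)^{\mathrm{c1}}$ form a directed system of pointed $B$-modules (the transition maps $\Phi_{\mathfrak d_e,\mathfrak d_{e+1}}$ of \autoref{coherent divisorial comparison} are pointed and $B$-linear), with colimit $M_B({<}\Delta)$.

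Next, apply \autoref{lem: functoriality of non-pure ideals}\autoref{part direct limit of modules} with $S=R$, $A=B$, $M_i=M_B(\mathfrak d_e)^{\mathrm{c1}}$. This gives that $\np^R(B,M_B(\mathfrak d_e)^{\mathrm{c1}})$ is ascending in $e$ and that its union is $\np^R(B,M_B({<}\Delta))=\np(B,{<}\Delta)$. Since $R$ is $\mathbf S_2$, \autoref{lem: functoriality of non-pure ideals}\autoref{part envelope} identifies $\np^R(B,M_B(\mathfrak d_e)^{\mathrm{c1}})=\np^R(B,M_B(\mathfrak d_e))$. Finally, \autoref{lem: injective hull splitting ideal} (valid since $R$ is Noetherian local) rewrites each $\np^R(B,M_B(\mathfrak d_e))$ as $\Ann_B(u\tensor\mathbf 1_{\mathfrak d_e})$ in $E\tensor_R M_B(\mathfrak d_e)$. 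This proves \autoref{nonpure ideal tower formula}, with the unions ascending.

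For the consequence, note that $\np(B,{<}\Delta)\neq B$ means $1\notin\np(B,{<}\Delta)$. Since $\np(B,{<}\Delta)$ is an ideal by \autoref{lem: injective hull splitting ideal}, this is equivalent to $\rho_{B,\Delta}$ being pure. By \autoref{universal module detects perfectoid purity}, $(R,\Delta)$ is perfectoid pure if and only if $\rho_{B,\Delta}$ is pure for some absolutely integrally closed perfectoid $B$. Such $B$ always admits a limiting sequence, by \autoref{construct.LimitingConstructionWithComparisons} or \autoref{datum associated to SnD construction}, so $\np(B,{<}\Delta)$ is defined.

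The only point requiring care is checking that the directed system in \autoref{lem: functoriality of non-pure ideals}\autoref{part direct limit of modules} is indexed compatibly. That lemma is stated for $\N$-indexed directed systems, and a limiting sequence is exactly such an index set, so no real obstacle arises.
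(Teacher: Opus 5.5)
Your proposal is correct and follows the paper's proof. The paper also cites \autoref{lem: functoriality of non-pure ideals}\autoref{part direct limit of modules} and \autoref{part envelope}, together with \autoref{universal module detects perfectoid purity}; you additionally spell out the annihilator rewriting via \autoref{lem: injective hull splitting ideal} and the step from ``$\np(B,{<}\Delta)\neq B$'' to purity of $\rho_{B,\Delta}$.
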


\begin{proof} 
    Follows immediately from \autoref{lem: functoriality of non-pure ideals}\autoref{part direct limit of modules}, \autoref{part envelope} and \autoref{universal module detects perfectoid purity}.
\end{proof}

\begin{proposition}\label{base change nonpure ideal below delta}
With notation as in \autoref{nonpure ideal below delta}, let $B\to C$ be a map of perfectoid $R$-algebras. Then, $C$ admits a limiting sequence for $\Delta$ and $\np(B, {<}\Delta)\subseteq\np(C, {<}\Delta)\cap B.$
If, additionally, $B\to C$ is $\m$-completely faithfully flat, then
\[
\np(C, {<}\Delta)=\np(B, {<}\Delta)C,
\qquad
\np(C, {<}\Delta)\cap B=\np(B, {<}\Delta)
\]
and $\rho_{B,\Delta}$ is pure if and only if $\rho_{C,\Delta}$ is pure.
\end{proposition}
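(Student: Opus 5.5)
Transport a single limiting sequence from $B$ to $C$, compare the two modules term by term via \autoref{properties of M_B}\autoref{part functoriality of M_B}, and read off the non-pure ideals through \autoref{lem: functoriality of non-pure ideals}.

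\textbf{Existence of a limiting sequence on $C$.} Let $(\Delta_e,\mathfrak d_e)$ be a limiting sequence for $\Delta$ on $B$, with $\mathfrak d_e=(S_e,D_e,\iota_e)$. Composing each realization with $B\to C$ gives the data $\mathfrak d_{e,C}$ of \autoref{properties of M_B}\autoref{part functoriality of M_B}. The divisors $\Delta_e$ and the conditions $\Delta_e\prec\Delta_{e+1}\prec\Delta$, $\Delta_e\to\Delta$ are unchanged, so $(\Delta_e,\mathfrak d_{e,C})$ is a limiting sequence on $C$. The same lemma gives pointed isomorphisms $M_C(\mathfrak d_{e,C})\cong C\otimes_B M_B(\mathfrak d_e)$, hence pointed maps $M_B(\mathfrak d_e)\to M_C(\mathfrak d_{e,C})$.

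\textbf{Compatibility with transition maps.} Next I check that these maps commute with the comparison maps $\Phi$ of \autoref{coherent divisorial comparison}. The local map of \autoref{divisorial comparison on big open} on $B$ is given by $x_\alpha^{-1}\mapsto c_\alpha y_\alpha^{-1}$ with $c_\alpha\in\sqrt{x_\alpha B_\alpha}$. Its image $c_\alpha$ in $C$ lies in $\sqrt{x_\alpha C_\alpha}$ and still satisfies $y_\alpha=c_\alpha x_\alpha$. By the uniqueness in \autoref{BS - root}, the base-changed map is therefore the map constructed on $C$. Passing to codimension-one envelopes and colimits yields a pointed $B$-linear map $M_B({<}\Delta)\to M_C({<}\Delta)$.

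\textbf{The containment.} By \autoref{lem: functoriality of non-pure ideals}\autoref{part algebra functoriality}, the map $M_B({<}\Delta)\to M_C({<}\Delta)$ gives $\np(B,{<}\Delta)\subseteq \np(C,{<}\Delta)\cap B$.

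\textbf{Faithfully flat case.} Assume now that $B\to C$ is $\m$-completely faithfully flat. I compare non-pure ideals at each finite stage and then take unions using \autoref{nonpure ideal tower formula}. Write $\mathfrak d_e$ as shorthand for $M_B(\mathfrak d_e)$ inside $\np(B,\cdot)$, and similarly $\mathfrak d_{e,C}$. Since $M_C(\mathfrak d_{e,C})= M_B(\mathfrak d_e)\otimes_B C$, \autoref{lem: functoriality of non-pure ideals}\autoref{part extension for p completely flat} gives
\[
\np(C,\mathfrak d_{e,C})=\np(B,\mathfrak d_e)C .
\]
The map $M_B(\mathfrak d_e)\to M_C(\mathfrak d_{e,C})$ is $\m$-completely pure, being a base change of a map that is faithfully flat modulo $\m^n$ for every $n$. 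Hence \autoref{lem: functoriality of non-pure ideals}\autoref{part p-comp pure} gives
\[
\np(C,\mathfrak d_{e,C})\cap B=\np(B,\mathfrak d_e).
\]
Both identities pass to ascending unions, which yields the two displayed equalities. Purity of $\rho_{B,\Delta}$ is equivalent to $1\notin\np(B,{<}\Delta)$. By the contraction equality this is equivalent to $1\notin\np(C,{<}\Delta)$, that is, to purity of $\rho_{C,\Delta}$.

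\textbf{Main obstacle.} The delicate step is the compatibility of the transition maps under base change. It rests on the uniqueness of $c_\alpha$ from \autoref{BS - root}, so it needs $C$ to be seminormal, which holds because $C$ is perfectoid. One must also keep track of the independence of the limiting sequence via \autoref{divisorial module computed by towers}.
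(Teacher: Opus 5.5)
Your proposal is correct and follows essentially the same route as the paper: transport a limiting sequence along $B\to C$ via \autoref{properties of M_B}, compare the non-pure ideals stage by stage with \autoref{lem: functoriality of non-pure ideals}, and take ascending unions via \eqref{nonpure ideal tower formula}. The transition-map compatibility you flag as the main obstacle can be skipped: applying the tower formula on $C$ to the transported sequence, the containment already follows stagewise from part \autoref{part algebra functoriality}, so you never need to build the map $M_B({<}\Delta)\to M_C({<}\Delta)$.
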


\begin{proof}
By \autoref{properties of M_B}, we can compose the realizations of a limiting sequence for $\Delta$ in $B$ with $B\to C$ to obtain realizations on $C$, so $C$ also admits a limiting sequence for $\Delta$. Then, we use \eqref{nonpure ideal tower formula} to compute the non-pure ideals. In this way the containment and the two equalities follow from \autoref{properties of M_B} and \autoref{lem: functoriality of non-pure ideals}.
The purity assertion follows from \autoref{properties nonpure ideal below delta}, since the contraction equality shows that $\np(B, {<}\Delta)$ is proper if and only if $\np(C, {<}\Delta)$ is proper.
\end{proof}

\begin{remark}\label{universal divisorial module principal case}
Let $R$ be an excellent $\mathbf{S}_2$ ring with $p$ in its Jacobson radical and $B$ a perfectoid $R$-algebra. Suppose that $f\in R$ is a nonzerodivisor that does not vanish along any codimension-one component of the singular locus (thus $\Delta=\lambda\Div_R(f)$ is a Mumford $\mathbb{Q}$-divisor). Write $\lambda=a/b$ with $a,b\in\Z_{>0}$, and suppose there is $Y\in B$ with $Y^b=f^a$ that admits a fixed compatible system of $p$-power roots in $B$. 
By \autoref{divisorial module computed by towers}, we may compute $M_B({<}\Delta)$ using the $(1-1/p^e)\Delta$-data $\mathfrak d_e=(S_e, \Delta_e, \iota_e)$ as described in \autoref{example.DeltaDatum}, where $S_e=R[x]/(x^{bp^e}-f^{a(p^e-1)})$, $\Delta_e=\Div(x)$, and $\iota_e$ sends $x$ to $Y^{1-1/p^e}$. Thus for each $e$, we have a pointed identification $M_{B}(\mathfrak d_e) \cong B$ with distinguished element $Y^{1-1/p^e}$ in $B$, and the natural map $M_B(\mathfrak d_e) \to M_B(\mathfrak d_{e+1})$ is  identified with multiplication by
$Y^{1/p^e-1/p^{e+1}}$ on $B$. It follows from \autoref{BS - root} \autoref{BS - root.RadicalComputation} that we have a pointed isomorphism $$M_B({<}\Delta)
\cong
\left((Y^{1/p^\infty})B\right)^{\mathrm{c1}}$$
which sends $\mathbf 1_{{<}\Delta}$ to $Y$. Since $\sqrt{YB}=\sqrt{fB}$, by \autoref{BS - root} \autoref{BS - root.RadicalComputation} and \autoref{pfd of ideal only depends on radical} we have that 
\[
\left((Y^{1/p^\infty})B\right)^-=(fB)_\perfd.
\]
It follows from \autoref{purity and codimension one envelope} and \autoref{rk limit of pure is pure} that $\rho_{B,\Delta}$ is pure if and only if $R\to(fB)_\perfd$ sending $1$ to $Y$ is pure. 
In other words, $(R,f^\lambda)$ is perfectoid pure in the sense of \autoref{definition pfd pure pair} if and only if $(R,\Delta)$ is perfectoid pure in the sense of \autoref{def perfd pure div pair}\footnote{If
$(R,f^\lambda)$ is perfectoid pure for some $\lambda>0$, then
\autoref{normalization is pfd pure pair}~(a), together with
\autoref{perfectoid pure coefficients at most one}, shows that $f$
cannot vanish along any codimension-one component of the singular
locus.}. 

In fact, by \autoref{lem: functoriality of non-pure ideals} \autoref{part p-comp pure} and unwinding the definition of pointed $B$-modules we have
\[
\np(B, {<}\Delta)
=
\left\{
b\in B\ \middle|\
R\xrightarrow{\,1\mapsto bY\,}(fB)_\perfd
\text{ is not pure}
\right\}.
\]
\end{remark}

\begin{remark}\label{perfectoid pure triples}
Let $R$ be an excellent $\mathbf{S}_2$ ring with $p$ in its Jacobson radical, let $\Delta$ be an effective Mumford $\Q$-divisor on $R$, let $f\in R$ be a nonzerodivisor that does not vanish along any codimension-one component of the singular locus, and let $t\in\Q_{>0}$. In light of \autoref{universal divisorial module principal case}, it is natural to say that the triple $(R,\Delta,f^t)$ is \emph{perfectoid pure} if $(R,\Delta+t\Div_R(f))$ is perfectoid pure. Equivalently, there is a perfectoid $R$-algebra $B$ that admits both a fixed compatible system of $p$-th power roots of $f$ and a limiting sequence for $\Delta$ such that for every $e \gg 0$ (equivalently $e\geq 1$), 
\begin{equation}
    \label{eq.perfectoid pure triples}
R\longrightarrow M_B({<}\Delta),
\qquad
1\longmapsto(f^{1/p^e})^{\lfloor t(p^e-1)\rfloor}\mathbf 1_{{<}\Delta},
\end{equation}
is pure: this can be seen with a similar argument as in \autoref{universal divisorial module principal case}. The sequence $\lfloor t(p^e-1)\rfloor/p^e$ may be replaced by any sequence $t_i = a_i/b_i$ of positive rational numbers, with $a_i, b_i \in \bZ_{>0}$, such that   $t_i<t$ and converges to $t$, as long as $f^{t_i}$ has a representative in $B$ (for instance, if $B$ is absolutely integrally closed). 
If $(R,\mathfrak m)$ is local, this is also equivalent to $(f^{1/p^e})^{\lfloor t(p^e-1)\rfloor}\notin\np(B, {<}\Delta)$ for every $e\gg0$. 
\end{remark}

\begin{corollary}\label{mixed characteristic compatible splitting via radical ideal}
Keep the hypotheses and notation of
\autoref{radical ideal computes universal divisorial module}.
Let $\phi\in\Hom_R(B,R)$. 
Consider the following conditions.
\begin{enumerate}
    \item $I_D$ is $\phi$-compatible.
    \item $\phi(\sqrt{I_DB})\subseteq I_D$.
    \item There is
    $\widetilde\phi\in\Hom_R(M_B({<}D),R)$ whose composite with the
     pointed map $\iota_{B,D}\colon B\to M_B({<}D)$ is $\phi$.
\end{enumerate}
Then (a)~$\Longrightarrow$~(b)~$\Longleftrightarrow$~(c). All three
conditions are equivalent whenever
$(I_DB)_{\mathrm{perfd}}=(\sqrt{I_DB})^-$. By
\autoref{pfd of ideal only depends on radical}, this holds if $I_D$
has a finite generating set whose elements admit compatible systems
of $p$-power roots in $B$. Whenever any of these conditions holds and $\phi$ is surjective, $(R,D)$ is perfectoid pure.
\end{corollary}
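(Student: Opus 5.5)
The plan is to compare everything through the map $\alpha_{D,0}$ of \autoref{radical ideal computes universal divisorial module}\autoref{def alpha D Gamma} with $\Gamma=0$. That proposition identifies $M_B({<}D)$, as a pointed module, with $T_{B,D}=\bigl(\sqrt{I_DB}\tensor_RR(D)\bigr)^{\mathrm{c1}}$. The pointed map $\iota_{B,D}\colon B\to M_B({<}D)$ then corresponds to the map $B\to T_{B,D}$ induced by $I_D\subseteq\sqrt{I_DB}$. This uses the canonical identification $(I_D\tensor_RR(D))^{\mathrm{c1}}\cong R$ and the fact that $B\cong (I_DB\tensor_R R(D))^{\mathrm{c1}}$ in codimension one.

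For (b)$\Leftrightarrow$(c), we use that $R$ is $\mathbf S_2$, so $\Hom_R(-,R)$ only sees codimension-one data. By the codimension-two extension property,
\[
\Hom_R(T_{B,D},R)\cong\Hom_R\bigl(\sqrt{I_DB}\tensor_RR(D),R\bigr)\cong\Hom_R\bigl(\sqrt{I_DB},\Hom_R(R(D),R)\bigr)=\Hom_R(\sqrt{I_DB},I_D),
\]
where the last step uses reflexivity of $I_D=R(-D)$. Similarly $\Hom_R(B,R)\cong\Hom_R(I_DB\tensor R(D)\text{-envelope},R)\cong\Hom_R(I_DB, I_D)$. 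The point is to check that these identifications are compatible with restriction along $I_DB\subseteq\sqrt{I_DB}\subseteq B$. This can be done at height-one primes, where $I_D=\pi^aR$ is principal and $R(D)=\pi^{-a}R$, so the identifications are just multiplication by $\pi^{\pm a}$. So a lift $\widetilde\phi$ exists exactly when $\phi|_{\sqrt{I_DB}}$ lands in $I_D$, at least after passing to the codimension-one envelope; since $I_D$ is reflexive, landing in $I_D$ in codimension one is the same as landing in $I_D$ globally. Uniqueness and compatibility of the lift follow from injectivity of restriction for reflexive targets.

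For (a)$\Rightarrow$(b), note $\sqrt{I_DB}\subseteq (I_DB)_\perfd$, since $(I_DB)_\perfd$ is radical and contains $I_DB$ by \autoref{pfd of ideal only depends on radical}. Hence $\phi(\sqrt{I_DB})\subseteq\phi((I_DB)_\perfd)\subseteq I_D$. Conversely, when $(I_DB)_\perfd=(\sqrt{I_DB})^-$, (b) gives (a) once we know $\phi$ is continuous for the $p$-adic topology: $\phi(p^nB)\subseteq p^nR$. Writing an element of the closure as $x+p^nb$ with $x\in\sqrt{I_DB}$, we get $\phi$ of it in $I_D+p^nR$ for all $n$. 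Then $I_D=\bigcap_n(I_D+p^nR)$ by Krull's intersection theorem, since $p$ is in the Jacobson radical. The sufficient condition involving generators with compatible $p$-power roots is exactly \autoref{pfd of ideal only depends on radical}.

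Finally, suppose $\phi$ is surjective and (c) holds. Choose $b$ with $\phi(b)=1$. Then $\widetilde\phi(b\,\mathbf 1_{{<}D})=1$, so the map $R\to M_B({<}D)$, $1\mapsto b\mathbf 1_{{<}D}$, splits. It follows that $b\notin\np(B,{<}D)$, so $\np(B,{<}D)\ne B$. More directly, the split map factors through $B\cdot\mathbf 1_{{<}D}$. I would instead argue with $\rho_{B,D}$ itself as follows. Since $\mathbf 1_{{<}D}=\iota_{B,D}(1)$ and $M_B({<}D)$ is a $B$-module, the map $R\to M_B({<}D)$, $r\mapsto rb\mathbf 1$, is pure. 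Multiplication by $b$ on $M_B({<}D)$ is $R$-linear, so this factors through $\rho_{B,D}$, and hence $\rho_{B,D}$ is pure. Now \autoref{universal module detects perfectoid purity} gives perfectoid purity. If $B$ is not absolutely integrally closed, I would first base change along \autoref{thm: Andre}, using \autoref{base change nonpure ideal below delta} and the fact that the last statement of \autoref{universal module detects perfectoid purity} really only needs purity of $\rho_{B,D}$ for some $B$ admitting limiting sequences, since this implies each $\eta_{\mathfrak d'}$ is pure.

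The main obstacle is the bookkeeping in (b)$\Leftrightarrow$(c): making sure the canonical isomorphism of \autoref{radical ideal computes universal divisorial module} really carries $\iota_{B,D}$ to the inclusion-induced map, so that extension along it is controlled by values on $\sqrt{I_DB}$.
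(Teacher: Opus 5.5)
Your proposal is correct and is essentially the paper's proof. The paper proves (b)$\Leftrightarrow$(c) through the identification $M_B({<}D)\cong T_{B,D}$, Hom--tensor adjunction and a height-one computation with a local equation $h$ (where $\iota_{B,D}(a)=ah\otimes h^{-1}$), gets (a)$\Rightarrow$(b) from $\sqrt{I_DB}\subseteq(I_DB)_{\mathrm{perfd}}$, obtains the converse by $p$-adic closure plus Krull's intersection theorem (which is all the cited Fayolle argument amounts to), and deduces perfectoid purity from the splitting $m\mapsto\widetilde\phi(bm)$ of $\rho_{B,D}$. Your remark about a $B$ that is not absolutely integrally closed also addresses a point the paper leaves implicit.

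One side claim is inaccurate, though you never use it: $B\cong(I_DB\otimes_RR(D))^{\mathrm{c1}}$ can fail in codimension one when $B$ has $R$-torsion at a height-one prime in the support of $D$. Nothing breaks, because $\iota_{B,D}$ is simply $b\mapsto b\,\mathbf 1_D$ and restriction $\Hom_R(B,R)\to\Hom_R(I_DB,I_D)$ is an isomorphism regardless.
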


\begin{proof}
Suppose first that (b) holds, i.e., $\phi(\sqrt{I_DB})\subseteq I_D$. The rule
$k\tensor r\mapsto r\phi(k)$ defines a map
$\sqrt{I_DB}\tensor_RR(D)\to R$. Taking codimension-one envelopes
and applying \autoref{radical ideal computes universal divisorial module}
gives $\widetilde\phi$. If $h$ is a local equation for $D$, then
$\iota_{B,D}(a)=ah\tensor h^{-1}$, so
$\widetilde\phi\circ\iota_{B,D}=\phi$ in codimension one and hence
everywhere, which shows (b)~$\implies$~(c).

Conversely, suppose that $\widetilde\phi$ exists as in
(c). At any
codimension-one point {$P$} where $h$ is a local equation for $D$, every
$k\in\sqrt{I_DB}$ satisfies
$\phi(k)=h\widetilde\phi(k\tensor h^{-1})\in hR_P$.
 As $I_D$ is $\mathbf{S}_2$, this gives
$\phi(\sqrt{I_DB})\subseteq I_D$.
This proves (c)~$\Longleftrightarrow$~(b). Moreover, if $\phi$ is surjective with $\phi(b)=1$ for some $b\in B$,
$m\mapsto\widetilde\phi(bm)$ sends $\mathbf1_{{<}D}$ to
$\phi(b)=1$, and hence splits the distinguished map
$R\to M_B({<}D)$. Thus $(R,D)$ is perfectoid pure by
\autoref{universal module detects perfectoid purity}.

Finally, 
the implication (a)$\implies$(b) follows  as $\sqrt{I_D B} \subseteq (I_D B)_{\perfd}$ (see \autoref{pfd of ideal only depends on radical}); when
$(I_DB)_{\mathrm{perfd}}=(\sqrt{I_DB})^-$, the converse follows as in
\cite[proof of Lemma~2.21]{FayolleCentersPerfectoidPurity}, since
$I_D$ is $p$-adically closed.
\end{proof}

\subsection{Localization, completion, and choice of $B$ for divisor pairs}
\label{subsec.PerdPureCompletionLocalizationChoice}

We now show that perfectoid purity of pairs can be detected locally (\autoref{localization of purity} and \autoref{can check purity locally}) or after completion (\autoref{completion of divisor pairs}).  In \autoref{perfectoid purity in characteristic p divisor} we also show that our notions coincide with notion of $F$-purity introduced in \cite{HaraWatanabe}.
In the complete local case, we show how to construct a specific $B$ that can determine perfectoid purity for pairs (\autoref{can test divisor pairs on RAinfty}).  
Finally, we show that perfectoid pure pairs remain so under normalization in \autoref{normalization is pfd pure pair}.

\begin{proposition}\label{localization of purity}
Let $R$ be an excellent $\mathbf{S}_2$ ring with $p$ in its Jacobson radical, and let $\mathfrak q$ be a prime ideal containing $p$. For an effective Mumford $\Q$-divisor $\Delta$ on $R$, if $(R,\Delta)$ is perfectoid pure then so is $(R_{\mathfrak q},\Delta_{\mathfrak q})$.
\end{proposition}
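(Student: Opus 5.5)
We mirror the proof of \autoref{prop:localization_ideal_pairs}, replacing the principal criterion by the divisor criterion of \autoref{common perfectoid algebra for divisor pairs}. First we check that the hypotheses of \autoref{def perfd pure div pair} persist. The ring $R_{\mathfrak q}$ is excellent, reduced and $\mathbf{S}_2$. The element $p$ lies in its maximal ideal. Since $R\to R_{\mathfrak q}$ is flat with regular (indeed trivial) fibers, \autoref{flat pullback is Mumford} shows that $\Delta_{\mathfrak q}$ is an effective Mumford $\Q$-divisor. For each rational $0<\varepsilon<1$ we fix a perfectoid $R$-algebra $B_\varepsilon$ and a $(1-\varepsilon)\Delta$-datum $\mathfrak d_\varepsilon=(S,\Delta_S,\iota)$ on $B_\varepsilon$ such that $\eta_{\mathfrak d_\varepsilon}$ is pure.

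Next we set $C_\varepsilon=(B_\varepsilon\tensor_RR_{\mathfrak q})^{\wedge p}$. This ring is perfectoid by \cite[Example~3.8(7)]{BhattIyengarMaRegularRings}, and $B_{\varepsilon,\mathfrak q}\to C_\varepsilon$ is an isomorphism modulo every $p^n$. We localize the datum: $S_{\mathfrak q}=S\tensor_RR_{\mathfrak q}$ is finite flat over $R_{\mathfrak q}$, and $\Delta_{S_{\mathfrak q}}$ is the flat pullback of $\Delta_S$. By \autoref{flat keeps reflexive}, the pullback of $n(1-\varepsilon)\Delta_{\mathfrak q}$ equals $n\Delta_{S_{\mathfrak q}}$. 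The composite $S_{\mathfrak q}\to B_{\varepsilon,\mathfrak q}\to C_\varepsilon$ then gives a $(1-\varepsilon)\Delta_{\mathfrak q}$-datum $\mathfrak e_\varepsilon$ on $C_\varepsilon$; this is essentially \autoref{properties of M_B}\autoref{part flat base change of M_B} with $R'=R_{\mathfrak q}$. Using $S_{\mathfrak q}(\Delta_{S_{\mathfrak q}})\cong S(\Delta_S)\tensor_RR_{\mathfrak q}$, we obtain a pointed identification
\[
M_{C_\varepsilon}(\mathfrak e_\varepsilon)\cong C_\varepsilon\tensor_{B_{\varepsilon,\mathfrak q}}\bigl(M_{B_\varepsilon}(\mathfrak d_\varepsilon)\tensor_RR_{\mathfrak q}\bigr).
\]

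Finally we prove purity. The map $R_{\mathfrak q}\to M_{B_\varepsilon}(\mathfrak d_\varepsilon)_{\mathfrak q}$ is the localization of a pure map, hence pure. The map
\[
M_{B_\varepsilon}(\mathfrak d_\varepsilon)_{\mathfrak q}\to M_{C_\varepsilon}(\mathfrak e_\varepsilon)
\]
is a base change of $B_{\varepsilon,\mathfrak q}\to C_\varepsilon$, so it is an isomorphism modulo $p^n$, and therefore modulo every power of $\mathfrak q R_{\mathfrak q}$ (since $p\in\mathfrak q$). Hence \autoref{pure + p-comp pure is pure} shows that $\eta_{\mathfrak e_\varepsilon}$ is pure. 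Since $\varepsilon$ was arbitrary, $(R_{\mathfrak q},\Delta_{\mathfrak q})$ is perfectoid pure.

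The main obstacle is the last step: justifying the application of \autoref{pure + p-comp pure is pure}. That lemma requires the map to be an isomorphism modulo powers of the Jacobson radical $\mathfrak qR_{\mathfrak q}$, but $p$-adic completion only controls it modulo $p^n$. The point to check is that an isomorphism modulo $p^n$ for all $n$, together with $\mathfrak q^N R_{\mathfrak q}\supseteq$ nothing useful in general, still suffices. It does, because the proof of \autoref{pure + p-comp pure is pure} only uses that finitely generated submodules $K\subseteq E$ are killed by some $p^n$. Every element of $E_{R_{\mathfrak q}}(k(\mathfrak q))$ is killed by a power of $\mathfrak q R_{\mathfrak q}$, hence by a power of $p\in\mathfrak q R_{\mathfrak q}$. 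So we would rerun that argument with $p^n$ in place of $J^n$.
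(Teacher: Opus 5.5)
Your proposal is correct and follows the paper's proof. The paper localizes, $p$-adically completes to get the perfectoid ring $C=(B_{\mathfrak q})^{\wedge p}$, transports the datum via \autoref{properties of M_B}, and concludes purity from \autoref{pure + p-comp pure is pure}, exactly as you do. The ``obstacle'' you flag in your last paragraph is not one: since $p^n\in(\mathfrak qR_{\mathfrak q})^n$, a map that is an isomorphism modulo $p^n$ for all $n$ is automatically an isomorphism modulo $(\mathfrak qR_{\mathfrak q})^n$ (as you already noted in the body), so the lemma applies verbatim without rerunning its proof.
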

\begin{proof}
 Suppose that $(R,\Delta)$ is perfectoid
pure, and fix a rational number $0<\varepsilon<1$. Choose a
perfectoid $R$-algebra $B$ and a $(1-\varepsilon)\Delta$-datum
$\mathfrak d=(S,D,\iota)$ on $B$ for which $\eta_{\mathfrak d}$ is
pure. 
As in the proof of \autoref{prop:localization_ideal_pairs}, we set $
B_{\mathfrak q}=B\tensor_RR_{\mathfrak q}$, $C =B_{\mathfrak q}^{\wedge p}$, and note that $C$ is perfectoid and  $B_\mathfrak{q} \to C$ is $p$-completely pure.
\autoref{properties of M_B}
gives a $(1-\varepsilon)\Delta_{\mathfrak q}$-datum
$\mathfrak d_{\mathfrak q}$ on $C$ such that $\eta_{\mathfrak d_{\mathfrak q}}$ is pure. Since $\varepsilon$ was arbitrary,
$(R_{\mathfrak q},\Delta_{\mathfrak q})$ is perfectoid pure.
\end{proof}

\begin{proposition}[{\cf \cite[Lemma 4.10]{BMPSTWW3}}]\label{can check purity locally}
Let $R$ be an excellent $\mathbf{S}_2$ ring with $p$ in its Jacobson radical. Let $\Delta$ be an effective Mumford $\Q$-divisor on $R$. Then $(R,\Delta)$ is perfectoid pure if and only if $(R_\fram,\Delta_\fram)$ is perfectoid pure for every maximal
ideal $\fram$ of $R$.
\end{proposition}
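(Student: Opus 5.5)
The forward implication is immediate from \autoref{localization of purity}, since every maximal ideal contains $p$ ($p$ lies in the Jacobson radical). The work is in the converse, which I would model on the proof of \autoref{can check purity locally ideals}: build one global perfectoid algebra as a product of local ones, and check purity of the distinguished maps maximal ideal by maximal ideal.

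\textbf{Step 1: a single datum and local algebras.} Fix a rational $0<\varepsilon<1$. If $\Delta=0$ the statement is \cite[Lemma~4.10]{BMPSTWW3}, so assume $\Delta\neq 0$. Choose one global $f\in R$ and an integer $m>0$ as in \autoref{construction SnD} for $(1-\varepsilon)\Delta$. This gives a $(1-\varepsilon)\Delta$-lift $(S_{f,m},\Delta_{f,m})$ over $R$. Since $S_{f,m}$ is finite free over $R$, its localization at each maximal ideal $\fram$ gives a $(1-\varepsilon)\Delta_\fram$-lift over $R_\fram$; this is compatible with \autoref{properties of M_B}\autoref{part flat base change of M_B}. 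By hypothesis and \autoref{common perfectoid algebra for divisor pairs}, for each $\fram$ there is an absolutely integrally closed perfectoid $R_\fram$-algebra $B(\fram)$ on which every datum for a divisor $\prec\Delta_\fram$ has pure distinguished map. In particular, choosing $z_\fram\in B(\fram)$ with $z_\fram^m=f$, the datum $\mathfrak d_\fram=(S_{f,m}\otimes_R R_\fram,\Delta_{f,m},\iota_{z_\fram})$ has $\eta_{\mathfrak d_\fram}$ pure.

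\textbf{Step 2: glue into a product.} Set $B=\prod_\fram B(\fram)$, which is perfectoid by \cite[Example~3.8(8)]{BhattIyengarMaRegularRings}, and let $z=(z_\fram)_\fram$, so that $z^m=f$. Then $\iota_z\colon S_{f,m}\to B$ gives a $(1-\varepsilon)\Delta$-datum $\mathfrak d$ on $B$. Because $S_{f,m}(\Delta_{f,m})$ is a finitely presented $S_{f,m}$-module, tensor product commutes with the product, so
\[
M_B(\mathfrak d)=B\otimes_{S_{f,m}}S_{f,m}(\Delta_{f,m})\cong\prod_\fram M_{B(\fram)}(\mathfrak d_\fram),
\]
and this isomorphism respects the distinguished elements.

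\textbf{Step 3: check purity locally.} Purity of $\eta_{\mathfrak d}$ can be checked after localizing at each maximal ideal $\fram$ (\autoref{lem: injective hull splitting ideal}). After localizing at $\fram$, the map $R_\fram\to M_B(\mathfrak d)_\fram$ is followed by projection onto the $\fram$-factor $M_{B(\fram)}(\mathfrak d_\fram)$, which is already an $R_\fram$-module. The composite is $\eta_{\mathfrak d_\fram}$, which is pure, so the localized map is pure. Since $\varepsilon$ was arbitrary, $(R,\Delta)$ is perfectoid pure by \autoref{def perfd pure div pair}.

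\textbf{Main obstacle.} The delicate point is Step 1. The local data must come from one global lift, so that the product of realizations is a realization of a single $R$-algebra $S$. This forces us to use a common $f$ and $m$, and to rely on the flexibility of \autoref{common perfectoid algebra for divisor pairs}, which says that \emph{every} datum for a divisor $\prec\Delta_\fram$ is pure. For this I need $(1-\varepsilon)\Delta_\fram\prec\Delta_\fram$, which holds. I also need that the localized lift is still a lift, i.e.\ that flat pullback commutes with localization; this follows from \autoref{flat keeps reflexive}.
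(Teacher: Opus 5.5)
Your proposal is correct and follows essentially the same route as the paper: one global $(1-\varepsilon)\Delta$-lift from \autoref{construction SnD}, local absolutely integrally closed algebras $B(\fram)$ from \autoref{common perfectoid algebra for divisor pairs}, the product $B=\prod_\fram B(\fram)$, and purity checked locally via projection onto the $\fram$-th factor. The differences are only cosmetic. You handle $\Delta=0$ by citing \cite[Lemma~4.10]{BMPSTWW3} instead of using the trivial lift, and you prove the full isomorphism $M_B(\mathfrak d)\cong\prod_\fram M_{B(\fram)}(\mathfrak d_\fram)$, whereas the argument only needs the pointed projection map.
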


\begin{proof}
The forward direction follows from \autoref{localization of purity}.

For the converse, suppose that $(R_\fram,\Delta_\fram)$ is perfectoid pure for every maximal ideal $\fram$, and fix a rational number $0<\varepsilon<1$. Use \autoref{construction SnD} to construct a $(1-\varepsilon)\Delta$-lift $(S,D)$, taking the trivial lift when $\Delta=0$.
By \autoref{common perfectoid algebra for divisor pairs}, for every $\fram$, there is an absolutely integrally closed perfectoid $R_\m$-algebra $B(\fram)$ admitting the datum $\mathfrak d(\fram) = (S_\m,D_\m, \iota_\m)$ with $R\to M_B(\mathfrak d_\m)$ pure. Combining these gives an $R$-algebra map $S\to B\coloneqq\prod_\fram B(\fram)$ where $B$ is perfectoid. Let $\mathfrak d$ be the resulting $(1-\varepsilon)\Delta$-datum on $B$.
Projecting onto the $\fram$-th factor gives a pointed map $M_B(\mathfrak d)_\fram\to M_{B(\fram)}(\mathfrak d(\fram))$ whose composite with $(\eta_{\mathfrak d})_\fram$ is the pure map $\eta_{\mathfrak d(\fram)}$ so $(\eta_{\mathfrak d})_\fram$ is also pure. Since purity can be checked locally and $\varepsilon$ was arbitrary, $(R,\Delta)$ is perfectoid pure.
\end{proof}

\begin{proposition}[{\cf \cite[Lemma 4.8]{BMPSTWW3}}]\label{completion of divisor pairs}
    Let $(R,\fram)$ be an excellent $\mathbf{S}_2$ ring of residue characteristic $p>0$. Let $\Delta$ be an effective Mumford $\Q$-divisor on $R$. Then $(R,\Delta)$ is perfectoid pure if and only if $(\widehat R,\widehat\Delta)$ is perfectoid pure, where $\widehat\Delta$ is the flat pullback of $\Delta$ along the regular map $R\to\widehat R$.
    More precisely, the following comparisons hold.
    \begin{enumerate}
        \item If $C$ is an absolutely integrally closed perfectoid
            $\widehat R$-algebra, 
            $
            \np^R(C, {<}\Delta)=\np^{\widehat R}(C, {<}\widehat\Delta).
            $
            \label{completion of divisor pairs.NonsplittingIdeal1}
        \item Let $B$ be an absolutely integrally closed perfectoid
            $R$-algebra, set $\widehat B=B^{\wedge\fram}$, which is
            perfectoid, and let
            $\widehat B\to C$ be a $p$-completely faithfully flat map to an
            absolutely integrally closed perfectoid ring. Then $C$ is naturally
            an $\widehat R$-algebra and
            \[
            \np^{\widehat R}(C, {<}\widehat\Delta)
            =\np^R(B, {<}\Delta)C,
            \qquad
            \np^{\widehat R}(C, {<}\widehat\Delta)\cap B
            =\np^R(B, {<}\Delta).
            \]
            \label{completion of divisor pairs.NonsplittingIdeal2}
    \end{enumerate}
\end{proposition}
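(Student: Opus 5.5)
We will prove the two numbered comparisons first and then deduce the equivalence. Throughout we use that $R\to\widehat R$ is flat with $\fram\widehat R=\widehat\fram$, so that $E_R(R/\fram)\cong E_{\widehat R}(\widehat R/\widehat\fram)$ and, for any $R$-module $N$, $E\otimes_RN\cong E\otimes_{\widehat R}(\widehat R\otimes_RN)$. In particular \autoref{lem: functoriality of non-pure ideals}\autoref{part: invariance of field extension} gives $\np^R(C,M)=\np^{\widehat R}(C,M)$ for any pointed $C$-module $M$.

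For \autoref{completion of divisor pairs.NonsplittingIdeal1}, let $C$ be an absolutely integrally closed perfectoid $\widehat R$-algebra. First construct a limiting sequence for $\Delta$ on $C$ from \autoref{construct.LimitingConstructionWithComparisons}: choose $f$ and $S_e=R[X_e]/(X_e^{mp^e}-f)$ with realizations in $C$. By \autoref{properties of M_B}\autoref{part flat base change of M_B}, the base changes $\widehat S_e=S_e\otimes_R\widehat R$, with flat pullback divisors $\widehat\Delta_e$ (Mumford by \autoref{flat pullback is Mumford}, since $R\to\widehat R$ is regular because $R$ is excellent), form a limiting sequence for $\widehat\Delta$ on $C$ with the same pointed modules $M_C(\mathfrak d_e)\cong M_C(\widehat{\mathfrak d}_e)$. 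The only subtlety is that the codimension-one envelopes are taken over $R$ for $M_C({<}\Delta)$ and over $\widehat R$ for $M_C({<}\widehat\Delta)$. We avoid comparing them by using the tower formula \eqref{nonpure ideal tower formula}, which computes both non-pure ideals as $\bigcup_e\np(C,M_C(\mathfrak d_e))$ without envelopes, thanks to \autoref{lem: functoriality of non-pure ideals}\autoref{part envelope}. Combined with the field-extension invariance above, this gives the equality termwise.

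For \autoref{completion of divisor pairs.NonsplittingIdeal2}, $\widehat B$ is perfectoid by \autoref{prop: properties of perfectoids}(f), since $p\in\fram$. The map $B\to\widehat B$ is an isomorphism modulo $\fram^n$, so $B\to C$ is $\fram$-completely faithfully flat. \autoref{base change nonpure ideal below delta} (over $R$) then gives $\np^R(C,{<}\Delta)=\np^R(B,{<}\Delta)C$ and $\np^R(C,{<}\Delta)\cap B=\np^R(B,{<}\Delta)$. Substituting \autoref{completion of divisor pairs.NonsplittingIdeal1} for $C$ yields the stated equalities. Here one should check that the $\fram$-complete flatness hypothesis holds, which follows by composing a mod-$\fram^n$ isomorphism with a $p$-completely (hence $\fram$-completely, as $p\in\fram$) faithfully flat map.

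Finally, the equivalence follows from \autoref{properties nonpure ideal below delta}. If $(R,\Delta)$ is perfectoid pure, take $B$ absolutely integrally closed with $\np^R(B,{<}\Delta)\neq B$; the contraction equality in \autoref{completion of divisor pairs.NonsplittingIdeal2} forces $\np^{\widehat R}(C,{<}\widehat\Delta)\ne C$, so $(\widehat R,\widehat\Delta)$ is perfectoid pure. Conversely, if $(\widehat R,\widehat\Delta)$ is perfectoid pure, pick an absolutely integrally closed perfectoid $\widehat R$-algebra $C$ with proper non-pure ideal. Then $C$ is also an $R$-algebra, and \autoref{completion of divisor pairs.NonsplittingIdeal1} shows $\np^R(C,{<}\Delta)\ne C$. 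The main obstacle I expect is the bookkeeping in \autoref{completion of divisor pairs.NonsplittingIdeal1}, namely verifying that the base-changed data genuinely form a limiting sequence for $\widehat\Delta$ with the compatible transition maps (the $\prec$ relations pull back flatly, and the comparison maps of \autoref{divisorial comparison on big open} are unique, hence they agree).
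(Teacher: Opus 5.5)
Your proposal is correct and follows essentially the same route as the paper: base-change the data along $S\to S\otimes_R\widehat R$ so that the pointed modules agree, compute both non-pure ideals by the tower formula \eqref{nonpure ideal tower formula}, and use invariance of non-pure ideals along the flat local map $R\to\widehat R$ with $\fram\widehat R=\widehat\fram$. As in the paper, you obtain the forward implication through \autoref{completion of divisor pairs.NonsplittingIdeal2} and the converse through \autoref{completion of divisor pairs.NonsplittingIdeal1}; the only difference is organizational, since you deduce \autoref{completion of divisor pairs.NonsplittingIdeal2} from \autoref{completion of divisor pairs.NonsplittingIdeal1} plus \autoref{base change nonpure ideal below delta} (with $C$ supplied by \autoref{thm: Andre}), whereas the paper cites the underlying parts of \autoref{lem: functoriality of non-pure ideals} directly.
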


\begin{proof}
Suppose first that $(R,\Delta)$ is perfectoid pure, and fix a rational
number $0<\varepsilon<1$. Choose a perfectoid $R$-algebra $B$ and a
$(1-\varepsilon)\Delta$-datum
$\mathfrak d=(S,D,\iota)$ on $B$ such that $\eta_{\mathfrak d}$ is
pure. Let $C$ be the $\fram B$-adic completion of $B$. Since
$p\in\fram B$, \autoref{prop: properties of perfectoids} shows that
$C$ is perfectoid and agrees with the derived $\fram B$-adic
completion. The map $R\to C$ extends to $\widehat R\to C$, and hence,
by \autoref{part functoriality of M_B} and \autoref{part flat base change of M_B} of \autoref{properties of M_B}, there is a pointed isomorphism
$M_C\bigl(\,\widehat{\mathfrak d}\,\bigr)\cong C\tensor_B M_B(\mathfrak d)$.
Since $\varepsilon$ was arbitrary \autoref{completion of divisor pairs.NonsplittingIdeal2} follows from \autoref{properties nonpure ideal below delta} together with \autoref{lem: functoriality of non-pure ideals} \autoref{part: invariance of field extension} \autoref{part p-comp pure} and \autoref{part extension for p completely flat}.
In particular, $(\widehat R,\widehat\Delta)$ is perfectoid pure.

Conversely, suppose that $(\widehat R,\widehat\Delta)$ is perfectoid
pure, and fix a rational number $0<\varepsilon<1$. Choose a
$(1-\varepsilon)\Delta$-lift $(S,D)$ supplied by
\autoref{construction SnD}, taking the trivial lift when $\Delta=0$.
Its flat pullback
$(\widehat S,\widehat D)$, where
$\widehat S=S\tensor_R\widehat R$, is a
$(1-\varepsilon)\widehat\Delta$-lift. Choose an absolutely integrally
closed perfectoid $\widehat R$-algebra $C$ as in
\autoref{common perfectoid algebra for divisor pairs}. By \autoref{datum associated to SnD construction}, $C$ realizes $(\widehat S,\widehat D)$, say with datum $\widehat{\mathfrak d}$. Note that $\eta_{\widehat{\mathfrak d}}$ is pure by \autoref{common perfectoid algebra for divisor pairs} since $(1-\varepsilon)\widehat\Delta\prec\widehat\Delta$. Via $S\to\widehat S\to C$, this realization also gives a $(1-\varepsilon)\Delta$-datum $\mathfrak d$ on $C$. 
By \autoref{properties of M_B} $M_C (\mathfrak d) = M_C(\widehat{\mathfrak d})$, so
\autoref{lem: functoriality of non-pure ideals} \autoref{part: invariance of field extension}
then implies \autoref{completion of divisor pairs.NonsplittingIdeal1}.
In particular, $(R, \Delta)$ is perfectoid pure. 
\end{proof}

We now compare perfectoid purity with the characteristic $p$ definition of \cite{HaraWatanabe}.  The purity that appears in \autoref{eq.perfectoid purity in characteristic p} was originally called \emph{$F$-purity} of the pair $(R, \Delta)$ in \cite{HaraWatanabe}.  Sometimes it is also called \emph{weak $F$-purity} to distinguish it from stronger notions.

\begin{proposition}\label{perfectoid purity in characteristic p divisor}
Let $R$ be an excellent reduced $\mathbf{S}_2$ ring of characteristic $p>0$,  and $\Delta$ an effective Mumford $\Q$-divisor on $R$. Then $(R,\Delta)$ is perfectoid pure if and only if the map
\begin{equation}
    \label{eq.perfectoid purity in characteristic p}
    R\longrightarrow
    R\bigl(\lfloor(p^e-1)\Delta\rfloor\bigr)^{1/p^e},
    \qquad 1\longmapsto1,
\end{equation}
is pure for all $e\gg0$. When $R$ is $F$-finite, purity here is
equivalent to splitting.
\end{proposition}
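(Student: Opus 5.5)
The argument runs parallel to \autoref{perfectoid purity in characteristic p principal}: reduce to the complete local case, identify a convenient perfectoid algebra with $R_\perf$, and compare its limiting module $M_{R_\perf}({<}\Delta)$ with the Frobenius-twisted modules in \autoref{eq.perfectoid purity in characteristic p}.

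\emph{Step 1: reductions.} Both conditions are local. On the perfectoid side this is \autoref{can check purity locally}; on the Frobenius side purity is checked at maximal ideals. Both are also insensitive to completion. On the perfectoid side this is \autoref{completion of divisor pairs}. On the Frobenius side, $R\to\widehat R$ is regular, so $R(\lfloor(p^e-1)\Delta\rfloor)^{1/p^e}$ base changes to the corresponding module for $\widehat\Delta$ by \autoref{flat keeps reflexive}, using that $\widehat R^{1/p^e}=R^{1/p^e}\otimes_R\widehat R$ up to the usual excellence argument. Purity ascends and descends along this faithfully flat map. So we may assume $(R,\fram)$ is complete local. If $\Delta=0$, the statement is \cite[Remark~4.3]{BMPSTWW3}.

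\emph{Step 2: computing $M_B({<}\Delta)$ in characteristic $p$.} Choose $B$ to be the perfection of an absolutely integrally closed extension, or simply any absolutely integrally closed perfect $R$-algebra. Here the perfectoid $R$-algebras are exactly the perfect $R$-algebras. Use the limiting sequence $\frad_e'$ of \autoref{construct.LimitingConstructionWithComparisons}, with $t_e\Delta$ and $S_e'=R[Y_e]/(Y_e^{p^e}-f)$. In characteristic $p$ the map $Y_e\mapsto f^{1/p^e}$ identifies $S_e'$ with a subring of $R^{1/p^e}$, and the image of $S_e'(\Delta_e')\otimes B$ in codimension one should be $R(p^et_e\Delta)^{1/p^e}\otimes_{R^{1/p^e}}B$. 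Since $B$ is faithfully flat over $R_\perf$ only $p$-completely (in characteristic $p$ this is just faithful flatness up to the Andr\'e-type statement), it is better to run the argument directly with $B=R_\perf$. Even though $R_\perf$ need not be absolutely integrally closed, it contains $f^{1/p^e}$, which is exactly what the primed sequence needs. The outcome I aim for is
\[
M_{R_\perf}({<}\Delta)\cong\varinjlim_e\bigl(R(\lfloor(p^e-1)\Delta\rfloor)^{1/p^e}\otimes_{R^{1/p^e}}R_\perf\bigr)^{\cOne},
\]
where $\lfloor(p^e-1)\Delta\rfloor$ and $p^et_e\Delta$ are cofinal with respect to $\prec$ after rescaling by $1/p^e$. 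Consequently $\rho_{R_\perf,\Delta}$ is pure if and only if each map $R\to R(\lfloor(p^e-1)\Delta\rfloor)^{1/p^e}\otimes R_\perf$ is pure, by \autoref{universal module detects perfectoid purity} and \autoref{purity and codimension one envelope}.

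\emph{Step 3: the two implications.} For ($\Leftarrow$): purity of \autoref{eq.perfectoid purity in characteristic p} for some $e$ implies Frobenius is pure, since the map factors through $F^e$. Hence $R^{1/p^e}\to R_\perf$ is pure, and tensoring with the reflexive module preserves purity in codimension one; \autoref{check purity on codim 1} then gives purity into the colimit, so $(R,\Delta)$ is perfectoid pure. For ($\Rightarrow$): given perfectoid purity, one needs purity of $\rho_{B,\Delta}$ for our specific $B$. This is the analogue of \autoref{can test divisor pairs on RAinfty}. Any absolutely integrally closed perfectoid $C$ witnessing purity is perfect, and it receives a map from $R_\perf$ compatible with the chosen roots of $f$. \autoref{base change nonpure ideal below delta} then gives $\np(R_\perf,{<}\Delta)\subseteq\np(C,{<}\Delta)\cap R_\perf\neq R_\perf$. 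Projecting gives purity of $R\to R(\lfloor(p^e-1)\Delta\rfloor)^{1/p^e}$, which holds because this map factors the pure map.

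The main obstacle is Step 2: showing that, in codimension one, the flat-lift divisors $\Delta_e'$ on $S_e'$ realized in $R_\perf$ agree with the Frobenius twists. At a height-one prime $P$, with uniformizer $\pi$ and $f=u\pi$, this comes down to the DVR computation in $R_P^{1/p^e}$, where $Y_e$ maps to $(u\pi)^{1/p^e}$ and so is a uniformizer there. I would verify this locally exactly as in the proof of \autoref{radical ideal computes universal divisorial module}. The F-finite clause follows because for finitely presented modules over complete or F-finite rings, purity of $R\to M$ with $M$ finite is equivalent to splitting.
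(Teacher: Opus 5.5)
This is essentially the paper's argument. You take $B=R_\perf$ and realize the divisors $p^{-e}\lfloor(p^e-1)\Delta\rfloor\prec\Delta$ (or your $t_e\Delta$, which is cofinal) by data on $R[Y]/(Y^{p^e}-f)$ with $Y\mapsto f^{1/p^e}$. The modules of these data agree in codimension one with $R(\lfloor(p^e-1)\Delta\rfloor)^{1/p^e}\otimes_{R^{1/p^e}}R_\perf$. Then ($\Rightarrow$) follows by factoring through $R(\lfloor(p^e-1)\Delta\rfloor)^{1/p^e}$. The converse ($\Leftarrow$) follows from $F$-purity, purity of $R^{1/p^e}\to R_\perf$, base change, and a subsequence that forms a limiting sequence.

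The one step that fails as written is in Step~1, which the paper does not need. The identity $\widehat R^{1/p^e}=R^{1/p^e}\otimes_R\widehat R$ is false for excellent rings that are not $F$-finite, e.g.\ $R=k[x]_{(x)}$ with $[k:k^p]=\infty$. If you keep the completion reduction, argue instead that $R(D)^{1/p^e}$ and $\widehat R(\widehat D)^{1/p^e}$ agree modulo $\mathfrak{m}^n$ for every $n$. This holds because $\mathfrak{m}^nR(D)^{1/p^e}=((\mathfrak{m}^n)^{[p^e]}R(D))^{1/p^e}$ with $(\mathfrak{m}^n)^{[p^e]}$ $\mathfrak{m}$-primary and $\widehat R(\widehat D)=R(D)\otimes_R\widehat R$. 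That agreement suffices for purity via $E\otimes_R(-)$.

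It is simpler to work globally, as the paper does. For ($\Rightarrow$), choose $C$ as in \autoref{common perfectoid algebra for divisor pairs}. Push each datum $\mathfrak d$ on $R_\perf$ forward along the unique map $R_\perf\to C$; the pure map $\eta_{\mathfrak d_C}$ then factors through $\eta_{\mathfrak d}$. This is exactly the step the paper asserts without comment when it says $R\to M_{R_\perf}({<}\Delta)$ is pure, even though $R_\perf$ is not absolutely integrally closed. So your explicit justification via a map $R_\perf\to C$ is a worthwhile addition. It also shows the forward direction gives purity for every $e$, not just $e\gg0$.
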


\begin{proof}

Set
$\Delta_e=p^{-e}\lfloor(p^e-1)\Delta\rfloor$.  \autoref{construct.LimitingConstructionWithComparisons} shows that $R_\perf=\varinjlim_jR^{1/p^j}$ admits a limiting sequence for $\Delta$ so if $(R,\Delta)$ is perfectoid pure, $R\to M_{R_\perf}({<}\Delta)$ is pure.
Since for any $e$, $\Delta_e\prec \Delta$, we must have that $R\to R_\perf \tensor_{R^{1/p^e}} R(p^e\Delta_e)^{1/p^e}$ is pure. Since this factors through $R\to R(p^e\Delta_e)^{1/p^e}$, this is also pure, as desired. 

Conversely, suppose that $R\to R(p^e\Delta_e)^{1/p^e}$ is pure for all $e\gg 0$. Since this implies that $R$ is $F$-pure, $R^{1/p^e}\to R_\perf$ is pure. Tensoring this with $R(p^e\Delta_e)^{1/p^e}$ and precomposing with the pure map  $R\to R(p^e\Delta_e)^{1/p^e}$ gives that $R\to R_\perf\tensor_{R^{1/p^e}} R(p^e\Delta_e)^{1/p^e}$ is pure. Since a subsequence of the $\Delta_e$s gives a limiting sequence for $\Delta$, $(R,\Delta)$ is perfectoid pure by \autoref{universal module detects perfectoid purity}.
\end{proof}

We also obtain the analogous result to \autoref{can test principal pairs on RAinfty} for divisor pairs.

\begin{proposition}\label{can test divisor pairs on RAinfty}
Let $(R,\fram,k)$ be a complete Noetherian $\mathbf{S}_2$ local ring of residue characteristic $p>0$, and let $\Delta$ be an effective Mumford $\Q$-divisor on $R$. Choose $f\in\fram$ for $\Delta$ as in \autoref{construction SnD}. For any $\varphi\colon A\to R$ as in \autoref{notation RAinfty} with $\varphi(x_1)=f$ (such a map may be obtained by adjoining a variable mapping to $f$ to any $A$ as in \autoref{notation RAinfty}), there are rational numbers $0<t_e<t_{e+1}<1$ for $e\gg0$, with $t_e\to1$, and $t_e\Delta$-data $\mathfrak d_e=(S_e,D_e,\iota_e)$ on $\raftypfd$. For these data, $(R,\Delta)$ is perfectoid pure if and only if
$\eta_{\mathfrak d_e}\colon
R\longrightarrow M_{\raftypfd}(\mathfrak d_e)$
is pure for every $e\gg0$.
Equivalently, as these data also form a limiting sequence for $\Delta$, $(R,\Delta)$ is perfectoid pure if and only if $\rho_{\raftypfd,\Delta}\colon R\to M_{\raftypfd}({<}\Delta)$ is pure.
\end{proposition}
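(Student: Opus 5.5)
The plan follows \autoref{can test principal pairs on RAinfty}, replacing the principal datum with the tower of \autoref{construct.LimitingConstructionWithComparisons}. By construction, $x_1\mapsto f$ has a compatible system of $p$-power roots $x_1^{1/p^e}$ in $\raftypfd$. I will use the primed tower $S'_e=R[Y_e]/(Y_e^{p^e}-f)$ with realizations $\iota_e\colon Y_e\mapsto x_1^{1/p^e}$. The divisors $D_e=\Delta'_e$ are the flat pullbacks of $t_e\Delta$, where $t_e=\frac{m}{p^e}(\lfloor p^e/m\rfloor-1)$; these satisfy $t_e<t_{e+1}$, $t_e\to 1$, and $t_e\Delta\prec t_{e+1}\Delta\prec\Delta$ for $e\gg0$. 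This choice needs no $m$-th root of $f$, so it is realized on $\raftypfd$ itself. Hence $(t_e\Delta,\mathfrak d_e)$ is a limiting sequence for $\Delta$ on $\raftypfd$. By \autoref{universal module detects perfectoid purity} and \autoref{properties nonpure ideal below delta}, purity of $\rho_{\raftypfd,\Delta}$ is then equivalent to purity of $\eta_{\mathfrak d_e}$ for all $e\gg0$. This reduces both "equivalently" formulations to the single claim
\[
(R,\Delta)\ \text{perfectoid pure}\iff \rho_{\raftypfd,\Delta}\ \text{pure}.
\]

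For the backward direction: if all $\eta_{\mathfrak d_e}$ are pure, fix $\varepsilon$ and pick $e$ with $(1-\varepsilon)\Delta\prec t_e\Delta$. Choose any $(1-\varepsilon)\Delta$-datum on an absolutely integrally closed perfectoid $C$ receiving $\raftypfd$ by \autoref{thm: Andre}. The pure $\eta_{\mathfrak d_e}$ stays pure after base change to $C$ by \autoref{properties of M_B}. Then \autoref{BS application to divisor purity} gives purity of that datum, so $(R,\Delta)$ is perfectoid pure by \autoref{def perfd pure div pair}.

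For the forward direction: by \autoref{common perfectoid algebra for divisor pairs}, take an absolutely integrally closed perfectoid $B$ on which every datum for every $\Delta'\prec\Delta$ is pure. As in \autoref{can test principal pairs on RAinfty} and the proof of \cite[Lemma~4.24]{BMPSTWW3}:
\begin{enumerate}
\item fix a compatible system of $p$-power roots of $f$ in $B$, and choose compatible roots of $p$ and of the images of the other $x_i$, together with a map $W(k^{1/p^\infty})\to B$ compatible with $C_k$;
\item replace $B$ by its $\fram B$-adic completion. This is perfectoid by \autoref{prop: properties of perfectoids}, and the map is $p$-completely pure on the relevant modules, so purity is preserved by \autoref{properties of M_B}\autoref{part M_B under p-comp pure map}. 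Note that \autoref{completion of divisor pairs} already shows that completion does not disturb the non-pure ideal.
\end{enumerate}
This yields an $A_\infty$-algebra map. By the universal property of perfectoidization, we get $\raftypfd\to B$ sending $x_1^{1/p^e}$ to the chosen roots of $f$. Composing gives the datum $(\mathfrak d_e)_B$. Since $t_e\Delta\prec\Delta$, the map $\eta_{(\mathfrak d_e)_B}$ is pure. Since $M_B((\mathfrak d_e)_B)\cong B\otimes_{\raftypfd}M_{\raftypfd}(\mathfrak d_e)$, the map $\eta_{(\mathfrak d_e)_B}$ factors through $\eta_{\mathfrak d_e}$, which is therefore pure.

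The main obstacle is the factorization step. It needs care because $B$ is an $R$-algebra in which the generators of $A_\infty$ need compatible $p$-power roots compatible with $\varphi$. This is handled exactly as in \cite[Lemma~4.24]{BMPSTWW3} using absolute integral closedness of $B$. One must also check that the chosen roots of $f$ in $B$ are the ones defining the realization; this is ensured by sending $x_1^{1/p^e}$ to the fixed roots. Unlike the principal case, no independence-of-root argument is needed here: \autoref{common perfectoid algebra for divisor pairs} makes every datum on $B$ pure, including the transported one.
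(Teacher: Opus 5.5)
Your proposal is correct and takes essentially the same route as the paper. The data are the primed tower $S'_e=R[Y_e]/(Y_e^{p^e}-f)$ of \autoref{construct.LimitingConstructionWithComparisons}, realized via $Y_e\mapsto x_1^{1/p^e}$, and the forward direction factors $R\to B$ through $\raftypfd$ as in \autoref{can test principal pairs on RAinfty}, so purity of the transported datum on $B$ forces purity of $\eta_{\mathfrak d_e}$. Your backward argument (pass to an absolutely integrally closed $C$ via \autoref{thm: Andre}, then use \autoref{BS application to divisor purity} for every $\varepsilon$) just spells out what the paper calls ``direct from the definition''. The only slip is a wording one: $B\to B^{\wedge\fram}$ is an isomorphism modulo $\fram^nB$, so it is $\fram$-completely pure, and that is the hypothesis part (a) of \autoref{properties of M_B} requires, not ``$p$-completely pure''.
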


\begin{proof}
The existence of the $t_e$ data is \autoref{construct.LimitingConstructionWithComparisons} and the backwards direction is direct from the definition. To show the forward direction, suppose that $(R,\Delta)$ is perfectoid pure and use \autoref{common perfectoid algebra for divisor pairs} to fix an absolutely integrally closed perfectoid $R$-algebra $B$ on which we can test the perfectoid purity of $(R,\Delta)$. Just as in \autoref{can test principal pairs on RAinfty}, $R\to B$ factors through $\raftypfd$ and base changing allows us to realize every $(S_e,D_e)$ on $B$, say with data $\mathfrak{d}_{e,B}$. Since each resulting $\eta_{\mathfrak{d}_{e,B}}$ is pure and factors through $\eta_{\mathfrak{d}_{e}}$ by \autoref{properties of M_B}\autoref{part functoriality of M_B}, these latter maps must also be pure, as desired.
\end{proof}

Finally, we generalize \cite[Corollary 5.5]{FayolleCentersPerfectoidPurity} to the context of pairs.

\begin{proposition}\label{normalization is pfd pure pair}
    Let $R$ be an excellent $\mathbf{S}_2$ ring with $p$ in its Jacobson radical, $R^N$ its normalization, $\mathfrak c$ the conductor of $R$ in $R^N$. Let $C$ be the divisorial part of $\mathfrak c$ so that $R^N(-C)=\mathfrak c^{**}\eqqcolon I_C$.
    \begin{enumerate}
        \item If $f\in R$ is a nonzerodivisor, $\lambda\in\Q_{>0}$, and
            $(R,f^\lambda)$ is perfectoid pure, then
            $(R^N,C+\lambda\Div_{R^N}(f))$ is perfectoid pure and $\Div_{R^N}(f)$ and $C$ have no component in common.  \label{normalization is pfd pure pair.PrincipalCase}
        \item If
            $\Delta=\sum_i a_iP_i$ is an effective Mumford $\Q$-divisor, let
            $P_i^N$ be the unique prime divisor of $R^N$ lying over $P_i$ and set
            $\Delta^N=\sum_i a_iP_i^N$. If $(R,\Delta)$ is perfectoid pure, then
            $(R^N,\Delta^N+C)$ is perfectoid pure.  \label{normalization is pfd pure pair.DivisorCase}
    \end{enumerate}
\end{proposition}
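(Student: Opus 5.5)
The plan is to prove \autoref{normalization is pfd pure pair.DivisorCase} first and then deduce \autoref{normalization is pfd pure pair.PrincipalCase} from it, mimicking the characteristic $p$ argument that a splitting $\phi$ of $F^e_*R\to R$ is compatible with the conductor. It therefore induces a map on $R^N$ twisted by $C$.

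\emph{Reductions.} Both sides are local, by \autoref{can check purity locally}. They are also insensitive to completion, by \autoref{completion of divisor pairs}, and normalization and conductors commute with completion since $R$ is excellent. So we may assume $(R,\fram)$ is complete local.

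\emph{Choice of test algebra.} Choose an absolutely integrally closed perfectoid $R^N$-algebra $B$. We may take it to be the $p$-completely faithfully flat enlargement of $(R^N)^{A_\infty}_\perfd$ given by \autoref{thm: Andre}, using an $A$ with variables mapping to $f$ and to generators of $\mathfrak c$. This $B$ is also a perfectoid $R$-algebra. Since $R\to R^N$ is finite birational, an absolutely integrally closed $R$-algebra test object can be replaced by one receiving $R^N$; the absolute weak normality of perfectoids from \autoref{prop: properties of perfectoids} handles this. So by \autoref{common perfectoid algebra for divisor pairs} and \autoref{can test divisor pairs on RAinfty} we may assume that $\rho_{B,\Delta}\colon R\to M_B({<}\Delta)$ is pure. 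As $R$ is complete, this gives a surjection $\phi\colon M_B({<}\Delta)\to R$ with $\phi(\mathbf 1_{{<}\Delta})=1$.

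\emph{Compatibility with the conductor.} The key step is to show that $\phi(\sqrt{\mathfrak cB}\cdot M_B({<}\Delta))\subseteq\mathfrak c$. For $c\in\mathfrak c$ and $r\in R^N$ we have $\phi(c\,b\,m)r=\phi(crb\,m)\in R$, since $cr\in R$ and $\phi$ is $R$-linear. This uses that $M_B({<}\Delta)$ is an $R^N$-module, which holds because $B$ is an $R^N$-algebra. Hence $\phi(\mathfrak cB\, M_B({<}\Delta))\subseteq\mathfrak c$. To pass from $\mathfrak cB$ to its radical, I would use that $\mathfrak c$ is radical in $R^N$ here: $R$ is perfectoid pure, so it is seminormal, so $\mathfrak c$ is radical in $R^N$. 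One then argues as in \autoref{basic properties comp ideals}(c), using generators of $\mathfrak c$ with $p$-power roots together with Krull's intersection theorem, as in \cite[Corollary 5.5]{FayolleCentersPerfectoidPurity}. I expect this to be the main obstacle: controlling the radical and $p$-adic closure of $\mathfrak cB$ rather than $\mathfrak cB$ itself, together with checking that the twisted module is really an $R^N$-module compatible with the limiting sequences for $\Delta^N$ versus $\Delta$.

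\emph{Splitting on $R^N$.} Apply \autoref{radical ideal computes universal divisorial module}\autoref{isom MB and TBD} on $R^N$ with $D=C$ and $\Gamma=\Delta^N$. Components of $\Delta$ are prime Mumford divisors, so $R$ is regular there and $C$ shares no components with $\Delta^N$. This gives
\[
M_B({<}(C+\Delta^N))\cong\bigl(\sqrt{I_CB}\otimes_{R^N}R^N(C)\otimes_B M_B({<}\Delta^N)\bigr)^{\mathrm{c1}}.
\]
Here $M_B({<}\Delta^N)$ agrees with $M_B({<}\Delta)$ up to codimension-one envelope, since the lifts pull back along $R\to R^N$, an isomorphism at the generic points of the $P_i$. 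On $\sqrt{\mathfrak cB}\otimes R^N(C)\otimes M_B({<}\Delta^N)$, define $k\otimes s\otimes m\mapsto s\,\phi(km)$, which lands in $\mathfrak c\,R^N(C)\subseteq R^N$. Extend it via codimension-one envelopes, which is allowed since $R^N$ is $\mathbf S_2$. At height-one primes of $C$, where a local equation $h$ generates $\mathfrak c$ (after reflexive hull), the element $\mathbf 1_C\otimes\mathbf 1$ corresponds to $h\otimes h^{-1}\otimes\mathbf 1$ and maps to $h^{-1}\phi(h\mathbf 1)=1$. So the map is pointed and surjective, and $(R^N,C+\Delta^N)$ is perfectoid pure by \autoref{universal module detects perfectoid purity}.

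\emph{Principal case.} For \autoref{normalization is pfd pure pair.PrincipalCase}, run the same argument with $M_B({<}\Delta)$ replaced by $(fB)_\perfd$ with distinguished element $Y$, as in \autoref{universal divisorial module principal case}. This yields purity of $(R^N,C+\lambda\Div_{R^N}(f))$. If $\Div_{R^N}(f)$ and $C$ shared a component, that component would have coefficient at least $1+\lambda'>1$ for suitable $\lambda'\le\lambda$, using \autoref{principal pair sequence criterion}. This contradicts \autoref{perfectoid pure coefficients at most one}.
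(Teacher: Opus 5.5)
\textbf{The proposal has a fatal gap at the choice of test algebra.} You take $B$ to be an $R^N$-algebra and assume $\rho_{B,\Delta}$ is pure. That is impossible whenever $R\neq R^N$.

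If $B$ is an $R^N$-algebra (compatibly with $R\subseteq R^N$), then $M_B({<}\Delta)$ is an $R^N$-module. For any $R^N$-module $M$, adjunction gives $\Hom_R(M,R)\cong\Hom_{R^N}(M,\Hom_R(R^N,R))$. Evaluation at $1$ identifies $\Hom_R(R^N,R)$ with $(R:_{K(R)}R^N)=\mathfrak c$. Hence every $R$-linear $\phi\colon M_B({<}\Delta)\to R$ has image in $\mathfrak c$, and no $\phi$ with $\phi(\mathbf 1_{{<}\Delta})=1$ exists.

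Equivalently, $\rho_{B,\Delta}$ factors through $R\to R^N$. A pure, hence split, finite birational inclusion is an equality.

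So the Andr\'e enlargement of $(R^N)^{A_\infty}_\perfd$, or any other $R^N$-algebra, can never witness perfectoid purity of a non-normal $(R,\Delta)$. This is the same obstruction the introduction records for $R^+$-algebras, and weak normality of perfectoid rings does not get around it. For instance, with $R=\mathbb Z_p\llbracket x,y\rrbracket/(xy)$, a connected $B$ with $R\to B$ injective cannot even receive $R^N$: that would require an idempotent $e$ with $xe=0=y(1-e)$.

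Relatedly, your inclusion $\phi(\mathfrak cB\,M)\subseteq\mathfrak c$ is just $\phi(cx)=c\phi(x)$ and carries no information. All the content sits in the radical step you defer, and in your setup that step is vacuous, since the image of $\phi$ already lies in $\mathfrak c$.

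\textbf{The missing idea.} The splitting must live on an $R$-algebra that does not receive $R^N$, namely $\raftypfd$. It then has to be replaced by a genuinely new map on a different algebra. By \cite[Corollary~5.5]{FayolleCentersPerfectoidPurity}, built on the conductor pullback square of \cite[Lemma~5.1]{FayolleCentersPerfectoidPurity}, every $\phi\in\Hom_R(\raftypfd,R)$ extends to $\phi^N\in\Hom_{R^N}(R^{N,A_\infty}_\perfd,R^N)$, compatibly with $\mathfrak c$.

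From there the paper proceeds as follows.
\begin{itemize}
\item The height-one associated primes of $\mathfrak c$, and hence $I_C$, are $\phi^N$-compatible \cite[Propositions~2.34, 2.35]{FayolleCentersPerfectoidPurity}. Then \autoref{mixed characteristic compatible splitting via radical ideal} produces a map on $M_{R^{N,A_\infty}_\perfd}({<}C)$ taking the value $1$.
\item For (a), this is applied to $\phi_e$ with $\phi_e\bigl((f^{1/p^e})^{\lfloor\lambda(p^e-1)\rfloor}\bigr)=1$, and the argument is finished with \autoref{perfectoid pure triples}.
\item For (b), the twisted splitting $\widetilde\phi_\Delta$ is never extended to $R^N$. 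Instead the paper glues $\widetilde\phi_C$ over $\Spec R^N\setminus\Supp(\Delta^N)$ with $\widetilde\phi_\Delta$ over $\Spec R^N\setminus V(\mathfrak c)$. On the latter, $\raftypfd\to R^{N,A_\infty}_\perfd$ is an isomorphism. The gluing uses \autoref{universal divisorial module is local in the divisor}, and \autoref{check purity on codim 1} concludes.
\end{itemize}

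Several of your steps are fine once the purity of $(R^N,C+\lambda\Div_{R^N}(f))$ is actually established: the reductions, the observation that $C$ and $\Delta^N$ share no component, and the deduction of the no-common-component claim in (a) from \autoref{perfectoid pure coefficients at most one}.
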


\begin{proof}

By \autoref{universal divisorial module principal case}, part~\autoref{normalization is pfd pure pair.DivisorCase} for
$\Delta=0$ follows from \autoref{normalization is pfd pure pair.PrincipalCase} with $f=1$; hence assume
$\Delta\neq0$. 

The formation of $R^N$ and $C$ commutes with localization.
If $(R,\fram)$ is local and $\widehat R=R^{\wedge\fram}$, then $R^N\tensor_R\widehat R=(\widehat R)^N$ by \cite[Scholie 7.8.3(vii)]{EGAIV2} and {in \autoref{normalization is pfd pure pair.PrincipalCase}}
principal divisors commute with this base change.
In \autoref{normalization is pfd pure pair.DivisorCase}, the ring $R_{P_i}$ is a DVR for every $i$, and hence $R^N\tensor_RR_{P_i}=R_{P_i}$. Thus $P_i^N$ is well-defined, and the conductor is the unit ideal there. In particular, $\Delta^N$ and $C$ have no common prime component. 
If 
$\widehat\Delta$ is the flat pullback of $\Delta$, the coefficient-wise 
definition gives 
$(\widehat\Delta)^N=\widehat{\Delta^N}$; here a component may split, 
but every prime above it receives the same coefficient. 
Flat base commutes with the construction of the conductor and, by \autoref{flat keeps reflexive}, so does the construction of $I_C$. Hence \autoref{can check purity locally}, \cite[Proposition~3.4]{FayolleCentersPerfectoidPurity}, and \autoref{completion of divisor pairs}, applied also to the local factors of $R^N$, reduce both \autoref{normalization is pfd pure pair.PrincipalCase} and \autoref{normalization is pfd pure pair.DivisorCase} to the case where
$(R,\fram)$ is complete local.

If $(R,\Delta)$ or $(R,f^\lambda)$ is perfectoid pure, $R$ is perfectoid pure so $\mathfrak c$ is radical in both $R$ and $R^N$ by \cite[Corollary~2.33, Proposition~5.2, and Corollary~5.5] {FayolleCentersPerfectoidPurity} and $C$ is reduced hence has integer coefficients. 
Note that the localization of $\mathfrak c$ at every prime component $Q$ of $C$ is the maximal ideal of the DVR $(R^N)_Q$. On the other hand, the localization of $\mathfrak c$ at every $P_i^N$ in \autoref{normalization is pfd pure pair.DivisorCase} is the unit ideal.  In the semilocal ring obtained by localizing $R^N$ at these primes and at the generic points of the remaining irreducible components, $\mathfrak c$ is invertible, hence principal. Clearing a denominator from a generator gives a nonzerodivisor $g\in\mathfrak c\subseteq R$ having order one at every such $Q$ and, in \autoref{normalization is pfd pure pair.DivisorCase}, order zero at every $P_i^N$. Thus $\Div_{R^N}(g)=C+G$ for an effective divisor $G$ having no component in common with $C$. 
For \autoref{normalization is pfd pure pair.DivisorCase}, since $\Delta$ and $C$  have no component in common, we can similarly choose $h\in R$ with $\Div_{R^N}(h)=(\Delta^N+C)_{\red}+G'$ for an effective divisor $G'$ having no component in common with $\Delta+ C$: instead of looking at $\mathfrak{c}$ in the semilocal ring above, look at $\mathfrak{c}\prod_i P_i^N$.

Let $A\to R$ be as in \autoref{notation RAinfty}. By potentially enlarging $A$, we can assume $\raftypfd$ contains compatible systems of $p$-th power roots for $g$, generators of $I_C$, and in the case of \autoref{normalization is pfd pure pair.DivisorCase}, $h$. 
By \autoref{can test divisor pairs on RAinfty} and \autoref{can test principal pairs on RAinfty}, in both \autoref{normalization is pfd pure pair.PrincipalCase} and \autoref{normalization is pfd pure pair.DivisorCase},
we can assume our pure (hence split since $R$ is complete) maps
are with the perfectoid ring $\raftypfd$. By \cite[Corollary 5.5]{FayolleCentersPerfectoidPurity}, any $\phi\in \Hom_R(\raftypfd,R)$ extends to $\phi^N\in \Hom_{R^N}(R^{N,A_\infty}_\perfd, R^N)$. Moreover, \cite[Lemma 5.1]{FayolleCentersPerfectoidPurity} shows that $\mathfrak{c}$ is compatible with such extension.
By \cite[Proposition 2.35]{FayolleCentersPerfectoidPurity} applied on each complete local factor of $R^N$, all the height $1$ associated primes of $\mathfrak{c}$ are compatible with such extensions and therefore so is their intersection, $I_C$ (\cite[Proposition 2.34]{FayolleCentersPerfectoidPurity}).
 
To prove \autoref{normalization is pfd pure pair.PrincipalCase}, fix $e \gg 0$. By \autoref{can test principal pairs on RAinfty}, there is 
$\phi_e\in\Hom_R(\raftypfd,R)$ with
$$
\phi_e\bigl((f^{1/p^e})^{\lfloor\lambda(p^e-1)\rfloor}\bigr)=1.
$$
This map extends to $
\phi_e^N\in
\Hom_{R^N}(R^{N,A_\infty}_{\mathrm{perfd}},R^N)
$
with $
\phi_e^N\bigl((f^{1/p^e})^{\lfloor\lambda(p^e-1)\rfloor}\bigr)=1.
$
Since $I_C$ is $\phi_e^N$-compatible, 
\autoref{mixed characteristic compatible splitting via radical ideal}
gives
$
\widetilde\phi_e\in
\Hom_{R^N}\bigl(
M_{R^{N,A_\infty}_{\mathrm{perfd}}}({<}C),R^N
\bigr)
$
 such that
$$
\widetilde\phi_e\bigl(
(f^{1/p^e})^{\lfloor\lambda(p^e-1)\rfloor}
\mathbf1_{{<}C}
\bigr)=1.
$$
Applying \autoref{perfectoid pure triples} proves (a). If $f$ and
$C$ had a common height-one component $Q$, then localizing
$\phi_e^N$ at $Q$ would give $1\in Q(R^N)_Q$, a contradiction.

For \autoref{normalization is pfd pure pair.DivisorCase}, if $C=0$ then since $R$ is $\mathbf{S}_2$, we see that $R=R^N$ and there is nothing to show.  Thus we may assume that $C\neq0$. 
Since $R$ is complete, perfectoid purity of $(R,\Delta)$ gives a splitting
\[
\widetilde\phi_\Delta\colon M_{\raftypfd}({<}\Delta)\longrightarrow R
\]
of $\rho_{\raftypfd,\Delta}$
and so set 
$
\phi
\coloneqq
\widetilde\phi_\Delta\circ\iota_{\raftypfd,\Delta}\colon
\raftypfd\longrightarrow R.
$
Let $\phi^N$ be its extension to $R^{N,A_\infty}_\perfd$. Since $I_C$ is $\phi^N$-compatible, 
\autoref{mixed characteristic compatible splitting via radical ideal}
gives a splitting
\[
\widetilde\phi_C\colon
M_{R^{N,A_\infty}_{\mathrm{perfd}}}({<}C)\longrightarrow R^N,
\qquad
\widetilde\phi_C\circ
\iota_{R^{N,A_\infty}_{\mathrm{perfd}},C}=\phi^N.
\]

Set
\[
U_\Delta=\Spec R^N\setminus V(\mathfrak{c}),
\qquad
U_C=\Spec R^N\setminus\Supp(\Delta^N),
\qquad
U=U_\Delta\cup U_C.
\]
Over $U_\Delta$, $C$ vanishes and the pullback square in \cite[Lemma~5.1]{FayolleCentersPerfectoidPurity} shows that $\raftypfd|_{U_\Delta}\to R^{N,A_\infty}_{\mathrm{perfd}}|_{U_\Delta}$ is an isomorphism. By \autoref{universal divisorial module is local in the divisor}, 
\[
\reallywidetilde {M_{R^{N,A_\infty}_{\mathrm{perfd}}}({<}\Delta^N+C)}\Big|_{U_{\Delta}} \cong \reallywidetilde {M_{\raftypfd}({<}\Delta)}\Big|_{U_{\Delta}} 
\]
so $\widetilde\phi_\Delta$ gives a splitting over $U_{\Delta}$. Similarly, 
\[
\reallywidetilde {M_{R^{N,A_\infty}_{\mathrm{perfd}}}({<}\Delta^N+C)}\Big|_{U_C} \cong \reallywidetilde {M_{R^{N,A_\infty}_{\mathrm{perfd}}}({<}C)}\Big|_{U_C} 
\]
so $\widetilde\phi_C$ gives a splitting over $U_C$.
Moreover, since both divisors vanish on $U_\Delta\cap U_C$,
\[
\reallywidetilde {M_{R^{N,A_\infty}_{\mathrm{perfd}}}({<}\Delta^N+C)}\Big|_{U_\Delta\cap U_C} \cong \reallywidetilde {(R^{N,A_\infty}_{\mathrm{perfd}})^{\mathrm{c1}}}\Big|_{U_\Delta\cap U_C},
\]
and $\widetilde\phi_C\big|_{U_\Delta\cap U_C}=\widetilde\phi_\Delta \big|_{U_\Delta\cap U_C} = \widetilde{(\phi^N)}^{\cOne}\big|_{U_\Delta\cap U_C}$. In particular, $\widetilde\phi_C$ and $\widetilde\phi_\Delta$ glue to a splitting over the big open $U_\Delta\cup U_C$ so the result follows from \autoref{check purity on codim 1}  and \autoref{universal module detects perfectoid purity}.
{
The compositions of the two splittings with $R^{N,A_\infty}_{\mathrm{perfd}}\to \bigl(R^{N,A_\infty}_{\mathrm{perfd}}\bigr)^{\mathrm{c1}}$ both equal $\phi^N$. Since this map is an isomorphism in codimension at most one and $\cO_{U_\Delta\cap U_C}$ is $\mathbf{S}_1$, the two splittings agree. They therefore glue to a splitting of the distinguished map over $U$.
Since $C$ and $\Delta^N$ have no common prime component, $\Spec R^N\setminus U$ has codimension at least two and thus $\Gamma(U,\cO_U)=R^N$, \autoref{check purity on codim 1} shows that
\[
\rho_{R^{N,A_\infty}_{\mathrm{perfd}},\Delta^N+C}\colon
R^N\longrightarrow
M_{R^{N,A_\infty}_{\mathrm{perfd}}}({<}\Delta^N+C)
\]
is pure. The result follows from
\autoref{universal module detects perfectoid purity}.
}
\end{proof}

\section{Comparison with log canonical singularities}
\label{subsec.PerfdPureVsLC}

{
In this subsection, we show that a perfectoid pure pair $(R,\Delta)$
for which $K_R+\Delta$ is $\Q$-Cartier {and $R$ is deminormal} 
is {semi-}log canonical. We briefly sketch the idea of the overall argument here to make it a bit easier to follow. After reducing to the complete local
mixed characteristic setting, let $0 \neq r \in R$ be such that $(R[r^{-1}],\Delta R[r^{-1}])$ is log canonical. By
\autoref{different characterization log canonical}, it is enough to
show $r\in\sJ(R,\Delta+(1-p^{-e})\Div_R(r))$ for every
$e\gg0$. Furthermore, as the BCM test ideal $\tau_B$ of \cite[Definitions~6.2 and~6.9]{MaSchwedeSingularitiesMixedCharBCM} is contained in the multiplier ideal \cite[Theorem~6.21]{MaSchwedeSingularitiesMixedCharBCM}, it suffices to show $r \in \tau_B(R,\Delta+(1-p^{-e})\Div_R(r))$ for a sufficiently large perfectoid big Cohen--Macaulay $R^+$-algebra $B$.
For carefully chosen $r = fg$ as in \autoref{notation lc comparison}, this is verified in two steps, proving the membership (\autoref{damped canonical module comparison}) and containment (\autoref{damped test ideal membership}) in
\[
r\in
\im\left(
\Hom_R\!\left((C\tensor_RR(-K_R))^{\mathrm{c1}},R\right)
\xrightarrow{\ \operatorname{ev}_{\widetilde q_e}\ }R
\right)\subseteq
\tau_B\left(R,\Delta+(1-p^{-e})\Div_R(r)\right),
\]
where $C$ is an appropriately chosen absolutely integrally closed perfectoid $R$-algebra for which $\rho_{C,\Delta}$ is pure and $\widetilde{q}_e$ is to be understood as some $K_R+\Delta$ twist of $f^{1-p^{-e}}g^{1-p^{-2e}}$.
}

\begin{lemma}\label{different characterization log canonical}
Let $X=\Spec R$ be an excellent normal scheme {with a dualizing complex}, and let $\Delta$ be an effective $\Q$-divisor on $X$ such that $K_X+\Delta$ is $\Q$-Cartier.
Suppose that there is a nonzerodivisor $r\in R$ such that for every proper birational morphism $\pi\colon Y\to X$ with $Y$ normal,
\begin{equation}\label{lc mult ideal}
r\in \pi_*\sO_Y\left(
\left\lceil K_Y-\pi^*(K_X+\Delta)
-(1-\varepsilon)\Div_Y(r)\right\rceil
\right)
\end{equation}
for arbitrarily small $\varepsilon>0$. Then $(R,\Delta)$ is log canonical.
\end{lemma}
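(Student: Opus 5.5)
The plan is to verify the defining discrepancy inequality for log canonicity divisor by divisor, reading it off from the given membership. Since $X$ is normal, local principality issues disappear: $K_X+\Delta$ being $\Q$-Cartier means $\pi^*(K_X+\Delta)$ is a well-defined $\Q$-divisor on any normal $Y$ proper birational over $X$. We then need to show that every prime divisor $E$ over $X$ has coefficient at least $-1$ in $K_Y-\pi^*(K_X+\Delta)$.

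First, fix a prime divisor $E$ over $X$ and realize it on a model. Take the blowup of $X$ along (an ideal defining) the center of $E$, iterating finitely many times if needed, and then normalize. Excellence of $R$ makes the normalization finite, so this gives a proper birational $\pi\colon Y\to X$ with $Y$ normal on which $E$ is a prime divisor. Non-exceptional $E$, namely strict transforms of components of $\Delta$, are handled by the same computation taking $Y=X$.

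Next, compute locally at the generic point of $E$. Write
\[
a=\operatorname{ord}_E\bigl(K_Y-\pi^*(K_X+\Delta)\bigr)\in\Q,
\qquad
v=\operatorname{ord}_E(r)\in\Z_{\geq0}.
\]
The order $v$ is a well-defined nonnegative integer because $r$ is a nonzerodivisor on the normal ring $R$, hence nonzero on each component, so $\Div_Y(r)$ makes sense. A global section $r$ of $\sO_Y(\lceil D\rceil)$ must satisfy $\Div_Y(r)+\lceil D\rceil\geq 0$. At $E$ this reads
\[
v+\lceil a-(1-\varepsilon)v\rceil\;\geq\;0.
\]
Because $v$ is an integer, the left side equals $\lceil a+\varepsilon v\rceil$. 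So the hypothesis \eqref{lc mult ideal} gives $\lceil a+\varepsilon v\rceil\geq0$, i.e.\ $a+\varepsilon v>-1$.

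Finally, this holds for arbitrarily small $\varepsilon>0$, where the allowed $\varepsilon$ may depend on $\pi$, but $\pi$ is now fixed. Letting $\varepsilon\to0$ gives $a\geq -1$. Since $E$ was arbitrary, all discrepancies of $(X,\Delta)$ are at least $-1$. In particular, applied to non-exceptional $E$ this gives that the coefficients of $\Delta$ are at most $1$. Hence $(R,\Delta)$ is log canonical.

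The only mildly delicate point is the first step: producing, in this excellent but non-geometric setting, a normal proper birational model on which a given divisorial valuation appears as a divisor. This is where excellence and the finiteness of normalization are used. The remaining steps are a one-line inequality.
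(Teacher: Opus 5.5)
Your argument is correct and is essentially the paper's: the paper also absorbs the integral divisor $\Div_Y(r)$ into the round-up to get $\lceil K_Y-\pi^*(K_X+\Delta)+\varepsilon\Div_Y(r)\rceil\geq 0$ coefficientwise, deduces $a_E+\varepsilon\operatorname{ord}_E(r)>-1$, and lets $\varepsilon\to 0$. The model-realization step you flag as delicate is not needed, because log canonicity is by definition tested on prime divisors of normal proper birational models $Y\to X$, which are exactly the $\pi$ allowed in the hypothesis (with $\pi=\mathrm{id}$ covering the components of $\Delta$).
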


\begin{proof}
For any such $\pi$, \autoref{lc mult ideal} is equivalent to
\[
\begin{aligned}
1\in{}&\pi_*\sO_Y\left(\Div_Y(r)+\left\lceil K_Y-\pi^*(K_X+\Delta)-(1-\varepsilon)\Div_Y(r)\right\rceil\right)\\
&\qquad=\pi_*\sO_Y\left(\left\lceil K_Y-\pi^*(K_X+\Delta)+\varepsilon\Div_Y(r)\right\rceil\right).
\end{aligned}
\]
This could happen only if, at every prime divisor $E$ on $Y$, the coefficient of $\lceil K_Y-\pi^*(K_X+\Delta)+\varepsilon\Div_Y(r)\rceil$ at $E$ is $\geq 0$. Equivalently, the coefficient $b_{E,\varepsilon}$ of $K_Y-\pi^*(K_X+\Delta)+\varepsilon\Div_Y(r)$ at $E$ is $>-1$. Letting $a_E$ be the coefficient of $K_Y-\pi^*(K_X+\Delta)$ at $E$, we want to show $a_E\geq -1$. If $a_E< -1$ then for any sufficiently small $\varepsilon$, $b_{E,\varepsilon}=a_E+\varepsilon \ord_E(r)<-1$, a contradiction.
\end{proof}

{

\begin{notation}\label{notation lc comparison}
Let $(R,\m)$ be a complete normal local domain of mixed
characteristic $(0,p)$, and let $\Delta\geq0$ be a $\Q$-divisor. We fix an effective canonical divisor
$K_R$ and assume that $K_R+\Delta$ is $\Q$-Cartier. 
\begin{description}
\item[\fbox{$n,h,f$}]
Choose $n>0$ and $h\in R$ such that
$\Div_R(h)=n(K_R+\Delta)$, taking $n=1$ and $h=1$ if
$K_R+\Delta=0$. Choose a nonzero $f\in\m$ such that
$\Div_R(f)=(K_R+\Delta)_{\red}+G$, where $G\geq0$ has no component
in common with $K_R$ or $\Delta$.

\item[\fbox{$A, g$}] Choose a Noether--Cohen normalization $A\subseteq R$ and
$0\neq g\in\m\cap A$ such that
$A[g^{-1}]\to R[g^{-1}]$ is finite \'etale,
and $p,n,f,h$ are units in $R[g^{-1}]$. Form $\raftypfd$ as in \autoref{notation RAinfty}. 
\item[\fbox{$C,y$}]
Choose a $p$-completely faithfully flat map ${\raftypfd}\to C$,
where $C$ is absolutely integrally closed and perfectoid, and choose
$y\in C$ with $y^n=h$. For every $e\geq1$, put
$q_e=yf^{1-p^{-e}}g^{1-p^{-2e}}\in C$. { Since $C$ is absolutely integrally closed, it admits a limiting sequence for $\Delta$ by \autoref{construct.LimitingConstructionWithComparisons} so we can define $M_C({<}\Delta)$ with distinguished map $\rho_{C,\Delta}\colon R\to M_C({<}\Delta)$. }
\end{description}
\end{notation}

\begin{lemma}\label{C is R torsion free}
Let $C$ and $R$ be as in \autoref{notation lc comparison}. Then every nonzero element of $R$ is a nonzerodivisor on $C$. In particular, $(C\tensor_R R(-K_R))^\cOne$ is isomorphic to its image in $C^\cOne$,  $(R(-K_R)C)^\cOne$.
\end{lemma}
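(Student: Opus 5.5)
The plan is to reduce to \autoref{rafty is R torsion free} and then use flatness. By that lemma, every nonzero $r\in R$ is a nonzerodivisor on $\raftypfd$. In \autoref{notation lc comparison} the ring $R$ itself is a complete local domain, and $A\subseteq R$ is a Noether--Cohen normalization, so the lemma applies verbatim. It remains to transport the nonzerodivisor property along the $p$-completely faithfully flat map $\raftypfd\to C$.

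The transport has two steps.

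\begin{enumerate}
\item Fix $0\neq r\in R$. The final statement of \autoref{rafty is R torsion free} gives $N$ with $(p^n\raftypfd:r)\subseteq p^{n-N}\raftypfd$ for all $n>N$. Equivalently, the kernel of multiplication by $r$ on $\raftypfd/p^n$ lies in $p^{n-N}\raftypfd/p^n$.
\item Since $\raftypfd/p^n\to C/p^n$ is faithfully flat, it commutes with forming kernels and these images. Hence the kernel of $r$ on $C/p^n$ lies in $p^{n-N}C/p^n$, which is exactly the base change used at the end of the proof of \autoref{rafty is R torsion free}.
\end{enumerate}

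Now suppose $rz=0$ in $C$. Then $z\in(p^nC:r)\subseteq p^{n-N}C$ for every $n$. Because $C$ is perfectoid, hence $p$-adically separated, we get $z\in\bigcap_n p^{n-N}C=0$. This proves the first statement.

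For the ``in particular'', consider the natural map $(C\tensor_R R(-K_R))^{\cOne}\to C^{\cOne}$. Choose a big open $U\subseteq\Spec R$ on which $R(-K_R)$ is invertible; this exists because $R$ is normal. On a principal affine $\Spec R[1/s]\subseteq U$ where $R(-K_R)=aR$ with $a\neq0$, the map $C[1/s]\tensor aR\to C[1/s]$ is multiplication by $a$. This is injective because $a$ is a nonzerodivisor on $C$ by the first part, so on $U$ the sheaf map $C\tensor R(-K_R)\to C$ is injective with image $R(-K_R)C$.

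Since $\Gamma(U,-)$ is left exact and filtered colimits are exact, the map on codimension-one envelopes is injective. Its image is the envelope of the image, namely $(R(-K_R)C)^{\cOne}$; here one uses \autoref{prop.SomeGeneralFactsAboutCodim1Envelopes}\autoref{prop.SomeGeneralFactsAboutCodim1Envelopes.IsoCheckedCodim1}, since $C\tensor R(-K_R)\to R(-K_R)C$ is an isomorphism at height $\le1$ primes.

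The step I expect to be the main obstacle is the first one, namely confirming that the uniform bound from \autoref{rafty is R torsion free} holds for this particular $A$. A bare nonzerodivisor statement would not descend through $p$-complete flatness, so the transfer genuinely relies on the uniform $p$-bound.
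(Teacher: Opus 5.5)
Your proposal is correct and follows essentially the same route as the paper. For the first part, both transport the uniform bound from \autoref{rafty is R torsion free} along the faithfully flat maps $\raftypfd/p^n\to C/p^n$ and then use $p$-adic separatedness of $C$. For the second part, both check that $C\otimes_R R(-K_R)\to C$ is locally multiplication by a nonzerodivisor.

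The only difference is cosmetic. You verify injectivity on a big open where $R(-K_R)$ is invertible and use left exactness of sections, whereas the paper checks it at height $\le 1$ primes via the argument of \autoref{prop.SomeGeneralFactsAboutCodim1Envelopes}. You also spell out why the image is $(R(-K_R)C)^{\cOne}$, which the paper leaves implicit.
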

\begin{proof}
Let $0\neq r\in R$ be arbitrary.
By \autoref{rafty is R torsion free}, 
there exists an integer $N \geq 0$ such that
\[
(p^n\raftypfd:_{\raftypfd}r)
\subseteq p^{n-N}\raftypfd 
\]
for every $n>N$. Since $\raftypfd \to C$ is faithfully flat
modulo $p^n$, we get 
\[
(p^nC:_C r)\subseteq p^{n-N}C.
\]
In particular, if $rz = 0$ in $C$, then $z \in \bigcap_n p^{n-N}C = 0$ so $C$ is $r$-torsion free.
To show the second part of the lemma, it suffices to show that $(C\tensor_R R(-K_R))^\cOne\to C^\cOne$ is injective. The proof of \autoref{prop.SomeGeneralFactsAboutCodim1Envelopes}\autoref{prop.SomeGeneralFactsAboutCodim1Envelopes.IsoCheckedCodim1} shows that it is enough to check that this map is injective at every height $\leq 1$ prime of $R$. Let $P\in \Spec R$ be any such prime and let $r\in R$ be such that $R(-K_R)_P\cong rR_P$. Then, $(C\tensor_R R(-K_R))_P\cong C\tensor_R r R_P$, which injects into $C_P$ since $C$ is $R$-torsion free.
\end{proof}

\begin{lemma}\label{damped canonical module comparison}
In the setting of \autoref{notation lc comparison}, for every $e\geq1$, there are $C$-linear
maps
\[
M_C({<}\Delta)
\xrightarrow{\ \theta_e\ }
\left( R(-K_R)C\right)^{\mathrm{c1}}
\xrightarrow{\ \beta_e\ }
M_C({<}\Delta)
\]
such that $\theta_e( \mathbf 1_{<\Delta})=yf^{1-1/p^e}g^{1-{1/p^{2e}}}$ and $\beta_e\theta_e=fg\,\mathrm{id}$. In particular, if $(R,\Delta)$ is
perfectoid pure and if we set $\tld{q_e}$ to be the element corresponding to $yf^{1-1/p^e}g^{1-{1/p^{2e}}}$ in $(C \otimes_R R(-K_R))^{\cOne} \cong (R(-K_R)C)^{\cOne}$, then
\[
fg\in
\im\left(
\Hom_R\!\left((C\tensor_RR(-K_R))^{\mathrm{c1}},R\right)
\xrightarrow{\ \operatorname{ev}_{\tld q_e}\ }R
\right).
\]
\end{lemma}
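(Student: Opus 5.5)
The plan is to build $\theta_e$ and $\beta_e$ first on a big open subset $U\subseteq\Spec R$, using the unique-root statements of \autoref{BS - root}, and then extend them by codimension-one envelopes. Both $M_C({<}\Delta)$ and $(R(-K_R)C)^{\mathrm{c1}}$ are envelopes, so they have the codimension-two extension property by \autoref{prop.SomeGeneralFactsAboutCodim1Envelopes}. Hence a $C$-linear map defined on $U$ extends uniquely, and an identity such as $\beta_e\theta_e=fg\,\mathrm{id}$ can be checked on $U$.

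First I fix $U$ as follows. Choose a big open $U$ on which $K_R$ and every component of $\Delta$ are Cartier. Cover $U$ by principal opens $V_\alpha$ on which
\[
R(-K_R)=k_\alpha R,\qquad h=u_\alpha k_\alpha^{n}\pi_\alpha^{n\Delta},\qquad f=v_\alpha\cdot(\text{reduced equation of }(K_R+\Delta)_{\mathrm{red}}+G),
\]
with $u_\alpha,v_\alpha$ units. Here $\pi_\alpha^{n\Delta}$ denotes a local equation of $n\Delta$. Next, compute $M_C({<}\Delta)$ via the limiting sequence of \autoref{construct.LimitingConstructionWithComparisons}. On $V_\alpha$ it is the colimit of the free modules $C\cdot z_{\alpha,e}^{-1}$, where $z_{\alpha,e}\in C$ is a root realizing $\Delta-\frac1{mp^e}E$.

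Now define $\theta_e$ on $V_\alpha$ by $z^{-1}\mapsto c\,k_\alpha$. The element $c\in C$ is the unique element, produced by \autoref{BS - root}(a)/(c), satisfying
\[
c\,k_\alpha z = y f^{1-1/p^e}g^{1-1/p^{2e}}.
\]
Such a $c$ exists because, after raising to a common power $N$, the valuation of the right side along each component $P$ equals
\[
\ord_P(K_R)+\ord_P(\Delta)+(1-p^{-e})\ord_P f,
\]
and this strictly exceeds $\ord_P(K_R)+\ord_P(z)$. The key point is that $\ord_P(z)<\ord_P(\Delta)$. Define $\beta_e$ on $V_\alpha$ by $k_\alpha\mapsto w\,z'^{-1}$ for some $z'$ further along the limiting sequence. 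The element $w$ is the unique root-type element with
\[
w\,y = k_\alpha f^{1/p^e}g^{1/p^{2e}} z'.
\]
This time the exponent inequality holds because $z'$ has order at least $\Delta-\frac1{p^e}\ord f$. Both constructions use \autoref{BS - root}(c) with units, in the spirit of the proof of \autoref{divisorial comparison on big open}. Uniqueness then gives compatibility on the overlaps $V_\alpha\cap V_\beta$ and across the transition maps of the colimit, exactly as in that lemma.

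By construction the composite $\beta_e\theta_e$ is multiplication by
\[
\frac{y f^{1-1/p^e}g^{1-1/p^{2e}}\cdot f^{1/p^e}g^{1/p^{2e}}}{y}=fg.
\]
Divisions by $y$ only make sense through the uniqueness clause, which also identifies this product. Moreover $\theta_e(\mathbf 1_{{<}\Delta})$ is the prescribed element. The role of $g^{1-p^{-2e}}$ is to absorb the failure of $y$ to be compatible with the roots of $f$ and $h$ at primes where $p$, $n$, $f$, $h$ are not units. Away from $V(g)$ everything is étale over $A$ and all these elements are units.

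For the final assertion, suppose $(R,\Delta)$ is perfectoid pure. By \autoref{universal module detects perfectoid purity}, together with \autoref{base change nonpure ideal below delta} applied to $\raftypfd\to C$ and \autoref{can test divisor pairs on RAinfty}, the map $\rho_{C,\Delta}$ is pure. Since $R$ is complete, it therefore admits a splitting $\sigma$. Set $\psi=\sigma\circ\beta_e$, regarded on $(C\tensor_RR(-K_R))^{\mathrm{c1}}$ via \autoref{C is R torsion free}. Then
\[
\psi(\widetilde q_e)=\sigma\bigl(\beta_e\theta_e(\mathbf 1_{{<}\Delta})\bigr)=\sigma\bigl(fg\,\mathbf 1_{{<}\Delta}\bigr)=fg,
\]
so $fg$ lies in the image of $\operatorname{ev}_{\widetilde q_e}$.

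The main obstacle is the construction of $\beta_e$. It requires a quotient by $y$, which is not a root of an element of $R$. One must verify the strict exponent inequalities at every codimension-one point, including the components of $G$ and of $K_R$, and justify the gluing. I would first attempt this componentwise using \autoref{BS - root}(c). If that fails at the primes dividing $g$, I would instead use \autoref{componentwise BCM transfer} to show that the needed elements are killed by $(gA_\infty)_\perfd$.
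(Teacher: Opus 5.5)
Your construction of $\theta_e$ and $\beta_e$ follows the paper's. You use the same limiting data $S_i=R[x_i]/(x_i^{mp^i}-f)$ from \autoref{construct.LimitingConstructionWithComparisons}, the same exponent comparisons fed into \autoref{BS - root}, and $\beta_e\theta_e$ is multiplication by $fg$ for the same reason.

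The genuine gap is in the final step. \autoref{can test divisor pairs on RAinfty} applies only to a map $A\to R$ sending a variable to $f$. The Noether--Cohen normalization $A$ of \autoref{notation lc comparison} has no such variable. Consequently $f$ need not have compatible $p$-power roots in $\raftypfd$, $M_{\raftypfd}({<}\Delta)$ need not be defined, and you cannot invoke \autoref{base change nonpure ideal below delta} along $\raftypfd\to C$. Nor does \autoref{universal module detects perfectoid purity} help: it gives purity of $\rho_{B,\Delta}$ for \emph{some} absolutely integrally closed $B$, not for the given $C$. The missing step is to adjoin a variable. Set $A_1=A\llbracket X\rrbracket\to R$ with $X\mapsto f$, and $C'=(C\otimes_{\raftypfd}R^{(A_1)_\infty}_\perfd)^{\wedge p}$. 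Then \autoref{can test divisor pairs on RAinfty} makes $\rho_{R^{(A_1)_\infty}_\perfd,\Delta}$ pure, and \autoref{properties of M_B} makes $\rho_{C',\Delta}$ pure. Since $C\to C'$ is $p$-completely faithfully flat, \autoref{base change nonpure ideal below delta} gives purity of $\rho_{C,\Delta}$. Only then does your splitting $\sigma$ exist.

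The local construction also needs repair, though the repair is easy. \autoref{BS - root}(c) requires both elements to be unit multiples of powers of a single $\pi$. Your componentwise valuation count provides this only at one height-one prime at a time, not on a $V_\alpha$ meeting several components of $\Div_R(f)$ or $\Div_R(g)$. Moreover, your $U$ does not make the $\Delta_i$ Cartier. Near a point where two components of $\Div_R(f)$ meet, $S_i$ looks like $x^{mp^i}=\pi_1\pi_2$, where $(x,\pi_1)$ is not principal, so $M_C(\mathfrak d_i)$ is not free there (you would have to shrink $U$ further).

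The paper avoids gluing altogether. Since $C$ is $R$-torsion free (\autoref{C is R torsion free}) and $y^n=h\in R\setminus\{0\}$, $y$ is a nonzerodivisor on $C$. So dividing by $y$ needs no uniqueness clause. The maps $\theta_{e,i}$ and $\beta_{e,i}$ are defined globally as multiplication by $yf^{1-1/p^e}g^{1-1/p^{2e}}$ and $y^{-1}f^{1/p^e}g^{1/p^{2e}}$ in $C\otimes_RK(R)$, taking $i\gg e$ for $\beta$. One then only checks that the images land in the envelopes, which is local at height $\leq 1$ primes $P$. There $S_{i,P}$ is a DVR with uniformizer $x_i$, and (c) applies with $\pi=x_i$.

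Finally, $g$ plays no role in this lemma beyond contributing nonnegative order. So \autoref{componentwise BCM transfer} is not needed here; it enters only in \autoref{damped test ideal membership}.
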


{

\begin{proof}
Work with $S_i=R[x_i]/(x_i^{mp^i}-f)$ as in \autoref{construct.LimitingConstructionWithComparisons}, using the $f$ chosen in \autoref{notation lc comparison} and $m$ so that $m\Delta$ has integer coefficients. 

Define $\theta_{e,i}\colon M_C(\mathfrak{d}_i)^\cOne \to (R(-K_R)C)^\cOne$ by $\alpha\tensor t \mapsto \alpha t yf^{1-1/p^e}g^{1-{1/p^{2e}}}$. 
We need to check that the image lands in $(R(-K_R)C)^\cOne$, which can be done locally at any prime of height $\leq 1$ of $R$. Let $P$ be such a prime. If $P$ is not in the support of $K_R+\Delta$ then both $M_C(\mathfrak{d}_i)_P$ and $(R(-K_R)C)_P$ are isomorphic to $C_P$ so there is nothing to check. Suppose now that $P$ is in the support of $K_R+\Delta$ and write $k,\delta,s$  for the respective coefficients of $K_R, \Delta, \Div_R(g)$ at $P$. Note that the coefficient of $f$ at $P$ is $1$.
Then, \autoref{construct.LimitingConstructionWithComparisons} applied with our choice of $f$ gives $(S_i(-K_R)C)_P=C_Px_i^{mp^ik}$ and 
$M_C(\mathfrak{d}_i)_{P}=C\tensor_{S_i}S_{i,P} x_i^{-(\delta mp^i-\epsilon_{P})}$ { where $\epsilon_P = 1$ if $P$ is in the support of $\Delta$ and $\epsilon_P = 0$ if not, in which case $\delta = 0$ so that $M_C(\mathfrak{d}_i)_{P}=C_P$}. 
Abusing notation and denoting by $x_i$ its image in $C$, we claim that there is $c_i\in C_P$ with
\begin{equation}\label{nts multiplication}
yf^{1-1/p^e}g^{1-1/p^{2e}}=c_{{i}}x_i^{\delta mp^i{-\epsilon_P}+mp^ik}.
\end{equation} 
Then, $\theta_{e,i}(\alpha \tensor t)= c_i \alpha t x_i^{\delta mp^i-{\epsilon_P}+mp^ik}$ which must be in $R(-K_R)C_P$ so $\theta_{e,i}$ has the correct target. Using that $\Div h=n(K_R+\Delta)$, we have
\[
(yf^{1-1/p^e}g^{1-1/p^{2e}})^{np^{2e}}=(u_hx_i^{mp^i(k+\delta)}u_f x_i^{mp^i(1-1/p^e)} u_g x_i^{mp^i s(1-1/p^{2e})})^{np^{2e}}=ux_i^b
\]
for some integer $b>(\delta mp^i-{\epsilon_P}+mp^ik)(np^{2e})$ and units $u,\ u_h,\ u_f,\ u_g$. 
Applying \autoref{BS - root}~(c) gives a unique $c_{i}\in \sqrt{x_i^{\delta mp^i-{\epsilon_P}+mp^ik}C_P}$ that satisfies \autoref{nts multiplication}, as desired. Note that $\theta_{e,i}$ behaves well with the transition maps $M_C(\mathfrak{d}_i)\to M_C(\mathfrak{d}_j)$ so we can go to the colimit to define $\theta_e$.

We proceed similarly and construct $\beta_{e,i}\colon (R(-K_R)C)^\cOne\to M_C(\mathfrak d_i)^\cOne$ by $\alpha\mapsto \alpha y^{-1}f^{1/p^e}g^{1/p^{2e}}\tensor 1$ { assuming $i \gg e$}. 
We need to show that the image indeed lands in $M_C(\mathfrak d_i)^\cOne$, which can be done locally at any prime of height $\leq 1$ of $R$. Let $P$ be such a prime. If $P$ is not in the support of $K_R+\Delta$ then there is nothing to check. Else, we claim that there is $d_i\in C_P$ with $x_i^{mp^i(k+\delta)-{\epsilon_P}}f^{1/p^e}g^{1/p^{2e}}=d_iy$. This will allow us to rewrite our morphism as $\alpha \mapsto \alpha  d_i x_i^{-(mp^ik)} \tensor x_i^{-(mp^i\delta-{\epsilon_P})}$ whose image is clearly in $M_C(\mathfrak d_i)_P$.
As before, raising everything to the $np^{2e}$ gives 
\[
(x_i^{mp^i(k+\delta)-{\epsilon_P}}f^{1/p^e}g^{1/p^{2e}})^{np^{2e}}=(x_i^{mp^i(k+\delta)-{\epsilon_P}} u_f x_i^{mp^i/p^e} u_g x_i^{mp^i s/p^{2e}})^{np^{2e}}=ux_i^b
\]
for some $b>mp^i(k+\delta)np^{2e}$ since $i \gg e$. Similarly, $
y^{np^{2e}}=u_hx_i^{mp^i(k+\delta)np^{2e}}$ so the existence of $d_i$ follows from \autoref{BS - root}~(c). Again, this behaves well with the transition maps $M_C(\mathfrak{d}_i)\to M_C(\mathfrak{d}_j)$ so we can pass to the colimit to define $\beta_e$. 

{
For the final statement, suppose that $(R,\Delta)$ is perfectoid  pure. Consider $A_1 = A \llbracket X \rrbracket \xrightarrow{X \mapsto f}R$, and set
$C'=\left(
C\tensor_{\raftypfd }R^{(A_1)_\infty}_\perfd
\right)^{\wedge p}$ which is perfectoid by 
\cite[Proposition 2.1.11(b)]{CesnaviciusScholzePurityFlatCohomology} and $R^{(A_1)_\infty}_\perfd\to C'$ remains $p$-completely faithfully
flat since $\raftypfd \to C$ is. By \autoref{can test divisor pairs on RAinfty},
$\rho_{R^{(A_1)_\infty}_\perfd,\Delta}$ is pure, hence
so is $\rho_{C',\Delta}$ by \autoref{properties of M_B}, and so $\rho_{C,\Delta}$ is pure by
\autoref{base change nonpure ideal below delta}. As $R$ is complete, we have an $R$-linear map $\psi \colon M_C(<\Delta) \to R$ sending ${\bf 1}_{<\Delta} \mapsto 1$.
Hence $fg = \psi \beta_e (\theta_e ({\bf 1}_{<\Delta}))$.  But now we are done thanks to the isomorphism $(C \otimes_R R(-K_R))^{\cOne} \cong (R(-K_R)C)^{\cOne}$.}
\end{proof}
}

\begin{proposition}\label{damped test ideal membership}
In the setting of \autoref{notation lc comparison}, suppose in addition that $(R,\Delta)$
is perfectoid pure. Then, for every $e\geq1$ and every integral
perfectoid balanced big Cohen--Macaulay $R^+$-algebra $B$,
\[
fg\in\tau_B\left(R,\Delta+(1-p^{-e})\Div_R(fg)\right).
\]
\end{proposition}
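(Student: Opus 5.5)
By \autoref{damped canonical module comparison}, perfectoid purity of $(R,\Delta)$ already gives
\[
fg\in \im\left(\Hom_R\!\left((C\tensor_RR(-K_R))^{\mathrm{c1}},R\right)\xrightarrow{\operatorname{ev}_{\widetilde q_e}}R\right).
\]
It therefore suffices to show that this image is contained in $\tau_B(R,\Delta+(1-p^{-e})\Div_R(fg))$. The plan is to rewrite the image via \autoref{canonical module evaluation} as
\[
\Ann_{\omega_R}\ker\bigl(H^d_\m(R)\xrightarrow{\cdot q_e}H^d_\m(C)\bigr),
\]
and then compare it with the defining description of the BCM test ideal. Recall from \cite[Definitions~6.2, 6.9]{MaSchwedeSingularitiesMixedCharBCM} that for $K_R+\Delta'$ $\Q$-Cartier with $\Div(h')=n'(K_R+\Delta')$, one has $\tau_B(R,\Delta')=\Ann_{\omega_R}0^{B,K_R+\Delta'}_{H^d_\m(\omega_R)}$. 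Here $0^{B,K_R+\Delta'}$ is the kernel of $H^d_\m(\omega_R)\to H^d_\m(B)$ given by multiplication by $h'^{1/n'}$, after identifying $\omega_R=R(K_R)$. With the twist conventions of \autoref{notation lc comparison}, which fix the effective $K_R$, $R\subseteq\omega_R$, and $y^n=h$, this is equivalent to the kernel of the map $H^d_\m(R)\to H^d_\m(B)$ given by multiplication by $y\cdot(fg)^{1-p^{-e}}$ (up to the harmless twist by $R(-K_R)$). Since $\Ann_{\omega_R}$ reverses inclusions, the desired containment reduces to
\[
\ker\bigl(H^d_\m(R)\xrightarrow{\cdot y (fg)^{1-p^{-e}}}H^d_\m(B)\bigr)\subseteq\ker\bigl(H^d_\m(R)\xrightarrow{\cdot q_e}H^d_\m(C)\bigr).
\]

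The key steps are as follows.
\begin{enumerate}
\item Note that $q_e=yf^{1-p^{-e}}g^{1-p^{-2e}}$ equals $y(fg)^{1-p^{-e}}\cdot g^{p^{-e}-p^{-2e}}$. So $q_e$ is the test-ideal witness multiplied by an extra small power of $g$, where $g$ is the element with $A[g^{-1}]\to R[g^{-1}]$ finite étale. This is exactly the situation of \autoref{componentwise BCM transfer}.
\item Take $\zeta\in H^d_\m(R)$ with $y(fg)^{1-p^{-e}}\zeta=0$ in $H^d_\m(B)$. Apply \autoref{componentwise BCM transfer} with $N=H^d_\m(R)$ (which is $p^\infty$-torsion) and with the element $\zeta'=y(fg)^{1-p^{-e}}\zeta$, viewed after base change. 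Since $R$ is a domain, $B$ can serve as the $B_i$ for the single minimal prime. This yields $(gA_\infty)_\perfd\cdot \zeta'=0$ in $\raftypfd\tensor_R H^d_\m(R)$.
\item The roots $y,f^{1/p^e},g^{1/p^{2e}}$ are not a priori in $\raftypfd$, so the statement of \autoref{componentwise BCM transfer} must be run on an enlarged $R$-algebra. Either enlarge $A$ by adjoining variables mapping to $f$ and $g$, or carry out the argument after the $p$-completely faithfully flat base change to $C$; the proof of \autoref{componentwise BCM transfer} goes through because the almost-purity input only uses $\raftypfd\to C$ being $p$-completely flat.
\item Since $g^{p^{-e}-p^{-2e}}\in (gA_\infty)_\perfd C$, conclude that $q_e\zeta=0$ in $H^d_\m(C)\cong C\tensor_R H^d_\m(R)$.
\end{enumerate}

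The main obstacle is the compatibility between $B$ and $C$. The vanishing in step 2 happens in $B$, while we need vanishing in $C$, and these are unrelated algebras. In particular, the chosen roots $y$ and $f^{1-p^{-e}}$ in $C$ and in $B$ need not correspond. I would handle this with the seminormality uniqueness of \autoref{BS - root}(c), together with the $g$-damping. Away from $V(g)$, roots differ by units, and everything is finite étale over $A$, so any choice discrepancy is absorbed by $(g)_\perfd$. Concretely, I would apply \autoref{componentwise BCM transfer} to the element $\zeta$ twisted by the images in $B$ of the same elements $y, f^{1/p^e}$ coming from a common perfectoid $R^+$-algebra mapping to both. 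Such a common algebra is obtained by choosing $B$ large enough, which is allowed since $\tau_B$ shrinks as $B$ grows. If that fails, I would prove the containment only for sufficiently large $B$ and then invoke the independence or minimality of $\tau_B$ for large $B$.
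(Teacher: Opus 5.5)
Your reduction matches the paper's. Via \autoref{damped canonical module comparison} and \autoref{canonical module evaluation} you must show
$\ker(H^d_\m(R)\xrightarrow{\cdot q_e^+}H^d_\m(B))\subseteq\ker(H^d_\m(R)\xrightarrow{\cdot q_e}H^d_\m(C))$. Here $q_e^+$ is built from chosen roots of $h,f,g$ in $R^+\subseteq B$, and the factorization $q_e=(\text{same expression with }C\text{-roots})\cdot g^{p^{-e}-p^{-2e}}$ is exactly what is used.

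\textbf{The gap is in steps 2--3.} \autoref{componentwise BCM transfer} applies to an element of a $p^\infty$-torsion module over a ring $T$ that is module-finite over $A$ with $A[g^{-1}]\to T[g^{-1}]$ finite \'etale. Your $\zeta'=y(fg)^{1-p^{-e}}\zeta$ is not an element of $H^d_\m(R)$, and neither proposed remedy fixes this.
\begin{itemize}
\item Adjoining variables $X\mapsto f$, $Z\mapsto g$ to $A$ destroys the finite \'etale hypothesis, which is what drives the almost purity in that proposition. After inverting $g$, the map $A\llbracket X,Z\rrbracket\to R$ has nonzero kernel, so it is not even flat. It also never produces the $n$-th root $y$ of $h$.
\item Running the argument over $C$ is not possible either. $C$ is not module-finite over $R$, so there are no minimal primes or BCM $R_i^+$-algebras to feed into the hypothesis.
\end{itemize}
The vague ``common perfectoid $R^+$-algebra mapping to $B$ and $C$'' would not transfer vanishing in a BCM algebra to $(g)_\perfd$-almost vanishing over $C$; that transfer is the whole content of the statement. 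A smaller point: in step 4, $g^{p^{-e}-p^{-2e}}$ need not lie in $(gA_\infty)_\perfd C$ itself, only in its $p$-adic closure $(gC)_\perfd$. To conclude you need that the element is $p^\infty$-torsion, together with \autoref{root ideal tensor comparison}.

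\textbf{The missing idea is to adjoin the roots formally in a finite free $R$-algebra.} Set
\[
T_e=R[U,V,Y]/(U^{p^e}-f,\;V^{p^{2e}}-g,\;Y^n-h).
\]
This ring is complete local with residue field $R/\m$. Moreover $A[g^{-1}]\to T_e[g^{-1}]$ is finite \'etale. This holds precisely because \autoref{notation lc comparison} chose $g$ so that $p,n,f,h$ are units in $R[g^{-1}]$, and because the fraction field has characteristic $0$.

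Since $T_e$ is not a domain, this is where the componentwise form of \autoref{componentwise BCM transfer} is needed. Apply it to $A\to T_e$, with $N=T_e\otimes_RH^d_\m(R)$ and the element $a_e\otimes\eta$, where $a_e=YU^{p^e-1}V^{p^{2e}-p^e}$.
\begin{itemize}
\item Each $T_e/P_i$ is a finite domain extension of $R$ that embeds in $R^+$, and $B$ is balanced big Cohen--Macaulay over it.
\item The images of $U,V,Y$ there differ from the roots used in $q_e^+$ by roots of unity. So the hypothesis follows from $q_e^+\eta=0$.
\end{itemize}
The universal property of perfectoidization gives $(T_e)^{A_\infty}_\perfd\to C$ sending $U,V,Y$ to $f^{1/p^e},g^{1/p^{2e}},y$. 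Hence $(gA_\infty)_\perfd C$ kills $a_e\eta$ in $H^d_\m(C)$, and by the closure argument so does $(g^{1/p^\infty})C$. Finally $b_e=V^{p^e-1}a_e$ maps to $q_e$, which gives $q_e\eta=0$.

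Both choices of roots are specializations of the same formal roots. So the $B$-versus-$C$ compatibility problem you identify never arises, and the argument works for every $B$ without enlarging it.
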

\begin{proof}
Fix $e$ and. Using \autoref{damped canonical module comparison} and its notation, we want to show that
\[
\im\left(
\Hom_R\!\left((C\tensor_RR(-K_R))^{\mathrm{c1}},R\right)
\xrightarrow{\ \operatorname{ev}_{\widetilde q_e}\ }R
\right)
\subseteq
\tau_B\big(R,\Delta+(1-p^{-e})\Div_R(fg)\big).
\]
By \cite[Definitions 6.2, 6.9]{MaSchwedeSingularitiesMixedCharBCM} and \autoref{canonical module evaluation}, it suffices to show 
\begin{equation}\label{coh kernel containment}
\ker\left(
H^d_\m(R)\xrightarrow{\ \cdot q_e^+\ }H^d_\m(B)
\right)
\subseteq
\ker\left(
H^d_\m(R)\xrightarrow{\ \cdot q_e\ }H^d_\m(C)
\right),
\end{equation}
for some specific choice of $q_e^+\in B$ with
$(q_e^+)^{np^{2e}}=h^{p^{2e}}((fg)^{1-1/p^e})^{np^{2e}}$.
Choose $y_+\in R^+$ with $y_+^n=h$ and $u_e,v_{2e}\in R^+$ with
$u_e^{p^e}=f$ and $v_{2e}^{p^{2e}}=g$, and put
$q_e^+=y_+u_e^{p^e-1}v_{2e}^{p^{2e}-p^e}\in R^+$. Set
\[
T_e=R[U,V,Y]/(U^{p^e}-f,\;V^{p^{2e}}-g,\;Y^n-h)
\]
and put $a_e=YU^{p^e-1}V^{p^{2e}-p^e}$ and
$b_e=YU^{p^e-1}V^{p^{2e}-1}$. Mapping $U,V,Y$ to
$f^{1/p^e},g^{1/p^{2e}},y$ in $C$ sends $b_e$ to $q_e$, while
mapping them to $u_e,v_{2e},y_+ \in R^+ \subseteq B$ sends $a_e$ to $q_e^+$.
By construction, the finite free $R$-algebra $T_e$ is
complete local with residue field\footnote{this follows from \autoref{notation lc comparison} because $g, f \in \fram$ and if $h \notin \fram$ then $h =1$ and $n = 1$} $R/\m$  and
$A [g^{-1}]\to T_e[g^{-1}]$ is finite étale (this is where we use that the fraction field has characteristic $0$).
Thus
\autoref{componentwise BCM transfer} applies to $A \to T_e$.

Let $\eta\in H^d_\m(R)$, and suppose that $q_e^+\eta=0$ in
$H^d_\m(B)$. To show \autoref{coh kernel containment}, we need to show $q_e\eta=0$ in $H^d_\fram(C)$. We first apply \autoref{componentwise BCM transfer} to
$N=T_e\tensor_RH^d_\m(R)$ and $a_e\tensor\eta$ to show that $(gA_\infty)_\perfd (1\tensor (a_e\tensor\eta))=0$ in $(T_e)^{A_\infty }_\perfd\tensor_{T_e}N$.
For every minimal
prime $P_i$ of $T_e$, finite freeness gives $P_i\cap R=(0)$, so
$R_i=T_e/P_i$ is a complete local domain and a finite extension of $R$. Choose an
$R$-embedding $R_i\to R^+$. Then $R_i^+=R^+$, and $B$ is balanced
big Cohen--Macaulay over $R_i$ by
\cite[Corollary~4.6]{DietzBigCohenMacaulayAlgebrasSeeds}. 
The images of $U,V,Y$ differ from
$u_e,v_{2e},y_+$ by roots of unity, so under the identification $B\tensor_{T_e}N\cong H^d_\m(B)$, the image of
$a_e\tensor\eta$ is a unit multiple of $q_e^+\eta$, and is therefore
zero. This verifies the componentwise hypothesis of
\autoref{componentwise BCM transfer} so $(gA_\infty)_\perfd (1\tensor (a_e\tensor\eta))=0$ in $(T_e)^{A_\infty }_\perfd\tensor_{T_e}N$.

The maps $A_\infty\to C$ and $T_e\to C$ together with the universal property of perfectoidization induce
$(T_e)^{A_\infty}_{\perfd}\to C$ so $(gA_\infty)_{\perfd}C$ annihilates $a_e\eta$ in
$H^d_\m(C)$. By
\cite[Lemma~2.19]{CaiLeeMaSchwedeTuckerPerfectoidSignature}, the
$p$-adic closure of this ideal is $(gC)_{\perfd}$. Since $a_e\eta$
is killed by a power of $p$, this closure also annihilates it, and
\autoref{root ideal tensor comparison}, applied to the cyclic
$R$-module generated by $a_e\eta$, shows that
$(g^{1/p^\infty})C$ annihilates $a_e\eta$. In particular,
$b_e\eta=V^{p^e-1}a_e\eta=0$, and hence
$q_e\eta=0$ in $H^d_\m(C)$, as desired.
\end{proof}

}

The next theorem generalizes \cite[Corollary 5.11]{BMPSTWW3} to the pair setting and removes the index not divisible by $p$ assumption in op.cit.

\begin{theorem}
\label{pfd pure implies lc}
Let $R$ be an excellent normal ring {admitting a dualizing complex}, with $p$ in its Jacobson radical. Let $\Delta$ be an effective $\Q$-divisor on $R$ such that $K_R+\Delta$ is $\Q$-Cartier.
If $(R,\Delta)$ is perfectoid pure, then it is log canonical.
\end{theorem}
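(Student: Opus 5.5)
The plan is to reduce to the complete local mixed characteristic setting and then combine \autoref{different characterization log canonical} with \autoref{damped test ideal membership}.

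\emph{Reductions.} Being log canonical is local on $\Spec R$, and its failure is witnessed by a divisor over some point. Points of $\Spec R$ not containing $p$ are handled by the characteristic $0$ argument, so we first localize at primes containing $p$, using \autoref{localization of purity}. We then pass to the completion: $(\widehat R,\widehat\Delta)$ is perfectoid pure by \autoref{completion of divisor pairs}. Since $R$ is excellent, $\widehat R$ is normal, $K_{\widehat R}+\widehat\Delta$ is the flat pullback of $K_R+\Delta$ and hence $\Q$-Cartier, and log canonicity descends from $\widehat R$ to $R$ because discrepancies are compatible with the regular base change $R\to\widehat R$ (a standard fact for excellent rings with dualizing complexes). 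If $R$ has characteristic $p$, perfectoid purity is $F$-purity of the pair by \autoref{perfectoid purity in characteristic p divisor}, and log canonicity is the classical result of Hara--Watanabe. It therefore suffices to treat the case where $(R,\fram)$ is a complete normal local domain of mixed characteristic $(0,p)$, so that we are in \autoref{notation lc comparison}.

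\emph{Main step.} Choose $f,g$ as in \autoref{notation lc comparison} and set $r=fg$. By \autoref{damped test ideal membership}, for every $e\ge 1$ and every perfectoid balanced big Cohen--Macaulay $R^+$-algebra $B$,
\[
r\in\tau_B\bigl(R,\Delta+(1-p^{-e})\Div_R(r)\bigr).
\]
By \cite[Theorem~6.21]{MaSchwedeSingularitiesMixedCharBCM}, for $B$ sufficiently large this BCM test ideal is contained in the multiplier ideal $\sJ\bigl(R,\Delta+(1-p^{-e})\Div_R(r)\bigr)$. That multiplier ideal is contained in
\[
\pi_*\sO_Y\bigl(\lceil K_Y-\pi^*(K_X+\Delta)-(1-p^{-e})\Div_Y(r)\rceil\bigr)
\]
for every proper birational $\pi\colon Y\to X$ with $Y$ normal. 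Indeed, the multiplier ideal is an intersection of such sheaves over all such models, or one can simply use valuations. Hence \autoref{lc mult ideal} holds with $\varepsilon=p^{-e}$, and these values are arbitrarily small. \autoref{different characterization log canonical} then gives that $(R,\Delta)$ is log canonical.

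\emph{Main obstacle.} The genuine content sits in \autoref{damped canonical module comparison} and \autoref{damped test ideal membership}, which are already available. What remains delicate is the bookkeeping in the reductions: checking that log canonicity, which is defined via all normal proper birational models, genuinely descends along completion in the mixed characteristic excellent setting, and that the characteristic-$p$ and non-$p$ loci are covered. I would handle this by testing log canonicity with divisorial valuations centered at each point and using that every such valuation extends to $\widehat R$ with the same discrepancy.
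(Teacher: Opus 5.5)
\paragraph{Mixed characteristic case.} This part of your argument is the paper's. After reducing to \autoref{notation lc comparison}, \autoref{damped test ideal membership} gives $fg\in\tau_B\bigl(R,\Delta+(1-p^{-e})\Div_R(fg)\bigr)$ for \emph{every} perfectoid BCM $B$. Then \cite[Theorem~6.21]{MaSchwedeSingularitiesMixedCharBCM} supplies, for each fixed $e$ and $\pi$, a $B$ whose test ideal lies in the corresponding $\pi_*\sO_Y(\lceil\cdots\rceil)$, and \autoref{different characterization log canonical} finishes. Phrase the middle step this way rather than via one ``sufficiently large'' $B$ with $\tau_B$ inside a multiplier ideal defined as an intersection over all models; you never need a single $B$ that works for all $\pi$ at once.

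\paragraph{Reductions.} The localization and completion reductions also match the paper. However, there is no ``characteristic $0$ argument'', and none is needed. Since $p$ lies in the Jacobson radical, every maximal ideal contains $p$, and log canonicity can be checked after localizing at maximal ideals.

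\paragraph{The gap: characteristic $p$.} \autoref{perfectoid purity in characteristic p divisor} only gives \emph{purity} of $R\to R(\lfloor(p^e-1)\Delta\rfloor)^{1/p^e}$. The theorem of \cite{HaraWatanabe} that $F$-pure pairs are log canonical is proved for $F$-finite rings: it uses splittings and duality for the finite Frobenius. An excellent ring of characteristic $p$ need not be $F$-finite, and neither need its completion at a maximal ideal whose residue field has infinite $p$-degree. So you cannot cite Hara--Watanabe directly.

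The paper fills this with Hochster's $\Gamma$-construction \cite{MurayamaGammaConstruction}:
\begin{enumerate}
\item Choose $\Gamma$ so that $R^\Gamma$ is $F$-finite and normal and each $P_iR^\Gamma$ remains prime, so that $\Delta^\Gamma$ is the flat pullback of $\Delta$.
\item Transfer $F$-purity to $(R^\Gamma,\Delta^\Gamma)$ using \cite[Lemma~3.5]{CarvajalRojasSchwedeTuckerBertini}, and apply \cite{HaraWatanabe} there.
\item If some prime divisor $E$ on a normal model $Y$ had discrepancy $<-1$, shrink $\Gamma$ further so that $Y^\Gamma$ is normal and $E^\Gamma$ is a reduced prime divisor.
\item Check that normalized dualizing complexes, and hence the coefficient of $E$ in the discrepancy divisor, are preserved by this flat base change. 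This contradicts log canonicity of $(R^\Gamma,\Delta^\Gamma)$.
\end{enumerate}
Unlike descent along completion, this step requires choosing $\Gamma$ depending on $(Y,E)$, so it is a genuinely additional argument that your proposal is missing.
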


\begin{proof}
{
By \autoref{can check purity locally} and
\autoref{completion of divisor pairs}, we may pass to the completion
at each maximal ideal. Log canonicity is local and descends from completion: base changing any proper birational modification along the
faithfully flat regular map $R\to\widehat R$ preserves its discrepancy divisor. We may therefore assume that $R$ is complete local.

{If $R$ has characteristic $p$, we use Hochster's
$\Gamma$-construction as developed in
\cite[Construction~3.1, Lemma~3.2, and Theorem~3.4]{MurayamaGammaConstruction}.
Write $\Delta=\sum_i a_iP_i$ and choose $\Gamma$ sufficiently small
simultaneously for $R$ and the finitely many $R/P_i$. Then $R^\Gamma$
is $F$-finite and normal and each $P_iR^\Gamma$ is prime.
Thus $\Delta^\Gamma=\sum_i a_iP_iR^\Gamma$ is the flat pullback of
$\Delta$. By \autoref{perfectoid purity in characteristic p divisor} and
\cite[Lemma~3.5]{CarvajalRojasSchwedeTuckerBertini},
$(R^\Gamma,\Delta^\Gamma)$ is $F$-pure, and hence log canonical by
\cite[Proposition~2.2(4) and Theorem~3.3]{HaraWatanabe}. If $(R,\Delta)$ were not log
canonical, choose a normal proper birational map
$\mu\colon Y\to\Spec R$ and a prime divisor $E$ on $Y$ with
discrepancy less than $-1$, and shrink $\Gamma$ further so that the
construction also applies to $Y$ and $E$. Then $Y^\Gamma$ is normal and
$E^\Gamma$ is a reduced prime divisor by the simultaneous
construction used in
\cite[Corollary~3.7(ii) and the proof of Theorem~5.6]{MurayamaGammaConstruction}.
Normalized dualizing complexes commute with $R\to R^\Gamma$ and this
flat proper base change
\cite[Tags~0AU3, 0AWD, and 0AA8]{stacks-project}, while reducedness of
$E^\Gamma$ shows that the induced local map of DVRs at the generic
points sends a uniformizer to a uniformizer. Consequently, writing
$\mu^\Gamma\colon Y^\Gamma\to\Spec R^\Gamma$ for the base change, one
has
\[
\coeff_{E^\Gamma}\!\left(
K_{Y^\Gamma}-(\mu^\Gamma)^*(K_{R^\Gamma}+\Delta^\Gamma)
\right)
=
\coeff_E\!\left(K_Y-\mu^*(K_R+\Delta)\right)<-1,
\]
contradicting log canonicity of $(R^\Gamma,\Delta^\Gamma)$.}

If $R$ has mixed characteristic $(0,p)$, we work in \autoref{notation lc comparison} and put $X=\Spec R$ and
$r=fg$. 
Fix $e\geq1$ and a proper birational map $\pi\colon Y\to X$ with
$Y$ normal. For a sufficiently large integral perfectoid balanced
big Cohen--Macaulay $R^+$-algebra $B$,
\autoref{damped test ideal membership} and
\cite[Theorem~6.21]{MaSchwedeSingularitiesMixedCharBCM} give
\[
r\in\tau_B\bigl(R,\Delta+(1-p^{-e})\Div_R(r)\bigr)
\subseteq
\pi_*\sO_Y\left(
\left\lceil K_Y-
\pi^*\bigl(K_X+\Delta+(1-p^{-e})\Div_R(r)\bigr)
\right\rceil\right).
\]
Letting $e\to\infty$, the result follows from
\autoref{different characterization log canonical}.
}
\end{proof}

\begin{remark}
Note that one can run the proof \cite[Theorem 4.12]{FayolleCentersPerfectoidPurity} using \autoref{pfd pure implies lc}. This shows that any log canonical center of a $\Q$-Gorenstein normal complete local perfectoid pure ring of residue characteristic $p>0$ is a center of perfectoid purity in the sense of \cite{FayolleCentersPerfectoidPurity}. 
\end{remark}

{

\begin{corollary}\label{pfd pure implies slc}
Let $R$ be an excellent demi-normal ring with $p$ in its Jacobson
radical and admitting a dualizing complex, and let $\Delta$ be an effective Mumford
$\Q$-divisor on $R$ such that $K_R+\Delta$ is $\Q$-Cartier. If
$(R,\Delta)$ is perfectoid pure, then it is semi-log canonical.
\end{corollary}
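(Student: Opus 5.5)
The plan is to reduce to the normal case via normalization and apply \autoref{pfd pure implies lc}, using Koll\'ar's characterization of semi-log canonical pairs. Recall that for a demi-normal $R$ with normalization $R^N$ and conductor divisor $C$ on $R^N$ (the divisorial part of the conductor, as in \autoref{normalization is pfd pure pair}), the pair $(R,\Delta)$ is semi-log canonical exactly when $K_R+\Delta$ is $\Q$-Cartier, no component of $\Delta$ lies in the conductor, and $(R^N,\Delta^N+C)$ is log canonical. Here $K_{R^N}+\Delta^N+C$ is the pullback of $K_R+\Delta$ (\cite{KollarSingularitiesMMP}). Since $\Delta$ is Mumford, its components avoid the codimension-one singular locus, and the second condition is automatic.

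The steps run as follows. First, reduce to the case where $R$ is local. Both perfectoid purity (\autoref{can check purity locally}) and semi-log canonicity are local, and normalization commutes with localization. Second, apply \autoref{normalization is pfd pure pair}\autoref{normalization is pfd pure pair.DivisorCase} to conclude that $(R^N,\Delta^N+C)$ is perfectoid pure. This needs $R$ excellent and $\mathbf{S}_2$, which holds because demi-normal means $\mathbf{S}_2$ and nodal in codimension one. Third, check the hypotheses of \autoref{pfd pure implies lc} for $R^N$.
\begin{itemize}
\item $R^N$ is excellent and normal with $p$ in its Jacobson radical, since $R\to R^N$ is finite.
\item $R^N$ admits a dualizing complex, namely the upper-shriek of the one on $R$ along the finite map.
\item $K_{R^N}+\Delta^N+C$ is $\Q$-Cartier, since it is the pullback of the $\Q$-Cartier divisor $K_R+\Delta$.
\end{itemize}
We may work on each connected (normal, hence domain) component of $R^N$ separately. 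Then \autoref{pfd pure implies lc} gives that $(R^N,\Delta^N+C)$ is log canonical, and hence $(R,\Delta)$ is semi-log canonical.

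The main obstacle is the third step, specifically the identification $\mu^*(K_R+\Delta)=K_{R^N}+C+\Delta^N$ for $\mu\colon \Spec R^N\to\Spec R$ in our generality. This means excellent rings with dualizing complexes rather than varieties, and it must be compatible with the conventions for generalized divisors used here. My approach is to check it in codimension one using the nodal structure. At a node, $\omega_R$ pulled back and reflexified is $\omega_{R^N}(C)$ via the residue/trace description of the dualizing module along the finite map $R\to R^N$, namely $\mu^!\omega_R=\Hom_R(R^N,\omega_R)\cong\omega_{R^N}$, together with the conductor computation $\Hom_R(R^N,R)=\mathfrak c$. Away from the conductor, $\mu$ is an isomorphism in codimension one. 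Both sides are reflexive, so agreement at height-one primes suffices, and then the $\Q$-Cartier property transfers after taking multiples. The one remaining subtlety is that $C$ is reduced; I would cite its radicality from the proof of \autoref{normalization is pfd pure pair}, which it obtains via \cite{FayolleCentersPerfectoidPurity}.
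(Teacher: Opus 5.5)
Your proposal is correct and follows the paper's own argument: \autoref{normalization is pfd pure pair}\autoref{normalization is pfd pure pair.DivisorCase} gives perfectoid purity of $(R^N,\Delta^N+C)$, and the identity $\mu^*(K_R+\Delta)=K_{R^N}+\Delta^N+C$ makes $K_{R^N}+\Delta^N+C$ $\Q$-Cartier; then \autoref{pfd pure implies lc} gives log canonicity, and Koll\'ar's normalization criterion finishes. Your codimension-one check of that pullback identity (via $\Hom_R(R^N,\omega_R)\cong\omega_{R^N}$ and $\Hom_R(R^N,R)=\mathfrak c$ at nodes) fills in a step the paper only asserts, and the reducedness of $C$ that you flag is already automatic from $R$ being nodal in codimension one.
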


\begin{proof}
Let $R^N$, $C$, and $\Delta^N$ be as in
\autoref{normalization is pfd pure pair}~(b). By that proposition,
$(R^N,\Delta^N+C)$ is perfectoid pure. For compatible choices of
canonical divisors, $K_{R^N}+\Delta^N+C$ is the pullback of
$K_R+\Delta$ under the normalization, and hence is $\Q$-Cartier. Thus
\autoref{pfd pure implies lc} shows that $(R^N,\Delta^N+C)$ is log
canonical. By the normalization characterization of semi-log
canonicity \cite[Definition--Lemma~5.10; see also the assumptions on page~188]{KollarSingularitiesMMP},
$(R,\Delta)$ is semi-log canonical.
\end{proof}
}

\section{Adjunction for divisor pairs}
\label{subsec.PerfdPureAdjunction}

In this section, we prove a version of adjunction for divisor pairs.  We begin by noting that a weak version of adjunction already follows from  \autoref{mixed characteristic compatible splitting via radical ideal}.

\begin{proposition}\label{prop:baby_baby_case}
Let $R$ be an excellent $\mathbf{S}_2$ ring of residue characteristic $p>0$, and let $D$ be a reduced effective Mumford divisor on $\Spec R$.  If $(R,D)$ is perfectoid pure, then $R/I_D$ is perfectoid pure.
\end{proposition}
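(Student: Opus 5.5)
The plan is to reduce to the complete local case, extract from the perfectoid purity of $(R,D)$ a surjective map $\phi\colon B\to R$ for which $I_D$ is compatible, and then descend $\phi$ to a surjection $B/(I_DB)_\perfd\to R/I_D$, which shows that $R/I_D$ is perfectoid pure. Essentially this runs \autoref{mixed characteristic compatible splitting via radical ideal} in reverse: (c)$\Rightarrow$(b), upgraded to (a).

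\textbf{Reduction.} Perfectoid purity of $R/I_D$ can be checked locally and after completion \cite[Lemmas~4.8, 4.10]{BMPSTWW3}. Perfectoid purity of $(R,D)$ passes to localizations and completions by \autoref{localization of purity} and \autoref{completion of divisor pairs}. The formation of $I_D$ commutes with these flat base changes by \autoref{flat keeps reflexive}. So I will assume $(R,\fram)$ is complete local.

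\textbf{Constructing a compatible splitting.} Choose $f$ as in \autoref{construction SnD} for $D$, and choose $A\to R$ as in \autoref{notation RAinfty}, enlarged so that $\raftypfd$ contains compatible $p$-power roots of $f$ and of a finite generating set of $I_D$. By \autoref{can test divisor pairs on RAinfty}, the map $\rho_{B,D}\colon R\to M_B({<}D)$ is pure with $B=\raftypfd$. Since $R$ is complete, it therefore splits; call the splitting $\widetilde\phi$. Set $\phi=\widetilde\phi\circ\iota_{B,D}\colon B\to R$. Here $\iota_{B,D}$ is the pointed map $B\to M_B({<}D)$, which sends $1\mapsto\mathbf 1_{{<}D}$, so $\phi(1)=1$ and $\phi$ is surjective.

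Now apply \autoref{mixed characteristic compatible splitting via radical ideal}. Condition (c) holds by construction, hence (b) holds: $\phi(\sqrt{I_DB})\subseteq I_D$. Because the generators of $I_D$ have compatible $p$-power roots in $B$, \autoref{pfd of ideal only depends on radical} gives $(I_DB)_\perfd=(\sqrt{I_DB})^-$. The corollary then upgrades this to (a): $I_D$ is $\phi$-compatible.

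\textbf{Descending to the quotient.} By \autoref{basic properties comp ideals}(a), $\phi$ descends to a surjective $R/I_D$-linear map $\overline\phi\colon B/(I_DB)_\perfd\to R/I_D$. The target of this map is $(R/I_D\otimes_R B)_\perfd$, which is perfectoid because it is the perfectoidization of a finite algebra over a perfectoid ring, and it is an $R/I_D$-algebra. The map $\overline\phi$ sends $1$ to $1$, so it splits the structure map $R/I_D\to B/(I_DB)_\perfd$. Hence $R/I_D$ is perfectoid pure.

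The main obstacle is the passage from (b) to (a), that is, from $\sqrt{I_DB}$ to its $p$-adic closure. The corollary handles this only under the hypothesis on roots of the generators, so the key point is arranging that $\raftypfd$ has compatible roots of generators of $I_D$ while the $\raftypfd$ test of \autoref{can test divisor pairs on RAinfty} still applies. Adjoining variables to $A$ that map to these generators should give both. Alternatively, one can pass to an absolutely integrally closed $p$-completely faithfully flat extension via \autoref{base change nonpure ideal below delta} and use completeness of $R$ to obtain a splitting there.
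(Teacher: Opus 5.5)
Your proposal is correct and is essentially the paper's proof. The paper also reduces to the complete local case, splits $R\to M_B({<}D)$, uses \autoref{mixed characteristic compatible splitting via radical ideal} to get $\phi$-compatibility of $I_D$, and descends via \autoref{basic properties comp ideals}.

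The one difference is the choice of algebra. The paper takes an absolutely integrally closed $B$ from \autoref{universal module detects perfectoid purity}, where compatible roots of the generators of $I_D$ come for free, whereas you arrange them by enlarging $A$ in $\raftypfd$. Also, a small slip: $B/(I_DB)_\perfd$ is the source of $\overline\phi$, not its target.
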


\begin{proof}
Without loss of generality, we may assume that $R$ is complete and local by \autoref{can check purity locally} and \autoref{completion of divisor pairs}. By \autoref{universal module detects perfectoid purity}, there is an absolutely integrally closed $B$ with $R \to B \to M_B({<}D)$ split, say by $\widetilde\phi$.
By \autoref{mixed characteristic compatible splitting via radical ideal} $I_D$ is $\phi$-compatible so the result follows by \autoref{basic properties comp ideals}.
\end{proof}

For the rest of the section, we aim to incorporate a boundary and the corresponding different. We work in the following setup.

\begin{setup}
\label{set:mumford-adjunction}
Let $R$ be an excellent equidimensional demi-normal ring with $p$ in its Jacobson radical. In particular, $R$ is reduced, $\mathbf{S}_2$, and $\mathbf{G}_1$.
Suppose that $R$ admits a dualizing complex $\omega^\bullet_R$ and let $\omega_R$ be the canonical module associated to it. 
Let $D$ be a reduced effective Mumford divisor on $R$ and let $i\colon D=V(I_D)\hookrightarrow \Spec R$. Assume that $S\coloneqq R/I_D$ is deminormal. 
Write $\omega_S\coloneqq \Ext^{1}_R(S,\omega_R^\bullet{[-d]})$, which is a canonical sheaf for $S$ by \cite[Tag~0AX1]{stacks-project}.
Let $\Delta$ be an effective Mumford $\Q$-divisor on $X$ having no prime component in common with $D$.
Since $R$ is excellent, it has at most finitely many singular primes of height $1$ so we can fix a canonical divisor $K_R=-D+G$ for some effective integral Mumford divisor $G$ having no common component with $D$. Use this choice of $K_R$ to fix an embedding $\omega_R\subseteq K(R)$. Assume that $K_R+D+\Delta$ is $\Q$-Cartier { at every height $1$-prime of $S$} and that $(R,D+\Delta)$ is perfectoid pure.
\end{setup}
\begin{lemma}[{\cf \cite[Definition 4.1]{KollarSingularitiesMMP}}]
\label{lem:codim_two_adjunction}
Work in \autoref{set:mumford-adjunction} and let $\omega_R(D)\coloneqq \Hom_R(I_D,\omega_R) = R(K_R+D)$. There is an exact sequence of $R$-modules
\[
0\longrightarrow \omega_R \longrightarrow \omega_R(D) \longrightarrow \omega_S.
\]
Furthermore, the map $\omega_R(D)\to \omega_S$ is surjective at any prime of height ${\leq} 2$ in the support of $D$. In other words, viewing $\omega_R(D)/\omega_R$ as an $S$-module, we have an isomorphism  \[(\omega_R(D)/\omega_R)^{\cOne} \xrightarrow{\sim} \omega_S.\]
\end{lemma}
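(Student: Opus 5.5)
Apply $\Hom_R(-,\omega_R^\bullet)$ to the short exact sequence $0\to I_D\to R\to S\to 0$ and shift by $[-d]$; this produces the exact sequence in the statement. Concretely, take cohomology in degree $-d$ and $-d+1$. Since $S$ has dimension $d-1$ (by equidimensionality, $D$ having pure codimension one), $\Ext^0_R(S,\omega_R^\bullet[-d])=0$, because $\Hom_R(S,\omega_R)=0$ as $\omega_R$ is $\mathbf{S}_1$ with full support. We obtain
\[
0\to \Hom_R(R,\omega_R)\to \Hom_R(I_D,\omega_R)\to \Ext^1_R(S,\omega_R^\bullet[-d]) = \omega_S \to \Ext^1_R(R,\omega^\bullet_R[-d]).
\]
Here $\Hom_R(I_D,\omega_R)=H^{-d}\RHom(I_D,\omega_R^\bullet)$ because $\omega_R=H^{-d}(\omega_R^\bullet)$ is the lowest cohomology. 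We identify $\Hom_R(I_D,\omega_R)$ with $R(K_R+D)$ inside $K(R)$ using the fixed embedding $\omega_R=R(K_R)$ and reflexivity (the $\mathbf{S}_2$ fractional ideal $R(K_R)$ and the codimension-two extension property). The first map is then the inclusion $\omega_R\subseteq\omega_R(D)$ induced by $I_D\subseteq R$.

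For surjectivity at a prime $P\supseteq I_D$ of height $\le 2$, the obstruction lies in $\Ext^1_R(R,\omega_R^\bullet[-d])_P=H^{-d+1}(\omega_R^\bullet)_P$. By local duality, this vanishes exactly when $\depth R_P\geq \dim R_P$ in the relevant degree, i.e.\ when $H^{\dim R_P-1}_{P}(R_P)=0$. Since $R$ is $\mathbf{S}_2$, every localization of dimension $\le 2$ is Cohen--Macaulay, so $\omega^\bullet_{R_P}$ is concentrated in one degree. Hence $H^{-d+1}(\omega_R^\bullet)_P=0$, after accounting for the shift by $\dim R/P$ in normalized dualizing complexes; equidimensionality and catenarity make the shifts consistent. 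This shift bookkeeping is the step I expect to require the most care: one should check that $H^{-d+1}(\omega^\bullet_R)_P$ corresponds to $H^{-\dim R_P+1}(\omega^\bullet_{R_P})$, using $\dim R_P+\dim R/P=d$ locally.

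Finally, the envelope statement follows from \autoref{prop.SomeGeneralFactsAboutCodim1Envelopes}. The map $\omega_R(D)/\omega_R\to\omega_S$ is injective, and it is an isomorphism at all primes of $S$ of height $\le1$, since these are primes of $R$ of height $\le2$ containing $I_D$. Taking $(-)^{\cOne}$ over $S$ therefore gives an isomorphism $(\omega_R(D)/\omega_R)^{\cOne}\cong\omega_S^{\cOne}$ by \autoref{prop.SomeGeneralFactsAboutCodim1Envelopes}\autoref{prop.SomeGeneralFactsAboutCodim1Envelopes.IsoCheckedCodim1}. Because $S$ is demi-normal, hence $\mathbf{S}_2$, its canonical module $\omega_S$ is $\mathbf{S}_2$ and has the codimension-two extension property, so $\omega_S^{\cOne}=\omega_S$. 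This gives the stated isomorphism.
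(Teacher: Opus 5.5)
Your proof is correct. It differs from the paper's in where the comparison with $\omega_S$ is made.

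\textbf{The paper's route.} The paper applies $\Hom_R(-,\omega_R)$ to $0\to I_D\to R\to S\to 0$ and gets a short exact sequence $0\to\omega_R\to\omega_R(D)\to\Ext^1_R(S,\omega_R)\to 0$. It then identifies $\Ext^1_R(S,\omega_R)$ with $\omega_S$ on the big open $\operatorname{CM}(S)\cap i^{-1}\operatorname{CM}(R)$ by citing Koll\'ar, and concludes with the $\mathbf{S}_2$ property of $\omega_S$.

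\textbf{Your route.} You apply $\mathbf{R}\Hom_R(-,\omega_R^\bullet[-d])$ instead, so $\omega_S$ appears directly as the third term, by the very definition in the setup. The map $\omega_R(D)\to\omega_S$ is then the connecting homomorphism, and left exactness of $0\to\omega_R\to\omega_R(D)\to\omega_S$ is immediate. In the paper's version this step tacitly requires that $\Ext^1_R(S,\omega_R)$ inject into its codimension-one envelope, and the paper leaves this unstated.

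\textbf{What each buys.} Your price is the extra term $H^{-d+1}(\omega_R^\bullet)$. You kill it at height $\leq 2$ primes using only that $R$ is $\mathbf{S}_2$, hence Cohen--Macaulay there. So you never need $S$ to be Cohen--Macaulay in codimension one, and you avoid the external citation. The two arguments are two faces of the triangle $\omega_R[d]\to\omega_R^\bullet\to\tau_{\geq -d+1}\omega_R^\bullet$. It shows that $\Ext^1_R(S,\omega_R)\hookrightarrow\omega_S$, with cokernel contained in $\Hom_R(S,H^{-d+1}(\omega_R^\bullet))$; this is the content behind the comparison the paper cites.

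\textbf{Two small remarks.}
\begin{enumerate}
\item The shift bookkeeping you flag is routine. All assertions are local, so after localizing, catenarity and local equidimensionality give $\dim R_P+\dim R/P=d$. Hence $(\omega_R^\bullet)_P$ is the normalized dualizing complex of $R_P$ shifted by $\dim R/P$.
\item Your phrase that the obstruction vanishes ``exactly when $\depth R_P\geq\dim R_P$'' overstates the criterion. Only $H^{\dim R_P-1}_{PR_P}(R_P)=0$ is needed. Cohen--Macaulayness of $R_P$ certainly implies this, so nothing in the argument breaks.
\end{enumerate}
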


\begin{proof}
Since $\omega_R$ is $\mathbf{S}_2$ and $I_D$ has height $1$, we have $\depth_{I_D}\omega_{R}=1$ and thus $\Hom_R(S,\omega_R)=0$. 
Applying $\Hom_R(-,\omega_R)$ to 
\[
0\longrightarrow I_D \longrightarrow R \longrightarrow S \longrightarrow 0
\]
gives 
\[
0\longrightarrow \omega_R \longrightarrow \omega_R(D) \xrightarrow{\delta} \Ext^1_R(S,\omega_R)\longrightarrow 0,
\]
Let $\operatorname{CM}(-)$ denote the open subset on which the given ring is Cohen--Macaulay and note that $U\coloneqq \operatorname{CM}(S)\cap i^{-1} \operatorname{CM}(R)$ is a big open subset of $\Spec S$ since $R$ is $\mathbf{S}_2$. On $U$, \cite[Definition~11.13 and (11.13.1)]{KollarFamiliesVarietiesGeneralType} gives an isomorphism $\Ext^1_R(S,\omega_R)|_U\cong \omega_S|_U$. Since $\omega_S$ is $\mathbf{S}_2$, it has the codimension $2$ extension property so we obtain an isomorphism $(\Ext^1_R(S,\omega_R))^{\cOne}\cong \omega_S$. 
\end{proof}

\begin{lemma}\label{lem:mumford-pluricanonical-residue}
The hypotheses of \cite[Definition 4.2]{KollarSingularitiesMMP} are satisfied on $S$. In particular, we can define the different, $\Delta_D=\operatorname{Diff}_D(\Delta)$, which is an effective Mumford $\Q$-divisor on $S$. Moreover, for any $m\in \N_{>0}$ for which $m(K_R+D+\Delta)$ is Cartier { at every height $1$ prime of $S$}, there is a canonical isomorphism
\[
\left(
\omega_R^{[m]}(mD+m\Delta)\tensor_RS
\right)^{\mathrm{c1},S}
\xrightarrow{\ \sim\ }
\omega_S^{[m]}(m\Delta_D),
\]
where $\omega_R^{[m]}(mD+m\Delta)=R(m(K_R+D+\Delta))$. 
\end{lemma}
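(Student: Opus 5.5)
We first check the hypotheses of \cite[Definition 4.2]{KollarSingularitiesMMP}, and then build the residue isomorphism by localizing at height-one primes of $S$ and gluing. Kollár's hypotheses are: $R$ is $\mathbf{S}_2$; $D$ is a reduced divisor that is $\mathbf{S}_2$ (here deminormal, hence also $\mathbf{G}_1$ and $\mathbf{S}_2$); no component of $D$ lies in the singular locus of $R$, which holds because $D$ is a Mumford divisor; $\Delta$ has no common component with $D$; and $K_R+D+\Delta$ is $\Q$-Cartier at the codimension-one points of $D$. These are exactly the assumptions of \autoref{set:mumford-adjunction}. Kollár then defines $\Delta_D$ by working over each height-one prime $Q$ of $S$, where $R_Q$ is a two-dimensional local ring. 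Effectivity of $\Delta_D$ is part of \cite[Section 4.1]{KollarSingularitiesMMP}, using $\Delta\geq0$. To see that $\Delta_D$ is Mumford, note that its support avoids the codimension-one singular points of $S$: where $S_Q$ is not regular it is a node, since $S$ is deminormal, and Kollár's computation puts no different along nodes.

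To construct the isomorphism, we first produce the map on a big open subset of $\Spec S$ and then take $(-)^{\mathrm{c1},S}$. Let $V$ be the set of height-one primes $Q$ of $S$. Since $m(K_R+D+\Delta)$ is Cartier at each $Q\in V$, we may choose a local generator $\sigma$ of $R(m(K_R+D+\Delta))_Q$. At such $Q$ the first factor of the tensor product is therefore free of rank one over $S_Q$. The map is the Poincaré residue of \cite[4.1--4.2]{KollarSingularitiesMMP}, namely $\mathcal R^m$, which sends $\sigma$ to a generator of $\omega_S^{[m]}(m\Delta_D)_Q$. At regular points of $D$ this residue comes from the $m$-th power of the residue $\omega_R(D)\to\omega_S$ of \autoref{lem:codim_two_adjunction}. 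The different is defined precisely so that this map is an isomorphism at each $Q\in V$. Concretely, $\Delta_D$ is the divisor for which the residue of $\sigma$ generates $\omega_S^{[m]}(m\Delta_D)_Q$.

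Both sides are $\mathbf{S}_2$ $S$-modules. The right side is reflexive of rank one on the $\mathbf{S}_2$, $\mathbf{G}_1$ ring $S$, and the left side is $\mathbf{S}_2$ by \autoref{prop.SomeGeneralFactsAboutCodim1Envelopes}. An isomorphism at all height $\leq1$ primes of $S$ therefore yields an isomorphism of codimension-one envelopes by \autoref{prop.SomeGeneralFactsAboutCodim1Envelopes}\autoref{prop.SomeGeneralFactsAboutCodim1Envelopes.IsoCheckedCodim1}. At the generic points of $S$, which are height-zero primes, $R$ is regular and the map is the classical residue. Canonicity means independence of the choice of $\sigma$: changing $\sigma$ by a unit $u$ multiplies the residue by $u|_D$, so the local maps glue.

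We expect the main obstacle to be checking that the local residue maps are well defined and canonical at the nodal height-one points $Q$ of $S$. There $R_Q$ is two-dimensional and only $\mathbf{S}_2$, so the residue of \autoref{lem:codim_two_adjunction} has to be used to define $\mathcal R^m$ on the index-$m$ reflexive powers. Our plan for this step is to cite Kollár's construction \cite[Proposition 4.5 and Definition 4.2]{KollarSingularitiesMMP} rather than redo it. We would add only a check that his normalization of $\omega_S$ agrees with $\Ext^1_R(S,\omega_R^\bullet[-d])$, which is exactly what \autoref{lem:codim_two_adjunction} provides.
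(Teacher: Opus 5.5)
Your overall route is the paper's. You check that \autoref{set:mumford-adjunction} puts you in the situation of \cite[Definition 4.2]{KollarSingularitiesMMP} with $S'=S$, which the paper notes is allowed because $S$ is $\mathbf{G}_1$. You take the residue isomorphism on a big open of $\Spec S$ (Kollár's $S_0$) and pass to $(-)^{\mathrm{c1},S}$, and your gluing and use of \autoref{prop.SomeGeneralFactsAboutCodim1Envelopes}\autoref{prop.SomeGeneralFactsAboutCodim1Envelopes.IsoCheckedCodim1} are fine. The step that fails is your argument that $\Delta_D$ is Mumford, namely that the different vanishes at nodal height-one points of $S$ because $S$ is demi-normal. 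This is false under the hypotheses you list.

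Take $R=\mathbb{Z}_p\llbracket y\rrbracket$, $D=\Div_R(py)$, $\Delta=c\Div_R(p-y)$ with $c\in\Q_{>0}$, and $m$ with $mc\in\Z$. Then $S=R/(py)$ is nodal at its closed point, which is a height-one prime of $S$, and every condition on your list holds. Since $R$ is regular and $D$ is Cartier, the residue identifies $\omega_R(D)\otimes_RS$ with $\omega_S$. Hence the residue of a generator of $\omega_R^{[m]}(mD+m\Delta)$ is a generator of $\omega_S^{[m]}$ times $\overline{(p-y)}^{\,-mc}$. Here $\overline{p-y}$ is a nonzerodivisor but not a unit on $S$. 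So the canonical image is $\omega_S^{[m]}$ twisted by the nonzero Cartier divisor $c\Div_S(\overline{p-y})$, which is supported at the node and is not Mumford. The only Mumford divisor on this $S$ is $0$, so the residue does not give an isomorphism onto $\omega_S^{[m]}(m\Delta_D)$ for any Mumford $\Delta_D$.

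So vanishing of the different at the nodes, and with it the isomorphism there that your gluing needs at every height-one prime, does not follow from demi-normality. In this example the only hypothesis of \autoref{set:mumford-adjunction} that fails is perfectoid purity of $(R,D+\Delta)$. Indeed, blowing up the closed point gives an exceptional divisor of discrepancy $-1-c<-1$, so the pair is not log canonical and \autoref{pfd pure implies lc} rules it out. Your argument never uses that hypothesis, and your list of Kollár's hypotheses does not exclude this configuration.

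The paper does not argue through nodes at all: it takes both the different and the isomorphism on the big open $S_0$ directly from \cite[Definition 4.2]{KollarSingularitiesMMP}. To repair your version you need input at the nodal points beyond demi-normality. One option is lc/slc-type behaviour of $(R,D+\Delta)$ there, which forces $\Delta$ to miss the node and the residue to be an isomorphism, as in the two-branch analysis in the proof of \autoref{cor:geometric-adjunction}. The other is a precise check that the hypotheses of Kollár's construction with $S'=S$ exclude this situation.
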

\begin{proof}
The fact that the hypotheses of \cite[Definition 4.2]{KollarSingularitiesMMP} hold is direct from \autoref{set:mumford-adjunction}: since $S$ is $\mathbf{G}_1$, one can take $S'=S$ in \cite[Definition 4.2]{KollarSingularitiesMMP}. The second part of the statement is in the proof of \cite[Definition 4.2]{KollarSingularitiesMMP}: following the notation in that proof, $S_0$ is a big open of $S$.
\end{proof}

\begin{theorem}[Adjunction for perfectoid pure pairs]
\label{thm:perfectoid-pure-adjunction} Keeping the notation of \autoref{lem:mumford-pluricanonical-residue}, $(S,\Delta_D)$ is perfectoid pure.
\end{theorem}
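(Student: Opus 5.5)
We reduce first to the case where $R$ is complete local, then build splittings on $S$ from splittings on $R$ by a residue-type map, using the canonical module comparisons of \autoref{lem:codim_two_adjunction} and \autoref{lem:mumford-pluricanonical-residue}.

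\textbf{Reduction.} Perfectoid purity is local and can be detected after completion, by \autoref{can check purity locally} and \autoref{completion of divisor pairs}. The different, $\omega_S$ and demi-normality are all compatible with localization and with the regular map $R\to\widehat R$. So we may assume $(R,\m)$ is complete local. Since $(R,D+\Delta)$ is perfectoid pure, \autoref{can test divisor pairs on RAinfty} and \autoref{universal module detects perfectoid purity} provide an absolutely integrally closed perfectoid $R$-algebra $B$ (say one receiving $\raftypfd$, where we have adjoined roots of a local equation $h$ of $m(K_R+D+\Delta)$ along $D$ and of the element $f$ from \autoref{construction SnD}) together with a splitting $\widetilde\phi\colon M_B({<}(D+\Delta))\to R$ of $\rho_{B,D+\Delta}$. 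Put $\overline B\coloneqq B/(I_DB)_\perfd$. This is a perfectoid $S$-algebra, and it is absolutely integrally closed after a further faithfully flat extension if needed.

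\textbf{Descent to $S$.} By \autoref{radical ideal computes universal divisorial module}\autoref{isom MB and TBD} we have
\[
M_B({<}(D+\Delta))\cong \bigl(T_{B,D}\tensor_BM_B({<}\Delta)\bigr)^{\mathrm{c1}},
\qquad T_{B,D}=\bigl(\sqrt{I_DB}\tensor_RR(D)\bigr)^{\mathrm{c1}}.
\]
Restrict $\widetilde\phi$ to the submodule coming from $(B\tensor_R I_D\tensor_R R(D))^{\mathrm{c1}}\tensor M_B({<}\Delta)\cong M_B({<}\Delta)$; the restriction lands in $R$. Composing with $R\to S$, this restricted map kills the part coming from $(I_DB)_\perfd\cdot M_B({<}\Delta)$, exactly as in the compatibility argument of \autoref{mixed characteristic compatible splitting via radical ideal}. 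It therefore induces an $R$-linear map $\overline\phi\colon \bigl(\overline B\tensor_B M_B({<}\Delta)\bigr)\to S$ on a big open subset of $\Spec S$, and $\overline\phi$ sends the distinguished element to $1$.

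It then remains to show the following: at each height one prime $Q$ of $S$, the module $\overline B\tensor_B M_B({<}\Delta)$ localized at $Q$ agrees with $M_{\overline B}({<}\Delta_D)_Q$. We plan to check this by translating both sides into canonical modules. Localizing at a height two prime of $R$ containing $Q$, we use $h$ and \autoref{lem:mumford-pluricanonical-residue}: the residue isomorphism $\bigl(\omega_R^{[m]}(mD+m\Delta)\tensor_RS\bigr)^{\mathrm{c1},S}\cong\omega_S^{[m]}(m\Delta_D)$ identifies the $m$-th root of the image of $h$ as a generator of the $\Delta_D$-twist. The $\Delta$-twists are then compared by \autoref{BS - root}(c), in the same way as in the proof of \autoref{damped canonical module comparison}. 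For $\Delta'\prec\Delta$ we aim to produce maps $M_{\overline B}(\mathfrak d'_{D})\to (\overline B\tensor_B M_B(\mathfrak d))^{\mathrm{c1},S}$ at height one primes of $S$, where $\mathfrak d'_D$ is a datum for $\Delta'_D$ and $\mathfrak d$ is a datum for some $\Delta''$ with $\Delta'\prec\Delta''\prec\Delta$. These maps should come from a containment of the form "$c\in\sqrt{x\overline B}$ with $y=cx$", which \autoref{BS - root} supplies.

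Granting this local identification, $\overline\phi$ extends over a big open subset of $\Spec S$ to a splitting of $S\to M_{\overline B}(\mathfrak d_\varepsilon)$ for each $(1-\varepsilon)\Delta_D$-datum $\mathfrak d_\varepsilon$. Since $S$ is $\mathbf{S}_2$, \autoref{check purity on codim 1} then gives purity, and hence $(S,\Delta_D)$ is perfectoid pure.

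\textbf{Main obstacle.} The hard step is the height-two local computation. There $D$ may be non-Cartier, and only $m(K_R+D+\Delta)$ is Cartier. The difficulty is to make the identification of twists canonical and compatible with the transition maps, so that the pieces glue; this requires working through $\omega$ rather than through $R(D)$ directly. The first thing we would try is to localize at height one points of $S$, work on the index-one cover given by $y^m=h$ realized in $B$, and there reduce to the Cartier case handled by \autoref{quotient of pfd pure is pfd pure}.
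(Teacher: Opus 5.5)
There is a genuine gap, and it sits exactly in the step you call ``remaining''.

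Your descent step itself is fine. Restricting $\widetilde\phi$ along $(I_D\otimes_R R(D))^{\mathrm{c1}}\cong R$ and arguing as in \autoref{mixed characteristic compatible splitting via radical ideal} does give a splitting of $S\to\overline B\otimes_B M_B({<}\Delta)$. But this is essentially \autoref{prop:baby_baby_case} with $\Delta$ carried along. The module $M_B({<}\Delta)$ is built only from the height-one data of $\Delta$ on $\Spec R$, so $\overline B\otimes_B M_B({<}\Delta)$ never sees $K_R+D$.

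The different, by contrast, generally carries a correction term at height-one primes $Q$ of $S$ that are height-two primes of $R$ where $D$ is not Cartier. Take $\Delta=0$, let $R$ have a transversal $A_{n-1}$ singularity $xy=z^n$ along $Q$, and let $D=V(x,z)$. Then $\Delta_D=(1-\frac1n)Q$, while $\Delta$ contributes nothing at $Q$. So the claimed agreement of $(\overline B\otimes_B M_B({<}\Delta))_Q$ with $M_{\overline B}({<}\Delta_D)_Q$ cannot hold in general.

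Your comparison mechanism also cannot produce the maps you want. The \autoref{BS - root} construction ($y=cx$ with $c\in\sqrt{x\overline B}$, as in \autoref{divisorial comparison on big open}) only yields pointed maps from the module of a smaller divisor to that of a larger one. Here the source $M_{\overline B}(\mathfrak d'_D)$ has positive coefficient at $Q$ and the target has coefficient $0$. You would need $y=cx$ with $y$ a unit and $x$ a root of a positive power of a uniformizer of $S_Q$. That is impossible, since $S_Q\to\overline B_Q$ is pure. Invoking $h$ does not help: $h$ is a local equation of $m(K_R+D+\Delta)$, and the module you have split contains no $K_R+D$ twist at all.

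The missing idea is to keep the $D$-twist and pass through the canonical module. With $K_R=-D+G$ one has $\omega_R(D)=R(G)$ and $(R(G)/\omega_R)^{\mathrm{c1},S}\cong\omega_S$ by \autoref{lem:codim_two_adjunction}. The different enters as follows. A local generator $\kappa$ of $\omega_S$ lifts to $r\in R(G)$, and with $G_r\coloneqq G+\Div_R(r)$ one has $n(G_r+\Delta)=\Div_R(h)$ and $n\Delta_D=\Div_S(\overline h)$ near $Q$. Hence the principal description $M_B({<}(G_r+\Delta))\cong(z^{1/p^\infty}B)^{\mathrm{c1}}$ of \autoref{universal divisorial module principal case} reduces modulo $(I_DB)_\perfd$ to $M_{\overline B}({<}\Delta_D)$.

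The paper runs this argument dually, in three steps:
\begin{enumerate}
\item It tensors the pure map $\rho_{B,D+\Delta}$ with $\omega_R$ and passes to $H^d_\m$, obtaining $E_R\hookrightarrow H^d_\m(L)$.
\item It uses the residue sequence to get $E_S\hookrightarrow E_R$, compatibly with the connecting map $H^{d-1}_\m(W^{\mathrm{c1},S})\to H^d_\m(L)$.
\item It factors this through a map $(\omega_S\otimes_S M_{\overline B}({<}\Delta_D))^{\mathrm{c1},S}\to W^{\mathrm{c1},S}$ glued from the local choices of $\kappa$. This gives injectivity of $E_S\to E_S\otimes_S M_{\overline B}({<}\Delta_D)$.
\end{enumerate}
No completion or explicit splittings are needed.

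Your fallback via the cover $y^m=h$ does not close the gap either. That cover trivializes $K_R+D+\Delta$ but not $D$. In fact $D$ need not even be $\mathbb{Q}$-Cartier, and then no finite cover makes it principal. \autoref{quotient of pfd pure is pfd pure}, however, requires $D=\Div(f)$. Moreover, when $p\mid m$ the cover need not be \'etale in codimension one.
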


\begin{proof}
By \autoref{can check purity locally}, we can assume that $R$ and $S$ are local, say with maximal ideals $\fram$ and $\fran$, respectively. Let $I=I_D$ and $d=\dim R$.  The case $d\leq1$ is immediate, so assume $d\geq2$.
Since $\omega_R(D)=R(G)$, \autoref{lem:codim_two_adjunction} gives an isomorphism $\left(R(G)/\omega_R\right)^{\mathrm{c1},S}\cong \omega_S$.
Let $B$ be an absolutely integrally closed perfectoid $R$-algebra such that $\rho_{B,D+\Delta}$ is pure.
Define
\[
 \begin{aligned}
 N_I&=\left(((IB)_\perfd\tensor_RR(G))\tensor_BM_B({<}\Delta)\right)^{\mathrm{c1}},\\
 N_B&=\left((B\tensor_RR(G))\tensor_BM_B({<}\Delta)\right)^{\mathrm{c1}},\\
 L&=\left(\im(N_I\longrightarrow N_B)\right)^{\mathrm{c1}} \subseteq N_B,\\ W & =N_B/L,
 \end{aligned}
\]
where the codimension-$1$ envelopes are taken over $R$. On any common big open used to define $N_I$ and $N_B$, $(IB)_\perfd N_B\subseteq\im(N_I\to N_B)$ so $(IB)_\perfd W=0$. Thus $W$ and $W^{\mathrm{c1},S}$ are naturally $C\coloneqq B/(IB)_\perfd$-modules.
Define a map $R(G)\to N_B$ by sending $\gamma\in R(G)$ to $1\tensor \gamma\tensor \mathbf 1_{<\Delta}$ and taking the codimension-$1$ envelope. The restriction of this map to $\omega_R=(IR(G))^\cOne$ has image in $L$. In particular, it defines a morphism $R(G)/\omega_R\to W$ which, after using $\left(R(G)/\omega_R\right)^{\mathrm{c1},S}\simeq\omega_S$ , yields $ q\colon\omega_S\longrightarrow W^{\mathrm{c1},S}$.

To sum up, we have a commutative diagram of solid arrows
\begin{equation}\label{first diagram adj div pair}
\xymatrix{
\omega_R\ar[r]\ar[dd]&
R(G)\ar[r]\ar[dd]&
\omega_S\ar[dd]_{q}
  \ar[dr]^-{\eta\mapsto\eta\tensor\mathbf1_{<\Delta_D}}&\\
&&&\left(\omega_S\tensor_SM_C({<}\Delta_D)\right)^{\mathrm{c1},S}
  \ar@{-->}[dl]^{\chi}\\
N_I\ar[r]&N_B\ar[r]&W^{\mathrm{c1},S}.&
}
\end{equation}

\begin{proposition}\label{prop:adjunction-third-column}
The dashed arrow in the diagram exists. More precisely, there is a $C$-linear map
\[
 \chi\colon \left(\omega_S\tensor_SM_C({<}\Delta_D)\right)^{\mathrm{c1},S} \longrightarrow W^{\mathrm{c1},S}
\]
which makes the triangle on the right of the preceding diagram commute.
\end{proposition}

\begin{proof}
We construct this map locally around every height $1$ point of $S$ before showing that it glues. For an arbitrary $P\in \Supp(\Delta_D)$, $S_P$ is a DVR since $\Delta_D$ is Mumford. Moreover, since $S$ is $\mathbf{G}_1$, $\omega_{S,P}$ is free of rank $1$. In a sufficiently small neighborhood of $P$, prime avoidance allows us to find a generator $\kappa$ of $\omega_S$ along with a lift $r$ of $\kappa$ in $R(G)\cap K(R)^\times$ such that $G_r\coloneqq G+ \Div_R (r)\geq 0$ has no component in common with $D$ and is Mumford. 
Note that $R(G_r)=r^{-1}R(G)$ and that $G_r+\Delta=K_R+D+\Delta+\Div_R(r)$ is $\Q$-Cartier. Choose a sufficiently divisible $n>0$ and $h\in R$ such that $n(G_r+\Delta)=\Div_R(h)$ sufficiently close to $P$.
The image $\overline h\in S$ is a nonzerodivisor and \autoref{lem:mumford-pluricanonical-residue} gives $n\Delta_D=\Div_S(\overline h)$.
Fix an $n$-th root $z$ of $h$ in $B$.
{By \autoref{universal divisorial module principal case} and \autoref{universal divisorial module is local in the divisor}, on that neighborhood of $P$, there are pointed isomorphisms $M_B(<G_r+\Delta)\cong (z^{1/p^{\infty}}B)^\cOne$, $\mathbf1_{<G_r+\Delta}\mapsto z$ and $M_C(<\Delta_D)\cong (\overline{z}^{1/p^{\infty}}C)^{\cOne,S}$, $\mathbf1_{<\Delta_D}\mapsto \overline z$ . 
{By \autoref{BS - root}~(b), there is a pointed isomorphism
	$C\tensor_B(z^{1/p^{\infty}}B)
	\cong(\overline{z}^{1/p^{\infty}}C)$.} 
Recall from \autoref{radical ideal computes universal divisorial module}\autoref{def alpha D Gamma} that there is a canonical pointed $B$-linear map
\begin{equation}\label{alpha r adjunction}
 \alpha_r\coloneqq\alpha_{G_r,\Delta}\colon M_B({<}(G_r+\Delta)) \longrightarrow \left((B\tensor_RR(G_r))\tensor_BM_B({<}\Delta)
 \right)^{\mathrm{c1}}.
\end{equation}
Composing it with multiplication by $r$ and quotienting by $L$ gives a $B$-linear map $\beta_r\colon M_B({<}(G_r+\Delta)) \to N_B \to W$. Since $(IB)_\perfd W=0$,
{the composition \[(z^{1/p^{\infty}}B)\to M_B({<}(G_r+\Delta))\to W\] descends to \((\overline{z}^{1/p^\infty})C\to W\).  Taking codimension-1 envelopes with respect to $S$ gives}
a map $\psi_\kappa \colon M_C(<\Delta_D)\to W^{\cOne,S}$.

Chasing the maps around,} we see that $\psi_\kappa(\mathbf1_{<\Delta_D})=q(\kappa)$.
We can finally define a $C$-linear map on the neighborhood of $P$
\[
\chi_P\colon  \omega_S\tensor_SM_C({<}\Delta_D)\longrightarrow W^{\mathrm{c1},S},
 \qquad \kappa\tensor m\longmapsto\psi_\kappa(m).
\]
Note that for any $\eta=s\kappa$ in $\omega_S$, $\chi_P(\eta\tensor \mathbf 1_{<\Delta_D})=s\psi_\kappa(\mathbf1_{<\Delta_D})=sq(\kappa)=q(\eta)$.
We want to show that it glues. Let $P_1,\ldots, P_m$ be the finitely many height-$1$ points in $\Supp(\Delta_D)$ and for each $i$, let $f_i\in (\bigcap_{j\neq i} P_j)\setminus P_i$ where the empty intersection is $S$. For any choice of $i$, we can pick $U_i\subset D(f_i)$ so $\chi_P$ as above can be defined on $U_i$. Let $U_0=D\setminus\bigcup_iV(P_i)$ and note that $\bigcup_{m\geq i\geq 0} U_i$ is a big open in $S$. Moreover, since $U_0$ does not contain any prime in the support of $\Delta_D$, one can define $\chi_0$ on $U_0$ by $\eta\tensor c\mapsto c q(\eta)$.

By the choice of the $f_i$, no height-one point of
$\Supp(\Delta_D)$ lies in an overlap. At every height-$1$ point of an overlap, the restrictions of $\chi_{P_i}$ and $\chi_{P_j}$ (or $\chi_0$) must therefore agree so they glue over $U$, note that by construction, the transition maps are equalities. Taking codimension $1$-envelopes finishes the proposition.
\end{proof}

We return to the proof of the theorem. Let $N_0\coloneqq \left(  (\sqrt{IB}\tensor_RR(G))\tensor_BM_B({<}\Delta) \right)^{\mathrm{c1}}$. The pointed map
\[
 \rho_{B,D+\Delta}\colon R\longrightarrow M_B({<}(D+\Delta)),
 \qquad 1\longmapsto\mathbf1_{{<}(D+\Delta)},
\]
was chosen to be pure.  Tensoring this map with $\omega_R$,
using $\left(\omega_R\tensor_RR(D)\right)^{\mathrm{c1}}=R(G)$, \autoref{radical ideal computes universal divisorial module},
and passing to top local cohomology gives an injection
\begin{equation}\label{eqn inj loc coh}
 E_R\lhook\joinrel\longrightarrow H^d_{\fram}(N_0).
\end{equation}
Since the generators of $I$ have compatible $p$-power roots in $B$,
the proof of \autoref{root ideal tensor comparison}, together with
\autoref{pfd of ideal only depends on radical}, gives, for every
$p^\infty$-torsion $R$-module $T$,
\[
 T\tensor_R \sqrt{IB}\xrightarrow{\ \sim\ }T\tensor_R(IB)_\perfd.
\]
In particular, this is the case for $T=H^d_\fram(R)\tensor_R R(G)$. Apply the above isomorphism and tensoring with $M_B(<\Delta)$ over $B$ gives an isomorphism
\begin{equation*}
 H^d_{\fram}\!\left((\sqrt{IB}\tensor_RR(G))\tensor_BM_B({<}\Delta) \right)\cong H^d_{\fram}\!\left(((IB)_\perfd\tensor_RR(G))\tensor_BM_B({<}\Delta) \right).
\end{equation*}
Taking the codimension-one envelopes does not change top local cohomology so
\[
 H^d_{\fram}(N_0)\xrightarrow{\ \sim\ }H^d_{\fram}(N_I).
\]
Note that for any minimal prime $\p\in \Spec R$, $IR_\p=R_\p$ so $\ker (N_I\to L)$ has support of dimension at most $d-1$. 
On the other hand, $\coker (N_I\to L)$ has support of dimension at most $d-2$ hence $ H^d_{\fram}(N_I)\xrightarrow{\ \sim\ }H^d_{\fram}(L)$.
Thus the injection in (\ref{eqn inj loc coh}) can be rewritten as
\[
 a\colon E_R\lhook\joinrel\longrightarrow H^d_{\fram}(N_0)
 \xrightarrow{\ \sim\ }H^d_{\fram}(N_I)
 \xrightarrow{\ \sim\ }H^d_{\fram}(L).
\]

With the same maps as in \autoref{first diagram adj div pair}, the following diagram has exact rows and commutes
\begin{equation}\label{diagram adjunction ses}
 \xymatrix{
 0\ar[r]&\omega_R\ar[r]\ar[d]&R(G)\ar[r]\ar[d]&
 R(G)/\omega_R\ar[r]\ar[d]&0\\
 0\ar[r]&L\ar[r]&N_B\ar[r]&W\ar[r]&0.
 }
\end{equation}

The codimension-$1$ envelope (over $S$) of the last vertical arrow is the map $q$ constructed above. 
Since $K_R=-D+G$,
\[
 \Hom_R(R(G),\omega_R)=R(-D)=I,
 \qquad
 \Hom_R(\omega_R,\omega_R)=R.
\]
It follows from duality that the natural map $H_\m^d(\omega)\to H_\m^d(R(G))$ can be identified with the Matlis dual of $I\hookrightarrow R$.  On the other hand since $(R(G)/\omega_R)^{\cOne,S}\cong \omega_S$, invariance of top local cohomology under codimension-$1$ envelope and local duality give $H^{d-1}_{\fram}(R(G)/\omega_R)\cong E_S$. Consider the exact sequence
$$H_\m^{d-1}(R(G))\xrightarrow{\delta} H_\m^{d-1}(R(G)/\omega_R) \to H_\m^d(\omega_R) \to H_\m^d(R(G)) \to 0.$$
We claim that $\delta=0$, to see this, note that the Matlis dual of the sequence can be identified with $0 \to \hat{I} \to \hat{R} \to \hat{S} \xrightarrow{\delta^\vee} H_\m^{d-1}(R(G))^\vee$. Since $R(G)$ is $\mathbf{S}_2$, $\dim(H_\m^{d-1}(R(G))^\vee)\leq d-3$ while the cokernel of the induced map $\hat{R}/\hat{I}\to \hat{S}$ (which is necessarily multiplication by some $r\in \hat{S}$), if nonzero, must have dimension $\geq d-2$ and thus cannot embed into $H_\m^{d-1}(R(G))^\vee$. It follows that $\delta^\vee=0$ and thus so is $\delta$.

Applying local cohomology to
(\ref{diagram adjunction ses}) therefore gives
\[
 \xymatrix{
 E_S\ar@{^{(}->}[r]^-j\ar[d]_{\widetilde q}&E_R\ar[d]^a\\
 H^{d-1}_{\fram}(W^{\mathrm{c1},S})
 \ar[r]_-{\partial}&H^d_{\fram}(L).
 }
\]
The top and right arrows are injective; hence $\widetilde q$ is
injective. The map $\chi$ of
\autoref{prop:adjunction-third-column} factors $\widetilde q$ which gives
\begin{equation}\label{adjunction map from Es}
 E_S\longrightarrow
 H^{d-1}_{\fram}
 \left(
   (\omega_S\tensor_S M_C({<}\Delta_D))^{\mathrm{c1},S}
 \right)
 \longrightarrow H^{d-1}_{\fram}(W^{\mathrm{c1},S}).
\end{equation}
Again, by invariance of top local cohomology under codimension-$1$ envelopes,
\[
 H^{d-1}_{\fram}
 \left(
   (\omega_S\tensor_S M_C({<}\Delta_D))^{\mathrm{c1},S}
 \right)
 \cong H^{d-1}_{\fram}(\omega_S)\tensor_S M_C({<}\Delta_D)
 \cong E_S\tensor_S M_C({<}\Delta_D).
\]
So the first arrow in \autoref{adjunction map from Es} can be rewritten as
\[
 E_S\longrightarrow E_S\tensor_S M_C({<}\Delta_D),\qquad
 u\longmapsto u\tensor\mathbf1_{<\Delta_D}
\]
and is injective since $\widetilde q$ is injective. Now
\autoref{lem: injective hull splitting ideal} shows that the
 pointed map
\[
 S\longrightarrow M_C({<}\Delta_D),\qquad
 1\longmapsto\mathbf1_{<\Delta_D},
\]
is pure so the result follows from \autoref{universal module detects perfectoid purity}.
\end{proof}

Finally we show that by imposing natural geometric hypotheses one can remove the assumptions on $S$ from \autoref{thm:perfectoid-pure-adjunction}.

\begin{theorem}

\label{cor:geometric-adjunction}
Let $R$ be an equidimensional demi-normal ring essentially of
finite type over an excellent DVR, with $p$ in its
Jacobson radical. Let $D$ be a reduced effective
Mumford divisor on $\Spec(R)$, and let $\Delta$ be an effective Mumford
$\Q$-divisor having no prime component in common with $D$.
Assume that $(R,D+\Delta)$ is perfectoid pure and that
$K_R+D+\Delta$ is $\Q$-Cartier. Put $S=R/I_D$.
Then $S$ is at worst nodal in codimension one.
Furthermore, if either $S$ is $\mathbf{S}_2$ or $D$ is $\Q$-Cartier, then $S$ is
demi-normal and $(S,\Delta_D)$ is perfectoid pure.
\end{theorem}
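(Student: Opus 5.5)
The plan is to reduce everything to \autoref{thm:perfectoid-pure-adjunction}, whose only extra hypothesis beyond ours is that $S$ is demi-normal. The work therefore splits into two parts. First we show that $S$ is $\mathbf{S}_1$ and at worst nodal in codimension one, so that it is $\mathbf{G}_1$ and in fact semi-normal at height-one points. Second we upgrade this to $\mathbf{S}_2$ when $D$ is $\Q$-Cartier. Once $S$ is demi-normal, the setup \autoref{set:mumford-adjunction} holds: $R$ is essentially of finite type over an excellent DVR, hence excellent and admits a dualizing complex, and $K_R+D+\Delta$ is $\Q$-Cartier everywhere. Then \autoref{thm:perfectoid-pure-adjunction} gives that $(S,\Delta_D)$ is perfectoid pure.

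For the codimension-one statement, first note that $(R,D)$ is perfectoid pure by \autoref{perfectoid purity decreases with divisor}. By \autoref{prop:baby_baby_case}, $S=R/I_D$ is then perfectoid pure, in particular reduced, hence $\mathbf{S}_1$ (no embedded primes would survive reducedness only if we also know $I_D$ is unmixed, which holds since $I_D$ is $\mathbf{S}_2$ of height one). Next we localize at a height-one prime $Q$ of $S$, that is, a height-two prime of $R$ containing $I_D$. There, pass to the normalization via \autoref{normalization is pfd pure pair}(b), so that $(R^N,D^N+\Delta^N+C)$ is perfectoid pure. By \autoref{pfd pure implies lc} applied after localizing, this pair is log canonical at the relevant codimension-two points; the $\Q$-Cartier hypothesis pulls back. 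Alternatively, invoke \autoref{pfd pure implies slc} directly on $R_Q$ to get that $(R_Q,D+\Delta)$ is semi-log canonical, a two-dimensional excellent slc pair. The classification of two-dimensional (s)lc pairs with a reduced boundary component, as in \cite{KollarSingularitiesMMP} (adjunction in dimension two: the boundary $D$ of a two-dimensional lc pair is nodal), shows that $D$ is at worst nodal at $Q$. This relies on the excellent-scheme version of Kollár's surface results, which is the reason for the essentially finite type hypothesis.

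For the second part, if $S$ is $\mathbf{S}_2$ then, together with nodality in codimension one, $S$ is demi-normal by definition. If instead $D$ is $\Q$-Cartier, we must prove that $S$ is $\mathbf{S}_2$. Here we would use the fact that $(R,D+\Delta)$ is slc, by \autoref{pfd pure implies slc}, together with the standard result that a $\Q$-Cartier reduced boundary divisor of an slc pair is $\mathbf{S}_2$. In the lc case this follows from the Kollár--Kovács/Alexeev--Hacon type theorem that lc centers are Du Bois and that $\omega$-type sheaves satisfy $\mathbf{S}_2$. In mixed characteristic, we would instead try the perfectoid analogue: index-one covers and \cite[Theorem 6.6]{BMPSTWW3}-style local cohomology arguments. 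Concretely, take a local index-one cover making $mD$ Cartier. Then show $H^{\dim S-1}$-injectivity, meaning $H^i_\m(S)\to H^i_\m(B/(I_DB)_\perfd)$ is injective, using the purity of $R\to M_B({<}D)$ and the compatibility of $I_D$ from \autoref{mixed characteristic compatible splitting via radical ideal}. From this we would deduce that $H^1_\m(S)=0$ in the relevant local rings.

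This last $\mathbf{S}_2$ step is the main obstacle. We expect to rely on known $\mathbf{S}_2$ results for slc pairs, whose mixed characteristic versions are available for schemes essentially of finite type over an excellent DVR. That availability is precisely why the hypotheses are geometric.
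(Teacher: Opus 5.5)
There is a genuine gap, and it is where you located it: the $\mathbf{S}_2$ step when $D$ is $\Q$-Cartier. The characteristic-zero fact you want to import (a $\Q$-Cartier reduced boundary of an slc pair is $\mathbf{S}_2$) rests on Du Bois and vanishing-theorem techniques that are unavailable for schemes essentially of finite type over a mixed characteristic DVR, so there is nothing to cite. Your perfectoid substitute does not close the gap either. Even granting injectivity of $H^1_\m(S)\to H^1_\m(B/(I_DB)_\perfd)$, you would still need $H^1_\m(B/(I_DB)_\perfd)=0$. That is itself a connectedness statement you have no handle on for a general absolutely integrally closed $B$ with $\rho_{B,D}$ pure. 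Index-one covers are also not \'etale when $p$ divides the index.

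The missing idea is one step past what you already proved. Once \autoref{prop:baby_baby_case} shows $S$ is perfectoid pure, $S$ is \emph{weakly normal} by \cite[Corollary~4.30]{BMPSTWW3}. For weakly normal schemes, $\mathbf{S}_2$ is equivalent to being topologically $\mathbf{S}_2$ \cite[Definition~6.8]{Kollar.MapsBetweenLocalPicardGroups}. Since $R$ is $\mathbf{S}_2$, it is topologically $\mathbf{S}_2$. The connectedness theorem for the Cartier divisor $mD$ \cite[Lemma~2.93]{KollarFamiliesVarietiesGeneralType}, applied \'etale-locally, then shows that $mD$, and hence $D$ (same underlying space), is topologically $\mathbf{S}_2$. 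No vanishing theorem enters.

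Weak normality is also what your codimension-one argument lacks. Kollár's classification \cite[Theorem~2.31]{KollarSingularitiesMMP} concerns normal surfaces. For demi-normal $R$ it only tells you the following: at a codimension-one point $x$ of $D$, $C_i+D_i^N$ is nodal on each factor $B_i$ of the normalization of the strict henselization $B=\mathcal{O}^{\mathrm{sh}}_{R,x}$ (via \autoref{normalization is pfd pure pair}(b) and \autoref{pfd pure implies slc}). You still have to do two things.
\begin{enumerate}
\item Bound the number of branches of $D$ coming from \emph{different} factors. The paper uses $c_i+d_i\le 2$ together with connectedness of the punctured spectrum of the two-dimensional Cohen--Macaulay ring $B$. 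Its Hochster--Huneke graph has at least $r-1$ edges, each producing a conductor branch on two factors, so $\sum_i c_i\ge 2(r-1)$ and hence $\sum_i d_i\le 2$.
\item Pass from branch data on the normalization to the scheme $S$ itself. This requires $S$ and $C+D^N$ to be weakly normal, so that nodes are separable and hence split. Then one branch forces $D$ to be regular at $x$, and two branches force the strict henselization of $S$ at $x$ to be $T_1\times_k T_2$ for DVRs $T_1,T_2$.
\end{enumerate}
Appealing to a ``classification of slc surface pairs'' skips both points, and over imperfect residue fields the second is not automatic.

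Finally, do not apply \autoref{pfd pure implies slc} to $R_Q$. A height-two prime $Q\supseteq I_D$ need not contain $p$, and perfectoid purity only localizes at primes containing $p$. Apply it to $R$ and then localize the slc conclusion.
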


\begin{proof}
Letting $R^N$ be the normalization of $R$, and using \autoref{normalization is pfd pure pair}~(b) and its notation, $(R^N, C+D^N + \Delta^N)$ is perfectoid pure. 
Put $E=C+D^N$ with its reduced divisor structure. Applying \autoref{prop:baby_baby_case} at the maximal ideals shows that $D$ and $E$ are perfectoid pure. Thus $R$, $D$ and $E$ are weakly normal by  \cite[Corollary~4.30]{BMPSTWW3}. Note that \autoref{pfd pure implies slc} shows that $(R,D+\Delta)$ is semi-log canonical and so $(R^N, C+D^N+\Delta^N)$ is log canonical.

Fix a codimension-one point $x$ of $D$. Let $B$ be its strict henselization of $R$ at $x$, and put $A=B/I_DB=\sO_{D,x}^{\mathrm{sh}}$ and $k=B/\mathfrak m_B$.
Weak normality is preserved by localization and this ind-\'etale base change \cite[Proposition~B.6]{RydhSubmersionsEffectiveDescent}. Normalization commutes with strict henselization \cite[Tag~0CBM]{stacks-project}; log canonicity is preserved as well, since it is \'{e}tale local (see \cite[Section~2.16.1]{KollarFamiliesVarietiesGeneralType}).
Write $B^N=\prod_{i=1}^r B_{i}$. The rings $B_i$ are normal strictly henselian local surfaces, and their residue fields $k_i$ are finite purely inseparable extensions of the separably closed field $k$.

We show that there are at most two branches of $D$ at $x$. 
Let $C_i$ and $D_i^{N}$ be $C$ and $D^N$ on $\Spec B_i$, respectively, and let $c_i$ and $d_i$ be their respective numbers of branches.
Since $C_i+D_i^{N}$ is the reduced boundary on a log canonical surface, it is nodal by \cite[Theorem~2.31]{KollarSingularitiesMMP}, and since it is weakly normal, every node is separable \cite[Lemma~2.8]{PosvaGluingStableFamiliesMixed}, and hence split.
Consequently $c_i+d_i\leq 2$. Note that $B$ is a Cohen-Macaulay local ring of dimension two, thus the punctured spectrum of $B$ is connected. Since each $B_i$ corresponds to a minimal prime $\p_i$ of $B$, the Hochster-Huneke graph of $B$ has at least $r-1$ edges. For an edge connecting $\p_i$ and $\p_j$, corresponding to a height $1$ prime $\p$ of $B$, we know that $B_\p$ has exactly two minimal primes since $B$ is deminormal (in particular, different edges correspond to different height $1$ primes). It follows that $\p$ gives a component of $C_i$ in $B_i$ and a component of $C_j$ in $B_j$. Therefore the $r-1$ edges contributes to at least $2(r-1)$ components of various $C_i$'s and thus $\sum_{i=1}^r c_i\geq 2(r-1)$.
Hence we have 
\[\sum_{i=1}^r d_i\leq 2r-\sum_{i=1}^r c_i\leq 2\] as required.

Firstly, if there is one branch, the normalization $D^N\to D$ is a universal homeomorphism and so $D$ is normal (hence regular) by weak normality. Suppose now there are two boundary branches, with normalization \(A^N=T_1\times T_2\) where $T_i$ are DVRs. Since the node on $B_i$ is separable and $k$ is separably closed, the residue fields of the branches are a common finite purely inseparable extension \(L/k\). 
The natural map
\[
\operatorname{Spec}(T_1\times_L T_2)\longrightarrow\operatorname{Spec}A
\]
is a finite birational universal homeomorphism.  Weak normality therefore gives \(A=T_1\times_L T_2\). Comparing residue fields yields \(L=k\), so \(A=T_1\times_k T_2\).

It is a quotient of a regular ring by \cite[Lemma~3.3.1(c)]{PosvaGluingTheorySLC}, and so \cite[Lemma~3.2.1]{PosvaGluingTheorySLC} shows that $A$ is nodal. Finally \cite[Corollary~3.3.2]{PosvaGluingTheorySLC} shows that the original $D$ (before strict henselization) was also nodal.

It now remains to show that $S$ is $\mathbf{S}_2$ in the case where $D$ is $\mathbb{Q}$-Cartier.
 We may localize to assume that $R$ is local. 
  By
\autoref{perfectoid purity decreases with divisor}, the pair $(R,D)$ is
perfectoid pure.  Therefore, by \autoref{prop:baby_baby_case}, the
underlying scheme $D$ is perfectoid pure, and hence weakly normal by
\cite{BMPSTWW3}.  Since $R$ is $\mathbf{S}_2$, it is topologically $\mathbf{S}_2$
\cite[Definition~6.8]{Kollar.MapsBetweenLocalPicardGroups}.  Choose
$m>0$ such that $mD$ is Cartier.  The connectedness theorem for Cartier
divisors
\cite[Lemma~2.93]{KollarFamiliesVarietiesGeneralType}, applied
\'etale-locally to $mD$, shows that $mD$ is topologically $\mathbf{S}_2$.
Since $D$ and $mD$ have the same underlying topological space, $D$ is
topologically $\mathbf{S}_2$ as well.  Finally,
a weakly normal scheme is $\mathbf{S}_2$ if
and only if it is topologically $\mathbf{S}_2$
\cite[Definition~6.8]{Kollar.MapsBetweenLocalPicardGroups}, so $D$ is
$\mathbf{S}_2$.  Since the maximal ideal $\mathfrak n$ is arbitrary, $S$ is
$\mathbf{S}_2$, and \autoref{thm:perfectoid-pure-adjunction} applies.
\end{proof}

\section{Perfectoid pure thresholds}
\label{sec.PerfectoidPureThresholds}

\subsection{A threshold for perfectoid rings}
\label{subsec.ThresholdForPerfectoidRings}

We present an analogue of F-threshold using perfectoid rings, 
in an approach similar to BCM thresholds defined in \cite{RVS}.  Compare also with \cite{Yoshikawa.ComputationMethodForPerfdPurity} as well as \cite{HunekeMustataTakagiWatanabe.FThresholdsTCICAndMult}.

\begin{definition}
    Let $R$ be a ring and $\fra$ an ideal. 
    For a positive rational number $a/r$ we define
    \[
    \overline{\fra^{a/r}} \coloneqq \{x \in R \mid x^r \in \overline{\fra^a}\}.
    \]
\end{definition}

It is easy to see from the equation of integral dependence that 
$\overline{\fra^{a/r}}$ does not depend on the presentation of the rational number $a/r$ as 
the proof of \cite[Proposition 10.5.2(1)]{HunekeSwansonIntegralClosure} does not use that $R$ is Noetherian.

\begin{definition}[\cite{RVS}]
    Let $R$ be a ring and $I, \ba$ be ideals (we allow $I$ to be the unit ideal). 
    If $\ba \subseteq \sqrt{I}$, then we define\footnote{We recall that $\sup \emptyset = -\infty$.} the \emph{rational powers threshold} of {$\fra$ with respect to $I$} as
    \[
    \newpt{\ba}{I} = \sup \{r \in \mathbb{Q}_{\geq 0} \mid  \overline{\ba^r} \not \subseteq I\}.
    \]
\end{definition}

We state some elementary properties.

\begin{lemma}\label{lem: elementary threshold}
    Let $R$ be a ring and $\ba, \bb, I$ be ideals with $\ba, \bb \subseteq \sqrt{I}$.
    Then 
    \begin{enumerate}
        \item if $\ba \subset \bb$ then $\newpt{\ba}{I}  \leq \newpt{\bb}{I}$; 
        \item if $\overline{\ba} = \overline{\bb}$ then $\newpt{\ba}{I}  = \newpt{\bb}{I}$;
        \item $\newpt{\ba^n}{I}  = n^{-1} \newpt{\ba}{I}$ for $n \in \bZ_{\geq 1}$; 
        \item if $I \subset J$ is an ideal, then 
        $\newpt{\ba}{I} \geq \newpt{\ba}{J}$.
    \end{enumerate}
\end{lemma}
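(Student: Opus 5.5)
All four assertions come straight from the definition
\[
\newpt{\ba}{I}=\sup\{r\in\Q_{\geq0}\mid \overline{\ba^r}\not\subseteq I\}.
\]
In each case the plan is to compare the sets of rationals $r$ over which the supremum is taken, by comparing the fractional integral closures $\overline{\ba^{r}}$ themselves. Throughout I use the fact, noted just before the statement, that $\overline{\ba^{a/r}}$ does not depend on how the rational number is written.

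For (a), suppose $\ba\subseteq\bb$. Then $\ba^a\subseteq\bb^a$, so $\overline{\ba^a}\subseteq\overline{\bb^a}$ since integral closure is monotone. It follows that $\overline{\ba^{a/s}}\subseteq\overline{\bb^{a/s}}$. So whenever $\overline{\ba^r}\not\subseteq I$, also $\overline{\bb^r}\not\subseteq I$, and the supremum for $\ba$ is at most the supremum for $\bb$.

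For (b), the key fact is that $\overline{\ba^a}=\overline{\bb^a}$ whenever $\overline{\ba}=\overline{\bb}$. This holds because $\overline{\ba^a}=\overline{(\overline{\ba})^a}$, which is the standard containment $(\overline{\ba})^a\subseteq\overline{\ba^a}$. That containment follows from the product rule $\overline{\fra}\,\overline{\frb}\subseteq\overline{\fra\frb}$, valid in any commutative ring, e.g. via the Rees algebra description of integral closure, so no Noetherian hypothesis is needed. Hence the fractional closures agree for every $r$, and the two thresholds coincide. Alternatively, apply (a) in both directions, using $\ba\subseteq\overline{\bb}$ and $\newpt{\overline{\bb}}{I}=\newpt{\bb}{I}$; the latter is the same identity.

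For (c), take $r=a/s$. Then $x\in\overline{(\ba^n)^{a/s}}$ if and only if $x^s\in\overline{\ba^{na}}$, if and only if $x\in\overline{\ba^{na/s}}=\overline{\ba^{nr}}$. So $\overline{(\ba^n)^r}\not\subseteq I$ exactly when $\overline{\ba^{nr}}\not\subseteq I$. The set of admissible $r$ for $\ba^n$ is therefore $n^{-1}$ times the set for $\ba$, and the supremum scales accordingly. This also holds when the set is empty, since the supremum is then $-\infty$ on both sides. For (d), if $I\subseteq J$ then $\overline{\ba^r}\not\subseteq J$ implies $\overline{\ba^r}\not\subseteq I$, so the set for $J$ sits inside the set for $I$ and $\newpt{\ba}{J}\leq\newpt{\ba}{I}$.

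The only step with real content is (b), or more precisely the identity $\overline{(\overline{\ba})^a}=\overline{\ba^a}$ in a possibly non-Noetherian ring. I would cite \cite{HunekeSwansonIntegralClosure}, whose argument for products of integrally closed ideals works without finiteness hypotheses. The remaining parts are bookkeeping with the definition.
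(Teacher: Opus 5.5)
Your proposal is correct and follows the paper's approach. The paper simply asserts that all four properties are immediate from the definition, and your comparison of the sets of admissible $r$ is exactly that unpacking. The only standard input is in (b), where $\overline{\ba^a}=\overline{(\overline{\ba})^a}$ uses both $(\overline{\ba})^a\subseteq\overline{\ba^a}$ and the fact that $\overline{\ba^a}$ is itself integrally closed; both hold in an arbitrary commutative ring.
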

\begin{proof}
    These properties are immediate from the definition.
\end{proof}

We will now switch to the case where $R$ is perfectoid so that we benefit from the following Brian{\c c}on--Skoda result that will allow us to replace rational powers by regular powers. 

\begin{proposition}[{\cite[Corollary~3.20]{MMRGSBrianconSkoda}}]\label{BS property}
Let $R$ be a perfectoid ring and let $I\subseteq R$ be an ideal
generated by $n$ elements. For every $k\geq1$, one has
$I_{\perfd}\overline{I^{n+k-1}}\subseteq I^k$ and
$\overline{I^{n+k}}\subseteq I^k$.
\end{proposition}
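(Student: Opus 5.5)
Since this statement is quoted from \cite[Corollary~3.20]{MMRGSBrianconSkoda}, we only sketch how we would derive it, treating the first containment as the core assertion and deducing the second from it. Throughout, write $I=(f_1,\ldots,f_n)$.

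\emph{Reduction.} First we would reduce to the case where $R$ is absolutely integrally closed. By \autoref{thm: Andre} there is a $p$-completely faithfully flat map $R\to S$ to such a ring. The ideal $(IS)_\perfd$ is compatible with base change, exactly as in the proof of \autoref{can choose B integrally closed}. Integral closure only grows under base change. So it suffices to know that a containment $x\in I^kS$, with $x\in R$, descends to $x\in I^kR$. We would prove this descent by arguing modulo $p^N$ for all $N$, where faithful flatness holds. Doing so requires that $I^k$ be $p$-adically closed, or at least that the argument produce the containment before taking closures. We regard this descent as a technical but secondary point. Once $S$ is absolutely integrally closed, we may choose compatible systems of $p$-power roots $f_i^{1/p^\infty}$. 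Then \autoref{pfd of ideal only depends on radical} gives $I_\perfd=(f_1^{1/p^\infty},\ldots,f_n^{1/p^\infty})^-$.

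\emph{Second containment from the first.} Let $x\in\overline{I^{n+k}}$, pick $x^{1/p^e}\in S$, and factor
\[
x = x^{1/p^e}\cdot x^{1-1/p^e}.
\]
The factor $x^{1/p^e}$ lies in $\sqrt{IS}\subseteq (IS)_\perfd$. The other factor satisfies $x^{1-1/p^e}\in\overline{I^{(n+k)(1-1/p^e)}}$, computed with rational powers, and this is contained in $\overline{I^{n+k-1}}$ as soon as $(n+k)/p^e\le 1$. Hence $x\in I_\perfd\,\overline{I^{n+k-1}}\subseteq I^k$ by the first containment.

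\emph{First containment (main obstacle).} Our plan is a Lipman--Teissier / Hochster--Huneke style argument in which perfectoidness supplies the needed ``regularity'' of the root sequence. Given $y\in\overline{I^{n+k-1}}$, we would pass to a Rees-type blowup chart, or equivalently use a Čech/Koszul complex on $f_1,\ldots,f_n$. The key input is that, over a perfectoid ring, the Koszul homology of $f_1^{1/p^\infty},\ldots,f_n^{1/p^\infty}$ is almost zero with respect to $I_\perfd$; this is the perfectoid analogue of the Cohen--Macaulay and colon-capturing property of big CM algebras, and follows from the almost purity and flatness results of \cite{BhattScholzePrisms}. Granting this, the standard ``$\overline{I^{n+k-1}}\subseteq I^k$ up to the test ideal'' argument goes through with $I_\perfd$ playing the role of the test ideal, i.e.\ $I_\perfd\,\overline{I^{n+k-1}}\subseteq I^k$.

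The step we expect to be hardest is making the almost-vanishing precise. Concretely, one must show that $I_\perfd$ annihilates the obstruction classes coming from the local cohomology of the Rees algebra. We would first try to do this by reducing to a universal finitely presented situation over a perfectoid ring such as $\Z_p^{\mathrm{cycl}}\langle T_i^{1/p^\infty}\rangle$, where the relevant computations can be carried out explicitly.
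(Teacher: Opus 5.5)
The paper gives no proof of this statement; it imports it from \cite[Corollary~3.20]{MMRGSBrianconSkoda}. The question is therefore whether your sketch supplies a proof. It does not: the step you defer is the whole content of the cited result, and the reduction you call secondary does not return the statement as written.

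\emph{The first containment is not proved.} Nothing in your sketch uses the integral equation of $y$ over $I^{n+k-1}$, and that is where the content lies.
\begin{itemize}
\item The Koszul input you propose concerns only $R$ and the $f_i$, and you give no justification for it beyond a pointer to almost purity.
\item In a Lipman--Teissier argument the essential input is a vanishing theorem on the normalized blowup or Rees algebra, which is where integral closure is visible. You name these ``obstruction classes'' but leave them untouched.
\item Colon capturing is a property of systems of parameters in Noetherian local rings. Here the $f_i$ are arbitrary elements of a non-Noetherian ring.
\end{itemize}
To see what mechanism is actually needed, start from the integral equation. It gives $y^N\in I^{(n+k-1)N-C}$ for a fixed $C$ and all $N\gg0$. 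By pigeonhole, for $q=p^e>C$ this lies in $(I^{k-1})^{[q]}\,I^{nq-C}$. If $R$ is a perfect $\mathbb{F}_p$-algebra, applying $F^{-e}$ gives $y\in I^{k-1}\cdot\bigl(f^{c/q}: |c|=nq-C\bigr)$, where $f^{c/q}=\prod_i(f_i^{1/q})^{c_i}$. Multiplying by $f_j^{1/p^{e'}}$ with $q\geq Cp^{e'}$ pushes each such monomial into $I$, since the exponents then sum to at least $n$. This yields $(f_1^{1/p^\infty},\dots,f_n^{1/p^\infty})\,\overline{I^{n+k-1}}\subseteq I^k$, which is the first containment because $I_\perfd=\sqrt I$ in that setting. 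In mixed characteristic there is no Frobenius on $R$. Extracting $q$-th roots through $R/p$ determines $y$ only modulo $\{z\in R: z^q\in pR\}$, an ideal that grows with $q$, while the argument needs $q\to\infty$. Overcoming this is exactly what the cited result does, and your sketch offers no substitute.

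\emph{The reduction only reaches the $p$-adic closure.} Since $R/p^N\to S/p^N$ is faithfully flat, descent gives $I^kS\cap R\subseteq\bigcap_N(I^k+p^NR)=(I^k)^-$. For a finitely generated ideal in a non-Noetherian $p$-complete ring, one knows in general only that $R/I^k$ is derived $p$-complete, not that $I^k$ is $p$-adically closed, and you give no argument for closedness. As written, the reduction yields at most $I_\perfd\overline{I^{n+k-1}}\subseteq(I^k)^-$ and $\overline{I^{n+k}}\subseteq(I^k)^-$. For the second containment this cannot be sidestepped. Your deduction of it from the first is correct inside $S$ (using $p^e\geq n+k$), but it needs $p$-power roots of $x$, which $R$ itself need not have.
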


The above proposition provides the following characterization of the rational powers threshold in perfectoid rings.

\begin{theorem}\label{thm: from rational powers to monomials}
    Let $R$ be a perfectoid ring and $x_1, \ldots, x_d$ be elements that admit compatible systems of $p$-power roots in $R$.
    Set $\ba = (x_1, \ldots, x_d)$.
    If $I$ is an ideal such that 
    $x_1, \ldots, x_d \in \sqrt{I}$ then
    \begin{align*}
    \newpt{\ba}{I}
    &= \sup \left \{\frac n{p^e} \mid n, e \in \mathbb{Z}_{\geq 0}, \overline{(x_1^{1/p^e}, \ldots, x_d^{1/p^e})^n} \not \subseteq I
    \right \}
    \\ &=
    \sup 
    \left \{
    \frac{|a|}{p^{e}} \mid a \in \mathbb{Z}_{\geq 0}^d, x_1^{a_1/p^e}\cdots x_d^{a_d/p^e}\notin I
    \right \}
    \\ &=
    \lim_{e \to \infty} 
    p^{-e}\sup \left \{
    |a| \mid a \in \mathbb{Z}_{\geq 0}^d, x_1^{a_1/p^e}\cdots x_d^{a_d/p^e}\notin I
    \right \},    
    \end{align*}
    where $|a| = \sum_{i = 1}^d a_i$.
\end{theorem}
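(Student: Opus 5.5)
Denote the three suprema on the right by $s_1,s_2,s_3$ (the last with the limit). The plan is to show $\newpt{\ba}{I}\geq s_1\geq s_2 \geq \newpt{\ba}{I}$, and then identify $s_3$ with $s_2$.

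\emph{Step 1: $\newpt{\ba}{I}\geq s_1$.} Set $\ba_e=(x_1^{1/p^e},\ldots,x_d^{1/p^e})$. Since $\ba_e^{p^e}\subseteq\ba\subseteq\ba_e^{p^e}$ up to integral closure (each $x_i=(x_i^{1/p^e})^{p^e}$, and conversely $x_i^{1/p^e}$ is integral over $\ba^{1/p^e}$ in the rational-power sense), we get $\overline{\ba_e^{p^e}}=\overline{\ba}$. By the definition of rational powers this gives $\overline{\ba_e^n}=\overline{\ba^{n/p^e}}$. Indeed, $x\in\overline{\ba_e^n}$ iff $x^{p^e}\in\overline{\ba_e^{np^e}}=\overline{\ba^n}$, using the multiplicativity of integral closure of powers for the equality $\overline{(\ba_e^{p^e})^n}=\overline{\ba^n}$. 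So whenever $\overline{\ba_e^n}\not\subseteq I$, we have $n/p^e\leq\newpt{\ba}{I}$.

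\emph{Step 2: $s_1\geq s_2$.} A monomial $x_1^{a_1/p^e}\cdots x_d^{a_d/p^e}$ lies in $\ba_e^{|a|}\subseteq\overline{\ba_e^{|a|}}$. So if the monomial is not in $I$, then $\overline{\ba_e^{|a|}}\not\subseteq I$.

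\emph{Step 3: $s_2\geq\newpt{\ba}{I}$.} This is the main step, and it uses \autoref{BS property}. Take $r\in\mathbb Q_{\ge0}$ with $\overline{\ba^r}\not\subseteq I$; the goal is to produce monomials with $|a|/p^e$ arbitrarily close to $r$ that are not in $I$. Take $e$ large and $n=\lceil rp^e\rceil$. By Step 1, $\overline{\ba^r}\subseteq\overline{\ba^{(n-d)/p^e}}$ when $(n-d)/p^e\le r$; more precisely, choose $n$ with $(n+d)/p^e\le r$, so that $\overline{\ba^r}\subseteq\overline{\ba_e^{\,n+d}}$. Applying \autoref{BS property} to the $d$-generated ideal $\ba_e$ in the perfectoid ring $R$ gives $\overline{\ba_e^{\,n+d}}\subseteq\ba_e^{\,n}$. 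Now $\ba_e^n$ is generated by the monomials $x^{a/p^e}$ with $|a|=n$. Since $\overline{\ba^r}\not\subseteq I$, some such monomial is not in $I$, so $s_2\ge n/p^e\ge r-(d+1)/p^e$. Letting $e\to\infty$ and then taking the supremum over $r$ gives the inequality. One edge case needs handling: if $r=0$ is the only admissible value, the claim is trivial via $a=0$ (provided $1\notin I$), and if $I=R$ all sets are empty.

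\emph{Step 4: $s_3=s_2$.} Let $c_e=\sup\{|a|: x^{a/p^e}\notin I\}$. This is finite when $I\ne R$, because $x_i\in\sqrt I$ makes large monomials lie in $I$. Since $x^{a/p^e}=x^{pa/p^{e+1}}$, we have $c_{e+1}\ge p c_e$, so $p^{-e}c_e$ is nondecreasing and converges to its supremum, which is $s_2$. I expect Step 3 to be the only real obstacle: it requires the $p$-power root structure and the correct bookkeeping of the Briançon–Skoda shift by $d$, which vanishes in the limit.
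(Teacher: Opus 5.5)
Your argument is correct and is essentially the paper's proof. Both identify $\overline{\ba^{n/p^e}}$ with $\overline{(x_1^{1/p^e},\ldots,x_d^{1/p^e})^n}$, use that the monomials lie in these powers, and apply \autoref{BS property} to the $d$-generated ideal of $p^e$-th roots, where the loss of $d$ in the exponent vanishes after dividing by $p^e$ and letting $e\to\infty$; both then get the limit from monotonicity of $p^{-e}c_e$. Two small things to tidy: in Step 1, the clean reason for $\ba_e^{p^e}\subseteq\overline{\ba}$ is that each degree-$p^e$ monomial $m$ in the $x_i^{1/p^e}$ satisfies $m^{p^e}\in\ba^{p^e}$, and in Step 3 the false start with $n=\lceil rp^e\rceil$ should simply be deleted.
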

\begin{proof}
    We start by observing that 
    \[
    \sup \left \{r \in \mathbb{Q}_{\geq 0} \mid  \overline{\ba^r} \not \subseteq I \right\}
    = \sup \left \{\frac{n}{p^e} \mid n, e \in \mathbb{Z}_{\geq 0}, 
    \overline{\ba^{\frac n {p^e}}} \not \subseteq I\right \}.
    \]
    Namely, the $\geq$ inequality is obvious and its converse holds because 
    we may approximate from below any positive real $r$ by a sequence of  rational numbers of the form $n/p^e$, 
    explicitly, by taking $n = \lfloor p^e r \rfloor$.

   The first equality follows from 
    $
    \overline{\ba^{n/p^e}} = \overline{(x_1^{1/p^e}, \ldots, x_d^{1/p^e})^n}.
    $
    To prove the second equality, note that 
    { $\geq$} is obvious. The other inequality follows from \autoref{BS property}
    after noting that for every sufficiently large $e' \geq 0$
    \[
    \overline{(x_1^{1/p^e}, \ldots, x_d^{1/p^e})^n}
    = \overline{(x_1^{1/p^{e+e'}}, \ldots, x_d^{1/p^{e+e'}})^{p^{e'}n}}
    \subseteq (x_1^{1/p^{e+e'}}, \ldots, x_d^{1/p^{e+e'}})^{p^{e'}n - d}.
    \]
    The last equality is obvious since the sequence 
    \[
    e \mapsto p^{-e} \left \{
    |a| \mid a \in \mathbb{Z}_{\geq 0}^d, x_1^{a_1/p^e}\cdots x_d^{a_d/p^e}\notin I\right \}   
    \]
    is non-decreasing.
\end{proof}

\begin{corollary}
\label{cor: ppt sum of ideals}
 Let $R$ be a perfectoid ring and $x_1, \ldots, x_s$, $y_1,\dots,y_t$ be elements that admit compatible systems of $p$-power roots in $R$. Set $\ba=(x_1,\dots,x_s)$ and $\bb=(y_1,\dots,y_t)$. If $I\subseteq R$ is an ideal such that $\ba + \bb \subseteq \sqrt{I}$, then 
 $$\newpt{\ba + \bb}{I} \leq \newpt{\ba}{I}  + \newpt{\bb}{I}.$$
\end{corollary}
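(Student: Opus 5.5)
The plan is to use the monomial description of \autoref{thm: from rational powers to monomials}, applied three times: to $\ba+\bb=(x_1,\dots,x_s,y_1,\dots,y_t)$, whose generators all admit compatible systems of $p$-power roots, and separately to $\ba$ and to $\bb$. For $\ba+\bb$ this says
\[
\newpt{\ba+\bb}{I}=\sup\Big\{\tfrac{|a|+|b|}{p^e}\;\Big|\; a\in\mathbb{Z}_{\geq0}^s,\ b\in\mathbb{Z}_{\geq0}^t,\ x^{a/p^e}y^{b/p^e}\notin I\Big\},
\]
where $x^{a/p^e}=x_1^{a_1/p^e}\cdots x_s^{a_s/p^e}$ and similarly for $y$. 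It therefore suffices to bound $(|a|+|b|)/p^e$, for every such witness, by $\newpt{\ba}{I}+\newpt{\bb}{I}$.

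Fix a witness $x^{a/p^e}y^{b/p^e}\notin I$. Since $I$ is an ideal, neither factor lies in $I$, so $x^{a/p^e}\notin I$ and $y^{b/p^e}\notin I$. The second expression in \autoref{thm: from rational powers to monomials}, applied to $\ba$, gives $|a|/p^e\leq\newpt{\ba}{I}$, and applied to $\bb$ it gives $|b|/p^e\leq\newpt{\bb}{I}$. Adding these bounds and taking the supremum over all witnesses yields the claimed inequality.

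Before applying the theorem, check its hypotheses: $\ba$ and $\bb$ are each contained in $\ba+\bb\subseteq\sqrt I$, so they qualify. Also interpret the degenerate cases. If the right-hand side is $+\infty$, there is nothing to prove. The suprema are never $-\infty$ when $I\neq R$, since $a=0$ gives $1\notin I$; if $I=R$, all three sides are $-\infty$, and the inequality holds under the convention $-\infty+(-\infty)=-\infty$.

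I do not expect a real obstacle: the Briançon--Skoda input is already packaged in \autoref{thm: from rational powers to monomials}. The only points needing care are that the monomial description is used with the combined generating set of $\ba+\bb$, which is legitimate because roots of all generators are available, and that the product splitting is valid simply because a product lying outside $I$ forces each factor to lie outside $I$.
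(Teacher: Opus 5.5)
Your proof is correct and is essentially the paper's argument. The paper phrases it contrapositively: if $(|a|+|b|)/p^e$ exceeds $\newpt{\ba}{I}+\newpt{\bb}{I}$, then one factor lies in $I$, hence so does the monomial. You argue directly from a monomial outside $I$, and both versions rest on applying \autoref{thm: from rational powers to monomials} to $\ba$, to $\bb$, and to the combined generating set of $\ba+\bb$.
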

\begin{proof}
Consider a monomial of the form $x_1^{a_1/p^e} \cdots x_s^{a_s/p^e} y_1^{b_1/p^e}\cdots y_t^{b_t/p^e}$. If $\frac{|a|}{p^e} + \frac{|b|}{p^e} > \newpt{\ba}{I}  + \newpt{\bb}{I},$
then either $\frac{|a|}{p^e} > \newpt{\ba}{I}$ or $\frac{|b|}{p^e} > \newpt{\bb}{I}$, hence, by \autoref{thm: from rational powers to monomials}, 
\[x_1^{a_1/p^e} \cdots x_s^{a_s/p^e} y_1^{b_1/p^e}\cdots y_t^{b_t/p^e} \in I.\] 
Applying \autoref{thm: from rational powers to monomials} again we find that $\newpt{\ba + \bb}{I} \leq \newpt{\ba}{I}  + \newpt{\bb}{I}$ as wanted.
\end{proof}

\begin{corollary}\label{cor: basic functoriality of threshold}
    Let $R \to S$ be a map of perfectoid rings and 
    $\ba \subseteq \sqrt{I} \subseteq R$ be ideals.
    Suppose that $\ba$ is generated by finitely many elements admitting compatible systems of $p$-power roots. 
    Then
    \begin{enumerate}
        \item $\newpt{\ba}{I} \geq \newpt{\ba S}{IS}$;
        \item if $J \subseteq S$ is an ideal such that $I = J\cap R$
        then $\newpt{\ba}{I} = \newpt{\ba S}{J}$. 
    \end{enumerate}
\end{corollary}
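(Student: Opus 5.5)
The plan is to reduce both parts to the monomial characterization in \autoref{thm: from rational powers to monomials}. Write $\ba=(x_1,\ldots,x_d)$ with each $x_i$ admitting a compatible system of $p$-power roots in $R$. Their images in $S$ then admit compatible systems of $p$-power roots, namely the images of the chosen roots, and they generate $\ba S$. Moreover $\ba S\subseteq \sqrt{I}S\subseteq\sqrt{IS}$, and in part (b) $\ba S\subseteq\sqrt{J}$ because $I=J\cap R$ forces $IS\subseteq J$. So \autoref{thm: from rational powers to monomials} applies in $S$, with the ideal $IS$ in part (a) and $J$ in part (b). In each case the threshold is the supremum of $|a|/p^e$ over exponent vectors $a\in\mathbb{Z}_{\geq0}^d$ for which the monomial
\[
x^{a/p^e}\coloneqq x_1^{a_1/p^e}\cdots x_d^{a_d/p^e}
\]
lies outside the relevant ideal.

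For part (a), take $a$ and $e$ with the image of $x^{a/p^e}$ in $S$ not in $IS$. Then $x^{a/p^e}\notin I$, since otherwise its image would lie in $IS$. Hence every $|a|/p^e$ contributing to the supremum defining $\newpt{\ba S}{IS}$ also contributes to the supremum defining $\newpt{\ba}{I}$, which gives the inequality.

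For part (b), we use $I=J\cap R$. The condition $x^{a/p^e}\notin I$ holds exactly when the image of $x^{a/p^e}$ is not in $J$, because $x^{a/p^e}\in R$ and $J\cap R=I$. Here "$J\cap R$" means the preimage of $J$ under $R\to S$, which is how I will read it since $R\to S$ need not be injective. With this equivalence, the two sets of exponents $(a,e)$ in the monomial characterization coincide, so the two suprema are equal.

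The only delicate point is making sure the monomial description is applied with the same choice of roots on both sides. The third expression in \autoref{thm: from rational powers to monomials} is stated for a fixed compatible system of roots, and this is fine because we use the images of that system in $S$. I expect no real obstacle beyond this bookkeeping; the Briançon--Skoda input is entirely absorbed into \autoref{thm: from rational powers to monomials}.
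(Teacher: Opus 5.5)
Your proposal is correct and follows the paper's own argument: the paper just says both parts are immediate from the monomial characterization in \autoref{thm: from rational powers to monomials}. You have written out the checks it leaves implicit: the images of the chosen roots give compatible systems in $S$, $\ba S\subseteq\sqrt{IS}\subseteq\sqrt{J}$, and the sets of monomials are compared exactly as you describe (with $J\cap R$ correctly read as the preimage of $J$).
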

\begin{proof}
    Both properties are immediate from \autoref{thm: from rational powers to monomials}.
\end{proof}

\begin{proposition}[Reduction to principal ideals]\label{prop: reduce to principal}
    Let $R$ be a perfectoid ring and $\ba = (f_1, \ldots, f_n)$ an ideal generated by elements $f_i$ admitting compatible systems of $p$-power roots.
    If $I$ is an ideal such that $\ba \subseteq \sqrt{I}$, then
    \[
    \newpt{\ba}{I} 
    = \sup \big\{|\alpha| \newpt{(f_1^{\alpha_1}\cdots f_n^{\alpha_n})}{I} \mid 0\neq \alpha \in \mathbb{Z}_{\geq 0}^n\big\}.
    \]
\end{proposition}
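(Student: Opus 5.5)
We prove the two inequalities separately, both using the monomial description of \autoref{thm: from rational powers to monomials}. Throughout, fix compatible systems of $p$-power roots of the $f_i$. For a product $g=f^\alpha=f_1^{\alpha_1}\cdots f_n^{\alpha_n}$ we use the induced roots $g^{1/p^e}=\prod_i (f_i^{1/p^e})^{\alpha_i}$. This is legitimate because $g\in\sqrt I$, and \autoref{thm: from rational powers to monomials} applied to the single element $g$ gives
\[
\newpt{(g)}{I}=\sup\{k/p^e \mid g^{k/p^e}\notin I\}.
\]

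\textbf{The inequality $\geq$.} Fix $0\neq\alpha$. We have $(f^\alpha)\subseteq \ba^{|\alpha|}$, so \autoref{lem: elementary threshold}(a) and (c) give
\[
\newpt{(f^\alpha)}{I}\leq \newpt{\ba^{|\alpha|}}{I}=|\alpha|^{-1}\newpt{\ba}{I}.
\]
Multiplying by $|\alpha|$ gives $\geq$.

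\textbf{The inequality $\leq$.} By \autoref{thm: from rational powers to monomials}, it suffices to show the following: given a monomial $x=f_1^{a_1/p^e}\cdots f_n^{a_n/p^e}\notin I$, there is some $\alpha$ with $|\alpha|\,\newpt{(f^\alpha)}{I}\geq |a|/p^e$.

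The natural choice is $\alpha=a$ (if $a=0$ there is nothing to show). Then $x=(f^a)^{1/p^e}$ with the induced roots, so $\newpt{(f^a)}{I}\geq 1/p^e$. Unfortunately this only gives $|a|\newpt{(f^a)}{I}\geq |a|/p^e$, which is exactly what we need. So the argument closes immediately: the exponent $k=1$, $e$ fixed, witnesses $1/p^e$ for the principal threshold of $f^a$, and scaling by $|a|$ recovers $|a|/p^e$.

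Taking the supremum over all such monomials gives $\newpt{\ba}{I}\leq \sup_\alpha |\alpha|\newpt{(f^\alpha)}{I}$.

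\textbf{Main obstacle.} The only delicate point is checking that the principal threshold of $f^\alpha$ may be computed using the product root system. The statement of \autoref{thm: from rational powers to monomials} applies to any element admitting a compatible system of $p$-power roots, and the product system is compatible. The threshold itself is intrinsic, since it is defined via integral closure, so the choice of root system does not matter. Everything else is formal.
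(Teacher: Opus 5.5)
Your proof is correct and is essentially the paper's argument. The $\geq$ direction uses $(f^\alpha)\subseteq\ba^{|\alpha|}$ together with parts (a) and (c) of \autoref{lem: elementary threshold}, and the $\leq$ direction uses the monomial description in \autoref{thm: from rational powers to monomials}, where a monomial $(f^a)^{1/p^e}\notin I$ witnesses $\newpt{(f^a)}{I}\geq 1/p^e$. That last bound follows directly from the definition, since the $p^e$-th power of the element $(f^a)^{1/p^e}$ is $f^a\in\overline{(f^a)}$; your aside beginning ``Unfortunately'' is only a slip of wording, not a gap.
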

\begin{proof}
    Let $\mathcal{M}_k$ be the set of degree-$k$ monomials on the $f_i$. 
    For any $f \in \mathcal{M}_k$ 
    \[
    k \newpt{(f)}{I} \leq k \newpt{\ba^{k}}{I} = \newpt{\ba}{I},  
    \]
    since $f \in \ba^k$. The opposite inequality follows from \autoref{thm: from rational powers to monomials} and the definition of $\pt$:
    \[
    \newpt{\ba}{I} = \sup 
    \left \{
    \frac{k}{p^{e}} \mid f \in \mathcal{M}_k, f^{1/p^e} \notin I
    \right \} 
    \leq \sup 
    \left \{
    k \newpt{(f)}{I} \mid f \in \mathcal{M}_k
    \right \}.
    \]
\end{proof}

\subsection{Perfectoid pure thresholds}
In this subsection we introduce perfectoid thresholds and prove some of their basic properties.

\begin{definition}
    Let $(R, \m)$ be a Noetherian local ring of residual characteristic $p > 0$ and $\ba$ be a proper ideal of $R$. 
    If $B$ is a perfectoid $R$-algebra such that 
    $\ba B = (x_1, \ldots, x_g)$ where the elements $x_1, \ldots, x_g$ have compatible systems of $p$-power roots,
    then we will call $\ppt (\ba; B) \coloneqq \newpt{\ba {B}}{\np(B)}$ the \emph{perfectoid pure threshold of} $\ba$ with respect to $B$.
    
    We set the \emph{perfectoid pure threshold} of $\ba$, $\ppt(\ba)$, to be 
    $\sup \{\ppt (\ba; B)\}$, where $B$ ranges through all perfectoid $R$-algebras such that $\ba B$ can be generated by elements that have compatible systems of $p$-power roots. By Proposition~\ref{prop: properties of perfectoids} the latter condition is automatic if $p \in \ba$ (e.g., when $R$ is local and $\ba$ is the maximal ideal), or when $B$ is absolutely integrally closed.  
\end{definition}

\begin{definition}
\label{def.ThresholdForDivisorPairs}
    Suppose $(R, \m)$ is an excellent reduced $\mathbf{S}_2$ local ring of residual characteristic $p > 0$, $\ba$ a proper ideal of $R$,  and $\Delta$ is an effective Mumford $\Q$-divisor on $R$. 
    If $B$ is a perfectoid $R$-algebra such that $M_B ({<}\Delta)$ is defined (\autoref{universal divisorial module}) and 
    $\ba B = (x_1, \ldots, x_g)$ where the elements $x_1, \ldots, x_g$ have compatible systems of $p$-power roots
    in $B$, then from \autoref{nonpure ideal below delta} we define $\ppt (\ba; \Delta; B) \coloneqq \newpt{\ba {B}}{\np (B, {<}\Delta)}$.
    We further define 
    \(
    \ppt (\ba; \Delta) = \sup \{\ppt (\ba; \Delta; B)\}, 
    \)
    where $B$ ranges through perfectoid $R$-algebras such that $\ppt (\ba; \Delta; B)$ is defined.    
\end{definition}

\begin{remark} We collect some basic observations.
\begin{enumerate}
    \item The definition of $\ppt(\ba; \Delta; B)$ does not depend on the choice of generators of $\ba B$.\footnote{The quantity $\newpt{\mathfrak{a}B}{\mathcal{I}(B)}$ makes sense even without the hypothesis that $\mathfrak a B$ can be generated by elements that have compatible systems of $p$-power roots. We make this assumption so that we can freely apply \autoref{thm: from rational powers to monomials}. Note that, in practice we will often assume (or reduce to the case) that $B$ is absolutely integrally closed so this assumption is automatic.}  
    \item The pair $(R,\Delta)$ is perfectoid pure if and only if $\ppt(\ba; \Delta)\neq -\infty$ for some (equivalently, all) ideals $\ba\subseteq R$.
    \item It follows from the above that for two perfectoid $R$-algebras $B_1, B_2$, it may happen that $\ppt(\ba; B_1) \neq \ppt(\ba; B_2)$: e.g., take a perfectoid pure $R$ that is not a splinter, and take $B_1=\raftypfd$ and $B_2=\widehat{R^+}$. 
    \item On the other hand, we do not know whether it is possible that $\ppt(\ba; B_1) \neq \ppt(\ba; B_2)$ when $R\to B_1$ and $R\to B_2$ are both pure. In fact, we will show that this is not possible in positive characteristic, see \autoref{prop: recover char p}. 
\end{enumerate}
\end{remark}

\begin{remark}\label{rmk: limiting sequence for thresholds}
    In the setting of \autoref{universal divisorial module}, 
    it easily follows from \autoref{properties nonpure ideal below delta}
    that 
    \[\ppt (\ba; \Delta; B) = \liminf_{e \to \infty} \newpt{\ba B} {\np(B, M_B(\mathfrak d_e))} = 
    \lim_{e \to \infty} \newpt{\ba B}{\np(B, M_B(\mathfrak d_e))}.
    \]
    Namely, 
    since $\np(B, M_B(\mathfrak d_e)) \subseteq \np(B, {<}\Delta)$, 
    the sequence $\newpt{\ba B}{\np(B, M_B(\mathfrak d_e))}$
    is non-increasing and is bounded below by 
    $\ppt (\ba; \Delta; B)$.
    The existence of the limit and  $\leq$ inequality follow, 
    and the opposite inequality is an easy consequence of 
    Theorem~\ref{thm: from rational powers to monomials}.
\end{remark}

As corollaries to \autoref{cor: basic functoriality of threshold} we get comparison results for  perfectoid pure thresholds.

\begin{corollary}\label{cor: ppt in algebra extensions}
    Let $(R, \m)$ be a Noetherian local ring of residual characteristic $p > 0$ and suppose that $f\colon A \to B$ is a map of perfectoid $R$-algebras such that $R\to B$ is pure. 
    Suppose that $x_1, \ldots, x_d \in R$ have compatible systems of $p$-power roots in $A$ and set $\ba = (x_1, \ldots, x_d)$.
    Then 
    $\ppt (\ba; A) \geq 
    \ppt (\ba; B)$ and the equality holds if $f$ is $\m$-completely pure.

    Moreover, if $R$ is an excellent reduced $\mathbf{S}_2$ ring and $\Delta$ is an effective Mumford $\Q$-divisor on $R$, then 
    $\ppt (\ba; \Delta; A) \geq 
    \ppt (\ba; \Delta; B)$ whenever both are defined 
    and the equality holds if $f$ is $\m$-completely pure. 
\end{corollary}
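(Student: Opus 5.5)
The plan is to reduce both assertions to \autoref{cor: basic functoriality of threshold}, applied to the map $f\colon A\to B$. The images $f(x_i)$ keep compatible systems of $p$-power roots, so $\ba B$ is generated by elements with such roots and the thresholds are defined. By definition, $\ppt(\ba;A)=\newpt{\ba A}{\np(A)}$ and $\ppt(\ba;B)=\newpt{\ba B}{\np(B)}$, where the non-pure loci are taken over $R$. First I check that these loci are ideals containing $\m$ times the algebra, using \autoref{lem: injective hull splitting ideal}. Hence they contain $\ba$ in their radicals, and $\pt$ makes sense.

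For the inequality, \autoref{lem: functoriality of non-pure ideals}\autoref{part algebra functoriality}, applied to the pointed map $(A,A)\to(B,B)$, gives $\np^R(A)\subseteq f^{-1}(\np^R(B))$. Set $I=\np(A)$ and $J=\np(B)$. Since $IB\subseteq J$, \autoref{lem: elementary threshold}(d) gives
\[
\newpt{\ba B}{J}\leq\newpt{\ba B}{IB}.
\]
Then \autoref{cor: basic functoriality of threshold}(a) gives $\newpt{\ba B}{IB}\leq\newpt{\ba A}{I}$. Alternatively, I can argue directly with the monomial description in \autoref{thm: from rational powers to monomials}: if a monomial $x^{a/p^e}$ lies outside $J$, then its preimage lies outside $I$.

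For equality when $f$ is $\m$-completely pure, I use \autoref{lem: functoriality of non-pure ideals}\autoref{part p-comp pure}, taking $S=R$ local with Jacobson radical $\m$. This requires $A/\m^nA\to B/\m^nB$ to be pure, which is what $\m$-complete purity means. The result is $\np(A)=A\cap\np(B)$, that is, $I=f^{-1}(J)$. Then \autoref{cor: basic functoriality of threshold}(b) gives $\newpt{\ba A}{I}=\newpt{\ba B}{J}$.

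For the divisor version, I first note that \autoref{base change nonpure ideal below delta} yields $\np(A,{<}\Delta)\subseteq f^{-1}(\np(B,{<}\Delta))$ for any map of perfectoid $R$-algebras. That gives the inequality exactly as above. For equality, I use the tower formula \eqref{nonpure ideal tower formula} with a limiting sequence $\mathfrak d_e$ on $A$, pushed forward to $B$. By \autoref{properties of M_B}\autoref{part functoriality of M_B}, $M_B(\mathfrak d_{e,B})\cong B\otimes_AM_A(\mathfrak d_e)$, and when $f$ is $\m$-completely pure the map $M_A(\mathfrak d_e)\to M_B(\mathfrak d_{e,B})$ is pure modulo $\m^n$. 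Then \autoref{lem: functoriality of non-pure ideals}\autoref{part p-comp pure} gives $\np(A,M_A(\mathfrak d_e))=A\cap\np(B,M_B(\mathfrak d_{e,B}))$ for each $e$. Taking the ascending union gives $\np(A,{<}\Delta)=A\cap\np(B,{<}\Delta)$, and \autoref{cor: basic functoriality of threshold}(b) finishes the argument.

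The main point needing care is this last transfer of $\m$-complete purity through the tensor product $M_A(\mathfrak d_e)\otimes_A B$. It should follow because base change of a pure map modulo $\m^n$ stays pure. I also have to make sure that the union formula for the non-pure ideals does not depend on the chosen limiting sequence, which is the content of \autoref{divisorial module computed by towers}.
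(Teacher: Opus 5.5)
Your proposal is correct and follows the paper's proof. The paper likewise combines \autoref{cor: basic functoriality of threshold} with \autoref{lem: functoriality of non-pure ideals}\autoref{part p-comp pure} for the first part, and cites \autoref{base change nonpure ideal below delta} for the divisor version. Your explicit unwinding of the divisor case, via \autoref{properties of M_B}\autoref{part functoriality of M_B} and the tower formula, is a welcome precision: that proposition is stated for $\m$-completely faithfully flat maps, while the contraction equality you need uses only $\m$-complete purity.
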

\begin{proof}
    The first part follows from 
    \autoref{cor: basic functoriality of threshold} and \autoref{lem: functoriality of non-pure ideals} { \autoref{part p-comp pure}} and the second part from 
    \autoref{base change nonpure ideal below delta}. 
\end{proof}

\begin{corollary}\label{cor: ppt in ring extensions}
     Let $f \colon (R, \m) \to (S, \n)$ be a pure map of Noetherian local rings of residual characteristic $p > 0$ and let $A$ be a perfectoid $S$-algebra. Suppose that $x_1, \ldots, x_d \in R$ have compatible systems of $p$-power roots in $A$ and set $\ba = (x_1, \ldots, x_d)$.
     Suppose $g \colon A \to B$ is any map of perfectoid $S$-algebras (e.g., $A = B)$.
     Then 
     \[
     \ppt (\ba; A) \geq \ppt (\ba S; B).
     \]
     Moreover, the equality holds if $f$ is faithfully flat, $g$ is $p$-completely pure, and $\m S = \n$.
\end{corollary}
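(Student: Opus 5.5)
The plan is to reduce to \autoref{cor: basic functoriality of threshold} by comparing non-pure ideals, exactly as in \autoref{cor: ppt in algebra extensions}. Here $\ppt(\ba; A)=\newpt{\ba A}{\np^R(A)}$ and $\ppt(\ba S; B)=\newpt{\ba B}{\np^S(B)}$. Since $x_1,\dots,x_d$ have compatible $p$-power roots in $A$, their images in $B$ do as well, so both thresholds are defined. Both sides are computed inside perfectoid rings, and \autoref{thm: from rational powers to monomials} expresses each as a supremum over monomials $x^{a/p^e}$ not lying in the relevant non-pure ideal. It therefore suffices to compare membership of these monomials.

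For the inequality, take $x^{a/p^e}\in A$ with $g(x^{a/p^e})\notin\np^S(B)$. Then $S\to B$, $1\mapsto g(x^{a/p^e})$, is pure. Precomposing with the pure map $f$, the map $R\to B$, $1\mapsto g(x^{a/p^e})$, is pure. This map factors through $R\to A$, $1\mapsto x^{a/p^e}$, so the latter is pure as well. Hence $\np^R(A)\subseteq g^{-1}(\np^S(B))$, which is \autoref{lem: functoriality of non-pure ideals}\autoref{part: ring functoriality} combined with \autoref{part algebra functoriality}. If a monomial avoids $\np^S(B)$ in $B$, it therefore avoids $\np^R(A)$ in $A$. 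Feeding this into the monomial description of \autoref{thm: from rational powers to monomials}, or equivalently into \autoref{cor: basic functoriality of threshold}(1) with $I=\np^R(A)$ and $IB\subseteq\np^S(B)$, gives $\ppt(\ba;A)\ge\ppt(\ba S;B)$. The ideal hypotheses are satisfied: $\ba A\subseteq\m A\subseteq\np^R(A)$ by \autoref{lem: injective hull splitting ideal}, and similarly over $S$, since $\ba\subseteq\m$ because $\ba$ is proper.

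For equality, I want the reverse containment $g^{-1}(\np^S(B))\subseteq \np^R(A)$, i.e.\ $\np^R(A)=g^{-1}(\np^S(B))$; then \autoref{cor: basic functoriality of threshold}(2) finishes. First, $f$ is flat local with $\m S=\n$, so \autoref{lem: functoriality of non-pure ideals}\autoref{part: invariance of field extension} gives $\np^R(A)=\np^S(A)$ when $A$ is viewed as an $S$-algebra. Second, $g$ is $p$-completely pure and $p\in\n$. I need purity of $A/\n^kA\to B/\n^kB$ for all $k$. This follows because $\n^k\supseteq p^N$ for some $N$, and purity modulo $p^N$ base-changes to purity modulo $\n^k$. \autoref{lem: functoriality of non-pure ideals}\autoref{part p-comp pure} then gives $\np^S(A)=A\cap\np^S(B)$, meaning $g^{-1}$.

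The main point to check is this reduction from $p$-complete purity to $\n$-adic purity, together with the bookkeeping that the contraction $g^{-1}$ plays the role of ``$I=J\cap R$'' in \autoref{cor: basic functoriality of threshold}(2). That corollary is phrased for $R\to S$ a map of perfectoid rings, and $A\to B$ is one; since $g$ need not be injective, I would argue directly with the monomial formula of \autoref{thm: from rational powers to monomials}. The argument uses only that $x^{a/p^e}\in\np^R(A)$ if and only if $g(x^{a/p^e})\in\np^S(B)$, which is exactly the established equality of ideals.
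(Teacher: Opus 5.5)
Your proposal is correct and is essentially the paper's argument. The inequality comes from $\np^R(A)B\subseteq\np^R(B)\subseteq\np^S(B)$, using the algebra-functoriality and ring-functoriality parts of \autoref{lem: functoriality of non-pure ideals}. The equality comes from $\np^S(B)\cap A=\np^R(A)$, using the field-extension-invariance and $p$-complete-purity parts of that lemma, after which \autoref{cor: basic functoriality of threshold} (or directly the monomial formula of \autoref{thm: from rational powers to monomials}) finishes. Your remark that $p$-complete purity of $g$ gives purity of $A/\n^kA\to B/\n^kB$ is a useful addition: it is a base change of purity modulo $p^k$, since $p^k\in\n^k$, and it makes explicit a step the paper leaves implicit.
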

\begin{proof}
    For the first claim, \autoref{part: ring functoriality} and \autoref{part algebra functoriality} of \autoref{lem: functoriality of non-pure ideals}
    show that $\np^R (A)B \subseteq \np^R (B) \subseteq \np^S (B)$, proving the required inequality by \autoref{cor: basic functoriality of threshold}. 
    For the second claim, we use instead \autoref{part: invariance of field extension} and \autoref{part p-comp pure} of \autoref{lem: functoriality of non-pure ideals}
    to deduce that $\np^S (B) \cap A = \np^R (A)$ .
\end{proof}

We record the following version of \autoref{cor: ppt in ring extensions}
for pairs.

\begin{corollary}\label{cor: ppt of pairs in ring extensions}
     Let $f \colon (R, \m) \to (R', \m')$ be a flat map between excellent reduced $\mathbf{S}_2$ local rings of residual characteristic $p > 0$. Let $\Delta$ be an effective Mumford $\Q$-divisor on $R$ 
     and suppose that its flat pullback $\Delta'$ is also a Mumford $\Q$-divisor (\autoref{flat pullback is Mumford}).    
     Suppose that $P$ is a perfectoid $R'$-algebra such that $M_P({<} \Delta)$ is defined (e.g., $P$ is absolutely integrally closed) and $x_1, \ldots, x_d \in R$ have compatible $p$-power roots in $P$. Set $\ba = (x_1, \ldots, x_d)$.
     Then 
     \[
     \ppt (\ba; \Delta; P) \geq \ppt (\ba R'; \Delta'; P).
     \]
     Moreover, the equality holds if $\m R' = \m'$. 
\end{corollary}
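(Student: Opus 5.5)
The plan is to mimic the proof of \autoref{cor: ppt in ring extensions}, with non-pure ideals replaced by $\np(P,{<}\Delta)$ and $\np(P,{<}\Delta')$. The threshold comparison will then follow from \autoref{cor: basic functoriality of threshold} applied with the identity map $P\to P$. It therefore suffices to show
\[
\np^R(P,{<}\Delta)\subseteq \np^{R'}(P,{<}\Delta'),
\]
with equality when $\fram R'=\fram'$. Indeed, $\ba P=\ba R'P$ is generated by the $x_i$, which have compatible $p$-power roots, so both thresholds are rational powers thresholds of the same ideal $\ba P$ with respect to these two ideals. Part (d) of \autoref{lem: elementary threshold} then gives the inequality, and equality of the ideals gives equality of the thresholds.

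The first step is to identify the modules. I would choose a limiting sequence $(\Delta_e,\mathfrak d_e)$ for $\Delta$ on $P$ using \autoref{construct.LimitingConstructionWithComparisons}. Concretely, this means taking $S_e=R[X_e]/(X_e^{mp^e}-f)$, with realizations through $R\to R'\to P$. Next I would apply \autoref{properties of M_B}\autoref{part flat base change of M_B} to the flat map $R\to R'$. This produces $\Delta'_e$-data $\mathfrak d'_e$ on $P$, with $S'_e=S_e\otimes_R R'$ and $\Delta'_e$ the flat pullback of $\Delta_e$, together with pointed isomorphisms $M_P(\mathfrak d'_e)\cong M_P(\mathfrak d_e)$.

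One has to check that $(\Delta'_e,\mathfrak d'_e)$ is a limiting sequence for $\Delta'$. Here I use that the flat pullback of $\Delta$ is Mumford by hypothesis (\autoref{flat pullback is Mumford}). Flat pullback preserves the relation $\prec$ and coefficientwise convergence, since a prime of $R'$ over a height-one $P_i$ receives coefficient $\mathrm{ord}$ times that of $P_i$ and there are finitely many components. Some care is needed here because the divisors $\Delta'_e$ must be effective Mumford, but they are flat pullbacks of the Mumford divisors $\Delta_e$ whose support lies in that of $\Delta$, so this is fine.

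Once the sequences match, \autoref{properties nonpure ideal below delta} gives
\[
\np^R(P,{<}\Delta)=\bigcup_e \np^R(P,M_P(\mathfrak d_e)),\qquad \np^{R'}(P,{<}\Delta')=\bigcup_e\np^{R'}(P,M_P(\mathfrak d'_e)).
\]
The two families of modules are identical as pointed $P$-modules; only the base ring used to test purity differs. Since $R\to R'$ is flat local, it is faithfully flat and hence pure, so \autoref{lem: functoriality of non-pure ideals}\autoref{part: ring functoriality} gives the containment termwise. When $\fram R'=\fram'$, part \autoref{part: invariance of field extension} of the same lemma gives equality termwise. Taking unions finishes the argument.

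The main obstacle I expect is the bookkeeping of the codimension-one envelopes. $M_P({<}\Delta)$ uses envelopes over $R$, while $M_P({<}\Delta')$ uses envelopes over $R'$, and these need not agree. I would sidestep this by working with the unions over $\np(P,M_P(\mathfrak d_e))$ given by \autoref{properties nonpure ideal below delta}, which involve no envelopes. This relies on \autoref{lem: functoriality of non-pure ideals}\autoref{part envelope} over each base separately, which is legitimate because both $R$ and $R'$ are $\mathbf{S}_2$.
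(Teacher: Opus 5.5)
Your proposal is correct and follows the paper's proof: you identify the pointed modules via \autoref{properties of M_B}\autoref{part flat base change of M_B}, then apply parts \autoref{part: ring functoriality} and \autoref{part: invariance of field extension} of \autoref{lem: functoriality of non-pure ideals}. Working termwise along a limiting sequence via \autoref{properties nonpure ideal below delta} is a sensible refinement: it avoids having to match the codimension-one envelopes taken over $R$ and over $R'$, a point the paper's one-line identification $M_P({<}\Delta)\cong M_P({<}\Delta')$ passes over. For the same reason you do not need the specific tower of \autoref{construct.LimitingConstructionWithComparisons}, since any limiting sequence for $\Delta$ on $P$ works.
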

\begin{proof}
By \autoref{properties of M_B},  $M_P ({<} \Delta) \cong M_P ({<} \Delta')$. 
\autoref{lem: functoriality of non-pure ideals} \autoref{part: ring functoriality} then shows that $\np (P, {<}\Delta) \subseteq \np (P, {<}\Delta')$ and the first claim follows. 
Likewise, \autoref{lem: functoriality of non-pure ideals} \autoref{part: invariance of field extension} implies the second claim.
\end{proof}

\begin{corollary}\label{cor: ppt and completion}
    Let $(R, \m)$ be a Noetherian local ring of residual characteristic $p > 0$ and $\ba$ be an ideal. 
    Then  $\ppt (\ba) = \ppt (\widehat{\ba})$. 

    Moreover, if $R$ is an excellent reduced $\mathbf{S}_2$ ring and $\Delta$ is an effective Mumford $\Q$-divisor on $R$, then $\ppt (\ba; \Delta) = \ppt (\widehat{\ba}; \widehat{\Delta})$, 
    where $\widehat{\Delta}$ is the flat pullback of $\Delta$. 
\end{corollary}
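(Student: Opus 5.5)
We prove each of $\ppt(\ba)\leq\ppt(\widehat\ba)$ and $\ppt(\ba)\geq\ppt(\widehat\ba)$ by comparing the thresholds algebra by algebra. Here $\widehat\ba=\ba\widehat R$. The tools are the flat local map $R\to\widehat R$ with $\m\widehat R=\widehat\m$, together with the invariance statements \autoref{cor: ppt in ring extensions}, \autoref{cor: ppt in algebra extensions}, and (for pairs) \autoref{cor: ppt of pairs in ring extensions}.

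\textbf{The inequality $\ppt(\widehat\ba)\leq\ppt(\ba)$.} Every perfectoid $\widehat R$-algebra $B$ is also a perfectoid $R$-algebra via $R\to\widehat R\to B$. Fix such a $B$ for which $\widehat\ba B=\ba B$ is generated by elements with compatible $p$-power roots. Since $R\to\widehat R$ is faithfully flat with $\m\widehat R=\widehat\m$, \autoref{lem: functoriality of non-pure ideals}\autoref{part: invariance of field extension} gives $\np^R(B)=\np^{\widehat R}(B)$. Hence $\ppt(\ba;B)=\ppt(\widehat\ba;B)$, and taking suprema gives the inequality.

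\textbf{The inequality $\ppt(\ba)\leq\ppt(\widehat\ba)$.} Start from a perfectoid $R$-algebra $B$. By Andr\'e's flatness lemma (\autoref{thm: Andre}) and \autoref{cor: ppt in algebra extensions}, we may replace $B$ by a $p$-completely faithfully flat absolutely integrally closed extension without changing $\ppt(\ba;B)$. Set $\widehat B=B^{\wedge\m}$, which is perfectoid by \autoref{prop: properties of perfectoids} and is naturally an $\widehat R$-algebra. Then pass to a $p$-completely faithfully flat absolutely integrally closed $C$ over $\widehat B$.

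The map $B\to C$ is $\m$-completely faithfully flat, since $B/\m^n\cong\widehat B/\m^n$. So \autoref{lem: functoriality of non-pure ideals}\autoref{part p-comp pure} gives $\np^R(C)\cap B=\np^R(B)$. By \autoref{cor: basic functoriality of threshold}(b), this yields $\ppt(\ba;B)=\ppt(\ba;C)$. Finally, $\ppt(\ba;C)=\ppt(\widehat\ba;C)$ by the first step, which gives the inequality. This is exactly the argument of \autoref{cor: ppt in algebra extensions} with the extension taken to be an $\m$-adic completion, and I expect no real difficulty here.

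\textbf{Pairs.} The divisor version runs the same way, with non-pure ideals replaced by $\np(\,\cdot\,,{<}\Delta)$. For the $\widehat R$-algebra $C$, \autoref{cor: ppt of pairs in ring extensions} (equality case, since $\m\widehat R=\widehat\m$) gives $\ppt(\ba;\Delta;C)=\ppt(\widehat\ba;\widehat\Delta;C)$. This uses that $\widehat\Delta$ is Mumford because $R\to\widehat R$ is regular (\autoref{flat pullback is Mumford}). Conversely, \autoref{completion of divisor pairs}\autoref{completion of divisor pairs.NonsplittingIdeal2} gives $\np^{\widehat R}(C,{<}\widehat\Delta)\cap B=\np^R(B,{<}\Delta)$ for absolutely integrally closed $B$ and $C$ as constructed above. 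Combined with \autoref{cor: basic functoriality of threshold}(b), this gives $\ppt(\ba;\Delta;B)=\ppt(\widehat\ba;\widehat\Delta;C)$.

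\textbf{Main obstacle.} The one point needing care is reducing to absolutely integrally closed $B$ in the pair setting. This uses \autoref{cor: ppt in algebra extensions} for pairs, which is valid because $M_B({<}\Delta)$ is defined on the enlarged algebra by \autoref{base change nonpure ideal below delta}.
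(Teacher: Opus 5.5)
Your proposal is correct and follows essentially the paper's route. In the ideal case you compare non-pure ideals along $R\to\widehat R$ (using $\m\widehat R=\widehat\m$) and along the $\m$-completely faithfully flat map $B\to B^{\wedge\m}\to C$; in the pair case your second inequality is exactly the paper's combination of \autoref{completion of divisor pairs}\autoref{completion of divisor pairs.NonsplittingIdeal2} with \autoref{cor: basic functoriality of threshold}. The differences are cosmetic: you cite \autoref{cor: ppt of pairs in ring extensions} where the paper cites \autoref{completion of divisor pairs}\autoref{completion of divisor pairs.NonsplittingIdeal1} (both rest on \autoref{properties of M_B} and \autoref{lem: functoriality of non-pure ideals}\autoref{part: invariance of field extension}), and you state explicitly the reduction to absolutely integrally closed algebras, which the paper leaves implicit.
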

\begin{proof}
    Any $\widehat{R}$-pure perfectoid $\widehat{R}$-algebra $B$ 
    is also an $R$-algebra, so $\ppt (\ba) \geq \ppt (\widehat{\ba})$ 
    by \autoref{cor: ppt in ring extensions}. 
    Furthermore,  if $B$ is a perfectoid $R$-algebra then $B^{\wedge \m}$
    is a perfectoid $\widehat{R}$-algebra thus 
    $\ppt (\ba; B) = \ppt (\widehat{\ba}; B^{\wedge \m})$
    by \autoref{cor: ppt in ring extensions}. 
    Therefore, $\ppt (\ba) \leq \ppt (\widehat{\ba})$. 

    We now proceed to pairs. For every absolutely integrally closed
    perfectoid $\widehat R$-algebra $C$,  \autoref{completion of divisor pairs} \autoref{completion of divisor pairs.NonsplittingIdeal1} gives that 
    $\ppt(\widehat\ba;\widehat\Delta;C)
    =\ppt(\ba;\Delta;C)$, and hence
    $\ppt(\widehat\ba;\widehat\Delta)\leq\ppt(\ba;\Delta)$.
    Conversely, for an absolutely integrally closed perfectoid
    $R$-algebra $B$, set $\widehat B=B^{\wedge\m}$ and choose
    $\widehat B\to C$ as in \autoref{completion of divisor pairs} \autoref{completion of divisor pairs.NonsplittingIdeal2}. Then \autoref{completion of divisor pairs} \autoref{completion of divisor pairs.NonsplittingIdeal2} and
    \autoref{cor: basic functoriality of threshold} show that
    $\ppt(\ba;\Delta;B)
    =\ppt(\widehat\ba;\widehat\Delta;C)$; taking suprema over $B$ gives
    the converse inequality.
\end{proof}

\begin{proposition}\label{prop: perfectoid threshold from one algebra}
    Let $(R, \m, k)$ be a perfectoid pure local ring of residual characteristic $p > 0$, let $\ba$ be an ideal of $R$, and let $\Delta$ be an effective Mumford $\Q$-divisor on $R$. Assume that either $\Delta = 0$ or $R$ is an excellent reduced $\mathbf{S}_2$ ring.
        Then 
    \begin{enumerate}
    \item  $\ppt(\ba) = \ppt (\ba; \widehat{R}^{A_\infty}_\perfd)$
    whenever $\ba \widehat{R}^{A_\infty}_\perfd$ can be generated by elements admitting compatible systems of $p$-power roots (here $A$ and $A_{\infty}$ are as in \autoref{notation RAinfty});
    \item likewise, if $f = f_{\Delta} \in R$ is chosen as in \autoref{construction SnD}, 
    then we have $\ppt(\ba; \Delta) = \ppt (\ba; \Delta; \widehat{R}^{A_\infty}_\perfd)$
    whenever $f$ admits a compatible system of $p$-power roots in 
    $\widehat{R}^{A_\infty}_\perfd$ and $\ba \widehat{R}^{A_\infty}_\perfd$ can be generated by elements admitting compatible systems of $p$-power roots;
    \item  
    there is an absolutely integrally closed perfectoid $R$-algebra $B$ such that $\ppt(\ba; \Delta) = \ppt (\ba; \Delta; B)$ for all $\ba\subseteq R$.
    \end{enumerate}
\end{proposition}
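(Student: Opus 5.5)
First reduce to the complete case. By \autoref{cor: ppt and completion}, $\ppt(\ba)=\ppt(\widehat\ba)$ and $\ppt(\ba;\Delta)=\ppt(\widehat\ba;\widehat\Delta)$. A perfectoid $\widehat R$-algebra is also a perfectoid $R$-algebra, and the non-pure ideals over $R$ and over $\widehat R$ agree by \autoref{lem: functoriality of non-pure ideals}\autoref{part: invariance of field extension} (and \autoref{completion of divisor pairs} for pairs). So I may work with $\widehat R$ throughout and assume $R$ is complete. The inequality $\ppt(\ba;\Delta)\geq\ppt(\ba;\Delta;\widehat R^{A_\infty}_\perfd)$ holds by definition, since the supremum ranges over all admissible $B$. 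The work is in the reverse inequality.

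Fix an arbitrary admissible $B$. I would replace $B$ by a $p$-completely faithfully flat absolutely integrally closed perfectoid $C$ using \autoref{thm: Andre}; after $\m$-adic completion this map is $\m$-completely pure. By \autoref{cor: ppt in algebra extensions} we get $\ppt(\ba;\Delta;B)=\ppt(\ba;\Delta;C)$, so it suffices to treat absolutely integrally closed, $\m$-adically complete $C$. Next, following the proof of \autoref{can test principal pairs on RAinfty} (\cite[Lemma~4.24]{BMPSTWW3}), choose a map $W(k^{1/p^\infty})\to C$ compatible with $C_k\to C$, together with compatible $p$-power roots in $C$ of $p$ and of the images of the variables $x_i$ of $A$. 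Where $f$ or generators of $\ba$ were adjoined as variables, I choose these roots compatibly with the roots already fixed in $\widehat R^{A_\infty}_\perfd$. The universal property of perfectoidization then gives an $R$-algebra map $\widehat R^{A_\infty}_\perfd\to C$. Because the roots are chosen compatibly, this map carries the limiting sequence of \autoref{construct.LimitingConstructionWithComparisons} on $\widehat R^{A_\infty}_\perfd$ (built from $f^{1/p^\infty}$) to one on $C$, and it carries root-generators of $\ba$ to root-generators of $\ba C$.

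\autoref{cor: ppt in algebra extensions} (first inequality, no purity needed beyond definedness) then yields $\ppt(\ba;\Delta;\widehat R^{A_\infty}_\perfd)\geq\ppt(\ba;\Delta;C)$. This uses \autoref{base change nonpure ideal below delta} for $\np(\widehat R^{A_\infty}_\perfd,{<}\Delta)\subseteq\np(C,{<}\Delta)\cap \widehat R^{A_\infty}_\perfd$, combined with \autoref{cor: basic functoriality of threshold}. Taking the supremum over $B$ proves (b), and (a) is the case $\Delta=0$. The main obstacle is this root-compatibility bookkeeping. The $\Delta$-data on $C$ obtained by base change must be those used to compute $\np(C,{<}\Delta)$. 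This is harmless by \autoref{divisorial module computed by towers}, since $M_C({<}\Delta)$ is independent of the limiting sequence. The generators of $\ba$ must also map to elements with roots; if needed, I would first enlarge $A$ by variables mapping to generators of $\ba$, which only changes $\widehat R^{A_\infty}_\perfd$ by a $p$-completely faithfully flat extension and does not change the threshold, by \autoref{cor: ppt in algebra extensions}.

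For (c), take $B$ to be an absolutely integrally closed perfectoid $p$-completely faithfully flat extension of $\widehat R^{A_\infty}_\perfd$, where $A$ has been enlarged by a variable mapping to $f$. By \autoref{cor: ppt in algebra extensions} (equality case), $\ppt(\ba;\Delta;B)=\ppt(\ba;\Delta;\widehat R^{A_\infty}_\perfd)$ for every $\ba$. There is a subtlety: $\widehat R^{A_\infty}_\perfd$ need not have root-generators for an arbitrary $\ba$, whereas $B$ always does. To handle this, I would rerun the argument of the previous paragraph directly with $B$ in place of $\widehat R^{A_\infty}_\perfd$. An arbitrary $C$ receives a map from $\widehat R^{A_\infty}_\perfd$, and tensoring it with $B$ and then applying \autoref{thm: Andre} produces a common absolutely integrally closed extension $D$ of $B$ and $C$, with $B\to D$ $p$-completely faithfully flat. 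Then
\[
\ppt(\ba;\Delta;C)\leq\ppt(\ba;\Delta;D)=\ppt(\ba;\Delta;B),
\]
which shows $\ppt(\ba;\Delta)=\ppt(\ba;\Delta;B)$ for all $\ba$ at once.
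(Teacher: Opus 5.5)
Your argument for (a) and (b) is the paper's: pass to the completion, replace an arbitrary $B$ by an absolutely integrally closed $p$-completely faithfully flat extension (which leaves the threshold unchanged, by the equality case of \autoref{cor: ppt in algebra extensions}), map $\widehat R^{A_\infty}_\perfd$ into it, and apply the inequality of \autoref{cor: ppt in algebra extensions}. Your root bookkeeping is harmless but unnecessary, since $\newpt{\ba B}{I}$ and $M_C({<}\Delta)$ do not depend on the chosen generators or limiting sequence. The problem is the displayed chain in (c). Write $P_0=\widehat R^{A_\infty}_\perfd$ and $E=C\widehat{\otimes}_{P_0}B$, so that $D$ is obtained from $E$ by \autoref{thm: Andre}. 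The leg $B\to E$ is the base change of the arbitrary map $P_0\to C$. Hence $B\to D$ has no reason to be $p$-completely flat, and the equality $\ppt(\ba;\Delta;D)=\ppt(\ba;\Delta;B)$ is not available. For the map $C\to D$, \autoref{cor: ppt in algebra extensions} gives $\ppt(\ba;\Delta;C)\geq\ppt(\ba;\Delta;D)$, which is the opposite of what you wrote. Non-pure ideals only grow along maps, as you yourself use in (b). So, as written, the chain does not prove $\ppt(\ba;\Delta;C)\leq\ppt(\ba;\Delta;B)$.

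The fix is to swap the roles of the two legs. The map $C\to E$ is the base change of the $p$-completely faithfully flat map $P_0\to B$. So $C\to D$ is $p$-completely, hence $\m$-completely, faithfully flat. This gives $\ppt(\ba;\Delta;C)=\ppt(\ba;\Delta;D)$; note that $R\to D$ is pure whenever $R\to C$ is, by \autoref{pure + p-comp pure is pure}. The mere map $B\to D$ then gives $\ppt(\ba;\Delta;D)\leq\ppt(\ba;\Delta;B)$. Together these yield $\ppt(\ba;\Delta;C)\leq\ppt(\ba;\Delta;B)$. No choice of $A$ depending on $\ba$ enters, so this works for every $\ba$ at once. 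Combined with restricting the supremum to absolutely integrally closed $C$, it proves (c). With this correction, your treatment of (c) is more thorough than the paper's one-line argument. That argument is phrased for a single $\widehat R^{A_\infty}_\perfd$ whose variables were chosen to give root generators for the finitely many ideals at hand.
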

\begin{proof}
    First, we note that, by \autoref{cor: ppt and completion}, we may assume that $R$ is complete and that there always exist relative perfectoidizations 
    $R^{A_\infty}_{\perfd}$ on which $\ppt (\ba; R^{A_\infty}_{\perfd})$
    is defined. 
    Namely, we can take any coefficient ring $\coff(k) \subseteq R$ and build a  surjection $\phi \colon A \coloneqq \coff(k)\llbracket x_1, \ldots, x_n\rrbracket \twoheadrightarrow R$ so that $\ba = (x_1, \ldots, x_g)R$ for some integer $g \in \{1, \ldots, n\}$. Then $x_i$ admit compatible systems of $p$-power roots in $R^{A_\infty}_{\perfd}.$ 
    The same argument applies to any finite number of ideals and to pairs
    via \autoref{can test divisor pairs on RAinfty}, by setting $x_{g+1} = f$. 

    We now show the first two claims by proving that $\ppt (\ba; \Delta; R^{A_\infty}_{\perfd}) \geq \ppt(\ba; \Delta; P)$ for any 
    $R$-pure perfectoid ring $P$ and $A$ constructed as in the first paragraph. 
    By \autoref{thm: Andre} and \autoref{cor: ppt in algebra extensions}, we may assume that $P$ is absolutely integrally closed, so, as explained in \cite[Lemma~4.23]{BMPSTWW3}, 
    there is a map $R^{A_\infty}_{\perfd} \to P$. Thus 
    $\ppt (\ba; \Delta; R^{A_\infty}_{\perfd}) \geq \ppt (\ba; \Delta; P)$ by \autoref{cor: ppt in algebra extensions}. By symmetry, this argument also shows that any two appropriate relative perfectoidizations have equal thresholds. 

    The final claim follows from \autoref{cor: ppt in algebra extensions} after applying \autoref{thm: Andre} to build a $p$-completely {faithfully flat} map $R^{A_\infty}_{\perfd} \to P$ to an absolutely integrally closed perfectoid $R$-algebra.  
\end{proof}

\begin{corollary}\label{cor: perfectoid threshold via pure pairs}
    Let $(R, \m)$ be an excellent reduced $\mathbf{S}_2$ ring of residual characteristic $p > 0$, $\Delta$ an effective Mumford $\Q$-divisor on $R$, and $\ba$ an ideal { not contained in any minimal prime}. Then we have 
    \begin{enumerate}
        \item If $\ba \subseteq \p$ for some height one prime $\p$ such that $R_\p$ is singular, then we have
        $$\begin{cases}
            \ppt(\ba; \Delta)=0 \text{ if $(R,\Delta)$ is perfectoid pure;}\\
            \ppt(\ba; \Delta)=-\infty \text{ if $(R,\Delta)$ is not perfectoid pure.} 
        \end{cases}$$\label{cor: perfectoid threshold via pure pairs.parta}
        \item Otherwise, there are minimal generators $(x_1, \ldots, x_m) = \ba$
    such that
    \[
    \ppt (\ba; \Delta) = \sup
    \left \{
    \frac{|a|}{p^{e}} \mid e \in \mathbb{Z}_{\geq 0}, a \in \mathbb{Z}_{\geq 0}^m, 
    (R, \Delta + \frac{1}{p^e} \Div (x_1^{a_1}\cdots x_m^{a_m}))
    \text{ is perfectoid pure}
    \right \}.
    \]    \label{cor: perfectoid threshold via pure pairs.partb}    
    \end{enumerate}
\end{corollary}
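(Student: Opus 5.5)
We reduce to a complete local situation where a single perfectoid algebra computes everything, and then translate the monomial description of the rational powers threshold into purity of pairs. By \autoref{cor: ppt and completion}, \autoref{completion of divisor pairs} and \autoref{pure iff completion pure principal case}, both sides of (a) and (b) are unchanged by completion. We may therefore assume $(R,\m)$ is complete.

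\textbf{Choice of generators and of $B$.} Since $\ba$ lies in no minimal prime, in case (b) $\ba$ is also not contained in any of the finitely many height-one primes $\p$ with $R_\p$ singular. A prime-avoidance argument for minimal generating sets (Davis' form, which works in a local ring) then gives minimal generators $x_1,\dots,x_m$ of $\ba$ avoiding all minimal primes and all these singular height-one primes. Thus each $x_i$ is a nonzerodivisor ($R$ is reduced), and every $\Div(x_1^{a_1}\cdots x_m^{a_m})$ is a Mumford divisor. Next, enlarge $A$ in \autoref{notation RAinfty} by variables mapping to the $x_i$ and to $f_\Delta$. By \autoref{prop: perfectoid threshold from one algebra}, $\ppt(\ba;\Delta)=\ppt(\ba;\Delta;B)$ for $B=\raftypfd$, or for an absolutely integrally closed $p$-completely faithfully flat extension of it. By \autoref{thm: from rational powers to monomials},
\[\ppt(\ba;\Delta)=\sup\Big\{\tfrac{|a|}{p^e}\ \Big|\ x^{a/p^e}\coloneqq x_1^{a_1/p^e}\cdots x_m^{a_m/p^e}\notin\np(B,{<}\Delta)\Big\}.\]

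\textbf{Part (b).} Put $g=x_1^{a_1}\cdots x_m^{a_m}$ and $t=1/p^e$. By \autoref{perfectoid pure triples}, applied to the compatible system $g^{1/p^\infty}$ built from those of the $x_i$, the pair $(R,\Delta+t\Div(g))$ is perfectoid pure if and only if $(g^{1/p^{e'}})^{\lfloor t(p^{e'}-1)\rfloor}\notin\np(B,{<}\Delta)$ for all $e'\gg0$. We compare this with the monomial condition in both directions.
\begin{enumerate}
\item If $x^{a/p^e}\notin\np(B,{<}\Delta)$, then these elements are multiples of $x^{a/p^e}$ up to smaller exponents. Since $\np(B,{<}\Delta)$ is an ideal, they also lie outside it, so the pair is pure. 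This gives ``$\le$''.
\item Conversely, if the pair is pure, the elements $g^{\lfloor (p^{e'}-1)/p^e\rfloor/p^{e'}}$ lie outside $\np(B,{<}\Delta)$. These are monomials $x^{a'/p^{e+e'}}$ with $|a'|/p^{e+e'}\to|a|/p^e$, which gives ``$\ge$'' after taking suprema.
\end{enumerate}
I expect the main bookkeeping obstacle to be the passage between the root systems and the floors. This needs the ideal property together with \autoref{BS - root}, which controls roots in the seminormal ring $B$.

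\textbf{Part (a).} Suppose $\ba\subseteq\p$ with $R_\p$ singular. If $(R,\Delta)$ is not perfectoid pure, then $\np(B,{<}\Delta)=B$ and the threshold is $-\infty$. If it is perfectoid pure, then $1\notin\np$, so the threshold is $\ge 0$.

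For the reverse inequality, suppose some $c\in\overline{(\ba B)^r}$ with $r>0$ lies outside $\np(B,{<}\Delta)$. By \autoref{prop: reduce to principal} and \autoref{thm: from rational powers to monomials}, we may take $c=y^{1/p^e}$ for some $0\ne y\in\ba$ that has a compatible system of roots; choose $y$ a nonzerodivisor. Since $\Delta$ is Mumford, $\p\notin\Supp\Delta$. We now localize at $\p$ and complete $p$-adically, as in \autoref{localization of purity}. This makes $M_B({<}\Delta)$ agree with $B$ in codimension one near $\p$, and turns the purity into purity of $R_\p\to B_\p^{\wedge p}$, $1\mapsto y^{1/p^e}$. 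Hence the principal pair $(R_\p,y^{t})$ is perfectoid pure for some $t>0$, by \autoref{principal pair sequence criterion} and \autoref{rk limit of pure is pure}.

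The one-dimensional reduced ring $R_\p$ is $\mathbf{S}_2$ but singular, so it is not normal, and its conductor defines a nonzero boundary $C$ supported at the closed point. Because $y\in\p$, the divisor $\Div(y)$ on the normalization shares components with $C$. This contradicts \autoref{normalization is pfd pure pair}\autoref{normalization is pfd pure pair.PrincipalCase}, and therefore $\ppt(\ba;\Delta)=0$.
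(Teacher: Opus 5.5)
Your argument for (b) is essentially the paper's. You choose generators outside the minimal primes and the finitely many singular height-one primes by prime avoidance. You then compute $\ppt(\mathfrak a;\Delta)$ on one absolutely integrally closed $B$ via \autoref{prop: perfectoid threshold from one algebra}, and translate using \autoref{thm: from rational powers to monomials} and \autoref{perfectoid pure triples}. There is one slip in your step (1). It is $x^{a/p^e}$ that is a multiple of $(g^{1/p^{e'}})^{p^{e'-e}-1}$, since the latter has the smaller exponent. That is the direction in which the ideal property of $\np(B,{<}\Delta)$ gives your conclusion.

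Part (a) has a genuine gap at the localization step. You localize at $\mathfrak p$ and $p$-complete ``as in \autoref{localization of purity}'', but that result (like \autoref{prop:localization_ideal_pairs}) requires $p\in\mathfrak p$. A singular height-one prime need not contain $p$. For example, take $R=\mathbb{Z}_p\llbracket x,y,z\rrbracket/(xy)$, which is perfectoid pure by inversion of adjunction since $R/pR$ is $F$-pure, and take $\mathfrak a=\mathfrak p=(x,y)$. Then $p$ is a unit in $R_{\mathfrak p}$, with three consequences:
\begin{itemize}
\item $B_{\mathfrak p}^{\wedge p}=0$;
\item ``$(R_{\mathfrak p},y^t)$ is perfectoid pure'' has no meaning, because $p$ is not in the Jacobson radical of $R_{\mathfrak p}$;
\item \autoref{normalization is pfd pure pair} cannot be applied to $R_{\mathfrak p}$.
\end{itemize}

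The remedy is to stay on $R$, as the paper does.
\begin{itemize}
\item Since $R\to M_B({<}\Delta)$ factors through $B$, \autoref{lem: functoriality of non-pure ideals}\autoref{part algebra functoriality} gives $\np(B)\subseteq\np(B,{<}\Delta)$. Hence $R\to B$, $1\mapsto y^{1/p^e}$, is already pure.
\item The elements $(y^{1/p^{e'}})^{p^{e'-e}-1}$ divide $y^{1/p^e}$, so \autoref{principal pair sequence criterion}(d) shows that $(R,y^{1/p^e})$ is perfectoid pure over $R$ itself, where $p$ lies in the Jacobson radical.
\item Now apply \autoref{normalization is pfd pure pair}\autoref{normalization is pfd pure pair.PrincipalCase} to $R$. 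Because $R_{\mathfrak p}$ is not normal, the conductor $\mathfrak c$ is contained in $\mathfrak p$. So every prime $Q$ of $R^N$ lying over $\mathfrak p$ (these have height one) contains $\mathfrak c$ and is a component of $C$.
\item On the other hand, $y\in\mathfrak p\subseteq Q$, so $Q$ is also a component of $\Div_{R^N}(y)$. This is the desired contradiction.
\end{itemize}
With this route, your passage through $M_B({<}\Delta)_{\mathfrak p}$ and the observation $\mathfrak p\notin\Supp\Delta$ become unnecessary.
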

\begin{proof}
For \autoref{cor: perfectoid threshold via pure pairs.parta}, it is clear that $\ppt(\ba; \Delta)=-\infty$ if $(R,\Delta)$ is not perfectoid pure. So assume $(R,\Delta)$ is perfectoid pure. If $\ppt(\ba; \Delta)\neq 0$, then $\ppt(\ba; B)\neq 0$ for some perfectoid $R$-algebra $B$. By \autoref{prop: reduce to principal} there is some $f\in \ba$ so that $\ppt((f); B)\neq 0$. This implies that for all $0<\lambda <\ppt((f); B)$, the pair $(R,f^\lambda)$ is perfectoid pure.\footnote{Note that $f$ must be a nonzerodivisor as otherwise $f$ becomes zero in $R_Q$ for some minimal prime $Q$, and $R_Q\to B_Q$ sending $1\to f^{1/p^e}=0$ cannot split for any $e$. } But then \autoref{normalization is pfd pure pair}  \autoref{normalization is pfd pure pair.PrincipalCase} implies that $\Div(f)$ cannot contain any height one prime $\p$ so that $R_\p$ is singular; this is a contradiction to $f\in \ba$.

We next treat \autoref{cor: perfectoid threshold via pure pairs.partb}. Since $R$ is reduced, there are finitely many height-one prime ideals 
in the singular locus of $R$. Let $\p_1, \ldots, \p_N$
be these ideals {along with the minimal primes of $R$}. Since $\ba$ is not contained in any $\p_i$, by prime avoidance we can choose a generating set $\{x_1,\dots,x_m\}$ of $\ba$ so that $x_i$ is not in $\p_j$ for all $j$ and $i$. 
Thus a monomial on $\{x_1, \ldots, x_m\}$ is not contained in any $\p_i$. 
By \autoref{prop: perfectoid threshold from one algebra} 
we may compute $\ppt (\ba; \Delta)$ in a fixed perfectoid absolutely integrally closed algebra $B$. The conclusion then follows 
from \autoref{thm: from rational powers to monomials} and \autoref{perfectoid pure triples}. (Note that $\ppt(\ba; \Delta)$ could still be $-\infty$ in case \autoref{cor: perfectoid threshold via pure pairs.partb} since when $(R,\Delta)$ is not perfectoid pure the right hand side is the supremum over an empty set, so it equals $-\infty$ by convention).
\end{proof}

\begin{proposition}[Elementary properties]\label{prop: elementary properties}
    Let $(R, \m)$ be a perfectoid pure local ring of residual characteristic $p > 0$ and $\ba, \bb$ be ideals. In what follows, either assume $\Delta=0$ or assume that $R$ is an excellent reduced $\mathbf{S}_2$ ring and $\Delta$ is an effective Mumford $\Q$-divisor on $R$.  
    Then 
    \begin{enumerate}
        \item $\ppt (\ba + \bb; \Delta) \leq \ppt (\ba; \Delta)  + \ppt (\bb; \Delta)$; \label{part: semiadditive}
        \item if $\ba \subset \bb$ then $\ppt (\ba; \Delta)  \leq \ppt (\bb; \Delta)$; \label{part: containment}
        \item if $\overline{\ba} = \overline{\bb}$ then $\ppt (\ba; \Delta)  = \ppt (\bb; \Delta)$; \label{part: integral closure}
        \item $\ppt (\ba^n; \Delta)  = n^{-1} \ppt (\ba; \Delta) $ for $n \in \bZ_{\geq 1}$; \label{part: power scaling}
        \item if $\Delta'$ is an
        effective Mumford $\QQ$-divisor with $\Delta'\leq\Delta$, then
        $\ppt(\ba; \Delta) \leq \ppt (\ba; \Delta')$;
        \item if $(p, \ba) \subseteq \mathfrak{q} \in \Spec (R)$ then $\ppt (\ba; \Delta)  \leq \ppt (\ba R_\mathfrak{q}; \Delta R_\mathfrak{q}) $.
    \end{enumerate}
\end{proposition}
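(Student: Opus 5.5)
Each part is reduced to statements about a single perfectoid algebra on which all relevant thresholds are computed simultaneously. By \autoref{prop: perfectoid threshold from one algebra}(c) (and its $\Delta=0$ analogue via part (a) plus \autoref{thm: Andre} and \autoref{cor: ppt in algebra extensions}), fix an absolutely integrally closed perfectoid $R$-algebra $B$ with $\ppt(\mathfrak c;\Delta)=\ppt(\mathfrak c;\Delta;B)=\newpt{\mathfrak cB}{\np(B,{<}\Delta)}$ for every ideal $\mathfrak c\subseteq R$. Since $B$ is absolutely integrally closed, every finitely generated ideal extended to $B$ has generators with compatible $p$-power roots, so \autoref{thm: from rational powers to monomials} and its corollaries apply with $I=\np(B,{<}\Delta)$. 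Note that $\ba B\subseteq\m B\subseteq I$, because non-pure ideals contain $\m B$ by \autoref{lem: injective hull splitting ideal}. Using the common $B$ matters because the defining suprema in \autoref{def.ThresholdForDivisorPairs} range over different algebras for different ideals.

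Parts (a)–(d) are then formal. Part (a) is \autoref{cor: ppt sum of ideals} applied to generators of $\ba B$ and $\bb B$, using $(\ba+\bb)B=\ba B+\bb B$. Parts (b)–(d) are \autoref{lem: elementary threshold}(a)–(c) applied to $\ba B$, $\bb B$ and $I$. Here we need $\overline{\ba B}=\overline{\bb B}$ and $\ba^nB=(\ba B)^n$; the first holds since integral closure is preserved under extension: $\ba\subseteq\overline{\bb}$ gives $\ba B\subseteq\overline{\bb B}$. One should check that the sets of algebras on which the thresholds are defined are the same, but this is irrelevant once we work on the common $B$.

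For (e), with $\Delta'\le\Delta$, \autoref{divisorial module computed by towers} gives a canonical pointed map $M_B({<}\Delta')\to M_B({<}\Delta)$. By \autoref{lem: functoriality of non-pure ideals}\autoref{part algebra functoriality} this yields $\np(B,{<}\Delta')\subseteq\np(B,{<}\Delta)$, and \autoref{lem: elementary threshold}(d) gives $\newpt{\ba B}{\np(B,{<}\Delta)}\le\newpt{\ba B}{\np(B,{<}\Delta')}$. Since $\ppt(\ba;\Delta')\ge\ppt(\ba;\Delta';B)$, the inequality follows. The non-perfectoid-pure case with $-\infty$ is trivial.

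The main obstacle is (f), which should go through localization as in \autoref{localization of purity}. Set $C=(B\otimes_RR_\q)^{\wedge p}$; it is perfectoid, $B_\q\to C$ is $p$-completely pure, and $C$ carries the localized limiting sequence, so $M_C({<}\Delta_\q)\cong (C\otimes_B M_B({<}\Delta))^{\cOne}$ by \autoref{properties of M_B} and localization of envelopes. The key claim is that $\np^{R_\q}(C,{<}\Delta_\q)\cap B\subseteq\np^R(B,{<}\Delta)$, up to the harmless $p$-completion. Indeed, if $x\in B$ gives a pure map $R_\q\to M_C({<}\Delta_\q)$, then $x$ gives a non-split map $R\to M_B({<}\Delta)$ only if it is non-pure at some maximal ideal. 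The usable direction is the contrapositive: purity over $R$ localizes to purity over $R_\q$, which yields $\np^{R_\q}\supseteq$ the image of $\np^R$. Then \autoref{cor: basic functoriality of threshold}(a) (or \autoref{lem: elementary threshold}(d) applied to the images) gives $\newpt{\ba B}{\np^R(B,{<}\Delta)}\le\newpt{\ba C}{\np^{R_\q}(C,{<}\Delta_\q)}$. To verify this, we check on monomials $x=\prod x_i^{a_i/p^e}$ via \autoref{thm: from rational powers to monomials}: if $x\notin\np^R(B,{<}\Delta)$, then $R\to M_B({<}\Delta)$, $1\mapsto x\mathbf 1$, is pure. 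Localizing at $\q$ and applying \autoref{pure + p-comp pure is pure} shows $R_\q\to M_C({<}\Delta_\q)$ is pure, so $x\notin\np^{R_\q}(C,{<}\Delta_\q)$. Taking suprema over such monomials gives the inequality, and hence $\ppt(\ba;\Delta)\le\ppt(\ba R_\q;\Delta R_\q;C)\le\ppt(\ba R_\q;\Delta R_\q)$. The hypothesis $p\in\q$ is needed so that $R_\q$ has residual characteristic $p$ and the threshold is defined.
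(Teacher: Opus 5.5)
Your treatment of (a)--(e) is the paper's. You fix one absolutely integrally closed $B$ computing all the thresholds via \autoref{prop: perfectoid threshold from one algebra}. You then apply \autoref{lem: elementary threshold} and \autoref{cor: ppt sum of ideals}, and for (e) you use the pointed map $M_B({<}\Delta')\to M_B({<}\Delta)$ of \autoref{divisorial module computed by towers}.

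For (f) you take a genuinely different route, and it is correct. The paper deduces (f) from \autoref{cor: perfectoid threshold via pure pairs}, which writes $\ppt(\ba;\Delta)$ as a supremum of $|a|/p^e$ over exponents for which $(R,\Delta+\frac{1}{p^e}\Div(x_1^{a_1}\cdots x_m^{a_m}))$ is perfectoid pure. It then localizes these pairs with \autoref{localization of purity}, and handles $\Delta=0$ by ``a similar argument''. You instead localize the computing algebra itself, $C=(B\otimes_R R_{\q})^{\wedge p}$. You then check monomial by monomial that purity of $R\to M_B({<}\Delta)$, $1\mapsto x\mathbf 1_{{<}\Delta}$, forces purity of $R_{\q}\to M_C({<}\Delta_{\q})$. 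This follows from purity of each $R\to M_B(\mathfrak d_e)$, localization, \autoref{pure + p-comp pure is pure} for $B_{\q}\to C$, and \autoref{properties nonpure ideal below delta} over $R_{\q}$. Combined with \autoref{thm: from rational powers to monomials}, this gives $\ppt(\ba;\Delta;B)\le\ppt(\ba R_{\q};\Delta_{\q};C)$.

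In effect you reprove \autoref{localization of purity} at the level of non-pure ideals. This avoids the case analysis inside \autoref{cor: perfectoid threshold via pure pairs}: the singular height-one primes, and generators whose monomials are nonzerodivisors defining Mumford divisors, which must be rechecked after localizing. It also treats $\Delta=0$ and $\Delta\neq0$ uniformly. The paper's route is shorter only because the localization of purity is already packaged.

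One correction is needed in your (f) write-up. The sentence saying localization ``yields $\np^{R_{\q}}\supseteq$ the image of $\np^R$'' is backwards and false in general. If $\q\neq\m$, any $y\in\m\setminus\q$ lies in $\np^R(B,{<}\Delta)\supseteq\m B$ (\autoref{lem: injective hull splitting ideal}). But $y$ becomes a unit in $C$, so it is not in $\np^{R_{\q}}(C,{<}\Delta_{\q})$ once $(R,\Delta)$ is perfectoid pure. Citing \autoref{cor: basic functoriality of threshold}(a) there also gives an inequality in the wrong direction.

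What your localization argument actually proves is that the preimage $J'$ of $\np^{R_{\q}}(C,{<}\Delta_{\q})$ in $B$ is contained in $\np^R(B,{<}\Delta)$. Then \autoref{lem: elementary threshold}(d) gives $\newpt{\ba B}{\np^R(B,{<}\Delta)}\le\newpt{\ba B}{J'}$. Next, \autoref{cor: basic functoriality of threshold}(b) gives $\newpt{\ba B}{J'}=\newpt{\ba C}{\np^{R_{\q}}(C,{<}\Delta_{\q})}$, using $\ba B\subseteq J'$ since $\ba C\subseteq\q C$. Equivalently, your direct monomial check does the job. Replace the incorrect sentence with this and the argument is complete.
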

\begin{proof}
    By \autoref{prop: perfectoid threshold from one algebra} there is an absolutely integrally closed perfectoid ring $B$
    that computes $\ppt(\ba; \Delta), \ppt(\bb; \Delta)$, and $\ppt(\ba+\bb; \Delta)$. 
    The first four properties now follow from \autoref{lem: elementary threshold} and \autoref{cor: ppt sum of ideals}. The next property follows immediately from the definitions and \autoref{divisorial module computed by towers}.
    
    The last property follows immediately from \autoref{cor: perfectoid threshold via pure pairs} and \autoref{localization of purity} when $\Delta \neq 0$ and from a similar argument when $\Delta = 0$. 
\end{proof}

\begin{corollary}
    Let $(R, \m)$ be a perfectoid pure local ring and $\ba$ be a proper ideal. 
    Furthermore, either assume $\Delta=0$ or assume that $R$ is an excellent reduced $\mathbf{S}_2$ ring and $\Delta$ is an effective Mumford $\Q$-divisor on $R$ {such that $(R, \Delta)$ is perfectoid pure}.
    Then for every $\varepsilon > 0$ there exists $n \in \mathbb{Z}_{> 0}$
    such that for every ideal $\bb \equiv \ba \mod {\m^n}$ we have 
    $$|\ppt (\ba; \Delta) - \ppt (\bb; \Delta)| < \varepsilon.$$ 
    In particular, for any $f \in \m$ and any $\varepsilon > 0$ there exists $n \in \mathbb{Z}_{> 0}$
    such that for all $g \in \m^n$
    we have 
    $
     |\ppt (f) - \ppt (f + g)| \leq \varepsilon.
    $
\end{corollary}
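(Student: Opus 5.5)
The plan is to compare both ideals with the common ideal $\ba+\m^n$, and to use the semi-additivity and scaling properties of \autoref{prop: elementary properties} together with a uniform finite bound on $\ppt(\m;\Delta)$. We read the hypothesis $\bb\equiv\ba \bmod \m^n$ as $\ba+\m^n=\bb+\m^n$. Then $\bb\subseteq \ba+\m^n$, and \autoref{prop: elementary properties}\autoref{part: containment}, \autoref{part: semiadditive} and \autoref{part: power scaling} give
\[
\ppt(\bb;\Delta)\leq \ppt(\ba+\m^n;\Delta)\leq \ppt(\ba;\Delta)+\ppt(\m^n;\Delta)=\ppt(\ba;\Delta)+\tfrac1n\ppt(\m;\Delta).
\]
Since $\ba\subseteq \bb+\m^n$, the same chain with the roles of $\ba$ and $\bb$ exchanged gives the reverse bound. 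It therefore suffices to choose $n>\ppt(\m;\Delta)/\varepsilon$.

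Two points make this arithmetic legitimate, and I expect them to be the main (though modest) obstacle.

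First, all the thresholds involved must be real numbers, not $-\infty$. Because $(R,\Delta)$ is perfectoid pure (or $\Delta=0$ and $R$ is perfectoid pure), there is a $B$ as in \autoref{prop: perfectoid threshold from one algebra} with $\np(B,{<}\Delta)\neq B$. For any ideal $\mathfrak c$ we have $\mathfrak c\subseteq\m$, hence $\mathfrak cB\subseteq \m B\subseteq \np(B,{<}\Delta)$ by \autoref{lem: injective hull splitting ideal}. The exponent $r=0$ then gives $\overline{(\mathfrak cB)^0}=B\not\subseteq\np$, so every threshold is at least $0$.

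Second, $\ppt(\m;\Delta)$ must be finite. Compute it on a single absolutely integrally closed $B$, again using \autoref{prop: perfectoid threshold from one algebra}, and let $x_1,\dots,x_\mu$ be generators of $\m$. By \autoref{thm: from rational powers to monomials} it suffices to bound $|a|/p^e$ over those monomials $x_1^{a_1/p^e}\cdots x_\mu^{a_\mu/p^e}$ that lie outside $\np(B,{<}\Delta)$. If $|a|\geq \mu p^e$, then by pigeonhole some $a_i\geq p^e$. In that case the monomial is a multiple of $x_i\in \m B\subseteq\np(B,{<}\Delta)$, and so it lies in $\np$. Hence $\ppt(\m;\Delta)\leq \mu$, which is finite.

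The principal statement is the special case $\ba=(f)$ and $\bb=(f+g)$ with $\Delta=0$. Here $\bb\subseteq(f)+\m^n$ and $(f)\subseteq \bb+\m^n$, so the same two-sided estimate applies verbatim; this gives the bound $\leq\varepsilon$.
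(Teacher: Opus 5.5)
Your proposal is correct and follows essentially the same route as the paper. You compare both ideals with $\bb+\m^n$ (and symmetrically with $\ba+\m^n$), and use containment, semi-additivity and power scaling from \autoref{prop: elementary properties} to get $|\ppt(\ba;\Delta)-\ppt(\bb;\Delta)|\le n^{-1}\ppt(\m;\Delta)$. Your extra checks that all thresholds are $\geq 0$ and that $\ppt(\m;\Delta)\le\mu$ are correct, and they fill in finiteness points the paper leaves implicit.
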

\begin{proof}
    Suppose that $\bb \equiv \ba \mod {\m^n}$. 
    Then we have $\ba \subseteq \bb + \m^n$.  Hence by \autoref{prop: elementary properties}
    \[
    \ppt (\ba; \Delta) \leq \ppt (\bb + \m^n; \Delta) \leq \ppt (\bb; \Delta) + \ppt (\m^n; \Delta)
    = \ppt (\bb; \Delta) + n^{-1}\ppt (\m; \Delta),
    \]
    and so by symmetry $|\ppt (\ba; \Delta) - \ppt (\bb; \Delta)| \leq n^{-1} \ppt (\m; \Delta)$.
\end{proof}

\begin{corollary}\label{cor: compute regular}
    Let $(R, \m, k)$ be a regular local ring of residual characteristic $p > 0$. Then $\ppt (\m) = \dim (R)$.  
\end{corollary}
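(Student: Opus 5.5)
We reduce to the complete case and compute on a single explicit perfectoid algebra. By \autoref{cor: ppt and completion} we may assume $R$ is complete, so by Cohen's structure theorem $R\cong C_k\llbracket x_1,\ldots,x_n\rrbracket/(\pi)$. Here either $\pi=0$ and $n=d=\dim R$, or $\pi\in \fram\setminus\fram^2$ and we may arrange $R\cong C_k\llbracket x_1,\ldots,x_{d}\rrbracket$ in the unramified case. In the ramified case $R=V\llbracket x_2,\dots,x_d\rrbracket$ with $V$ a DVR; in all cases $R$ has a regular system of parameters $y_1,\dots,y_d$. We take $A=C_k\llbracket x_1,\ldots,x_n\rrbracket\to R$ as in \autoref{notation RAinfty}, mapping variables onto a regular system of parameters (adjoining a variable if needed). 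By \autoref{prop: perfectoid threshold from one algebra}, $\ppt(\fram)=\ppt(\fram;\raftypfd)$, since $p\in\fram$ makes the root condition automatic.

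For the upper bound $\ppt(\fram)\le d$ we use \autoref{thm: from rational powers to monomials} with generators $y_1,\dots,y_d$ of $\fram B$ having compatible $p$-power roots. Write $y^{a/p^e}$ for $y_1^{a_1/p^e}\cdots y_d^{a_d/p^e}$. We must show that $y^{a/p^e}\in\np(B)$ whenever $|a|/p^e>d$, for any perfectoid $R$-algebra $B$. By pigeonhole some $a_i\ge p^e$, so $y^{a/p^e}\in y_iB$. Hence $R\to B$, $1\mapsto y^{a/p^e}$, sends $1$ into $\fram B$. After tensoring with $R/\fram$ this map becomes zero on a nonzero module, so it is not pure. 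This bound appears too weak, since it only uses one coordinate. A sharper argument: if $|a|>d(p^e-1)$ then some $a_i\ge p^e$, so every monomial with $|a|/p^e>d-d/p^e$ lies in $\np(B)$. Taking the limit in the third formula of \autoref{thm: from rational powers to monomials} gives $\ppt(\fram;B)\le d$.

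For the lower bound we need monomials approaching exponent $d$ outside the non-pure ideal. We use \autoref{lem.SplittingApproxGorenstein} with $I_t=(y_1^t,\dots,y_d^t)$ and socle representative $u_t=(y_1\cdots y_d)^{t-1}$. Then $y^{a/p^e}$ with $a_i=p^e-1$ for all $i$ lies outside $\np(B)$ if and only if $u_t y^{(1-p^{-e})}\notin I_tB$ for all $t$. This is the key step: it says that $(y_1\cdots y_d)^{t-p^{-e}}\notin(y_1^t,\dots,y_d^t)B$. We would deduce it from the fact that $\raftypfd$, for $R$ regular, is a $p$-completely faithfully flat extension of $R$ (perfectoid André flatness, as in \cite{BhattIyengarMaRegularRings}). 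In the unramified case $\raftypfd=A_\infty$ is visibly flat with monomial basis, and there the non-membership follows by comparing monomial exponents in the free $A/I_t$-module $A_\infty/I_tA_\infty$. In the ramified case we would instead use that $R\to\raftypfd$ is $p$-completely faithfully flat and that $y^{1/p^\infty}$ monomials behave like in a "perfectoid regular" ring. We could cite \cite{BhattIyengarMaRegularRings} for the statement that regular $R$ has $R\to B$ splitting after such monomial twists, or reduce via \autoref{thm: ppt in regular fibers for pairs}-style flat base change from $A$ to $R$.

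We expect the ramified case of the lower bound to be the main obstacle. The exponents give $|a|/p^e=d(p^e-1)/p^e\to d$, so $\ppt(\fram)\ge d$, and combining with the upper bound proves the claim.
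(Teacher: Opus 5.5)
Your outline is the paper's: reduce to the complete case, compute on one relative perfectoidization, and apply \autoref{thm: from rational powers to monomials}. Two pieces are fine. The upper bound holds already by your first pigeonhole argument, since every monomial with $|a|/p^e>d$ lies in $\m B\subseteq\np(B)$; it is not too weak. The lower bound also works when $R$ is unramified, where $\raftypfd$ has a monomial basis, and the same argument works in characteristic $p$.

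The genuine gap is the lower bound when $R$ is ramified, which you leave open. There you need a perfectoid $R$-algebra $B$ with three properties: a regular system of parameters $y_1,\dots,y_d$ has compatible $p$-power roots in $B$, it is a regular sequence on $B$, and $\m B\neq B$. Given such $B$, the roots $y_1^{1/p^e},\dots,y_d^{1/p^e}$ again form a regular sequence. Apply the identity $(z_1^c,\dots,z_d^c)B:_B(z_1\cdots z_d)^{c-1}=(z_1,\dots,z_d)B$ with $z_i=y_i^{1/p^e}$ and $c=tp^e$. This gives $(y_1\cdots y_d)^{t-p^{-e}}\notin(y_1^t,\dots,y_d^t)B$, which is exactly the non-membership required by \autoref{lem.SplittingApproxGorenstein}.

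None of your three suggestions supplies such a $B$:
\begin{itemize}
\item \autoref{thm: ppt in regular fibers for pairs} does not apply to $A\to R$. In the ramified case a Cohen presentation $A\twoheadrightarrow R$ has nonzero kernel, so it is not flat. A Noether normalization $A\to R$ is finite with Artinian closed fibre $R/\m_AR$, and this fibre is a field only if $p\notin\m_R^2$, i.e.\ only if $R$ is unramified.
\item Saying that monomials behave as in a ``perfectoid regular'' ring is not an argument.
\item The $p$-complete faithful flatness of $R\to\raftypfd$ is neither proved nor precisely cited. Even granting it, you would still have to pass from it to a statement about the roots $y_i^{1/p^e}$.
\end{itemize}
Separately, your structural claim that a ramified $R$ has the form $V\llbracket x_2,\dots,x_d\rrbracket$ is false in general. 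For example, $\mathbb{Z}_p\llbracket x,y\rrbracket/(p-xy)$ is not a power series ring over a DVR. The correct general form is $C_k\llbracket x_1,\dots,x_d\rrbracket/(p-f)$ with $f\in(x_1,\dots,x_d)^2$.

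The paper treats the ramified case with exactly that presentation. It notes $\m=(x_1,\dots,x_d)$ and uses that $x_1,\dots,x_d$ form a regular sequence on $R^{S_\infty}_\perfd$ for $S=C_k\llbracket x_1,\dots,x_d\rrbracket$. A simpler repair is available to you. Since $\ppt(\m)$ is a supremum, the lower bound needs only one algebra, so you may take $B=\widehat{R^+}$. This ring is perfectoid, contains compatible $p$-power roots of the $y_i$, satisfies $\m B\neq B$, and is big Cohen--Macaulay by \cite{Bhatt}, so $y_1,\dots,y_d$ is a regular sequence on it. The computation above then gives $\ppt(\m)\geq\ppt(\m;\widehat{R^+})\geq d(1-p^{-e})$ for every $e$, which closes the ramified case.
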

\begin{proof}
    We may assume that $R$ is complete. 
    If $R$ is unramified of mixed characteristic then $R \cong \coff(k)\llbracket x_1, \ldots, x_d\rrbracket $, so 
    we may compute 
    \[
    \ppt(\m) = 
     \sup 
    \left \{
    \frac{|a|}{p^{e}} \mid a \in \mathbb{Z}_{\geq 0}^{d+1}, x_1^{a_1/p^e}\cdots x_d^{a_d/p^e}p^{a_{d+1}/p^e }\notin (x_1, \ldots, x_d, p)R_\infty
    \right \} = d+1 = \dim (R), 
    \]
    since $p, x_1, \ldots, x_d$ form a regular sequence in $R_\infty$.
    
    Otherwise, $R$ can be written as 
    $R \cong S/(p - f(x_1, \ldots, x_d))$ 
    where $S = \coff(k)\llbracket x_1, \ldots, x_d\rrbracket $.
    Since $\m = (x_1, \ldots, x_d)$ in this case,  we may compute 
    \[
    \ppt (\m) = 
    \sup 
    \left \{
    \frac{|a|}{p^{e}} \mid a \in \mathbb{Z}_{\geq 0}^{d}, x_1^{a_1/p^e}\cdots x_d^{a_d/p^e} \notin (x_1, \ldots, x_d)R^{S_\infty}_\perfd 
    \right \} = d = \dim (R),
    \]
    since again $x_1, \ldots, x_d$ form a regular sequence on $R^{S_\infty}_{\perfd}$.  
\end{proof}

We end this subsection by showing that in characteristic $p>0$, $\ppt (\ba; B)$ does not depend on the choice of $B$ {\it as long as $R\to B$ is pure}.
\begin{proposition}\label{prop: recover char p}
If $(R, \m)$ is a Noetherian $F$-finite local ring of characteristic $p > 0$, $\ba = (x_1, \ldots, x_d)$, and $R \to B$ is a pure map to a perfect ring $B$, then
\[
\fpt (\ba) = \ppt (\ba; B) = \ppt  (\ba; R_\perf).
\]
Furthermore, if $R$ is a normal domain and $\Delta$ is an effective $\Q$-divisor on $R$, then 
\[
\fpt(\ba; \Delta) = \ppt (\ba; \Delta; B).
\]
\end{proposition}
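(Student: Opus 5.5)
We compare everything to $R_\perf$, which is both an instance of the relative perfectoidization (for $A$ a Cohen presentation in characteristic $p$, $R^{A_\infty}_\perfd=R_\perf$, as used in \autoref{perfectoid purity in characteristic p principal}) and receives a map from any further perfect ring only in the opposite direction. The first step is the principal reduction. By \autoref{prop: reduce to principal} applied in $R_\perf$ (where every element has a unique compatible system of $p$-power roots), together with the analogous monomial description of $\fpt$ in $F$-finite rings (the $F$-pure threshold of $\ba$ is the supremum of $|a|/p^e$ such that $R\to R^{1/p^e}$, $1\mapsto (x^{a})^{1/p^e}$ splits, which is the standard Takagi--Watanabe / Hara description up to the $\lfloor \cdot\rfloor$ conventions), we reduce to checking that for each monomial $x^a$ and $e$, the map $R\to R_\perf$, $1\mapsto (x^a)^{1/p^e}$ is pure if and only if $R\to R^{1/p^e}$, $1\mapsto(x^a)^{1/p^e}$ is pure. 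This is exactly the argument in the proof of \autoref{perfectoid purity in characteristic p principal}: if $R$ is $F$-pure then $R^{1/p^e}\to R_\perf$ is pure; and $R$ is $F$-pure as soon as $R\to B$ is pure. Combined with \autoref{thm: from rational powers to monomials} this gives $\fpt(\ba)=\ppt(\ba;R_\perf)$.

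The second step is to show $\ppt(\ba;B)=\ppt(\ba;R_\perf)$ for an arbitrary pure $R\to B$ with $B$ perfect. Since $B$ is perfect, $R\to B$ factors uniquely through $R_\perf\to B$, so \autoref{cor: ppt in algebra extensions} gives $\ppt(\ba;R_\perf)\ge\ppt(\ba;B)$. For the reverse inequality, the key claim is that $R_\perf\to B$ is pure as a map of $R$-modules in the relevant sense: precisely, that $\np(B)\cap R_\perf=\np(R_\perf)$. Take a monomial $y=(x^a)^{1/p^e}$ with $R\to R_\perf$, $1\mapsto y$ pure; then by $F$-finiteness this gives a splitting $\phi\colon R^{1/p^{e'}}\to R$ with $\phi(y)=1$ for some $e'$. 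I would then try to produce a splitting of $R\to B$, $1\mapsto y$: write $y=(x^a)^{1/p^e}$ and use that $R\to B$ pure implies $R^{1/p^{e}}\to B$ pure (apply Frobenius twist, $B$ perfect), so $R^{1/p^e}\to R^{1/p^e}\cdot$ ... i.e.\ the composition $R\to R^{1/p^e}\xrightarrow{} B$, where the first map is $1\mapsto y$ (pure by hypothesis) and the second is the $e$-th root of the pure map $R\to B$ (pure as an $R^{1/p^e}$-module map, hence as an $R$-module map). A composition of pure maps is pure, so $y\notin\np(B)$. This is the main point and I expect it to be the only non-formal step; the subtlety is that purity of $R\to R_\perf$ at $y$ must be pushed down to finite level $R^{1/p^e}$, which is automatic since that map factors through $R^{1/p^e}$ and $y\in R^{1/p^e}$.

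For the pair statement, with $R$ normal and $\Delta$ effective, I run the same argument using \autoref{perfectoid purity in characteristic p divisor} and \autoref{cor: perfectoid threshold via pure pairs}: $\ppt(\ba;\Delta;B)$ is the supremum of $|a|/p^e$ over $(R,\Delta+\frac1{p^e}\Div(x^a))$ being tested pure on $B$, and by \autoref{perfectoid pure triples} with \autoref{universal divisorial module principal case} this amounts to purity of $R\to M_B({<}\Delta)$, $1\mapsto (x^a)^{1/p^e}\mathbf 1_{<\Delta}$. Using the limiting sequence $\Delta_e=p^{-e}\lfloor(p^e-1)\Delta\rfloor$, $M_B(\mathfrak d)=B\otimes_{R^{1/p^e}}R(\lfloor(p^e-1)\Delta\rfloor)^{1/p^e}$, and the same factorization as above (tensoring the pure map $R^{1/p^e}\to B$ with the finite-level module) shows this purity is equivalent to splitting of $R\to R(\lfloor(p^e-1)\Delta\rfloor)^{1/p^e}$ at $(x^a)^{1/p^e}$, which is the Takagi--Watanabe definition of $\fpt(\ba;\Delta)$; the only care needed is matching the rounding conventions, handled by the limit in \autoref{rmk: limiting sequence for thresholds}.
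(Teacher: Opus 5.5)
Your treatment of the case $\Delta=0$ is correct and is essentially the paper's argument. Purity of $R\to R^{1/p}\to B$ forces $R$ to be $F$-pure, so $R^{1/p^e}\to R_\perf$ and $R^{1/p^e}\to B$ are pure. Hence $\np(B)\cap R^{1/p^e}=\np(R_\perf)\cap R^{1/p^e}=\np(R^{1/p^e})$, and the monomial formula of \autoref{thm: from rational powers to monomials}, together with the limit description of $\fpt$ from de~Stefani--N\'u\~nez-Betancourt, gives both equalities. The appeal to \autoref{prop: reduce to principal} is not needed.

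The pair case contains a false step. You claim that, for $y=(x^a)^{1/p^e}$, purity of $R\to M_B({<}\Delta)$, $1\mapsto y\mathbf 1_{{<}\Delta}$, is equivalent to splitting of $R\to R(\lfloor(p^e-1)\Delta\rfloor)^{1/p^e}$ at $y$, at the \emph{same} level $e$. Put $J_{e'}=\np\bigl(B,B\otimes_{R^{1/p^{e'}}}R(\lfloor(p^{e'}-1)\Delta\rfloor)^{1/p^{e'}}\bigr)$. Then $\np(B,{<}\Delta)=\bigcup_{e'\in S}J_{e'}$ is an ascending union over a subsequence $S$. So $y\notin\np(B,{<}\Delta)$ requires $y\notin J_{e'}$ for \emph{all} large $e'$. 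Your factorization only identifies each $J_{e'}\cap R^{1/p^{e'}}$ with the level-$e'$ non-splitting locus, and the single condition at $e'=e$ is strictly weaker.

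Here is a concrete counterexample. Take $R=k\llbracket s\rrbracket$, $p=2$, $B=R_\perf$, $\Delta=\frac{9}{10}\Div(s)$ and $\delta_e=2^{-e}\lfloor\frac{9}{10}(2^e-1)\rfloor$. For every $e\geq1$ the element $y=s^{1-\delta_e-2^{-e}}$ passes your level-$e$ test, since $ys^{\delta_e}=s^{1-2^{-e}}$. Yet $1-\delta_e-2^{-e}>\frac1{10}$, so $ys^{\delta_{e'}}\in sR_\perf$ for $e'\gg0$, and therefore $y\in\np(B,{<}\Delta)$.

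So the issue is not rounding conventions. The root level and the divisor level must be decoupled and compared by two separate inequalities, which is what the paper does.
\begin{itemize}
\item For $\ppt(\ba;\Delta;B)\le\fpt(\ba;\Delta)$: use the nondecreasing limit in \autoref{thm: from rational powers to monomials}, and note that $x^{a/p^n}\notin\np(B,{<}\Delta)$ implies $x^{a/p^n}\notin J_n$ for $n\in S$. This bounds the sequence termwise by the diagonal sequence $p^{-n}\sup\{|a| : x^{a/p^n}\notin J_n\}$. Its limit is $\fpt(\ba;\Delta)$ by a de~Stefani--N\'u\~nez-Betancourt type formula; this is not literally the Takagi--Watanabe definition and has to be adapted, as the paper does.
\item For the reverse inequality: \autoref{rmk: limiting sequence for thresholds} gives $\ppt(\ba;\Delta;B)=\lim_{e\in S}\newpt{\ba B}{J_e}$, and $\newpt{\ba B}{J_e}\geq p^{-e}\sup\{|a| : x^{a/p^e}\notin J_e\}$.
\end{itemize}
Finally, \autoref{cor: perfectoid threshold via pure pairs} concerns $\ppt(\ba;\Delta)$, not $\ppt(\ba;\Delta;B)$. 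The monomial description of $\ppt(\ba;\Delta;B)$ you need comes directly from \autoref{thm: from rational powers to monomials}.
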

\begin{proof}
We may assume that $R$ is F-pure. 
Note that the purity of $R \to B$ implies purity of 
$R_\perf \to B$ since $B$ is perfect. 
Thus 
$\ppt (\ba; \Delta; {R_\perf}) = 
\ppt (\ba; \Delta; B)$ by \autoref{cor: ppt in algebra extensions}.
Hence, it suffices to show the statements for $B = R_\perf$. 

First, assume that $\Delta = 0$. 
Recall that by \cite{dSNB} 
we have 
\[
\fpt (\ba) = 
\lim_{e \to \infty} \frac{\sup \{m \mid \ba^m \not \subseteq I_e\}}{p^e}
= 
\lim_{e \to \infty} \frac{\sup \{|a| \mid a \in \ZZ_{\geq 0}^{d}, x_1^{a_1}\cdots x_d^{a_d} \notin I_e\}}{p^e},
\]
where $I_e = \{x \in R \mid \phi(x) \in \m, \forall \phi \in \Hom_R (F_*^e R, R)\}$, so $F_*^e I_e = \np (F_*^e R)$. 
By \autoref{lem: functoriality of non-pure ideals} 
\[
\np(R_\perf) \cap F_*^e R = \np(F_*^e R) = F_*^e I_e,
\]
thus the conclusion follows from \autoref{thm: from rational powers to monomials}.

We now assume $\Delta \neq 0$. Set $\Delta_e = p^{-e} \lfloor (p^e - 1)\Delta \rfloor$.
Adapting \cite{dSNB}, we may compute
\begin{align*}
\fpt (\ba; \Delta) &= \lim_{e \to \infty} p^{-e}\sup \left\{|a| \mid a \in \ZZ_{\geq 0}^d, x_1^{a_1/{p^e}}\cdots x_d^{a_d/{p^e}} \notin \np^{R} (R^{1/p^e}, R(p^e \Delta_e)^{1/p^e})\right \}.
\end{align*}
Set $J_e = \np(R_\perf, R_\perf \tensor_{R^{1/p^e}} R(p^e\Delta_e)^{1/p^e})$. 
Since $J_e \cap R^{1/p^e} = \np^{R} (R^{1/p^e}, R(p^e \Delta_e)^{1/p^e})$,
it follows that 
\[
\fpt (\ba; \Delta) = \lim_{e \to \infty} p^{-e}\sup \left\{|a| \mid a \in \ZZ_{\geq 0}^d, x_1^{a_1/{p^e}}\cdots x_d^{a_d/{p^e}} \notin J_e\right \}.
\]
The argument of \autoref{perfectoid purity in characteristic p divisor}
shows that a subsequence $\Delta_e, e \in S \subseteq \N$
forms a limiting sequence for $\Delta$, so 
$\np (R_\perf, {<}\Delta) = \cup_{e \in S} J_e$. Thus, 
\autoref{thm: from rational powers to monomials} immediately shows that 
\begin{align*}
\ppt (\ba; \Delta) 
&= \lim_{n \to \infty} p^{-n}\sup 
    \left \{
    |a| \mid a \in \mathbb{Z}_{\geq 0}^d, x_1^{a_1/p^n}\cdots x_d^{a_d/p^n}\notin J_e \text{ for all } e \in S
    \right \}
\\ &\leq   \lim_{n \in S} p^{-n} \sup 
    \left \{
    |a| \mid a \in \mathbb{Z}_{\geq 0}^d, x_1^{a_1/p^n}\cdots x_d^{a_d/p^n}\notin J_n
    \right \} = \fpt (\ba; \Delta).
\end{align*}

On the other hand,  by \autoref{rmk: limiting sequence for thresholds}, we may compute
\(
\ppt (\ba; \Delta) = \lim_{e \in S} \newpt{\ba {R_{\perf}}}{J_e},
\)
so it is immediate from \autoref{thm: from rational powers to monomials} that 
\[
\ppt (\ba; \Delta) = \lim_{e \in S} 
\sup 
    \left \{
    \frac{|a|}{p^n} \mid a \in \mathbb{Z}_{\geq 0}^d, x_1^{a_1/p^n}\cdots x_d^{a_d/p^n}\notin J_e
    \right \}
\geq 
\fpt (\ba; \Delta).
\]
\end{proof}

\begin{remark}
    The proof of \autoref{prop: recover char p} shows that
    the perfectoid threshold generalizes the Frobenius closure version of F-threshold: if $R$ has characteristic $p > 0$ and $\ba, J$ are ideals,  then 
    \[
    \newpt{\ba R_{\perf}}{J R_{\perf}}
    = \lim_{e \to \infty} \frac 1 {p^e} {\sup \{m \mid \ba^m \not \subseteq (J^{[p^e]})^F\}}.
    \]
    In the same spirit \cite[Corollary~3.9]{RVS} shows that a tight closure version of F-threshold can be recovered as $\newpt{\ba B}{JB}$ for certain BCM algebras $B$.
\end{remark}

As we mentioned, we do not know a good analog of \autoref{prop: recover char p} in mixed characteristic. 

\begin{question}
Let $(R, \m)$ be a Noetherian local ring of mixed characteristic $(0,p>0)$ that is perfectoid pure. Then is $\ppt(\ba) = \ppt (\ba; B)$ whenever $R \to B$ is pure and $B$ is perfectoid? 
\end{question}

An affirmative answer to this question will be a generalization of the well-known fact that in a strongly F-regular ring the strongly F-regular threshold coincides with F-pure threshold. We record the following partial result towards this question.

\begin{proposition}
      If $(R, \m, k)$ is a complete regular local ring {of residual characteristic $p > 0$ }then $\ppt (\ba) = \ppt (\ba; \widehat{R^+})$ for any ideal $\ba \subset R$.
\end{proposition}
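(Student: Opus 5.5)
The plan is to prove the two inequalities separately. The inequality $\ppt(\ba) \geq \ppt(\ba; \widehat{R^+})$ is immediate from the definition, since $\widehat{R^+}$ is an absolutely integrally closed perfectoid $R$-algebra; it is pure over $R$ because a regular ring is a splinter (André, Bhatt). For the reverse inequality, I would use \autoref{prop: perfectoid threshold from one algebra}. The regular ring $R$ is perfectoid pure, so we can realize $\ppt(\ba) = \ppt(\ba; R^{A_\infty}_\perfd)$ for a suitable presentation $A \to R$, chosen so that generators of $\ba$ (together with $p$) acquire compatible $p$-power roots. Since $\widehat{R^+}$ is absolutely integrally closed, there is a map $R^{A_\infty}_\perfd \to \widehat{R^+}$, as in the proof of that proposition (\cite[Lemma~4.23]{BMPSTWW3}). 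By \autoref{cor: ppt in algebra extensions}, it then suffices to show that this map is $\m$-completely pure (equivalently, $p$-completely faithfully flat), or at least that $\np(\widehat{R^+}) \cap R^{A_\infty}_\perfd = \np(R^{A_\infty}_\perfd)$. The equality of thresholds then follows from \autoref{cor: basic functoriality of threshold}(b).

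To get this contraction statement, I would choose $A$ to be a Cohen presentation of $R$ with kernel generated by part of a regular system of parameters, or, in the ramified case, with kernel $(p - f)$ as in \autoref{cor: compute regular}. In either case $R^{A_\infty}_\perfd$ is a $p$-completely flat perfectoid $R$-algebra: it is essentially $R_\infty$, obtained by adjoining $p$-power roots of a regular system of parameters, which by Bhatt–Iyengar–Ma is $p$-completely faithfully flat over $R$. The key input is the flatness of $R^+$ (or $\widehat{R^+}$) over $R$ in the regular case. By \cite{BhattIyengarMaRegularRings} (see also Bhatt's Cohen–Macaulayness of $R^+$), $\widehat{R^+}$ is $p$-completely faithfully flat over $R$, and more strongly it is a big Cohen–Macaulay perfectoid algebra.

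From these inputs I would deduce that $R_\infty \to \widehat{R^+}$ is $p$-completely faithfully flat. Flatness over the perfectoid ring $R_\infty$ can be tested via the regular sequence given by the $p$-power roots of the parameters: a perfectoid big Cohen–Macaulay algebra over the "regular perfectoid" ring $R_\infty$ is $p$-completely flat, again by \cite{BhattIyengarMaRegularRings}. Then \autoref{lem: functoriality of non-pure ideals}\autoref{part extension for p completely flat} gives $\np(\widehat{R^+}) = \np(R_\infty)\widehat{R^+}$ with contraction $\np(R_\infty)$. Consequently
\[
\ppt(\ba; \widehat{R^+}) = \ppt(\ba; R_\infty) = \ppt(\ba).
\]

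The main obstacle is this flatness of $R_\infty \to \widehat{R^+}$, particularly in the ramified case where $R = S/(p-f)$. If flatness over the relative perfectoidization is not directly available, my fallback is to compute $\np(\widehat{R^+})$ explicitly. Since $\widehat{R^+}$ is balanced big Cohen–Macaulay over the Gorenstein ring $R$, \autoref{lem: injective hull splitting ideal} shows that $\np(\widehat{R^+})$ consists of those $b$ that kill the socle class of $H^d_\m(\widehat{R^+})$, i.e. $\np(\widehat{R^+}) = (\underline{x}^{t}\widehat{R^+} : \underline{x}^{t-1}) = \m\widehat{R^+}$ up to $p$-adic closure by the Cohen–Macaulay property. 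The analogous description holds for $R_\infty$. With these descriptions, the comparison of monomials $x^{a/p^e} \notin \m B$ in \autoref{thm: from rational powers to monomials} can be checked via faithful flatness modulo $\m$.
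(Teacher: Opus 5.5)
There is a genuine gap. It sits in the step where you identify the algebra computing $\ppt(\ba)$ with $R_\infty$ and conclude that $\raftypfd\to\widehat{R^+}$ is $p$-completely faithfully flat.

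\textbf{Where the argument fails.} For $\ppt(\ba;\raftypfd)$ to be defined, the generators $f_1,\dots,f_n$ of $\ba$ need compatible $p$-power roots, so you must adjoin variables $z_j\mapsto f_j$ to the presentation. The kernel $(z_j-f_j)$ is then part of a regular system of parameters of $A$. But $\raftypfd=(A_\infty/(z_j-f_j))_\perfd$ is the perfectoidization of a quotient by non-monomial elements. It is not the completion of $R[x_1^{1/p^\infty},\dots,x_d^{1/p^\infty}]$, and it is not a completed colimit of regular rings. The same problem occurs in the ramified case, where the $p^{1/p^\infty}$ are new roots of $f(x)$. Consequently:
\begin{itemize}
\item Bhatt's flatness of $R^+$ over regular rings does not apply to $\raftypfd$.
\item Knowing that $R_\infty\to\raftypfd$ (André) and $R_\infty\to\widehat{R^+}$ are both $p$-completely faithfully flat says nothing about the map between them; compare $k[x]\to k[x,y]\xrightarrow{y\mapsto 0}k[x]$.
\item The flatness of perfectoid big Cohen--Macaulay algebras over a ``regular perfectoid'' ring is not something you can simply quote.
\end{itemize}
Your fallback does not avoid this. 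The descriptions $\np(\widehat{R^+})=\m\widehat{R^+}$ and $\np(\raftypfd)=\m\raftypfd$ are fine. However, transporting $x^{a/p^e}\notin\m\raftypfd$ to $\widehat{R^+}$ is exactly the contraction $\m\widehat{R^+}\cap\raftypfd=\m\raftypfd$ you set out to prove, and ``faithful flatness modulo $\m$'' is the unproven flatness again. Note that the paper leaves independence of $\ppt(\ba;B)$ from the pure perfectoid $B$ as an open question, so some genuine input is required. There is also a smaller issue: if $k$ is imperfect, your $R_\infty$ is not perfectoid at all.

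\textbf{The missing idea.} You never need $\raftypfd\to\widehat{R^+}$ to be flat. By \autoref{cor: ppt in algebra extensions}, a mere map $\widehat{R^+}\to B$ gives $\ppt(\ba;\widehat{R^+})\geq\ppt(\ba;B)$. So it suffices to find a perfectoid $B$ that receives a map from $\widehat{R^+}$ and is $p$-completely faithfully flat over an algebra computing $\ppt(\ba)$. The paper obtains such a $B$ by base change:
\begin{enumerate}
\item Set $R'=R\widehat{\otimes}_{C_k}W(k^{1/p^\infty})$ and $R_n=R'[x_1^{1/p^n},\dots,x_d^{1/p^n}]$ for a regular system of parameters $x_i$ of $R$. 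Let $R''$ be the $\m$-adic completion of $R'$.
\item The $R_n$ are regular, so \cite[Theorem~5.16]{Bhatt} makes $R_n\to\widehat{R''^+}$ faithfully flat. Hence $R_\infty\to\widehat{R''^+}$ is $p$-completely faithfully flat.
\item Choose the presentation so that $R^{S_\infty}_\perfd$ contains compatible roots of the $x_i$ and of the $f_j$. Then it is an $R_\infty$-algebra and computes $\ppt(\ba)$ by \autoref{prop: perfectoid threshold from one algebra}.
\item Put $B=\widehat{R''^+}\widehat{\otimes}^{\mathbb{L}}_{R_\infty}R^{S_\infty}_\perfd$. This is perfectoid, $p$-completely faithfully flat over $R^{S_\infty}_\perfd$, and an $\widehat{R''^+}$-algebra.
\end{enumerate}
This gives
\[
\ppt(\ba)\geq\ppt(\ba;\widehat{R^+})\geq\ppt(\ba;\widehat{R''^+})\geq\ppt(\ba;B)=\ppt(\ba;R^{S_\infty}_\perfd)=\ppt(\ba).
\]
The flatness input you identified, flatness of $\widehat{R^+}$ over a colimit of regular rings, is the right one. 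It has to be applied to $R_\infty$ and transported by base change, not applied to $\raftypfd$ directly. Also note that $\m$-completely pure is not equivalent to $p$-completely faithfully flat.
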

\begin{proof}
Let $C_k$ be a coefficient ring of $R$. Set $R'= R\widehat{\otimes}_{C_k}W(k^{1/p^\infty})$. Note that the map $C_k\to W(k^{1/p^\infty})$ is the $p$-adic completion of an integral map and thus $W(k^{1/p^\infty})$ and hence $R'$ admit a map to $\widehat{R^+}$. Let $x_1,\dots,x_d$ be a regular system of parameters of $R$ and fix a compatible system of $p$-power roots of them in $R^+$. Set $R_n=R'[x_1^{1/p^n},\dots,x_d^{1/p^n}]$ and $R_\infty =\widehat{\varinjlim} R_n$. Then $R_\infty$ is perfectoid and each $R_n$ is regular local. Let $R''$ be the $\m$-adic completion of $R'$; then by \cite[Theorem 5.16]{Bhatt}, the natural map $R_n\to \widehat{R''^+}$ is faithfully flat and hence $R_\infty\to \widehat{R''^+}$ is $p$-completely faithfully flat. Set $\ba = (f_1, \ldots, f_n)$ and let $S=R[y_1,\dots,y_n]$. Consider the natural map $S\to R$ sending $y_i$ to $f_i$ and  define $R^{S_\infty}_\perfd$ as in \autoref{notation RAinfty}.
    The ring $B:=\widehat{R''^+} \widehat{\otimes}^{\mathbb{L}}_{R_\infty} R^{S_\infty}_\perfd$ is then perfectoid and $p$-completely faithfully flat 
    over $R^{S_\infty}_\perfd$. 
    Then we have 
    $$\ppt(\ba)\geq \ppt(\ba; \widehat{R^+}) \geq \ppt(\ba; \widehat{R''^+})\geq \ppt (\ba; B) = \ppt (\ba; R^{S_\infty}_\perfd) = \ppt (\ba) $$
    by \autoref{cor: ppt in algebra extensions} 
    and \autoref{prop: perfectoid threshold from one algebra}. Thus we must have equalities throughout which completes the proof.
\end{proof}

\subsection{Localization and regular extensions}
In this subsection we will strengthen and generalize \cite[Corollary 4.21 (a)]{BMPSTWW3} by proving a faithfully flat ascent result for pairs, \autoref{thm: ppt in regular fibers for pairs}. In particular, we will show that perfectoid purity is preserved under faithfully flat extensions with regular closed fiber (thus answering \cite[Conjecture 4.22 (a)]{BMPSTWW3}). 

We start with a lemma treating a partial case. 

\begin{lemma}\label{lem: adjoining variable for pairs}
    Let $(R, \m)$ be a complete local ring of residual characteristic $p > 0$ and $P$ be a perfectoid $R$-algebra. Set $T = P[x^{1/p^{\infty}}]^{\wedge{(\m, x)}}$.
    If $M$ is a pointed $P$-module, then 
    \[
    \np^{R\llbracket x\rrbracket } (T, M \otimes_P T) = \np^R (P, M)T + xT.
    \] 
\end{lemma}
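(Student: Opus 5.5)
The plan is to compute the non-pure ideal with the injective-hull description of \autoref{lem: injective hull splitting ideal}, after writing the injective hull of $R\llbracket x\rrbracket$ explicitly. Let $E=E_R(R/\m)$ with socle generator $u$. Then $E'\coloneqq E_{R\llbracket x\rrbracket}(R/\m)\cong E\tensor_R\bigl(R[x,x^{-1}]/R[x]\bigr)=\bigoplus_{j\geq1}E\,x^{-j}$, and $u x^{-1}$ generates its socle. By \autoref{lem: injective hull splitting ideal},
\[
\np^{R\llbracket x\rrbracket}(T,M\tensor_PT)=\Ann_T\bigl(ux^{-1}\tensor\mathbf 1_M\bigr)\quad\text{in } E'\tensor_{R\llbracket x\rrbracket}(M\tensor_PT).
\]

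First I remove the completion. Every element of $E'$ is killed by a power of $(\m,x)$, so $E'\tensor_{R\llbracket x\rrbracket}(M\tensor_PT)\cong E'\tensor_{R\llbracket x\rrbracket}(M\tensor_PP[x^{1/p^\infty}])$. Writing $P[x^{1/p^\infty}]=P\tensor_{\Z}\Z[x^{1/p^\infty}]$, this module becomes
\[
(E\tensor_RM)\tensor_{\Z}\bigl(\Z[x^{\pm1/p^\infty}]/\Z[x^{1/p^\infty}]\bigr)=\bigoplus_{a\in\Z[1/p],\,a<0}(E\tensor_RM)\,x^{a}.
\]
Thus it is a direct sum of copies of $E\tensor_RM$, indexed by the negative $p$-adic exponents.

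Next I reduce an arbitrary $t\in T$ to a finite polynomial. Since $\m P\subseteq\np^R(P,M)$, we have $\m T\subseteq\np^R(P,M)T$. Moreover, $T$ is the $(\m,x)$-completion of $P[x^{1/p^\infty}]$, so each $t\in T$ can be written $t=g+m+xs$. Here $m\in\m T$, $s\in T$, and $g=\sum_{a\in[0,1)\cap\Z[1/p]}g_ax^a$ is a finite sum with $g_a\in P$. It is worth confirming that $m+xs$ kills $ux^{-1}\tensor\mathbf 1_M$. The term $m$ does because $\m u=0$. The term $xs$ does because each monomial $x^b$ with $b\geq1$ sends $x^{-1}$ to $x^{b-1}$, which is $0$ in the quotient; this extends to all of $T$ by the torsion argument above. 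So the whole question reduces to $g$.

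Now I compute $g\cdot(ux^{-1}\tensor\mathbf 1_M)=\sum_a (g_a\cdot u\tensor\mathbf 1_M)\,x^{a-1}$. The exponents $a-1$ are distinct and negative, so by the direct-sum decomposition this vanishes if and only if $u\tensor g_a\mathbf 1_M=0$ in $E\tensor_RM$ for every $a$. By \autoref{lem: injective hull splitting ideal} that means $g_a\in\np^R(P,M)$ for every $a$. This gives both inclusions at once. If $t$ lies in the non-pure ideal, then $g\in\np^R(P,M)T$, hence $t\in\np^R(P,M)T+xT$. Conversely, every element of $\np^R(P,M)T+xT$ annihilates, by the same computation together with the linearity of annihilators.

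I expect the main obstacle to be making the identifications rigorous without losing elements. Specifically, I need to check that tensoring with $E'$ really ignores the $(\m,x)$-completion in $T$. I also need to check that the decomposition $t=g+m+xs$ exists for elements of the completion and not only for polynomials; this is where $\m T\subseteq\np^R(P,M)T$ is essential, since $\np^R(P,M)T$ need not be closed. To handle both points, I would work modulo $(\m^N,x^N)$ and use that the cyclic submodule generated by $ux^{-1}\tensor\mathbf 1_M$ is killed by $(\m,x)$.
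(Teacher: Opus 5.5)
Your proof is correct, and it takes a different route from the paper's.

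The paper argues as follows:
\begin{itemize}
\item It first reduces to the case $R\to P$ pure, so $R$ is reduced and hence approximately Gorenstein.
\item It writes $\np^{R\llbracket x\rrbracket}(T,M_T)=\bigcup_t\bigl((I_t+x^t)M_T:_T u_tx^{t-1}\mathbf 1_{M_T}\bigr)$ via \autoref{lem.SplittingApproxGorenstein}.
\item It cancels powers of $x$, using that $x$ is a nonzerodivisor on $(M/N)\otimes_PT$. This lands in $\np^R(T,M_T)+xT$.
\item It descends to $\np^R(P,M)T+xT$ using that $P\to T/xT$ is $\m$-completely faithfully flat.
\item The reverse inclusion is obtained separately from functoriality of non-pure ideals.
\end{itemize}

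You instead compute $E_{R\llbracket x\rrbracket}(R/\m)\cong\bigoplus_{j\geq1}E\,x^{-j}$ explicitly. You then decompose $E'\otimes_{R\llbracket x\rrbracket}(M\otimes_PT)\cong\bigoplus_{a<0}(E\otimes_RM)\,x^{a}$. The annihilator of $ux^{-1}\otimes\mathbf 1_{M_T}$ can then be read off coefficientwise. Both inclusions come out at once, together with a sharper description: $t$ is non-pure exactly when the coefficients $g_a$ of its truncation modulo $(\m,x)T$ lie in $\np^R(P,M)$.

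Your route is more elementary. It needs neither reducedness of $R$, nor the regularity of $x$ on base changes of the completed ring $T$ (a point that takes some care, since $T$ is a completion of a non-Noetherian ring), nor the $\m$-complete flatness of $P\to T/xT$. The only input about the completion is $T/(\m,x)^NT\cong P[x^{1/p^\infty}]/(\m,x)^N$, which holds because $(\m,x)$ is finitely generated. This single fact settles both points you flagged as the main obstacle: it identifies the two tensor products with $E'$, and it gives the decomposition $t=g+m+xs$ directly from $T/(\m,x)T\cong P[x^{1/p^\infty}]/(\m,x)$. No further work modulo $(\m^N,x^N)$ is needed.

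On the other hand, the paper's colon-ideal formulation matches the cancellation-of-$x$ manipulations that reappear in \autoref{cor: ppt after adding a variable}, so it keeps the two arguments parallel.
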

\begin{proof}
Set $M_T = M \otimes_P T$ and note that $x$ is a regular element on 
$M_T/(N \otimes_P T)$ for any submodule $N \subset M$.
By \autoref{lem: functoriality of non-pure ideals} and \autoref{lem: injective hull splitting ideal}, it suffices to show that $\np^{R\llbracket x\rrbracket } (T, M_T) \subseteq \np^R (P, M)T + xT$. We may further assume that $R \to P$
is pure, so $R$ is reduced. 

Let $I_t$ be an approximately Gorenstein sequence in $R$ 
and $u_t$ be elements such that $u_t R/I_t$ is the socle ideal. 
By \autoref{lem.SplittingApproxGorenstein} we may compute 
\[
\np^R (T, M_{T}) = \bigcup I_tM_T :_T u_t \mathbf 1_{M_T} \text{ and } 
\np^{R\llbracket x\rrbracket } (T, M_T) = \bigcup (I_t + x^t)M_T :_T u_tx^{t-1}\mathbf 1_{M_T}.
\]
Let $a \in (I_n + x^n) M_T :_T u_nx^{n-1} \mathbf 1_{M_T}$. 
By definition, there is $f \in M_T$ such that
\[
x^{n-1} (au_n \mathbf 1_{M_T} + fx) \in I_n M_T.
\]
{Note $x$ is a regular element here since $I_n M_T \subseteq M_T$ is obtained by base change from $P$,} and so $( au_n \mathbf 1_{M_T} + fx) \in I_n M_T$, i.e., 
\[
fx \in (I_n M_T, T u_n \mathbf 1_{M_T}) = (I_n M, P u_n \mathbf 1_{M}) \otimes_P T.
\]
Hence, $f \in (I_n M_T, T u_n \mathbf 1_{M_T})$, 
so there is $b \in T$ such that $f - bu_n \mathbf 1_{M_T} \in I_n M_T$.
This implies that 
$
(a {+} b x)u_n \mathbf 1_{M_T} \in I_n M_T,  
$
thus  $a { +} bx \in I_n M_T :_T u_n \mathbf 1_{M_T} \subseteq \np^R (T, M_T)$.
It remains to note that $\np^R (T, M_T) \subseteq \np^R (P, M)T + xT$, which follows from \autoref{lem: functoriality of non-pure ideals} \autoref{part algebra functoriality} and \autoref{part extension for p completely flat} applied to the $\m$-completely faithfully flat map $P\to T/xT$.
\end{proof}

\begin{corollary}\label{cor: ppt after adding a variable}
Let $(R, \m)$ be a complete local perfectoid pure ring of residue characteristic $p$, $\ba$ be an ideal of $R$, and $\Delta$ be an effective Mumford $\Q$-divisor on $R$. If $\Delta \neq 0$, then further assume that $R$ is an excellent reduced $\mathbf{S}_2$ ring. Let $\Delta\llbracket x\rrbracket $ be the flat pullback of $\Delta$
to $R\llbracket x\rrbracket $. Then 
\[
 \ppt ((\ba+x) R\llbracket x\rrbracket ; \Delta\llbracket x\rrbracket ) = \ppt (\ba; \Delta) + 1.
\]
\end{corollary}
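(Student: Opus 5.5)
Our plan is to compute both sides on one well-chosen perfectoid algebra and then compare the non-pure ideals using \autoref{lem: adjoining variable for pairs}. By \autoref{prop: perfectoid threshold from one algebra}(c) we fix an absolutely integrally closed perfectoid $R$-algebra $P$ with $\ppt(\ba;\Delta)=\ppt(\ba;\Delta;P)$. We then put $T=P[x^{1/p^\infty}]^{\wedge(\m,x)}$, which is perfectoid and an $R\llbracket x\rrbracket$-algebra. It contains compatible $p$-power roots of $x$ and of generators of $\ba T$, so $\ppt(-;\Delta\llbracket x\rrbracket;T)$ is defined. A limiting sequence for $\Delta$ on $P$ base changes to one for $\Delta\llbracket x\rrbracket$ on $T$, since the lifts $S_e$ pull back flatly to $S_e\llbracket x\rrbracket$. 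Hence \autoref{properties of M_B}\autoref{part flat base change of M_B}, together with the commutation of codimension-one envelopes with this flat base change, gives a pointed isomorphism $M_T({<}\Delta\llbracket x\rrbracket)\cong M_P({<}\Delta)\otimes_P T$, up to $(-)^{\cOne}$, which does not affect non-pure ideals by \autoref{lem: functoriality of non-pure ideals}\autoref{part envelope}. Applying \autoref{lem: adjoining variable for pairs} then yields
\[
\np(T,{<}\Delta\llbracket x\rrbracket)=\np(P,{<}\Delta)T+xT .
\]

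Next we compute thresholds with monomials via \autoref{thm: from rational powers to monomials}. Take generators $x_1,\dots,x_m$ of $\ba P$ with roots, and consider a monomial $x_1^{a_1/p^e}\cdots x_m^{a_m/p^e}x^{b/p^e}$ in $T$. The ring $T/(\np(P,{<}\Delta)T+xT)$ is $P/\np(P,{<}\Delta)[x^{1/p^\infty}]/(x)$, completed, and it is graded by $x$-exponents in $[0,1)$. So the monomial lies outside the ideal iff $b<p^e$ and $x_1^{a_1/p^e}\cdots x_m^{a_m/p^e}\notin\np(P,{<}\Delta)$. The first condition comes from the basis $x^{c/p^e}$ with $c<p^e$, and the second from faithful flatness of $P\to T/xT$ modulo $\m$-powers, as in \autoref{lem: functoriality of non-pure ideals}\autoref{part extension for p completely flat}. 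Taking suprema, this gives $\ppt((\ba+x);\Delta\llbracket x\rrbracket;T)=\ppt(\ba;\Delta;P)+1$, and hence $\geq$ in the corollary.

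For the reverse inequality $\leq$ we need that $T$ computes $\ppt$ over $R\llbracket x\rrbracket$. \autoref{prop: perfectoid threshold from one algebra}(b) lets us compute it on a relative perfectoidization $R\llbracket x\rrbracket^{A'_\infty}_\perfd$ with $A'=A\llbracket x\rrbracket$. This equals $(R^{A_\infty}_\perfd[x^{1/p^\infty}])^{\wedge}$, because perfectoidization commutes with adjoining the perfectoid variable. Choosing $P$ to be an absolutely integrally closed $p$-completely faithfully flat extension of $R^{A_\infty}_\perfd$, the map from this ring to $T$ is $\m$-completely faithfully flat. Equality of thresholds then follows from \autoref{cor: ppt in algebra extensions}, and the previous computation closes the argument. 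The main obstacle is this step: checking that the perfectoidization identification and the flatness of limiting data hold carefully, particularly the $\cOne$ compatibility for the divisor data under $R\to R\llbracket x\rrbracket$. If $\Delta=0$, all of this reduces to the simpler statement with $M=P$.
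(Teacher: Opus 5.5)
Your proposal is correct and follows essentially the paper's route. You reduce to a relative perfectoidization via \autoref{prop: perfectoid threshold from one algebra}, adjoin $x^{1/p^\infty}$ and $(\m,x)$-complete, use \autoref{lem: adjoining variable for pairs} on the divisorial modules to get $\np(T,{<}\Delta\llbracket x\rrbracket)=\np(P,{<}\Delta)T+xT$, and compare monomials via \autoref{thm: from rational powers to monomials}; the only differences are cosmetic: you pass to an absolutely integrally closed $P$ and read off the monomial condition from $T/(IT+xT)\cong\bigoplus_{c\in[0,1)\cap\mathbb{Z}[1/p]}(P/I)\,x^{c}$ (valid since $\m P\subseteq I$), whereas the paper works with $R^{A_\infty}_\perfd$ itself and argues elementwise, using that $x$ is regular modulo extended ideals and that $1-tx^{1/p^e}$ is a unit in the $x$-complete ring $T$.
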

\begin{proof}
Fix a surjection $A \twoheadrightarrow R$ such that $\ppt (\ba; \Delta; R^{A_\infty}_\perfd) = \ppt (\ba; \Delta)$ (\autoref{prop: perfectoid threshold from one algebra}). 
It follows from the universal property 
that there is a natural map
\[
R\llbracket x\rrbracket ^{(A\llbracket x\rrbracket )_\infty}_\perfd \to 
(R^{A_\infty}_\perfd [x^{1/p^{\infty}}])^{\wedge{(\m, x)}} \cong \big(R\llbracket x\rrbracket ^{(A\llbracket x\rrbracket )_\infty}_\perfd\big)^{\wedge(\m, x)} =:T.
\]
By \autoref{cor: ppt in algebra extensions} and \autoref{prop: perfectoid threshold from one algebra} we know that 
$$\ppt ((\ba+x) R\llbracket x\rrbracket ; \Delta\llbracket x\rrbracket )=\ppt ((\ba+x) R\llbracket x\rrbracket ; \Delta\llbracket x\rrbracket ; T).$$ 
Applying \autoref{lem: adjoining variable for pairs} to 
the sequence $(\raftypfd, M_{\raftypfd}(\mathfrak d_e))$
from \autoref{universal divisorial module} as in \autoref{base change nonpure ideal below delta}, 
we obtain that 
\begin{equation}
\label{eq1.cor: ppt after adding a variable}
\np^{R\llbracket x\rrbracket }(T, {<}\Delta\llbracket x\rrbracket ) = \np^R (R^{A_\infty}_\perfd, {<}\Delta)T + xT.
\end{equation}
By \autoref{thm: from rational powers to monomials} it remains 
to show that, writing $\ba=(y_1,\dots,y_n)$ so that each $y_i$ admits a compatible system of $p$-power roots in $\raftypfd$, if
a monomial $m = y_1^{a_1/p^e}\cdots y_n^{a_n/p^e} \in R^{A_\infty}_\perfd$ 
is not contained in $\np^R (R^{A_\infty}_\perfd, {<}\Delta)$
then $x^{1 - 1/p^e}m \notin \np^{R\llbracket x\rrbracket }(T, {<}\Delta\llbracket x\rrbracket )$. 
If not, then 
\[
x^{1 - 1/p^e} m - xs \in \np^R (R^{A_\infty}_\perfd, {<}\Delta)T
\]
for some $s \in T$.
Since $x$ is a regular element modulo $IT$ for every $I \subseteq R^{A_\infty}_\perfd$, we must have 
\begin{equation}
\label{eq2.cor: ppt after adding a variable}
m - sx^{1/p^e} \in \np^R (R^{A_\infty}_\perfd, {<}\Delta) T, 
\end{equation}
i.e., 
$sx^{1/p^e} \in (m, \np^R (R^{A_\infty}_\perfd, {<}\Delta)) T$.
Thus, there is $t \in T$ such that 
$s - mt \in \np^R (R^{A_\infty}_\perfd, {<}\Delta) T$.
Plugging { this}  into {\autoref{eq2.cor: ppt after adding a variable} }
we obtain that 
\[
m (1 - tx^{1/p^e}) \in \np^R (R^{A_\infty}_\perfd, {<}\Delta) T.
\]
Since $T$ is $(x)$-complete, $(1 - tx^{1/p^e})$ is invertible, so by
\autoref{lem: functoriality of non-pure ideals} {and \autoref{eq1.cor: ppt after adding a variable}}
\[
m \in \np^R (R^{A_\infty}_\perfd, {<}\Delta) T \cap R^{A_\infty}_\perfd
\subseteq \np^R (T, {<}\Delta) \cap R^{A_\infty}_\perfd = 
\np^R (R^{A_\infty}_\perfd, { {<}\Delta}),
\]
a contradiction.     
\end{proof}

\begin{theorem}\label{thm: ppt in regular fibers for pairs}
Let $(R, \m) \to (S, \n)$ be a faithfully flat {local} map of Noetherian local rings of residue characteristic $p$ such that $S/\m S$ is regular. Let $f_1,\dots,f_n \in S$ be elements whose images form a regular system of parameters on $S/\m S$. 
Let $\ba$ be an ideal of $R$ and $\Delta$ be an effective Mumford $\Q$-divisor on $R$. 
If $\Delta \neq 0$, further assume that $R$ and $S$ are excellent reduced $\mathbf{S}_2$ rings and that the flat pullback $\Delta_S$ of $\Delta$ is a Mumford $\Q$-divisor (\autoref{flat pullback is Mumford}).
Then 
\[
\ppt ((\ba S+(f_1,\dots,f_n))S; \Delta_S) = \ppt (\ba; \Delta) + \dim (S/\m S).
\]
In particular, if $(R,\Delta)$ is perfectoid pure, then so is $(S,\Delta_S)$. 
\end{theorem}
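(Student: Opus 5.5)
We will reduce the statement to \autoref{cor: ppt after adding a variable} via completion and a structure theorem for flat local maps with regular closed fiber. First, by \autoref{cor: ppt and completion}, both sides are unchanged under passing to $\widehat R$ and $\widehat S$. The hypotheses are also preserved: $\widehat R\to\widehat S$ is faithfully flat, $\widehat S/\m\widehat S$ is the completion of $S/\m S$ and so is regular, the images of the $f_i$ still form a regular system of parameters, and flat pullback of $\Delta$ commutes with completion. So we may assume $R$ and $S$ are complete. If $(R,\Delta)$ is not perfectoid pure, then the right side is $-\infty$. By \autoref{cor: ppt of pairs in ring extensions}, or more simply by the inequality proved below, it then suffices to check that $(S,\Delta_S)$ is not perfectoid pure either. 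This follows from descent of purity along the pure map $R\to S$, using the intersection form $\np^S(P,\cdot)\cap\ldots$ from \autoref{lem: functoriality of non-pure ideals}. So assume $(R,\Delta)$ is perfectoid pure.

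Next we build the comparison ring. Set $R'=R\llbracket x_1,\dots,x_n\rrbracket$ and map $R'\to S$ by $x_i\mapsto f_i$. By the local flatness criterion this map is flat: $R'/\m R'=k\llbracket x\rrbracket\to S/\m S$ is flat because $S/\m S$ is regular and the $f_i$ form a system of parameters. Its closed fiber is $S/(\m,f)S$, which is a field extension $l$ of $k$. Hence $R'\to S$ is flat, local, and satisfies $\m_{R'}S=\n$. Iterating \autoref{cor: ppt after adding a variable} $n$ times (the flat pullbacks remain Mumford, since adjoining a power series variable has regular fibers) gives
\[
\ppt((\ba+(x_1,\dots,x_n))R';\Delta_{R'})=\ppt(\ba;\Delta)+n .
\]
It then remains to show that $\ppt$ is invariant along the flat local map $R'\to S$ with $\m_{R'}S=\n$, applied to the ideal $\bb=(\ba+(x))R'$, which satisfies $\bb S=(\ba S+(f))$. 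Here $n=\dim S/\m S$.

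For this invariance we use \autoref{cor: ppt of pairs in ring extensions}, which gives equality $\ppt(\bb;\Delta_{R'};P)=\ppt(\bb S;\Delta_S;P)$ for any suitable perfectoid $S$-algebra $P$. This yields $\ppt(\bb S;\Delta_S)\le\ppt(\bb;\Delta_{R'})$ after taking suprema, because every $S$-algebra is an $R'$-algebra. The reverse inequality is the main obstacle, since a perfectoid $R'$-algebra computing $\ppt(\bb;\Delta_{R'})$ need not be an $S$-algebra. My plan is to take $P'=(R')^{A_\infty}_\perfd$ computing the threshold (\autoref{prop: perfectoid threshold from one algebra}) and form $P=(P'\widehat\otimes_{R'}S)_\perfd$, or more concretely $S^{A'_\infty}_\perfd$ for a Cohen presentation of $S$ compatible with that of $R'$. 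We then need $\np^{S}(P,{<}\Delta_S)\cap P'=\np^{R'}(P',{<}\Delta_{R'})$. Since $R'\to S$ is flat with $\m_{R'} S=\n$, $E_S=E_{R'}\otimes_{R'}S$, and the annihilator description in \autoref{lem: injective hull splitting ideal} reduces this to $\m$-complete faithful flatness of $P'\to P$, after which \autoref{lem: functoriality of non-pure ideals}\autoref{part extension for p completely flat} and \autoref{cor: basic functoriality of threshold} give the required equality. The risk is that perfectoidization may destroy flatness when $S$ is not formally smooth over $R'$ (for example, when the residue field extension is inseparable). If that happens, I would instead first reduce to the case $l=k$ by a Cohen-type gonflement that does not change the threshold. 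I would then write $S$ as a quotient-free flat extension $R'\widehat\otimes_{C_k}C_l$ and use André flatness to obtain faithful flatness of $P'\to P$. The final claim, that perfectoid purity of $(R,\Delta)$ passes to $(S,\Delta_S)$, follows because a finite threshold on the right forces $\ppt\neq-\infty$ on the left.
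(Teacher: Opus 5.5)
Your reductions are sound and agree with the paper's. These are: passing to completions; adjoining $x_i\mapsto f_i$ and iterating \autoref{cor: ppt after adding a variable} to reach a flat local map $R'\to S$ with $\m_{R'}S=\n$; and obtaining $\ppt(\bb S;\Delta_S)\le\ppt(\bb;\Delta_{R'})$ from \autoref{cor: ppt of pairs in ring extensions} on an absolutely integrally closed $P$ computing the left side.

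The gap is the reverse inequality. It carries all the content, and it is also what the ``in particular'' assertion needs. You correctly isolate what is required, namely a perfectoid $S$-algebra receiving an $\m$-completely faithfully flat map from $P'=(R')^{A_\infty}_\perfd$, but you never produce one. As you note yourself, there is no reason for $P'\to(P'\widehat\otimes_{R'}S)_\perfd$ to be flat.

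The fallback does not repair this. Since $R'$ and $S$ are complete with $\m_{R'}S=\n$, complete Nakayama and faithful flatness show that $R'\to S$ is an isomorphism as soon as $l=k$. So ``reducing to $l=k$ by a gonflement that does not change the threshold'' just restates the problem: invariance of $\ppt$ along a flat local map whose closed fiber is a field. You give no argument for that.

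Likewise, a description $S\cong R'\widehat\otimes_{C_k}C_l$ over $R'$ is only available when $C_l\to S$ can be chosen compatibly with $C_k\to R'$. Formal smoothness guarantees this when $l/k$ is separable, i.e.\ exactly in the case the fallback was meant to avoid. In the inseparable case it can fail. For example, take $k=\mathbb{F}_p(t)$, $R'=k\llbracket u\rrbracket$ and $S=R'[T]/(T^p-t-u)$. A coefficient field of $S$ containing $k$ would contain some $s$ with $s^p=t$, making the uniformizer $u=(T-s)^p$ a $p$-th power.

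The missing idea is that $P'\to P$ itself need not be flat. It suffices to have a perfectoid $P$-algebra $T$ with $P'\to T$ $p$-completely faithfully flat. Then, with $P'$ chosen via \autoref{prop: perfectoid threshold from one algebra}, \autoref{cor: ppt in algebra extensions} and \autoref{cor: ppt of pairs in ring extensions} give
\[
\ppt(\bb;\Delta_{R'})=\ppt(\bb;\Delta_{R'};P')=\ppt(\bb;\Delta_{R'};T)\le\ppt(\bb;\Delta_{R'};P)=\ppt(\bb S;\Delta_S;P)\le\ppt(\bb S;\Delta_S).
\]
The paper produces $T$ from quasisyntomicity, in three steps:
\begin{enumerate}
\item A flat local map with regular closed fiber is a complete intersection morphism, so by Avramov's theorem $L_{S/R'}$ has Tor dimension at most one.
\item Choose Cohen presentations $A\to R'$ and $B\to S$ with $A\to B$ faithfully flat, lifting the coefficient ring of $R'$ via \cite[IV, 19.8.6]{EGA64}. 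Then the map $P'\to P'\widehat\otimes^{\mathbb L}_{R'}S\widehat\otimes^{\mathbb L}_BB_\infty$ is quasisyntomic.
\item Andr\'e's flatness lemma \cite[Proposition~7.11]{BhattScholzePrisms} then yields $T$ over $P=(P'\widehat\otimes^{\mathbb L}_{R'}S\widehat\otimes^{\mathbb L}_BB_\infty)_\perfd$.
\end{enumerate}
Some input of this kind is needed. Without it, your argument only proves $\ppt((\ba S+(f_1,\dots,f_n))S;\Delta_S)\le\ppt(\ba;\Delta)+\dim(S/\m S)$.
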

\begin{proof}
We may assume that $R$ is perfectoid pure, because if 
it is not then $S$ is also not perfectoid pure (since an $S$-algebra is an $R$-algebra) so all the thresholds involved are $-\infty$. We may also assume that both rings are complete by \autoref{cor: ppt and completion}. Using \autoref{cor: ppt after adding a variable}, by adjoining power series variables $x_1,\dots,x_n$ to $R$ and mapping them to $f_1,\dots,f_n$ in $S$, we reduce the theorem to the case where $\n = \m S$, in which case we need to show that $\ppt(\ba; \Delta)=\ppt(\ba S; \Delta_S)$.

Let us choose coefficient rings $V_R \subseteq R$ and $V_S \subseteq S$. 
Fix a surjection $B':=V_S \llbracket y_1, \ldots, y_h\rrbracket  \twoheadrightarrow S$
such that $\ppt (\ba S; \Delta_S; S^{B'_\infty}_{\perfd}) = \ppt (\ba S; \Delta_S)$ (\autoref{prop: perfectoid threshold from one algebra}). 
By \cite[IV, 19.8.6.i]{EGA64}, the composition $V_R \to S$ 
can be lifted to a homomorphism  $V_R \to B'=V_S \llbracket y_1, \ldots, y_h\rrbracket $. 
We now take $A = V_R \llbracket x_1, \ldots, x_g\rrbracket  \twoheadrightarrow R$ to be 
any surjection such that 
$\ppt (\ba; \Delta; R^{A_\infty}_{\perfd}) = \ppt (\ba; \Delta)$ (\autoref{prop: perfectoid threshold from one algebra}) 
and define $B = V_S \llbracket x_1, \ldots, x_g, y_1, \ldots, y_h\rrbracket $. 
We have a faithfully flat map $A \to B$. Also note that $\ppt (\ba S; \Delta_S; S^{B'_\infty}_{\perfd}) = \ppt (\ba S; \Delta_S; S^{B_\infty}_{\perfd})$ by \autoref{cor: ppt in algebra extensions} since $S^{B'_\infty}_{\perfd}\to S^{B_\infty}_{\perfd}$ is $p$-completely faithfully flat by Andr\'{e}'s lemma \cite[Proposition~7.11]{BhattScholzePrisms}. 

Consider the perfectoid ring $P = (R^{A_\infty}_{\perfd} \widehat{\otimes}^\mathbb{L}_R S \widehat{\otimes}^\mathbb{L}_B B_\infty)_\perfd$. 
By \cite[Proposition 8.13]{BhattScholzePrisms}, for any diagram $T_1\leftarrow T_3 \to T_2$ we know that $(T_1 \widehat{\otimes}^\mathbb{L}_{T_3} T_2)_\perfd = (T_{1, \perfd} \widehat{\otimes}^\mathbb{L}_{T_3} T_2)_\perfd$,
thus we have
\[
P
= (A_\infty \widehat{\otimes}^\mathbb{L}_A R \widehat{\otimes}^\mathbb{L}_R S \widehat{\otimes}^\mathbb{L}_B B_\infty)_\perfd
= (A_\infty \widehat{\otimes}^\mathbb{L}_A S \widehat{\otimes}^\mathbb{L}_B B_\infty)_\perfd
= (A_\infty \widehat{\otimes}^\mathbb{L}_A S^{B_\infty}_{\perfd})_\perfd.
\]
By Andr{\'e}'s lemma (\cite[Proposition~7.11]{BhattScholzePrisms}), we find that
$S^{B_\infty}_{\perfd} \to P$  is $p$-completely faithfully flat. 
Therefore, by \autoref{cor: ppt in algebra extensions} we have 
\[
\ppt(\ba; \Delta) = \ppt (\ba; \Delta; R^{A_\infty}_{\perfd}) 
\quad \text{and} \quad \ppt (\ba S; \Delta_S; P) = \ppt (\ba S; \Delta_S; S^{B_\infty}_{\perfd}) = \ppt (\ba S; \Delta_S).
\]
By \autoref{cor: ppt of pairs in ring extensions}
we also have $\ppt (\ba; \Delta; P) = \ppt (\ba S; \Delta_S; P)$, so it follows immediately that 
\[
\ppt(\ba; \Delta) = 
\ppt (\ba; \Delta; R^{A_\infty}_{\perfd}) 
\geq \ppt (\ba; \Delta; P) = \ppt (\ba S; \Delta_S).
\]
Thus, it suffices to show that $\ppt (\ba; \Delta; R^{A_\infty}_{\perfd}) 
\leq \ppt (\ba; \Delta; P)$.

We claim that the $p$-completed base change $R^{A_\infty}_{\perfd} \to R^{A_\infty}_{\perfd} \widehat{\otimes}^\mathbb{L}_{R} S \widehat{\otimes}^\mathbb{L}_{B} B_\infty$ is quasisyntomic. 
First, since $R \to S$ is a complete intersection morphism,
\cite[Theorem~1.2]{Avramov} shows that
the Tor dimension of the cotangent complex $L_{S/R}$ is at most one. 
This shows that the $p$-completely faithfully flat base change $R^{A_\infty}_{\perfd} \to R^{A_\infty}_{\perfd} \widehat{\otimes}_R^\mathbb{L} S$
is quasisyntomic. Second, $B \to B_\infty$ is ind-syntomic, so a further base change
$R^{A_\infty}_{\perfd} \widehat{\otimes}^\mathbb{L}_{R} S \to R^{A_\infty}_{\perfd} \widehat{\otimes}^\mathbb{L}_{R} S \widehat{\otimes}^\mathbb{L}_{B} B_\infty$ is quasisyntomic.

Since $R^{A_\infty}_{\perfd} \to R^{A_\infty}_{\perfd} \widehat{\otimes}^\mathbb{L}_{R} S \widehat{\otimes}^\mathbb{L}_{B} B_\infty$ is quasisyntomic, 
by \cite[Proposition~7.11]{BhattScholzePrisms}
there exists a perfectoid $P$-algebra $T$ such that the composition 
$R^{A_\infty}_{\perfd} \to P \to T$ is 
$p$-completely faithfully flat. 
Thus, \autoref{cor: ppt in algebra extensions} yields the desired
\(
\ppt (\ba; \Delta; P) \geq 
\ppt (\ba; \Delta; T) 
= \ppt (\ba; \Delta; R^{A_\infty}_\perfd). 
\)
\end{proof}

We end this subsection by {describing} the behavior of perfectoid pure threshold of the maximal ideal under localizations.

\begin{proposition}\label{prop: localization inequality}
    Let $(R, \m)$ be an excellent local ring of residual characteristic $p > 0$, $\Delta$ be an effective Mumford $\Q$-divisor on $R$. 
    If $\Delta \neq 0$, we further assume that $R$ is reduced and $\mathbf{S}_2$.  
    If $p \in \q$ is a prime ideal such that 
    $\dim (R_\q) + \dim (R/\q) = \dim (R)$, 
    then 
\[
    \ppt (\m; \Delta) \leq \dim(R/\q)+ \ppt (\q R_\q; \Delta_\q).
    \]
\end{proposition}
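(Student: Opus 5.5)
The plan is to reduce $\m$ to an ideal of the form $\q+(x_1,\dots,x_h)$ with $h=\dim(R/\q)$, and then bound the threshold of that sum. The $x$-part is bounded by $h$ via the perfectoid Brian\c{c}on--Skoda theorem, and the $\q$-part by $\ppt(\q R_\q;\Delta_\q)$ via localization of purity.

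\emph{Step 1 (reductions).} First reduce to the case of infinite residue field. Apply \autoref{thm: ppt in regular fibers for pairs} with $n=0$ to the faithfully flat map $R\to R(t)\coloneqq R[t]_{\m R[t]}$, whose closed fiber is a field. This preserves $\ppt(\m;\Delta)$. It also preserves the hypothesis $\dim R_\q+\dim R/\q=\dim R$ for the prime $\q R(t)$. The inequality $\ppt(\q R_\q;\Delta_\q)\ge \ppt(\q R(t)_{\q R(t)};\Delta)$ follows from \autoref{cor: ppt of pairs in ring extensions} applied to the flat local map $R_\q\to R(t)_{\q R(t)}$. We may therefore assume $k$ is infinite.

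Set $h=\dim R/\q$. Choose $x_1,\dots,x_h\in\m$ whose images generate a minimal reduction of $\m/\q$. Then $\m^{n+1}\subseteq (x)\m^n+\q$ for some $n$, so $\m\subseteq\overline{\q+(x)}$. By \autoref{prop: elementary properties}~\autoref{part: integral closure}, $\ppt(\m;\Delta)=\ppt(\q+(x);\Delta)$.

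\emph{Step 2 (monomial computation).} By \autoref{prop: perfectoid threshold from one algebra}, fix an absolutely integrally closed perfectoid $B$ computing $\ppt(\q+(x);\Delta)$. Fix generators $y_1,\dots,y_s$ of $\q$, each with compatible $p$-power roots in $B$. By \autoref{thm: from rational powers to monomials}, it suffices to show the following. If $\mu=y^{a/p^e}x^{b/p^e}\notin\np^R(B,{<}\Delta)$, then $|b|/p^e\le h$ and $|a|/p^e\le\ppt(\q R_\q;\Delta_\q)$, up to an error tending to $0$.

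\emph{The $x$-part.} By \autoref{BS property}, $\overline{(x)^{h+k}}B\subseteq (x)^kB$. Taking $k=1$ gives that any monomial with $|b|/p^e>h$ lies in $\overline{((x)B)^{h+1}}\subseteq xB\subseteq \m B\subseteq\np^R(B,{<}\Delta)$. The last containment is by \autoref{lem: injective hull splitting ideal}.

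\emph{The $\q$-part.} Let $C=(B\otimes_RR_\q)^{\wedge p}$. As in \autoref{localization of purity} and \autoref{prop:localization_ideal_pairs}, $C$ admits a limiting sequence for $\Delta_\q$. Purity of $R\xrightarrow{1\mapsto \mu\mathbf 1}M_B({<}\Delta)$ localizes and passes to the $p$-completion by \autoref{pure + p-comp pure is pure}. Hence $\mu\notin\np^{R_\q}(C,{<}\Delta_\q)$; that is, $\np^{R_\q}(C,{<}\Delta_\q)\cap B\subseteq\np^R(B,{<}\Delta)$ in contrapositive form. Since $\np^{R_\q}(C,{<}\Delta_\q)$ is an ideal, $y^{a/p^e}\notin\np^{R_\q}(C,{<}\Delta_\q)$. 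Then \autoref{thm: from rational powers to monomials} gives $|a|/p^e\le \newpt{\q C}{\np^{R_\q}(C,{<}\Delta_\q)}\le\ppt(\q R_\q;\Delta_\q)$.

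Summing the two bounds yields the claim. The main obstacle is Step 1: making the residue field infinite while controlling both sides, which is also where the dimension hypothesis $\dim(R_\q)+\dim(R/\q)=\dim(R)$ should enter. If the reduction via $R(t)$ turns out to be delicate, I would instead try to work directly with a system of parameters of $R/\q$, choosing $x_i$ so that $\q+(x)$ is $\m$-primary. One would then replace the integral-closure equality by a Rees-valuation or Brian\c{c}on--Skoda estimate, which only needs $\m\subseteq\sqrt{\q+(x)}$ at the cost of an error term that vanishes in the limit $e\to\infty$.
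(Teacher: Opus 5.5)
\paragraph{The gap.} Your Step~1 makes a false claim, and everything after it depends on that claim. From $\m^{n+1}\subseteq (x)\m^n+\q$ you cannot conclude $\m\subseteq\overline{\q+(x)}$. If you lift an equation of integral dependence of $\bar z$ over $(x)(R/\q)$, you pick up an error term in $\q$, and that term need not lie in $(\q+(x))^N$.

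Here is a concrete counterexample. Take $R=\mathbb{Z}_p\llbracket t\rrbracket$ and $\q=(p^2-t^3)$. This ideal is prime because $t^3-p^2$ has no root in $\mathbb{Q}_p$, and $\dim R_\q+\dim R/\q=2=\dim R$.
\begin{itemize}
\item In $R/\q$ one has $p^2=t^3$, so $(t)$ is a minimal reduction of $\m/\q$.
\item However $\q+(t)=(p^2,t)$, and the monomial valuation with $v(p)=1$, $v(t)=2$ shows $p\notin\overline{(p^2,t)}$.
\item Computing with monomials in $A_\infty$ gives $\ppt((p^2,t))=3/2$, while $\ppt(\m)=2$ by \autoref{cor: compute regular}.
\end{itemize}
So the equality $\ppt(\m;\Delta)=\ppt(\q+(x);\Delta)$ fails. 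No other choice of $x$ helps either: since $p^2-t^3\in\m^2$, no ideal $(p^2-t^3,x)$ equals $\m$, so in this two-dimensional regular local ring none of them is a reduction of $\m$.

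The same example rules out your fallback. Here $\q+(t)$ is $\m$-primary, yet the thresholds differ by $1/2$. So no estimate that uses only $\m\subseteq\sqrt{\q+(x)}$ can give $\ppt(\m;\Delta)\le\ppt(\q+(x);\Delta)$ up to an error that vanishes in the limit.

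There is also a smaller issue in your $x$-part. The containment $\overline{((x)B)^{h+1}}\subseteq (x)B$ only handles $|b|/p^e\geq h+1$. For $h<|b|/p^e<h+1$ you need the sharper containment $I_{\perfd}\overline{I^{h}}\subseteq I$ from \autoref{BS property}. Alternatively, use subadditivity together with $\ppt(x;\Delta)\le 1$ for a principal ideal.

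\paragraph{The missing idea.} You need to change the ring so that $R/\q$ becomes regular. Then $\m=\q+(x)$ holds exactly, not merely up to integral closure. The paper does this as follows.
\begin{enumerate}
\item Pass to the completion, and induct along a saturated chain of primes to reduce to $\dim R/\q=1$.
\item Let $V$ be the normalization of $R/\q$. It is a DVR, generated as an $R/\q$-module by $1,f_1,\dots,f_n$.
\item Set $S=R[x_1,\dots,x_n]_\n\twoheadrightarrow V$ with $x_i\mapsto f_i$, and let $Q$ be the kernel.
\item Both $R\to S$ and $R_\q\to S_Q$ are faithfully flat with regular closed fibers of dimension $n$. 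So \autoref{thm: ppt in regular fibers for pairs} shifts both $\ppt(\m;\Delta)$ and $\ppt(\q R_\q;\Delta_\q)$ by exactly $n$.
\item Now $S/Q=V$ is a DVR, so $\n=(Q,x)$ on the nose. Subadditivity and localization from \autoref{prop: elementary properties}, together with $\ppt(x;\Delta)\le 1$, finish the proof.
\end{enumerate}
Your $\q$-part and $x$-part amount to this last step. What your proposal lacks is the reduction that makes $R/\q$ regular. Your residue-field extension $R\to R(t)$ cannot supply it, because it leaves $R/\q$ singular.
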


\begin{proof}
By \autoref{cor: ppt and completion} and \autoref{thm: ppt in regular fibers for pairs}, we may assume that $R$ is complete. By a straightforward induction we may assume that $\dim(R/\q)=1$. 

Since $R/\q$ is a complete local domain of dimension one, its normalization $V$ is a DVR module-finite over $R/\q$. 
Let $1,f_1,\dots,f_n$, where $f_i\in V$, generate $V$ as a module over $R/\q$. Consider the surjective map 
$$(S,\n):=R[x_1,\dots,x_n]_{\n}  \twoheadrightarrow V$$
sending $x_i$ to $f_i$ where $\n$ is the inverse image of $\m_V$. Let $Q$ be the kernel of this map; then $Q\in\Spec(S)$ lies over $\q\in \Spec(R)$. In particular, we have an induced commutative diagram: 
\[\xymatrix{
R \ar[d] \ar[r] & S  \ar[d] \\
R_\q \ar[r] & S_Q
}
\]
where both horizontal maps are faithfully flat with regular closed fibers of dimension $n$: this is obvious for $R\to S$, and for $R_\q\to S_Q$, note that $\dim(S_Q/\q S_Q)=n$ while $Q(S_Q/\q S_Q)$ can be generated by $(x_1-f_1,\dots,x_n-f_n)$ since $f_i\in V \subseteq R_\q/\q R_\q\subseteq S_Q/\q S_Q$. 

Let $\Delta_S$ and $\Delta_{S_Q}$ be the obvious pullbacks of $\Delta$ to $S$ and $S_Q$ respectively. By \autoref{thm: ppt in regular fibers for pairs}, we have that 
$$\ppt(\m; \Delta) = \ppt(\n; \Delta_S) -n \text{ and } \ppt(\q R_\q; \Delta_\q) = \ppt(QS_Q; \Delta_{S_Q}) -n.$$
Therefore, in order to prove the proposition, we may further replace $R$ by $S$ and $\q$ by $Q$ to assume that $R/\q$ is a DVR, i.e., $\m = (\q, x)$ for some $x\in \m$. Now by \autoref{prop: elementary properties} we have    
    \[
    \ppt (\m; \Delta) =\ppt ((\q, x); \Delta) \leq \ppt (\q; \Delta) + \ppt (x; \Delta) 
    \leq \ppt (\q R_\q; \Delta_\q) + 1.
    \]
\end{proof}

\subsection{Measuring singularities}
In this subsection we connect both $\ppt(\ba)$, respectively $\ppt(\ba; \Delta)$, with the singularities of $R$, respectively $(R,\Delta)$. We start by proving some bounds on the perfectoid pure threshold using analytic spread.

\begin{corollary}
    \label{cor.AnalyticSpreadPPT}
    Let $(R, \m)$ be a Noetherian local ring of residual characteristic $p > 0$ and $\ba { \subseteq \m}$ be an ideal.
    Then $\ppt (\ba) \leq \ell (\ba)$, where $\ell (\ba)$ denotes the analytic spread of $\ba$. 
    Moreover, 
    $$\ppt (\ba) \leq \min \{\dim (R_Q) \mid (p, \ba) \subseteq Q \in \Spec (R)\}.$$
\end{corollary}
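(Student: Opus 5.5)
Both bounds reduce to facts about principal or parameter ideals together with the regular-ring computation of \autoref{cor: compute regular}. We first pass to the completion using \autoref{cor: ppt and completion}. Since $\ppt(\ba)$ only depends on $\overline{\ba}$ by \autoref{prop: elementary properties}, we may enlarge the residue field if needed. For this we use a faithfully flat extension $R\to R[t]_{\m R[t]}$, whose closed fiber is a field. By \autoref{thm: ppt in regular fibers for pairs} with $n=0$, the threshold is unchanged, so we may assume the residue field is infinite. Then $\ba$ has a minimal reduction $\bb$ generated by $\ell=\ell(\ba)$ elements $y_1,\dots,y_\ell$, and $\overline{\bb}=\overline{\ba}$. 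Hence $\ppt(\ba)=\ppt(\bb)$.

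For the first bound, we apply \autoref{prop: elementary properties}\autoref{part: semiadditive} repeatedly to get
\[
\ppt(\bb)\leq \sum_{i}\ppt((y_i)).
\]
It then suffices to show $\ppt((y))\leq 1$ for every $y\in\m$. Via \autoref{thm: from rational powers to monomials}, this is the statement that $y^{a/p^e}\in \np(B)$ whenever $a/p^e>1$. For such exponents $y^{a/p^e}\in yB$, and since $y\in \m$ we have $yB\subseteq\m B\subseteq \np(B)$ by \autoref{lem: injective hull splitting ideal}. This gives $\ppt(\ba)\leq \ell(\ba)$.

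For the second bound, fix $Q\supseteq (p,\ba)$. By \autoref{prop: elementary properties}(f) we have $\ppt(\ba)\leq \ppt(\ba R_Q)$. By containment (part \autoref{part: containment}), $\ppt(\ba R_Q)\leq \ppt(QR_Q)$. So it suffices to show $\ppt(\m)\leq \dim R$ for a local ring $(R,\m)$ of residue characteristic $p$. Choose a system of parameters $z_1,\dots,z_d$, which after completion may include $p$. Then $(z_1,\dots,z_d)$ is $\m$-primary, and $\m^N\subseteq (z)$ for some $N$.

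The main obstacle is that semiadditivity alone only bounds $\ppt((z))$ by $d$, whereas $\m$ is larger than $(z)$. Monotonicity goes the wrong way here, and $(z)$ need not be a reduction of $\m$. We handle this by applying the first part to $\m$ itself: $\ell(\m)=\dim R$, because the fiber cone of $\m$ is the associated graded ring, of dimension $d$. Thus $\ppt(\m)\leq \ell(\m)=d$, which completes the argument. If $\ba\not\subseteq\m$, or no such $Q$ exists, the bounds are either trivial or vacuous.
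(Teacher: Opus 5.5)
Your proposal is correct and follows the paper's proof essentially step for step. The shared steps are the principal bound $\ppt((y))\le 1$ from $yB\subseteq \m B\subseteq\np(B)$, the passage to $R(t)$ via \autoref{thm: ppt in regular fibers for pairs}, and a minimal reduction combined with semiadditivity; for the second bound you bound by $QR_Q$ via containment and $\ell(QR_Q)=\dim R_Q$, where the paper enlarges $\ba$ to $(p,\ba)$ and treats it as $\m$-primary. You should add the opening reduction that $R$ may be assumed perfectoid pure (otherwise $\ppt(\ba)=-\infty$), since \autoref{prop: elementary properties} is stated under that hypothesis; the completion step and the digression on systems of parameters are unnecessary.
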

\begin{proof}
    We may assume that $R$ is perfectoid pure as otherwise $\ppt(\ba)={-}\infty$.  
    First, if $\ba = xR$ is principal, then $\ppt (\ba) \leq 1$ 
    by \autoref{thm: from rational powers to monomials}
    since $xB \subseteq \np (B)$ for any $R$-algebra $B$. 
    
    We may replace $R$ with $R(T)$ by \autoref{thm: ppt in regular fibers for pairs}
    to assume that the residue field is infinite. 
    Hence, if $\lambda = \ell (\ba)$ then there are elements $x_1, \ldots, x_\lambda \in R$ such that 
    $\overline{(x_1, \ldots, x_\lambda)} { = \overline{\ba}}$. 
    It now follows from \autoref{prop: elementary properties} 
    that
    \[
    \ppt (\ba) \leq \ppt (x_1R) + \cdots + \ppt (x_\lambda R) \leq \lambda = \ell (\ba).  
    \]
    
    For the second bound, since 
    $\ppt (\ba) \leq \ppt ((p, \ba)) \leq \ppt ((p, \ba)R_Q)$, 
    we may assume that $p \in \ba$ and $\ba$ is $\m$-primary. 
    But then $\ell (\ba) = \dim (R)$ and the conclusion follows from the first claim.  
\end{proof}

We now extend an argument of Takagi--Watanabe \cite[Proposition~2.6]{TakagiWatanabe} to our setting. 

\begin{proposition}\label{prop: Takagi-Watanabe}
    Let $(R, \m, k)$ be a perfectoid pure Noetherian local ring with infinite 
    residue field $k$. 
    Let $\ba$ be an ideal of analytic spread $\lambda$ and   
    $x_1, \ldots, x_\lambda \in R$ form 
    a minimal reduction of $\ba$.
    If $\ppt (\ba) > \lambda - r$ for some integer $\lambda \geq r \geq 1$, then 
    $\ba^r \subseteq (x_1, \ldots, x_\lambda)$.
\end{proposition}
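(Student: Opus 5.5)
We follow the classical argument of \cite[Proposition~2.6]{TakagiWatanabe}, replacing Frobenius by a perfectoid algebra. By \autoref{cor: ppt and completion} (and since minimal reductions and containments descend along the faithfully flat map $R\to\widehat R$) we may assume $R$ is complete. By \autoref{prop: perfectoid threshold from one algebra}, after adjoining variables mapping to generators $y_1,\dots,y_s$ of $\ba$ and to $x_1,\dots,x_\lambda$, we fix a single perfectoid $R$-algebra $B=\raftypfd$ with $\ppt(\ba)=\ppt(\ba;B)$. In $B$ all of these elements carry compatible systems of $p$-power roots, and $R\to B$ is pure because $R$ is perfectoid pure. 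Set $J=(x_1,\dots,x_\lambda)$.

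\textbf{Step 1 (reduction to $B$).} Since $R\to B$ is pure, $JB\cap R=J$. It therefore suffices to show $\ba^rB\subseteq JB$, and it is enough to prove $z\in JB$ for each monomial $z$ of degree $r$ in the $y_j$.

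\textbf{Step 2 (Brian\c{c}on--Skoda type step).} Choose $e\gg0$ and put $J_e=(x_1^{1/p^e},\dots,x_\lambda^{1/p^e})B$. Since $\ba\subseteq\overline{J}$, we have $\ba B\subseteq \overline{J_e^{p^e}}$, and hence $z\in\overline{J_e^{rp^e}}$. To test membership of $z$ in $JB=(x_1,\dots,x_\lambda)B$, we use the colon/socle criterion, as in Takagi--Watanabe. Because $\ppt(\ba;B)>\lambda-r$, \autoref{thm: from rational powers to monomials} gives, for $e\gg0$, a monomial $w$ in the $y_j^{1/p^e}$ of degree $N$ with $N/p^e>\lambda-r$ and $w\notin\np(B)$. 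That is, $R\xrightarrow{1\mapsto w}B$ is pure. Now consider $zw$. It lies in $\overline{J_e^{rp^e+N}}$ with $rp^e+N>\lambda p^e$. By \autoref{BS property}, together with a pigeonhole argument on the $\lambda$ generators $x_i^{1/p^e}$, it follows that $zw\in \overline{J_e^{\lambda(p^e-1)+1+c}}\subseteq J_e^{\lambda(p^e-1)+1}\subseteq (x_1,\dots,x_\lambda)B=JB$. Here the middle inclusion uses Brian\c{c}on--Skoda with shift $\lambda$, after passing to $e'\gg e$ to absorb the constant $c$ as in the proof of \autoref{thm: from rational powers to monomials}; the last inclusion holds because a monomial of degree $\lambda(p^e-1)+1$ in $\lambda$ roots must contain some full $x_i$.

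\textbf{Step 3 (descend via purity of $w$).} We have $zw\in JB$ for every $z\in\ba^r\subseteq R$. Suppose $z\notin J$. Since $R/J$ is Artinian-free we cannot use the socle directly. Instead we argue with the pure map $\varphi\colon R\to B$, $1\mapsto w$. Tensoring with $R/J$, purity gives that $R/J\to B/JB$, $1\mapsto w$, is injective. Since $z\mapsto zw\in JB$, we get $z\in J$. This is the required conclusion.

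The main obstacle is the exponent bookkeeping in Step 2. We must make sure that the strict inequality $N/p^e>\lambda-r$ leaves enough room to absorb the Brian\c{c}on--Skoda loss, namely the $n$ in \autoref{BS property}, which becomes negligible only after rescaling by $p^{e'}$. We must also check that $w$ can be taken as an honest monomial in $B$ rather than an element of a rational power. To handle this, we first obtain $w\in\overline{(\ba B)^{N/p^e}}\setminus\np(B)$, and then use the monomial description in \autoref{thm: from rational powers to monomials}, slightly decreasing $N$ while keeping the strict inequality.
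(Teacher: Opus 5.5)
Your argument is correct and is essentially the paper's proof. Both pick a witness $w\notin\np(B)$ of degree exceeding $\lambda-r$, use perfectoid Brian\c{c}on--Skoda to get $\ba^r w\subseteq (x_1,\dots,x_\lambda)B$, and descend. The paper does this with a splitting $\phi$ and the form $I_{\perfd}\overline{I^{\lambda}}\subseteq I$ of \autoref{BS property}, while you use purity of $1\mapsto w$ together with $\overline{J_e^{\lambda p^e+1}}\subseteq J_e^{\lambda(p^e-1)+1}\subseteq JB$ and pigeonhole. Your bookkeeping actually closes exactly, since $rp^e+N\geq\lambda p^e+1$ and the Brian\c{c}on--Skoda shift is exactly $\lambda$, so the passage to $e'\gg e$ is unnecessary.
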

\begin{proof}
We may assume that $R$ is complete by \autoref{cor: ppt and completion}. Let $B$ be an absolutely integrally closed perfectoid $R$-algebra such that $\ppt (\ba) = \ppt (\ba; B)$.
    By the assumption on $\ppt(\ba)$ we may find an integer $n$ and 
    an $R$-linear map $\phi\colon B \to R$
    such that 
    $\phi (\overline{\ba^{1/n} {B}} \cdot \overline{\ba^{\lambda - r}{B}} ) = R$.
    By \autoref{pfd of ideal only depends on radical}, $\overline{\ba^{1/n}{B}} \subseteq (\overline{\ba^{1/n}{B}})_\perfd = (\ba {B})_\perfd$.
    Thus by \autoref{BS property} applied to ${I}=(x_1,\dots,x_\lambda)$, we have
    \[
    \ba^r = \ba^r \phi ((\ba {B})_\perfd \overline{\ba^{\lambda - r} {B}})
    \subseteq \phi (\ba_\perfd \overline{\ba^{\lambda}}{B})
    \subseteq \phi ((x_1, \ldots, x_\lambda){B}) \subseteq (x_1, \ldots, x_\lambda).
    \]
\end{proof}

As a consequence, we show that the perfectoid pure threshold of the maximal ideal detects regularity, generalizing the analogous characteristic $p>0$ result for $F$-pure thresholds, \cite[Theorem 2.7(1)]{TakagiWatanabe}.

\begin{corollary}\label{cor: characterize regular}
    Let $(R, \m, k)$ be a perfectoid pure Noetherian local ring of residual characteristic $p > 0$. 
    Then $\ppt(\m) >  \dim (R) - 1$ if and only if $R$ is regular (in which case $\ppt(\m) = \dim (R)$).    
\end{corollary}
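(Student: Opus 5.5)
The regular direction is \autoref{cor: compute regular}, which gives $\ppt(\m)=\dim(R)>\dim(R)-1$. For the converse, assume $\ppt(\m)>\dim(R)-1$ and set $d=\dim R$. The plan is to reduce to an infinite residue field and then apply \autoref{prop: Takagi-Watanabe} with $r=1$.

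First, replace $R$ by $R(T)=R[T]_{\m R[T]}$. This is a faithfully flat local extension with closed fiber a field, so $\dim$ does not change. By \autoref{thm: ppt in regular fibers for pairs} with $n=0$, $\ppt(\m R(T))=\ppt(\m)$. Regularity descends along faithfully flat local maps and ascends along $R\to R(T)$, so we may assume $k$ is infinite. Perfectoid purity is also preserved, since $\ppt\neq-\infty$.

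Now $\ell(\m)=d$ because $\m$ is $\m$-primary. Choose a minimal reduction $x_1,\dots,x_d$ of $\m$, which is a system of parameters. Since $\ppt(\m)>d-1=\lambda-r$ with $r=1$, \autoref{prop: Takagi-Watanabe} gives $\m=\m^1\subseteq(x_1,\dots,x_d)$. Hence $\m$ is generated by $d$ elements, so $\operatorname{edim}R=d=\dim R$ and $R$ is regular. The equality $\ppt(\m)=\dim R$ then follows from \autoref{cor: compute regular}.

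The main point to check is the hypotheses of \autoref{prop: Takagi-Watanabe}. It requires a perfectoid pure local ring with infinite residue field, which we arranged above. We also need the strict inequality $\ppt(\m)>\lambda-r$, and this is exactly the assumption with $\lambda=d$ and $r=1$. The case $d=0$ is trivial: there $\ppt(\m)>-1$ forces $\m\subseteq(\,)=0$ by the same argument, so $R$ is a field.
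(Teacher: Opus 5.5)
Your proof is correct and is essentially the paper's argument: pass to $R(T)$ via \autoref{thm: ppt in regular fibers for pairs} to get an infinite residue field, apply \autoref{prop: Takagi-Watanabe} with $r=1$ to get $\m\subseteq(x_1,\dots,x_d)$, and cite \autoref{cor: compute regular} for the converse. (The paper also first passes to the completion, which is harmless but not needed, since the proof of \autoref{prop: Takagi-Watanabe} already does this.)

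One small fix concerns $d=0$. There you cannot invoke \autoref{prop: Takagi-Watanabe} ``by the same argument'', because it requires $\lambda\geq r\geq 1$. The case is trivial for a different reason: a perfectoid pure ring embeds purely into a reduced ring, so it is reduced, and a zero-dimensional reduced local ring is a field.
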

\begin{proof}
    We may assume that $R$ is complete by \autoref{cor: ppt and completion}.  By \autoref{thm: ppt in regular fibers for pairs} we may assume that $k$ is infinite. Taking $r = 1$ in \autoref{prop: Takagi-Watanabe} shows that $\fram \subseteq (x_1, \dots, x_{\dim R}) \subseteq \fram$ and so $R$ is regular.  The converse is \autoref{cor: compute regular}.
\end{proof}

We next prove some behavior of the perfectoid pure threshold under adjunction for \emph{compatible} ideals (generalizing \cite[Lemma~2.19]{DSNBS}).

\begin{proposition}\label{ppt under quotients}
Let $(R,\m)$ be a Noetherian complete local ring of residue characteristic $p>0$ and let $B$ be a perfectoid $R$-algebra. If $I\subseteq R$ is $\phi$-compatible for all $\phi\in \Hom_R(B,R)$, then for all $\ba\subseteq R$ such that $\ba B$ can be generated by elements admitting compatible systems of $p$-power roots we have the inequality
$$\ppt(\ba(R/I); (B/IB)_\perfd) \geq \ppt(\ba; B).$$
\end{proposition}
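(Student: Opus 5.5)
We want to show $\np^{R/I}(\overline B)\cap(\text{monomials})$ is no larger than the preimage of $\np^R(B)$, where $\overline B\coloneqq (B/IB)_\perfd=B/(IB)_\perfd$. Concretely, by \autoref{thm: from rational powers to monomials}, write $\ba B=(x_1,\dots,x_d)$ with the $x_i$ admitting compatible $p$-power roots; their images $\overline{x}_i$ in $\overline B$ generate $\ba\overline B$ and still admit compatible roots, so both thresholds are suprema of $|a|/p^e$ over monomials $x^{a/p^e}$ not in the respective non-pure ideal. It therefore suffices to prove: if $b\in B$ and $b\notin\np^R(B)$, then its image $\overline b\notin \np^{R/I}(\overline B)$.

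First reduce to splittings. Since $R$ is complete local, $b\notin\np^R(B)$ means $R\to B$, $1\mapsto b$, is pure, hence split (Matlis duality argument as in the remark after the definition of compatible ideals: $\Hom_R(B,R)\cong\Hom_R(B\otimes_R E,E)$ surjects onto $\Hom_R(E,E)=R$ via evaluation at $u\otimes b$). So there is $\phi\in\Hom_R(B,R)$ with $\phi(b)=1$. By hypothesis $I$ is $\phi$-compatible, so by \autoref{basic properties comp ideals}(a) $\phi$ descends to $\overline\phi\colon \overline B=B/(IB)_\perfd\to R/I$, which is $R/I$-linear and satisfies $\overline\phi(\overline b)=\overline{\phi(b)}=1$. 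Hence $R/I\to\overline B$, $1\mapsto\overline b$, is split, in particular pure, so $\overline b\notin\np^{R/I}(\overline B)$. (If $I=R$ the statement is vacuous/degenerate; assume $I$ proper.)

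Applying this to $b=x^{a/p^e}$ gives that every exponent $|a|/p^e$ witnessing $\ppt(\ba;B)$ also witnesses $\ppt(\ba(R/I);\overline B)$, yielding the inequality; note also that $\overline B$ is perfectoid and an $R/I$-algebra, so the left side is defined. The only step needing care is the passage from purity to an actual $R$-linear map $\phi$ with $\phi(b)=1$, which uses completeness of $R$ (and nonemptiness of the sets, handled by the convention $\sup\emptyset=-\infty$); the rest is formal.
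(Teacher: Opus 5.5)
Your proposal is correct and is essentially the paper's argument. Compatibility lets every $\phi\in\Hom_R(B,R)$ descend to $\overline\phi\colon (B/IB)_\perfd\to R/I$, so a splitting with $\phi(b)=1$ (available because $R$ is complete) descends to one with $\overline\phi(\overline b)=1$; this gives $\np^{R/I}((B/IB)_\perfd)\subseteq \np^R(B)(B/IB)_\perfd$, and the inequality follows from the monomial description in \autoref{thm: from rational powers to monomials}. The only difference is that you spell out the purity-implies-splitting step, which the paper leaves implicit in the phrase ``a surjection would descend to a surjection.''
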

\begin{proof}
    By \autoref{basic properties comp ideals}, any $\phi\in \Hom_R(B,R)$ descends to $\overline{\phi}\in \Hom_R((B/IB)_\perfd, R/I)$. 
    Since a surjection would descend to a surjection, it follows 
    immediately that 
    $$\np^R (B) (B/IB)_\perfd \supseteq \np^{R/I} ((B/IB)_\perfd)$$ 
    and the proposition follows from \autoref{thm: from rational powers to monomials}.
\end{proof}

This result does not have an obvious extension to divisor pairs, 
since, in general, there is no appropriate divisor $\Delta_\ba$ on $R/\ba$ 
even in positive characteristic.  {In characteristic $p > 0$ though, there is a correct Cartier algebra-type statement and that is what we will mimic below.}
We will provide two results: one generalizing the notion of pairs 
and one putting an extra assumption ensuring that 
$\Delta_\ba$ is defined.

\begin{definition}
Let $(R, \m)$ be a Noetherian complete local ring of residual characteristic $p > 0$ that is reduced and $\mathbf{S}_2$, and let $\Delta$ be an effective Mumford $\Q$-divisor on $R$. We say that \emph{$\fra$ is $(R, \Delta)$-compatible} if for any perfectoid $R$-algebra $B$ on which $M_B({<}\Delta)$ can be defined and every $\phi\in \Hom_R(M_B({<}\Delta),R)$, $(\phi\circ\iota_{B,\Delta})((\fra B)_\perfd )\subset \fra$. 
\end{definition}

\begin{remark}\label{rem from pairs to generalized pairs}
With notation as above, $\phi\circ\iota_{B,\Delta}\in \Hom_R(B,R)$. The set of all maps obtained this way, \ie the image of $\Hom_R(M_B({<}\Delta),R)$ in $\Hom_R(B,R)$ via precomposition with $\iota_{B,\Delta}$ forms a $B$-submodule, which we call $\sC(B, {<}\Delta)$. The $B$-module structure comes from precomposition by multiplication by an element of $B$.
Thus $\fra$ is $(R, \Delta)$-compatible if and only if it is $\sC(B, {<}\Delta)$-compatible.
In particular, if $\fra$ is uniformly compatible then it is $(R, \Delta)$-compatible for all $\Delta$. 

Furthermore, if $\ba$ is $\Delta$-compatible, then by \autoref{basic properties comp ideals},
$\sC(B, {<}\Delta)$ descends to a $(B/\ba)_\perfd$-submodule 
$\overline{\sC(B, {<}\Delta)} \subseteq \Hom_{R/\fra} ((B/\ba)_\perfd, R/\fra)$.
\end{remark}

\begin{definition}
Let $(R, \m)$ be a Noetherian complete local ring, $B$ a perfectoid $R$-algebra, and $\sC \subseteq \Hom_R (B, R)$ be a $B$-submodule (i.e., $\sC$ is stable under precomposition with elements in $B$). 
Then we call
\[
\np (B, \sC) \coloneqq \{x \in B \mid \phi(x) \in \m  \, \  \forall \phi\in \sC\}
\]
the \emph{non-pure ideal} of $\sC$. 
\end{definition}
The verification that $\np (B, \sC)$ is an ideal is straightforward.
First, if $x \in \np (B, \sC)$ and $b \in B$, then for any $\phi \in \sC$, 
the composition $\phi \circ \ev_b \in \sC$, so 
$\phi (bx) \in \m$. 
Second, if $x, y \in \np (B, \sC)$ then $\phi(x+y) = \phi (x) + \phi (y) \in \m$ for all $\phi \in \sC$.

\begin{lemma}\label{lem generalized pairs ppt}
Let $(R, \m)$ be a Noetherian complete local ring of residual characteristic $p > 0$ that is reduced and $\mathbf{S}_2$, and let $\Delta$ be an effective Mumford $\Q$-divisor on $R$. Let $B$ be a perfectoid $R$-algebra such that $M_B ({<}\Delta)$ is defined.
If $\ba$ is an $(R, \Delta)$-compatible ideal, then
\[
\ppt (\bb; \Delta; B) \leq \newpt{\bb (B/\ba B)_\perfd}{\np ((B/\ba B)_\perfd, \overline{\sC(B, {<}\Delta)})}
\]
for all $\bb\subseteq R$ (such that $\bb B$ can be generated by elements admitting compatible systems of $p$-power roots).
\end{lemma}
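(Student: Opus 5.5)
The plan is to mimic the proof of \autoref{ppt under quotients}, with $\Hom_R(B,R)$ replaced by the submodule $\sC\coloneqq\sC(B,{<}\Delta)$ of \autoref{rem from pairs to generalized pairs}. Write $\overline B\coloneqq(B/\ba B)_\perfd = B/(\ba B)_\perfd$; this is a surjective image of $B$. Both thresholds are rational-powers thresholds of ideals generated by the same elements with compatible $p$-power roots: the generators of $\bb B$ and their images in $\overline B$. So by \autoref{thm: from rational powers to monomials} it suffices to prove one thing. If a monomial $m=y_1^{a_1/p^e}\cdots y_n^{a_n/p^e}\in B$ is not in $\np(B,{<}\Delta)$, then its image $\overline m$ is not in $\np(\overline B,\overline{\sC})$. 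Equivalently, it suffices to show
\[
\np(B,{<}\Delta)\,\overline B\supseteq \np(\overline B,\overline{\sC}),
\]
or, more usefully, that the preimage of $\np(\overline B,\overline\sC)$ in $B$ lies in $\np(B,{<}\Delta)$.

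First, identify $\np(B,{<}\Delta)$ with $\np(B,\sC)$. Since $R$ is complete local, Matlis duality applies to the pointed module $M_B({<}\Delta)$. By \autoref{lem: injective hull splitting ideal}, $x\notin\np(B,{<}\Delta)$ exactly when $R\to M_B({<}\Delta)$, $1\mapsto x\mathbf 1_{{<}\Delta}$, is pure. For complete $R$ this is equivalent to splitting, i.e.\ to the existence of $\psi\in\Hom_R(M_B({<}\Delta),R)$ with $\psi(x\mathbf 1_{{<}\Delta})=1$. This is the same surjectivity argument as in the remark preceding \autoref{basic properties comp ideals}, applied to $E\to E\otimes M_B({<}\Delta)$. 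Setting $\phi=\psi\circ\iota_{B,\Delta}\in\sC$ gives $\phi(x)=1\notin\m$. Conversely, if $\phi(x)\notin\m$ for some $\phi=\psi\circ\iota_{B,\Delta}\in\sC$, then after rescaling by a unit $\psi$ splits $1\mapsto x\mathbf 1_{{<}\Delta}$. Hence $\np(B,{<}\Delta)=\np(B,\sC)$.

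Next, use compatibility. By \autoref{basic properties comp ideals}(a), every $\phi\in\sC$ descends to $\overline\phi\colon\overline B\to R/\ba$ with $\overline\phi(\overline x)=\overline{\phi(x)}$, and these $\overline\phi$ make up $\overline\sC$. Now suppose $x\notin\np(B,\sC)$, so some $\phi\in\sC$ has $\phi(x)\notin\m$. Then $\overline\phi(\overline x)\notin\m/\ba$. Since $\ba\subseteq\m$ is proper, this means $\overline x\notin\np(\overline B,\overline\sC)$, where the latter ideal is taken with respect to the maximal ideal $\m/\ba$ of $R/\ba$. Consequently $\np(B,{<}\Delta)$ contains the preimage of $\np(\overline B,\overline\sC)$. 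Monomials avoiding $\np(B,{<}\Delta)$ therefore have images avoiding $\np(\overline B,\overline\sC)$, and \autoref{thm: from rational powers to monomials} (or directly the definition of $\pt$ via \autoref{cor: basic functoriality of threshold}(a)-style containment) gives the inequality.

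The main obstacle is the first step: justifying that purity of $R\to M_B({<}\Delta)$ at $x$ yields an honest $R$-linear map $M_B({<}\Delta)\to R$ sending $x\mathbf 1_{{<}\Delta}$ to $1$. This needs the completeness of $R$ and the Hom–tensor identity $\Hom_R(M,R)\cong\Hom_R(M\otimes E,E)$, together with injectivity of $E$. Everything else is formal.
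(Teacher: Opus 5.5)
Your proposal is correct and follows the paper's proof essentially line by line. You identify $\np(B,{<}\Delta)$ with $\np(B,\sC(B,{<}\Delta))$; the paper cites \autoref{rem from pairs to generalized pairs} for this, while you justify it via the purity-equals-splitting argument over the complete ring $R$. You then use $(R,\Delta)$-compatibility so that each $\phi\in\sC(B,{<}\Delta)$ descends with $\overline\phi(\overline x)\notin\m(R/\ba)$ whenever $\phi(x)\notin\m$, and conclude via \autoref{thm: from rational powers to monomials}.

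The only difference is that the paper records the full equality $\np(B,{<}\Delta)(B/\ba B)_\perfd=\np((B/\ba B)_\perfd,\overline{\sC(B,{<}\Delta)})$. You prove just the containment that the inequality actually requires.
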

\begin{proof}
By \autoref{rem from pairs to generalized pairs}, $\np (B, {<}\Delta) = \np (B, \sC(B, {<}\Delta))$. 
Since $\ba$ is $(R, \Delta)$-compatible, for any $\phi\in \sC(B, {<}\Delta)$, $\overline{\phi} (\overline{x}) \in \m (R/\ba)$
if and only if $\phi (x) \in \m$.
It follows immediately that 
\[
\np (B, {<}\Delta) (B/\ba B)_\perfd = \np ({(B/\ba B)_\perfd}, \overline{\sC(B, {<}\Delta)}).
\]    
The conclusion follows from \autoref{thm: from rational powers to monomials}.
\end{proof}

\begin{proposition}
Let $(R,\fram)$ be a Noetherian complete local normal domain of residual characteristic $p>0$ and $\Delta$ be an integral effective Weil divisor on $R$ such that 
$(R,\Delta)$ is perfectoid pure. 
Let $\fra$ be an $(R, \Delta)$-compatible ideal of $R$ with $R/\fra$ normal and  
define $\Delta_\fra$ to be the divisor corresponding to $I_{\Delta_\fra}\coloneqq (((I_\Delta\colon \fra)+\fra)/\fra)^{**}$ (which we assume to be nonzero) on $R/\fra$.
If $\bb$ is an ideal of $R$, then 
\[
\ppt (\bb; \Delta) \leq \ppt (\bb (R/\ba); \Delta_\ba). 
\]
\end{proposition}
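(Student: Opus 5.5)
We reduce to \autoref{lem generalized pairs ppt}. The key is to compare the descended Cartier-type submodule $\overline{\sC(B,{<}\Delta)}$ on $\overline B\coloneqq(B/\fra B)_\perfd$ with $\sC(\overline B,{<}\Delta_\fra)$ coming from $M_{\overline B}({<}\Delta_\fra)$.

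First fix $B$. By \autoref{prop: perfectoid threshold from one algebra}, choose an absolutely integrally closed perfectoid $R$-algebra $B$ with $\ppt(\bb;\Delta)=\ppt(\bb;\Delta;B)$. Then $\overline B$ is perfectoid and is an $R/\fra$-algebra. By \autoref{lem generalized pairs ppt},
\[
\ppt(\bb;\Delta)\leq \newpt{\bb\overline B}{\np(\overline B,\overline{\sC(B,{<}\Delta)})}.
\]
Two remarks on the right-hand side:
\begin{itemize}
\item Generators of $\bb\overline B$ still admit compatible $p$-power roots, since these roots are images of roots in $B$.
\item $\overline B$ admits a limiting sequence for $\Delta_\fra$, since roots of any equation for the reduced support lift from $B$, which is absolutely integrally closed.
\end{itemize}
It therefore suffices to show
\[
\np(\overline B,\overline{\sC(B,{<}\Delta)})\supseteq \np^{R/\fra}(\overline B,{<}\Delta_\fra).
\]
By \autoref{cor: basic functoriality of threshold}/\autoref{lem: elementary threshold}(d) this gives $\ppt(\bb;\Delta;B)\le \ppt(\bb(R/\fra);\Delta_\fra;\overline B)\le \ppt(\bb(R/\fra);\Delta_\fra)$.

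Since $R/\fra$ is complete, its non-pure ideal is computed by the maps $\psi\circ\iota_{\overline B,\Delta_\fra}$ for $\psi\in\Hom_{R/\fra}(M_{\overline B}({<}\Delta_\fra),R/\fra)$. Here we use \autoref{lem: injective hull splitting ideal} together with Matlis duality, as in the remark before \autoref{basic properties comp ideals}. So it is enough to show that every such composite lies in $\overline{\sC(B,{<}\Delta)}$. Concretely: given $\phi\in\Hom_R(M_B({<}\Delta),R)$, we want $\overline{\phi\circ\iota_{B,\Delta}}$ to extend to $\overline\psi\colon M_{\overline B}({<}\Delta_\fra)\to R/\fra$. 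Since all maps to $R/\fra$ out of $M_{\overline B}({<}\Delta_\fra)$ arise this way, we may in fact aim to show the reverse containment of Cartier submodules, $\sC(\overline B,{<}\Delta_\fra)\subseteq\overline{\sC(B,{<}\Delta)}$. Either direction suffices once the construction is made.

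We build the extension in codimension one on $\Spec R/\fra$. Because $\Delta$ is integral, \autoref{radical ideal computes universal divisorial module} identifies
\[
M_B({<}\Delta)\cong\bigl(\sqrt{I_\Delta B}\tensor_R R(\Delta)\bigr)^{\cOne}.
\]
Then \autoref{mixed characteristic compatible splitting via radical ideal} says that $\phi\circ\iota$ ranges over maps $\varphi\colon B\to R$ with $\varphi(\sqrt{I_\Delta B})\subseteq I_\Delta$.

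Now take such a $\varphi$ and an element $x\in(I_\Delta:\fra)$ with a compatible system of roots (possible after enlarging $B$, by \autoref{basic properties comp ideals}(c)). We get
\[
\varphi(x^{1/p^e}\fra^{\cdot}B)\subseteq\varphi(\sqrt{I_\Delta B})\subseteq I_\Delta,
\]
and so $\overline\varphi(\sqrt{I_{\Delta_\fra}\overline B})\subseteq I_{\Delta_\fra}$ at height one primes of $R/\fra$. Since $R/\fra$ is normal and $I_{\Delta_\fra}$ is reflexive, this containment holds globally. Applying \autoref{mixed characteristic compatible splitting via radical ideal} on $R/\fra$ (valid since $\Delta_\fra$ is integral and reduced-part roots exist in $\overline B$) then yields $\overline\psi$ on $M_{\overline B}({<}\Delta_\fra)$.

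The converse direction, lifting an arbitrary $\psi$ on $R/\fra$ back to some $\phi$, is not needed if we phrase the argument as follows. We show that if $x\notin\np(\overline B,{<}\Delta_\fra)$ fails in the right way, i.e.\ $x\notin\np(\overline B,\overline{\sC})$, then there is a $\phi$ with $\overline{\phi\iota}(x)$ a unit, and the extension $\overline\psi$ shows $x\notin\np^{R/\fra}(\overline B,{<}\Delta_\fra)$. This is exactly the needed containment $\np(\overline B,{<}\Delta_\fra)\subseteq\np(\overline B,\overline{\sC})$.

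The main obstacle is the codimension-one verification that $\overline\varphi$ sends $\sqrt{I_{\Delta_\fra}\overline B}$ into $I_{\Delta_\fra}$. It requires relating $\sqrt{((I_\Delta:\fra)+\fra)\overline B}$ to the image of $\sqrt{(I_\Delta:\fra)B}$, and handling the passage through $(\fra B)_\perfd$. Here I would rely on the $\phi$-compatibility of $(I_\Delta:\fra)$ from \autoref{basic properties comp ideals}(c) and on \autoref{pfd of ideal only depends on radical}.
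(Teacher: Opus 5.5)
Your plan is essentially the paper's proof. You fix an absolutely integrally closed $B$ with $\ppt(\bb;\Delta)=\ppt(\bb;\Delta;B)$, apply \autoref{lem generalized pairs ppt}, and prove $\overline{\sC(B,{<}\Delta)}\subseteq\sC\bigl((B/\fra B)_\perfd,{<}\Delta_\fra\bigr)$ by showing each $\overline\varphi$ sends $\sqrt{I_{\Delta_\fra}(B/\fra B)_\perfd}$ into $I_{\Delta_\fra}$, then invoking \autoref{mixed characteristic compatible splitting via radical ideal} on $R/\fra$.

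The step you call the main obstacle is handled in the paper in three moves. First, $J=(I_\Delta:\fra)+\fra$ is compatible as a sum of compatible ideals \cite{FayolleCentersPerfectoidPurity}; this uses the root description of $(JB)_\perfd$ from \autoref{pfd of ideal only depends on radical}, and it is where the $(R,\Delta)$-compatibility of $\fra$ itself enters. Second, compatibility descends to $J(R/\fra)$. Third, one passes to $(J(R/\fra))^{**}=I_{\Delta_\fra}$ via \autoref{basic properties comp ideals}(f), which needs $J$ radical; the paper gets this from the surjective splitting. Your height-one localization plus reflexivity would do the same job without needing radicality.

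Drop the remark that ``either direction suffices.'' Only $\overline{\sC(B,{<}\Delta)}\subseteq\sC\bigl((B/\fra B)_\perfd,{<}\Delta_\fra\bigr)$ gives the needed $\np\bigl((B/\fra B)_\perfd,{<}\Delta_\fra\bigr)\subseteq\np\bigl((B/\fra B)_\perfd,\overline{\sC(B,{<}\Delta)}\bigr)$. That is the direction your final contrapositive correctly uses, and the reverse containment would give the wrong inequality.
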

\begin{proof}
By \autoref{prop: perfectoid threshold from one algebra}, 
we may find an absolutely integrally closed perfectoid $R$-algebra $B$ such that $\ppt (\bb; \Delta) = \ppt (\bb; \Delta; B)$. 
In particular, $R\to M_B({<}\Delta)$ splits, say by $\tilde{\phi}$.
It follows that $\phi\coloneqq \widetilde\phi\circ\iota_{B,\Delta} \in \sC(B, {<}\Delta)$ is surjective. Since $I_\Delta$ is $\sC(B, {<}\Delta)$-compatible by \autoref{mixed characteristic compatible splitting via radical ideal}, so is $(I_\Delta\colon \fra)$ by \autoref{basic properties comp ideals}. It follows from \cite[Proposition 2.34]{FayolleCentersPerfectoidPurity} that $J=(I_\Delta\colon \fra)+\fra$ is $\sC(B, {<}\Delta)$-compatible and in particular radical by \autoref{basic properties comp ideals} (we assumed that $(R,\Delta)$ is perfectoid pure). Since every element of $(B/\fra B)_\perfd$ has a compatible system of $p$-th power roots, $M_{(B/\fra B)_\perfd}({<}\Delta_\fra)$ exists.

We now claim that 
\[
\overline{\sC(B, {<}\Delta)} \subseteq \sC ((B/\fra B)_\perfd, {<} \Delta_\fra).
\]
Indeed,
the image $J(R/\fra)$ is radical and by \cite[Proposition 2.20]{FayolleCentersPerfectoidPurity}, it is $\overline{\sC(B, {<}\Delta)}$-compatible. Then, \autoref{basic properties comp ideals} gives that $(J(R/\fra))^{**} = I_{\Delta_\fra}$ is $\overline{\sC(B, {<}\Delta)}$-compatible.
The claim follows from \autoref{mixed characteristic compatible splitting via radical ideal}. 

The claim then implies that 
$\np ((B/\fra B)_\perfd, {<}\Delta_\ba) \subseteq 
\np ((B/\fra B)_\perfd, \overline{\sC(B, {<}\Delta)})$.
Thus by \autoref{lem generalized pairs ppt} and \autoref{lem: elementary threshold} we obtain that
\begin{align*}
\ppt (\bb; \Delta) = 
\ppt (\bb; \Delta; B) & \leq \newpt{\bb (B/\ba B)_\perfd}{\np ((B/\fra B)_\perfd, \overline{\sC(B, {<}\Delta)})}
 \\
& \leq \newpt{\bb (B/\ba B)_\perfd}{\np ((B/\fra B)_\perfd, {<}\Delta_\ba)}\\
 & =
\ppt (\bb (R/\ba); \Delta_\ba; (B/\fra B)_\perfd) \leq \ppt(\bb(R/\ba); \Delta_\ba).
\end{align*}
\end{proof}

We now present a quantification of the inversion of adjunction 
in \cite[Theorem~6.6]{BMPSTWW3}. 

\begin{proposition}\label{prop: ppt inversion}
Let $(R, \m, k)$ be a perfectoid pure complete intersection 
of residual characteristic $p > 0$ and $\ba$ be an ideal.
If $f \in \ba$ is a regular element, then $\ppt (\ba) \geq \ppt (\ba (R/fR)) + 1$. 
\end{proposition}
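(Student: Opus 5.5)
We reduce to the principal-pair criterion. By \autoref{cor: ppt and completion} we may assume $R$ is complete; $R/fR$ is then again a complete intersection, and we may assume it is perfectoid pure, since otherwise $\ppt(\ba(R/fR))=-\infty$ and there is nothing to prove. By \autoref{prop: perfectoid threshold from one algebra}, applied to $R/fR$ with a Cohen presentation $A\twoheadrightarrow R\twoheadrightarrow R/fR$ to which we adjoin variables mapping to $f$ and to generators $x_1,\dots,x_m$ of $\ba$, we have
\[
\ppt(\ba(R/fR))=\sup\Big\{\tfrac{|a|}{p^e}\;\Big|\;\overline{x}^{a/p^e}\notin\np^{R/f}\big((R/fR)^{A_\infty}_\perfd\big)\Big\}.
\]
Here $\overline{x}^{a/p^e}$ denotes the image of the monomial $x_1^{a_1/p^e}\cdots x_m^{a_m/p^e}$.

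Fix $a$ and $e$ with $\overline{x}^{a/p^e}\notin\np^{R/f}\big((R/fR)^{A_\infty}_\perfd\big)$, and let $g=x_1^{a_1}\cdots x_m^{a_m}$. By \autoref{thm: from rational powers to monomials}, the element $\overline{x}^{a/p^e}$ is the image of $g^{1/p^e}$. The map $R/fR\to(R/fR)^{A_\infty}_\perfd$ sending $1\mapsto \overline{g}^{1/p^e}$ is pure, and so are the maps sending $1$ to $\overline g^{\lfloor(p^{e'}-1)\lambda\rfloor/p^{e'}}$ for $\lambda$ slightly below $1/p^e$, since these are multiples of $\overline g^{1/p^e}$ up to factoring. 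By \autoref{can test principal pairs on RAinfty}, $(R/fR,\overline g^{\lambda})$ is perfectoid pure for every $\lambda<1/p^e$. Inversion of adjunction, \autoref{quotient of ci pfd pure}, then shows that $(R,fg^{\lambda})$ is perfectoid pure.

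Next, fix $\lambda<1/p^e$ and use \autoref{lem:coefficient-one-comparison} together with \autoref{rk limit of pure is pure}. In a fixed absolutely integrally closed $B$ with $\ppt(\ba)=\ppt(\ba;B)$, chosen via \autoref{prop: perfectoid threshold from one algebra}, part (c), the element $f^{1-1/p^{e'}}g^{\mu}$ lies outside $\np(B)$ for all $e'$ and all $\mu<1/p^e$ of the form $b/p^{e''}$. The key point is that $f\in\ba$, so $f^{1-1/p^{e'}}g^{\mu}$ is a monomial of total degree $(1-1/p^{e'})+|a|\mu p^e/p^e$ in the generators $f,x_1,\dots,x_m$ of $\ba$. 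We may use $f$ as an additional generator; this is harmless by \autoref{thm: from rational powers to monomials}, since the threshold does not depend on the generating set. This gives
\[
\ppt(\ba;B)\ \ge\ (1-p^{-e'})+|a|\mu .
\]
Letting $e'\to\infty$ and $\mu\to 1/p^e$ yields $\ppt(\ba)\ge 1+|a|/p^e$, and taking the supremum over $a,e$ proves the claim.

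The main obstacle is passing from perfectoid purity of the pair $(R,fg^{\lambda})$, which is witnessed by some perfectoid algebra, to non-membership in $\np(B)$ for the single algebra $B$ that computes $\ppt(\ba)$. For this we need a $B$ on which every perfectoid pure principal pair $(R,f\cdot\text{monomial})$ is tested. We would take $B=\raftypfd$ with $f$ and the $x_i$ among the variables, and use \autoref{can test principal pairs on RAinfty} together with \autoref{lem:coefficient-one-comparison}. One also has to make sure the exponents $f^{1-1/p^{e'}}$ are handled correctly, as in \autoref{principal pair sequence criterion}.
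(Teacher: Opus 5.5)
Your proposal is correct, but it is organized differently from the paper's proof.

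\textbf{Your route.} You make a round trip through pair-level statements:
\begin{enumerate}
\item from $\overline{x}^{a/p^e}\notin\np^{R/f}\bigl((R/fR)^{A_\infty}_\perfd\bigr)$ you deduce that the pair $(R/fR,\overline g^{\lambda})$ is perfectoid pure;
\item you apply \autoref{quotient of ci pfd pure} as a black box to get that $(R,fg^{\lambda})$ is perfectoid pure;
\item you then come back down to $f^{1-1/p^{e'}}g^{\mu}\notin\np(B)$ for a single fixed algebra $B$.
\end{enumerate}

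\textbf{The last step.} Step 3 is exactly the obstacle you flag. Your fix works: take $B=\raftypfd$ with variables mapping to $f$ and to the $x_i$. You only need $\ppt(\ba)\geq\ppt(\ba;B)$, which holds by definition, so there is no reason to ask that $B$ compute $\ppt(\ba)$.

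However, \autoref{can test principal pairs on RAinfty} is stated only for a single element with a variable mapping to it, so you must rerun its proof for the product $fg^{\lambda}$:
\begin{enumerate}
\item choose compatible roots of $f$ and of the $x_i$ in an absolutely integrally closed witness;
\item use independence of the choice of roots (via \autoref{BS - root}(a), as in \autoref{purity independent choice of root}) to take the witnesses to be monomials in these roots;
\item factor through $\raftypfd$.
\end{enumerate}

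\textbf{The paper's route.} The paper avoids the round trip. It fixes one presentation $S\twoheadrightarrow R$ with $x\mapsto f$. It then reuses the local cohomology diagram from the \emph{proof} (not the statement) of \autoref{quotient of ci pfd pure}, with the monomial $y_1^{a_1/p^m}\cdots y_n^{a_n/p^m}$ in place of $g_e$. This gives purity of
\[
R\to\bigl((f)^{1/p^\infty}R^{S_\infty}_\perfd\bigr)^-,\qquad 1\mapsto f\,y_1^{a_1/p^m}\cdots y_n^{a_n/p^m}.
\]
Then \autoref{rk limit of pure is pure} yields $f^{1-1/p^e}y_1^{a_1/p^m}\cdots y_n^{a_n/p^m}\notin\np(R^{S_\infty}_\perfd)$ in that same algebra. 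No re-testing is needed, and there is no need to approach $1/p^e$ from below.

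Your version buys modularity, since it uses only the statement of inversion of adjunction, at the cost of this extra bookkeeping.

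\textbf{Small points to record.}
\begin{itemize}
\item $\overline g$ is automatically a nonzerodivisor on the reduced ring $R/fR$: if $\overline g$ lay in a minimal prime, no map $1\mapsto\overline g^{1/p^e}$ could be pure.
\item $f\in\m$ because $\ba$ is proper; this is needed to apply \autoref{quotient of ci pfd pure}.
\item Your phrase ``multiples of $\overline g^{1/p^e}$'' is reversed. It is $\overline g^{1/p^e}$ that is a multiple of the smaller-exponent elements, which is why the pure map factors through them.
\end{itemize}
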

\begin{proof}
Take a surjection $S:=\coff(k)\llbracket x, x_1, \ldots, x_d\rrbracket  \to R$ such that $x \mapsto f$ and $\ba R^{S_\infty}_\perfd$ is generated by elements $(y_1,\dots,y_n)$ that admit compatible systems of $p$-power roots (e.g., we could arrange that $\{y_1,\dots, y_n\}$ is a subset of the images of $\{x_1,\dots,x_d\}$). 

If the image of $y_1^{a_1/p^m}\cdots y_n^{a_n/p^m}$ is not in $\np((R/fR)^{S_\infty}_\perfd)$, then as in the proof of \autoref{quotient of ci pfd pure}, we know that $R\xrightarrow{\cdot fy_1^{a_1/p^m}\cdots y_n^{a_n/p^m}} ((f)^{1/p^\infty}\rsftypfd)^-$ is pure. Thus by \autoref{rk limit of pure is pure}, we have
$$f^{(p^e-1)/p^{e}}y_1^{a_1/p^m}\cdots y_n^{a_n/p^m} \notin  \np(R^{S_\infty}_\perfd)$$ for all $e \gg 0$. 
The desired inequality then follows from \autoref{thm: from rational powers to monomials}.\end{proof}

We end this subsection by comparing the perfectoid pure threshold with the log canonical threshold.

\begin{proposition}
    \label{prop.pptVsLCT}
    Let $(R, \m)$ be an excellent normal local ring of residual characteristic $p > 0$ admitting a dualizing complex and let $\ba\subseteq R$ be an ideal. Let $\Delta$ be an effective $\Q$-divisor on $R$ such that $K_R+\Delta$ is $\Q$-Cartier.
    Then $\ppt(\ba; \Delta) \leq \lct(\ba; \Delta).$
\end{proposition}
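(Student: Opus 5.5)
We reduce the statement to \autoref{pfd pure implies lc} via the monomial description of the threshold in \autoref{cor: perfectoid threshold via pure pairs}. First we dispose of degenerate cases. If $(R,\Delta)$ is not perfectoid pure, then $\ppt(\ba;\Delta)=-\infty$ and there is nothing to prove. If $\ba=0$, or more generally if $\ba$ lies in a minimal prime, the statement is vacuous or immediate under the usual conventions. Since $R$ is normal, it is regular in codimension one, so case \autoref{cor: perfectoid threshold via pure pairs.parta} of \autoref{cor: perfectoid threshold via pure pairs} cannot occur. We may therefore apply part \autoref{cor: perfectoid threshold via pure pairs.partb} and fix generators $x_1,\dots,x_m$ of $\ba$ with
\[
\ppt(\ba;\Delta)=\sup\Big\{\tfrac{|a|}{p^e} \;\Big|\; \big(R,\Delta+\tfrac1{p^e}\Div(x_1^{a_1}\cdots x_m^{a_m})\big)\text{ is perfectoid pure}\Big\}.
\]

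Next we fix $e$ and $a$ for which the displayed pair $(R,\Delta')$, with $\Delta'=\Delta+\frac1{p^e}\Div(x^a)$, is perfectoid pure. The divisor $\Div(x^a)$ is Cartier, so $K_R+\Delta'$ is still $\Q$-Cartier. By \autoref{pfd pure implies lc}, the pair $(R,\Delta')$ is log canonical. Now take any proper birational morphism $\pi\colon Y\to\Spec R$ with $Y$ normal on which $\ba\sO_Y=\sO_Y(-F)$ is invertible (for instance the normalized blowup of $\ba$), and let $E$ be a prime divisor on $Y$. Then $\ord_E(x^a)=\sum_i a_i\ord_E(x_i)\geq |a|\,\ord_E(F)$, because each $x_i\in\ba$. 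Log canonicity of $(R,\Delta')$ gives
\[
a_E(R,\Delta)-\tfrac1{p^e}\ord_E(x^a)\geq -1,
\]
and hence $a_E(R,\Delta)-\frac{|a|}{p^e}\ord_E(F)\geq -1$. This is exactly the statement that $(R,\Delta,\ba^{|a|/p^e})$ is log canonical along $E$.

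Since every divisor over $\Spec R$ appears on some such $Y$, we obtain $|a|/p^e\leq\lct(\ba;\Delta)$, using the divisorial definition $\lct(\ba;\Delta)=\sup\{t\mid a_E(R,\Delta)-t\,\ord_E(\ba)\geq-1\ \forall E\}$. Taking the supremum over all admissible $(e,a)$ gives the claim.

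The main obstacle is making the log canonical threshold definition match in this generality, for excellent schemes without resolutions. Using the valuative/divisorial formulation over all normal proper birational models avoids any need for log resolutions, and the inequality $\ord_E(x^a)\geq|a|\ord_E(\ba)$ is elementary. One must also check that $\Div(x^a)$ causes no issue with the hypotheses of \autoref{pfd pure implies lc}; it does not, since $R$ is normal and the added divisor is Cartier.
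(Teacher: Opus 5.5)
Your proposal is correct and follows essentially the same route as the paper. You write $\ppt(\mathfrak{a};\Delta)$ via \autoref{cor: perfectoid threshold via pure pairs} as a supremum over perfectoid pure pairs $(R,\Delta+\frac{1}{p^e}\Div(x_1^{a_1}\cdots x_m^{a_m}))$, apply \autoref{pfd pure implies lc} to each such pair, and compare with $\lct(\mathfrak{a};\Delta)$; your inequality $\operatorname{ord}_E(x^a)\geq |a|\operatorname{ord}_E(\mathfrak{a})$ is exactly the justification the paper leaves implicit in its final step.
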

\begin{proof}
    We can choose a generating set $\ba = (x_1, \ldots, x_m)$ that satisfies \autoref{cor: perfectoid threshold via pure pairs}. By \autoref{perfectoid pure triples} and \autoref{pfd pure implies lc}, we have
    \begin{align*}
    \ppt (\ba; \Delta) &= \sup
    \left \{
    \frac{|a|}{p^{e}} \mid a \in \mathbb{Z}_{\geq 0}^m, 
    (R, \Delta + \frac{1}{p^e} \Div (x_1^{a_1}\cdots x_m^{a_m}))
    \text{ is perfectoid pure}
    \right \}
    \\
    &\leq 
    \sup
    \left \{
    \frac{|a|}{p^{e}} \mid a \in \mathbb{Z}_{\geq 0}^m, 
    (R, \Delta + \frac{1}{p^e} \Div (x_1^{a_1}\cdots x_m^{a_m}))
    \text{ is log canonical}
    \right \}
    \\
    &\leq 
    \sup
    \left \{
    \frac{n}{p^{e}} \mid n \in \mathbb{Z}_{\geq 0}, 
    (R, \ba^{n/p^e}, \Delta)
    \text{ is log canonical}
    \right \}
    = \lct (\ba; \Delta).
    \end{align*}    
\end{proof}

\subsection{Examples} In this subsection we collect some computations on the perfectoid pure threshold, using various properties we developed earlier this section.

\begin{example}[$A_n$ singularities]
    Let $R = \ZZ_p \llbracket a, b\rrbracket $ and $f = xy + z^{n+1}$ where 
    $x,y,z$ coincide with $p, a, b$ up to a permutation.
    Then $1 \geq \ppt (\m (R/fR))$ by \autoref{cor: characterize regular} and 
    $\ppt (\m (R/fR)) \geq 1$ by \autoref{prop: ppt inversion} because $R/(f, z)R$ is perfectoid pure.
    Therefore $\ppt (\m(R/fR)) = 1$ and thus coincides with the characteristic $p$ value computed in \cite{TakagiWatanabe}.
\end{example}

\begin{example}
    \label{exam.PPTExamplesEllipticCurve}
Consider the ring $R = \mathbb{Z}_p\llbracket x,y,z\rrbracket /(x^3 + y^3 + z^3)$. 
First, if $p \equiv 1 \mod 3$, then $R/pR$ is F-pure. From this we obtain that $\ppt (\m) = 1$: the lower bound holds by \autoref{prop: ppt inversion} and 
the upper bound by \autoref{prop: Takagi-Watanabe}.

Second, we claim that $\ppt(\m) = 1 - p/(p^2 - 1)$ if 
$p \equiv 2 \mod 3$. To see this, first note that by  \cite[Proposition~6.10]{Yoshikawa.ComputationMethodForPerfdPurity}, we have
$\ppt(p) = 1 - p/(p^2-1).$
However, $(x,y,z)$ is the multiplier ideal of $R$ and hence is uniformly compatible by \cite[Corollary~4.14]{FayolleCentersPerfectoidPurity}. 
In particular, $(x,y,z)_\perfd \subseteq \np(R^{A_\infty}_\perfd)$ and thus by \autoref{prop: reduce to principal}, we have
$\ppt (\m) = \ppt (p)$ since only monomials in $p$ have nonzero thresholds.   
\end{example}

\begin{example}
    \label{exam.PPTExamplesPVarialbe}
In a similar way,  $R=\mathbb{Z}_p\llbracket x, y\rrbracket /(p^3 + x^3 + y^3)$
is not perfectoid pure if $p \equiv 2 \mod 3$ (one can see this via \autoref{quotient of ci pfd pure}: let $A=\mathbb{Z}_p\llbracket x, y\rrbracket$; we have $\ppt ((p^3 + x^3+y^3)A) < 1$ by \cite[Theorem~4.5]{BJPRMSS}).
On the other hand, if $p \equiv 1 \mod 3$ then $R$ is perfectoid pure by \cite[Example 7.4]{BMPSTWW3} and in fact we have
$
\ppt(\m) = 0
$ in this case: this is because the multiplier ideal is the entire $\m$ and thus only units can split.
\end{example}

\begin{example}
More generally, consider
$R = \mathbb{Z}_p\llbracket x_1, \ldots, x_n\rrbracket /F_d(x_1, \ldots, x_n),$
where $F_d$ is a degree $d$ form so that $F_d\neq 0$ mod $p$ and such that $R/p R$ is F-pure. Then 
by \autoref{prop: ppt inversion} we have that 
$\ppt (\m) \geq 1 + \fpt (\m (R/ p R)) = n - d + 1$, where the equality is due to \cite[Theorem B]{dSNB}. On the other hand, if $J$ is a homogeneous minimal reduction, then the socle degree of $R/J$ is $\eh(R) - 1 = d - 1$. Thus 
$\m^{d-1} \not\subset J$ and by \autoref{prop: Takagi-Watanabe} we obtain that 
$ n - d + 1 = \dim(R) - (d-1)  \geq \ppt(\m)$. It follows that $\ppt(\m) = n-d+1$.     
\end{example}

\section{Perfectoid signature for pairs}
\label{sec.PerfectoidSignatureForPairs}

Let $(A,\m_A,k)$ be a complete regular local ring of
dimension $d$ with perfect residue field $k$ of
characteristic $p>0$. Consider the following two cases:
\begin{itemize}
    \item If $\mathrm{char} A=p>0$, the
    \emph{equicharacteristic $p>0$ case}, then we set
    $A=\Lambda\llbracket x_1,\dots,x_d\rrbracket$,
    where $\Lambda=k$.

    \item If $\mathrm{char} A=0$, the
    \emph{unramified mixed characteristic case}, then we
    define $A=\Lambda\llbracket x_2,\dots,x_d\rrbracket$,
    where $\Lambda=W(k)$. In this case we set $x_1=p$.
\end{itemize}
In either case, $x_1,\dots,x_d$ form a regular system of
parameters, and we define
\[
A_n:=A[x_1^{1/p^n},x_2^{1/p^n},\dots,x_d^{1/p^n}].
\]
These rings form a filtered system, and we set
\[
A_\infty^{nc}:=\varinjlim_n A_n.
\]
Faltings's normalized length $\lambda_\infty(-)$ is then
defined on $\m_A$-power torsion $A_\infty^{nc}$-modules
and has the following properties:
\begin{itemize}
    \item It is additive on short exact sequences;
    \item It is defined on almost categories;
    \item It commutes with filtered colimits with
    injective transition maps.
\end{itemize}

Our primary use of normalized length will be in the
following setting. Let $(R,\m,k)$ be a complete Noetherian
normal local domain with perfect residue field of positive
characteristic $p>0$, and choose a Cohen--Gabber
normalization $A\subset R$.
With $A_n$ and $A_\infty^{nc}$ as above, the
\emph{perfectoid signature} of $R$ is then defined as
\[
s^{\underline{x}}_\perfd(R)
=\lambda_\infty(R^{A_\infty}_\perfd/I_\infty),
\]
where
\[
I_\infty:=\np^R(R^{A_\infty}_\perfd)
=
\{z\in R^{A_\infty}_\perfd
\mid R\xrightarrow{1\mapsto z}R^{A_\infty}_\perfd
\mbox{ is not pure}\}
\]
is the non-splitting ideal over $R$, in the notation of
\autoref{def: non-pure ideals}.
We retain here $\underline{x}$ in the notation because
independence of the choice of parameters is not known in mixed characteristic.
For more properties of the normalized length and
perfectoid signature we refer the reader to
\cite{CaiLeeMaSchwedeTuckerPerfectoidSignature}
and the references within.

In equicharacteristic $p>0$, the perfectoid signature agrees with the $F$-signature, giving a new interpretation of the $F$-signature via perfection \cite[Theorem 3.7]{CaiLeeMaSchwedeTuckerPerfectoidSignature}. Analogously, to motivate our definition of the perfectoid signature of divisor pairs, we first give below a similar interpretation of the $F$-signature of divisor pairs from \cite{BSTfsigpairs1} via perfection. Throughout this section, we will make use of the following notation. For a finite extension $R \subseteq S$ of normal domains, let $\pi \colon \Spec(S) \to \Spec(R)$ be the induced morphism. If $\Delta_R$ is an effective $\bQ$-divisor on $\Spec(R)$ whose pullback $\pi^* \Delta_R$ has integral coefficients, by writing $S(\Delta_R)$ we mean $S(\pi^* \Delta_R)$. As we are working with finite extensions of normal rings, such abuses of notation are harmless. Moreover, we also write $R^+(\Delta_R)$ to mean the union over all $S(\Delta_R)$ where $R \subseteq S \subseteq R^+$ are finite normal extensions such that the pullback of $\Delta_R$ to $S$ has integral coefficients.

{
\begin{theorem}
\label{thm.FSigPairsPosCharViaPerfection}
Let $(R,\m,k)$ be a complete Noetherian normal local domain of characteristic $p>0$ with perfect residue field. Fix a Noether-Cohen-Gabber normalization $A \subseteq R$ and a regular system of parameters $\underline{x}$ for $A$ as above. Let $\Delta_R \geq 0$ be a $\bQ$-divisor on $\Spec(R)$, and let $(R,\m) \subseteq (S,\n)$ be a finite generically \'etale extension of complete normal local domains such that the pullback of $\Delta_R$ to $S$ has integral coefficients. If 
\[
\np^R(S_{\perf},\Delta_R) = \{ z \in S_{\perf} \mid R \xrightarrow{1 \mapsto z \otimes 1} S_{\perf}\otimes_S S(\Delta_R) \mbox{ is not pure} \}
\]
is the non-splitting ideal over $R$, then 
\[
s(R,\Delta_R) = \frac{1}{r}\cdot\lambda_\infty(S_\perf/\np^R (S_\perf, \Delta_R))
\]
where $r = \rank_R S$.
\end{theorem}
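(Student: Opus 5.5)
The plan is to reduce the statement to the classical description of the $F$-signature of pairs in \cite{BSTfsigpairs1} via splitting numbers, and then to pass to the perfection by normalized length. I would proceed in three steps.

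\textbf{Step 1: the level-$e$ ideal.} Put $\Delta_e$ to be the $\bQ$-divisor $p^{-e}\lfloor (p^e-1)\Delta_R\rfloor$. By the transformation rule for $F$-signature of pairs under finite extensions (\cite{CarvajalSchwedeTuckerEtaleFundFsignature}), or directly from the definition in \cite{BSTfsigpairs1},
\[
s(R,\Delta_R)=\lim_e \frac{1}{p^{e\gamma}}\,a_e(R,\Delta_R),
\]
where $a_e(R,\Delta_R)$ is the number of free summands of $F^e_*R$ compatible with $\Delta$, that is, $\ell_R(F^e_*R/I_e^\Delta)$ suitably normalized. Here
\[
I^\Delta_e=\{z\in F^e_*R\mid R\xrightarrow{1\mapsto z}F^e_*R(\lceil(p^e-1)\Delta_R\rceil)\text{ is not pure}\}.
\]
I would rewrite this on $S$. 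Set $J_e\coloneqq\{z\in S^{1/p^e}\mid R\to S^{1/p^e}\otimes_S S(\Delta_R),\ 1\mapsto z\otimes1\text{ not pure}\}$, using $S^{1/p^e}\otimes_S S(\Delta_R)\cong S(p^e\Delta_R)^{1/p^e}$ because $\pi^*\Delta_R$ is integral. The key identity to establish is
\[
\ell\bigl(S^{1/p^e}/J_e\bigr)\approx r\cdot \ell\bigl(R^{1/p^e}/I_e^\Delta\bigr)
\]
up to $o(p^{e\dim R})$ after normalizing by $A$-ranks. I expect this to follow from the usual trace argument. Since $R\subseteq S$ is generically étale, $\Hom_R(S(p^e\Delta)^{1/p^e},R)$ is identified via $\mathrm{Tr}$ with $\Hom_S(S(p^e\Delta-(p^e-1)K_{S/R}\ldots)^{1/p^e},S)$. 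This is exactly the computation behind \cite[Theorem~4.4]{CarvajalSchwedeTuckerEtaleFundFsignature}. The mismatch between $\lfloor(p^e-1)\Delta\rfloor$ and $p^e\Delta$ only contributes boundedly many $p^{-e}$ shifts, which vanish in the limit.

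\textbf{Step 2: pass to the perfection.} Since $S_\perf=\varinjlim S^{1/p^e}$ and $S(\Delta_R)$ is finitely presented, \autoref{lem: functoriality of non-pure ideals}\autoref{part direct limit of modules} gives
\[
\np^R(S_\perf,\Delta_R)=\bigcup_e J_e S_\perf.
\]
Moreover $J_e=\np^R(S_\perf,\Delta_R)\cap S^{1/p^e}$, because $R$ is $F$-pure whenever the ideal is proper, so $S^{1/p^e}\to S_\perf$ is pure. Hence $S_\perf/\np^R(S_\perf,\Delta_R)=\varinjlim S^{1/p^e}/J_e$ with injective transition maps. Since $\lambda_\infty$ commutes with such colimits, and on $S^{1/p^e}$-modules it equals $\ell_A(-)/p^{e\dim A}$ (as in \cite[Theorem~3.7]{CaiLeeMaSchwedeTuckerPerfectoidSignature}), we obtain
\[
\lambda_\infty\bigl(S_\perf/\np^R(S_\perf,\Delta_R)\bigr)=\lim_e \ell_A(S^{1/p^e}/J_e)/p^{e d}.
\]

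\textbf{Step 3: combine.} Combining with Step 1 and dividing by $r$ gives $s(R,\Delta_R)$, using $[k:k]=1$ for perfect residue fields. If $R$ is not $F$-pure for $\Delta$, both sides are $0$.

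The main obstacle is Step 1: identifying the $S$-level non-pure ideal with $r$ copies of the $R$-level one asymptotically, where the divisor $K_{S/R}$ does not appear in the statement. I would handle it by noting that purity over $R$ (not over $S$) is being tested. Concretely, I would compute $\ell(S^{1/p^e}/J_e)$ via local duality as the length of the image of $H^d_\m(R)\to H^d_\m(S(p^e\Delta)^{1/p^e})$, and compare that with the $R$-version through the finite flat-in-codimension-one structure.
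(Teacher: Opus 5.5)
\textbf{Step 2 rests on a false claim.} Your Step~1 is essentially the second half of the paper's proof.

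You assert $J_e=\np^R(S_\perf,\Delta_R)\cap S^{1/p^e}$ because ``$R$ is $F$-pure, so $S^{1/p^e}\to S_\perf$ is pure.'' Purity of $S^{1/p^e}\to S^{1/p^{e+1}}$, even as a map of $R$-modules, depends on $S$, and $S$ need not be $F$-pure.

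For example, let $p\equiv 2\bmod 3$, $\Delta_R=0$, $R=k\llbracket x,y\rrbracket$ and $S=R[z]/(z^3+x^3+y^3)$. This $S$ is a normal domain, free over $R$ on $1,z,z^2$, and generically \'etale. The map $R\to S$, $1\mapsto z^2$, splits. But $z^{2p}=\pm z(x^3+y^3)^{(2p-1)/3}\in(x,y)^{2p-1}S\subseteq(x^p,y^p)S$, so $z^2\in\m_RS^{1/p}$ and $R\to S^{1/p}$, $1\mapsto z^2$, is not pure. Thus $z^2\in\np^R(S_\perf)\cap S$ but $z^2\notin J_0$. The transition maps of $\varinjlim S^{1/p^e}/J_e$ are therefore not injective, and ``$\lambda_\infty$ commutes with such colimits'' does not apply.

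There is a second, independent problem: $\lambda_\infty$ is only defined on $A_\perf$-modules. To make sense of your formula you must pass to $S^{1/q}[A_\perf]\coloneqq S^{1/q}\otimes_{A^{1/q}}A_\perf$ ($q=p^e$). You then have to compare $J_eS^{1/q}[A_\perf]$ with $\np^R(S_\perf,\Delta_R)\cap S^{1/q}[A_\perf]$, which is the same issue again.

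\textbf{The fix.} Your limit formula is still true, but only asymptotically. The missing ingredient is the uniform error bound the paper supplies.

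Choose $0\neq g\in A$ with $g^{1/q}S_\perf\subseteq S^{1/q}[A_\perf]$ for all $q$; this is possible since $A\subseteq S$ is generically \'etale, \cite[Lemma~2.3]{PolstraTuckerCombinedApproach}. Let $J_e\subseteq S$ correspond to your ideal under $S\cong S^{1/q}$. Then
\[
J_e^{1/q}S^{1/q}[A_\perf]\subseteq\np^R(S_\perf,\Delta_R)\cap S^{1/q}[A_\perf]\subseteq(J_e:g)^{1/q}S^{1/q}[A_\perf].
\]
The second containment holds because an $R$-linear map on $S^{1/q}[A_\perf]\otimes_SS(\Delta_R)$, precomposed with multiplication by $g^{1/q}$, is defined on $S_\perf\otimes_SS(\Delta_R)$, as in \cite[Lemma~3.6]{CaiLeeMaSchwedeTuckerPerfectoidSignature}.

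By flatness of $A^{1/q}\to A_\perf$, the outer quotients have normalized lengths $\ell_R(S/J_e)/q^d$ and $\ell_R(S/(J_e:g))/q^d$. These differ by $\ell_R(S/(J_e,g))/q^d$, which tends to $0$ because $\m^{[q]}S\subseteq J_e$ and $\dim S/gS<d$. Exhausting $S_\perf=\bigcup_eS^{1/q}[A_\perf]$ then gives your displayed limit.

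\textbf{On Step 1.} The obstacle you flag is resolved by the bounded-twist principle you already invoke. Your trace identification is really the one for $\Hom_R(F^e_*S(q\pi^*\Delta_R+\operatorname{Ram}),R)$. Its non-pure ideals $J'_e$ satisfy $J_e\subseteq J'_e\subseteq(J_e:x)$ for any fixed $x\in R$ with $\Div_S(x)\geq\operatorname{Ram}$, so they have the same limit. That limit is $r\cdot s(R,\Delta_R)$ by the argument of \cite[Lemma~4.5]{CarvajalSchwedeTuckerEtaleFundFsignature}. Cite that argument rather than \cite[Theorem~4.4]{CarvajalSchwedeTuckerEtaleFundFsignature}, which assumes $\pi^*\Delta_R-\operatorname{Ram}\geq0$; that hypothesis is not available here (e.g.\ when $\Delta_R=0$ and $S$ is ramified).
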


\begin{proof}
    Write $d = \dim R$, and for $q = p^e$ let 
    \[
    \begin{array}{ll}
    J_e &= \{ z \in S \mid R \xrightarrow{1 \mapsto z^{1/q}\otimes 1} S^{1/q} \otimes_S S(\Delta_R) \mbox{ is not pure}\} \\
    & = \{ z \in S \mid \varphi(z^{1/q}) \in \m \mbox{ for all } \varphi \in \Hom_R(S(q\Delta_R)^{1/q},R)\}.
    \end{array}
    \] We first claim that
    \begin{equation}
    \label{eq.limitgivesnormlength}
    \lambda_\infty (S_\perf / \np^R(S_{\perf},\Delta_R) ) = \lim_{e \to \infty}\ell_R(S / J_e) / q^d.
    \end{equation} To see this, choose $0 \neq g \in A$ such that $g^{1/q} S_\perf \subseteq S^{1/q}[A_\perf] = S^{1/q} \otimes_{A^{1/q}} A_\perf$ for all $e$ (see \cite[Lemma 2.3]{PolstraTuckerCombinedApproach} for further details). We have
    \[
    J_e^{1/q} \cdot S^{1/q}[A_\perf] \subseteq \np^R(S_{\perf},\Delta_R) \cap S^{1/q}[A_\perf] \subseteq (J_e : g)^{1/q} \cdot S^{1/q}[A_\perf]
    \]
    where the first containment follows from the string of inclusion $S^{1/q} \subseteq S^{1/q}[A_\perf] \subseteq S_\perf$ and the second from the argument in \cite[Lemma 3.6]{CaiLeeMaSchwedeTuckerPerfectoidSignature}, both after applying $\blank \otimes_S S(\Delta_R)$.
    Consequently,
    \[ 0 \leq \frac{\ell_R(S/J_e)}{q^d} - \lambda_\infty \left(\frac{S^{1/q}[A_\perf]}{\np^R(S_{\perf},\Delta_R)\cap S^{1/q}[A_\perf]} \right) \leq \frac{\ell_R(S/(J_e,g))}{q^d}
    \]
    which limits to $0$ as $e \to \infty$ as $\m^{[q]} \cdot S \subseteq J_e$ and $\dim S/(g) < d$. As $S_\perf = \bigcup_e S^{1/q}[A_\perf]$, \eqref{eq.limitgivesnormlength} then follows from the definition of the normalized length as in \cite[proof of Theorem~3.7]{CaiLeeMaSchwedeTuckerPerfectoidSignature}.

    Let $\operatorname{Ram}$ be the ramification divisor of the generically \'etale extension $R \subseteq S$, consider now the sequence of ideals $J_e' = \{ z \in S \mid \varphi(z^{1/q}) \in \m \mbox{ for all } \varphi \in \Hom_R(S(q\Delta_R + \operatorname{Ram})^{1/q},R)\}$. To conclude, it suffices to observe
    \[
     \lim_{e \to \infty} \frac{\ell_R(S/J_e)}{q^d} =  \lim_{e \to \infty} \frac{\ell_R(S/J'_e)}{q^d} = r \cdot s(R,\Delta_R).
    \]
    The first equality follows from \cite[proof of Lemma 2.8] {CarvajalSchwedeTuckerEtaleFundFsignature} after noting that $J_e \subseteq J'_e \subseteq (J_e : x)$ for any $x \in R$ with $\Div_S(x) \geq \operatorname{Ram}$. The second equality is precisely \cite[Lemma 4.5]{CarvajalSchwedeTuckerEtaleFundFsignature}, whose argument notably does not make use of the additional hypothesis $\pi^* \Delta_R - \operatorname{Ram} \geq 0$.
\end{proof}

}
\subsection{Definition and first properties}

\begin{definition}
\label{def.perfectoidsigforpairs}
Let $(R,\fram,k)$ be a complete Noetherian normal local domain of mixed characteristic $(0,p)$ with perfect residue field.
Fix a Noether--Cohen--Gabber normalization $A\subseteq R$ and a regular
system of parameters $\underline{x}$ for $A$ as above.
Let $\Delta_R\geq 0$ be a $\bQ$-divisor on $\Spec(R)$, and let
$f\colon (R,\fram)\to (S,\frn)$ be a finite
{ generically \'etale} extension of complete normal
local domains such that the pullback of $\Delta_R$ to $S$
has integral coefficients. Following the notation from \autoref{def: non-pure ideals} with some abbreviation, we set
\begin{align}
\np^R (S^{A_\infty}_\perfd, \Delta_R) &= \{x \in S^{A_\infty}_{\perfd} \mid R \xrightarrow{1 \mapsto x \otimes 1} S^{A_\infty}_{\perfd} \otimes_S S(\Delta_R) \mbox{ is not pure}\} \label{eq.FirstDescriptionOfIinfty}
\\
&=\{x \in S^{A_\infty}_{\perfd} \mid \psi(x \otimes 1)\in \m_R \text{ for any }\psi\in\Hom_R(S^{A_\infty}_{\perfd} \otimes_S S(\Delta_R),R) \}.
\label{eq.SecondDescriptionOfIinfty}
\end{align}
The \emph{perfectoid signature} of the pair $(R,\Delta_R)$
is then defined by
$$s^{\underline{x}}_\perfd(R,\Delta_R)=\frac{1}{r}\cdot\lambda_\infty(S^{A_\infty}_\perfd/\np^R (S^{A_\infty}_\perfd, \Delta_R))$$
where $r=\rank_R S$.
\end{definition}

We now prove that the definition is independent of the choice of $S$.

\begin{lemma}
\label{lem.extendingI_infty}
In the setting of \autoref{def.perfectoidsigforpairs}, let  $f'\colon (S,\n)\to (S',\n')$ be another { generically \'etale} finite extension of complete normal local domains. Choose $0\neq g\in A$ such that $f[1/g]$ and $f'[1/g]$ are \'etale. Then the canonical map 
$${S'}^{A_\infty}_\perfd/\np^R (S^{A_\infty}_\perfd, \Delta_R){S'}^{A_\infty}_\perfd \to  {S'}^{A_\infty}_\perfd/\np^R (S'^{A_\infty}_\perfd, \Delta_R)$$
is a $g$-almost isomorphism, i.e., the ideal $\np^R (S^{A_\infty}_\perfd, \Delta_R)$ is independent (in the sense of extensions) of the choice of $S$ up to almost isomorphism. 
\end{lemma}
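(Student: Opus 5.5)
We prove the containment $\np^R(S^{A_\infty}_\perfd,\Delta_R){S'}^{A_\infty}_\perfd\subseteq \np^R({S'}^{A_\infty}_\perfd,\Delta_R)$, so the canonical map exists and is surjective. Then we show its kernel is killed by $(gA_\infty)_\perfd$, i.e.\ $(gA_\infty)_\perfd\cdot\bigl(\np^R({S'}^{A_\infty}_\perfd,\Delta_R)\bigr)\subseteq \np^R(S^{A_\infty}_\perfd,\Delta_R){S'}^{A_\infty}_\perfd$.

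\textbf{Containment.} The map $S\to S'$ induces $S^{A_\infty}_\perfd\to {S'}^{A_\infty}_\perfd$ by functoriality of perfectoidization. Since $S$ and $S'$ are normal and $\pi^*\Delta_R$ is integral on $S$, we have $S(\Delta_R)\subseteq S'(\Delta_R)$. This gives a pointed map
\[
S^{A_\infty}_\perfd\otimes_S S(\Delta_R)\to {S'}^{A_\infty}_\perfd\otimes_{S'}S'(\Delta_R).
\]
Then \autoref{lem: functoriality of non-pure ideals}\autoref{part algebra functoriality}, together with the fact that the non-pure locus is an ideal (\autoref{lem: injective hull splitting ideal}), gives the containment.

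\textbf{Almost reverse inclusion.} After inverting $g$, the extension $S[1/g]\to S'[1/g]$ is finite étale. Hence, by almost purity (as in \cite[Proposition~2.25]{CaiLeeMaSchwedeTuckerPerfectoidSignature}), the map $S^{A_\infty}_\perfd\to{S'}^{A_\infty}_\perfd$ is $p$-completely $(gA_\infty)_\perfd$-almost finite étale. In particular, the trace $\mathrm{Tr}\colon {S'}^{A_\infty}_\perfd\to S^{A_\infty}_\perfd$ is almost surjective: for each $\epsilon\in (gA_\infty)_\perfd$ there is $w$ with $\mathrm{Tr}(w\,\cdot)$ behaving like $\epsilon$ times a retraction. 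Moreover, ${S'}^{A_\infty}_\perfd$ is almost finite projective over $S^{A_\infty}_\perfd$.

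Let $z\notin \np^R(S^{A_\infty}_\perfd,\Delta_R){S'}^{A_\infty}_\perfd$, and test by the socle element $u\in E_R$ via \autoref{lem: injective hull splitting ideal}. We want: for $\epsilon\in(gA_\infty)_\perfd$, the element $u\otimes\epsilon z\otimes 1$ is nonzero in $E\otimes_R {S'}^{A_\infty}_\perfd\otimes_{S'}S'(\Delta_R)$.

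Here the key identification is $S'(\Delta_R)\cong S'\otimes_S S(\Delta_R)$ in codimension one away from the branch locus. Since $g$ kills the difference in the $\Delta$-twist only up to the ramification divisor, we use instead the isomorphism after inverting $g$. Combined with almost flatness (or almost projectivity), this gives
\[
{S'}^{A_\infty}_\perfd\otimes_{S'}S'(\Delta_R)\;\approx\;{S'}^{A_\infty}_\perfd\otimes_{S^{A_\infty}_\perfd}\bigl(S^{A_\infty}_\perfd\otimes_S S(\Delta_R)\bigr)
\]
up to $(gA_\infty)_\perfd$-almost isomorphism. The reflexive hull discrepancy is supported in codimension $\geq 2$ or on $V(g)$, and the torsion it creates is $g$-almost zero, using \autoref{rafty is R torsion free}-type $R$-torsion-freeness. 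Tensoring with $E$, the module $E\otimes_R {S'}^{A_\infty}_\perfd\otimes S'(\Delta_R)$ becomes almost $N\otimes_{S^{A_\infty}_\perfd}{S'}^{A_\infty}_\perfd$, where $N=E\otimes_R S^{A_\infty}_\perfd\otimes_S S(\Delta_R)$. Its annihilator of $u\otimes 1$ is almost $\np^R(S^{A_\infty}_\perfd,\Delta_R){S'}^{A_\infty}_\perfd$ by almost faithful flatness, mimicking \autoref{lem: functoriality of non-pure ideals}\autoref{part extension for p completely flat}. This yields the almost containment.

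\textbf{Main obstacle.} The hard step is comparing $S'(\Delta_R)$ with $S'\otimes_S S(\Delta_R)$ after base change to the perfectoid ring. The twist is reflexive and not flat-compatible, so we must control the codimension-one envelope and the ramification along $V(g)$ uniformly. I would try first to prove that the cokernel of $S'\otimes_S S(\Delta_R)\to S'(\Delta_R)$ is killed by a power of $g$, since $f'[1/g]$ is étale. Then I would use that $(gA_\infty)_\perfd$ is idempotent with $p$-power roots, which turns ``killed by $g^N$'' into ``$(gA_\infty)_\perfd$-almost zero'' after passing to $\lambda_\infty$-relevant $p^\infty$-torsion via \autoref{root ideal tensor comparison}.
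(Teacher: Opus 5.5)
\textbf{Where the proposal stands.} Your containment step and your almost-flat transport step are correct, and they are essentially the paper's argument. The paper writes $\np^R(S^{A_\infty}_\perfd,\Delta_R)$ as a union of colon ideals via \autoref{lem.SplittingApproxGorenstein}. It then uses that $S^{A_\infty}_\perfd\to{S'}^{A_\infty}_\perfd$ is $p$-completely $g$-almost faithfully flat, by \cite[Theorem~10.9]{BhattScholzePrisms} and \cite[Proposition~2.25]{CaiLeeMaSchwedeTuckerPerfectoidSignature}. Consequently the extended ideal is almost equal to the non-pure ideal of ${S'}^{A_\infty}_\perfd$ computed with the twist ${S'}^{A_\infty}_\perfd\otimes_S S(\Delta_R)$.

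\textbf{The gap.} The genuine gap is the step you flag as the main obstacle: comparing that twist with ${S'}^{A_\infty}_\perfd\otimes_{S'}S'(\Delta_R)$. Your proposed mechanism does not work. Knowing that the cokernel of $S'\otimes_S S(\Delta_R)\to S'(\Delta_R)$ is killed by $g^N$ gives no $(gA_\infty)_\perfd$-almost vanishing. Being $g$-power torsion is far weaker than being $g$-almost zero. For instance, with $T={S'}^{A_\infty}_\perfd$, the module $T/g^NT$ is killed by $g^N$, but it is almost zero only if $(gA_\infty)_\perfd T\subseteq g^NT$, which is false in general. Idempotence of $(gA_\infty)_\perfd$ does not change this. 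Moreover, base change to the non-flat $S'$-algebra $T$ creates additional kernel that such a statement about $S'$-modules does not control. Likewise, ``supported on $V(g)$'' is not a usable condition here.

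\textbf{The missing idea.} There is no ramification discrepancy at all, so no almost mathematics is needed in this step. On the regular locus $U$ of $S$, the integral divisor $\pi^*\Delta_R$ is Cartier. Pulling a Cartier divisor back along the finite map of normal schemes $\Spec S'\to\Spec S$ just pulls back local equations. The ramification indices are already built into the coefficients of the pullback of $\Delta_R$ to $S'$; ramification would only enter through a relative canonical divisor, which does not occur here. Hence $S'\otimes_S S(\Delta_R)\cong S'(\Delta_R)$ over the preimage of $U$, branch points included.

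\textbf{How to finish.} The natural map ${S'}^{A_\infty}_\perfd\otimes_S S(\Delta_R)\to{S'}^{A_\infty}_\perfd\otimes_{S'}S'(\Delta_R)$ is therefore an isomorphism after localizing at any prime in $U$. So its kernel and cokernel are supported in codimension $\geq 2$, i.e.\ in dimension $\leq d-2$ over $R$. By local duality, the map $R\to M$, $1\mapsto x\otimes 1$, is pure if and only if $H^d_\m(R(K_R))\to H^d_\m(R(K_R)\otimes_R M)$ is injective. The functor $H^d_\m(R(K_R)\otimes_R-)$ does not see a map whose kernel and cokernel live in dimension $\leq d-2$, since the relevant $H^{d-1}_\m$ and $H^d_\m$ vanish. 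So the two non-pure ideals of ${S'}^{A_\infty}_\perfd$ coincide exactly. Together with your transport step, this proves the lemma, and it is precisely the paper's argument. Your remark that the discrepancy lives ``in codimension $\geq 2$ or on $V(g)$'' contains the right ingredient, but only the codimension-two part is true. It has to be exploited through top local cohomology, not through almost vanishing.
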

\begin{proof}

By \autoref{lem.SplittingApproxGorenstein}, we have 
$$\np^R (S^{A_\infty}_\perfd, \Delta_R)=\bigcup_t (I_t{S}^{A_\infty}_\perfd\otimes_S S(\Delta_R) :_{{S}^{A_\infty}_\perfd} u_t\otimes 1).$$
By \cite[Theorem 10.9]{BhattScholzePrisms}, ${S}^{A_\infty}_\perfd \to {S'}^{A_\infty}_\perfd$ is $g$-almost \'etale. Hence by \cite[Proposition 2.25]{CaiLeeMaSchwedeTuckerPerfectoidSignature}, it is $p$-complete $g$-almost faithfully flat. Since one can write $I_t{S}^{A_\infty}_\perfd\otimes_S S(\Delta_R) :_{{S}^{A_\infty}_\perfd} u_t\otimes 1$ as 
$$\ker({S}^{A_\infty}_\perfd\xrightarrow{1\mapsto u_t\otimes 1}{S}^{A_\infty}_\perfd\otimes_S S(\Delta_R)/I_t{S}^{A_\infty}_\perfd\otimes_S S(\Delta_R))$$
we have 
$$\np^R (S^{A_\infty}_\perfd, \Delta_R){S'}^{A_\infty}_\perfd\overset{a}{\simeq}\bigcup_t (I_t{S'}^{A_\infty}_\perfd\otimes_S S(\Delta_R) :_{{S'}^{A_\infty}_\perfd} u_t\otimes 1) $$
by $p$-complete $g$-almost faithfully flatness of ${S'}^{A_\infty}_\perfd$. Now it suffices to show that 
$$\bigcup_t (I_t{S'}^{A_\infty}_\perfd\otimes_S S(\Delta_R) :_{{S'}^{A_\infty}_\perfd} u_t\otimes 1)=\bigcup_t (I_t{S'}^{A_\infty}_\perfd\otimes_{S'} S'(\Delta_R) :_{{S'}^{A_\infty}_\perfd} u_t\otimes 1)$$
since the latter is $\np^R (S'^{A_\infty}_\perfd, \Delta_R)$ by \autoref{lem.SplittingApproxGorenstein} again. In other words, we need to show that $R \to {S'}^{A_\infty}_\perfd\otimes_S S(\Delta_R)$ sending $1$ to $x\otimes 1$ is pure if and only if $R \to {S'}^{A_\infty}_\perfd\otimes_{S'} S'(\Delta_R)$ sending $1$ to $x\otimes 1$ is pure. By local duality it suffices to show that $H^d_\fram(R(K_R))\to H^d_\fram(R(K_R)\otimes_R{S'}^{A_\infty}_\perfd\otimes_S S(\Delta_R))$ is injective if and only if $ H^d_\fram(R(K_R))\to H^d_\m(R(K_R)\otimes_R{S'}^{A_\infty}_\perfd\otimes_{S'} S'(\Delta_R))$ is injective. We claim that $ H^d_\m(R(K_R)\otimes_R{S'}^{A_\infty}_\perfd\otimes_S S(\Delta_R))\simeq H^d_\m(R(K_R)\otimes_R{S'}^{A_\infty}_\perfd\otimes_{S'} S'(\Delta_R))$. Note that top degree local cohomology of two modules are the same if the difference is supported on a codimension $\ge 2$ subset of $\Spec(S)$.
Now if we restrict to the regular locus $U$ of $S$, then ${S'}^{A_\infty}_\perfd\otimes_{S'} S'(\Delta_R) \cong {S'}^{A_\infty}_\perfd\otimes_{S} S(\Delta_R)$ because $S(\Delta_R)$ is a Cartier divisor when restricting to $U$. Since the complement of $U$ has codimension at least two on $\Spec(S)$ as $S$ is normal, we obtain the isomorphism on top local cohomology as wanted.
\end{proof}

\begin{lemma}
\label{lem.additionofuntilt}
Let $R$ be a perfectoid algebra and $R^\flat$ be its tilt. Denote by $\sharp:R^\flat\to R$ the untilt map. Then for any $x,y\in R^\flat$ we have 
$$(x+y)^\sharp=x^\sharp+y^\sharp+p\cdot(x^{1/p}y^{1/p})^\sharp \cdot u+p^2\cdot v$$ 
for some $u,v\in R$.
\end{lemma}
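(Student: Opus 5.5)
The plan is to work in $W(R^\flat)$ and use Fontaine's map $\theta\colon W(R^\flat)\to R$, which satisfies $\theta([a])=a^\sharp$ for Teichmüller lifts. The untilt map $\sharp$ is multiplicative but not additive. The failure of additivity is governed by Witt vector addition: we compute $[x]+[y]$ in $W(R^\flat)$ and then apply $\theta$.

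\textbf{Step 1 (Witt vector addition).} In $W(R^\flat)$ we have $[x]+[y]=(s_0,s_1,s_2,\dots)$, where the $s_n$ are given by the universal Witt addition polynomials. In particular $s_0=x+y$, and $s_1=S_1(x,y)$ is the reduction mod $p$ of the integral polynomial
\[
\frac{x^p+y^p-(x+y)^p}{p}=-\sum_{i=1}^{p-1}\frac1p\binom{p}{i}x^iy^{p-i}.
\]
Since $R^\flat$ is perfect, every Witt vector can be written as $(a_0,a_1,\dots)=\sum_{n\geq0}p^n[a_n^{1/p^n}]$. This series converges because $W(R^\flat)$ is $p$-adically complete. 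Hence
\[
[x]+[y]=[x+y]+p\,[S_1(x,y)^{1/p}]+p^2 w
\]
for some $w\in W(R^\flat)$.

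\textbf{Step 2 (factoring $S_1^{1/p}$).} Every monomial of $S_1$ has the form $x^iy^{p-i}$ with $1\le i\le p-1$. Using perfectness of $R^\flat$, we can therefore write
\[
S_1(x,y)^{1/p}=x^{1/p}y^{1/p}\cdot T,\qquad T=-\sum_{i=1}^{p-1}\overline{\tfrac1p\tbinom{p}{i}}\,x^{(i-1)/p}y^{(p-1-i)/p}\in R^\flat.
\]
All exponents here are nonnegative (when $p=2$ this is simply $T=-1$). Multiplicativity of Teichmüller lifts gives $[S_1^{1/p}]=[x^{1/p}y^{1/p}][T]$.

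\textbf{Step 3 (apply $\theta$).} Applying the ring map $\theta$ to the identity from Step 1, and using $\theta([a])=a^\sharp$, gives
\[
x^\sharp+y^\sharp=(x+y)^\sharp+p\,(x^{1/p}y^{1/p})^\sharp\, T^\sharp+p^2\theta(w).
\]
Rearranging, we take $u=-T^\sharp\in R$ and $v=-\theta(w)\in R$, which yields exactly the stated identity.

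The only step needing care is Step 1: identifying the first Witt coordinate of $[x]+[y]$, and justifying the expansion $\sum_n p^n[a_n^{1/p^n}]$ over a perfect ring (including the $p$-adic convergence in $W(R^\flat)$). Both are standard facts about Witt vectors of perfect $\mathbb{F}_p$-algebras, so I expect no genuine obstacle.
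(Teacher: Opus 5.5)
Your proof is correct and follows the paper's argument: factor $\sharp$ as $\theta\circ[\cdot]$ and reduce to an identity of the form $[x]+[y]=[x+y]+p\,[x^{1/p}y^{1/p}]\cdot u'+p^2v'$ in $W(R^\flat)$. The paper leaves this as ``a simple Witt vector computation''; you carry it out explicitly via the first Witt addition polynomial and the expansion $(a_0,a_1,\dots)=\sum_n p^n[a_n^{1/p^n}]$ over the perfect ring $R^\flat$, and that computation is right.
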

\begin{proof}
Factor the untilt map as 
$$\sharp\colon R^\flat \xrightarrow{[\cdot]} A_{\inf}(R) \xrightarrow{\theta} R$$
where $[\cdot]$ is the Teichmüller lift and $\theta$ is Fontaine’s map. Hence it suffices to show that $[x+y]=[x]+[y]+p\cdot[x^{1/p}y^{1/p}] \cdot u'+p^2\cdot v'$ for some $u',v'\in A_{\inf}(R)$. This follows from a simple Witt vector computation. 
\end{proof}

Recall from the beginning of this section that $A_\infty$ is the $p$-completion of the union of $A_n=A[x_1^{1/{p^n}},x_2^{1/{p^n}},\dots,x_d^{1/{p^n}}]$ where $A=W(k)\llbracket x_2, \dots, x_d \rrbracket$.

\begin{lemma}
\label{lem.sopinA_infty}
Let $t = p^{\flat}, x_{2}^{\flat}, \ldots, x_d^{\flat} \in A_{\infty}^{\flat}$ satisfy $t^{\sharp} = p, (x_2^{\flat})^{\sharp} = x_2, \ldots, (x_d^{\flat})^{\sharp} = x_d$, respectively, and put 
$(A_{\infty}^{\flat})^{\un} = k\llbracket p^{\flat},x^{\flat}_{2}, \ldots, x^{\flat}_{d}\rrbracket _{\perf}$ so that $k\llbracket p^{\flat},x^{\flat}_{2}, \ldots, x^{\flat}_{d}\rrbracket  \subseteq (A_{\infty}^{\flat})^{\un} \subseteq A_{\infty}^{\flat}$.
For any $0\neq f\in \m_{A}$, we can find elements $f_2, \ldots, f_d$ inside the maximal ideal of $k\llbracket p^{\flat},x^{\flat}_{2}, \ldots, x^{\flat}_{d}\rrbracket $ (so also $f_2^\sharp,\dots,f_d^\sharp \in \fram_A A_\infty$)
such that $\sqrt{(f,f_2^\sharp,\dots,f_d^\sharp)}=\m_{A_\infty}$.
\end{lemma}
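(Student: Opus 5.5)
We want elements $f_2,\dots,f_d$ in the maximal ideal of $P\coloneqq k\llbracket p^\flat,x_2^\flat,\ldots,x_d^\flat\rrbracket$ such that $(f,f_2^\sharp,\dots,f_d^\sharp)$ has radical $\m_{A_\infty}$. The plan is to reduce everything to a characteristic $p$ computation modulo $p$. Since $A_\infty/pA_\infty\cong A_\infty^\flat/t A_\infty^\flat$ via $\sharp$, and $p\in\sqrt{(f,f_2^\sharp,\dots,f_d^\sharp)}$ will have to be shown anyway, the radical is $\m_{A_\infty}$ as soon as it contains $p$ and its image modulo $p$ is radical-equal to the maximal ideal of $A_\infty/p$.

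First step: write $f=p^a f'$ with $f'\notin pA$ (possible since $A$ is a UFD and $f\neq0$). If $f'$ is a unit we take $f_i=x_i^\flat$; then $(f,x_2,\dots,x_d)$ has radical containing $p$ and all $x_i$, and we are done. Otherwise choose $g\in P$ with $g^\sharp\equiv f'$ modulo $pA_\infty$. This is possible because $A/p=k\llbracket x_2,\dots,x_d\rrbracket$ and $\sharp$ modulo $p$ sends $x_i^\flat\mapsto x_i$ multiplicatively; additivity modulo $p$ follows from \autoref{lem.additionofuntilt}. The image $\bar g$ of $g$ in $P/t=k\llbracket x_2^\flat,\dots,x_d^\flat\rrbracket$ is then a nonzero nonunit.

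Second step: choose $f_2,\dots,f_d\in P$ so that $t,\bar f_2,\dots,\bar f_d$ is a system of parameters of $P$, and moreover $g,f_3,\dots,f_d,t$ generate an ideal of $P$ whose radical is the maximal ideal. Concretely, set $f_2=t$. Then pick $f_3,\dots,f_d$ so that $\bar g,\bar f_3,\dots,\bar f_d$ is a system of parameters of the $(d-1)$-dimensional regular ring $P/t$; this uses prime avoidance in $k\llbracket x_2^\flat,\dots,x_d^\flat\rrbracket$, which has dimension $d-1$ with $\bar g$ a nonzero nonunit. Since $f_2^\sharp=p$, the ideal $(f,p,f_3^\sharp,\dots,f_d^\sharp)$ contains $p$. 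Modulo $p$ it equals $(f'^{}\text{ or }0,\ f_3^\sharp,\dots)$. More carefully, if $a>0$ the $f$ is absorbed by $p$, so we should instead choose $f_2=g$ and let $p$ come from $f$ itself: $p^a f'\in I$ and $f'\equiv g^\sharp \bmod p$. The image of $I$ in $A_\infty/p\cong A_\infty^\flat/t$ then contains $\overline{g},\bar f_3,\dots,\bar f_d$, hence a power of $\m$. So $I$ contains $\m_{A_\infty}^N+pA_\infty$ up to radical, and since $f=p^af'$ with $f'\in \m+ p$, one then gets $p^{a}\cdot(\text{unit}+\text{nilpotent mod } I)\in\sqrt I$ …

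This bookkeeping of $p$ is the main obstacle. The fix I intend is to treat $a=0$ and $a>0$ separately. For $a=0$, take $f_2,\dots,f_d$ such that $\bar f_2,\dots,\bar f_d$ together with $t$ form a system of parameters of $P$. Then modulo $p$, $\sqrt{I}$ contains $\m$, and in particular $x_i^{1/p^n}$ and $p^{1/p^n}$ powers up to $p$. Since $A_\infty$ is $p$-complete with $p$ in the Jacobson radical, we get $\m_{A_\infty}^{N}\subseteq I+pA_\infty$, which iterates to $p\in\sqrt I$ once $f$ gives an element $\equiv$ unit$\cdot p^{c}$ modulo the other generators. I would try to get this by choosing the $f_i$ so that $\overline{f}$ and the $\bar f_i$ cut $\Spec A/p$ to the closed point, which forces $p\in\sqrt{I}$ by a Krull-dimension count in the Noetherian ring $A_n$ containing all generators (for $n\gg0$, after approximating the $f_i^\sharp$ modulo high powers of $p$). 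I expect this Noetherian approximation to be the cleanest route.
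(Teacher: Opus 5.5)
\textbf{There is a genuine gap: the case $p\mid f$ is never proved, and the mechanism you propose for it cannot work.}

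Your case $p\nmid f$ is fine via your first choice: take $f_2=t$, and take $f_3,\dots,f_d$ lifting elements that complete $\bar f$ to a system of parameters of $A/p$. This is simpler than the paper, which treats $n=0$ with the same trick as below.

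Every condition you impose on $f_2,\dots,f_d$ concerns only the images of the $f_j^\sharp$ in $A_\infty/p$. When $f\in pA$ we have $f\equiv 0$ modulo $p$, so no such condition can force $p\in\sqrt{I}$. Concretely, take $d=2$, $f=px_2$, and $f_2=g=x_2^\flat$ as in your recipe. Then $\bar f_2=x_2$ is a system of parameters of $A/p$, yet $I=(px_2,x_2)=x_2A_\infty$. Moreover $p\notin\sqrt I$: the map $A_\infty\to W(k)[p^{1/p^\infty}]^{\wedge p}$ sending every $x_2^{1/p^n}$ to $0$ kills $I$ but no power of $p$.

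So the proposed Krull-dimension count is false. Elements cutting out the closed point of $\Spec A/p$ need not generate an ideal whose radical contains $p$. Noetherian approximation cannot repair this, because the missing information is exactly the first $p$-adic digit of the $f_j^\sharp$.

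\textbf{The missing idea is to choose $f_2$ so that $f_2^\sharp$ is controlled to first order in $p$, not just modulo $p$.} This is what the paper does.

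\emph{Setup.} Write $f=p^af'$ with $f'$ a nonunit not divisible by $p$ (the unit case is handled as you do). Your element $g\in k\llbracket t,x_2^\flat,\dots,x_d^\flat\rrbracket$ with $g^\sharp\equiv f'$ modulo $p$ gives $f'=g^\sharp+pb$ for some $b\in A_\infty$.

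\emph{Choice of $f_2$.} By \autoref{lem.additionofuntilt},
\[
(g+t)^\sharp=g^\sharp+p+p^{1+1/p}c
\qquad\text{and}\qquad
(g+t^p)^\sharp=g^\sharp+p^p+p^2c'.
\]
Take $f_2=g+t$ if $b$ is not a unit, and $f_2=g+t^p$ if $b$ is a unit. In either case $f_2^\sharp=f'+pu$ with $u\in A_\infty^\times$.

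\emph{Getting $p$ into the radical.} Then $p^af_2^\sharp-f=p^{a+1}u$ lies in $I$. Hence $p\in\sqrt I$, and therefore $f'=f_2^\sharp-pu\in\sqrt I$.

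\emph{The remaining generators.} Only at this point do mod-$p$ considerations suffice. Choose $f_3,\dots,f_d$ as in your second step, so that $\bar g,\bar f_3,\dots,\bar f_d$ is a system of parameters of $k\llbracket x_2^\flat,\dots,x_d^\flat\rrbracket$. Then $\sqrt I$ contains $p$ and $f'$, and its image modulo $p$ contains an ideal primary to the maximal ideal of $A_\infty/p$. Hence $\sqrt I=\m_{A_\infty}$.
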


\begin{proof}
We first recall the following fact which we shall use repeatedly throughout the proof: for any element $z \in A_\infty$, there is some $y \in (A_{\infty}^{\flat})^{\un}$ and $y' \in A_\infty$ so that $z = y^\sharp + p \cdot y'$ in $A_\infty$. Indeed, to see this, begin by writing $z = y_1^\sharp + p \cdot y_1'$ for some $y_1 \in A_{\infty}^{\flat}$ and $y_1' \in A_\infty$. Since $(A_{\infty}^{\flat})^{\un} / (t) = A_{\infty}^{\flat}/(t)$, we may write $y_1 = y + t \cdot e$ for some $y \in (A_{\infty}^{\flat})^{\un}$ and $e \in A^\flat_\infty$. This gives $y_1^\sharp = y^\sharp + p \cdot y_2'$ (e.g., by  \autoref{lem.additionofuntilt}) for some $y_2' \in A_\infty$, and putting $y' = y_1' + y_2'$ we have $z = y^\sharp + p \cdot y'$ for $y \in (A_{\infty}^{\flat})^{\un}$ and $y' \in A_\infty$.

Proceeding with the proof, write $f=p^n a$ in $A$ such that $p\nmid a$ with $n \in \mathbb{Z}_{\geq 0}$ 
and $a \in A$ non-zero. If $a$ is a unit, then we may take $f_2 := x_2^{\flat}, \ldots, f_d :=x_d^{\flat}$. Thus, we may assume $a$ is not a unit, so that the height of $(p,a)$ in $A$ is two. 
Choose $\tilde{f_3},\ldots, \tilde{f_d}$ in $A$ so that $p,a,\tilde{f_3},\ldots, \tilde{f_d}$ is a system of parameters of $A$. Write $a = x^\sharp + p \cdot b$ for $x \in (A_{\infty}^{\flat})^{\un}$ and $b \in A_\infty$.

Assume first that $b$ is not a unit. In this case, we first argue that $a,p \in \sqrt{(f,(x+t)^\sharp)}$. By \autoref{lem.additionofuntilt} we have $(x+t)^\sharp=x^\sharp+p+p^{(p+1)/p}\cdot c$ for some $c\in A_\infty$. This gives
$$p^n\cdot(x+t)^\sharp-f=p^{n+1}\cdot(1+p^{1/p}\cdot c -b).$$
Since $1+p^{1/p}\cdot c -b$ is a unit, we see $p \in \sqrt{(f,(x+t)^\sharp)}$. As $(x+t)^\sharp=x^\sharp+p+p^{(p+1)/p}\cdot c$, this gives $x^\sharp \in \sqrt{(f,(x+t)^\sharp)}$ and hence also $a = x^\sharp + p \cdot b \in \sqrt{(f,(x+t)^\sharp)}$ as claimed. 
For $j > 2$, we can then write $\tilde{f}_j = g_j^\sharp + pv_j$ for $g_j \in (A^\flat_\infty)^{\un}$ and $v_j \in A_\infty$. Since $A_\infty / (p) = A_\infty^\flat / (t)$, note also that $g_j \in \sqrt{(t,x_2^\flat,\ldots,x_d^\flat) A^\flat_\infty}$.
Putting $g_2 = x+t$, again noting that $g_2 \in \sqrt{(t,x_2^\flat,\ldots,x_d^\flat) A^\flat_\infty}$ as $f \in \fram_A$, we have that $\fram_{A_\infty} = \sqrt{(p,a,\tilde{f}_3,\ldots,\tilde{f}_d)} = \sqrt{(f,g_2^\sharp,\ldots,g_d^\sharp)}$. Taking $N = p^e$ for $e \gg 0$ so that $f_2 = g_2^N, \ldots, f_d = g_d^N$ are all in $(t,x^\flat_{2}, \ldots, x^\flat_{d}) \subseteq k\llbracket t,x^\flat_{2}, \ldots, x^\flat_{d}\rrbracket $, this gives $\sqrt{(f,f_2^\sharp,\dots,f_d^\sharp)}=\sqrt{(f,g_2^\sharp,\dots,g_d^\sharp)}=\m_{A_\infty}$ as desired.

It remains to treat the case where $b$ is a unit, where we begin by arguing that $a,p \in \sqrt{(f,(x+t^p)^\sharp)}$. Again by \autoref{lem.additionofuntilt} we have $(x+t^p)^\sharp=x^\sharp+p^p+p^2\cdot d$ for some $d\in A_\infty$. 
This gives
$$p^n\cdot(x+t^p)^\sharp-f=p^{n+1}\cdot(p^{p-1}+p\cdot d -b).$$
Since $p^{p-1}+p\cdot d -b$ is a unit, we see $p \in \sqrt{(f,(x+t^p)^\sharp)}$. As $(x+t^p)^\sharp=x^\sharp+p^p+p^2\cdot d$, this gives $x^\sharp \in \sqrt{(f,(x+t^p)^\sharp)}$ and hence also $a = x^\sharp + p \cdot b \in \sqrt{(f,(x+t^p)^\sharp)}$ as claimed. 
For $j > 2$, we can then write $\tilde{f}_j = g_j^\sharp + pv_j$ for $g_j \in (A^\flat_\infty)^{\un}$ and $v_j \in A_\infty$. Since $A_\infty / (p) = A_\infty^\flat / (t)$, note also that $g_j \in \sqrt{(t,x_2^\flat,\ldots,x_d^\flat) A^\flat_\infty}$.
Putting $g_2 = x+t^p$, again noting that $g_2 \in \sqrt{(t,x_2^\flat,\ldots,x_d^\flat) A^\flat_\infty}$ as $f \in \fram_A$, we have that $\fram_{A_\infty} = \sqrt{(p,a,\tilde{f}_3,\ldots,\tilde{f}_d)} = \sqrt{(f,g_2^\sharp,\ldots,g_d^\sharp)}$. Taking $N = p^e$ for $e \gg 0$ so that $f_2 = g_2^N, \ldots, f_d = g_d^N$ are all in $(t,x^\flat_{2}, \ldots, x^\flat_{d}) \subseteq k\llbracket t,x^\flat_{2}, \ldots, x^\flat_{d}\rrbracket $, this gives $\sqrt{(f,f_2^\sharp,\dots,f_d^\sharp)}=\sqrt{(f,g_2^\sharp,\dots,g_d^\sharp)}=\m_{A_\infty}$ as desired.
\end{proof}

\begin{proposition}
\label{prop.transformationrulefordefinition}
With notations as in \autoref{lem.extendingI_infty}. Let $r'=\rank_S S'$, then we have
$$\lambda_\infty({S'}^{A_\infty}_\perfd/\np^R (S'^{A_\infty}_\perfd, \Delta_R))={r'}\cdot\lambda_\infty(S^{A_\infty}_\perfd/\np^R (S^{A_\infty}_\perfd, \Delta_R)).$$
\end{proposition}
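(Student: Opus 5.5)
By \autoref{lem.extendingI_infty}, the ideal $\np^R(S'^{A_\infty}_\perfd,\Delta_R)$ agrees, up to $g$-almost isomorphism, with the extension $\np^R(S^{A_\infty}_\perfd,\Delta_R)S'^{A_\infty}_\perfd$. Since $\lambda_\infty$ is defined on almost categories, the plan is to establish
\[
\lambda_\infty\bigl(S'^{A_\infty}_\perfd/\np^R(S^{A_\infty}_\perfd,\Delta_R)S'^{A_\infty}_\perfd\bigr)
= r'\cdot\lambda_\infty\bigl(S^{A_\infty}_\perfd/\np^R(S^{A_\infty}_\perfd,\Delta_R)\bigr).
\]
To do this, set $I=\np^R(S^{A_\infty}_\perfd,\Delta_R)$ and $M=S^{A_\infty}_\perfd/I$. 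By \cite[Theorem 10.9]{BhattScholzePrisms} and \cite[Proposition 2.25]{CaiLeeMaSchwedeTuckerPerfectoidSignature}, the map $S^{A_\infty}_\perfd\to S'^{A_\infty}_\perfd$ is $g$-almost finite étale of rank $r'$ (the rank is that of $S'[1/g]$ over $S[1/g]$). As a result, $S'^{A_\infty}_\perfd/IS'^{A_\infty}_\perfd\cong M\otimes S'^{A_\infty}_\perfd$ is, up to $(g)_\perfd$-almost isomorphism, the base change of $M$ along an almost finite projective module of constant rank $r'$.

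The identity then follows from the key claim that for every $\m$-power torsion module $M$ and every $g$-almost finite projective $S^{A_\infty}_\perfd$-module $P$ of rank $r'$, we have $\lambda_\infty(M\otimes P)=r'\lambda_\infty(M)$. I would argue this by dévissage. Since $\lambda_\infty$ is additive and commutes with filtered colimits, and $P$ is almost flat, it suffices to treat cyclic modules of the form $S^{A_\infty}_\perfd/J$. For those, I would use \autoref{lem.sopinA_infty}: choose $f_2,\dots,f_d$ such that $(g,f_2^\sharp,\dots,f_d^\sharp)$ is $\m_{A_\infty}$-primary. Filtering by powers of these elements reduces the computation to modules over the one-dimensional-type quotients $A_\infty/(f_2^\sharp,\dots,f_d^\sharp)$, on which $P$ becomes genuinely (almost) free of rank $r'$ after inverting $g$. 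The $g$-torsion discrepancy contributes normalized length zero, because it is killed by $(g)_\perfd$ and lives on a locus of smaller dimension. I would mimic the corresponding argument in \cite[Section 4]{CaiLeeMaSchwedeTuckerPerfectoidSignature}, which proves the case $\Delta_R=0$: the transformation rule there handles exactly this multiplicativity, and none of its steps uses the divisor.

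The main obstacle is this multiplicativity of $\lambda_\infty$ under almost finite étale base change. In particular, one must control the almost-zero error terms, since $\lambda_\infty$ is only guaranteed to ignore $(g)_\perfd$-almost zero modules when they are $\m$-torsion in the appropriate sense. My first attempt would be to quote the corresponding step of \cite{CaiLeeMaSchwedeTuckerPerfectoidSignature} verbatim. If that is not directly applicable, I would fall back on the dévissage above, using \autoref{lem.sopinA_infty} to produce parameters whose tilts admit roots, so that the standard normalized-length computation $\lambda_\infty(A_\infty/(f_1^\sharp,\dots)^{\ldots})$ applies.
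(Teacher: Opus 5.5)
Your opening reduction is the same as the paper's. \autoref{lem.extendingI_infty} lets you replace $\np^R({S'}^{A_\infty}_\perfd,\Delta_R)$ by the extended ideal $I{S'}^{A_\infty}_\perfd$, where $I=\np^R(S^{A_\infty}_\perfd,\Delta_R)$. So everything reduces to the multiplicativity $\lambda_\infty({S'}^{A_\infty}_\perfd/I{S'}^{A_\infty}_\perfd)=r'\lambda_\infty(S^{A_\infty}_\perfd/I)$. That multiplicativity is the entire content of the proposition, and neither of your two routes establishes it.

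Citing \cite{CaiLeeMaSchwedeTuckerPerfectoidSignature} verbatim (the relevant result is Proposition~6.12 there, not Section~4) is not available. That argument needs the coordinates of $A$, hence $A_\infty$, to be adapted to the element $g$ coming from the extension. Here $A_\infty$ is fixed by $R$ and $g$ depends on $S\subseteq S'$. Removing this dependence is exactly what \autoref{rem.generalizedtransformationrule} says the present proof accomplishes.

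Your d\'evissage, in turn, misidentifies the error term. Knowing that $S^{A_\infty}_\perfd\to {S'}^{A_\infty}_\perfd$ is $g$-almost finite projective of rank $r'$ does not by itself give a comparison between ${S'}^{A_\infty}_\perfd/I{S'}^{A_\infty}_\perfd$ and $(S^{A_\infty}_\perfd/I)^{r'}$ whose discrepancy is $(g)_\perfd$-almost zero, hence of normalized length zero. Likewise, freeness on a slice \emph{after inverting $g$} gives no control over normalized lengths of $\m$-torsion modules. For that you need integral, uniform almost freeness.

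The missing mechanism is the following.
\begin{itemize}
\item Apply \autoref{lem.sopinA_infty} to $pg$, not just $g$, to get $f_2^\sharp,\dots,f_d^\sharp$ with $\sqrt{(pg,f_2^\sharp,\dots,f_d^\sharp)}=\m_{A_\infty}$. Then modulo $(f_2^\sharp,\dots,f_d^\sharp)_\perfd$ one has $(g)_\perfd=\sqrt{p}$, so the $g$-almost finite \'etale maps become $p$-almost finite \'etale.
\item Tilting shows that the generic fibres of these one-dimensional quotients are finite products of perfectoid fields, so the extension is finite free of rank $r'$ there.
\item \cite[Lemma 6.8]{CaiLeeMaSchwedeTuckerPerfectoidSignature} upgrades this to \emph{uniform} $p$-almost finite freeness of rank $r'$ modulo $(p,(f_2^\sharp,\dots,f_d^\sharp)_\perfd)$. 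This is realized by maps $\phi_{\epsilon_n},\psi_{\epsilon_n}$ whose composites are multiplication by $p^{1/p^n}$, where $\epsilon_n\in(g)_\perfd$ is congruent to $p^{1/p^n}$.
\item Since $S^{A_\infty}_\perfd/I$ is killed by $(p,f_2^\sharp,\dots,f_d^\sharp)$ but not by $(f_2^\sharp,\dots,f_d^\sharp)_\perfd$, you cannot simply work on the slice. The paper lifts these maps to ${S'}^{A_\infty}_\perfd/p\to(S^{A_\infty}_\perfd/p)^{r'}\to{S'}^{A_\infty}_\perfd/p$ using \cite[Lemma 6.1.1]{CaiLeeMaSchwedeTuckerPerfectoidSignature}.
\item It then bounds the cokernel $C_{\epsilon_n}$ of the lift. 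Choose $m\gg n$ with $\epsilon_n^3C_{\epsilon_n}\subseteq((f_2^\sharp)^{1/p^m},\dots,(f_d^\sharp)^{1/p^m})C_{\epsilon_n}$ and pay the factor $p^{m(d-1)}$. This yields $\lambda_\infty(C_{\epsilon_n}/IC_{\epsilon_n})\leq(3/p^n)\operatorname{rank}_A(S')\,\lambda_\infty(A_\infty/(p,f_2^\sharp,\dots,f_d^\sharp))$.
\end{itemize}
The error therefore vanishes only in the limit $n\to\infty$, not because it is almost zero. Without this quantitative estimate, carried out for both inequalities, your argument does not prove the proposition.
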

\begin{proof}
We follow the proof of \cite[Proposition 6.12]{CaiLeeMaSchwedeTuckerPerfectoidSignature}. Choose $0\neq g\in \m_A$ such that $A[1/g]\to S[1/g]\to S'[1/g]$ are finite \'etale maps and $S'[1/g]$ is a finite free $S[1/g]$-module. By \cite[Theorem 10.9]{BhattScholzePrisms}, we have that 
$\Ainfty/p \to S^{\Ainfty}_{\perfd}/p \to {S'}^{\Ainfty}_{\perfd}/p$ are $g$-almost finite \'etale maps. By \autoref{lem.sopinA_infty}, we can pick $f_2^\sharp,\dots,f_d^\sharp \in \fram_A A_\infty$ such that $\sqrt{(pg,f_2^\sharp,\dots,f_d^\sharp)}=\m_{A_\infty}$. 
Thus, modulo $(f_2^\sharp,\dots, f_d^\sharp)_\perfd=((f_2^\sharp)^{1/p^\infty},\dots, (f_d^\sharp)^{1/p^\infty})^-\subseteq \Ainfty$, we have
$$ \Ainfty/(p, (f_2^\sharp,\dots, f_d^\sharp)_\perfd) \to S^{\Ainfty}_\perfd/(p,(f_2^\sharp,\dots, f_d^\sharp)_\perfd) \to {S'}^{\Ainfty}_\perfd/(p,(f_2^\sharp,\dots, f_d^\sharp)_\perfd)$$
are $p$-almost finite \'etale maps since $\sqrt{p}=(g)_\perfd=\m_{A_\infty}$ modulo $(f_2^\sharp,\dots, f_d^\sharp)_\perfd$. Note that for any perfectoid ring, its $p$-torsion is exactly $\sqrt{p}$-torsion which is $p$-almost zero. Hence by modding out $p$-torsion if necessary, we may assume that $\Ainfty/(f_2^\sharp,\dots, f_d^\sharp)_\perfd$, $S^{\Ainfty}_\perfd/(f_2^\sharp,\dots, f_d^\sharp)_\perfd$ and ${S'}^{\Ainfty}_\perfd/(f_2^\sharp,\dots, f_d^\sharp)_\perfd$ are all $p$-torsion free.
Removing $p$-torsion gives $p$-almost isomorphisms, so it
does not change normalized length by
\cite[Proposition 4.10]{CaiLeeMaSchwedeTuckerPerfectoidSignature}.
The reductions modulo $p$ are also $p$-almost isomorphic,
so it suffices to prove uniform $p$-almost freeness of
rank $r'$ after removing $p$-torsion.

Now by the tilting equivalence (see \cite[Theorem 5.25]{ScholzePerfectoidSpaces}), we know that 
$$ (\Ainfty^\flat/(f_2,\dots, f_d)_\perf)^\wedge \to (S^{\Ainfty}_\perfd/(f_2^\sharp,\dots,f_d^\sharp)_\perfd)^\flat \to (S'^{\Ainfty}_\perfd/(f_2^\sharp,\dots,f_d^\sharp)_\perfd)^\flat$$
is $t$-almost finite \'etale, where the completion is $t$-adic. We then claim that the natural map 
$$k\llbracket t\rrbracket _\perf^\wedge\to(\Ainfty^\flat/(f_2,\dots, f_d)_\perf)^\wedge \simeq (k\llbracket t,x_2^\flat,\ldots,x_d^\flat\rrbracket /(f_2,\dots, f_d))_\perf^\wedge$$
is $t$-almost finite \'etale. It suffices to prove without the completion. By the almost purity theorem in characteristic $p$, it suffices to show that $k\llbracket t\rrbracket _\perf[1/t]\to (k\llbracket t,x_2^\flat,\ldots,x_d^\flat\rrbracket /(f_2,\dots, f_d))_\perf[1/t]$ is finite \'etale. 
Since perfection commutes with localization, we may proceed the localization first. We claim that
$$k\llbracket t\rrbracket [1/t]_\perf\to (k\llbracket t,x_2^\flat,\ldots,x_d^\flat\rrbracket /(f_2,\dots, f_d))[1/t]_\perf$$
is a finite \'etale extension: before perfection, $(k\llbracket t,x_2^\flat,\ldots,x_d^\flat\rrbracket /(f_2,\dots, f_d))[1/t]$ is a finite extension of a field, so $(k\llbracket t,x_2^\flat,\ldots,x_d^\flat\rrbracket /(f_2,\dots, f_d))[1/t]_{\red}$ is a finite product of finite field extension of $k\llbracket t\rrbracket [1/t]$, thus after perfection, it is a finite product of finite (\'etale) extension of the perfect field $k\llbracket t\rrbracket [1/t]_\perf$. This proves the claim.

It follows that, in particular,  $(\Ainfty^\flat/(f_2,\dots, f_d)_\perf)^\wedge[1/p^\flat]$ and thus $(S^{\Ainfty}_\perfd/(f_2^\sharp,\dots,f_d^\sharp)_\perfd)^\flat[1/p^\flat]$ are product of perfect fields, hence $S^{\Ainfty}_\perfd/(f_2^\sharp,\dots,f_d^\sharp)_\perfd[1/p]$ is a finite product of perfectoid fields. 
But we also know that $(S^{\Ainfty}_\perfd/(f_2^\sharp,\dots,f_d^\sharp)_\perfd)[1/p] \to (S'^{\Ainfty}_\perfd/(f_2^\sharp,\dots,f_d^\sharp)_\perfd)[1/p]$ is finite \'etale and finite free (of constant rank $r'$). Hence the latter is also a finite product of perfectoid fields. By tilting again, we know that 
\[ 
  (S^{\Ainfty}_\perfd/(f_2^\sharp,\dots,f_d^\sharp)_\perfd)^\flat[1/p^\flat] \to (S'^{\Ainfty}_\perfd/(f_2^\sharp,\dots,f_d^\sharp)_\perfd)^\flat[1/p^\flat].
\]
is a finite free extension of constant rank $r'$ (and both are finite product of perfect fields). 

By \cite[Lemma 6.8]{CaiLeeMaSchwedeTuckerPerfectoidSignature}, we know that $(S'^{\Ainfty}_\perfd/(f_2^\sharp,\dots,f_d^\sharp)_\perfd)^\flat$ is uniformly $p^\flat$-almost finite free of rank $r'$ over $(S^{\Ainfty}_\perfd/(f_2^\sharp,\dots,f_d^\sharp)_\perfd)^\flat$, and thus $S'^{\Ainfty}_\perfd/(p, (f_2^\sharp,\dots,f_d^\sharp)_\perfd)$ is uniformly $p$-almost finite free of rank $r'$ over $S^{\Ainfty}_\perfd/(p, (f_2^\sharp,\dots,f_d^\sharp)_\perfd)$. Also note that $(g)_\perfd =(p^{1/p^\infty})$ in $A_\infty/{(f_2^\sharp,\dots, f_d^\sharp)_\perfd}$. We choose a sequence of elements $\epsilon_n\in (g)_\perfd$ so that  $\epsilon_n = p^{1/p^n} \mod {(f_2^\sharp,\dots, f_d^\sharp)_\perfd}$. By uniformly $p$-almost finite freeness (see \cite[Definition 6.7]{CaiLeeMaSchwedeTuckerPerfectoidSignature}),
       \begin{align*}
        {S'}^{\Ainfty}_\perfd\big/(p, (f_2^\sharp,\dots, f_d^\sharp)_\perfd) &\xrightarrow{\phi_{\epsilon_n}}  \left(S^{\Ainfty}_\perfd\big/(p, (f_2^\sharp,\dots, f_d^\sharp)_\perfd)\right)^{r'} \\ & \xrightarrow{\psi_{\epsilon_n}}{S'}^{\Ainfty}_\perfd\big/(p, (f_2^\sharp,\dots, f_d^\sharp)_\perfd)
    \end{align*} 
    where the two-way compositions are both multiplication by $p^{1/p^n}$. Since ${S'}^{\Ainfty}_\perfd/p$ is $g$-almost finite projective over $S^{\Ainfty}_\perfd/p$ by \cite[Theorem 10.9]{BhattScholzePrisms}, we can apply \cite[Lemma 6.1.1]{CaiLeeMaSchwedeTuckerPerfectoidSignature} to obtain the following commutative diagram:
    {\small
    \[
        \xymatrix{
            {S'}^{\Ainfty}_{\perfd}/p \ar[r]^-{\widetilde{\phi}_{\epsilon_n}}  \ar[d]^-{can}& (S^{\Ainfty}_{\perfd}/p)^{r'} \ar[r]^-{\widetilde{\psi}_{\epsilon_n}} \ar[d]^-{can}& {S'}^{\Ainfty}_{\perfd}/p \ar[d]^-{can} \\
            {S'}^{\Ainfty}_\perfd/(p,(f_2^\sharp,\dots, f_d^\sharp)_\perfd) \ar[r]^-{{\epsilon_n} \cdot\phi_{\epsilon_n}}  & (S^{\Ainfty}_\perfd/(p,(f_2^\sharp,\dots, f_d^\sharp)_\perfd))^{r'} \ar[r]^-{\psi_{\epsilon_n}}  & {S'}^{\Ainfty}_\perfd/(p,(f_2^\sharp,\dots, f_d^\sharp)_\perfd)
        }
    \]
    }

        Below we will only show $$\lambda_\infty({S'}^{A_\infty}_\perfd/\np^R (S'^{A_\infty}_\perfd, \Delta_R)) \le r'\cdot\lambda_\infty(S^{A_\infty}_\perfd/\np^R (S^{A_\infty}_\perfd, \Delta_R))$$ as the other direction is similar (and slightly easier). 
        
        We set $C_{\epsilon_n}=\coker(\widetilde{\psi}_{\epsilon_n})$. Since $(p,(f_2^\sharp,\dots, f_d^\sharp)_\perfd)=(p,{(f_2^\sharp)}^{1/p^\infty},\dots,{(f_d^\sharp)}^{1/p^\infty})$ and $C_{\epsilon_n}$ is almost finitely generated, we can find $m\gg n$ such that ${\epsilon^3_n}\cdot C_{\epsilon_n}\subset ({(f_2^\sharp)}^{1/p^m},\dots,{(f_d^\sharp)}^{1/p^m})C_{\epsilon_n}$, i.e., 
    \begin{equation}
    \label{eqn.EqnInProofofGenTransRule}
        {\epsilon^3_n} \cdot ({S'}^{\Ainfty}_{\perfd}/p)\subset \im(\widetilde{\psi}_{\epsilon_n})+({(f_2^\sharp)}^{1/p^m},\dots,{(f_d^\sharp)}^{1/p^m})({S'}^{\Ainfty}_{\perfd}/p).
    \end{equation} 
    Increasing $m$ if necessary, we may assume that  ${\epsilon_n} =p^{1/p^n}$  in $\Ainfty/({(f_2^\sharp)}^{1/p^m},\dots,{(f_d^\sharp)}^{1/p^m})$ up to units. By \autoref{lem.extendingI_infty}, we have
\begin{align*}
\lambda_\infty({S'}^{A_\infty}_\perfd/\np^R (S'^{A_\infty}_\perfd, \Delta_R)) &=\lambda_\infty({S'}^{A_\infty}_\perfd/\np^R (S^{A_\infty}_\perfd, \Delta_R){S'}^{A_\infty}_\perfd)\\
& \le r'\cdot\lambda_\infty(S^{A_\infty}_\perfd/\np^R (S^{A_\infty}_\perfd, \Delta_R))+\lambda_\infty(C_{\epsilon_n}/\np^R (S^{A_\infty}_\perfd, \Delta_R) C_{\epsilon_n})
\end{align*}
Therefore it suffices to show that for any $\delta>0$, there exists some $n>0$ such that 
$$\lambda_\infty(C_{\epsilon_n}/\np^R (S^{A_\infty}_\perfd, \Delta_R) C_{\epsilon_n})< \delta.$$ 
Now we estimate the normalized length:
        \begin{align*}
        &\lambda_\infty(C_{\epsilon_n}/\np^R (S^{A_\infty}_\perfd, \Delta_R) C_{\epsilon_n})\\ &\le \lambda_\infty({S'}^{\Ainfty}_{\perfd}/((p,f_2^\sharp,\dots,f_d^\sharp){S'}^{\Ainfty}_{\perfd}+\im(\widetilde{\psi}_{\epsilon_n})))\\
        &\le p^{m(d-1)}\cdot  \lambda_\infty({S'}^{\Ainfty}_{\perfd}/((p,{(f_2^\sharp)}^{1/p^m},\dots,{(f_d^\sharp)}^{1/p^m}){S'}^{\Ainfty}_{\perfd}+\im(\widetilde{\psi}_{\epsilon_n})))\\
        &\le p^{m(d-1)} \cdot  \lambda_\infty({S'}^{\Ainfty}_{\perfd}/(\epsilon^3_n,{(f_2^\sharp)}^{1/p^m},\dots,{(f_d^\sharp)}^{1/p^m}))\\
        &=p^{m(d-1)} \cdot (\rank_A S') \cdot \lambda_\infty(\Ainfty/(\epsilon^3_n,{(f_2^\sharp)}^{1/p^m},\dots,{(f_d^\sharp)}^{1/p^m}))\\
        &=p^{m(d-1)} \cdot (\rank_A S') \cdot \lambda_\infty(\Ainfty/(p^{3/p^n},{(f_2^\sharp)}^{1/p^m},\dots,{(f_d^\sharp)}^{1/p^m}))\\
        &=(3/p^n)\cdot (\rank_A S') \cdot \lambda_\infty(\Ainfty/(p,{(f_2^\sharp)},\dots,{(f_d^\sharp)}))
        \end{align*}
    where the inequality on the third line follows from \autoref{eqn.EqnInProofofGenTransRule}, the equality on the fourth line follows from \cite[Proposition 4.16]{CaiLeeMaSchwedeTuckerPerfectoidSignature}; the equality in the fifth line follows from our choice that ${\epsilon_n} =p^{1/p^n}$ up to units in $\Ainfty/({(f_2^\sharp)}^{1/p^m},\dots,{(f_d^\sharp)}^{1/p^m})$. The equality in the sixth line follows from taking a filtration and noting that $p, {f_2^\sharp},\dots,{f_d^\sharp}$ is a regular sequence on $\Ainfty$: after modulo $p$, $({f_2^\sharp},\dots,{f_d^\sharp}) (A_\infty/p)$ has the same radical with $\m_A (A_\infty/p)$, so the images of ${f_2^\sharp},\dots,{f_d^\sharp}$ in $A_\infty/p$, which we may assume comes from $A_n$ for $n\gg0$, form a system of parameters and thus a regular sequence on $A_n/p$, and hence they form a regular sequence on $A_\infty/p$. 
\end{proof}

\begin{remark}
\label{rem.generalizedtransformationrule}
Using the method of \autoref{prop.transformationrulefordefinition}, one can generalize \cite{CaiLeeMaSchwedeTuckerPerfectoidSignature}[Proposition 6.12] to get rid of the issue that one needs to change $A_\infty$ whenever considering a new extension $R\to S$. Explicitly, one can run through the proof of \cite{CaiLeeMaSchwedeTuckerPerfectoidSignature}[Proposition 6.12] while replacing the canonical choice of system of parameters in $A_\infty$ by the system of parameters given by \autoref{lem.sopinA_infty}.
\end{remark}

\begin{corollary}
\autoref{def.perfectoidsigforpairs} is well-defined, i.e., independent of the choice of $S$.
\end{corollary}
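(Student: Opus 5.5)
The plan is to show that the quantity $\frac{1}{r}\lambda_\infty(S^{A_\infty}_\perfd/\np^R(S^{A_\infty}_\perfd,\Delta_R))$ does not change when $S$ is replaced by another admissible finite extension. The argument compares two admissible choices through a common refinement and applies \autoref{prop.transformationrulefordefinition} to each step of the tower.

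Let $S_1$ and $S_2$ be two finite generically \'etale extensions of $R$. Assume both are complete normal local domains on which the pullback of $\Delta_R$ has integral coefficients. Fix an algebraic closure of $K(R)$ containing both fraction fields, and let $L$ be the Galois closure of the compositum $K(S_1)K(S_2)$ over $K(R)$. This is a finite separable extension, because each $K(S_i)/K(R)$ is separable. Let $S'$ be the integral closure of $R$ in $L$. Since $R$ is complete, hence excellent and Henselian, $S'$ is a complete normal local domain that is module-finite over $R$ and over each $S_i$. The extensions $S_i\subseteq S'$ are generically \'etale, since $L/K(S_i)$ is separable. The pullback of $\Delta_R$ to $S'$ is the pullback of an integral divisor on $S_i$, so it is still integral. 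Hence $S'$ is admissible, and it dominates both $S_1$ and $S_2$.

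Next apply \autoref{prop.transformationrulefordefinition} with $S=S_i$ and $S'$ as above, where $r'_i=\rank_{S_i}S'$. This gives
\[
\lambda_\infty\bigl(S'^{A_\infty}_\perfd/\np^R(S'^{A_\infty}_\perfd,\Delta_R)\bigr)=r'_i\,\lambda_\infty\bigl((S_i)^{A_\infty}_\perfd/\np^R((S_i)^{A_\infty}_\perfd,\Delta_R)\bigr).
\]
Ranks are multiplicative in the tower $R\subseteq S_i\subseteq S'$, so $\rank_RS'=r'_i\cdot\rank_RS_i$. Dividing the display by $\rank_RS'$ shows that the value computed with $S_i$ equals the value computed with $S'$, for $i=1,2$. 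Therefore the values from $S_1$ and $S_2$ agree.

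Once \autoref{prop.transformationrulefordefinition} is available, the one remaining point is to check that $S'$ satisfies all the hypotheses there. Those hypotheses are: a complete normal local domain, finite over $S_i$, generically \'etale, and with the same $A$ and $\underline{x}$. Completeness and the local property follow from $R$ being complete local and $S'$ being a domain finite over $R$. The choice of $A\subseteq R$ is fixed throughout, so the same $A_\infty$ is used everywhere; this is exactly why that proposition avoids having to change $A_\infty$.
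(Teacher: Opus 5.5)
Your proposal is correct and follows the paper's own argument: dominate $S_1$ and $S_2$ by a common admissible extension $S'$, then apply \autoref{prop.transformationrulefordefinition} to each $S_i\subseteq S'$ and use multiplicativity of ranks in the tower $R\subseteq S_i\subseteq S'$. You take $S'$ to be the integral closure of $R$ in a Galois closure of the compositum, and you check that it is finite, local, complete, normal and generically \'etale with integral pulled-back divisor; these are exactly the details the paper's one-line proof leaves implicit.
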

\begin{proof}
Suppose $f_1\colon R\to S_1$ and $f_2\colon R\to S_2$ are two extensions of normal domains such that $\pi_1^*\Delta_R$ and $\pi_2^*\Delta_R$ are integral. Then we can find a third one $S_3$ that dominates $S_1$ and $S_2$. Now we are done by \autoref{prop.transformationrulefordefinition}.
\end{proof}

Now we establish a transformation rule for pairs using the relative canonical divisor. 

\begin{theorem}
\label{thm.transformationruleforpairs}
Let $\phi\colon (R,\m,k)\to (R',\n,l)$ be a finite {generically \'etale} extension of complete normal local domains of mixed characteristic $(0,p)$ with perfect residue fields and with fraction fields $K = K(R)$ and $L = K(R')$. Let $\tau\colon \Spec(R')\to \Spec(R)$ be the corresponding morphism of schemes. Suppose $\Delta_R\ge 0$ is a $\bQ$-divisor such that $\Delta_{R'}\coloneqq \tau^*\Delta_R-K_{R'/R}\ge 0$. Let $t$ be the maximal number of direct $\mathrm{Tr}$-summands\footnote{{That is, a rank-1 free $R$-module summand of $R'$ such that if $\rho \colon R' \to R$ is projection on that summand, then $\rho = \Tr \cdot s$ for some $s \in R'$, see also \cite[Definition 2.12]{CarvajalSchwedeTuckerEtaleFundFsignature}.}} of $R'$ and $r=[L:K]$. Then we have
\[ 
    t\cdot  s^{\underline{x}}_\perfd(R',\Delta_{R'})={r}\cdot s^{\underline{x}}_\perfd(R,\Delta_R).
\]
\end{theorem}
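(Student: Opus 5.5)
Choose a common finite generically étale extension $R\subseteq R'\subseteq S$ of complete normal local domains such that the pullback of $\Delta_R$ to $S$ is integral; since $\Delta_{R'}=\tau^*\Delta_R-K_{R'/R}$, and $K_{R'/R}$ is integral, the pullback of $\Delta_{R'}$ to $S$ is then integral as well. Both signatures can therefore be computed on $S^{A_\infty}_\perfd$, with the same $A$ and $\underline{x}$:
\[
s^{\underline{x}}_\perfd(R,\Delta_R)=\tfrac{1}{r\,r''}\lambda_\infty\bigl(S^{A_\infty}_\perfd/\np^R(S^{A_\infty}_\perfd,\Delta_R)\bigr),
\qquad
s^{\underline{x}}_\perfd(R',\Delta_{R'})=\tfrac{1}{r''}\lambda_\infty\bigl(S^{A_\infty}_\perfd/\np^{R'}(S^{A_\infty}_\perfd,\Delta_{R'})\bigr),
\]
where $r''=\rank_{R'}S$. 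The corollary following \autoref{prop.transformationrulefordefinition} guarantees the definition is independent of $S$. Note also that $A\subseteq R'$ is still a Noether--Cohen--Gabber normalization. The theorem is thus reduced to a single comparison of two ideals of the same perfectoid ring:
\[
t\cdot\lambda_\infty\bigl(S^{A_\infty}_\perfd/\np^{R'}(S^{A_\infty}_\perfd,\Delta_{R'})\bigr)=\lambda_\infty\bigl(S^{A_\infty}_\perfd/\np^R(S^{A_\infty}_\perfd,\Delta_R)\bigr).
\]

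For this comparison I would follow the argument of \cite[Theorem~4.4]{CarvajalSchwedeTuckerEtaleFundFsignature}, now in the perfectoid setting. Grothendieck duality for the finite map $\tau$ gives
\[
\Hom_R\bigl(S^{A_\infty}_\perfd\otimes_S S(\tau^*\Delta_R),R\bigr)\cong\Hom_{R'}\bigl(S^{A_\infty}_\perfd\otimes_S S(\Delta_{R'}),\Hom_R(R',R)\bigr),
\]
with $\Hom_R(R',R)=R'(K_{R'/R})\cdot\Tr$. This is checked in codimension one and then extended to top local cohomology, exactly as in \autoref{lem.extendingI_infty}. Under this identification, every $R$-linear map has the form $\Tr\circ\psi'$, where $\psi'$ is $R'$-linear on the module twisted by $\Delta_{R'}$. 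Consequently $x\notin\np^R$ if and only if there is some $\psi'$ with $\Tr(\psi'(x\otimes1))\notin\fram$. Take $\psi'(x\otimes 1)=a\in R'$. Then $\Tr(a\,\cdot)\colon R'\to R$ is surjective precisely when the image of $a$ lies outside the ideal $J=\{a\in R'\mid \Tr(aR')\subseteq\fram\}$, the trace non-splitting ideal. This yields
\[
\np^R(S^{A_\infty}_\perfd,\Delta_R)=\{x\mid \psi'(x\otimes1)\in J\text{ for all }\psi'\}.
\]

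The ideal $J$ contains $\fran$, and $\ell_{R'}(R'/J)$ computes the trace summand number: in the terminology of \cite{CarvajalSchwedeTuckerEtaleFundFsignature}, $t=\ell(R'/J)\cdot[l:k]$ after the usual bookkeeping, and here $l=k$ because the residue fields are perfect with finite extension. Filtering $R'/J$ by copies of $l$, we have
\[
\np^{R'}\subseteq\np^R,\qquad \np^R/\np^{R'}\text{ is filtered with }\ell(R'/J)-1\text{ graded pieces}.
\]
Each graded piece is isomorphic, via multiplication by a socle representative $a_i$, to $S^{A_\infty}_\perfd/\np^{R'}(S^{A_\infty}_\perfd,\Delta_{R'})$. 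Additivity of $\lambda_\infty$ then gives the ratio $t$, accounting for the factor $[l:k]$ relating lengths over $R$ and over $R'$.

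The main obstacle is the last step. One must show that each graded piece $(\np^{R'}:a_i)/\np^{R'}$-type quotient has exactly the normalized length of $S^{A_\infty}_\perfd/\np^{R'}$. A priori only an injection into that quotient is available, which gives an inequality. To obtain the reverse inequality I would argue via Matlis duality on $E_R\otimes -$, following the proof of \cite[Lemma~4.5]{CarvajalSchwedeTuckerEtaleFundFsignature}, whose purity arguments are formal. If this direct route fails, I would fall back on the almost-flatness estimates of \autoref{prop.transformationrulefordefinition}, which control error terms up to arbitrarily small normalized length.
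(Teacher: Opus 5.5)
Your proposal has a genuine gap, in two places.

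\textbf{The claim $l=k$ is false.} You assert $l=k$ ``because the residue fields are perfect with finite extension''. A finite extension of a perfect field is separable, not trivial, and this residue field extension is exactly what produces $t$.

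Take $R=W(\mathbb{F}_p)\llbracket x\rrbracket\subseteq R'=W(\mathbb{F}_{p^2})\llbracket x\rrbracket$. This is finite \'etale, so $\Delta_R=\Delta_{R'}=0$ and $r=2$. Every $R$-linear map $R'\to R$ has the form $\Tr(s\,\cdot)$, so $t=2=[l:k]$. Both rings are regular, so both signatures equal $1$.

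With $S=R'$ and your common base $A=R$, one gets $\lambda_\infty\bigl(S^{A_\infty}_\perfd/\np^{R'}(S^{A_\infty}_\perfd,0)\bigr)=\lambda_\infty\bigl(S^{A_\infty}_\perfd/\np^{R}(S^{A_\infty}_\perfd,0)\bigr)=2$. Your formula therefore returns $s^{\underline{x}}_\perfd(R',0)=2$, and your reduced identity reads $2\cdot 2=2$.

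The signature of $R'$ must be computed with $A'=W(l)\llbracket x_2,\dots,x_d\rrbracket$. Since $A\subseteq A'$ is finite \'etale, $S^{A'_\infty}_\perfd=S^{A_\infty}_\perfd$. The normalized lengths, however, satisfy $\lambda^{A_\infty}_\infty=[l:k]\,\lambda^{A'_\infty}_\infty$ (the argument behind \cite[Equation~(6.2)]{CaiLeeMaSchwedeTuckerPerfectoidSignature}). When the trace is surjective, this factor is precisely $t$.

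\textbf{The filtration step is vacuous.} You already note $J\supseteq\n$, so $R'/J$ is $0$ or $l$ and there is nothing to filter; the ``main obstacle'' you identify does not arise. What is actually needed is to determine $J$.
\begin{enumerate}
\item If $\Tr$ is not surjective, then $\Tr(R')\subseteq\m$, so $J=R'$. Your duality description then makes $\np^R(S^{A_\infty}_\perfd,\Delta_R)$ the whole ring, so $s^{\underline{x}}_\perfd(R,\Delta_R)=0$, matching $t=0$.
\item If $\Tr$ is surjective, then $J=\n$. Elements outside $\n$ are units. Moreover $\Tr(\n)\subseteq\m$, because the conjugates of $a\in\n$ lie in the maximal ideal of the integral closure of $R$ in a Galois closure of $L/K$, which is local since $R$ is complete. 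This is what the paper takes from \cite[Lemma~2.10]{CarvajalSchwedeTuckerEtaleFundFsignature}.
\end{enumerate}
In the surjective case, $\np^R(S^{A_\infty}_\perfd,\Delta_R)=\np^{R'}(S^{A_\infty}_\perfd,\Delta_{R'})$ on the nose. Combining this with $t=[l:k]$ from \cite[Lemma~2.13]{CarvajalSchwedeTuckerEtaleFundFsignature} and the change of normalization above proves the theorem; this is the paper's proof.

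Your duality step is the paper's \autoref{lem.I_inftycomparison}. Your display, though, double-counts $K_{R'/R}$: once the source is twisted by $\Delta_{R'}$, the target should be $R'$, not $\Hom_R(R',R)$.
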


\begin{proof}
Let $f \colon R' \to T$ be a finite generically \'etale extension of complete normal local domains such that $\Delta_R$ and $\Delta_{R'}$ are integral after pullback to $\Spec(T)$. Set $M = K(T)$ and $\pi = \Spec(f)$. 

Suppose first that $t = 0$, so that the trace map is not surjective and $\Tr(R') \subseteq \m$. By \autoref{lem.I_inftycomparison} below, every $\varphi \in \Hom_R(T^{A_\infty}_\perfd\otimes_T T(\Delta_R),R)$ can be expressed as $\varphi = \Tr \circ \psi$ for some $\psi \in \Hom_{R'}(T^{A_\infty}_\perfd\otimes_T T(\Delta_{R'}),R')$. In particular,
\[
\varphi(z \otimes 1) = \Tr(\psi(z \otimes 1)) \in \Tr(R') \subseteq \m
\]
for all $z \in T^{A_\infty}_\perfd$. Thus, $\np^R(T^{A_\infty}_\perfd,\Delta_R)=T^{A_\infty}_\perfd$ by
\autoref{eq.SecondDescriptionOfIinfty}, and hence
$s^{\underline{x}}_\perfd(R,\Delta_R)=0$.
The desired equality therefore holds in this case.

Now assume that the trace map is surjective. By \cite[Lemma 2.13]{CarvajalSchwedeTuckerEtaleFundFsignature}, we know that $t=[l:k]$. 
Notice that 
\[
\lambda_{\infty}^{A_\infty}(T^{A_\infty}_\perfd/\np^{R}(T^{A_\infty}_\perfd,\Delta_R)) = r\cdot[M:L]  \cdot s_{\perfd}^{\underline{x}}(R, \Delta_R)
\] by the independence of our definition, where the normalized length is computed with respect to the Noether-Cohen-Gabber normalization $A=W(k)\llbracket x_2,\dots,x_d\rrbracket  \to R$. Similarly, we see that 
$$\lambda_{\infty}^{A'_\infty}(T^{A'_\infty}_\perfd/\np^{R'}(T^{A'_\infty}_\perfd,\Delta_{R'})) = [M:L] \cdot s^{\underline{x}}_{\perfd}(R', \Delta_{R'}).$$
where the normalized length is computed with respect to the Noether-Cohen-Gabber normalization $A'=W(l)\llbracket x_2,\dots,x_d\rrbracket \to R'$. Using the argument behind \cite[Equation (6.2)]{CaiLeeMaSchwedeTuckerPerfectoidSignature}, we have 
$$t\cdot \lambda_{\infty}^{A'_\infty}(T^{A'_\infty}_\perfd/\np^{R'}(T^{A'_\infty}_\perfd,\Delta_{R'})) = \lambda_{\infty}^{A_\infty}(T^{A_\infty}_\perfd/\np^{R'}(T^{A_\infty}_\perfd,\Delta_{R'}))$$
and keeping in mind that $t=[l:k]$.
Therefore, it suffices to show that
\[ 
\np^{R}(T^{A_\infty}_\perfd,\Delta_R)= \np^{R'}(T^{A_\infty}_\perfd,\Delta_{R'}).
\]
This follows by using \autoref{lem.I_inftycomparison} below and \cite[Lemma 2.10]{CarvajalSchwedeTuckerEtaleFundFsignature}, applied to the descriptions of these ideals in \autoref{eq.SecondDescriptionOfIinfty}.  Thus the proof is complete.
\end{proof}

\begin{lemma}
\label{lem.I_inftycomparison}
With notations as in the proof of \autoref{thm.transformationruleforpairs}, we have
$$\Hom_R(T^{A_\infty}_{\perfd} \otimes_T T(\Delta_R),R)\simeq \Hom_{R'}(T^{A_\infty}_{\perfd} \otimes_T T(\Delta_{R'}),R')$$
via the composition by the trace map $\Tr\colon R'(K_{R'/R})\to R$.
\end{lemma}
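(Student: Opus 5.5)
The plan is to reduce the statement to the classical duality for the finite extension $R\subseteq R'$ and then base change it along the module $T^{A_\infty}_\perfd\otimes_T T(\Delta_{R'})$.

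First, since $R\subseteq R'$ is a finite extension of normal domains with $L/K$ separable, the classical duality gives an isomorphism of $R'$-modules
\[
\Hom_R(R'(D),R)\cong R'(K_{R'/R}-D)
\]
for every Weil divisor $D$ on $\Spec R'$, induced by composing with $\Tr$. Here we use that $\Tr$ generates $\Hom_R(R'(K_{R'/R}),R)$ as an $R'$-module, which is the definition of the ramification divisor $K_{R'/R}$. Writing $N\coloneqq T^{A_\infty}_\perfd\otimes_T T(\Delta_{R'})$, Hom-tensor adjunction gives
\[
\Hom_R\bigl(N\otimes_{R'}R'(K_{R'/R}),R\bigr)\cong \Hom_{R'}\bigl(N,\Hom_R(R'(K_{R'/R}),R)\bigr)\cong\Hom_{R'}(N,R'),
\]
where the last isomorphism is composition with $\Tr$.

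It then remains to identify $N\otimes_{R'}R'(K_{R'/R})$ with $T^{A_\infty}_\perfd\otimes_T T(\Delta_R)$, at least after reflexification, which does not change $\Hom$ into the $\mathbf{S}_2$ module $R$. On the regular locus $U$ of $T$ (a big open), all the divisors involved are Cartier. There we have $T(\Delta_{R'})\otimes T(\pi^*K_{R'/R})\cong T(\pi^*\Delta_{R'}+\pi^*K_{R'/R})=T(\Delta_R)$, because $\pi^*\Delta_{R'}+\pi^*K_{R'/R}$ equals the pullback of $\Delta_R$ to $T$. Both sides agree over the big open, so their codimension-one envelopes agree. By the codimension-two extension property of $R$, any $R$-linear map to $R$ is determined by, and extends from, its restriction to a big open of $\Spec R$, whose preimage in $\Spec T$ is big because $R\to T$ is finite. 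Hence the $\Hom$ groups agree.

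The main care point is the bookkeeping: the isomorphism must be compatible with evaluation at $z\otimes1$, and the pulled-back divisors, which are only integral on $T$, have to be handled correctly. This compatibility is checked at height-one primes, where every module involved is free of rank one.
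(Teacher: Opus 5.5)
Your proposal is correct and is essentially the paper's argument: Hom--tensor adjunction along $R\to R'$ with the trace identification of $\Hom_R(R',R)$, followed by a codimension-one comparison that moves the $K_{R'/R}$ twist. The only difference is placement: you move the twist into the source and compare the large modules $N\otimes_{R'}R'(K_{R'/R})$ and $T^{A_\infty}_\perfd\otimes_T T(\Delta_R)$, whereas the paper keeps it in the target and compares only the finitely generated reflexive $T$-modules $\Hom_{R'}(T(\Delta_R),R'(K_{R'/R}))$ and $\Hom_{R'}(T(\Delta_{R'}),R')$.

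One small correction: the identification $T\otimes_{R'}R'(K_{R'/R})\cong T(\pi^*K_{R'/R})$ needs $K_{R'/R}$ to be Cartier on $\Spec R'$, so you should work over a big open of $\Spec R$ over which both $R'$ and $T$ are regular, not merely over the regular locus of $T$. Such an open is big because finite extensions of normal domains preserve heights.
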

\begin{proof}
Note that we have
\begin{align*}
    \Hom_R(T^{A_\infty}_{\perfd} \otimes_T T(\Delta_R),R)&\simeq \Hom_R(T^{A_\infty}_{\perfd} \otimes_T T(\Delta_R)\otimes_{R'} R',R)\\
    &\simeq \Hom_{R'}(T^{A_\infty}_{\perfd} \otimes_T T(\Delta_R),\Hom_R(R',R))\\
    &\simeq \Hom_{R'}(T^{A_\infty}_{\perfd} \otimes_T T(\Delta_R),R'(K_{R'/R}))
\end{align*}
where the last isomorphism follows by identifying $\Hom_{R}(R',R)$ with $R'(K_{R'/R})$. Note that under this identification, the evaluation at $1$ map $\Hom_R(R',R)\to R$ is identified with the trace map $\Tr\colon R'(K_{R'/R})\to R$.
Then we are done after we have observed the following isomorphism
\begin{align*}
\Hom_{R'}(T^{A_\infty}_{\perfd} \otimes_T T(\Delta_R),R'(K_{R'/R}))&\simeq \Hom_T(T^{A_\infty}_{\perfd} ,\Hom_{R'}( T(\Delta_R),R'(K_{R'/R})))\\
&\simeq \Hom_T(T^{A_\infty}_{\perfd} ,\Hom_{R'}( T(\Delta_R- \pi^*K_{R'/R}),R'))\\
&= \Hom_T(T^{A_\infty}_{\perfd} ,\Hom_{R'}( T(\Delta_{R'}),R'))\\
&\simeq \Hom_{R'}(T^{A_\infty}_{\perfd} \otimes_T T(\Delta_{R'}),R').
\end{align*}
The second isomorphism is true by using the fact that any morphism between two reflexive sheaves is determined by any open subset whose complement has codimension $\ge 2$. Hence we may restrict ourselves to the regular locus which clearly has the desired isomorphism.
\end{proof}

\subsection{BCM-regularity}
In this subsection we will show that a pair $(R,\Delta)$ is BCM-regular in the sense of \cite{MaSchwedeSingularitiesMixedCharBCM} if and only if the perfectoid signature of pair $s_\perfd^{\underline{x}}(R,\Delta)>0$, generalizing the result in \cite[Section 5]{CaiLeeMaSchwedeTuckerPerfectoidSignature} to the pair version. Throughout this subsection we work in the following setting.

\begin{setting}
    \label{set.BCMRegularityPairSetting}
    Suppose $(R, \m)$ is a Noetherian complete local normal domain of mixed characteristic $(0,p)$ with perfect residue field.  Fix $\Delta \geq 0$ a $\bQ$-divisor on $\Spec R$ such that $K_R + \Delta$ is $\bQ$-Cartier.  Choose $R \subseteq S \subseteq R^+$ a finite extension of normal domains such that $S(K_R + \Delta) \cong S$ (that is, the pullback of $K_R + \Delta$ is Cartier).  

    We also fix a Noether--Cohen normalization $A \subseteq R$ and $0 \neq g \in A$ such that $A[g^{-1}] \subseteq R[g^{-1}]$ and $A[g^{-1}] \subseteq S[g^{-1}]$ are \'etale.
\end{setting}

\begin{definition}
    With notation as in \autoref{set.BCMRegularityPairSetting} we say that $(R, \Delta)$ is \emph{perfectoid BCM-regular} if for each perfectoid big Cohen-Macaulay $R^+$-algebra $B$, we have that 
    \[
        R \to B \otimes_S S(\Delta)
    \]
    is pure where $S$ is a normal domain finite over $R$ so that the pullback of $\Delta$ to $S$ is a Weil divisor.  
\end{definition}

\begin{remark}
\label{rem. BCM-regular via injective hull}
    Set $E$ to be the injective hull of the residue field.  Notice that $(R, \Delta)$ is perfectoid BCM-regular if and only if for all perfectoid big Cohen-Macaulay $B$, we have that
    \[ 
        E \to B \otimes_S S(\Delta) \otimes_R E
    \]
    is injective. Since $E = H^d_{\m}(R(K_R))$, we see that $S(\Delta) \otimes_R E \cong H^d_{\m}(S(K_R + \Delta)) \cong H^d_\m(S)$.  Hence, we have that $(R, \Delta)$ is perfectoid BCM-regular if and only if 
    \begin{equation}
        H^d_{\m}(R(K_R)) \to H^d_{m}(B \otimes_S S(K_R + \Delta)) \cong H^d_{m}(B )
    \end{equation}
    is injective.  
\end{remark}

Our goal in this section is to prove that positivity of perfectoid signature is equivalent to BCM-regularity of pairs $(R, \Delta)$.  We recall the following result from \cite{CaiLeeMaSchwedeTuckerPerfectoidSignature}, also see \cite[Claim 7.11]{BMPSTWW2}.  

\begin{lemma}[{\cite[Lemma 5.6]{CaiLeeMaSchwedeTuckerPerfectoidSignature}}]
\label{lem.AlmostKernel}
Suppose $(S, \frn, L)$ is a complete Noetherian local domain of mixed characteristic and fix $W(k)\llbracket x_2, \dots, x_d \rrbracket = A \subseteq S$ a regular subring such that $S$ is a finite $A$-module (we do not assume that the induced $k \subseteq L$ is an equality). Let $0 \neq g \in A$ with $A[1/g] \to S[1/g]$ finite \'etale.   Suppose $M$ is a $p^{\infty}$-torsion $S$-module. Consider the following submodules of $M$:
\[
\begin{array}{rcl}
    M_1 := \{\eta \in M & | & (g)_{\perfd} \otimes \eta =0 \text{ in } S^{\Ainfty}_{\perfd} \otimes_S M\}.\\
    M_2 := \{\eta \in M  & | &  (g^{1/p^\infty}) \otimes \eta =0 \text{ in } S^+ \otimes_S M\}. \\
    M_3 := \{\eta \in M & | & (g^{1/p^\infty}) \otimes \eta =0 \text{ in } B \otimes_S M \text{ for some perfectoid big Cohen-Macaulay}\\ & & \text{$S^+$-algebra $B$}\}.\\
    M_4 := \{\eta \in M & | & 1 \otimes \eta =0 \text{ in } B \otimes_S M \text{ for some perfectoid big Cohen-Macaulay} \\ & & \text{$S^+$-algebra $B$} \} 
\end{array}
\]
Then we have $M_1=M_2=M_3\supseteq M_4$. Moreover, if $M$ is finitely generated, then we have $M_3=M_4$.
\end{lemma}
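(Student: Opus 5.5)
The plan is to establish the chain $M_1\subseteq M_2\subseteq M_3\subseteq M_1$ and $M_4\subseteq M_3$, and then to prove $M_3\subseteq M_4$ separately when $M$ is finitely generated. Throughout, the tool for replacing $(g)_\perfd$ by $(g^{1/p^\infty})$ is \autoref{root ideal tensor comparison}, which applies because $M$ is $p^\infty$-torsion. So wherever a ring contains a compatible system of roots of $g$, the annihilation conditions involving $(g)_\perfd$ and $(g^{1/p^\infty})$ agree after tensoring with $M$. The containment $M_4\subseteq M_3$ is immediate, and so is $M_2\subseteq M_3$: choose a perfectoid big Cohen--Macaulay $S^+$-algebra $B$, which exists by André and Bhatt, and use that $S^+\otimes_S M\to B\otimes_S M$ carries the annihilation statement along.

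For $M_1\subseteq M_2$, I would enlarge $A$ so that $g$ acquires $p$-power roots, or simply use that $S^+$ receives a map from $A_\infty$ after choosing roots of the $x_i$. Then $\widehat{S^+}$ is a perfectoid $S$-algebra through which $S^{A_\infty}_\perfd$ maps, by the universal property of perfectoidization. The image of $(g)_\perfd$ lands in $(gS^+)_\perfd$, whose $p$-adic closure is $(g^{1/p^\infty})^-$ by \autoref{pfd of ideal only depends on radical}. Since $M$ is $p^\infty$-torsion, we have $\widehat{S^+}\otimes M=S^+\otimes M$, and the claim follows.

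The key step is $M_3\subseteq M_1$, or equivalently $M_2\subseteq M_1$, which is the almost purity input. First I would reduce to finitely generated submodules, since everything commutes with filtered colimits; I would then reduce further to $M$ killed by $p^n$. Next, use that $A[1/g]\to S[1/g]$ is finite étale, so $A_\infty\to S^{A_\infty}_\perfd$ is $g$-almost finite étale by \cite[Theorem~10.9]{BhattScholzePrisms}. The aim is to show that $S^{A_\infty}_\perfd\to\widehat{S^+}$ (or $\to B$) is $(g)_\perfd$-almost pure modulo $p^n$, in the manner of \cite[Proposition~2.25]{CaiLeeMaSchwedeTuckerPerfectoidSignature}, exactly as was done in the proof of \autoref{componentwise BCM transfer}. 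Given this, an element that becomes zero in $B\otimes M$ after multiplication by $(g^{1/p^\infty})$ is already almost zero in $S^{A_\infty}_\perfd\otimes M$, which gives $M_3\subseteq M_1$. I expect the almost purity of $S^{A_\infty}_\perfd\to B$ to be the main obstacle. The route I would try is to factor it as $S^{A_\infty}_\perfd\to (B\widehat\otimes_{A_\infty} S^{A_\infty}_\perfd)_\perfd$ followed by a map which is $g$-almost an isomorphism, using that $A_\infty\to B$ is $p$-completely faithfully flat because $B$ is big Cohen--Macaulay over a regular ring.

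Finally, suppose $M$ is finitely generated. To get $M_3\subseteq M_4$, I would pick $\eta_1,\dots,\eta_k$ generating $M_3$ and, for each, a witnessing $B_i$. Using a perfectoid BCM algebra $B'$ that receives maps from all $B_i$ (available by weakly functorial existence), I would arrange $g^{1/p^\infty}\eta=0$ in $B'\otimes M$ for all $\eta\in M_3$. Then I would pass to a perfectoid BCM algebra in which $g$ becomes killed up to almost zero in the relevant quotient. Concretely, the plan is to adjoin $g^{-1/p^\infty}$-type partial algebra modifications, or better, to take the $p$-completion of $B'[\,(g^{1/p^\infty})^{-1}\cdot \text{relations}\,]$ following \cite[Lemma~5.6]{CaiLeeMaSchwedeTuckerPerfectoidSignature}. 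The finite generation ensures that only finitely many relations need killing, so a single algebra modification suffices and the result is still big Cohen--Macaulay.
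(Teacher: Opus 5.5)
\textbf{Gap 1: the key step $M_3\subseteq M_1$.} The factorization you propose does not give almost purity. Put $P=S^{A_\infty}_{\perfd}$ and $T=B\widehat{\otimes}_{A_\infty}P$. Since $A[1/g]\to S[1/g]$ is finite \'etale of rank $r=[K(S):K(A)]$, $T$ is $g$-almost finite \'etale of rank $r$ over $B$. More precisely, $B$ is an $S^+$-algebra and $(S^+\otimes_A S)[1/g]\cong\prod_\sigma S^+[1/g]$, the product running over the $r$ embeddings $\sigma\colon S\hookrightarrow S^+$ over $A$. Hence $T$ is $g$-almost $\prod_\sigma B$, and the map $T\to B$ induced by $\mathrm{id}_B$ and $P\to B$ is, almost, the projection onto the single factor of the structural embedding. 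So this map is not a $g$-almost isomorphism unless $r=1$; after inverting $g$ the ranks already differ.

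What the factorization does give, through the $p$-complete faithful flatness of $P\to T$ (which you correctly deduce from that of $A_\infty\to B$), is a reduction: $\eta\in M_1$ follows once $(g^{1/p^\infty})\eta$ vanishes in $B\otimes_{S,\sigma}M$ for \emph{every} $\sigma$. But $\eta\in M_3$ only controls the structural component. Handling the other components is the real content of this step, and it is missing. One way to supply it: $\mathrm{Aut}(S^+/A)$ permutes the embeddings transitively and fixes $g$, hence the ideal $(g^{1/p^\infty})$. So you can twist the $S^+$-structure of $B$ by finitely many such automorphisms and pass to a perfectoid big Cohen--Macaulay algebra dominating all the twists. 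For $B=\widehat{S^+}$ the symmetry alone suffices and yields $M_2\subseteq M_1$.

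Also, \autoref{componentwise BCM transfer} is not a template here. \cite[Proposition~2.25]{CaiLeeMaSchwedeTuckerPerfectoidSignature} applies there because the map is between perfectoidizations of finite algebras that agree after inverting $g$, which $P\to B$ is not. Moreover, that proof itself invokes \cite[Lemma~5.6]{CaiLeeMaSchwedeTuckerPerfectoidSignature}, the very statement you are proving.

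\textbf{Gap 2: the step $M_3\subseteq M_4$ for finitely generated $M$.} This step appeals to \cite[Lemma~5.6]{CaiLeeMaSchwedeTuckerPerfectoidSignature} itself, so it is circular. The assertion that a single algebra modification stays perfectoid and big Cohen--Macaulay is exactly what would need proof, since nondegeneracy is the whole difficulty with modifications. Nothing in your construction turns $(g^{1/p^\infty})$-almost vanishing into actual vanishing. The input that does this is Gabber's argument for removing the ``almost'', in the form of \cite[Corollary~2.28]{CaiLeeMaSchwedeTuckerPerfectoidSignature}. This is how the paper handles the analogous step in the proof of \autoref{thm.BCMRegular=>PositiveSignatureForPairs}, and it is there that finite generation of $M$ is used.

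\textbf{A smaller error in $M_1\subseteq M_2$.} The containment you cite points the wrong way. Knowing that $(g)_{\perfd}$ maps into $(g\widehat{S^+})_{\perfd}$ does not let you pass from annihilation by $(g)_{\perfd}$ to annihilation by $g^{1/p^e}$. You need $g^{1/p^e}\in(g)_{\perfd}\widehat{S^+}+p^n\widehat{S^+}$. This follows from the base change identity $(\widehat{S^+}/g)_{\perfd}\cong\widehat{S^+}\widehat{\otimes}_P(P/gP)_{\perfd}$ (or from Andr\'e's flatness lemma), combined with $p^n\eta=0$.
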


The purpose of this lemma is seen in the next result.

\begin{proposition}
\label{prop.perfectoid BCM-regularity equivalence}
With notation as in \autoref{set.BCMRegularityPairSetting}, the following are equivalent.
\begin{enumerate}
    \item $(R, \Delta)$ is perfectoid BCM-regular.
    \item For some $c \in (g)_{\perfd} \subseteq S_{\perfd}^{A_{\infty}}$ we have that the following map is pure:
    \[
        R \xrightarrow{1 \mapsto c} S^{A_{\infty}}_{\perfd} \otimes_S S(\Delta).
    \]
    \item For some $e \gg 0$, the following map is pure:
    \[ R \xrightarrow{1 \mapsto g^{1/p^e}} \widehat{R^+}(\Delta). \]
    {\item For some $e \gg 0$, and for every perfectoid big Cohen-Macaulay $R^+$-algebra $B$, the following map is pure: \[ R \xrightarrow{1 \mapsto g^{1/p^e} \otimes 1} B \otimes_S S(\Delta).\]} 
\end{enumerate}
\end{proposition}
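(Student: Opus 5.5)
We plan to follow the strategy of the non-pair case in \cite[Section 5]{CaiLeeMaSchwedeTuckerPerfectoidSignature}: translate each purity condition into injectivity of a map on top local cohomology, and then compare the resulting kernels using \autoref{lem.AlmostKernel}. Fix $E=H^d_\m(R(K_R))$ with socle generator $u$. As in \autoref{rem. BCM-regular via injective hull}, for any $S$-algebra $C$ and $c\in C$, the map $R\xrightarrow{1\mapsto c\otimes 1}C\otimes_S S(\Delta)$ is pure if and only if
\[
H^d_\m(R(K_R))\longrightarrow H^d_\m\bigl(C\otimes_S S(K_R+\Delta)\bigr)\cong H^d_\m(C)
\]
is injective on $u$. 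The isomorphism holds because $S(K_R+\Delta)\cong S$ and top local cohomology only sees a big open set. We then apply \autoref{lem.AlmostKernel} to the $p^\infty$-torsion $S$-module $M=H^d_\m(S)\cong S(\Delta)\otimes_R E$ and the element $\eta$ given by the image of $u$. In this language, (a) says $\eta\notin M_4$, and the conditions (b) and (d) say $\eta\notin M_1$ and $\eta\notin M_3$ respectively (or slight variants of these).

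For (a)$\Rightarrow$(d), we will use that $\eta$ generates a finitely generated (cyclic) submodule $N$ of $M$. Applying the ``moreover'' part of \autoref{lem.AlmostKernel} to $N$, the conditions $\eta\notin N_4$ and $\eta\notin N_3$ coincide. To keep track of the base change we replace $M$ by $N$, noting that $N\to M$ induces compatible maps after tensoring with $B$. So if $\eta$ survives in every $B\otimes_S M$, then there is no $B$ in which $(g^{1/p^\infty})\eta=0$. Since the perfectoid BCM algebras form a directed system (any two map to a common third), a single $e$ then works for all $B$. The implication (d)$\Rightarrow$(a) is immediate, because each map factors through $R\to B\otimes_S S(\Delta)$.

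For (c), note that $\widehat{R^+}$ is itself a perfectoid BCM $R^+$-algebra by Bhatt's theorem, so (d)$\Rightarrow$(c). For (c)$\Rightarrow$(b) we use $M_2=M_1$: if $g^{1/p^e}\eta\neq0$ in $\widehat{R^+}\otimes M$, then $\eta\notin M_2=M_1$, so some $c\in(g)_\perfd$ has $c\eta\neq0$ in $S^{A_\infty}_\perfd\otimes_S M$. Here one checks that $\widehat{R^+}(\Delta)$, defined as a union of the $S'(\Delta)$, has the same top local cohomology as $\widehat{R^+}\otimes_S S(\Delta)$, again via a big open set.

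The remaining step is (b)$\Rightarrow$(a). Condition (b) gives $\eta\notin M_1=M_3$, which says that for every perfectoid BCM $B$ some $g^{1/p^e}\eta\neq 0$, and hence $\eta\neq0$ in $B\otimes M$. This is exactly (a).

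We expect the main obstacle to be the bookkeeping of which $S$-algebra and which module $M$ to use. In particular, the lemma is stated over $S$ with $S$-algebras while purity is over $R$, so we must check that the identification $M\cong S(\Delta)\otimes_R E$ and the element $\eta$ are compatible under all these base changes. We must also check that the quantifiers ``some $e$'' and ``all $B$'' interact correctly through the directed-system argument above.
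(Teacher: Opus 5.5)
Your overall strategy is the paper's: reduce each condition to whether the image $\eta$ of the socle generator survives in $B\otimes_S M$, with $M=H^d_\m(S)\cong S(\Delta)\otimes_R E$, then invoke \autoref{lem.AlmostKernel}. Your steps (d)$\Rightarrow$(a), (d)$\Rightarrow$(c), (c)$\Rightarrow$(b) and (b)$\Rightarrow$(a) are fine.

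The gap is in (a)$\Rightarrow$(d), the one place where $M_3=M_4$ is needed for the non-finitely generated module $M$. Passing to the cyclic submodule $N=S\eta$ works in only one direction. From $\eta\notin M_4$ you do get $\eta\notin N_4=N_3$. But that only says $(g^{1/p^\infty})\eta\neq0$ in $B\otimes_S N$, and nonvanishing there does not imply nonvanishing in $B\otimes_S M$. Put contrapositively, you would need: if $(g^{1/p^\infty})\eta=0$ in $B\otimes_S M$, then the same holds in $B'\otimes_S N$ for some $B'$. The only evident route to this is injectivity of $B\otimes_S N\to B\otimes_S M$, which fails in general because $B$ is not flat over $S$. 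Concretely, if $\eta$ is represented by $u\in S/(y_1^t,\dots,y_d^t)$, then $B\otimes_S N=B/\mathrm{Ann}_S(\eta)B$. For balanced $B$, the annihilator of $\eta$ in $H^d_\m(B)$ is $(y_1^t,\dots,y_d^t)B:_B u$. These need not agree, since colon ideals do not commute with non-flat base change. ``Compatible maps'' is not enough.

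The missing idea is to choose a finitely generated module whose base change to $B$ \emph{does} inject into $H^d_\m(B)$; the injectivity must come from $B$, not from $S$.
\begin{enumerate}
\item First reduce to balanced $B$ by replacing $B$ with its $\m$-adic completion $\widehat B$. This is still perfectoid, and vanishing in $H^d_\m(B)$ persists in $H^d_\m(\widehat B)$.
\item For a system of parameters $y_1,\dots,y_d$ of $R$, the map $B/(y_1^t,\dots,y_d^t)B\to H^d_\m(B)$ is then injective, because the $y_i$ form a regular sequence on $B$.
\item So $(g^{1/p^\infty})\eta=0$ in $H^d_\m(B)$ forces $(g^{1/p^\infty})u=0$ in $B\otimes_S S/(y_1^t,\dots,y_d^t)$.
\item Apply the finitely generated case of \autoref{lem.AlmostKernel} to the $p^\infty$-torsion module $S/(y_1^t,\dots,y_d^t)$. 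This gives $B'$ with $u\mapsto0$ in $B'/(y_1^t,\dots,y_d^t)B'$, hence $\eta\mapsto0$ in $H^d_\m(B')$.
\end{enumerate}
Alternatively, rerun the proof of the lemma for top local cohomology via the \v{C}ech complex.

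A smaller point: turning $\eta\notin M_3$ into a single $e$ in (d) requires one $B$ that receives maps from countably many $B_e$. The fact that any two perfectoid BCM algebras map to a common third handles only finitely many. You need to iterate it and take the completed colimit.
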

\begin{proof}
Let $E$ be the injective hull of the residue field of $R$. Then by \autoref{rem. BCM-regular via injective hull}, we know that $(R,\Delta)$ is BCM-regular if and only if for each perfectoid { big Cohen-Macaulay} $R^+$-algebra $B$, the map 
$$ E \to E\otimes_R B\otimes_S S(\Delta)$$
is injective, namely the following is zero:
$$\{ \eta \in E\ |\ \eta \otimes 1 =0 \text{ in }E\otimes_R B\otimes_S S(\Delta) \text{ for some perfectoid { big Cohen-Macaulay} $R^+$-algebra }B \}.$$
Similarly, we also note that (b) holds if and only if
$$ \{ \eta \in E\ |\ \eta \otimes (g)_\perfd =0 \text{ in }E\otimes_R S^{A_{\infty}}_{\perfd} \otimes_S S(\Delta)\}=0$$
and (c) holds if and only if 
$$ \{ \eta \in E\ |\ \eta \otimes (g^{1/p^\infty}) =0 \text{ in }E\otimes_R R^+ \otimes_S S(\Delta)\}=0.$$
{ Finally (d) holds if and only if \[
\left\{
\eta\in E \left|
\begin{array}{l} \qquad \quad
\eta\otimes(g^{1/p^\infty})=0
\text{ in } E\otimes_R B\otimes_S S(\Delta)\\
\text{for some perfectoid big Cohen--Macaulay }R^+\text{-algebra }B
\end{array} \right.
\right\}=0.
\] 
}
With the above observation the equivalence between (a), (b), (c) { and (d)} follows from \autoref{lem.AlmostKernel}. Note that the equality of $M_3$ and $M_4$ requires $M$ to be finitely generated in \autoref{lem.AlmostKernel}. However, since $K_R+\Delta$ is $\mathbb{Q}$-Cartier, by \autoref{rem. BCM-regular via injective hull} we know that $$E\otimes_R B \otimes_S S(\Delta) \simeq H^d_\m (B)\simeq H^d_\m(R)\otimes_R B.$$ The proof of \cite[Lemma 5.6]{CaiLeeMaSchwedeTuckerPerfectoidSignature} holds verbatim for the top local cohomology module by the Čech-complex description. Alternatively, choose a system of parameters $y_1,\ldots,y_d$ of $R$.
Using $S(K_R+\Delta)\cong S$, represent the image of
$\eta\in E$ in $H^d_\m(S)$ by
$u\in S/(y_1^N,\ldots,y_d^N)$ for some $N$.
We may assume that $B$ is balanced, so
$B/(y_1^N,\ldots,y_d^N)\longrightarrow H^d_\m(B)$ is injective. Thus, if every $g^{1/p^e}$ annihilates the
image of $\eta$ in $H^d_\m(B)$, they all annihilate the image of $u$
in $B/(y_1^N,\ldots,y_d^N)$. Since $S/(y_1^N,\ldots,y_d^N)$ is a finite
$p^\infty$-torsion $S$-module, \autoref{lem.AlmostKernel}
gives a perfectoid big Cohen--Macaulay algebra $B'$
such that $u$ maps to zero in $B'/(y_1^N,\ldots,y_d^N)$.
Hence $\eta$ maps to zero in $H^d_\m(B')$.
\end{proof}

We can now state and prove the main result of this subsection.

\begin{theorem}
    \label{thm.BCMRegular=>PositiveSignatureForPairs}
    With the notation as in \autoref{set.BCMRegularityPairSetting}, we have $(R, \Delta)$ is BCM-regular if and only if
    \[
        s^{\underline{x}}_{\perfd}(R, \Delta) > 0.
    \]
\end{theorem}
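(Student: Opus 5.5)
The plan is to follow the non-pair argument of \cite[Section 5]{CaiLeeMaSchwedeTuckerPerfectoidSignature}, with \autoref{prop.perfectoid BCM-regularity equivalence} as the bridge between BCM-regularity and non-splitting ideals. Throughout, $S$ is the finite extension fixed in \autoref{set.BCMRegularityPairSetting}. By the independence of \autoref{def.perfectoidsigforpairs} from the choice of extension, we may compute the signature as
\[
s^{\underline{x}}_\perfd(R,\Delta)=\frac{1}{r}\,\lambda_\infty\bigl(S^{A_\infty}_\perfd/\np^R(S^{A_\infty}_\perfd,\Delta)\bigr).
\]

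For ($\Rightarrow$), assume $(R,\Delta)$ is BCM-regular.
\begin{enumerate}
\item By \autoref{prop.perfectoid BCM-regularity equivalence}(b), there is $c\in(g)_\perfd\subseteq S^{A_\infty}_\perfd$ with $c\notin\np^R(S^{A_\infty}_\perfd,\Delta)$.
\item Since $(g)_\perfd$ is the $p$-adic closure of $(g^{1/p^\infty})$ and the non-pure ideal contains $\m$, hence $p$, we may replace $c$ by some $g^{1/p^e}$. Thus $g^{1/p^e}\notin \np^R(S^{A_\infty}_\perfd,\Delta)$ for some $e$.
\item The non-pure ideal is stable under ``division'': if $g^{1/p^e}z\notin\np$ then $z\notin\np$. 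Consequently $g^{1/p^e}S^{A_\infty}_\perfd$ meets the complement of $\np$ in a submodule that is ``large''.
\item To get positivity, show $\np^R(S^{A_\infty}_\perfd,\Delta)\subseteq(\np^R(S^{A_\infty}_\perfd,\Delta):g^{1/p^e})$. Then multiplication by $g^{1/p^e}$ induces an injection
\[
S^{A_\infty}_\perfd/(\np:g^{1/p^e})\hookrightarrow S^{A_\infty}_\perfd/\np .
\]
\item The key is to exhibit an $R$-linear map $\psi\colon S^{A_\infty}_\perfd\otimes_S S(\Delta)\to R$ with $\psi(g^{1/p^e}\otimes1)=1$. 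Then for each $n$, the ideal $\np$ is contained in the kernel of $z\mapsto \psi(g^{1/p^e}z\otimes 1)\bmod \m$. Restricting to $A_\infty$, the ideal $(\m_A^{[p^n]}\text{-type}\ :\ g^{1/p^e})$ bounds $\np\cap A_\infty$-lengths from above, exactly as in \cite[Theorem 5.?]{CaiLeeMaSchwedeTuckerPerfectoidSignature}.
\end{enumerate}
Concretely, the aim is to show that $\np^R(S^{A_\infty}_\perfd,\Delta)\cap A_\infty$ is contained in $(\m_A A_\infty :_{A_\infty} g^{1/p^e})$-ish up to almost zero, and that $\lambda_\infty(A_\infty/(\m_AA_\infty: g^{1/p^e}))>0$. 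The latter holds because $g^{1/p^e}\notin\m_AA_\infty$ up to small exponents, and normalized length detects this via the monomial computation of \cite[Section 4]{CaiLeeMaSchwedeTuckerPerfectoidSignature}.

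For ($\Leftarrow$), assume $s>0$. Suppose $(R,\Delta)$ is not BCM-regular. By \autoref{prop.perfectoid BCM-regularity equivalence}(b), $(g)_\perfd\subseteq\np^R(S^{A_\infty}_\perfd,\Delta)$. Hence $S^{A_\infty}_\perfd/\np$ is a quotient of $S^{A_\infty}_\perfd/(g)_\perfd$, which is $p$-adically a quotient by an ideal of dimension $<d$. Normalized length of modules supported in dimension $<d$ vanishes, as in \cite[Proposition 4.x]{CaiLeeMaSchwedeTuckerPerfectoidSignature}: filter $S^{A_\infty}_\perfd/(g)_\perfd$ modulo $\m$-primary parameters and use that $g$ is part of a system of parameters. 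This gives $s=0$, a contradiction.

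The main obstacle is step 5 of ($\Rightarrow$): converting the single splitting that sends $g^{1/p^e}$ to $1$ into a uniform lower bound on $\lambda_\infty$. I would first try to mimic the original proof directly. That means using $g$-almost finite \'etaleness of $A_\infty\to S^{A_\infty}_\perfd$ to compare with $A_\infty$, where the bound reduces to an explicit monomial computation, with the twist by $S(\Delta)$ handled because $S(K_R+\Delta)\cong S$ is Cartier, as in \autoref{rem. BCM-regular via injective hull}.
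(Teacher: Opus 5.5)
Your argument for $s^{\underline{x}}_\perfd(R,\Delta)>0\Rightarrow$ BCM-regular is the paper's argument. If $(R,\Delta)$ is not BCM-regular, \autoref{prop.perfectoid BCM-regularity equivalence}(b) gives $(g)_\perfd\subseteq\np^R(S^{A_\infty}_\perfd,\Delta)$. So $S^{A_\infty}_\perfd/\np^R(S^{A_\infty}_\perfd,\Delta)$ is killed by $(g)_\perfd$, hence almost zero, and it has normalized length zero by \cite[Proposition 4.10]{CaiLeeMaSchwedeTuckerPerfectoidSignature}. Phrase it this way: $\lambda_\infty$ is only defined on $\m$-power torsion modules, so ``$S^{A_\infty}_\perfd/(g)_\perfd$ has dimension $<d$'' is not the right statement. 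Also, in your step 2, $g$ need not have $p$-power roots in $S^{A_\infty}_\perfd$, so $(g)_\perfd$ is not a priori the closure of $(g^{1/p^\infty})$ there.

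The direction BCM-regular $\Rightarrow s^{\underline{x}}_\perfd(R,\Delta)>0$ has a genuine gap. Steps 3 and 4 are tautologies: every ideal $I$ satisfies $I\subseteq(I:c)$, and the resulting injection gives no positivity. Step 5 only asserts the containment $\np^R(S^{A_\infty}_\perfd,\Delta)\cap A_\infty\subseteq(\m_AA_\infty:c)$ ``up to almost zero'', with no mechanism behind it. A single $\psi$ with $\psi(c\otimes 1)=1$ controls only the one functional $z\mapsto\psi(cz\otimes 1)$. To show that $z\notin\np^R(S^{A_\infty}_\perfd,\Delta)$, you need an $R$-linear map sending $z\otimes 1$ itself to a unit. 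For a lower bound on $\lambda_\infty$, you need this for a set of $z$ of positive normalized length. In characteristic $p$ that transfer comes from composing with $p^{-e}$-linear maps. There is no Frobenius on $R$ in mixed characteristic, and your outline supplies no substitute.

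The missing idea is to argue by contradiction, as the paper does, using two inputs from \cite{CaiLeeMaSchwedeTuckerPerfectoidSignature}.
\begin{enumerate}
\item If $s^{\underline{x}}_\perfd(R,\Delta)=0$, then by \cite[Lemma 4.13]{CaiLeeMaSchwedeTuckerPerfectoidSignature} (or \cite[Proposition 2.15]{ShimomotoFrobeniusActionLocalCohomology}) there are $c_i\in A_\infty\cap\np^R(S^{A_\infty}_\perfd,\Delta)$ with $v(c_i)\to 0$.
\item Let $\eta$ generate the socle of $E=H^d_\m(R(K_R))$. Each $c_i$ kills $1\otimes 1\otimes\eta$ in $S^{A_\infty}_\perfd\otimes_S S(\Delta)\otimes_R E$. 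Via $S^{A_\infty}_\perfd\to\widehat{R^+}$, it also kills it in
\[
R^+\otimes_S S(\Delta)\otimes_R E\cong H^d_\m(R^+(K_R+\Delta))\cong H^d_\m(R^+).
\]
The first isomorphism holds because the comparison map is an isomorphism off a codimension-two subset. The second uses that $K_R+\Delta$ is $\mathbb{Q}$-Cartier.
\item Gabber's construction \cite[Corollary 2.28]{CaiLeeMaSchwedeTuckerPerfectoidSignature} then yields a perfectoid big Cohen--Macaulay $R^+$-algebra $B$ with $\eta\mapsto 0$ in $H^d_\m(B)$. This contradicts BCM-regularity by \autoref{rem. BCM-regular via injective hull}.
\end{enumerate}
The passage from ``annihilated by elements of arbitrarily small valuation'' to ``zero in some perfectoid BCM algebra'' is exactly what a direct lower-bound argument like yours would have to replace.
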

\begin{proof}  The proof closely follows the one in \cite[Proposition 5.8 and 5.9]{CaiLeeMaSchwedeTuckerPerfectoidSignature}.  
    We first prove the ``if" direction. We will prove that for every nonzero $g\in R^+$, there exists $e \gg 0$ such that for any perfectoid big Cohen-Macaulay $R^+$-algebra $B$, the map $R \to B \otimes_S S(\Delta)$ sending $1 \mapsto g^{1/p^e}$ is pure. Replacing $g$ with a multiple if necessary, we may assume that $g\in A$ and that $A[g^{-1}] \subseteq S[g^{-1}]$ is \'etale.
    By \autoref{prop.perfectoid BCM-regularity equivalence}, it suffices to show that there exists $g_1\in (g)_{\perfd}$ such that $R \to S^{\Ainfty}_{\perfd} \otimes_{S} S(\Delta)$ sending $1$ to $g_1$ is pure.  
    If this is not the case, then $(g)_{\perfd}\subseteq \np^R (S^{A_\infty}_\perfd, \Delta)$ and hence $s^{\underline{x}}_{\perfd}(R, \Delta)=\frac{1}{[K(S) : K(R)]} \lambda_{\infty}\big(S^{\Ainfty}_{\perfd}/ \np^R (S^{A_\infty}_\perfd, \Delta))  = 0$ by \cite[Proposition 4.10]{CaiLeeMaSchwedeTuckerPerfectoidSignature} which is a contradiction.

    We next prove the ``only if" direction. We prove this by contradiction. So suppose
    $s^{\underline{x}}_{\perfd}(R, \Delta) = {(1 / \rank_R S)}\lambda_{\infty}(S^{A_{\infty}}_{\perfd} /\np^R (S^{A_\infty}_\perfd, \Delta))=0$.   Set $v$ to be the $\bZ[1/p]$-valued valuation that sends $f \in A_{\infty}$ to the lowest degree (in terms of the rational-power monomials in $p, x_2, \dots, x_d$) of a term (see \cite[Definition 4.11]{CaiLeeMaSchwedeTuckerPerfectoidSignature}).   By \cite[Proposition 2.15]{ShimomotoFrobeniusActionLocalCohomology} or \cite[Lemma 4.13]{CaiLeeMaSchwedeTuckerPerfectoidSignature}, there is a sequence of elements $\{c_i\} \subseteq \Ainfty$ such that $v(c_i)\to 0$ and with $c_i \in \np^R (S^{A_\infty}_\perfd, \Delta)$.  In particular, the map $R\to S^{\Ainfty}_{\perfd} \otimes_S S(\Delta)$ sending $1 \mapsto c_i \otimes 1$ is not pure.  
    
    Suppose $\eta \in H^d_\m(R(K_R)) = E$ generates the socle. Then $c_i\otimes 1 \otimes \eta =0$ inside $S^{\Ainfty}_{\perfd} \otimes_S S(\Delta)  \otimes_R E$. Since $R\to \widehat{R^+} \otimes_S S(\Delta)$ factors through $S^{\Ainfty}_{\perfd} \otimes_S S(\Delta)$, we have that $c_i \otimes 1 \otimes \eta =0$ inside $\widehat{R^+} \otimes_S S(\Delta) \otimes_R E \cong R^+ \otimes_S S(\Delta) \otimes_R E$. It follows that $c_i\otimes 1 \otimes \eta=0$ inside
    \[
        R^+ \otimes_S S(\Delta) \otimes_RE \cong R^+ \otimes_S S(\Delta)\otimes_R H^d_{\m}(R(K_R))  \cong H^d_{\m}(R^+(K_R + \Delta)).
    \]
    To see the second isomorphism, note that while the canonical map $R^+ \otimes_R R(K_R) \otimes_S S(\Delta) \to R^+(K_R + \Delta)$ need not be an isomorphism, it is an isomorphism over the complement of a codimension $\geq 2$ subset of $\Spec R$ (where all the divisors considered are Cartier), and hence the induced map 
    \[ 
        H^d_{\m}(R(K_R)) \otimes_R R^+ \otimes_S S(\Delta) \cong H^d_{\m}\big(R^+ \otimes_R R(K_R) \otimes_S S(\Delta)\big) \to H^d_{\m}\big(R^+(K_R + \Delta) \big)
    \]
    is an isomorphism.

    Since $K_R + \Delta$ is $\bQ$-Cartier and $R$ is local, we see that $R^+(K_R + \Delta) \cong R^+$.  It follows from an argument of Gabber \cite{GabberMSRINotes} in the form of \cite[Corollary 2.28]{CaiLeeMaSchwedeTuckerPerfectoidSignature} that there exists a perfectoid big Cohen-Macaulay $R^+$-algebra $B$ such that the image of $\eta$ is zero under the map 
    \[
        H^d_{\m}(R^+(K_R + \Delta)) \cong H^d_{\m}(R^+) \to H^d_\m(B).
    \]   
    Hence $(R, \Delta)$ is not BCM-regular which is a contradiction.  
\end{proof}

\subsection{Continuity and Convexity}

\begin{theorem}
  \label{thm:continuity}
  Suppose $(R,\fram,k)$ is a complete Noetherian normal local domain of mixed characteristic $(0,p)$ with perfect residue field and $f \in \fram$ is non-zero. Let $D = \divisor(f)$ on $\Spec(R)$. The function $\QQ_{\geq 0} \to \mathbb{R}$ given by
  \begin{equation*}
    t \mapsto s^{\underline{x}}_\perfd(R,tD)
  \end{equation*}
  is continuous. In particular, it extends naturally to a function $\mathbb{R}_{\geq 0} \to \mathbb{R}$.
\end{theorem}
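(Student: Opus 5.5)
We would mimic the characteristic $p$ argument of Blickle--Schwede--Tucker: establish a two-sided Lipschitz-type estimate for $t\mapsto s^{\underline{x}}_\perfd(R,tD)$ in terms of normalized lengths of quotients by powers of roots of $f$.

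\textbf{Setup.} Fix $t<t'$ rational. Choose a common finite generically \'etale normal extension $R\subseteq S$ on which both $tD$ and $t'D$ pull back to integral divisors; this is allowed by the independence of \autoref{def.perfectoidsigforpairs} from the choice of $S$ (\autoref{prop.transformationrulefordefinition}). On $S$, choose $y\in S^{A_\infty}_\perfd$ with $y^N=f$ for suitable $N$, so that $S(tD)$ and $S(t'D)$ are generated in codimension one by negative powers of $y$. More simply, since $D=\Div(f)$ is principal, $S(tD)\otimes S^{A_\infty}_\perfd$ is identified, as a pointed module, with $S^{A_\infty}_\perfd$ with distinguished element $f^{t}$ (a chosen root), exactly as in \autoref{example.DeltaDatum}.

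\textbf{Key identity.} This gives
\[
\np^R(S^{A_\infty}_\perfd,tD)=\bigl(\np^R(S^{A_\infty}_\perfd):f^{t}\bigr),
\]
and similarly for $t'$. Because $f^{t'}=f^{t'-t}f^{t}$, we get
\[
\bigl(I:f^{t}\bigr)\subseteq\bigl(I:f^{t'}\bigr)=\bigl((I:f^{t}):f^{t'-t}\bigr),
\]
where $I=\np^R(S^{A_\infty}_\perfd)$. Hence $s(R,tD)$ is nonincreasing in $t$, and
\[
0\le r\bigl(s(R,tD)-s(R,t'D)\bigr)=\lambda_\infty\bigl((J:f^{\delta})/J\bigr),
\qquad J=(I:f^{t}),\ \delta=t'-t.
\]
The module $(J:f^{\delta})/J$ is isomorphic, via multiplication by $f^{\delta}$, to $\bigl(f^{\delta}S^{A_\infty}_\perfd+J\bigr)/J$ modulo the kernel. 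Its normalized length is therefore bounded by the length of the kernel of multiplication by $f^\delta$ on $S^{A_\infty}_\perfd/J$. Using the exact sequence
\[
0\to \ker\to S^{A_\infty}_\perfd/J\xrightarrow{\,f^\delta\,} S^{A_\infty}_\perfd/J\to S^{A_\infty}_\perfd/(J+f^\delta)\to0
\]
and additivity of $\lambda_\infty$ (the outer terms are $\m$-torsion with finite normalized length), we get
\[
\lambda_\infty(\ker)=\lambda_\infty\bigl(S^{A_\infty}_\perfd/(J+f^{\delta})\bigr)\le\lambda_\infty\bigl(S^{A_\infty}_\perfd/(\m S^{A_\infty}_\perfd+f^{\delta})\bigr),
\]
since $\m\subseteq J$ by \autoref{lem: injective hull splitting ideal}.

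\textbf{Main obstacle.} The remaining step is to show that
\[
\lambda_\infty\bigl(S^{A_\infty}_\perfd/(\m,f^{\delta})\bigr)\le C\delta
\]
for a constant $C$ independent of $\delta$ and of the (varying) $S$. We would first reduce, via $g$-almost finite freeness of $S^{A_\infty}_\perfd$ over $A_\infty$ (as in the proof of \autoref{prop.transformationrulefordefinition}), to computing on $A_\infty$ with the norm of $f$, normalizing by $r$. Then we would use \autoref{lem.sopinA_infty} to complete $f$ to a system $f,f_2^\sharp,\dots,f_d^\sharp$ with radical $\m_{A_\infty}$. The filtration argument at the end of the proof of \autoref{prop.transformationrulefordefinition}, applied to powers $f^{a/p^n}$, gives
\[
\lambda_\infty\bigl(A_\infty/(f^{\delta},f_2^\sharp,\dots,f_d^\sharp)\bigr)=\delta\,\lambda_\infty\bigl(A_\infty/(f,f_2^\sharp,\dots)\bigr),
\]
and the $\m$-quotient is dominated by this. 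The subtle point we expect is handling $f$ not in $A$ and the dependence on $S$. To deal with it, we would instead divide by $r$ throughout and note that after the normalization the constant is $\lambda_\infty(R^{A_\infty}_\perfd/(f))$-type, which is independent of $S$.

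\textbf{Conclusion.} This yields $|s(R,tD)-s(R,t'D)|\le C|t-t'|$, so the function is Lipschitz on $\QQ_{\ge0}$. A Lipschitz function on $\QQ_{\ge0}$ extends uniquely by continuity to $\mathbb R_{\ge0}$.
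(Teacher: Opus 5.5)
Up to the step you call the main obstacle, you follow the paper: the identity $\np^R(S^{A_\infty}_\perfd,tD)=\np^R(S^{A_\infty}_\perfd,0):f^{t}$, the exact sequence for multiplication by $f^{\delta}$ on $S^{A_\infty}_\perfd/J$, and the reduction, via $\m\subseteq J$, to bounding $\lambda_\infty\bigl(S^{A_\infty}_\perfd/(\m,f^{\delta})S^{A_\infty}_\perfd\bigr)$ by $C\,r\,\delta$ with $C$ independent of $S$. (Here $(J:f^{\delta})/J$ is itself the kernel of multiplication by $f^\delta$, so the exact sequence gives an equality directly.)

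That bound is the real content of the theorem, and your sketch does not supply it. There are three concrete problems:
\begin{itemize}
\item \autoref{lem.sopinA_infty} is stated only for $f\in\m_A$, while here $f\in\m$ is arbitrary.
\item The element $f^{\delta}$, or a fractional power of the norm of $f$, lies in $S$ but not in $A_\infty$. So neither almost finite freeness over $A_\infty$ nor the filtration computation from the proof of \autoref{prop.transformationrulefordefinition} applies to the ideal $(\m,f^{\delta})$. That computation works only because $p^{3/p^n}\in A_\infty$ and $p,f_2^\sharp,\dots,f_d^\sharp$ is a regular sequence there.
\item The proposed constant $\lambda_\infty(R^{A_\infty}_\perfd/(f))$ is not finite once $\dim R\geq 2$, since $R^{A_\infty}_\perfd/fR^{A_\infty}_\perfd$ is then not $\m$-power torsion.
\end{itemize}
More fundamentally, filtering by powers of $f^{1/b}$ only yields $\lambda_\infty(S^{A_\infty}_\perfd/(f,\underline y))\leq b\,\lambda_\infty(S^{A_\infty}_\perfd/(f^{1/b},\underline y))$, which is the wrong direction. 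An upper bound on the quotient by $f^{1/b}$ that decays like $1/b$ needs a Cohen--Macaulay-type input, and your argument never invokes one.

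The missing step, which is how the paper concludes, is to extend $f$ to a system of parameters $y_1=f,y_2,\dots,y_d$ of $R$. Since $y_2,\dots,y_d\in\m\subseteq J$, your quantity is at most $\lambda_\infty\bigl(S^{A_\infty}_\perfd/(f^{\delta},y_2,\dots,y_d)S^{A_\infty}_\perfd\bigr)$, and $f^{\delta},y_2,\dots,y_d$ is a system of parameters of $S$. By \cite[Proposition~4.14]{CaiLeeMaSchwedeTuckerPerfectoidSignature}, the normalized length of $S^{A_\infty}_\perfd$ modulo a parameter ideal of $S$ equals $[S/\n:k]$ times its Hilbert--Samuel multiplicity; this is exactly the Cohen--Macaulay input you need. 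Linearity of multiplicity in the exponent of a parameter, together with the projection formula, then gives $[S/\n:k]\cdot e\bigl(S,(f^{\delta},y_2,\dots,y_d)\bigr)=\delta\cdot r\cdot e\bigl(R,(f,y_2,\dots,y_d)\bigr)$. Hence $0\leq s^{\underline{x}}_\perfd(R,tD)-s^{\underline{x}}_\perfd(R,t'D)\leq (t'-t)\,e(R,(f,y_2,\dots,y_d))$, with a Lipschitz constant visibly independent of $S$.
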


\begin{proof}
  Let $y_1 :=f,y_2, \ldots, y_d$ be a fixed system of parameters of $R$ beginning with $f$.
  Suppose $t \in \mathbb{Q}_{\geq 0}$ and $\epsilon \in \mathbb{Q}_{>0}$. Choose $(S,\n)$ a finite complete normal local extension domain of $(R,\m)$ so that $f^{t},f^{t+\epsilon} \in S$. Observe that, after tracing through the definitions, we have
  \begin{gather*}
    \np^R(S^{A_\infty}_\perfd,tD)  = \np^R(S^{A_\infty}_\perfd,0) :_{S^{A_{\infty}}_{\perfd}} f^{t} \; \mbox{ and } \; \np^R(S^{A_\infty}_\perfd,(t+\epsilon)D)  = \np^R(S^{A_\infty}_\perfd,0) :_{S^{A_{\infty}}_{\perfd}} f^{t+\epsilon}
  \end{gather*}
   so that in particular $\np^R(S^{A_\infty}_\perfd,(t+\epsilon)D) = \np^R(S^{A_\infty}_\perfd,tD):_{S^{A_{\infty}}_{\perfd}} f^{\epsilon}$. If $r = \rank_R(S)$, then we have 
   \[\begin{array}{l} \displaystyle
    s^{\underline{x}}_\perfd(R,tD) - s^{\underline{x}}_\perfd(R,(t+\epsilon)D) \\[8pt] \displaystyle = \frac{1}{r}\left( \lambda_\infty\left( S^{A_\infty}_{\perfd} / \np^R(S^{A_\infty}_\perfd,tD)\right) - \lambda_\infty\left( S^{A_\infty}_{\perfd} / \np^R(S^{A_\infty}_\perfd,(t+\epsilon)D)\right)\right) \\[8pt] \displaystyle =
    \frac{1}{r}\lambda_\infty\left( \frac{S^{A_\infty}_{\perfd}}{(\np^R(S^{A_\infty}_\perfd,tD),f^{\epsilon})S^{A_\infty}_{\perfd}} \right) \\[15pt] \displaystyle
     \leq \frac{1}{r}\lambda_\infty \left( \frac{S^{A_\infty}_{\perfd}}{(y_1^\epsilon,y_2, \ldots, y_d)S^{A_\infty}_{\perfd}} \right).
   \end{array}\]
  {Set $\ell = [S/\frn : k]$.}  Now $y_1^\epsilon,y_2, \ldots, y_d$ is also a system of parameters for $S$, and so by \cite[Proposition 4.14]{CaiLeeMaSchwedeTuckerPerfectoidSignature} it follows that
  \begin{align*}
    \lambda_\infty \left( \frac{S^{A_\infty}_{\perfd}}{(y_1^\epsilon,y_2, \ldots, y_d)S^{A_\infty}_{\perfd}} \right) & ={\ell \cdot } e_{\perfd}^{\underline{x}}(S,(y_1^\epsilon,y_2, \ldots, y_d)) \\ 
    & = {\ell \cdot }e(S,(y_1^\epsilon,y_2, \ldots, y_d)) \\ 
    & = \epsilon \cdot r \cdot e(R,(y_1,y_2, \ldots, y_d)).
  \end{align*}
  Putting these together gives that 
  \begin{equation*}
    0 \leq s^{\underline{x}}_\perfd(R,tD) - s^{\underline{x}}_\perfd(R,(t+\epsilon)D) \leq \epsilon \cdot e(R,(y_1,y_2, \ldots, y_d)).
  \end{equation*}
  Thus, we have that the function $\QQ_{\geq 0} \to \mathbb{R}$ given by $
    t \mapsto s^{\underline{x}}_\perfd(R,tD)$
  is Lipschitz with constant $e(R,(y_1,y_2, \ldots, y_d))$, and in particular continuous as claimed.
\end{proof}

\begin{theorem}
    \label{thm.PerfdSignatureIsConvex}
  Suppose $(R,\fram,k)$ is a complete Noetherian normal local domain of mixed characteristic $(0,p)$ with perfect residue field and $f \in \fram$ is non-zero with $D = \divisor(f)$ a divisor on $\Spec R$.  The function $\mathbb{R}_{\geq 0} \to \mathbb{R}$ given by
  \begin{equation*}
    t \mapsto s^{\underline{x}}_\perfd(R,tD)
  \end{equation*}
  is convex.
\end{theorem}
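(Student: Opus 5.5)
Since $t\mapsto s^{\underline{x}}_\perfd(R,tD)$ is continuous by \autoref{thm:continuity}, it suffices to prove midpoint convexity on rationals (with dyadic-type midpoints, or with general rational weights): for rational $t_1<t_2$ and $t=(t_1+t_2)/2$, show
\[
2\,s^{\underline{x}}_\perfd(R,tD)\leq s^{\underline{x}}_\perfd(R,t_1D)+s^{\underline{x}}_\perfd(R,t_2D).
\]
Convexity on $\mathbb{R}_{\geq0}$ then follows from continuity.

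Fix a finite normal extension $S$ containing $f^{t_1},f^{t},f^{t_2}$, and write $N=S^{A_\infty}_\perfd$, $I=\np^R(N,0)$. As in the proof of \autoref{thm:continuity}, $\np^R(N,sD)=I:_N f^{s}$ for each relevant $s$. Put $\delta=(t_2-t_1)/2$ and $J=I:f^{t_1}$. Then
\[
\np^R(N,tD)=J:f^{\delta},\qquad \np^R(N,t_2D)=J:f^{2\delta}.
\]
The desired inequality is, after multiplying by $r$ and using additivity of $\lambda_\infty$,
\[
\lambda_\infty\bigl((J:f^\delta)/J\bigr)\geq\lambda_\infty\bigl((J:f^{2\delta})/(J:f^{\delta})\bigr).
\]
This says the successive ``layers'' of the colon filtration shrink. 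The standard argument applies. Multiplication by $f^{\delta}$ induces an injective $N$-linear map
\[
(J:f^{2\delta})/(J:f^{\delta})\hookrightarrow (J:f^{\delta})/J.
\]
It is well defined because $f^\delta(J:f^{2\delta})\subseteq J:f^\delta$ and $f^\delta(J:f^\delta)\subseteq J$. It is injective because $f^\delta x\in J$ means exactly $x\in J:f^\delta$. Additivity and monotonicity of $\lambda_\infty$ on submodules then give the inequality.

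The main point to check is that these modules are legitimately measured by $\lambda_\infty$: they must be $\m_A$-power torsion, or at least colimits for which $\lambda_\infty$ behaves (monotone under injection and additive). All the quotients are subquotients of $N/\np^R(N,\cdot)$, which contains $\m N$ and is handled as in \cite{CaiLeeMaSchwedeTuckerPerfectoidSignature}. I would also note that the identification $\np^R(N,sD)=I:f^s$ needs $f^s\otimes 1$ to generate $N\otimes_S S(sD)$ compatibly. This holds since $sD$ pulls back to the principal divisor $\divisor(f^s)$ on $S$. The remaining check is that the same $S$ computes all three signatures, which is the independence of $S$ already established.
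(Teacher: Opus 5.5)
Your proposal is correct and is essentially the paper's argument: use continuity to reduce to three equally spaced rational points, identify $\np^R(N,sD)=\np^R(N,0):_N f^{s}$ on a finite normal extension $S$ containing the needed root of $f$, and compare consecutive layers of the resulting colon filtration. The only cosmetic difference is that the paper rewrites each layer as $\lambda_\infty\bigl(N/(\np^R(N,sD),f^{1/n})N\bigr)$ and then uses $\np^R(N,tD)\subseteq \np^R(N,t_1D)$, which is equivalent to your injection $(J:f^{2\delta})/(J:f^{\delta})\hookrightarrow (J:f^{\delta})/J$ given by multiplication by $f^{\delta}$.
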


\begin{proof}
  If $0 \leq t < t_1 < t_2$, we need to verify that
  \begin{equation}
    \label{eq:diffquotients}
    D(t,t_1) = \frac{s^{\underline{x}}_\perfd(R,t_1D) - s^{\underline{x}}_\perfd(R,tD)}{t_1 - t} \leq \frac{s^{\underline{x}}_\perfd(R,t_2D) - s^{\underline{x}}_\perfd(R,t_1D)}{t_2-t_1} = D(t_1,t_2).
  \end{equation}
  By continuity, it suffices to check this inequality on the dense subset of rational numbers. Taking a common denominator $n \in \mathbb{Z}_{>0}$, we may assume that 
  \begin{equation*}
    t = \frac{a}{n} \qquad t_1 = \frac{a_1}{n} \qquad t_2 = \frac{a_2}{n}
  \end{equation*}
  where $a,a_1,a_2 \in \mathbb{Z}$ with $0 \leq a < a_1 < a_2$. Moreover, it suffices to treat the case $a_1 = a+1$ and $a_2 = a+2$.
  
  Choose $(S,\n)$ a complete normal local finite extension domain of $(R,\fram)$ so that $f^{1/n} \in S$. If $r = \rank_R(S)$, the inequality \eqref{eq:diffquotients} reduces to the following
  \begin{multline*}
    \frac{n}{r} \left( \lambda_\infty\left( 
      \frac{S^{A_\infty}_{\perfd}}{\np^R(S^{A_\infty}_\perfd,tD):_{S^{A_\infty}_{\perfd}}f^{1/n}}
     \right) - \lambda_\infty\left( 
      \frac{S^{A_\infty}_{\perfd}}{\np^R(S^{A_\infty}_\perfd,tD)}
     \right)\right) \\ \leq \frac{n}{r} \left( \lambda_\infty\left( 
      \frac{S^{A_\infty}_{\perfd}}{\np^R(S^{A_\infty}_\perfd,t_1D):_{S^{A_\infty}_{\perfd}}f^{1/n}}
     \right)- \lambda_\infty\left( 
      \frac{S^{A_\infty}_{\perfd}}{\np^R(S^{A_\infty}_\perfd,t_1D)}
     \right) \right)
  \end{multline*}
With similar reductions as in the proof of \autoref{thm:continuity} the above is  equivalent to
\begin{align*}
  -\frac{n}{r} \left( \lambda_\infty\left( 
  \frac{S^{A_\infty}_{\perfd}}{(\np^R(S^{A_\infty}_\perfd,tD),f^{1/n})S^{A_\infty}_{\perfd}}
 \right) \right) &\leq -\frac{n}{r} \left( \lambda_\infty\left( 
  \frac{S^{A_\infty}_{\perfd}}{(\np^R(S^{A_\infty}_\perfd,t_1D),f^{1/n})S^{A_\infty}_{\perfd}}
 \right) \right) 
\end{align*}
or simply
\begin{align*}
  \lambda_\infty\left( 
  \frac{S^{A_\infty}_{\perfd}}{(\np^R(S^{A_\infty}_\perfd,tD),f^{1/n})S^{A_\infty}_{\perfd}}
 \right)  &\geq  \lambda_\infty\left( 
  \frac{S^{A_\infty}_{\perfd}}{(\np^R(S^{A_\infty}_\perfd,t_1D),f^{1/n})S^{A_\infty}_{\perfd}}
 \right)
\end{align*}
which follows from the inclusion $\np^R(S^{A_\infty}_\perfd,tD) \subseteq \np^R(S^{A_\infty}_\perfd,t_1D) = \np^R(S^{A_\infty}_\perfd,tD) :_{S^{A_\infty}_{\perfd}} f^{1/n}$.
\end{proof}

\bibliographystyle{amsalpha}
\bibliography{preprints,main}

\end{document}